\title[Distributionally Robust Optimization]{Distributionally Robust Optimization}
\author[D.~Kuhn, S.~Shafiee, and W.~Wiesemann]{Daniel Kuhn\\
	Risk Analytics and Optimization Chair,\\ \'Ecole Polytechnique F\'ed\'erale de Lausanne, Lausanne, Switzerland\\
	E-mail: {daniel.kuhn@epfl.ch} \\
	\and
	Soroosh Shafiee\\
	School of Operations Research and Information Engineering,\\ Cornell University, Ithaca, NY, USA\\
	E-mail: {shafiee@cornell.edu} \\
	\and
	Wolfram Wiesemann\\
	Imperial College Business School,\\ Imperial College London, London, United Kingdom\\
	E-mail: {ww@imperial.ac.uk}}
\begin{document}
	
	\label{firstpage}
	\maketitle
	
	\vspace{7.5em}
	
	\begin{abstract}
		Distributionally robust optimization (DRO) studies decision problems under uncertainty where the probability distribution governing the uncertain problem parameters is itself uncertain. A key component of any DRO model is its ambiguity set, that is, a family of probability distributions consistent with any available structural or statistical information. DRO seeks decisions that perform best under the worst distribution in the ambiguity set. This worst case criterion is supported by findings in psychology and neuroscience, which indicate that many decision-makers have a low tolerance for distributional ambiguity. DRO is rooted in statistics, operations research and control theory, and recent research has uncovered its deep connections to regularization techniques and adversarial training in machine learning. This survey presents the key findings of the field in a unified and self-contained manner.
	\end{abstract}
	
	\vspace{-1.25em}
	\tableofcontents 
	
	\section{Introduction}
	\label{sec:intro}
	
	Traditionally, mathematical optimization studies problems of the form
	\begin{align*}
		\inf_{x \in \cX} \; \ell (x),
	\end{align*}
	where a decision $x$ is sought from the set $\cX \subseteq \R^n$ of feasible solutions that minimizes a loss function $\ell : \R^n \rightarrow \overline \R$. With its early roots in the development of calculus by Isaac Newton, Gottfried Wilhelm Leibniz, Pierre de Fermat and others in the late 17th century, mathematical optimization has a rich history that involves contributions from numerous mathematicians, economists, engineers, and scientists. The birth of modern mathematical optimization is commonly credited to George Dantzig, whose simplex algorithm developed in 1947 solves linear optimization problems where $\ell$ is affine and $\cX$ is a polyhedron \citep{dantzig1949programming}. Subsequent milestones include the 
	development of the rich theory of convex analysis \citep{rockafellar1970convex} as well as the discovery of polynomial-time solution methods for linear \citep{khachiyan1979polynomial, karmarkar1984new} and broad classes of nonlinear convex optimization problems \citep{doi:10.1137/1.9781611970791}.
	
	Classical optimization problems are \emph{deterministic}, that is, all problem data are assumed to be known with certainty. However, most decision problems encountered in practice depend on parameters that are corrupted by measurement errors or that are revealed only {\em after} a decision must be determined and committed. A na\"ive approach to model uncertainty-affected decision problems as deterministic optimization problems would be to replace all uncertain parameters with their expected values or with appropriate point predictions. However, it has long been known and well-documented that decision-makers who replace an uncertain parameter of an optimization problem with its mean value fall victim to the `flaw of averages' \citep{savage-2006-probability-management, savage-2012-flaw}. In order to account for uncertainty realizations that deviate from the mean value, \citet{beale1955minimizing} and \citet{dantzig1955linear} independently introduced \emph{stochastic programs} of the form
	\begin{align}\label{eq:sp}
		\inf_{x \in \cX} \; \E_\P \left[ \ell (x, Z) \right],
	\end{align}
	which explicitly model the uncertain problem parameters $Z$ as a random vector that is governed by a probability distribution $\P$, and where a decision is sought that performs best in expectation (or, subsequently, according to some risk measure). 
	Since then, stochastic programming has grown into a mature field \citep{birge2011introduction, shapiro2009lectures}, and it provides the theoretical underpinnings of the empirical risk minimization principle in machine learning \citep{bishop2006pattern, hastie2009elements}.
	
	Despite their success in theory and practice, stochastic programs suffer from at least two shortcomings. Firstly, the assumption that the probability distribution $\P$ is known precisely is unrealistic in many practical settings, and stochastic programs can be sensitive to mis-specifications of this distribution. This effect has been described by different communities as the optimizer's curse \citep{smith2006optimizer}, the error-maximization effect of optimization \citep{michaud1989markowitz, demiguel2009portfolio}, the optimization bias \citep{shapiro2003monte} or overfitting \citep{bishop2006pattern, hastie2009elements}. Secondly, evaluating the expected loss of a fixed decision requires computing a multi-dimensional integral, which is provably hard already for embarrassingly simple loss functions and distributions. Hence, stochastic programs suffer from a curse of dimensionality, that is, their computational complexity generically displays an exponential dependence on the dimension of the random vector~$Z$. To alleviate both shortcomings, \citet{soyster1973convex} proposed to model uncertainty-affected decision problems as \emph{robust optimization problems} of the form
	\begin{align*}
		\inf_{x \in \cX} \; \sup_{z \in \cZ} \, \ell (x, z).
	\end{align*}
	Robust optimization replaces the probabilistic description of the uncertain problem parameters with a set-based description and seeks for decisions that perform best in view of the worst anticipated parameter realization $z$ from within an \emph{uncertainty set} $\cZ$. After an extended period of neglect, the ideas of \citet{soyster1973convex} have been revisited and substantially extended in the late nineties onwards by \citet{kouvelis1997robust, elghaoui1997robust}, \citet{elghaoui1998robust, elghaoui1998least, ben-tal1999robust, ben1998robust, ben1999robust, bertsimas2004price} and others. For reviews of the robust optimization literature, we refer to \citet{ben2009robust}, \citet{rh03} and \citet{bertsimas2022robust}. We point out that similar ideas have been developed independently in the areas of robust stability \citep{horn1985optimal, doyle1989robust, green1995tutorial}, which investigates whether a system remains stable in the face of parameter variations, and robust control \citep{zames1966robust, khalil1996control, zhou1996robust}, which designs systems that maintain a desirable performance in the presence of parameter variations. For textbook introductions to robust stability and control, we refer to \citet{zhou-1999-robust-control} and \citet{dullerud-2001-robust-control}. \citet{hansen2008robustness} adapt robust control techniques to economic problems affected by model uncertainty, where they design policies that perform well across a range of possible model mis-specifications.
	
	While robust optimization reduces the informational and computational burden that plagues stochastic programs, its equal treatment of all parameter realizations within the uncertainty set and its exclusive focus on worst-case scenarios can make it overly conservative for practical applications. These concerns prompted researchers to study \emph{distributionally robust optimization problems} of the form
	\begin{align}
		\label{eq:primal:dro}
		\inf_{x \in \cX} \; \sup_{\P \in \cP} \; \E_\P \left[ \ell (x, Z) \right],
	\end{align}
	which model the uncertain problem parameters $Z$ as a random vector that is governed by some distribution $\P$ from within an \emph{ambiguity set} $\cP$, and where a decision is sought that performs best in view of its expected value under the worst distribution $\P \in \cP$. Distributionally robust optimization (DRO) thus blends the distributional perspective of stochastic programming with the worst-case focus of robust optimization. Herbert E.~Scarf is commonly credited with pioneering this approach in his study on newsvendor problems where the uncertain demand distribution is only characterized through its mean and variance \citep{Scarf:58}. Subsequently, \citet{dupavcova1966minimax, dupacova1987minimax, dupacova1994applications} and \citet{shapiro2002minimax} have studied DRO problems whose ambiguity sets specify the support, some lower-order moments, independence patterns or other structural properties of the unknown probability distribution. \citet{ermoliev1985stochastic} and \citet{gaivoronski1991numerical} have developed early solution approaches for DRO problems over moment ambiguity sets. The advent of modern DRO is often attributed to the works of \citet{bertsimas2002relation, bertsimas2005optimal}, who derive probability inequalities under partial distributional information and apply their techniques to option pricing problems, of \citet{ghaoui2003worst} and \citet{calafiore2006distributionally}, who study DRO problems where a quantile of the objective function should be minimized, or a set of uncertainty-affected constraints should be satisfied with high probability, across all probability distributions with known moment bounds, and of \citet{delage2010distributionally}, who study similar DRO problems with a worst-case expected value~objective.
	
	Early research on DRO has primarily focused on moment ambiguity sets, which contain all distributions on a prescribed support set~$\cZ$ that satisfy finitely many moment constraints. In contrast to stochastic programs, DRO problems with moment ambiguity sets sometimes exhibit favorable scaling with respect to the dimension of the random vector~$Z$. However, strikingly different distributions can share identical moments. As a consequence, moment ambiguity sets always include a wide range of distributions, including some implausible ones that can safely be ruled out when ample historical data is available. This prompted \citet{ben2013robust} and \citet{wang2016likelihood} to introduce ambiguity sets that contain all distributions in some neighborhood of a prescribed reference distribution (typically the empirical distribution that is formed from historical data). These neighborhoods can be defined with respect to a discrepancy function between probability distributions such as a $\phi$-divergence \citep{csiszar1964informationstheoretische} or a Wasserstein distance \citep{villani2008optimal}. Unlike moment ambiguity sets, discrepancy-based ambiguity sets have a tunable size parameter ({\em e.g.}, a radius) and can thus be shrunk to a singleton that contains only the reference distribution. If the reference distribution converges to the unknown true distribution and the size parameter decays to~$0$ as more historical data becomes available, then the DRO problem eventually reduces to the classical stochastic program under the true distribution. Early work on discrepancy-based ambiguity sets relies on the assumption that~$Z$ is a {\em discrete} random vector with a finite support set~$\cZ$. Extensions to discrepancy-based DRO problems with generic (possibly {\em continuous}) random vectors are due to \citet{mohajerin2018data, zhao2018data, blanchet2019quantifying, zhang2022simple} and \citet{gao2016distributionally}, who construct ambiguity sets using optimal transport discrepancies. We refer to \citet{kuhn2019wasserstein} and \citet{rahimian2022frameworks} for prior surveys of the DRO literature.
	
	Historically, the term `distributional robustness' has its roots in robust statistics. The term was coined by \citet{huber1981robust} to describe methods aimed at making robust decisions in the presence of outlier data points. This idea expanded upon earlier works by \citet{box1953non, box1979robustness}, who explores robustness in situations where the underlying distribution deviates from normality, a common assumption underlying many statistical models. To address the challenges posed by outliers, statisticians have developed several contamination models, each offering a unique approach to mitigating data irregularities. The Huber contamination model, introduced by \citet{huber1964robust, huber1968robust} and further developed by \citet{hampel1968contributions, hampel1971general}, assumes that the observed data is drawn from a mixture of the true distribution and an arbitrary contaminating distribution. Neighborhood contamination models define deviations from the true distribution in terms of statistical distances such as the total variation \citep{donoho1988automatic} or Wasserstein distances \citep{zhu2022generalized, liu2023robust}. More recently, data-dependent adaptive contamination models allow for a fraction of the observed data points to be replaced with points drawn from an arbitrary distribution \citep{diakonikolas2019robust, zhu2022generalized}. Interestingly, the optimistic counterpart of a DRO model, which optimizes in view of the best (as opposed to the worst) distribution in the ambiguity set, recovers many estimators from robust statistics \citep{jose2024favor, xie2024favor}. For a survey of recent advances in algorithmic robust statistics we refer to \citet{diakonikolas2023algorithmic}.
	
	Robust and distributionally robust optimization have found manifold applications in machine learning. For example, popular regularizers from the machine learning literature are known to admit a robustness interpretation, which offers theoretical insights into the strong empirical performance of regularization in practice \citep{xu2009regularization, shafieezadeh2019regularization, li2022tikhonov, gao2024wasserstein}. Likewise, optimistic counterparts of DRO models that optimize in view of the best (as opposed to the worst) distribution in the ambiguity set give rise to upper confidence bound algorithms that are ubiquitous in the bandit and reinforcement learning literature \citep{jose2024favor, xie2024favor}. DRO is also related to adversarial training, which aims to improve the generalization performance of a machine learning model by training it in view of adversarial examples \citep{ian15adversarial}. Adversarial examples are perturbations of existing data points that are designed to mislead a model into making incorrect predictions. 
	
	There are also deep connections between DRO and extensions of stochastic (dynamic) programming that replace the expected value with coherent risk measures. Similar to the expected value, a risk measure maps random variables to extended real numbers. In contrast to the expected value, which is risk-neutral since it weighs positive and negative outcomes equally, risk measures most commonly assign greater weights to negative outcomes and thus account for the risk aversion frequently observed among decision-makers. \citet{artzner1999coherent} and \citet{delbaen2002coherent} show that risk measures satisfying the axioms of coherence as well as a Fatou property can be equivalently represented as worst-case expectations over specific sets of distributions. In other words, there is a direct link between optimizing worst-case expectations (as done in DRO) and optimizing coherent risk measures. A similar representation theorem has been developed for a class of nonlinear expectations, the so-called $G$-expectations that are based on the solution of a backward stochastic differential equation, in the financial mathematics literature \citep{peng1997backward, peng2007gB, peng2007g, peng2019nonlinear}. \citet{peng2011gg} shows that sublinear $G$-expectations are equivalent to worst-case expectations over specific families of distributions, thus creating a bridge between the theory of $G$-expectations and~DRO.

	Philosophically, DRO is related to the principle of \emph{ambiguity aversion}, under which individuals prefer known risks over unknown risks even when the unknown risks promise potentially higher rewards. In the economics literature, the distinction between risky outcomes whose probabilities are known and ambiguous outcomes whose probabilities are (partially) unknown goes back to at least \citet{keynes1921treatise} and \citet{knight1921risk}. The concept of ambiguity aversion has been widely popularized through the Ellsberg paradox \citep{ellsberg1961risk}, a thought experiment under which people are asked to choose between betting on an urn with a known distribution of colored balls (\emph{e.g.}, 50 red and 50 blue) and an urn with an unknown distribution of the same colored balls (\emph{i.e.}, the proportion of red to blue is unknown). Despite the potential for equal or better odds, many people prefer to bet on the urn with the known distribution, that is, they display ambiguity aversion. The Ellsberg paradox challenges classical expected utility theory, and it has led to extensions such as the maxmin expected utility theory \citep{gilboa1989maxmin} that serve as theoretical underpinnings of DRO. Ambiguity aversion has subsequently been identified in countless empirical economic studies across financial markets \citep{epstein2003equilibrium, bossaerts2010ambiguity}, insurance markets \citep{cabantous2007ambiguity}, individual decision-making \citep{dimmock2016ambiguity}, macroeconomic policy \citep{HANSEN20101097}, auctions \citep{salo1995auctions} and games of trust \citep{li2019trust}.
	
	
	There is also substantial medical and neuroscientific evidence that supports the presence of ambiguity aversion. \citet{hsu2005neural} found that the amygdala, a key emotional processing center in the brain, becomes more active when individuals are confronted with ambiguity compared to situations with known probabilities, indicating its role in driving ambiguity aversion. A meta-analysis by \citet{krain2006distinct} highlights the involvement of the prefrontal cortex, which is responsible for higher-order cognitive control, rational decision-making, and emotional regulation, in processing ambiguity. In addition, a meta-analysis of \citet{wu2021devil} shows that processing risk and ambiguity both rely on the anterior insula. Risk processing additionally activates the dorsomedial prefrontal cortex and ventral striatum, whereas ambiguity processing specifically engages the dorsolateral prefrontal cortex, inferior parietal lobe, and right anterior insula. This supports the notion that distinct neural mechanisms are engaged when individuals face ambiguous versus risky decisions. Genetic factors may influence an individual's tendency toward ambiguity aversion. \citet{he2010serotonin} link certain genetic polymorphisms to the performance of individuals in decision-making under risk and ambiguity. In a separate study, \citet{buckert14stress} examine how hormonal changes, such as  higher cortisol levels which are linked to stress and anxiety, affect decision-making under risk and ambiguity. These findings collectively suggest that perceptions of risk and ambiguity are not just a cognitive phenomenon but also influenced by brain structures and genetic and hormonal factors that shape individual differences in decision-making under ambiguity. Finally, we mention \citet{hartley2015adolescent} and \citet{blankenstein2016adolescent}, who examine how ambiguity aversion differs between children, adolescents and adults, and \citet{monkeys}, who observed that rhesus macaques monkeys also exhibit ambiguity aversion when offered the choice between risky and ambiguous games of large and small juice outcomes.

	The remainder of this survey is structured as follows. A significant part of our analysis is dedicated to studying the worst-case expectation $\sup_{\P \in \cP} \E_\P \left[ \ell (x, Z) \right]$, which constitutes the objective function of the DRO problem~\eqref{eq:primal:dro}. Evaluating this expression typically requires the solution of a semi-infinite optimization problem over infinitely many variables that characterize the probability distribution $\mathbb{P}$, subject to finitely many constraints imposed by the ambiguity set $\mathcal{P}$. This problem, which we refer to as \emph{nature's subproblem}, is the key feature that distinguishes the DRO problem~\eqref{eq:primal:dro} from deterministic, stochastic, and robust optimization problems. Sections~\ref{sec:ambiguity-sets} and~\ref{sec:topology} review commonly studied ambiguity sets $\mathcal{P}$ and their topological properties, focusing especially on conditions under which nature's subproblem attains its optimal value. Sections~\ref{sec:duality-wc-expectation} and~\ref{sec:duality-wc-risk} develop a duality theory for nature's subproblem that allows us to upper bound or equivalently reformulate the worst-case expectation with a semi-infinite optimization problem over finitely many dual decision variables that are subjected to infinitely many constraints. This duality framework lays the foundations for the analytical solution of nature’s subproblem in Section~\ref{sec:analytical-wc}, which relies on constructing primal and dual feasible solutions that yield the same objective value and thus enjoy strong duality. Sections~\ref{sec:finite-convex-reformulations} and~\ref{sec:approximations-of-nature} leverage the same duality theory to develop equivalent reformulations and conservative approximations of nature's subproblem as well as the overall DRO problem~\eqref{eq:primal:dro}. Section~\ref{sec:numerical-solution-methods} demonstrates how the duality theory gives rise to numerical solution techniques for nature's subproblem and the full DRO problem. Finally, Section~\ref{sec:statistics} reviews the statistical guarantees enjoyed by different ambiguity sets.
	
	Length restrictions dictated difficult trade-offs in the choice of topics covered by this survey. We decided to focus on the most commonly used ambiguity sets and to only briefly review other possible choices, such as marginal ambiguity sets, ambiguity sets with structural constraints (including, \emph{e.g.}, symmetry and unimodality), Sinkhorn ambiguity sets or conditional relative entropy ambiguity sets. Likewise, we do not cover the important but somewhat more advanced topics of distributionally favourable optimization and decision randomization. Finally, we focus on single-stage problems where the uncertainty is fully resolved after the here-and-now decision $x \in \mathcal{X}$ is taken; two-stage and multi-stage DRO problems, where uncertainty unfolds over time and recourse decisions are possible, are reviewed by \citet{DI15:robust_multistage} and \citet{YANIKOGLU2019799}.
	
	
	

	\subsection{Notation} 
	All vector spaces considered in this paper are defined over the real numbers. For brevity, we simply refer to them as `vector spaces' instead of `real vector spaces.' We use $\overline \R = \R \cup \{ -\infty, \infty \}$ to denote the extended reals. The effective domain of a function $f: \R^d \to \overline \R$ is defined as $\dom(f) = \{ z \in \R^d : f(z) < \infty \}$, and the epigraph of $f$ is defined as $\epi(f)=\{(z,\alpha) \in\R^d\times\R :f(z) \leq\alpha\}$. We say that~$f$ is proper if $\dom(f) \neq \emptyset$ and $f(z) > -\infty$ for all $z \in \R^d$. The convex conjugate of $f$ is the function $f^*:\R^d\to\overline\R$ defined through $f^*(y) = \sup_{z \in \R^d} y^\top z - f(z)$. A convex function $f$ is called closed if it is proper and lower semicontinuous or if it is identically equal to~$+\infty$ or to~$-\infty$. One can show that~$f$ is closed if and only if it coincides with its bi-conjugate $f^{**}$, that is, with the conjugate of~$f^*$. If~$f$ is proper, convex and lower semicontinuous, then its recession function $f^\infty: \R^d \to \overline \R$ is defined through $f^\infty(z) = \lim_{\alpha \to \infty} \alpha^{-1}(f(z_0 + \alpha z) - f(z_0))$, where~$z_0$ is any point in~$\dom(f)$~\citep[Theorem~8.5]{rockafellar1970convex}. The perspective of~$f$ is the function $f^\pi: \R^d \times \R \to \overline \R$ defined through $f^\pi(z, t) = t f(z / t)$ if $t > 0$, $f^\pi(z, t) = f^\infty(z)$ if $t = 0$ and $f^\pi(z, t) = \infty$ if $t < 0$. One can show that $f^\pi$ is proper, convex and lower semicontinuous \citep[page~67]{rockafellar1970convex}. When there is no risk of confusion, we occasionally use $t f(z / t)$ to denote $f^\pi(z, t)$ even if $t=0$. The indicator function~$\delta_\cZ:\R^d\to\overline\R$ of a set $\cZ\subseteq\R^d$ is defined through $\delta_\cZ(z)=0$ if $z\in\cZ$ and $\delta_\cZ(z)=\infty$ if $z\notin\cZ$. The conjugate~$\delta^*_\cZ$ of~$\delta_\cZ$ is called the support function of~$\cZ$. Thus, it satisfies $\delta^*_\cZ(y)=\sup_{z\in\cZ} y^\top z$. Random objects are denoted by capital letters ({\em e.g.},~$Z$) and their realizations are denoted by the corresponding lowercase letters ({\em e.g.},~$z$). For any closed set~$\cZ\subseteq\R^d$, we use $\cM(\cZ)$ to denote the space of all finite signed Borel measures on~$\cZ$, while $\cM_+(\cZ)$ stands for the convex cone of all (non-negative) Borel measures in $\cM(\cZ)$, and $\cP(\cZ)$ stands for the convex set of all probability distributions in $\cM_+(\cZ)$. The expectation operator with respect to $\P\in\cP(\cZ)$ is defined through $\E_\P[f(Z)]=\int_\cZ f(z)\,\diff\P(z)$ for any Borel function $f:\cZ\to\overline \R$. If the integrals of the positive and the negative parts of~$f$ both evaluate to~$\infty$, then we define $\E_\P[f(Z)]$ `adversarially.' That is, we set $\E_\P[f(Z)] = \infty$ ($-\infty$) if the integral appears in the objective function of a minimization (maximization) problem. The Dirac probability distribution that assigns unit probability to~$z\in\cZ$ is denoted as~$\delta_z$. The Dirac distribution~$\delta_z$ should not be confused with the indicator function~$\delta_{\{z\}}$ of the singleton $\{z\}$. For any $\P \in\cP(\cZ)$ and any Borel measurable transformation $f:\cZ\to\cZ'$ between Borel sets $\cZ\subseteq \R^d$ and $\cZ'\subseteq \R^{d'}$, we denote by $\P\circ f^{-1}$ the pushforward distribution of~$\P$ under~$f$. Thus, if~$Z$ is a random vector on~$\cZ$ governed by~$\P$, then $f(Z)$ is a random vector on~$\cZ'$ governed by $\P\circ f^{-1}$. The closure, the interior and the relative interior of a set~$\cZ\subseteq \R^d$ are denoted by $\cl(\cZ)$, $\text{int}(\cZ)$ and $\rint(\cZ)$, respectively. We use $\R^d_+$ and~$\R_{++}^d$ to denote the non-negative orthant in~$\R^d$ and its interior. In addition, we use~$\S^d$ to denote the space of all symmetric matrices in~$\R^{d\times d}$. The cone of positive semidefinite matrices in~$\S^d$ is denoted by~$\S_+^d$, and~$\S^d_{++}$ stands for its interior, that is, the set of all positive definite matrices in~$\S^d$. The truth value~$\ds 1_\cE$ of a logical statement evaluates to~$1$ if~$\cE$ is true and to~$0$ otherwise. The set of all natural numbers $\{1,2,3,\ldots\}$ is denoted by~$\N$, and $[n] = \{1, \dots, n\}$ stands for the set of all integers up to $n \in \N$.

	\section{Ambiguity Sets}
	\label{sec:ambiguity-sets}
	An ambiguity set~$\cP$ is a family of probability distributions on a common measurable space. Throughout this paper we assume that~$\cP\subseteq\cP(\cZ)$, where~$\cP(\cZ)$ denotes the entirety of {\em all} Borel probability distributions on a closed set~$\cZ\subseteq\R^d$. This section reviews popular classes of ambiguity sets. For each class, we first give a formal definition and provide historical background information. Subsequently, we exemplify important instances of ambiguity sets and highlight how they are used. 
	
	\subsection{Moment Ambiguity Sets}
	\label{sec:moment-ambiguity-sets}
	A moment ambiguity set is a family of probability distributions that satisfy finitely many (generalized) moment conditions. Formally, it can thus be represented as
	\begin{align}
		\label{eq:moment-ambiguity-set}
		\cP = \left\{ \P \in \cP(\cZ) \, : \, \E_\P \left[ f (Z) \right] \in \cF \right\},
	\end{align}
	where $f: \cZ \to \R^m$ is a Borel measurable moment function, and~$\cF \subseteq \R^m$ is an uncertainty set. By definition, the moment ambiguity set~\eqref{eq:moment-ambiguity-set} thus contains all probability distributions~$\P$ supported on~$\cZ$ whose generalized moments $\E_\P[f(Z)]$ are well-defined and belong to the uncertainty set~$\cF$. Ambiguity sets of the type~\eqref{eq:moment-ambiguity-set} were first studied by \citet{isii1960extrema,isii1962sharpness} and \citet{karlin1966tchebycheff} to establish the sharpness of generalized Chebyshev inequalities. The following subsections review popular instances of the moment ambiguity set.
	
	\subsubsection{Support-Only Ambiguity Sets}
	\label{sec:support}
	The support-only ambiguity set contains all probability distributions supported on $\cZ \subseteq \R^d$, that is, $\cP = \cP(\cZ)$. It can be viewed as an instance of~\eqref{eq:moment-ambiguity-set} with $f(z)=1$ and $\cF=\{1\}$. Any DRO problem with ambiguity set $\cP(\cZ)$ is ostensibly equivalent to a classical robust optimization problem with uncertainty set~$\cZ$, that is,
	\begin{align*}
		\inf_{x \in \cX} ~ \sup_{\P \in \cP(\cZ)} ~ \E_{\P} \left[ \ell(x, Z) \right] = \inf_{x \in \cX} ~ \sup_{z \in \cZ} \ell(x, z).
	\end{align*}
	For a comprehensive review of the theory and applications of robust optimization we refer to \citep{ben1998robust,ben1999robust,ben2000robust,ben2002robust,bertsimas2004price,ben2009robust,bertsimas2011theory,ben2015deriving,bertsimas2022robust}.
	
	If the uncertainty set~$\cZ$ covers a fraction of
	$1-\varepsilon$ of the total probability mass of some distribution~$\P$, then the worst-case loss $\sup_{z\in\cZ}\ell(x,z)$ is guaranteed to exceed the $(1-\varepsilon)$-quantile of~$\ell(x,Z)$ under~$\P$. This can be achieved by leveraging prior structural information or statistical data from~$\P$. For example, $\P(Z\in\cZ)\geq 1-\varepsilon$ may hold (with certainty) if~$\cZ$ is an appropriately sized intersection of halfspaces and ellipsoids and if~$Z$ has independent, symmetric, unimodal and/or sub-Gaussian components under~$\P$ \citep{bertsimas2004price,janak2007new,ben2009robust,li2011comparative,bertsimas2021probabilistic}. Alternatively, it may hold (with high confidence) if~$\cZ$ is constructed from independent samples from~$\P$ by using statistical hypothesis tests \citep{postek2016computationally,bertsimas2018robust,bertsimas2018data}, quantile estimation \citep{hong2021learning}, or learning-based methods \citep{han2021multiple,goerigk2023data,wang2023learning}.

	\subsubsection{Markov Ambiguity Sets}
	\label{sec:Markov}
	Markov's inequality provides an upper bound on the probability that a non-negative univariate random variable~$Z$ with mean~$\mu\geq 0$ exceeds a positive threshold~$\tau>0$. Formally, it states that $\P(Z \geq \tau) \leq \mu/\tau$ for every possible probability distribution of~$Z$ in the ambiguity set $\cP = \{\P \in  \cP(\R_+): \E_\P[Z] = \mu\}$. If $\mu\leq \tau$, then Markov's inequality is sharp, that is, there exists a probability distribution $\P^\star\in\cP$ for which the inequality holds as an equality. Indeed, the distribution $\P^\star = (1-\mu/\tau) \delta_{0} + \mu/\tau \delta_{\tau}$, where $\delta_z$ is the Dirac distribution that places point mass as $z \in \R$, is an element of~$\cP$ and satisfies $\P(Z \geq \tau) =\mu/\tau$. These insights imply that $\sup_{\P\in\cP} \P(Z\geq \tau)=\mu/\tau$ and that the supremum is attained by~$\P^\star$ whenever~$\mu\leq\tau$. Thus, Markov's bound can be interpreted as the optimal value of a DRO problem. It is therefore common to refer to~$\cP$ as a Markov ambiguity set. More generally, we define the Markov ambiguity set corresponding to a closed support set~$\cZ \subseteq \R^d$ and a mean vector~$\mu \in \R^d$ as a family of multivariate distributions of the form
	\begin{align}
		\label{eq:Markov}
		\cP =\left\{ \P\in \cP(\cZ): \E_\P[Z] = \mu \right\}.
	\end{align}
	Thus, the Markov ambiguity set~\eqref{eq:Markov} contains all distributions supported on~$\cZ$ that share the same mean vector~$\mu$. However, these distributions may have dramatically different shapes and higher-order moments. Worst-case expectations over Markov ambiguity sets are sometimes used as efficiently computable upper bounds on the expected cost-to-go functions in stochastic programming. If the cost-to-go functions are concave in the uncertain problem parameters, then these worst-case expectations are closely related to Jensen's inequality \citep{jensen1906fonctions}; see also Section~\ref{sec:jensen}. If the cost-to-go functions are convex and~$\cZ$ is a polyhedron, on the other hand, then these worst-case expectations are related to the Edmundson-Madansky inequality \citep{edmundson:56, madansky:59}; see also Section~\ref{sec:edmundson-madansky}.

	\subsubsection{Chebyshev Ambiguity Sets}
	\label{sec:Chebyshev}
	Chebyshev's inequality provides an upper bound on the probability that a univariate random variable~$Z$ with finite mean~$\mu\in\R$ and variance~$\sigma^2> 0$ deviates from its mean by more than~$k>0$ standard deviations. Formally, it states that $\P\left(|Z - \mu| \geq k \sigma \right) \leq 1/k^2$ for every possible probability distribution of~$Z$ in the ambiguity set $\cP=\{\P \in  \cP(\R): \E_\P[Z] = \mu, ~ \E_\P [Z^2] = \sigma^2 + \mu^2 \}$. Chebyshev's inequality is sharp if~$k \geq 1$. Indeed, one readily verifies that the distribution
	\begin{align*}
		\P^\star = \frac{1}{2k^2} \delta_{\mu - k \sigma} + \left( 1 - \frac{1}{k^2} \right) \delta_{\mu} + \frac{1}{2k^2} \delta_{\mu + k \sigma}
	\end{align*} 
	is an element of~$\cP$ and satisfies $\P(|Z - \mu| \geq k \sigma) = 1/k^2$. These insights imply that $\sup_{\P\in\cP} \P(|Z - \mu| \geq k \sigma) = 1/k^2$ and that the supremum is attained for~$k\geq 1$. Thus, Chebyshev's bound can be interpreted as the optimal value of a DRO problem. It is therefore common to refer to~$\cP$ as a Chebyshev ambiguity set. More generally, we define the Chebyshev ambiguity set corresponding to a closed support set~$\cZ \subseteq \R^d$, mean vector $\mu \in \R^d$ and second-order moment matrix $M \in \S_+^d$, $M\succeq \mu\mu^\top$, as
	\begin{align}
		\label{eq:Chebyshev}
		\cP = \left\{ \P \in \cP(\cZ): \E_\P[Z] = \mu, ~\E_\P[Z Z^\top] = M \right\}.
	\end{align}
	Thus, the Chebyshev ambiguity set~\eqref{eq:Chebyshev} contains all distributions supported on~$\cZ$
	that share the same mean vector~$\mu$ and second-order moment matrix~$M$ (and thus also the same covariance matrix $\Sigma=M-\mu \mu^\top\in \S_+^d$). However, these distributions may have dramatically different shapes and higher-order moments.
	
	The Chebyshev ambiguity set~\eqref{eq:Chebyshev} captures the distributional information relevant for multivariate Chebyshev inequalities  \citep{lal1955note,marshall1960one,tong1980probability,rujeerapaiboon2018chebyshev}. In operations research, Chebyshev ambiguity sets are routinely used since the seminal work of \citet{Scarf:58} on the distributionally robust newsvendor, which is widely perceived as the first paper on DRO. 
	Since then a wealth of DRO models with Chebyshev ambiguity sets have emerged in the context of newsvendor and portfolio selection problems. These models involve a wide range of different decision criteria such as the expected value \citep{gallegomoon:93,natarajan2007mean,popescu2007robust}, the value-at-risk \citep{ghaoui2003worst, xu2012optimization,zymler2013distributionally,zymler2013worst,rujeerapaiboon2016robust,yang2016distributionally,zhang2018ambiguous}, the conditional value-at-risk~\citep{natarajan2010utility, chen2011tight, zymler2013worst,hanasusanto2015distributionally}, spectral risk measures \citep{li2018closed} and distortion risk measures \citep{cai2023distributionally, pesenti2020optimizing}, as well as minimax regret criteria~\citep{yue2006expected,perakis2008regret}. Besides this, Chebyshev ambiguity sets have found numerous applications in option and stock pricing \citep{bertsimas2002relation}, statistics and machine learning \citep{lanckriet2001minimax,lanckriet2002robust,strohmann2002formulation,huang2004minimum,bhattacharyya2004second,farnia2016minimax,nguyen2019optimistic,rontsis2020distributionally}, stochastic programming \citep{birgewets:86,dula1992tchebysheff,dokov2005second,bertsimas2010models,natarajan2011mixed}, control \citep{van2015distributionally,yang2018dynamic,xin2021time,xin2022distributionally}, the operation of power systems \citep{xie2017distributionally,zhao2017distributionally}, complex network analysis \citep{van2021robust,brugman2022sharpest}, queuing systems \citep{van2023second}, healthcare \citep{mak2015appointment,shehadeh2020distributionally}, and extreme event analysis \citep{lam2017tail}, among others.
	
	\subsubsection{Chebyshev Ambiguity Sets with Uncertain Moments}
	\label{sec:chebyshev-with-moment-uncertainty}
	Working with Chebyshev ambiguity sets is appropriate when the first- and second-order moments of~$\P$ are known, while all higher-order moments are unknown. In practice, however, even the first- and second-order moments are never known with absolute certainty. Instead, they must be estimated from statistical data and are thus subject to estimation errors. This prompted \citet{ghaoui2003worst} to introduce a Chebyshev ambiguity set with uncertain moments, which can be represented as
	\begin{align}
		\label{eq:chebyshev-with-moments-in-F}
		\cP & = \left\{ \P \in \cP(\cZ): \left(\E_\P[Z], \E_\P[Z Z^\top] \right)\in\cF \right\}.
	\end{align}
	Here, $\cF \subseteq \R^d \times \S_+^d$ is a convex set that captures the moment uncertainty. Clearly, $\cP$ can be expressed as a union of crisp Chebyshev ambiguity sets, that is, we have
	\begin{align*}
		\cP  = \bigcup_{(\mu,M)\in\cF}\left\{ \P \in \cP(\cZ): \E_\P[Z] = \mu, ~\E_\P[Z Z^\top] = M \right\}.
	\end{align*}
	Note that the Chebyshev ambiguity set with uncertain moments encapsulates the support-only ambiguity set, the Markov ambiguity set, and the Chebyshev ambiguity set as special cases. They are recovered by setting $\cF=\R^d\times\S^d_+, \cF=\{\mu\}\times \S^d_+$, and $\cF=\{\mu\}\times\{M\}$, respectively. 
	
	\citet{ghaoui2003worst} capture the uncertainty in the moments using the box
	\begin{align*}
		\cF = \left\{(\mu, M) \in \R^d \times \S_+^d: \underline{\mu} \leq \mu \leq \overline{\mu}, ~ \underline{M} \preceq M \preceq \overline{M} \right\}
	\end{align*}
	parametrized by the moment bounds $\underline{\mu}, \overline{\mu}\in \R^d$ and $\underline{M}, \overline{M} \in \S_+^d$. 
	
	Given noisy estimates~$\hat\mu$ and~$\hat\Sigma$ for the unknown mean vector and covariance matrix of~$\P$, respectively, \citet{delage2010distributionally} propose the ambiguity set
	\begin{align*}
		\cP  = \left\{ \P \in \cP(\cZ)\;: \!\begin{array}{l}
			(\E_\P[Z]-\hat\mu)^\top\hat\Sigma^{-1} (\E_\P[Z]-\hat\mu)\leq\gamma_1\\
			\E_\P[(Z-\hat\mu)(Z-\hat\mu)^\top ]\preceq\gamma_2 \hat\Sigma
		\end{array} \right\}.
	\end{align*}
	By construction, $\cP$ contains all distributions on~$\cZ$ whose first-order moments reside in an ellipsoid with center~$\hat\mu$ and whose second-order moments (relative to~$\hat\mu$) reside in a semidefinite cone with apex~$\gamma_2\hat\Sigma$. An elementary calculation reveals that
	\[
	\E_\P[(Z-\hat\mu)(Z-\hat \mu)^\top] = \E_\P[ ZZ^\top] -\E_\P[Z]\hat\mu^\top-\hat\mu \E_\P[Z]^\top +\hat \mu\hat\mu^\top.
	\]
	Thus, $\cP$ can be viewed as a Chebyshev ambiguity set with uncertain moments. Indeed, $\cP$ is an instance of~\eqref{eq:chebyshev-with-moments-in-F} if we define the moment uncertainty set as
	\begin{align*}
		\cF = \left\{(\mu, M) \in \R^d \times \S_+^d\;:\! \begin{array}{l}
			(\mu - \hat \mu)^\top \hat \Sigma (\mu - \hat \mu) \leq \gamma_1 \\
			M - \mu \hat \mu^\top- \hat \mu \mu^\top + \hat \mu \hat \mu^\top \preceq \gamma_2 \hat \Sigma 
		\end{array} \right\}.
	\end{align*}
	\citet{delage2010distributionally} show that if~$\hat\mu$ and~$\hat\Sigma$ are set to the sample mean and the sample covariance matrix constructed from a finite number of independent samples from~$\P$, respectively, then one can tune the size parameters~$\gamma_1\geq 0$ and $\gamma_2\geq 1$ to ensure that~$\P$ belongs to~$\cP$ with any desired confidence. 
	
	Chebyshev as well as Markov ambiguity sets with uncertain moments have found various applications ranging from control \citep{nakao2021distributionally} to integer stochastic programming \citep{bertsimas2004probabilistic,cheng2014distributionally}, portfolio optimization \citep{natarajan2010utility}, extreme event analysis \citep{bai2023distributionally} and mechanism design and pricing \citep{bergemann2008pricing,bandi2014optimal,koccyiugit2020distributionally,koccyiugit2021robust,chen2023screening,bayrak2022distributionally,anunrojwong2024robustness}, among many others.
	
	The uncertainty set~$\cF$ for the first- and second-order moments of~$\P$ often corresponds to a neighborhood of a nominal mean-covariance pair $(\hat\mu, \hat\Sigma)$ with respect to some measure of discrepancy. For example, matrix norms such as the Frobenius norm, the spectral norm or the nuclear norm \citep[\ts~9]{bernstein2009matrix} provide natural measures to quantify the dissimilarity of covariance matrices. The discrepancy between two mean-covariance pairs $(\mu, \Sigma)$ and $(\hat\mu, \hat\Sigma)$ can also be defined as the discrepancy between the normal distributions $\cN(\mu, \Sigma)$ and $\cN(\hat\mu, \hat\Sigma)$ with respect to a probability metric or an information-theoretic divergence such as the Kullback-Leibler divergence \citep{kullback1959information}, the Fisher-Rao distance \citep{atkinson1981rao} or other spectral divergences \citep{zorzi2014multivariate}.
	
	As we will discuss in more detail in Section~\ref{sec:optimal:transport}, the 2-Wasserstein distance between two normal distributions $\cN(\mu, \Sigma)$ and $\cN(\hat\mu, \hat\Sigma)$ coincides with the Gelbrich distance between the underlying mean-covariance pairs $(\mu, \Sigma)$ and $(\hat\mu, \hat\Sigma)$. In the following, we first provide a formal definition of the Gelbrich distance and then exemplify how it can be used to define a moment uncertainty set~$\cF$.
	
	
	\begin{definition}[Gelbrich Distance]
		\label{def:Gelbrich}
		The Gelbrich distance between two mean-covariance pairs~$(\mu, \Sigma)$ and $(\hat \mu, \hat \Sigma)$ in~$\R^d \times \S_+^d$ is given by
		\begin{align*}
			\G \big( (\mu, \Sigma), (\hat \mu, \hat \Sigma) \big) = \sqrt{\| \mu - \hat \mu \|_2^2 + \Tr \Big( \Sigma + \hat \Sigma - 2 \big( \hat \Sigma^\half \Sigma \hat \Sigma^\half \big)^\half \Big)}.
		\end{align*}
	\end{definition}
	The Gelbrich distance is non-negative, symmetric and subadditive, and it vanishes if and only if $(\mu, \Sigma) = (\hat \mu, \hat \Sigma)$. Thus, it represents a metric on $\R^d \times \S_+^d$ \citep[pp.~239]{givens1984class}. 
	When $\mu = \hat \mu$, then the Gelbrich distance collapses to the Bures distance between~$\Sigma$ and~$\hat\Sigma$, which was conceived as a measure of dissimilarity between density matrices in quantum information theory. The Bures distance is known to induce a Riemannian metric on the space of positive semidefinite matrices~\citep{bhatia2018strong, bhatia2019bures}. When~$\Sigma$ and~$\hat\Sigma$ are simultaneously diagonalizable, then their Bures distance coincides with the Hellinger distance between their spectra. The Hellinger distance is closely related to the Fisher-Rao metric ubiquitous in information theory~\citep{liese1987convex}. Even though the Gelbrich distance is nonconvex, the {\em squared} Gelbrich distance is jointly convex in both of its arguments. This is an immediate consequence of the following proposition, which can be found in \citep{olkin1982distance,dowson1982frechet,givens1984class,panaretos2020invitation}.
	
	\begin{proposition}[SDP Representation of the Gelbrich Distance]
		\label{prop:Gelbrich:SDP}
		For any mean-covariance pairs $(\mu, \Sigma)$ and $(\hat \mu, \hat \Sigma)$ in~$\R^d \times \S_+^d$, we have
		\begin{align} 
			\label{eq:Gelbrich:SDP}
			\G^2 \big( (\mu, \Sigma), (\hat \mu, \hat \Sigma) \big) = 
			\left\{
			\begin{array}{cl}
				\displaystyle \min_{C \in \R^{d \times d}} & \| \mu - \hat \mu \|_2^2 + \Tr (\Sigma + \hat \Sigma - 2C) \\[1ex]
				\st & \begin{bmatrix} \Sigma & C \\ C^\top & \hat \Sigma \end{bmatrix} \succeq 0. 
			\end{array}
			\right.
		\end{align} 
	\end{proposition}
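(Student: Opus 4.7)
The plan is to subtract the constant $\|\mu-\hat\mu\|_2^2 + \Tr(\Sigma+\hat\Sigma)$ from both sides of~\eqref{eq:Gelbrich:SDP} and prove the equivalent identity
\[
    \Tr\bigl((\hat\Sigma^\half \Sigma \hat\Sigma^\half)^\half\bigr) = \max_{C\in\R^{d\times d}} \left\{ \Tr(C) \;:\; \begin{bmatrix}\Sigma & C \\ C^\top & \hat\Sigma\end{bmatrix}\succeq 0 \right\}.
\]
I would first establish this identity in the non-degenerate case $\Sigma, \hat\Sigma \in \S_{++}^d$ and then extend it to arbitrary $(\Sigma, \hat\Sigma) \in \S_+^d \times \S_+^d$ by replacing $\Sigma$ and $\hat\Sigma$ with $\Sigma + \varepsilon I$ and $\hat\Sigma + \varepsilon I$ and letting $\varepsilon \downarrow 0$. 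Both sides depend continuously on $(\Sigma,\hat\Sigma)$: the left-hand side because the matrix square root is continuous on $\S_+^d$, the right-hand side by the compactness argument discussed below.

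Assuming $\Sigma, \hat\Sigma \succ 0$, the Schur complement criterion asserts that $\begin{bmatrix}\Sigma & C \\ C^\top & \hat\Sigma\end{bmatrix}\succeq 0$ if and only if $\hat\Sigma - C^\top \Sigma^{-1} C \succeq 0$, which is in turn equivalent to $C = \Sigma^\half K \hat\Sigma^\half$ for some $K \in \R^{d\times d}$ with operator norm $\|K\|_{\mathrm{op}} \le 1$. Using the cyclic property of the trace, the objective becomes $\Tr(C) = \Tr(\hat\Sigma^\half \Sigma^\half K)$. By the standard duality between the operator and nuclear norms, $\max_{\|K\|_{\mathrm{op}} \le 1} \Tr(AK) = \|A\|_*$ for every $A \in \R^{d\times d}$. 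Applying this with $A = \hat\Sigma^\half \Sigma^\half$ yields the maximum value $\|\hat\Sigma^\half \Sigma^\half\|_*$.

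To identify this nuclear norm with $\Tr\bigl((\hat\Sigma^\half \Sigma \hat\Sigma^\half)^\half\bigr)$, I would observe that $(\hat\Sigma^\half \Sigma^\half)(\hat\Sigma^\half \Sigma^\half)^\top = \hat\Sigma^\half \Sigma \hat\Sigma^\half$, so the squared singular values of $\hat\Sigma^\half \Sigma^\half$ are precisely the eigenvalues of $\hat\Sigma^\half \Sigma \hat\Sigma^\half$. Summing yields
\[
\|\hat\Sigma^\half \Sigma^\half\|_* \,=\, \sum_{i=1}^d \sqrt{\lambda_i(\hat\Sigma^\half \Sigma \hat\Sigma^\half)} \,=\, \Tr\bigl((\hat\Sigma^\half \Sigma \hat\Sigma^\half)^\half\bigr),
\]
as required.

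The minimum in~\eqref{eq:Gelbrich:SDP} is attained because the feasible set is compact: the principal $2 \times 2$ minors of the block matrix force $|C_{ij}| \le \sqrt{\Sigma_{ii} \hat\Sigma_{jj}}$ for all $i,j$, which bounds $C$, and the set is closed as the preimage of $\S_+^{2d}$ under a linear map; the objective is linear and thus attains its maximum. The main obstacle is the singular case, where the Schur-complement parametrization breaks down and the polar factor providing the optimal $K$ need not be unique; the $\varepsilon$-regularization circumvents this, but one must verify that the maximizers of the perturbed problems remain uniformly bounded so that the limit of the optimal values coincides with the optimal value of the limiting SDP.
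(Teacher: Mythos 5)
Your proof is correct, but it follows a genuinely different route from the paper's. After reducing to the equivalent identity $\Tr\bigl((\hat\Sigma^\half \Sigma \hat\Sigma^\half)^\half\bigr) = \max_C\{\Tr(C): \begin{bsmallmatrix}\Sigma & C\\ C^\top & \hat\Sigma\end{bsmallmatrix}\succeq 0\}$, you parametrize the feasible $C$ via the Schur complement as $C=\Sigma^\half K\hat\Sigma^\half$ with $\|K\|_{\mathrm{op}}\leq 1$, invoke the operator/nuclear norm duality $\max_{\|K\|_{\mathrm{op}}\leq 1}\Tr(AK)=\|A\|_*$, and identify $\|\hat\Sigma^\half\Sigma^\half\|_*$ with the desired trace via singular values. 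The paper instead dualizes the SDP directly (with Lagrange multipliers $A_{11},A_{22}$), reduces the dual to $\inf_{A_{22}\succ 0}\Tr(A_{22}^{-1}\Sigma)+\Tr(A_{22}\hat\Sigma)$, and solves the first-order optimality condition $A_{22}\hat\Sigma A_{22}=\Sigma$ in closed form. Your approach has the advantage of exhibiting the identity as an instance of a classical norm duality rather than requiring a bespoke conic dual and a Lyapunov-type matrix equation, and of treating $\Sigma$ and $\hat\Sigma$ symmetrically; the paper's approach has the advantage of producing, as a byproduct, the convexity and lower semicontinuity of the right-hand side in $\hat\Sigma$ (properties used elsewhere in the paper, e.g., in the proof of Proposition~\ref{prop:U:Gelbrich:SDP}), which fall out of the dual representation but are not immediate from your argument. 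For the degenerate case, both proofs use a limiting argument; yours perturbs both covariances by $\varepsilon I$ and relies on the uniform boundedness of the feasible sets $\cC_\varepsilon$ (which nest and shrink to $\cC_0$), while the paper keeps $\Sigma$ fixed and uses that $g$ is a pointwise infimum of affine functions of $\hat\Sigma$ and hence upper semicontinuous and concave, so $f=g$ passes to the boundary. Both limiting arguments are sound; yours is a bit more symmetric, the paper's is a bit more economical given the convexity facts it already establishes.
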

	\begin{proof}
		Throughout the proof we keep $\mu$, $\hat\mu$ and $\Sigma$ fixed and treat~$\hat\Sigma$ as a parameter. We also use $f(\hat\Sigma)$ as a shorthand for the left hand side of~\eqref{eq:Gelbrich:SDP} and $g(\hat \Sigma)$ as a shorthand for the right hand side of~\eqref{eq:Gelbrich:SDP}. Elementary manipulations show that
		\begin{align}
			\label{eq:max:C}
			g(\hat\Sigma) = \|\mu - \hat \mu \|_2^2 + \Tr(\Sigma + \hat \Sigma) - \left\{\begin{array}{cl}
				\displaystyle \max_{C \in \R^{d \times d}} & \Tr (2C) \\[1ex]
				\st & \begin{bmatrix} \Sigma & C \\ C^\top & \hat \Sigma \end{bmatrix} \succeq 0.
			\end{array}\right.
		\end{align}
		The maximization problem in~\eqref{eq:max:C} is dual to the following minimization problem.
		\begin{align*}
			\begin{array}{cl}
				\displaystyle \inf_{A_{11}, A_{22}\in\S^d} & \Tr (A_{11} \Sigma) + \Tr(A_{22} \hat \Sigma) \\[1ex]
				\st & \begin{bmatrix} A_{11} & I_d \\ I_d & A_{22} \end{bmatrix} \succeq 0
			\end{array}
		\end{align*}
		Strong duality holds because $A_{11} = A_{22} = 2I_d$ constitutes a Slater point for the dual problem \cite[Theorem~2.4.1]{ben2001lectures}. The existence of a Slater point further implies that the primal maximization problem in~\eqref{eq:max:C} as well as the minimization problem in~\eqref{eq:Gelbrich:SDP} are solvable. By \citep[Corollary~8.2.2]{bernstein2009matrix}, both $A_{11}$ and $A_{22}$ must be positive definite in order to be dual feasible. Thus, they are invertible. We can therefore employ a Schur complement argument \citep[Lemma~4.2.1]{ben2001lectures} to simplify the dual problem to
		\begin{align}
			\label{eq:min:A22}
			\inf_{A_{11} \succeq A_{22}^{-1} \succ 0} \, \Tr(A_{11} \Sigma) + \Tr(A_{22} \hat \Sigma) = \inf_{A_{22} \succ 0} \, \Tr(A_{22}^{-1} \Sigma) + \Tr(A_{22} \hat \Sigma),
		\end{align}
		where the equality holds because $\Sigma \succeq 0$. The optimal value of the resulting minimization problem is concave and upper semicontinuous in~$\hat\Sigma$ because it constitutes a pointwise infimum of affine functions of~$\hat\Sigma$. Thus, $g(\hat\Sigma)$ is convex and lower semicontinuous. We now show that if $\hat \Sigma \succ 0$, then the convex minimization problem over~$A_{22}$ in~\eqref{eq:min:A22} can be solved in closed form. To this end, we construct a positive definite matrix $A^\star_{22}$ that satisfies the problem's first-order optimality condition
		\begin{align*}
			\hat \Sigma - A_{22}^{-1} \Sigma A_{22}^{-1} = 0 \quad \iff \quad A_{22} \hat \Sigma A_{22} - \Sigma = 0.
		\end{align*}
		Indeed, multiplying the quadratic equation on the right from both sides with $\hat \Sigma^\half$ yields the equivalent equation $(\hat \Sigma^\half A_{22} \hat \Sigma^\half)^2 = \hat \Sigma^\half \Sigma \hat \Sigma^\half$. As $\hat \Sigma \succ 0$, this equation is uniquely solved by $A^\star_{22} = \hat \Sigma^{-\half}( \hat \Sigma^\half \Sigma \hat \Sigma^\half )^\half \hat \Sigma^{-\half}$. Substituting~$A^\star_{22}$ into~\eqref{eq:min:A22} reveals that the optimal value of the dual minimization problem is given by $2 \Tr(( \hat \Sigma^\half \Sigma \hat \Sigma^\half)^\half)$. Substituting this value into~\eqref{eq:max:C} then shows that $g(\hat\Sigma)=f(\hat \Sigma)$ whenever $\hat\Sigma\succ 0$.
		
		It remains to be shown that $g(\hat\Sigma)=f(\hat \Sigma)$ if $\hat\Sigma$ is singular. To this end, we recall from \cite[Lemma~A.2]{nguyen2019bridging} that the matrix square root is continuous on~$\S^d_+$, which implies that~$f(\hat \Sigma)$ is continuous on~$\S^d_+$. For any singular~$\hat\Sigma\succeq 0$, we thus have
		\[
		f(\hat\Sigma)=\liminf_{\hat\Sigma'\to\hat\Sigma,\,\hat\Sigma'\succ 0} f(\hat\Sigma') = \liminf_{\hat\Sigma'\to\hat\Sigma,\,\hat\Sigma'\succ 0} g(\hat\Sigma') = g(\hat\Sigma).
		\]
		Here, the first equality exploits the continuity of~$f$, and the second equality holds because $f(\hat\Sigma') =g(\hat\Sigma')$ for every~$\hat\Sigma'\succ 0$. The third equality follows from the convexity and lower semicontinuity of~$g$, which imply that the limit inferior can neither be larger nor smaller than~$g(\hat \Sigma)$, respectively. This completes the proof.
	\end{proof}
	
	Proposition~\ref{prop:Gelbrich:SDP} shows that the squared Gelbrich distance coincides with the optimal value of a tractable semidefinite program. This makes the Gelbrich distance attractive for computation. As a byproduct, the proof of Proposition~\ref{prop:Gelbrich:SDP} reveals that the squared Gelbrich distance is convex as well as continuous on its domain. 
	
	Following \citet{nguyen2021mean}, we can now introduce the Gelbrich ambiguity set as an instance of the Chebyshev ambiguity set~\eqref{eq:chebyshev-with-moments-in-F} with uncertain moments. The corresponding moment uncertainty set is given by
	\begin{align}
		\label{eq:Gelbrich:F}
		\cF = \left\{(\mu, M) \in \R^d \times \S_+^d~:\, \begin{array}{l}
			\exists \Sigma\,\in\S^d_+ \text{ with } M = \Sigma + \mu \mu^\top, \\ \G \left( (\mu, \Sigma), (\hat \mu, \hat \Sigma) \right) \leq r 
		\end{array}
		\right\},
	\end{align}
	where~$(\hat \mu, \hat \Sigma)$ is a nominal mean-covariance pair, and the radius~$r \geq 0$ serves as a tunable size parameter. Below we refer to~$\cF$ as the Gelbrich uncertainty set. The next proposition establishes basic topological and computational properties of~$\cF$.
	
	\begin{proposition}[Gelbrich Uncertainty Set]
		\label{prop:U:Gelbrich:SDP}
		The uncertainty set $\cF$ defined in \eqref{eq:Gelbrich:F} is convex and compact. In addition, it admits the semidefinite representation
		\begin{align*}
			\cF = \left\{(\mu, M) \in \R^d \times \S_+^d~:\, \begin{array}{l}
				\exists C\,\in\R^{d\times d},~U\in\S^d_+ \text{ with } \\[0.5ex]
				\| \hat \mu\|_2^2 - 2 \mu^\top \hat \mu + \Tr (M + \hat \Sigma - 2C) \leq r^2, \\[1ex] 
				\begin{bmatrix} M - U & C \\ C^\top & \hat \Sigma \end{bmatrix} \succeq 0, ~ \begin{bmatrix} U & \mu \\ \mu^\top & 1 \end{bmatrix} \succeq 0
			\end{array}
			\right\}.
		\end{align*}
	\end{proposition}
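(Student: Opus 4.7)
The plan is to first establish the semidefinite representation, and then deduce convexity and compactness as corollaries.

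For the representation, I would start from the definition \eqref{eq:Gelbrich:F}, substitute $\Sigma = M - \mu\mu^\top$, and apply the SDP representation of the squared Gelbrich distance from Proposition~\ref{prop:Gelbrich:SDP}. Using the identities $\|\mu-\hat\mu\|_2^2 = \|\mu\|_2^2 - 2\mu^\top\hat\mu + \|\hat\mu\|_2^2$ and $\Tr(\Sigma) = \Tr(M) - \|\mu\|_2^2$, the quadratic-in-$\mu$ term $\|\mu\|_2^2$ cancels, yielding the linear trace inequality $\|\hat\mu\|_2^2 - 2\mu^\top\hat\mu + \Tr(M + \hat\Sigma - 2C) \leq r^2$. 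The only remaining nonlinearity is the PSD constraint $\smash{\begin{bmatrix}\Sigma & C \\ C^\top & \hat\Sigma\end{bmatrix} \succeq 0}$, in which $\Sigma = M - \mu\mu^\top$ still depends quadratically on $\mu$. To linearize this, I introduce an auxiliary variable $U \in \S^d_+$ with $U \succeq \mu\mu^\top$, which by the Schur complement is equivalent to $\smash{\begin{bmatrix}U & \mu \\ \mu^\top & 1\end{bmatrix} \succeq 0}$, and replace $\Sigma$ with $M - U$ in the block matrix.

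The crux is to argue that this relaxation is exact on projection. For the forward inclusion, given $(\mu,M) \in \cF$ with witness $\Sigma = M - \mu\mu^\top$ and optimal $C$ from Proposition~\ref{prop:Gelbrich:SDP}, the choice $U = \mu\mu^\top$ obviously satisfies all new constraints. For the reverse inclusion, suppose $(\mu, M)$ satisfies the SDP representation with some $U, C$. I set $\Sigma := M - \mu\mu^\top$. Since $U \succeq \mu\mu^\top$ and $M \succeq U$ (from the first block PSD constraint), I have $\Sigma \succeq M - U \succeq 0$. The crucial step is to verify $\smash{\begin{bmatrix}\Sigma & C \\ C^\top & \hat\Sigma\end{bmatrix} \succeq 0}$: this follows because $\smash{\begin{bmatrix}\Sigma & C \\ C^\top & \hat\Sigma\end{bmatrix} = \begin{bmatrix}M-U & C \\ C^\top & \hat\Sigma\end{bmatrix} + \begin{bmatrix}U - \mu\mu^\top & 0 \\ 0 & 0\end{bmatrix}}$ is a sum of two PSD matrices. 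Then the trace inequality, when written back in terms of $\Sigma$, gives $\G^2((\mu,\Sigma),(\hat\mu,\hat\Sigma)) \leq r^2$ via Proposition~\ref{prop:Gelbrich:SDP}.

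Convexity now follows for free: the SDP representation exhibits $\cF$ as the projection onto the $(\mu, M)$ coordinates of a spectrahedron (defined by linear matrix inequalities and a linear scalar inequality in $(\mu, M, C, U)$), hence convex. For compactness, I would argue closedness and boundedness separately. Boundedness follows directly from the metric property of $\G$ established after Definition~\ref{def:Gelbrich}: the constraint $\G((\mu,\Sigma),(\hat\mu,\hat\Sigma)) \leq r$ together with subadditivity confines $\mu$ to a Euclidean ball and $\Sigma$ to a bounded subset of $\S^d_+$ (one can extract an explicit bound by noting $\|\mu-\hat\mu\|_2^2 \leq r^2$ and, separately, $\Tr(\Sigma) \leq (r + \sqrt{\Tr(\hat\Sigma)})^2$ by the triangle inequality applied with the pair $(\hat\mu, 0)$), and hence $M = \Sigma + \mu\mu^\top$ is bounded. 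Closedness follows from the continuity of the squared Gelbrich distance on $\R^d \times \S^d_+ \times \R^d \times \S^d_+$ noted in the proof of Proposition~\ref{prop:Gelbrich:SDP} (which itself relies on continuity of the matrix square root on $\S^d_+$): the map $(\mu, M) \mapsto \G^2((\mu, M - \mu\mu^\top), (\hat\mu, \hat\Sigma))$ is continuous on the closed set $\{(\mu, M) : M \succeq \mu\mu^\top\}$, so $\cF$ is the intersection of a closed sublevel set with a closed set.

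The main obstacle I anticipate is not a deep one but rather the bookkeeping in the reverse direction of the SDP representation, where the $U$ variable appears at first to over-relax the constraint $\Sigma = M - \mu\mu^\top$; the decomposition of $\smash{\begin{bmatrix}\Sigma & C \\ C^\top & \hat\Sigma\end{bmatrix}}$ into a sum of two PSD matrices is the key observation that salvages exactness.
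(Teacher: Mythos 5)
Your proposal is correct and follows essentially the same route as the paper's proof: the linear trace identity, the Schur-complement lifting via $U\succeq\mu\mu^\top$ (you merely spell out the PSD-sum decomposition that the paper leaves as ``a standard Schur complement argument''), and convexity read off from the lifted conic representation. The only divergence is in the boundedness step: the paper bounds $\Tr\bigl(\bigl(\hat\Sigma^{1/2}\Sigma\hat\Sigma^{1/2}\bigr)^{1/2}\bigr)$ from first principles via a relaxation of the linear matrix inequality and Cauchy--Schwarz, whereas you invoke the triangle inequality of $\G$ with the auxiliary point $(\hat\mu,0)$ to get $\Tr(\Sigma)\le\bigl(r+\Tr(\hat\Sigma)^{1/2}\bigr)^2$ directly; this is shorter and valid, but note it rests on the subadditivity of $\G$, which the paper asserts with a citation rather than proving, while the paper's computation is self-contained.
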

	
	\begin{proof}
		The proof exploits the semidefinite representation of the squared Gelbrich distance established in Proposition~\ref{prop:Gelbrich:SDP}. Note first that if $M = \Sigma + \mu \mu^\top$, then
		\begin{align*}
			\| \mu - \hat \mu \|_2^2 + \Tr (\Sigma + \hat \Sigma - 2C) = \| \hat \mu \|_2^2 - 2 \mu^\top \hat \mu + \Tr (M + \hat \Sigma - 2C).
		\end{align*}
		By Proposition~\ref{prop:Gelbrich:SDP}, the Gelbrich uncertainty set $\cF$ can thus be represented as
		\begin{align*}
			\cF = \left\{(\mu, M) \in \R^d \times \S_+^d~:\,
			\begin{array}{l}
				\exists C\,\in\R^{d\times d} \text{ with } \\
				\| \hat \mu \|_2^2 - 2 \mu^\top \hat \mu + \Tr (M + \hat \Sigma - 2C) \leq r^2, \\[1ex] 
				\begin{bmatrix} M - \mu \mu^\top & C \\ C^\top & \hat \Sigma \end{bmatrix} \succeq 0
			\end{array}
			\right\}.
		\end{align*}
		A standard Schur complement argument further reveals that
		\begin{align*}
			\begin{bmatrix} M - \mu \mu^\top & C \\ C^\top & \hat \Sigma \end{bmatrix} \succeq 0 \iff \exists U \in \S_+^d \text{ with } \begin{bmatrix} M - U & C \\ C^\top & \hat \Sigma \end{bmatrix} \succeq 0, ~ \begin{bmatrix} U & \mu \\ \mu^\top & 1 \end{bmatrix} \succeq 0.
		\end{align*}
		Hence, the Gelbrich uncertainty set~$\cF$ admits the semidefinite representation given in the proposition statement. Convexity is evident from this representation, which expresses $\cF$ as the projection of a set defined by conic inequalities in a lifted space.
		
		It remains to be shown that~$\cF$ is compact. To this end, we define
		\begin{align*}
			\cV = \left\{ (\mu, \Sigma) \in \R^d \times \S_+^d: \G \left( (\mu, \Sigma), (\hat \mu, \hat \Sigma) \right) \leq r \right\}
		\end{align*}
		as the ball of radius~$r$ around~$(\hat \mu, \hat \Sigma)$ with respect to the Gelbrich distance. Note that~$\cF = f(\cV)$, where the transformation $f:\R^d\times\S^d_+\to \R^d\times \S^d_+$ is defined through $f(\mu,\Sigma)=(\mu,\Sigma+\mu\mu^\top)$. We will now prove that~$\cV$ is compact. As~$f$ is continuous and as compactness is preserved under continuous transformations, this will readily imply that~$\cF$ is compact. Clearly, $\cV$ is closed because the Gelbrich distance is continuous. To show that~$\cV$ is also bounded, fix any~$(\mu, \Sigma)\in\cV$. By the definition of the Gelbrich distance, we have $\| \mu - \hat \mu \| \leq r^2$. In addition, we find
		\begin{align*}
			\Tr \Big( \big( \hat \Sigma^\half \Sigma \hat \Sigma^\half \big)^\half \Big)
			& =  \max_{C \in \R^{d \times d}} \left\{ \Tr (C) : \begin{bmatrix} \Sigma & C \\ C^\top & \hat \Sigma \end{bmatrix} \succeq 0 \right\}\\
			& \leq \max_{C \in \R^{d \times d}} \left\{\Tr(C):\displaystyle C_{ij}^2 \leq \Sigma_{ii} \hat \Sigma_{jj} ~ \forall i,j \in [d] \right\} \leq \sqrt{\Tr(\Sigma) \Tr(\hat \Sigma)}\,,
		\end{align*}
		where the equality has been established in the proof of Proposition~\ref{prop:Gelbrich:SDP}. The two inequalities follow from a relaxation of the linear matrix inequality, which exploits the observation that all second principal minors of a positive semidefinite matrix are non-negative, and from the Cauchy-Schwarz inequality. Thus, $\Sigma$ satisfies
		\begin{equation*}
			\left( \Tr(\Sigma)^\half - \Tr(\hat \Sigma)^\half \, \right)^2 \leq \Tr \left( \Sigma + \hat \Sigma - 2 \big( \hat \Sigma^\half \Sigma \hat \Sigma^\half \big)^\half) \right) \leq r^2 ,
		\end{equation*}
		where the second inequality holds because $(\mu,\Sigma)\in\cV$. We may therefore conclude that $\Tr(\Sigma) \leq (r + \Tr(\hat \Sigma)^\half )^2$, which in turn implies that $0 \preceq \Sigma \preceq (r + (\Tr(\hat \Sigma))^\half )^2 I_d$. In summary, we have shown that both~$\mu$ and~$\Sigma$ belong to bounded sets. As $(\mu,\Sigma)\in\cV$ was chosen arbitrarily, this proves that~$\cV$ is indeed bounded and thus compact. 
	\end{proof}
	
	Proposition~\ref{prop:Gelbrich:SDP} shows that the uncertainty set~$\cF$ is convex. This is surprising because $\cF=f(\cV)$, where the Gelbrich ball~$\cV$ in the space of mean-covariance pairs is convex thanks to Proposition~\ref{prop:Gelbrich:SDP} and where~$f$ is a {\em quadratic} bijection. Indeed, convexity is usually only preserved under {\em affine} transformations.
	
	Gelbrich ambiguity sets were introduced by \citet{nguyen2021mean} in the context of robust portfolio optimization. They have also found use in machine learning \citep{bui2021counterfactual,vu2021distributionally,nguyen2022distributionally}, estimation \citep{nguyen2019bridging}, filtering \citep{shafieezadeh2018wasserstein,kargin2024distributionally} and control \citep{mcallister2023distributionally,al2023distributionally,hajar2023wasserstein,hakobyan2024wasserstein,taskesen2024distributionally,kargin2024lqr,kargin2024infinite,kargin2024wasserstein}.

	\subsubsection{Mean-Dispersion Ambiguity Sets}
	\label{sec:mean-dispersion}
	
	
	If~$\cK\subseteq \R^k$ is a proper convex cone and $v_1,v_2\in\R^k$, then the inequality $v_1\preceq_\cK v_2$ means that $v_2-v_1\in\cK$. Also, a function $G:\R^m\to\R^k$ is called $\cK$-convex if 
	\[
	G(\theta v_1+(1-\theta)v_2)\preceq_\cK \theta G(v_1)+(1-\theta)G(v_2) \quad \forall v_1,v_2\in\R^m, ~ \forall \theta \in [0, 1].
	\]
	The mean-dispersion ambiguity set corresponding to a convex closed support set~$\cZ\subseteq\R^d$, a mean vector~$\mu\in\R^d$, a $\cK$-convex dispersion function $G:\R^m\to\R^k$ and a dispersion bound $g\in\R^k$ is defined as
	\begin{align}
		\label{eq:mean-dispersion-ambiguity-set}
		\cP = \left\{ \P \in \cP(\cZ) \, : \, \E_\P[Z]=\mu,~\E_\P[G(Z) ] \preceq_\cK g \right\}.
	\end{align}
	The mean-dispersion ambiguity set is highly expressive, that is, it can model various stylized features of the unknown probability distribution. For example, if~$\|\cdot\|$ is a norm on~$\R^d$, $G(z)=\|z-\mu\|$ is convex in the usual sense, and $g=\sigma\in\R_+$, then all distributions~$\P\in\cP$ have a mean absolute deviation from the mean that is bounded by~$\sigma$. Alternatively, if $G(z)= (z-\mu)(z-\mu)^\top$ is $\S_+^d$-convex and $g=\Sigma\in\S_+^d$, then~$\cP$ reduces to a Chebyshev ambiguity set with moment uncertainty. Specifically, the covariance matrix of any~$\P\in\cP$ is bounded by~$\Sigma$ in Loewner order. \citet{wiesemann2014distributionally} show that the ambiguity set~$\cP$, which contains distributions of the $d$-dimensional random vector~$Z$, is closely related to the lifted ambiguity set
	\begin{equation*}
		\cQ = \left\{ \Q \in \cP(\cC) \, : \, \E_\Q[ Z ] = \mu,~\E_\Q[ U ] = g \right\}
	\end{equation*}
	with support set $\cC=\{(z,u)\in \cZ\times \R^k: G(z)\preceq_\cK u\}$, which contains {\em joint} distributions of~$Z$ and an auxiliary $m$-dimensional random vector~$U$. Indeed, one can prove that~$\cP=\{\Q_Z:\Q\in\cQ\}$, where~$\Q_Z$ denotes the marginal distribution of~$Z$ under~$\Q$. As the loss function depends only on~$Z$ but not on~$U$, this reasoning implies that the inner worst-case expectation problem in~\eqref{eq:primal:dro} satisfies 
	\begin{equation*}
		\sup_{\P \in \cP} \; \E_\P \left[ \ell (x, Z) \right] = \sup_{\Q \in \cQ} \; \E_\Q \left[ \ell (x, Z)\right] .
	\end{equation*}
	Hence, one can replace the original ambiguity set~$\cP$ with the lifted ambiguity set~$\cQ$. This is useful because~$\cQ$ constitutes a simple Markov ambiguity set that specifies only the support set~$\cC$ and the mean~$(\mu, g)$ of the joint random vector~$(Z,U)$. In addition, one can show that~$\cZ$ is convex because~$\cZ$ is convex and~$G$ is $\cK$-convex. In summary, DRO problems with mean-dispersion ambiguity sets of the form~\eqref{eq:mean-dispersion-ambiguity-set} can systematically be reduced to DRO problems with Markov ambiguity sets.

	A more general class of mean-dispersion ambiguity sets can be used to shape the moment generating function of~$Z$ under~$\P$. Specifically, \citet{chen2019distributionally} introduce the \emph{entropic dominance} ambiguity set 
	\begin{align*}
		\cP = \left\{ \P \in \cP(\cZ): \E_\P [Z] = \mu, ~ \log \left( \E_\P [\exp(\theta^\top (Z - \mu))] \right) \leq g(\theta) ~~ \forall \theta \in \R^d \right\},
	\end{align*}
	where $g: \R^d \to \R$ is a convex and twice continuously differentiable function satisfying $g(0) = 0$ and $\nabla g(0) = 0$. The constraints parametrized by~$\theta$ impose a continuum of upper bounds on the cumulant generating function (that is, the logarithmic moment generating function) of the centered random variable $Z - \mu$ under~$\P$. The choice of~$g$ determines the specific class of distributions included in the ambiguity set. For example, if $g(\theta) = \sigma^2 \theta^\top \theta/2$ for some~$\sigma > 0$, then the ambiguity set contains only sub-Gaussian distributions with variance proxy~$\sigma^2$. Sub-Gaussian distributions are probability distributions whose tails are bounded by the tails of a Gaussian distribution. They play a significant role in probability theory and statistics, particularly in the study of concentration inequalities and high-dimensional phenomena \citep{vershynin2018high,wainwright2019high}.
	
	The entropic dominance ambiguity set imposes infinitely many  constraints on~$\P$. \citet{chen2019distributionally} show that worst-case expectation problems over this ambiguity set can be reformulated as semi-infinite conic programs. They propose a cutting plane algorithm to solve these conic programs efficiently. The entropic dominance ambiguity set has also found applications in the study of nonlinear and PDE-constrained DRO problems \citep{milz2020approximation,milz2022approximation}. Generalized entropic dominance ambiguity sets are considered by \citet{chen2023robust}.

	\subsubsection{Higher-order Moment Ambiguity Sets}
	Markov and Chebyshev ambiguity sets only impose conditions on the first- and/or second-order moments of~$\P$. DRO problems with such ambiguity sets are often tractable. In this section we briefly comment on moment ambiguity sets that impose conditions on higher-order (polynomial) moments of~$\P$, which generically lead to NP-hard DRO problems \citep[Propositions~4.5 and~4.6]{popescu2005semidefinite}. 
	
	Assume now that~$\cZ$ is a closed semialgebraic set defined as the feasible set of finitely many polynomial inequalities. In addition, define the monomial of order~$\alpha \in \Z_+^d$ in~$z\in\R^d$ as the function $\prod_{i=1}^d z_i^{\alpha_i}$, which we denote more compactly as~$z^\alpha$. The higher-order moment ambiguity set induced by a finite index set~$\cA \subseteq \Z_+^d$ and the moment bounds~$m_\alpha\in\R$, $\alpha\in\cA$, is then given by 
	\begin{align*}
		\cP = \left\{ \P \in \cP(\cZ): \E_\P [ Z^\alpha ] \leq m_\alpha ~ \forall \alpha \in \cA \right\}.
	\end{align*}
	Evaluating the worst-case expectation of a polynomial function (or the characteristic function of a semialgebraic set) over all distributions in~$\cP$ thus amounts to solving a generalized moment problem. This moment problem as well as its dual constitute semi-infinite linear programs, which can be recast as finite-dimensional conic optimization problems over certain moment cones and the corresponding dual cones of non-negative polynomials \citep{karlin1966tchebycheff, zuluaga2005conic}. Even though NP-hard in general, these conic problems can be approximated by increasingly tight sequences of tractable semidefinite programs by using tools from polynomial optimization \citep{parrilo2000structured,parrilo2003semidefinite, lasserre2001global,lasserre2009moments}. This general technique gives rise to worst-case expectation bounds and generalized Chebyshev inequalities with respect to the ambiguity set~$\cP$ \citep{bertsimas2000moment,lasserre2002bounds,popescu2005semidefinite,lasserre2008semidefinite}. In addition, it leads to tight bounds on worst-case risk measures \citep{natarajan2009constructing}.

	\subsection{$\phi$-Divergence Ambiguity Sets}
	\label{sec:phi-divergence-amgibuity-sets}
	The dissimilarity between two probability distributions is often quantified in terms of a $\phi$-divergence, which is uniquely determined by an entropy function~$\phi$.
	
	\begin{definition}[Entropy Functions]
		\label{def:phi}
		An entropy function $\phi: \R \to \overline{\R}$ is a lower semicontinuous convex function with $\phi(1) =0$ and $\phi(s) =+\infty$ for all $s<0$. 
	\end{definition}
	
	Note that any entropy function~$\phi$ is continuous relative to its domain. In fact, this is true for any {\em univariate} convex lower semicontinuous function. We emphasize, however, that {\em multivariate} convex lower semicontinuous functions can have points of discontinuity within their domains \cite[Example~2.38]{rockafellar2009variational}. The notion of a $\phi$-divergence relies on the perspective $\phi^\pi$ of the entropy function~$\phi$.
	
	\begin{definition}[$\phi$-Divergences \citep{csiszar1964informationstheoretische,csiszar1967information,ali1966general}]
		\label{def:D_phi}
		The (generalized) $\phi$-divergence of $\P\in \cP (\cZ)$ with respect to $\hat{\P}\in \cP (\cZ)$ is given by
		\begin{align*}
			\D_\phi(\P, \hat{\P}) = \int_{\cZ}
			\phi^\pi \left( \frac{\diff \P}{\diff \rho}(z), \frac{\diff \hat{\P}}{\diff \rho}(z) \right) \diff \rho(z), 
		\end{align*}
		where $\phi$ is an entropy function and~$\rho \in \cM_+(\cZ)$ is any dominating measure. This means that~$\P$ and~$\hat\P$ are absolutely continuous with respect to~$\rho$, that is, $\P , \hat{\P} \ll \rho$. 
	\end{definition}
	
	By the definition of $\phi^\pi$ and our convention that $0 \phi(s/0)$ should be interpreted as the recession function~$\phi^\infty(s)$, $\D_\phi(\P, \hat{\P})$ can be recast as
	\begin{align*}
		\D_\phi(\P, \hat{\P}) = \int_{\cZ}
		\frac{\diff \hat{\P}}{\diff \rho}(z)  \cdot  \phi\left( \frac{\frac{\diff \P}{\diff \rho}(z) }{ \frac{\diff \hat{\P}}{\diff \rho}(z)}\right) \diff \rho(z).
	\end{align*}
	A dominating measure $\rho$ always exists, but it must depend on~$\P$ and~$\hat{\P}$. For example, one may set $\rho = \P + \hat{\P}$. The absolute continuity conditions $\P \ll \rho$ and $\hat{\P} \ll \rho$ ensure that the Radon-Nikodym derivatives $\frac{\diff \P}{\diff \rho}$ and $\frac{\diff \hat{\P}}{\diff \rho}$ are well-defined, respectively. The following proposition derives a dual representation of a generic $\phi$-divergence, which reveals that $\D_\phi(\P, \hat{\P})$ is in fact independent of the choice of~$\rho$.
	
	\begin{proposition}[Dual Representation of $\phi$-Divergences]
		\label{prop:dual-phi-divergences}
		We have
		\begin{align*}
			\D_\phi(\P, \hat{\P}) = \sup_{f \in \cF} \; \int_{\cZ}
			f(z)\, \diff\P(z)  -\int_\cZ  \phi^*(f(z)) \, \diff \hat{\P}(z),
		\end{align*}
		where $\cF$ denotes the family of all bounded Borel functions $f: \cZ \rightarrow \dom(\phi^*)$.
	\end{proposition}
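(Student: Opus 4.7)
The plan is to reduce the integral identity to a pointwise Fenchel--Moreau duality, and then to exchange the pointwise supremum with the integral via a measurable selection argument. Since $\phi$ is closed, proper and convex---it is lower semicontinuous by Definition~\ref{def:phi} and proper because $\phi(1)=0$ lies in its effective domain---the biconjugate theorem recalled in the notation section gives $\phi=\phi^{**}$. Applying this to $\phi(s/t)$ for $t>0$ and multiplying by $t$ yields the perspective identity
\begin{equation*}
    \phi^\pi(s,t) \;=\; \sup_{y\in\dom(\phi^*)} \bigl\{y s - t\,\phi^*(y)\bigr\},
\end{equation*}
with the convention $0\cdot\phi^*(y) = 0$ for $y\in\dom(\phi^*)$. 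For $t=0$ the left-hand side is the recession function $\phi^\infty(s)$, which coincides with the support function $\delta^*_{\dom(\phi^*)}(s)$ by a standard identity in convex analysis, so the formula remains valid uniformly for all $t\geq 0$.

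For the direction ``$\geq$'', fix $f\in\cF$ and abbreviate $p = \diff\P/\diff\rho$ and $q = \diff\hat{\P}/\diff\rho$. Applying the pointwise identity at $y=f(z)$ yields $f(z)p(z) - \phi^*(f(z))q(z) \leq \phi^\pi(p(z),q(z))$ $\rho$-almost everywhere. Since $f$ is bounded and takes values in $\dom(\phi^*)$, the composition $\phi^*\circ f$ is itself bounded---a univariate lower semicontinuous convex function is automatically continuous on any closed bounded subinterval of its effective domain---so both $fp$ and $(\phi^*\circ f)\,q$ are $\rho$-integrable. Integrating against $\rho$ yields $\int f\,\diff\P - \int \phi^*(f)\,\diff\hat{\P} \leq \D_\phi(\P,\hat{\P})$, and taking the supremum over $f\in\cF$ establishes this direction.

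For the reverse direction, set $D_N = \dom(\phi^*)\cap[-N,N]$ and, by the Kuratowski--Ryll-Nardzewski measurable maximum theorem, construct a Borel selector $f_N$ with $f_N(z)\in\arg\max_{y\in D_N}\{yp(z)-q(z)\phi^*(y)\}$; the maximum is attained because the objective is upper semicontinuous in $y$ and $D_N$ is compact (after closure if necessary). Each $f_N$ is a member of $\cF$. As $N\to\infty$, the values $f_N(z)p(z)-\phi^*(f_N(z))q(z)$ increase monotonically to $\sup_{y\in\dom(\phi^*)}\{yp(z)-q(z)\phi^*(y)\} = \phi^\pi(p(z),q(z))$ pointwise. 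Fixing any $y_0\in\dom(\phi^*)$ provides an integrable lower bound $y_0 p(z)-q(z)\phi^*(y_0)$, so the monotone convergence theorem delivers $\int [f_N p - (\phi^*\circ f_N)\,q]\,\diff\rho \uparrow \D_\phi(\P,\hat{\P})$, bounding the claimed supremum from below. The main obstacle lies precisely in this last step: the measurable selection requires care when $\dom(\phi^*)$ is not closed---one works within $[-N,N]\cap\cl(\dom(\phi^*))$, or with $\varepsilon$-optimal selectors---and the monotone convergence is what correctly handles the delicate regime $q(z)=0$, where $\sup_y\{yp(z)\}$ matches $\phi^\infty(p(z))$ and may be $+\infty$. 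A pleasant byproduct is that the right-hand side of the dual representation makes no reference to $\rho$, confirming the $\rho$-independence of $\D_\phi(\P,\hat{\P})$ anticipated in the paragraph preceding the proposition.
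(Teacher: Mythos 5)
Your proof is correct, and it reaches the pointwise Fenchel identity $\phi^\pi(s,t) = \sup_{y\in\dom(\phi^*)}\{ys - t\phi^*(y)\}$ by a cleaner route than the paper: the paper computes the conjugate $(\phi^\pi)^*$ of the perspective explicitly and then appeals to the biconjugate theorem for $\phi^\pi$, whereas you obtain the same formula directly from $\phi = \phi^{**}$ together with the identity $\phi^\infty = \delta^*_{\dom(\phi^*)}$ for the $t=0$ slice. The real divergence is in how the pointwise supremum is exchanged with the integral against $\rho$. The paper cites the interchange theorem for normal integrands (Rockafellar--Wets Theorem~14.60), which applies because the integrand is Carath\'eodory. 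You instead unpack that theorem by hand: you split the argument into an easy ``$\geq$'' direction by pointwise domination and integration, and a ``$\leq$'' direction via truncated measurable selectors and monotone convergence with an integrable minorant. This is genuinely more self-contained and makes visible exactly why the delicate edge cases work out---the $q(z)=0$ slice, where $\phi^\infty$ may be $+\infty$, and the non-closedness of $\dom(\phi^*)$---at the cost of some bookkeeping (as you note, when $\dom(\phi^*)$ is not closed one must pass to $\varepsilon$-optimal selectors, and to keep monotone convergence applicable one then either patches the selectors to force a monotone integrand, or replaces monotone convergence by Fatou with the integrable lower bound). One small imprecision: your claim that $\phi^* \circ f$ is automatically bounded because $\phi^*$ is continuous on compact subintervals of $\dom(\phi^*)$ does not quite follow, since the closure of the bounded range of $f$ need not lie in $\dom(\phi^*)$; but this is harmless, because $\phi^*(t)\geq t$ gives integrability from below, and if $\int \phi^*(f)\,\diff\hat\P = +\infty$ the inequality you want holds vacuously.
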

	
	\begin{proof}
		As the entropy function~$\phi(s)$ is proper, convex and lower semicontinuous on~$\R$ and as $0\phi(s/0)$ is interpreted as the recession function~$\phi^\infty(s)$, the perspective function~$\phi^\pi(s,t)=t\phi(s/t)$ is proper, convex and lower semicontinuous on~$\R\times\R_+$. By \cite[Theorem~12.2]{rockafellar1970convex}, $\phi^\pi(s,t)$ can therefore be expressed as the conjugate of its conjugate. Note that the conjugate of $\phi^\pi(s,t)$ satisfies
		\begin{align*}
			(\phi^\pi)^*(f,g) & = \sup_{s\in \R,\,t \in \R_+} fs + gt - t\phi(s/t) \\
			& = \;\sup_{t \in  \R_+}\; gt +t\phi^*(f) = \left\{ \begin{array}{cl}
				0 & \text{if } f \in \dom(\phi^*) \text{ and } g + \phi^*(f) \leq 0, \\
				+\infty & \text{otherwise,}
			\end{array}\right.
		\end{align*}
		for all $f,g \in \R$. The second equality in the above expression follows from \cite[Theorem~16.1]{rockafellar1970convex}. As $\phi^\pi(s,t)=\sup_{f,g\in\R} sf+tg-(\phi^\pi)^*(f,g)$ by virtue of \cite[Theorem~12.2]{rockafellar1970convex}, the $\phi$-divergence is thus given by
		\begin{align*}
			\D_\phi(\P, \hat{\P}) & = \int_{\cZ}
			\sup_{f,g\in\R} \left\{ \frac{\diff \P}{\diff \rho}(z)\cdot f + \frac{\diff \hat{\P}}{\diff \rho}(z)\cdot g - (\phi^\pi)^* (f,g) \right\} \diff \rho(z) \\
			& = \int_{\cZ}
			\sup_{f\in\dom(\phi^*)} 
			\left\{ \frac{\diff \P}{\diff \rho}(z)\cdot f - \frac{\diff \hat{\P}}{\diff \rho}(z)\cdot \phi^*(f) \right\} \diff \rho(z)\\
			&= \sup_{f\in\cF} \; \int_{\cZ}
			\left\{\frac{\diff \P}{\diff \rho}(z)\cdot f(z) - \frac{\diff \hat{\P}}{\diff \rho}(z)\cdot \phi^*(f(z)) \right\} \diff \rho(z),
		\end{align*}
		where the second equality exploits our explicit formula for~$(\phi^\pi)^*$ derived above, while the third equality follows from
		\cite[Theorem~14.60]{rockafellar2009variational}. This theorem applies because the function $h:\dom(\phi^*)\times \cZ\rightarrow \R$ defined through
		\[
		h(f,z)=\frac{\diff \P}{\diff \rho}(z)\cdot f - \frac{\diff \hat{\P}}{\diff \rho}(z)\cdot \phi^*(f)
		\]
		is continuous in~$f$ and Borel measurable in~$z$, thus constituting a Carath\'eodory integrand in the sense of~\cite[Example~14.29]{rockafellar2009variational}. The claim then follows immediately from the definition of Radon-Nikodym derivatives. 
	\end{proof}
	
	Proposition~\ref{prop:dual-phi-divergences} reveals that~$\D_\phi(\P, \hat{\P})$ is jointly convex in~$\P$ and~$\hat\P$. If $\phi(s)$ grows superlinearly with~$s$, that is, if the asymptotic growth rate~$\phi^\infty(1)$ is infinite, then $\D_\phi(\P, \hat{\P})$ is finite if and only if $\frac{\diff \P}{\diff \rho}(z)=0$ for $\rho$-almost all $z\in\cZ$ with $\frac{\diff \hat{\P}}{\diff \rho}(z)=0$. Put differently, $\D_\phi(\P, \hat{\P})$ is finite if and only if $\P\ll\hat\P$. In this special case, the chain rule for Radon-Nikodym derivatives implies that $\frac{\diff \P}{\diff \rho} / \frac{\diff \hat{\P}}{\diff \rho} = \frac{\diff \P}{\diff \hat{\P}}$. If~$\phi^\infty(1)=\infty$, the $\phi$-divergence thus admits the more common (but less general) representation
	\begin{align*}
		\D_\phi(\P, \hat{\P}) = \left\{\begin{array}{ll}
			\int_{\cZ}
			\phi\left( \frac{\diff \P}{\diff \hat{\P}}(z)\right) \diff \hat{\P}(z) & \text{if } \P\ll \hat{\P},\\
			+\infty & \text{otherwise.}    
		\end{array}\right.
	\end{align*}
	
	We are now ready to define the $\phi$-divergence ambiguity set as
	\begin{align}
		\label{eq:phi-divergence-ambiguity-set}
		\cP = \left\{ \P \in \cP(\cZ) \, : \, \D_\phi(\P, \hat \P) \leq r \right\}.
	\end{align}
	This set contains all probability distributions~$\P$ supported on~$\cZ$ whose $\phi$-divergence with respect to some prescribed reference distribution~$\hat \P$ is at most~$r \geq 0$.
	
	\begin{remark}[Csisz{\'a}r Duals]
		The family of generalized $\phi$-divergences (which may adopt finite values even if $\P\not\ll \hat\P$) is invariant under permutations of~$\P$ and~$\hat \P$. Formally, we have $\D_\phi(\P, \hat{\P}) = \D_\psi(\hat{\P}, \P)$, where $\psi$ denotes the Csisz{\'a}r dual of~$\phi$ defined through $\psi(s) = \phi^\pi(1, s)=s\phi(1/s)$ \citep[Lemma~2.3]{ben1991certainty}. One readily verifies that if~$\phi$ is a valid entropy function in the sense of Definition~\ref{def:phi}, then~$\psi$ is also a valid entropy function. This relationship shows that, even though $\phi$-divergences are generically asymmetric, we do not sacrifice generality by focusing on divergence ambiguity sets of the form~\eqref{eq:phi-divergence-ambiguity-set}, with the nominal distribution~$\hat\P$ being the second argument of the divergence. From the discussion after Proposition~\ref{prop:dual-phi-divergences} it is clear that if $\phi^\infty(1) = \infty$, then all distributions~$\P$ in the $\phi$-divergence ambiguity set~\eqref{eq:phi-divergence-ambiguity-set} satisfy~$\P\ll\hat\P$. 
		Similarly, if the Csisz{\'a}r dual of $\phi$ satisfies $\psi^\infty(1) = \infty$, then all distributions~$\P$ in the $\phi$-divergence ambiguity set satisfy~$\hat\P\ll\P$. Table~\ref{tab:phi-divergence} lists common entropy functions and their Csisz{\'a}r duals. We emphasize that the family of Cressie-Read divergences includes the (scaled) Pearson $\chi^2$-divergence for $\beta = 2$, the Kullback-Leibler divergence for $\beta \to 1$ and the likelihood divergence for $\beta \to 0$ as special cases. 
	\end{remark}
	
	\begin{table}[!t]
		\setlength\tabcolsep{3pt}
		\footnotesize
		\centering
		\begin{tabular}{l @{\qquad} l l l l}
			\toprule
			Divergence & $\phi(s)~(s\geq 0)$ & $\psi(s) ~(s\geq 0)$ &$\phi^\infty(1)$ &$\psi^\infty(1)$ \\ \hline
			Kullback-Leibler& $s \log(s) - s + 1$ &$-\log(s) + s - 1$ & $\infty$ & $1$ \\[1ex]
			Likelihood & $-\log(s) + s - 1$ &  $s \log(s) - s + 1$ & $1$ & $\infty$ \\[1ex]
			Total variation &$\half|s-1|$ &$\half|s-1|$& $\frac{1}{2}$ & $\frac{1}{2}$ \\[1ex]
			Pearson $\chi^2$ &$(s-1)^2$ &$\frac{1}{s}(s-1)^2$ & $\infty$ & $1$ \\[1ex]
			Neyman $\chi^2$ &$\frac{1}{s}(s-1)^2$ & $(s-1)^2$ & $1$ &$\infty$ \\[1ex]
			Cressie-Read for $\beta \in (0, 1)$ &$\frac{s^\beta - \beta s + \beta - 1}{\beta(\beta-1)}$ &$\frac{s^{1-\beta} - \beta + \beta s - s}{\beta(\beta-1)}$ & $\frac{1}{1-\beta}$ & $\frac{1}{\beta}$ \\[1ex]
			Cressie-Read for $\beta > 1$ &$\frac{s^\beta - \beta s + \beta - 1}{\beta(\beta-1)}$ &$\frac{s^{1-\beta} - \beta + \beta s - s}{\beta(\beta-1)}$ & $\infty$ & $\frac{1}{\beta - 1}$ \\[1ex]
			\bottomrule
		\end{tabular}
		\caption{Examples of entropy functions and their Csisz{\'a}r duals. }
		\label{tab:phi-divergence}
	\end{table}
	
	The DRO literature often focuses on the \emph{restricted} $\phi$-divergence ambiguity set
	\begin{align}
		\label{eq:restricted-phi-divergence-ambiguity-set}
		\cP = \left\{ \P \in \cP(\cZ) \, : \, \P \ll \hat \P, ~ \D_\phi(\P, \hat \P) \leq r \right\}
	\end{align}
	introduced by \citet{ben2013robust}. Unlike the standard $\phi$-divergence ambiguity set~\eqref{eq:phi-divergence-ambiguity-set}, it contains only distributions that are absolutely continuous with respect to the reference distribution~$\hat\P$ even if $\phi^\infty(1)< \infty$. \citet{ben2013robust} study DRO problems over restricted $\phi$-divergence ambiguity sets under the assumption that the reference distribution~$\hat \P$ is discrete. In this case, the absolute continuity constraint $\P\ll\hat\P$ ensures that the ambiguity set contains only discrete distributions supported on the atoms of~$\hat \P$, and thus nature's worst-case expectation problem reduces to a finite convex program. \citet{ben2013robust} further develop a duality theory for this problem class. \citet{shapiro2017distributionally} extends this duality theory to general reference distributions~$\hat \P$ that are not necessarily discrete. \citet{hu2013ambiguous} and \citet{jiang2016data} show that any distributionally robust individual chance constraint with respect to a restricted $\phi$-divergence ambiguity set is equivalent to a classical chance constraint under the reference distribution~$\hat\P$ but with a rescaled confidence level. A classification of various $\phi$-divergences and an analysis of the structural properties of the corresponding $\phi$-divergence ambiguity sets is provided by \citet{bayraksan2015data} under the assumption that~$\cZ$ is finite. Below we review popular instances of the standard and restricted $\phi$-divergence ambiguity~sets.

	\subsubsection{Kullback-Leibler Ambiguity Sets}
	\label{section:KL}
	The Kullback-Leibler divergence is the $\phi$-divergence corresponding to the entropy function that satisfies $\phi(s) = s \log(s) - s + 1$ for all $s\geq 0$; see also Table~\ref{tab:phi-divergence}. As $\phi^\infty(1) = +\infty$, it thus admits the following equivalent definition.
	
	\begin{definition}[Kullback-Leibler Divergence]
		\label{def:KL}
		The Kullback-Leibler divergence of~$\P\in \cP (\cZ)$ with respect to~$\hat{\P}\in \cP (\cZ)$ is given by
		\begin{align*}
			\KL(\P, \hat \P) =
			\begin{cases}
				\displaystyle \int_\cZ \log\left(\frac{\diff \P}{\diff \hat \P}(z)\right) \diff \P(z) & \text{if } \P \ll \hat \P, \\[2ex]
				+\infty & \text{otherwise.}
			\end{cases}
		\end{align*}
	\end{definition}
	
	We now review a famous variational formula for the Kullback-Leibler divergence. 
	
	\begin{proposition}[\citet{donsker1983asymptotic}]
		\label{prop:donsker:KL}
		The Kullback-Leibler divergence of~$\P$ with respect to~$\hat \P$ satisfies
		\begin{align}
			\label{eq:donsker}
			\KL(\P, \hat{\P}) = \sup_{f \in \cF} \; \int_{\cZ} f(z) \, \diff\P(z) - \log \left( \int_\cZ  \text{e}^{f(z)} \, \diff \hat{\P}(z) \right),
		\end{align}
		where $\cF$ denotes the family of all bounded Borel functions $f: \cZ \rightarrow \R^d$.
	\end{proposition}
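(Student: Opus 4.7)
The plan is to derive the Donsker--Varadhan formula as a direct corollary of the general dual representation of $\phi$-divergences established in Proposition~\ref{prop:dual-phi-divergences}. First, I would specialize that proposition to the Kullback--Leibler entropy $\phi(s) = s\log s - s + 1$. A short calculation (take the derivative of $ys - \phi(s)$, set it to zero, obtain $s^\star = e^y$, and back-substitute) shows that $\phi^*(y) = e^y - 1$ with $\dom(\phi^*) = \R$. Hence the family $\cF$ appearing in Proposition~\ref{prop:dual-phi-divergences} coincides with the family of bounded Borel functions $f:\cZ\to\R$, and Proposition~\ref{prop:dual-phi-divergences} yields
\begin{equation*}
    \KL(\P, \hat\P) \;=\; \sup_{f \in \cF} \; \int_\cZ f(z)\,\diff\P(z) \,-\, \int_\cZ \bigl(e^{f(z)} - 1\bigr)\,\diff\hat\P(z).
\end{equation*}

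The key reduction step is to exploit the invariance of $\cF$ under additive constant shifts: for every $f \in \cF$ and every $c \in \R$, one has $f + c \in \cF$. Consequently, the supremum above can be written equivalently as a joint supremum over $f \in \cF$ and $c \in \R$. Substituting $f + c$ for $f$ and using that $\int_\cZ\diff\P = 1$ and $\int_\cZ\diff\hat\P = 1$, the objective becomes
\begin{equation*}
    \int_\cZ f(z)\,\diff\P(z) \,+\, c \,-\, e^c\int_\cZ e^{f(z)}\,\diff\hat\P(z) \,+\, 1.
\end{equation*}
Since $f$ is bounded, the quantity $I(f) := \int_\cZ e^{f(z)}\,\diff\hat\P(z)$ lies in a bounded subinterval of $(0, \infty)$, so the inner supremum over $c$ is a one-dimensional, strictly concave problem that is solved uniquely by $c^\star(f) = -\log I(f)$. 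Substituting $c^\star(f)$ back yields inner optimal value $\int_\cZ f\,\diff\P - \log I(f)$, and taking the supremum over $f \in \cF$ then delivers exactly the right-hand side of~\eqref{eq:donsker}.

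I do not anticipate any substantial obstacles: the conjugate computation is elementary, closure of $\cF$ under constant shifts is immediate, and boundedness of $f$ ensures that $I(f) \in (0,\infty)$ so the logarithm and the unique optimizer $c^\star(f)$ are both well-defined. The only point requiring mild care is confirming that the case $\P \not\ll \hat\P$ (in which the left-hand side equals $+\infty$) is automatically handled, but this is already guaranteed by Proposition~\ref{prop:dual-phi-divergences}, whose right-hand side correctly returns $+\infty$ in that case; the shift argument preserves this because rescaling through $c$ cannot change the fact that the supremum diverges.
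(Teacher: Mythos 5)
Your argument is correct and follows exactly the paper's route: specialize Proposition~\ref{prop:dual-phi-divergences} to $\phi^*(t)=e^t-1$, exploit the invariance of $\cF$ under constant shifts, solve the inner one-dimensional concave problem over $c$ in closed form to get $c^\star(f)=-\log\int_\cZ e^{f}\,\diff\hat\P$, and substitute back. No gaps.
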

	\begin{proof}
		The convex conjugate of the entropy function~$\phi$ inducing the Kullback-Leibler divergence satisfies $\phi^*(t)=\exp(t)-1$ with $\dom(\phi^*)=\R$. Thus, the dual representation of generic $\phi$-divergences established in Proposition~\ref{prop:dual-phi-divergences} implies that
		\begin{align*}
			\KL(\P, \hat{\P}) 
			&= \sup_{f \in \cF} \; \int_{\cZ} f(z)\, \diff\P(z) - \int_\cZ \left( \text{e}^{f(z)} - 1 \right) \, \diff \hat{\P}(z),
		\end{align*}
		where $\cF$ denotes the family of all bounded Borel functions $f:\cZ \rightarrow \R$. Note that~$\cF$ is invariant under constant shifts. That is, if $f(z)$ is a bounded Borel function, then so is $f(z)+c$ for any constant~$c\in\R$. Without loss of generality, we may thus optimize over both~$f\in\cF$ and~$c\in\R$ in the above maximization problem to obtain
		\begin{align*}
			\KL(\P, \hat{\P})
			&= \sup_{f \in \cF} \; \sup_{c \in \R} \; \int_{\cZ} \left( f(z) + c \right) \, \diff\P(z) - \int_\cZ  \left( \text{e}^{f(z) + c} - 1 \right) \diff \hat{\P}(z).
		\end{align*}
		For any fixed~$f\in\cF$, the inner maximization problem over~$c$ is uniquely solved by 
		\[
		c^\star=-\log\left(\int_\cZ e^{f(z)}\,\diff \hat{\P}(z)\right) .
		\]
		Substituting this expression back into the objective function yields~\eqref{eq:donsker}.
	\end{proof}
	
	Proposition~\ref{prop:donsker:KL} establishes a link between the Kullback-Leibler divergence and the entropic risk measure. This connection will become useful in Section~\ref{sec:phi:duality}.
	
	The Kullback-Leibler ambiguity set of radius~$r\geq 0$ around $\hat\P\in\cP(\cZ)$ is given~by
	\begin{align}
		\label{eq:KL-ambiguity-set}
		\cP = \left\{ \P \in \cP(\cZ) \, : \, \KL (\P, \hat \P) \leq r \right\}.
	\end{align}
	As $\phi^\infty(1) = +\infty$, all distributions $\P\in\cP$ are absolutely continuous with respect to~$\hat\P$. Thus, $\cP$ coincides with the {\em restricted} Kullback-Leibler ambiguity set. \citet{ghaoui2003worst} derive a closed-form expression for the worst-case value-at-risk of a linear loss function when~$\hat \P$ is a {\em Gaussian} distribution. \citet{hu2013kullback} use similar techniques to show that any distributionally robust individual chance constraint with respect to a Kullback-Leibler ambiguity set is equivalent to a classical chance constraint with a rescaled confidence level. \citet{calafiore2007ambiguous} studies worst-case mean-risk portfolio selection problems when $\hat\P$ is a {\em discrete} distribution. The Kullback-Leibler ambiguity set has also found applications in least-squares estimation \citep{levy2004robust}, hypothesis testing \citep{levy2008robust,gul2017minimax}, filtering \citep{levy2012robust,zorzi2016robust,zorzi2017convergence,zorzi2017robustness}, the theory of risk measures \citep{ahmadi2012entropic,postek2016computationally} and
	extreme value analysis \citep{blanchet2020distributionally}, among many others. 
	
	\subsubsection{Likelihood Ambiguity Sets}
	\label{sec:likelihood-ambiguity-sets}
	As the Kullback-Leibler divergence fails to be symmetric, it gives rise to two strictly different ambiguity sets. The Kullback-Leibler ambiguity set from Section~\ref{section:KL} is obtained by fixing the {\em second} argument of the Kullback-Leibler divergence to the reference distribution~$\hat\P$ and considering all distributions~$\P$ with $\KL (\P, \hat \P) \leq r$. An alternative ambiguity set is obtained by using~$\hat\P$ as the {\em first} argument and setting
	\begin{align}
		\label{eq:likelihood-ambiguity-set}
		\cP = \left\{ \P \in \cP(\cZ) \, : \, \KL (\hat \P, \P) \leq r \right\}.
	\end{align}
	We refer to~$\cP$ as the likelihood ambiguity set centered at~$\hat\P\in\cP(\cZ)$. Indeed, the likelihood or Burg-entropy divergence of $\P\in\cP(\cZ)$ with respect to~$\hat\P\in\cP(\cZ)$ is usually defined as the reverse Kullback-Leibler divergence $\KL(\hat \P, \P)$. This terminology is based on the following intuition. If~$\cZ$ is a discrete set and $\hat\P=\frac{1}{N}\sum_{i=1}^N\delta_{\hat z_i}$ is the empirical distribution corresponding to~$N$ independent samples $\{\hat z_i\}_{i=1}^N$ from an unknown distribution on~$\cZ$, then it is natural to construct the family of all distributions on~$\cZ$ that make the observed data achieve a prescribed level of likelihood. This distribution family corresponds to a superlevel set of the likelihood function $\cL(\P)=\prod_{i=1}^N\P(Z=\hat z_i)$ over~$\cP(\cZ)$. One can show that any such {\em superlevel} set coincides with a {\em sublevel} set of the likelihood divergence~$\KL (\hat \P, \P)$. Thus, it constitutes a likelihood ambiguity set of the form~\eqref{eq:likelihood-ambiguity-set}. We emphasize that this correspondence does not easily carry over to situations where~$\cZ$ fails to be discrete. 
	
	Likelihood ambiguity sets were originally introduced by \citet{wang2016likelihood} in the context of static DRO, and they were used by \citet{WKR13:rmdps} in the context of robust Markov decision processes. \citet{bertsimas2018data,bertsimas2018robust} show that the likelihood ambiguity set contains all distributions that pass a G-test of goodness-of-fit at a prescribed significance level.
	
	Likelihood ambiguity sets display several statistical optimality properties even if~$\cZ$ is uncountable. To explain these properties, we consider the task of evaluating a $(1-\eta)$-upper confidence bound on the expected value of some loss function under an unknown distribution~$\P$ when~$N$ independent samples from~$\P$ are given. Leveraging the empirical likelihood theorem by \citet{owen1988empirical}, \citet{lam2019recovering} shows a desirable property of the likelihood ambiguity set centered around the empirical distribution~$\hat\P$: The associated worst-case expected loss provides the least conservative confidence bound for a constant significance level~$\eta$ asymptotically when the radius $r$ decays at the rate $1/N$. Similar guarantees for a broader class of $\phi$-divergences are reported by \citet{duchi2021statistics}. In addition, \citet{van2021data} leverage Sanov's large deviation principle \cite[Theorem 11.4.1]{cover2006elements} to prove that the worst-case expected loss with respect to a likelihood ambiguity set of constant radius~$r$ around~$\hat\P$ provides the least conservative confidence bound for a decaying  significance level $\eta\propto e^{-rN}$ asymptotically for large~$N$. \citet{gupta2019near} further shows that a likelihood ambiguity set of radius $r\propto N^{-1/2}$ around~$\hat \P$ represents the smallest convex ambiguity set that satisfies a Bayesian robustness guarantee. 
	
	\subsubsection{Total Variation Ambiguity Sets}
	\label{section:TV}
	The total variation distance of two distributions $\P,\hat\P\in\cP(\cZ)$ is the maximum absolute difference between the probabilities assigned to any event by~$\P$ and~$\hat \P$.
	
	\begin{definition}[Total Variation Distance]
		\label{def:TV}
		The total variation distance is the function $\TV:\cP(\cZ) \times \cP(\cZ) \to [0, 1]$ defined through
		\begin{align*}
			\TV(\P, \hat{\P}) = \sup \left\{\left|\P(\cB)  - \hat\P(\cB)\right| : \cB\subseteq\cZ \text{ is a Borel set}\right\}.
		\end{align*}
	\end{definition}
	
	The total variation distance is ostensibly symmetric and satisfies the identity of indiscernible as well as the triangle inequality. Thus, it constitutes a metric on~$\cP(\cZ)$. In addition, the total variation distance is an instance of a $\phi$-divergence.
	\begin{proposition}
		\label{prop:tv}
		The total variation distance coincides with the $\phi$-divergence induced by the the entropy function with $\phi(s)=\half|s-1|$ for all~$s\ge 0$.
	\end{proposition}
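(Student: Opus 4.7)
The plan is to show both quantities coincide with $\frac{1}{2}\int_\cZ |p - q| \, \diff \rho$, where $p = \diff\P/\diff\rho$ and $q = \diff\hat\P/\diff\rho$ for any common dominating measure $\rho$ ({\em e.g.}, $\rho = \P + \hat\P$).

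First I would compute the perspective $\phi^\pi$ of the entropy function $\phi(s)=\tfrac{1}{2}|s-1|$. For $t>0$ and $s\in\R$, the definition gives $\phi^\pi(s,t)=t\phi(s/t)=\tfrac{1}{2}|s-t|$ if $s\geq 0$ and $+\infty$ otherwise. For the boundary case $t=0$, I would evaluate the recession function via $\phi^\infty(s)=\lim_{\alpha\to\infty}\alpha^{-1}\phi(1+\alpha s)$, which equals $s/2$ for $s\geq 0$ and $+\infty$ for $s<0$. Since $p,q\geq 0$ $\rho$-almost everywhere, substituting into Definition~\ref{def:D_phi} gives
\[
    \D_\phi(\P,\hat\P) \;=\; \int_\cZ \tfrac{1}{2} \left|p(z)-q(z)\right| \diff\rho(z),
\]
uniformly across the three regimes $q>0$, $q=0 < p$, and $p=q=0$. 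A brief check confirms independence from the choice of $\rho$, which is already guaranteed by Proposition~\ref{prop:dual-phi-divergences}.

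Next, for the supremum side I would carry out a Hahn-type decomposition of the signed measure $\P-\hat\P$. Setting $\cA=\{z\in\cZ : p(z)\geq q(z)\}$, which is Borel, one has $\int_\cA(p-q)\diff\rho = \P(\cA)-\hat\P(\cA) \geq 0$ and $\int_{\cA^c}(q-p)\diff\rho = \hat\P(\cA^c)-\P(\cA^c)$. Because $\P$ and $\hat\P$ both have unit mass, these two non-negative quantities are equal, so
\[
    \int_\cZ |p-q|\,\diff\rho \;=\; 2\bigl(\P(\cA)-\hat\P(\cA)\bigr).
\]
Thus $\D_\phi(\P,\hat\P)=\P(\cA)-\hat\P(\cA)$, and it only remains to show this equals $\TV(\P,\hat\P)$.

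The final step is to verify that $\cA$ maximizes $|\P(\cB)-\hat\P(\cB)|$ over Borel sets $\cB\subseteq\cZ$. Splitting any such $\cB$ as $(\cB\cap\cA)\cup(\cB\cap\cA^c)$, the contribution from $\cB\cap\cA$ is non-negative, while the contribution from $\cB\cap\cA^c$ is non-positive, yielding
\[
    \int_{\cA^c}(p-q)\,\diff\rho \;\leq\; \P(\cB)-\hat\P(\cB) \;\leq\; \int_{\cA}(p-q)\,\diff\rho,
\]
and both bounds have absolute value $\P(\cA)-\hat\P(\cA)$. Taking absolute values and supremizing over $\cB$ gives $\TV(\P,\hat\P)=\P(\cA)-\hat\P(\cA)=\D_\phi(\P,\hat\P)$. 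The main (mild) obstacle is to handle the recession-function value $\phi^\pi(\cdot,0)$ carefully so the density formula holds even where $q$ vanishes; once that is settled, the Hahn-decomposition argument is standard.
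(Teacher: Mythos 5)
Your proof is correct, but it follows a genuinely different route from the paper's. The paper never touches the primal integral formula: it computes the conjugate $\phi^*(t)=\max\{t,-\half\}+\delta_{(-\infty,\half]}(t)$, invokes the dual representation of Proposition~\ref{prop:dual-phi-divergences}, and then successively restricts the class of test functions (clipping at $-\half$, shifting into $[0,1]$, passing to binary functions) until only characteristic functions of Borel sets remain, at which point Definition~\ref{def:TV} is recovered. You instead evaluate the perspective $\phi^\pi$ directly, including the recession value $\phi^\infty(s)=s/2$ on $\{q=0\}$, to obtain $\D_\phi(\P,\hat\P)=\half\int_\cZ|p-q|\,\diff\rho$, and then run a Hahn-decomposition argument on $\P-\hat\P$ to identify this with $\sup_\cB|\P(\cB)-\hat\P(\cB)|$. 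Your route is more elementary and yields two useful by-products for free: the classical scaled-$L^1$ formula for the total variation distance and an explicit maximizing event $\cA=\{p\geq q\}$. The paper's route is longer but produces, as an intermediate step, the representation of $\TV$ as the integral probability metric generated by Borel functions into $[-\half,\half]$ (its equation~\eqref{eq:TV:IPM2}), which the paper explicitly reuses later when discussing IPM-based ambiguity sets; your argument does not deliver that representation. Both proofs are complete, and your handling of the boundary case $q=0$ via the recession function is exactly the care the primal approach requires.
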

	\begin{proof}
		The conjugate entropy function evaluates to $\phi^*(t)=\max\{t,-\half\}$ if $t \le \half$ and to $\phi^*(t)=+\infty$ if $t> \half$. By Proposition~\ref{prop:dual-phi-divergences}, the $\phi$-divergence corresponding to the given entropy function thus admits the dual representation
		\begin{align}
			\label{eq:TV:IPM}
			\D_\phi(\P, \hat{\P}) = \sup_{f \in \cF} \; \int_{\cZ} f(z)\, \diff\P(z) - \int_\cZ  \max\left\{f(z),-\half \right\} \, \diff \hat{\P}(z),
		\end{align}
		where $\cF$ denotes the family of all bounded Borel functions $f:\cZ \rightarrow (-\infty,\half]$. As clipping any~$f\in\cF$ from below at $-\half$ creates a new function in $\cF$ with a non-inferior objective value, we can in fact restrict attention to Borel functions $f:\cZ\to[-\half, \half]$. The objective function in~\eqref{eq:TV:IPM} then simplifies to $\int_{\cZ} f(z) \diff\P(z) - \int_\cZ  f(z) \diff \hat{\P}(z)$. This simplified objective function remains unchanged when~$f$ is shifted by a constant. In summary, we may therefore conclude that \eqref{eq:TV:IPM} is equivalent to
		\begin{align}
			\label{eq:TV:IPM2}
			\D_\phi(\P, \hat{\P}) = \sup_{f \in \cF'} \; \int_{\cZ} f(z)\, \diff\P(z) - \int_\cZ  f(z) \, \diff \hat{\P}(z),
		\end{align}
		where $\cF'$ denotes the family of all Borel functions $f:\cZ \rightarrow [0,1]$. Moreover, as the objective function of the maximization problem in~\eqref{eq:TV:IPM2} is linear in~$f$, we can further restrict~$\cF'$ to contain only binary Borel functions $f:\cZ\to\{0,1\}$ without sacrificing optimality. As there is a one-to-one correspondence between Borel sets and their characteristic functions, we finally obtain the desired identity
		\begin{align*}
			\D_\phi(\P, \hat{\P}) = \sup \left\{\left|\P(\cB)  - \hat\P(\cB)\right| : \cB\subseteq\cZ \text{ is a Borel set}\right\}.
		\end{align*}
		Hence, the claim follows.
	\end{proof}
	
	The total variation ambiguity set of radius~$r\geq 0$ around $\hat\P\in\cP(\cZ)$ is given~by
	\begin{align*}
		\cP = \left\{ \P \in \cP(\cZ) \, : \, \TV (\P, \hat \P) \leq r \right\}.
	\end{align*}
	Most of the existing literature focuses on the {\em restricted} total variation ambiguity set, which contains all distributions $\P\in\cP$ that satisfy~$\P\ll\hat \P$. \citet[Theorem~1]{jiang2018risk} and \citet[Example~3.7]{shapiro2017distributionally} show that the worst-case expected loss with respect to a restricted total variation ambiguity set coincides with a combination of a conditional value-at-risk and the essential supremum of the loss with respect to~$\hat\P$, see also Section~\ref{sec:TV-bound}. \citet{rahimian2019controlling,rahimian2019identifying,rahimian2022effective} study the worst-case distributions of DRO problems over unrestricted total variation ambiguity sets when~$\cZ$ is finite. The total variation ambiguity set is related to Huber's contamination model from robust statistics \citep{huber1981robust}, which assumes that a fraction $r \in (0,1)$ of all samples in a statistical dataset are drawn from an arbitrary contaminating distribution. Hence, the total variation distance between the target distribution to be estimated and the contaminated data-generating distribution is at most~$r$. It is thus natural to use a total variation ambiguity set of radius~$r$ around some estimated distribution as the search space for the target distribution \citep{nishimura2004search,nishimura2006axiomatic,bose2009dynamic,duchi2023distributionally,tsanga2024trade}.

	\subsubsection{$\chi^2$-Divergence Ambiguity Set}
	
	The $\chi^2$-divergence is the $\phi$-divergence corresponding to the entropy function that satisfies $\phi(s)=(s-1)^2$ for all~$s\ge 0$; see also Table~\ref{tab:phi-divergence}. As $\phi^\infty(1) = +\infty$, it thus admits the following equivalent definition.
	
	\begin{definition}[$\chi^2$-Divergence]
		The $\chi^2$-divergence of~$\P\in \cP (\cZ)$ with respect to~$\hat{\P}\in \cP (\cZ)$ is given by
		\begin{align*}
			\chi^2(\P, \hat \P) =
			\begin{cases}
				\displaystyle \int_\cZ \left(\frac{\diff \P}{\diff \hat \P}(z) - 1 \right)^2 \diff \hat \P(z) & \text{if } \P \ll \hat \P, \\[2ex]
				+\infty & \text{otherwise.}
			\end{cases}
		\end{align*}
	\end{definition}
	The $\chi^2$-divergence admits the following dual representation.
	\begin{proposition}
		\label{prop:variance:chi}
		The $\chi^2$-divergence of~$\P$ with respect to~$\hat \P$ satisfies
		\begin{align*}
			\chi^2(\P, \hat{\P})
			= \sup_{f \in \cF} \frac{\left(\E_\P [f(Z)] - \E_{\hat \P} [f(Z)] \right)^2}{\V_{\hat \P} [f(Z)]},
		\end{align*}
		where $\cF$ is a shorthand for the family of all bounded Borel functions $f:\cZ \rightarrow \R$, and $\V_{\hat \P} [f(Z)]$ stands for the variance of $f(Z)$ under~$\hat \P$. If $\V_{\hat \P} [f(Z)]=0$, then the above fraction is interpreted as~0 if $\E_{\P} [f(Z)]=\E_{\hat \P} [f(Z)]$ and as~$+\infty$ otherwise.
	\end{proposition}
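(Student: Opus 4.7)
The plan is to invoke Proposition~\ref{prop:dual-phi-divergences} with the entropy function $\phi(s) = (s-1)^2$ and then exploit the affine invariance of the resulting supremum. A direct calculation yields the convex conjugate $\phi^*(t) = t + t^2/4$ for $t \geq -2$ and $\phi^*(t) = -1$ for $t < -2$, so that $\dom(\phi^*) = \R$. Since $\phi^*(-2) = -1$, clipping any candidate $f$ from below at $-2$ leaves $\int \phi^*(f)\, \diff\hat\P$ unchanged while weakly increasing $\int f\, \diff \P$; without loss of generality one may therefore take $\phi^*(f) = f^2/4 + f$ pointwise. Proposition~\ref{prop:dual-phi-divergences} then yields
\begin{align*}
    \chi^2(\P, \hat\P) = \sup_{f \in \cF} \int_\cZ f(z)\, \diff \P(z) - \int_\cZ \left( \tfrac{1}{4} f(z)^2 + f(z) \right) \diff \hat\P(z),
\end{align*}
where $\cF$ denotes the family of bounded Borel functions $f : \cZ \to \R$.

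Next, I would exploit that $\cF$ is closed under affine transformations. Every $f \in \cF$ can be written as $\alpha g + \beta$ for some $g \in \cF$ and scalars $\alpha, \beta \in \R$ (trivially with $g = f$, $\alpha = 1$, $\beta = 0$), and conversely every such $\alpha g + \beta$ lies in $\cF$. Hence the displayed supremum equals the joint supremum over $g \in \cF$ and $\alpha, \beta \in \R$ of the objective evaluated at $\alpha g + \beta$. Using $\E_{\hat\P}[(\alpha g + \beta)^2] = \alpha^2 \V_{\hat\P}[g] + (\alpha \E_{\hat\P}[g] + \beta)^2$, this objective reduces to
\begin{align*}
    \alpha \bigl( \E_\P[g] - \E_{\hat\P}[g] \bigr) - \tfrac{1}{4} \alpha^2 \V_{\hat\P}[g] - \tfrac{1}{4} \bigl( \alpha \E_{\hat\P}[g] + \beta \bigr)^2,
\end{align*}
whose inner maximization over $\beta$ is attained at $\beta^\star = -\alpha \E_{\hat\P}[g]$ and annihilates the last square. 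Setting $v := \V_{\hat\P}[g]$ and $d := \E_\P[g] - \E_{\hat\P}[g]$, when $v > 0$ the remaining concave quadratic $\alpha d - \alpha^2 v / 4$ is maximized at $\alpha^\star = 2d/v$ with value $d^2/v$, which is exactly the claimed expression.

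The main subtlety will be the degenerate case $v = 0$, in which $g$ is $\hat\P$-almost surely constant but may fail to be $\P$-almost surely constant. If $d = 0$ as well, the objective vanishes identically in $\alpha$, giving supremum $0$; if $d \neq 0$, the objective is linear in $\alpha$ with nonzero slope $d$, so its supremum over $\alpha \in \R$ is $+\infty$. Both cases coincide with the conventions in the proposition statement, completing the identification of the two sides.
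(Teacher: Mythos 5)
Your proof is correct and follows essentially the same strategy as the paper: apply Proposition~\ref{prop:dual-phi-divergences}, replace $\phi^*$ by the quadratic $t^2/4+t$ via the clipping argument, and then optimize over an affine transformation of $f$. The only (cosmetic) difference is that you bundle the paper's two sequential steps---a constant shift to expose the variance, followed by a scaling to produce the ratio---into a single joint optimization over $\alpha g + \beta$, and you spell out the degenerate case $\V_{\hat\P}[g]=0$ somewhat more explicitly than the paper does.
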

	
	\begin{proof}
		The convex conjugate of the entropy function inducing the $\chi^2$-divergence satisfies $\phi^*(t)=\frac{t^2}{4}+t$ if $t \geq -2$ and $\phi^*(t)=-1$ if $t < -2$, and its domain is given by $\dom(\phi^*)=\R$. Consequently, Proposition~\ref{prop:dual-phi-divergences} implies that
		\begin{align*}
			\chi^2(\P, \hat \P) 
			&= \sup_{f \in \cF} \; \int_{\cZ} f(z)\, \diff \P(z)  -\int_\cZ  \left( \frac{f(z)^2}{4} + f(z) \right) \, \diff \hat{\P}(z),
		\end{align*}
		where $\cF$ denotes the family of all bounded Borel functions $f:\cZ \rightarrow \R$. Note that we have replaced $\phi^*(f(z))$ with $f(z)^2/4+f(z)$ in the second integral. This may be done without loss of generality. Indeed, if the function~$f(z)$ adopts values below~$-2$, then it is (weakly) dominated by the function~$f'(z)=\max\{f(z),-2\}$. Note also that~$\cF$ is invariant under constant shifts. That is, if $f(z)$ is a bounded Borel function, then so is $f(z)+c$ for any constant~$c\in\R$. An elementary calculation reveals that, for any fixed~$f\in\cF$, the optimal shift is~$c^\star=-\E_{\hat\P}[f(Z)]$. Hence, we may replace $f(z)$ with $f(z)-\E_{\hat\P}[f(Z)]$ in the above expression, which yields
		\begin{align*}
			\chi^2(\P, \hat \P) 
			&= \sup_{f \in \cF} \; \E_{\P}[f(Z)]  -\E_{\hat \P}[f(Z)] -\frac{\V_{\hat \P}[f(Z)]}{4}.
		\end{align*}
		Note that the set~$\cF$ is also invariant under scaling. That is, if $f(z)$ is a bounded Borel function, then so is $cf(z)$ for any constant~$c\in\R$. We may thus optimize separately over $f\in\cF$ and $c\in\R$ in the above maximization problem to obtain
		\begin{align*}
			\chi^2(\P, \hat{\P})
			&= \sup_{f \in \cF} \; \sup_{c \in \R} \; \left(\E_{\P}[f(Z)]  -\E_{\hat \P}[f(Z)]\right)c -\frac{\V_{\hat \P}[f(Z)]}{4}c^2\\
			&= \sup_{f \in \cF} \; \frac{\left(\E_\P [f(Z)] - \E_{\hat \P} [f(Z)] \right)^2}{\V_{\hat \P} [f(Z)]}.
		\end{align*}
		Note that the inner maximization problem over~$c$ simply evaluates the conjugate of the convex quadratic function $\V_{\hat \P}[f(Z)] c^2/4$ at $\E_\P[f(Z)]- \E_{\hat \P}[f(Z)]$, which is available in closed form. Thus, the claim follows.
	\end{proof}
	
	As the $\chi^2$-divergence fails to be symmetric, it give rise to two complementary ambiguity sets, which differ according to whether the reference distribution~$\hat\P\in\cP(\cZ)$ is used as the first or the second argument of the $\chi^2$-divergence. \citet{lam2018sensitivity} defines the \emph{Pearson} $\chi^2$-ambiguity set of radius $r\geq0$ around $\hat\P$ as
	\begin{align}
		\label{eq:pearson-chi-squared-ambiguity-set}
		\cP = \left\{ \P \in \cP(\cZ) \, : \, \chi^2 (\P, \hat \P) \leq r \right\}
	\end{align}
	in order to analyze operations and service systems with dependent data. \citet{philpott2018distributionally} develop a stochastic dual dynamic programming algorithm for solving distributionally robust multistage stochastic programs with a Pearson ambiguity set. In the context of static DRO,
	\citet{duchi2019variance} show that robustification with respect to a Pearson ambiguity set is closely related to variance regularization. Note that as  $\phi^\infty(1) = +\infty$, the Pearson ambiguity set coincides with its {\em restricted} version, which contains only distributions~$\P\ll\hat\P$.
	
	\citet{klabjan2013robust} define the \emph{Neyman} $\chi^2$-ambiguity set as
	\begin{align*}
		\cP = \left\{ \P \in \cP(\cZ) \, : \, \chi^2 (\hat \P, \P) \leq r \right\}
	\end{align*}
	in order to formulate robust lot-sizing problems. \citet{hanasusanto2013robust} use a Neyman ambiguity set with finite~$\cZ$ in the context of robust data-driven dynamic programming. Finally, \citet{hanasusanto2015distributionally} use the same ambiguity set to model the uncertainty in the mixture weights of multimodal demand distributions.
	
	\subsection{Optimal Transport Ambiguity Sets}
	\label{sec:optimal:transport}
	
	Optimal transport theory offers a natural way to quantify the difference between probability distributions and gives rise to a rich family of ambiguity sets. To explain this, we first introduce the notion of a transportation cost function. 
	
	\begin{definition}[Transportation Cost Function]
		\label{def:cost}
		A lower semicontinuous function $c: \cZ \times \cZ \to [0, +\infty]$ with $c(z, z) = 0$ for all~$z \in \cZ$ is a transportation cost function.
	\end{definition}
	
	Every transportation cost function induces an optimal transport discrepancy.
	
	\begin{definition}[Optimal Transport Discrepancy]
		\label{def:OT}
		The optimal transport discrepancy $\OT_c:\cP(\cZ) \times \cP(\cZ) \to [0, +\infty]$ associated with any given transportation cost function~$c$ is defined through 
		\begin{align}
			\label{eq:ot-discrepancy}
			\OT_c(\P, \hat{\P}) = \inf_{\gamma \in \Gamma(\P, \hat{\P})} \E_{\gamma} [c(Z, \hat Z)],
		\end{align}
		where $\Gamma(\P, \hat{\P})$ represents the set of all couplings~$\gamma$ of $\P$ and $\hat \P$, that is, all joint probability distributions of~$Z$ and~$\hat Z$ with marginals~$\P$ and~$\hat{\P}$, respectively. 
	\end{definition}
	
	By definition, we have~$\gamma \in \Gamma(\P, \hat \P)$ if and only if $\gamma((Z,\hat Z)\in\cB \times\cZ) = \P(Z\in\cB)$ and $\gamma((Z,\hat Z)\in\cZ \times \hat \cB) = \hat \P(\hat Z\in \hat\cB)$ for all Borel sets $\cB,\hat\cB \subseteq \cZ$. If the probability distributions~$\P$ and~$\hat \P$ are visualized as two piles of sand, then any coupling~$\gamma \in \Gamma(\P, \hat \P)$ can be interpreted as a transportation plan, that is, an instruction for morphing~$\hat \P$ into the shape of~$\P$ by moving sand between various origin-destination pairs in~$\cZ$. Indeed, for any fixed origin~$\hat z\in\cZ$, the conditional probability $\gamma(z\leq Z\leq z+\diff z|\hat Z=\hat z)$ determines the proportion of the sand located at~$\hat z$ that should be moved to (an infinitesimally small rectangle at) the destination~$z$. If the cost of moving one unit of probability mass from~$\hat z$ to~$z$ amounts to~$c(z,\hat z)$, then $\OT_c(\P, \hat{\P})$ is the minimal amount of money that is needed to morph~$\hat\P$ into~$\P$. 
	We now provide a dual representation for generic optimal transport discrepancies.
	
	\begin{proposition}[Kantorovich Duality I]
		\label{prop:kanorovich-duality}
		We have
		\begin{align}
			\label{eq:dual-OT-problem}
			\OT_c(\P, \hat \P) = \left\{ \begin{array}{cl}
				\displaystyle \sup_{f\in\cL^1(\P),\,g\in\cL^1(\hat\P)} & \displaystyle \int_{\cZ} f(z) \, \diff \P(z) - \int_{\cZ} g(\hat z) \, \diff \hat \P(\hat z) \\[3ex]
				\text{\em s.t.} &  f(z) - g(\hat z)\leq c(z,\hat z) \quad \forall z,\hat z\in\cZ,
			\end{array}\right.
		\end{align}
		where~$\cL^1(\P)$ and~$\cL^1(\hat\P)$ denote the sets of all Borel functions from~$\cZ$ to~$\R$ that are integrable with respect to~$\P$ and~$\hat\P$, respectively. 
	\end{proposition}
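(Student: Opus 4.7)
The plan is to prove the two inequalities separately: weak duality (the $\geq$ direction) is straightforward, while strong duality (the $\leq$ direction) requires convex duality combined with an approximation argument.

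For weak duality, I would observe that for any $\gamma \in \Gamma(\P, \hat\P)$ and any dual-feasible pair $(f,g) \in \cL^1(\P) \times \cL^1(\hat\P)$ with $f(z) - g(\hat z) \leq c(z, \hat z)$, the marginal identities characterizing couplings yield $\int f\,\diff\P - \int g\,\diff\hat\P = \int (f(z) - g(\hat z))\,\diff\gamma(z,\hat z) \leq \int c\,\diff\gamma$. Minimizing over $\gamma$ on the right and maximizing over $(f,g)$ on the left then establishes the $\geq$ direction. The only points to verify carefully are that $f$ and $g$ are $\gamma$-integrable (immediate from $\gamma \in \Gamma(\P,\hat\P)$ and $(f,g) \in \cL^1(\P) \times \cL^1(\hat\P)$) and that the subtraction is legal even when $\int c\,\diff\gamma = +\infty$.

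For strong duality, I would set up the primal as an infinite-dimensional linear program over the cone $\cM_+(\cZ \times \cZ)$ subject to two linear marginal constraints. Introducing Lagrange multipliers $(f,g)$ for the marginal constraints produces, upon minimizing the Lagrangian over $\gamma \succeq 0$, the pointwise constraint $c(z,\hat z) - f(z) + g(\hat z) \geq 0$ and a residual equal to $\int f\,\diff\P - \int g\,\diff\hat\P$. To make this rigorous, I would first specialize to the case where $c$ is bounded and continuous; in this case, Fenchel-Rockafellar duality in $C_b(\cZ \times \cZ)$, or equivalently a Hahn-Banach separation argument between the convex set $\{u + v : u \in \cL^1(\P),\, v \in \cL^1(\hat\P),\, u(z) + v(\hat z) \leq c(z,\hat z)\}$ and an appropriate epigraph, delivers the identity. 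I would then extend to general lower semicontinuous $c \geq 0$ by writing $c$ as the pointwise supremum of an increasing sequence of bounded continuous functions $c_n$ (possible because $c \geq 0$ is lsc on the metric space $\cZ \times \cZ$), applying strong duality at each level~$n$, and passing to the limit using tightness of $\Gamma(\P, \hat\P)$ (Prokhorov, since $\P$ and $\hat\P$ are Radon) and weak lower semicontinuity of $\gamma \mapsto \int c\,\diff\gamma$ on the primal side, together with monotonicity of the dual values on the dual side.

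The main obstacle is ensuring that the dual potentials remain in $\cL^1(\P)$ and $\cL^1(\hat\P)$ throughout the argument. I would address this via a $c$-concavity normalization: given any admissible $(f,g)$, replacing it by the pair $(f^{cc}, f^c)$ built from the $c$-transforms yields an admissible pair with a non-inferior dual objective that inherits integrability from the marginals under $c \geq 0$. A secondary subtlety is that, for general lsc~$c$, the dual supremum need not be attained; the proposition only asserts equality of values, and this is indeed all that the approximation scheme delivers. Finally, the $+\infty$ case (i.e., $\OT_c(\P,\hat\P) = +\infty$) has to be handled separately, but this is essentially a bookkeeping exercise: if every coupling assigns positive mass to $\{c = +\infty\}$, then one exhibits dual-feasible potentials of arbitrarily large objective value by exploiting the infinite cost there.
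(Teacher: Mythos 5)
Your overall architecture is the same as the paper's: the easy direction by integrating the constraint against an arbitrary coupling, and the hard direction via Fenchel--Rockafellar duality for a regular cost followed by an approximation/relaxation step. The paper, however, deliberately retreats to the case of \emph{compact} $\cZ$ (citing Villani for the general statement), precisely to avoid the one step where your proposal has a genuine gap: for non-compact $\cZ$, the topological dual of the space of bounded continuous functions on $\cZ\times\cZ$ is \emph{not} $\cM(\cZ\times\cZ)$ but the strictly larger space of finitely additive regular set functions. Running Fenchel--Rockafellar (or Hahn--Banach separation) in that pairing therefore produces a nonnegative \emph{finitely additive} functional with the prescribed marginals, and you must supply an extra argument -- typically inner regularity of $\P$ and $\hat\P$ plus the marginal constraints, which force tightness and hence countable additivity -- before you may call it a coupling in $\Gamma(\P,\hat\P)$. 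As written, "Fenchel--Rockafellar duality in $C_b(\cZ\times\cZ)$ delivers the identity" skips this, and it is exactly the point where naive proofs of Kantorovich duality on non-compact spaces break. Either restrict to compact $\cZ$ as the paper does (where the Riesz representation theorem identifies the dual of $\cC(\cZ\times\cZ)$ with $\cM(\cZ\times\cZ)$), or add the countable-additivity argument explicitly.

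Two smaller points. First, your monotone-approximation scheme $c_n\uparrow c$ is sound and in fact handles the $+\infty$ case automatically (if $\OT_{c_n}\to\infty$ then so do the dual values, since the dual feasible set for $c_n$ is contained in that for $c$), so the separate bookkeeping you describe for $\OT_c(\P,\hat\P)=+\infty$ is unnecessary -- and your stated fix there is not quite right anyway, since $\OT_c$ can be infinite even when every coupling gives zero mass to $\{c=+\infty\}$. Second, be careful with the claim that the $c$-transform normalization "inherits integrability from the marginals": with $c\geq 0$ one only gets that $\int f^c\,\diff\hat\P$ exists as an extended real number bounded above by $\int g\,\diff\hat\P$, not that $f^c\in\cL^1(\hat\P)$; the paper itself flags this and for that reason states the $c$-transform version (Corollary Kantorovich Duality II) without proof. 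Neither issue is fatal, but both require a sentence of care in a written-out proof.
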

	
	The dual problem ~\eqref{eq:dual-OT-problem} represents the profit maximization problem of a third party that redistributes the sand from~$\hat\P$ to~$\P$ on behalf of the problem owner by buying sand at the origin~$\hat z$ at unit price~$g(\hat z)$ and selling sand at the destination~$z$ at unit price~$f(z)$. The constraints ensure that it is cheaper for the problem owner to use the services of the third party instead of moving the sand without external help at the transportation cost~$c(z,\hat z)$ for every origin-destination pair~$(\hat z,z)$. The optimal price functions~$f^\star$ and~$g^\star$, if they exist, are termed Kantorovich potentials.
	
	\begin{proof}[Proof of Proposition~\ref{prop:kanorovich-duality}]
		For a general proof we refer to~\citep[Theorem~5.10\,(i)]{villani2008optimal}.
		We prove the claim under the simplifying assumption that~$\cZ$ is compact. 
		In this case, the family $\cC(\cZ \times \cZ)$ of all continuous (and thus bounded) functions $f:\cZ \times \cZ\to \R$ equipped with the supremum norm constitutes a Banach space. Its topological dual is the space $\cM(\cZ \times \cZ)$ of all finite signed Borel measures on $\cZ \times \cZ$ equipped with the total variation norm \citep[Corollary~7.18]{folland1999real}. This means that for every continuous linear functional $\varphi:\cC(\cZ\times\cZ)\to\R$ there exists $\gamma\in\cM(\cZ\times\cZ)$ such that $\varphi(f)=\int_{\cZ\times\cZ}f(z,\hat z)\,\diff\gamma(z,\hat z)$ for all $f\in\cC(\cZ\times\cZ)$.
		
		We first use the Fenchel–Rockafellar duality theorem to show that
		\begin{align}
			\label{eq:OT:Cb}
			\OT_c(\P, \hat \P) = \left\{ \begin{array}{cl}
				\displaystyle \sup_{f,g \in \cC(\cZ)} & \displaystyle \int_{\cZ} f(z) \, \diff \P(z) - \int_{\cZ} g(\hat z) \, \diff \hat \P(\hat z) \\[2ex]
				\text{s.t.} &  f(z) - g(\hat z)\leq c(z,\hat z) \quad \forall z,\hat z\in\cZ,
			\end{array}\right.
		\end{align}
		that is, we prove that strong duality holds if the price functions~$f$ and~$g$ in the dual problem are restricted to the space~$\cC(\cZ)$ of continuous functions from~$\cZ$ to~$\R$. To this end, we re-express the maximization problem in~\eqref{eq:OT:Cb} more compactly as
		\begin{equation}
			\label{eq:compact-dual-OT-problem}
			\sup_{h \in \cC(\cZ \times \cZ)}  -\phi(h) - \psi(h),
		\end{equation}
		where the convex functions $\phi, \psi : \cC(\cZ \times \cZ) \to (-\infty,+\infty]$ are defined through 
		\[
		\phi(h) = \left\{ \begin{array}{ll}
			0 & \text{if }-h(z, \hat z) \leq c(z, \hat z)~\forall z,\hat z\in\cZ,\\
			+\infty & \text{otherwise,}
		\end{array}\right.
		\]
		and 
		\[
		\psi(h) = \left\{ \begin{array}{ll}
			\displaystyle \int_{\cZ}\int_\cZ h(z,\hat z)\, \diff \P(z) \,\diff \hat \P(\hat z) & \left\{\begin{array}{l} \text{if }\exists f,g \in\cC(\cZ) \text{ with} \\
				h(z, \hat z) = g(\hat z) - f(z)~\forall z,\hat z\in\cZ, \end{array}\right.\\[2ex]
			+\infty & \text{otherwise.}
		\end{array}\right.
		\]
		Note that~\eqref{eq:compact-dual-OT-problem} can be viewed as the conjugate of $\phi+\psi$ with respect to the pairing of~$\cC(\cZ\times\cZ)$ and~$\cM(\cZ\times\cZ)$ evaluated at the zero measure. Note also that~$\phi$ is continuous at the constant function~$h_0\equiv 1$ because the transportation cost function~$c$ is non-negative. In addition, $h_0$ belongs to the domain of~$\psi$. The Fenchel–Rockafellar duality theorem \citep[Theorem~1.12]{brezis2011functional} thus ensures that the conjugate of the sum of the proper convex functions~$\phi$ and~$\psi$ coincides with the infimal convolution of their conjugates~$\phi^*$ and~$\psi^*$. Hence, \eqref{eq:compact-dual-OT-problem} equals
		\begin{align}
			\label{eq:fenchel:duality}
			(\phi+\psi)^*(0) = \inf_{\gamma \in \cM(\cZ\times\cZ)}  \phi^*(-\gamma) + \psi^*(\gamma).
		\end{align}
		It remains to evaluate the conjugates of~$\phi$ and~$\psi$. For any $\gamma\in\cM(\cZ\times\cZ)$ we have
		\begin{align*}
			\phi^*(-\gamma) 
			&= \sup_{h \in \cC(\cZ \times \cZ)} \left\{ - \int_{\cZ\times\cZ} h(z, \hat z) \, \diff \gamma(z, \hat z): -h(z, \hat z) \leq c(z, \hat z) ~ \forall z, \hat z \in \cZ \right\} \\
			&= \begin{cases}
				\displaystyle \int_{\cZ \times \cZ} c(z, \hat z) \, \diff \gamma(z, \hat z) & \text{if~} \gamma \in \cM_+(\cZ \times \cZ), \\
				+\infty & \text{otherwise,}
			\end{cases}
		\end{align*}
		where $\cM_+(\cZ \times \cZ)$ stands for the cone of finite Borel measures on~$\cZ\times\cZ$. Indeed, if $\gamma \in \cM_+(\cZ \times \cZ)$, then the second equality follows from the monotone convergence theorem, which applies because~$c$ is lower semicontinuous and can thus be written as the pointwise limit of a non-decreasing sequence of continuous functions (see also Lemma~\ref{lem:baire-semicontinuity} below). On the other hand, if $\gamma \not\in \cM_+(\cZ \times \cZ)$, then the second equality holds because every~$\gamma\in\cM(\cZ\times\cZ)$ is a Radon measure, which ensures that the measure of any Borel set can be approximated with the integral of a continuous function. Similarly, for any $\gamma\in\cM(\cZ\times\cZ)$ one readily verifies that
		\begin{align*}
			\psi^*(\gamma) = \begin{cases}
				0 &  \text{if~} \gamma\in\Gamma(\P,\hat\P), 
				\\
				+\infty & \text{otherwise.}
			\end{cases}
		\end{align*}
		Substituting the above formulas for~$\phi^*$ and~$\psi^*$ into~\eqref{eq:fenchel:duality} yields~\eqref{eq:OT:Cb}.
		
		Relaxing the requirement $f,g \in \cC(\cZ)$ to $f \in \cL^1(\P)$ and $g \in \cL^1(\hat \P)$ on the right hand side of~\eqref{eq:OT:Cb} immediately leads to the upper bound
		\begin{align}
			\label{eq:OT:upper}
			\OT_c(\P, \hat \P) \leq \left\{ \begin{array}{cl}
				\displaystyle \sup_{f\in\cL^1(\P),\,g\in\cL^1(\hat\P)} & \displaystyle \int_{\cZ} f(z) \, \diff \P(z) - \int_{\cZ} g(\hat z) \, \diff \hat \P(\hat z) \\[2ex]
				\text{s.t.} &  f(z) - g(\hat z)\leq c(z,\hat z) \quad \forall z,\hat z\in\cZ.
			\end{array}\right.
		\end{align}
		On the other hand, it is clear that
		\begin{align*}
			\OT_c(\P, \hat \P) = \inf_{\gamma \in \cM_+(\cZ \times \cZ)} \, \sup_{f \in \cL^1(\P), g \in \cL^1(\hat \P)} \,  \int_{\cZ \times \cZ} & \big( c(z, \hat z) - f(z) + g(\hat z) \big) \, \diff \gamma(z, \hat z) \\
			& + \int_{\cZ} f(z) \, \diff \P(z) - \int_{\cZ} g(\hat z) \, \diff \hat \P(\hat z).
		\end{align*}
		Interchanging the order of minimization and maximization in the above expression and then evaluating the inner infimum in closed form yields
		\begin{align}
			\label{eq:OT:lower}
			\OT_c(\P, \hat \P) \geq \left\{ \begin{array}{cl}
				\displaystyle \sup_{f\in\cL^1(\P),\,g\in\cL^1(\hat\P)} & \displaystyle \int_{\cZ} f(z) \, \diff \P(z) - \int_{\cZ} g(\hat z) \, \diff \hat \P(\hat z) \\[2ex]
				\text{s.t.} &  f(z) - g(\hat z)\leq c(z,\hat z) \quad \forall z,\hat z\in\cZ.
			\end{array}\right.
		\end{align}
		Combining~\eqref{eq:OT:upper} with~\eqref{eq:OT:lower} proves~\eqref{eq:dual-OT-problem}, and thus the claim follows.
	\end{proof}
	
	The dual optimal transport problem~\eqref{eq:dual-OT-problem} constitutes a linear program over the price functions~$f \in \cL^1(\P)$ and~$g \in \cL^1(\hat\P)$, and its objective function is linear in~$\P$ and~$\hat \P$. As pointwise suprema of linear functions are convex, $\OT_c(\P, \hat \P)$ is thus jointly convex in~$\P$ and~$\hat\P$. Problem~\eqref{eq:dual-OT-problem} can be further simplified by invoking the $c$-transform $f^c:\cZ\to(-\infty,+\infty]$ of the price function~$f$, which is defined through
	\begin{align}
		\label{eq:c-transofrm-of-f}
		f^c(\hat z) = \sup_{z \in \cZ} \, f(z) - c(z, \hat z).
	\end{align}
	The constraints of the dual problem~\eqref{eq:dual-OT-problem} can now be re-expressed as
	\begin{align*}
		g(\hat z) \geq f(z) - c(z, \hat z) \quad \forall z,\hat z \in \cZ\quad \iff \quad g(\hat z) \geq f^c(\hat z)\quad \forall \hat z \in \cZ.
	\end{align*}
	Note that problem~\eqref{eq:dual-OT-problem} seeks a price function~$g$ that is as {\em small} as possible. As~$g$ is lower bounded by~$f^c$, this suggests that~$g = f^c$  at optimality. Conversely, defining the $c$-transform $g^c:\cZ\to[-\infty,+\infty)$ of the price function~$g$ through
	\begin{align}
		\label{eq:c-transofrm-of-g}
		g^c(z) = \inf_{\hat z \in \cZ} \, g(\hat z) + c(z, \hat z),
	\end{align}
	the constraint of problem~\eqref{eq:dual-OT-problem} can be re-expressed as
	\begin{align*}
		f(z) \leq g(\hat z) + c(z, \hat z) \quad \forall z,
		\hat z\in \cZ\quad\iff \quad f(z) \leq g^c(z)\quad\forall z\in\cZ.
	\end{align*}
	This suggests that~$f = g^c$ at optimality. Note that $f^c$ and $g^c$ may fail to be integrable with respect to~$\hat\P$ and~$\P$, respectively. If~$f\in\cL^1(\P)$ and~$g\in\cL^1(\hat \P)$, however, then one can verify that the integrals $\int_{\cZ} f^c(\hat z)\, \diff \hat \P(\hat z) <+\infty$ and $\int_{\cZ} g^c(z) \,\diff \P(z)>-\infty$ exist as extended real numbers. The above insights culminate in the following corollary, which we state without proof. For details see \citet[Theorem~5.10\,(i)]{villani2008optimal}.
	
	\begin{corollary}[Kantorovich Duality II]
		\label{cor:kanorovich-duality}
		We have
		\begin{align*}
			\OT_c(\P, \hat \P) 
			&= \sup_{f \in \cL^1(\P)} ~ \int_\cZ f(z) \, \diff \P(z) - \int_\cZ f^c(\hat z) \, \diff \hat \P(\hat z) \\
			&= \sup_{g \in \cL^1(\hat \P)} ~ \int_\cZ g^c(z) \, \diff \P(z) - \int_\cZ g(\hat z) \, \diff \hat \P(\hat z), 
		\end{align*}
		where the $c$-transforms $f^c$ and $g^c$ are defined in~\eqref{eq:c-transofrm-of-f} and \eqref{eq:c-transofrm-of-g}, respectively. In addition, the first (second) supremum does not change if we require that $f=g^c$ ($g=f^c$) for some function $g:\cZ\to(-\infty,+\infty]$ ($f:\cZ\to[-\infty,+\infty)$).
	\end{corollary}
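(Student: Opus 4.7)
The plan is to derive the corollary from Proposition~\ref{prop:kanorovich-duality} by eliminating one of the two price functions in its dual optimal transport problem~\eqref{eq:dual-OT-problem}. For the first equality, I would fix~$f \in \cL^1(\P)$ and observe that the constraint $f(z) - g(\hat z) \leq c(z,\hat z)$ for all~$z,\hat z \in \cZ$ is equivalent to the pointwise lower bound $g(\hat z) \geq f^c(\hat z)$, with~$f^c$ as in~\eqref{eq:c-transofrm-of-f}. Since the dual objective subtracts~$\int_\cZ g(\hat z)\, \diff\hat\P(\hat z)$, for any fixed~$f$ the optimal choice is $g = f^c$. Before substituting, I would verify that $\int_\cZ f^c(\hat z)\, \diff\hat\P(\hat z)$ is well defined as an extended real number: the function~$f^c$ is upper semicontinuous and thus Borel measurable, and the bound $f^c \leq g$ for any feasible $g \in \cL^1(\hat\P)$ forces $(f^c)^+ \leq g^+$ to be $\hat\P$-integrable, so the integral takes values in~$[-\infty, +\infty)$. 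Substituting $g = f^c$ into the Kantorovich dual and taking the supremum over~$f$ then yields the first representation of~$\OT_c(\P, \hat\P)$.

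The second representation follows by the symmetric argument. Fixing~$g \in \cL^1(\hat\P)$, I would rewrite the constraint as $f(z) \leq g^c(z)$ for all~$z \in \cZ$ with~$g^c$ defined in~\eqref{eq:c-transofrm-of-g}, and since the dual objective is increasing in~$f$, the optimal choice is $f = g^c$. Integrability of~$g^c$ with respect to~$\P$ is again controlled by the feasibility bound $g^c \geq f$ for every $f \in \cL^1(\P)$, which makes $(g^c)^-$ dominated by~$f^-$ and hence $\P$-integrable.

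For the \emph{in addition} statement, I would invoke the Galois-type identities $f^{cc} \geq f$ and $(f^{cc})^c = f^c$. The first follows directly from the definition of~$f^c$, which gives $f(z) \leq f^c(\hat z) + c(z,\hat z)$ for all~$\hat z$ and hence $f(z) \leq \inf_{\hat z}[f^c(\hat z) + c(z,\hat z)] = f^{cc}(z)$. The second combines the inequality $f^{ccc} \geq f^c$ (obtained by applying the first identity to~$f^c$) with $f^{ccc} \leq f^c$ (which follows because $f^{cc}(z) \leq f^c(\hat z) + c(z,\hat z)$ for all~$z,\hat z$, so $f^{cc}(z) - c(z,\hat z) \leq f^c(\hat z)$ uniformly in~$z$). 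Together these imply that replacing any~$f$ in the first supremum by~$f^{cc}$ weakly improves the objective $\int_\cZ f \,\diff\P - \int_\cZ f^c \,\diff\hat\P$, and $f^{cc}$ is of the form~$g^c$ for $g = f^c$. The analogous inequalities $g^{cc} \leq g$ and $(g^{cc})^c = g^c$ handle the second supremum, so that $g$ can be restricted to functions of the form~$f^c$.

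The main technical obstacle is the careful bookkeeping around integrability, since the $c$-transforms $f^c$ and~$g^c$ are merely semicontinuous and may a priori take the values~$\pm\infty$. The argument is rescued at each step by the fact that the companion function appearing in the Kantorovich dual is integrable by assumption, which forces the relevant one-sided integral of $f^c$ or $g^c$ to be finite and hence the substituted objective to be well defined in the extended reals.
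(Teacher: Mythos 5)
Your argument is correct and follows essentially the same route as the paper, which obtains the corollary from Proposition~\ref{prop:kanorovich-duality} by eliminating one price function via the $c$-transform and in fact states the result without proof after sketching precisely this substitution together with the one-sided integrability remarks ($\int_\cZ f^c\,\diff\hat\P<+\infty$ and $\int_\cZ g^c\,\diff\P>-\infty$). Your handling of the final clause via the identities $f^{cc}\geq f$ and $f^{ccc}=f^c$ is the standard completion that the paper defers to \citet[Theorem~5.10\,(i)]{villani2008optimal}.
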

	
	Given any transportation cost function~$c$, reference distribution~$\hat\P\in\cP(\cZ)$ and transportation budget~$r\geq 0$, the optimal transport ambiguity set is defined as
	\begin{align}
		\label{eq:OT-ambiguity-set}
		\cP = \left\{ \P \in \cP(\cZ) \, : \, \OT_c(\P, \hat \P) \leq r \right\}.
	\end{align}
	By construction, $\cP$ contains all probability distributions~$\P$ that can be obtained by reshaping the reference distribution~$\hat \P$ at a finite cost of at most~$r \geq 0$. The optimal transport ambiguity set was first studied by \citet{pflug2007ambiguity}, who propose a successive linear programming algorithm to solve robust mean-risk portfolio selection problems when~$\cZ$ is {\em finite}. \citet{postek2016computationally} leverage tools from conjugate duality theory to develop an exact solution method for the same problem class. \citet{wozabal2012framework} and \citet[\ts~7.1]{pflug2014multistage} reformulate DRO problems with optimal transport ambiguity sets over {\em uncountable} support sets~$\cZ\subseteq\R^d$ as finite-dimensional {\em non}convex programs and address them with methods from global optimization. \citet{mohajerin2018data} and \citet{zhao2018data} use specialized duality results to show that these DRO problems are in fact equivalent to generalized moment problems that admit exact reformulations as finite-dimensional convex programs. \citet{blanchet2019quantifying,gao2016distributionally} as well as \citet{zhang2022simple} show that the underlying duality results remain valid even when~$\cZ$ is a Polish space. For recent surveys of the theory and applications of DRO with optimal transport ambiguity sets we refer to \citet{kuhn2019wasserstein} and \citet{blanchet2021statistical}.

	\subsubsection{$p$-Wasserstein Ambiguity Sets}
	\label{sec:p-Wasserstein-balls}
	It is common to set the transportation cost function~$c$ in Definition~\ref{def:OT} to the $p$-th power of some metric on~$\cZ$. In this case, the $p$-th root of the optimal transport discrepancy is termed the $p$-Wasserstein distance.
	
	\begin{definition}[$p$-Wasserstein Distance]
		\label{def:p-Wassertein}
		Assume that~$d(\cdot,\cdot)$ is a metric on~$\cZ$ and $p \in [1, +\infty)$ is a prescribed exponent. Then, the $p$-Wasserstein distance $\W_p:\cP(\cZ) \times \cP(\cZ) \to [0,+\infty]$ corresponding to~$d$ and~$p$ is defined via
		\begin{align*}
			\W_p(\P, \hat \P) = \inf_{\gamma \in \Gamma(\P, \hat{\P})} \left( \E_{\gamma} [d(Z, \hat Z)^p]  \right)^{\frac{1}{p}}.
		\end{align*}
	\end{definition}
	
	Definition~\ref{def:p-Wassertein} implies that if~$c(z, \hat z) = d(z, \hat z)^p$, then $W_p^p(\P, \hat \P) = \OT_c(\P, \hat \P)$. In the following we use $\cP_p(\cZ) = \{\P \in \cP(\cZ): \E_{\P}[d(Z,\hat z_0)^p] < \infty \}$ to denote the family of all distributions on~$\cZ$ with finite $p$-th moment. As~$d$ is a metric, $\cP_p(\cZ)$ is independent of the choice of the reference point~$\hat z_0\in\cZ$. The $p$-Wasserstein distance constitutes a metric on~$\cP_p(\cZ)$. Indeed, it is evident that~$W_p(\P, \hat \P)$ is symmetric and vanishes if and only if $\P=\hat\P$. The proof that $W_p(\P, \hat \P)$ obeys the triangle inequality requires a gluing lemma for transportation plans and is therefore more intricate; see, {\em e.g.}, \cite[\ts~1]{villani2008optimal}. The $p$-Wasserstein distance further metrizes the weak convergence of distributions {\em and} the convergence of their $p$-th moments. This means that $W_p(\P, \hat \P_N)$ converges to~$0$ if and only if $\hat\P_N$ converges weakly to~$\P$ {\em and} $\E_{\hat\P_N}[d(Z,\hat z_0)^p]$ converges to $\E_{\P}[d(Z,\hat z_0)^p]$ as~$N$ grows \citep[Theorem~6.9]{villani2008optimal}. Furthermore, the $p$-Wasserstein distance enjoys attractive measure concentration properties. Specifically, if~$\hat\P_N$ represents the empirical distribution obtained from~$N$ independent samples from~$\P$, then the rate at which~$\hat\P_N$ converges to~$\P$ in $p$-Wasserstein distance admits sharp asymptotic and finite-sample bounds \citep{fournier2015rate,weed2019sharp}. 
	
	As the $p$-Wasserstein distance constitutes the $p$-th root of an optimal transport discrepancy, Proposition~\ref{prop:kanorovich-duality} and Corollary~\ref{cor:kanorovich-duality} readily imply that it admits a dual representation. For $p=1$ this dual representation becomes particularly simple. Indeed, one can show that the $1$-Wasserstein distance coincides with the integral probability metric generated by all test functions that are Lipschitz continuous with respect to the metric~$d$ and have Lipschitz modulus at most~$1$.
	
	\begin{corollary}[Kantorovich-Rubinstein Duality]
		\label{cor:kanorovich-rubinstein-duality}
		We have
		\begin{align*}
			\W_1(\P, \hat \P) = \sup_{\substack{f \in \cL^1(\P),\,\lip(f) \leq 1}}  \; \int_{\cZ} f(z) \, \diff \P(z) - \int_{\cZ} f(\hat z) \, \diff \hat \P(\hat z).
		\end{align*}
	\end{corollary}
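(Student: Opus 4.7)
The plan is to specialize the Kantorovich duality in Proposition~\ref{prop:kanorovich-duality} to the transportation cost $c(z,\hat z) = d(z,\hat z)$ and then reduce the resulting supremum over pairs $(f,g)$ of price functions to a supremum over a single $1$-Lipschitz function. Since $\W_1(\P,\hat \P) = \OT_d(\P,\hat \P)$ by Definition~\ref{def:p-Wassertein}, Proposition~\ref{prop:kanorovich-duality} immediately yields
\begin{equation*}
    \W_1(\P, \hat\P) = \sup_{\substack{f\in\cL^1(\P),\, g\in\cL^1(\hat\P) \\ f(z) - g(\hat z) \,\leq\, d(z,\hat z)\, \forall z,\hat z}} \int_\cZ f(z) \, \diff \P(z) - \int_\cZ g(\hat z) \, \diff \hat\P(\hat z),
\end{equation*}
so it suffices to show that this supremum coincides with the one in the claim.

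The inequality ``$\geq$'' is immediate: for any $f \in \cL^1(\P)$ with $\lip(f) \leq 1$, the pair $(f,f)$ satisfies the feasibility constraint above by the very definition of $1$-Lipschitzness. For the reverse inequality ``$\leq$'', I would, given any admissible pair $(f,g)$, introduce the $c$-transform $g^c(z) := \inf_{\hat z \in \cZ}\, g(\hat z) + d(z,\hat z)$ from~\eqref{eq:c-transofrm-of-g} and rely on three observations: (i) $g^c$ is $1$-Lipschitz in $z$, as the pointwise infimum of the $1$-Lipschitz family $z \mapsto g(\hat z) + d(z, \hat z)$ indexed by $\hat z$; (ii) the admissibility constraint is equivalent to $f \leq g^c$, so replacing $f$ by $g^c$ weakly increases $\int f\, \diff\P$ while preserving feasibility; (iii) evaluating the defining infimum at $\hat z = z$ yields $g^c \leq g$ pointwise, so replacing $g$ by $g^c$ weakly increases $-\int g\, \diff\hat\P$, while the new pair $(g^c, g^c)$ is feasible thanks to (i). The resulting objective $\int g^c \, \diff \P - \int g^c \, \diff \hat\P$ is then majorized by the right-hand side of the claim, proving ``$\leq$''.

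The principal technical obstacle is integrability. To make the ``$\geq$'' step rigorous, one needs $f \in \cL^1(\hat\P)$ in addition to $f \in \cL^1(\P)$; this follows from the bound $|f(z)| \leq |f(z_0)| + d(z,z_0)$ valid for any anchor $z_0$, combined with $\hat\P \in \cP_1(\cZ)$, a condition that is implicit whenever $\W_1(\P,\hat\P)$ is finite. For the ``$\leq$'' step, the transformed function $g^c$ must lie in $\cL^1(\P) \cap \cL^1(\hat\P)$; the pointwise sandwich $f \leq g^c \leq g$ from (ii) and (iii), combined with the $1$-Lipschitz bound on $g^c$ and the finite first-moment condition on both $\P$ and $\hat\P$, delivers this integrability.
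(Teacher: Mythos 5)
Your proposal is correct and takes essentially the same approach as the paper: both reduce the two-function Kantorovich dual to a single $1$-Lipschitz test function by passing to the $c$-transform $g^c$, using that infima of $1$-Lipschitz functions are $1$-Lipschitz and the pointwise inequality $g^c \leq g$. The only structural difference is that the paper launches from Corollary~\ref{cor:kanorovich-duality} (which already encodes the $c$-transform reduction) and then verifies $f^c = f$ for $1$-Lipschitz $f$, whereas you argue directly from Proposition~\ref{prop:kanorovich-duality} by replacing an admissible pair $(f,g)$ with $(g^c,g^c)$; your version is slightly more self-contained, and your explicit treatment of integrability of $f$ and $g^c$ with respect to both marginals is a point the paper glosses over.
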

	\begin{proof}
		Corollary~\ref{cor:kanorovich-duality} implies that
		\begin{align*}
			\W_1(\P, \hat \P) = \sup_{\substack{f \in \cL^1(\P)}}  \; \int_{\cZ} f(z) \, \diff \P(z) - \int_{\cZ} f^c(\hat z) \, \diff \hat \P(\hat z).
		\end{align*}
		In addition, it ensures that the supremum does not change if we restrict the search space to functions that are representable as $f=g^c$ for some $g:\cZ\to(-\infty,+\infty]$. By~\eqref{eq:c-transofrm-of-g}, we thus have $f(z)=\inf_{\hat z\in\cZ} g(\hat z)+d(z,\hat z)$. For any fixed~$\hat z\in\cZ$, the auxiliary function $f_{\hat z}(z)=g(\hat z)+d(z,\hat z)$ is ostensibly $1$-Lipschitz with respect to the metric~$d$. As infima of $1$-Lipschitz functions remain $1$-Lipschitz, we thus find $\lip(f)\leq 1$. In summary, we have shown that restricting attention to $1$-Lipschitz functions does not reduce the supremum of the dual optimal transport problem. Next, we prove that~$\lip(f) \leq 1$ implies that~$f^c=f$. Indeed, for any $\hat z\in\cZ$ we have
		\begin{align*}
			f(\hat z)\leq \sup_{z\in\cZ} f(z)-d(z,\hat z)\leq \sup_{z\in\cZ} f(\hat z)+d(z,\hat z)-d(z,\hat z)=f(\hat z),
		\end{align*}
		where the two inequalities hold because $d(\hat z,\hat z)=0$ and~$\lip(f) \leq 1$, respectively. This implies via~\eqref{eq:c-transofrm-of-f} that $f(\hat z) = \sup_{z\in\cZ} f(z)-d(z,\hat z)=f^c(\hat z)$ for all~$\hat z\in\cZ$. Hence, $f^c$ coincides with~$f$ whenever $\lip(f) \leq 1$, and thus the claim follows. 
	\end{proof}

	The $p$-Wasserstein ambiguity set of radius~$r\geq 0$ around~$\hat\P\in\cP(\cZ)$ is defined as
	\begin{align}
		\label{eq:p-wasserstein-ball}
		\cP = \left\{ \P \in \cP(\cZ) \, : \, \W_p(\P, \hat \P) \leq r \right\}.
	\end{align}
	\citet{pflug2012} study robust portfolio selection problems, where the uncertainty about the asset return distribution is captured by a $p$-Wasserstein ball. They prove that---as~$r$ approaches infinity---it becomes optimal to distribute one's capital equally among all available assets. Hence, this result reveals that the popular $1/N$-investment strategy widely used in practice \citep{demiguel-1-over-n} is optimal under extreme ambiguity. \citet{pflug2012,pichler:2013} and \citet{wozabal2014robustifying} further show that, for a broad range of convex risk measures, the worst-case portfolio risk across all distributions in a $p$-Wasserstein ball equals the nominal risk under~$\hat\P$ plus a regularization term that scales with the Wasserstein radius~$r$; see also Section~\ref{sec:Lipschitz-continuous-risk-measures}. 
	
	The Wasserstein ambiguity set corresponding to $p=1$ enjoys particular prominence in DRO.
	The Kanthorovich-Rubinstein duality can be used to construct a simple upper bound on the worst-case expectation of a Lipschitz continuous loss function across all distributions in a $1$-Wasserstein ball. This upper bound is given by the sum of the expected loss under the nominal distribution~$\hat \P$ plus a regularization term that consists of the Lipschitz modulus of the loss function weighted by the radius~$r$ of the ambiguity set. \citet{shafieezadeh2015distributionally} demonstrate that this upper bound is exact for distributionally robust logistic regression problems. However, this exactness result extends in fact to many linear prediction models with convex \citep{ chen2018robust,chen2019selecting,blanchet2019robust,  shafieezadeh2019regularization,wu2022generalization} and even nonconvex loss functions \citep{gao2024wasserstein,ho2020adversarial}. More generally, $1$-Wasserstein ambiguity sets have found numerous applications in diverse areas such as two-stage and multi-stage stochastic programming \citep{zhao2018data,hanasusanto2018conic,duque2020distributionally, bertsimas2023data}, chance constrained programming \citep{chen2018data,xie2019distributionally,ho2020distributionally,shen2020chance}, inverse optimization \citep{esfahani2018inverse}, statistical learning \citep{blanchet2019multivariate,zhu2022distributionally}, hypothesis testing \citep{gao2018robust}, contextual stochastic optimization~\citep{zhang2024optimal}, transportation \citep{sun2023distributionally}, control \citep{cherukuri2020cooperative,yang2020wasserstein,boskos2020data,li2020data,coulson2021distributionally,aolaritei2022uncertainty,terpin2022trust,terpin2024dynamic}, and power systems analysis \citep{wang2018risk,ordoudis2021energy}, among others.
	
	The Wasserstein ambiguity set corresponding to $p=2$ also enjoys wide popularity. Before reviewing its various uses, we highlight an interesting connection between the $2$-Wasserstein distance and the Gelbrich distance introduced in Section~\ref{sec:chebyshev-with-moment-uncertainty} (see Definition~\ref{def:Gelbrich}). As pointed out by \citet[Theorem~2.1]{gelbrich1990formula}, the $2$-Wasserstein distance between two probability distributions provides an upper bound on the Gelbrich distance between their mean-covariance pairs.

	\begin{theorem}[Gelbrich Bound]
		\label{theorem:gelbrich}
		Assume that $\cZ$ is equipped with the Euclidean metric $d(z, \hat z) = \| z - \hat z \|_2$.
		For any distributions $\P,\hat \P\in\cP(\cZ)$ with finite mean vectors $\mu,\hat\mu \in \R^d$ and covariance matrices $\Sigma$, $\hat \Sigma \in \S_+^d$, respectively, we have 
		$$\W_2(\P, \hat \P) \geq \G ((\mu, \Sigma), (\hat \mu, \hat \Sigma)).$$
	\end{theorem}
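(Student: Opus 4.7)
The plan is to reduce the inequality to the semidefinite representation of the Gelbrich distance established in Proposition~\ref{prop:Gelbrich:SDP}. Since $d(z,\hat z) = \|z-\hat z\|_2$, we have $\W_2^2(\P,\hat\P) = \inf_{\gamma \in \Gamma(\P,\hat\P)} \E_\gamma[\|Z-\hat Z\|_2^2]$, so it suffices to bound $\E_\gamma[\|Z-\hat Z\|_2^2]$ from below by $\G^2((\mu,\Sigma),(\hat\mu,\hat\Sigma))$ for every coupling $\gamma$.

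First I would fix an arbitrary $\gamma \in \Gamma(\P,\hat\P)$ and introduce the cross-covariance $C = \E_\gamma[(Z-\mu)(\hat Z-\hat\mu)^\top]$. Expanding the squared norm and exploiting that $Z$ has mean $\mu$ and covariance $\Sigma$ under $\gamma$ while $\hat Z$ has mean $\hat\mu$ and covariance $\hat\Sigma$ under $\gamma$ yields the identity
\begin{equation*}
    \E_\gamma[\|Z-\hat Z\|_2^2] = \|\mu-\hat\mu\|_2^2 + \Tr(\Sigma + \hat\Sigma - 2C).
\end{equation*}
Next I would observe that the joint covariance matrix of the random vector $(Z,\hat Z)$ under $\gamma$ is
\begin{equation*}
    \begin{bmatrix} \Sigma & C \\ C^\top & \hat\Sigma \end{bmatrix} \succeq 0,
\end{equation*}
because every covariance matrix is positive semidefinite. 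Hence $C$ is feasible in the minimization problem on the right hand side of~\eqref{eq:Gelbrich:SDP}, and Proposition~\ref{prop:Gelbrich:SDP} implies
\begin{equation*}
    \E_\gamma[\|Z-\hat Z\|_2^2] = \|\mu-\hat\mu\|_2^2 + \Tr(\Sigma + \hat\Sigma - 2C) \geq \G^2\big((\mu,\Sigma),(\hat\mu,\hat\Sigma)\big).
\end{equation*}
Taking the infimum over all $\gamma \in \Gamma(\P,\hat\P)$ and then extracting the square root yields the desired inequality.

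There is no real obstacle: the argument is essentially an identification of the Kantorovich coupling with a feasible point of the SDP dual characterization of the squared Gelbrich distance. The only mild subtlety is recognizing that the cross-covariance block $C$ produced by an arbitrary coupling is automatically admissible in the semidefinite program, which is why the Gelbrich distance is a lower bound rather than an equality in general (equality would require the existence of a coupling whose $C$ attains the SDP optimum, which additionally demands Gaussian-like structure on the marginals).
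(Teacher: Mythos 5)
Your proof is correct and follows essentially the same route as the paper: both arguments expand $\E_\gamma[\|Z-\hat Z\|_2^2]$ in terms of the first and second moments of the coupling and observe that the cross-covariance block $C$ of any $\gamma\in\Gamma(\P,\hat\P)$ is automatically feasible in the semidefinite program of Proposition~\ref{prop:Gelbrich:SDP}, whose optimal value is $\G^2((\mu,\Sigma),(\hat\mu,\hat\Sigma))$. The paper merely phrases this as relaxing the optimal transport problem by dropping all constraints involving $\gamma$, which is the same observation.
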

	\begin{proof}
		By definition, the squared $2$-Wasserstein distance satisfies
		\begin{align*}
			& \W_2^2(\P, \hat \P) =  \inf_{\gamma \in \Gamma(\P, \hat \P)}~ \int_{\cZ \times \cZ} \| z - \hat z \|_2^2 \, \diff \gamma(z, \hat z) \\[0.5em]
			& = \left\{
			\begin{array}{cl}
				\inf & \| \mu - \hat \mu \|_2^2 + \Tr[\Sigma + \hat \Sigma - 2C] \\[1ex]
				\st &  \gamma \in \Gamma(\P, \hat \P),\; C \in \R^{d \times d} \\[1ex]
				& \displaystyle \int_{\cZ \times \cZ} \begin{bmatrix} z-\mu \\ \hat z-\hat \mu \end{bmatrix} \begin{bmatrix} z-\mu \\ \hat z -\hat\mu\end{bmatrix}^\top \diff \gamma( z,  \hat z) = \begin{bmatrix} \Sigma & C \\ C^\top & \hat \Sigma \end{bmatrix},\quad  \begin{bmatrix} \Sigma & C \\ C^\top & \hat \Sigma \end{bmatrix}\succeq 0.
			\end{array} \right. 
		\end{align*}
		Note that the new decision variable~$C$ is uniquely determined by the transportation plan~$\gamma$, that is, it represents the cross-covariance matrix of~$Z$ and~$\hat Z$ under~$\gamma$. Thus, its presence does not enlarge the feasible set. Note also that the linear matrix inequality in the last expression is redundant because the second-order moment matrix of~$\gamma$ is necessarily positive semidefinite. Thus, its presence does not reduce the feasible set. Finally, note that the integral of the quadratic function
		\begin{align*}
			\| z - \hat z \|_2^2 = & \| \mu - \hat \mu \|_2^2 + \| z - \mu \|_2^2 + \| \hat z - \hat \mu \|_2^2 -2(z-\mu)^\top(\hat z-\hat \mu) \\
			& + 2(\mu-\hat\mu)^\top (z-\mu) - 2(\mu-\hat\mu)^\top (\hat z-\hat\mu)
		\end{align*}
		with respect to~$\gamma$ is uniquely determined by the first- and second-order moments of~$\gamma$ and evaluates to $\| \mu - \hat \mu \|_2^2 + \Tr[\Sigma + \hat \Sigma - 2C]$. Relaxing the last optimization problem by removing all constraints that involve~$\gamma$ then yields
		\begin{align*}
			\W_2^2(\P, \hat \P) \geq 
			\left\{
			\begin{array}{cl}
				\displaystyle \min_{C \in \R^{d \times d}} & \| \mu - \hat \mu \|_2^2 + \Tr[\Sigma + \hat \Sigma - 2C]\\
				\st & \begin{bmatrix} \Sigma & C \\ C^\top & \hat \Sigma \end{bmatrix} \succeq 0. 
			\end{array}
			\right.
		\end{align*}
		By Proposition~\ref{prop:Gelbrich:SDP}, the optimal value of the resulting semidefinite program amounts to $\G^2( (\mu, \Sigma), (\hat \mu, \hat \Sigma))$. The claim follows by taking square roots on both sides.
	\end{proof}
	
	The proof of Theorem~\ref{theorem:gelbrich} reveals that the squared Gelbrich distance coincides with the minimum of a relaxed optimal transport problem, which only requires the marginals of the transportation plan~$\gamma$ to have the same first- and second-order moments as~$\P$ and~$\hat\P$, respectively. Gelbrich's inequality may be useful when the exact $2$-Wasserstein distance is inaccessible. Indeed, computing the $2$-Wasserstein distance between a discrete and a continuous distribution is $\#$P-hard already when the discrete distribution has only two atoms \citep{taskesen2021semi}. Computing the $2$-Wasserstein distance may even be $\#$P-hard when both distributions are discrete \citep{taskesen2023discrete}. If both~$\P$ and~$\hat\P$ are Gaussian, then Gelbrich's inequality collapses to an equality. Thus, the $2$-Wasserstein distance between two Gaussian distributions matches the Gelbrich distance between their mean vectors and covariance matrices \citep[Proposition~7]{givens1984class}. This classical result, which actually predates Gelbrich's inequality, is nowadays recognized as an immediate consequence of a celebrated optimality condition for optimal transport problems by \citet{brenier1991polar}. Using Brenier's optimality condition, one can prove more generally that if $\hat \P$ is a positive semidefinite affine pushforward of~$\P$, that is, if there exists an affine function $f(z) = A z + b$ with $A \in \S_+^d$ and $b \in \R^d$ such that $\hat \P = \P \circ f^{-1}$, then the $2$-Wasserstein distance between~$\P$ and~$\hat\P$ matches again the Gelbrich distance between their mean vectors and covariance matrices \citep[Theorem~2]{nguyen2021mean}. 
	
	The $2$-Wasserstein ambiguity set has found applications in machine learning \citep{sinha2018certifying,blanchet2019robust,blanchet2019confidence,blanchet2018optimal}, inverse optimization \citep{esfahani2018inverse}, two-stage stochastic programming \citep{hanasusanto2018conic}, estimation and filtering \citep{shafieezadeh2018wasserstein,nguyen2019bridging,kargin2024distributionally}, portfolio optimization~\citep{blanchet2018distributionally,nguyen2021mean} as well as control theory \citep{al2023distributionally,hajar2023wasserstein,hakobyan2024wasserstein,taskesen2024distributionally,kargin2024lqr,kargin2024infinite,kargin2024wasserstein}.
	
	\subsubsection{L\'evy-Prokhorov Ambiguity Sets}
	\label{sec:LP-ambiguity-sets}
	The L\'evy-Prokhorov distance is one of the most widely used probability metrics because it metrizes the topology of weak convergence on~$\cP(\cZ)$. We assume below that~$d(\cdot,\cdot)$ is a continuous metric on~$\cZ$. For any set~$\cB\subseteq\cZ$ and $r\geq 0$, we use 
	\begin{align}
		\label{eq:Br}
		\cB_r = \left\{z \in \cZ: \exists z' \in \cB \text{ with } d(z,z') \leq r \right\}
	\end{align}
	to denote the $r$-neighborhood of~$\cB$. The dependence of~$\cB_r$ on the metric~$d$ is notationally suppressed because~$d$ is usually obvious from the context. With these preparations, we are now ready to define the L\'evy-Prokhorov distance.
	
	\begin{definition}[L\'evy-Prokhorov Distance]
		For any metric $d(\cdot,\cdot)$ on~$\cZ$, the L\'evy-Prokhorov distance $\LP:\cP(\cZ) \times \cP(\cZ) \to [0,1]$ induced by~$d$ is defined via
		\begin{align*}
			\LP (\P, \hat{\P}) = \inf \left\{r\geq 0: \P(\cB)  \leq \hat\P(\cB_r) +r \text{ for all Borel sets } \cB\subseteq\cZ\right\},
		\end{align*}
		where $\cB_r$ is defined in~\eqref{eq:Br}.
	\end{definition}
	
	The L\'evy-Prokhorov distance is bounded by~$1$ and vanishes if and only if its arguments match. In addition, one can easily show that it satisfies the triangle inequality. However, it appears to be asymmetric. The next proposition reveals that the L\'evy-Prokhorov distance is closely linked to the theory of optimal transport. 
	
	\begin{proposition}[\citet{strassen1965existence}]
		\label{prop:dual-levy-prokhorov}
		If the transportation cost function~$c_r$ corresponding to $r\geq 0$ is defined through $c_r(z,\hat z)=\ds 1_{d(z,\hat z)>r}$ for all $z,\hat z\in\cZ$, then
		\begin{align*}
			\LP (\P, \hat{\P}) = \inf \left\{r \geq 0: \OT_{c_r}(\P, \hat{\P})\leq r \right\}.
		\end{align*}
	\end{proposition}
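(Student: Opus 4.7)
The plan is to establish the identity by showing that, for each fixed $r\geq 0$, the following two conditions are equivalent:
\begin{itemize}
\item[(a)] $\P(\cB)\leq\hat\P(\cB_r)+r$ for every Borel set $\cB\subseteq\cZ$;
\item[(b)] $\OT_{c_r}(\P,\hat\P)\leq r$.
\end{itemize}
Once the equivalence is in hand, both infima in the proposition are taken over the same set of admissible $r\geq 0$, and the claim follows.

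For the direction (b)$\implies$(a), I would apply Kantorovich duality (Proposition~\ref{prop:kanorovich-duality}) with the bounded Borel test pair $f=\ds 1_\cB$ and $g=\ds 1_{\cB_r}$. Checking the dual-feasibility constraint $f(z)-g(\hat z)\leq c_r(z,\hat z)$ reduces to a short case analysis; the only non-trivial case is $z\in\cB$ and $\hat z\notin\cB_r$, in which case the definition of the closed $r$-neighborhood $\cB_r$ forces $d(z,\hat z)>r$ and hence $c_r(z,\hat z)=1$. Weak duality then delivers $\P(\cB)-\hat\P(\cB_r)\leq\OT_{c_r}(\P,\hat\P)\leq r$.

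The more substantive direction (a)$\implies$(b) is the core of Strassen's theorem, and I would handle it via Kantorovich duality combined with a layer-cake decomposition. Using Corollary~\ref{cor:kanorovich-duality} and the invariance of the dual objective under simultaneous constant shifts of $f$ and $g$ (and after a truncation $g\mapsto\min\{g,M\}$ to reduce to bounded $g$), it suffices to bound $\int f\,\diff\P-\int g\,\diff\hat\P$ by $r$ for dual-feasible pairs $(f,g)$ with $f=g^{c_r}$ normalized so that $\inf_{\hat z\in\cZ}g(\hat z)=0$. Since $c_r$ is bounded by $1$, this normalization yields $0\leq f\leq 1$ and $g\geq 0$. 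The decisive step is the level-set inclusion
\begin{align*}
    \{f>t\}_r\subseteq\{g>t\}\qquad\forall t\in\R,
\end{align*}
which is forced by dual feasibility: any $\hat z\in\{f>t\}_r$ admits some $z\in\{f>t\}$ with $d(z,\hat z)\leq r$, so $c_r(z,\hat z)=0$ and $g(\hat z)\geq f(z)>t$. Applying hypothesis~(a) to $\cB=\{f>t\}$ and combining with this inclusion gives $\P(f>t)-\hat\P(g>t)\leq r$ for every $t\geq 0$, so the layer-cake formula produces
\begin{align*}
    \int_{\cZ}f\,\diff\P-\int_{\cZ}g\,\diff\hat\P=\int_0^\infty\left(\P(f>t)-\hat\P(g>t)\right)\diff t\leq\int_0^1 r\,\diff t+\int_1^\infty 0\,\diff t=r,
\end{align*}
where on $(1,\infty)$ the bound $f\leq 1$ annihilates $\P(f>t)$ and renders the integrand non-positive. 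Taking the supremum over all dual-feasible pairs yields~(b).

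The main obstacle is the normalization and truncation that reduces to dual-feasible pairs with $0\leq f\leq 1$ and $g\geq 0$, together with the identification of the inclusion $\{f>t\}_r\subseteq\{g>t\}$ as the precise mechanism by which the pointwise dual constraint interacts with hypothesis~(a). With both implications established, taking the infimum over admissible $r\geq 0$ on each side of the equivalence yields the claimed identity.
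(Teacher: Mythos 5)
Your proposal is correct and follows essentially the same route as the paper's proof: Kantorovich duality for $\OT_{c_r}$, normalization of the dual potentials to $[0,1]$, a layer-cake representation, and the key observation that dual feasibility forces the $r$-neighborhood inclusion between level sets of $f$ and $g$. The only organizational difference is that the paper pushes the dual all the way down to characteristic functions (yielding the exact formula $\OT_{c_r}(\P,\hat\P)=\sup_{\cB}\,\P(\cB)-\hat\P(\cB_r)$, recorded as Corollary~\ref{cor:dual:OT:c:r}), whereas you prove the two inequalities of the equivalence separately, which suffices for the proposition.
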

	
	\begin{proof}
		Note that $c_r$ is lower semicontinuous because the metric~$d$ is continuous by assumption.
		By Proposition~\ref{prop:kanorovich-duality}, $\OT_{c_r}(\P, \hat{\P})$ thus admits the dual representation
		\begin{align}
			\label{eq:levi-prokhorov-duality}
			\begin{array}{cl}
				\displaystyle \sup_{f\in\cL^1(\P),\,g\in\cL^1(\hat\P)} & \displaystyle \int_{\cZ} f(z) \, \diff \P(z) - \int_{\cZ} g(\hat z) \, \diff \hat \P(\hat z) \\
				\text{s.t.} & f(z)-g(\hat z)\leq \ds 1_{d(z,\hat z)>r} \quad \forall z,\hat z\in\cZ.
			\end{array}
		\end{align}
		Here, for any fixed~$g$, it is optimal to push~$f$  up such that for all $z\in\cZ$ we have
		\begin{subequations}
			\begin{align}
				\label{eq:f:bounds}
				f(z) = \inf_{\hat z\in\cZ}  g(\hat z) + \ds 1_{d(z,\hat z)>r} \implies  \inf_{\hat z\in\cZ}  g(\hat z) \leq f(z) \leq 1 + \inf_{\hat z\in\cZ}  g(\hat z).   
			\end{align}
			Also, for any fixed $f$, it is optimal to push $g$ down such that for all $\hat z\in\cZ$ we have
			\begin{align}
				\label{eq:g:bounds}
				g(\hat z) = \sup_{z\in\cZ}  f(z) - \ds 1_{d(z,\hat z)>r} \implies \sup_{z\in\cZ}  f(z)-1\leq g(\hat z)\leq \sup_{z\in\cZ}  f(z).   
			\end{align}
		\end{subequations}
		Combining the upper bound on $g(\hat z)$ in~\eqref{eq:g:bounds} with the upper bound on $f(z)$ in~\eqref{eq:f:bounds} further implies that $g(\hat z) \leq \sup_{z \in \cZ} f(z) \leq 1 + \inf_{z' \in \cZ}  g(z')$. At optimality, \eqref{eq:f:bounds} and~\eqref{eq:g:bounds} must hold simultaneously, and thus we have
		\[
		\inf_{ z'\in\cZ}  g( z') \leq f(z) \leq 1 + \inf_{z'\in\cZ}  g(z') \quad \text{and} \quad \inf_{ z'\in\cZ}  g( z') \leq g(\hat z) \leq 1 + \inf_{z'\in \cZ}  g(z')
		\]
		for all $z,\hat z\in\cZ$. Note that, as both~$\P$ and~$\hat \P$ are probability distributions, the objective function of the dual optimal transport problem~\eqref{eq:levi-prokhorov-duality} remains invariant under the substitutions~$f(z)\leftarrow f(z)-\inf_{z'\in\cZ} g(z')$ and~$g(\hat z) \leftarrow g(\hat z) - \inf_{z'\in\cZ}g (z')$. In the following, we may thus assume without loss of generality that $0\leq f(z)\leq 1$ for all~$z\in\cZ$ and that $0\leq g(\hat z)\leq 1$ for all~$\hat z\in\cZ$.
		
		As $f$ and $g$ are now normalized to~$[0,1]$, they admit the integral representations
		\[
		f(z)=\int_0^1 \ds 1_{f(z)\geq \tau}\,\diff\tau \quad\forall z\in\cZ\quad \text{and} \quad g(\hat z)=\int_0^1 \ds 1_{g(\hat z)\geq \tau}\,\diff\tau \quad \forall \hat z\in\cZ.
		\]
		Next, one can show that~$f$ and~$g$ satisfy the constraints in~\eqref{eq:levi-prokhorov-duality} if and only if
		\begin{align}
			\label{eq:levi-prokhorov-duality-constraints}
			\ds 1_{f(z)\geq \tau} -\ds 1_{g(\hat z)\geq \tau} \leq \ds 1_{d(z,\hat z)>r} \quad \forall z,\hat z\in\cZ,~\forall \tau\in[0,1].
		\end{align}
		Note first that~\eqref{eq:levi-prokhorov-duality-constraints} is trivially satisfied unless its left hand side evaluates to~$1$ and its right hand side evaluates to~$0$. This happens if and only if $f(z)\geq\tau$ and $g(\hat z)<\tau$ for some $\tau\in[0,1]$ and $z,\hat z\in\cZ$ with $d(z,\hat z)\leq r$. This is impossible, however, because it implies that $f(z)-g(\hat z)>0$ for some $z,\hat z$ with $d(z,\hat z)\leq r$, thus contradicting the constraints in~\eqref{eq:levi-prokhorov-duality}. Hence, the constraints in~\eqref{eq:levi-prokhorov-duality} imply~\eqref{eq:levi-prokhorov-duality-constraints}. The converse implication follows immediately from the integral representations of~$f$ and~$g$. 
		
		Finally, note that $\ds 1_{f(z)\geq \tau}$ and $\ds 1_{g(\hat z)\geq \tau}$ are the characteristic functions of the Borel sets $\cB=\{z\in\cZ:f(z)\geq \tau\}$ and $\cC=\{\hat z\in\cZ:g(\hat z)\geq\tau\}$, respectively. Note also that~\eqref{eq:levi-prokhorov-duality-constraints} holds if and only if $\cC\supseteq\cB_r$. Recalling their integral representations, we may thus conclude that the functions~$f$ and~$g$ are feasible in~\eqref{eq:levi-prokhorov-duality} if and only if they represent convex combinations of (infinitely many) characteristic functions of the form $\ds 1_{z\in\cB}$ and $\ds 1_{\hat z\in\cC}$ for some Borel sets~$\cB$ and~$\cC$ with~$\cC\supseteq\cB_r$. As the objective function of~\eqref{eq:levi-prokhorov-duality} is linear in~$f$ and~$g$, its supremum does not change if we restrict the feasible set to such characteristic functions. Hence, \eqref{eq:levi-prokhorov-duality} reduces to
		\begin{align*}
			\OT_{c_r}(\P, \hat{\P}) = \sup \left\{ \P(\cB)  -\hat\P(\cC) : \cB,\cC\subseteq\cZ\text{ are Borel sets with } \cC\supseteq\cB_r \right\}.
		\end{align*}
		Clearly, it is always optimal to set $\cC=\cB_r$, and thus the claim follows.
	\end{proof}
	
	While Proposition~\ref{prop:dual-levy-prokhorov} follows from \cite[Theorem~11]{strassen1965existence}, the proof shown here parallels that of \cite[Theorem~1.27]{villani2003topics}. As a byproduct, Proposition~\ref{prop:dual-levy-prokhorov} reveals that the L\'evy-Prokhorov distance is symmetric, which is not evident from its definition. Thus, it constitutes indeed a metric. 
	
	The L\'evy-Prokhorov ambiguity set of radius~$r\geq 0$ around~$\hat\P\in\cP(\cZ)$ is defined~as
	\[
	\cP= \left\{ \P \in \cP(\cZ) \, : \, \LP(\P, \hat \P) \leq r \right\}.
	\]
	For our purposes, the most important implication of Proposition~\ref{prop:dual-levy-prokhorov} is that $\cP$ can be viewed as special instance of an optimal transport ambiguity set, that is, we have
	\[
	\cP= \left\{ \P \in \cP(\cZ) \, : \, \OT_{c_r}(\P, \hat{\P}) \leq r \right\}
	\]
	for any radius $r\geq 0$. L\'evy-Prokhorov ambiguity sets were first introduced in the context of chance-constrained programming \citep{erdougan2006ambiguous}. They also naturally emerge in data-driven decision-making and the training of robust machine learning models \citep{pydi2021adversarial,bennouna2023holistic,bennouna2023certified}. We close this section with a useful corollary, which follows immediately from the last part of the proof of Proposition~\ref{prop:dual-levy-prokhorov}.
	
	\begin{corollary}
		\label{cor:dual:OT:c:r}
		If the transportation cost function~$c_r$ corresponding to $r\geq 0$ is defined through $c_r(z,\hat z)=\ds 1_{d(z,\hat z)>r}$ for all $z,\hat z\in\cZ$, then we have
		\begin{align*}
			\OT_{c_r}(\P, \hat{\P}) = \sup \left\{ \P(\cB) - \hat\P(\cB_r) : \cB \, \subseteq \cZ\text{ is a Borel set} \right\},
		\end{align*}
		where the $r$-neighborhood $\cB_r$ is defined in~\eqref{eq:Br}.
	\end{corollary}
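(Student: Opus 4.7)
The plan is to retrace the argument already developed in the proof of Proposition~\ref{prop:dual-levy-prokhorov} and stop one step earlier, before the infimum over the radius is taken. Concretely, I would start by applying Proposition~\ref{prop:kanorovich-duality} to the transportation cost $c_r(z,\hat z)=\ds 1_{d(z,\hat z)>r}$, which is lower semicontinuous because $d$ is continuous. This yields
\[
\OT_{c_r}(\P,\hat\P) = \sup\left\{\int_\cZ f\,\diff\P-\int_\cZ g\,\diff\hat\P\;:\; f(z)-g(\hat z)\leq \ds 1_{d(z,\hat z)>r}\;\forall z,\hat z\in\cZ\right\},
\]
where $f\in\cL^1(\P)$ and $g\in\cL^1(\hat\P)$.

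Next, I would reproduce verbatim the normalization step from the proof of Proposition~\ref{prop:dual-levy-prokhorov}: the dual constraints force $f$ and $g$ to differ by at most~$1$ pointwise, so after subtracting the common constant $\inf_{z'\in\cZ} g(z')$, which leaves the objective invariant since $\P$ and $\hat\P$ are probability distributions, we may assume $0\leq f,g\leq 1$ without loss of generality. With this normalization, the layer-cake formulas $f(z)=\int_0^1 \ds 1_{f(z)\geq\tau}\,\diff\tau$ and $g(\hat z)=\int_0^1 \ds 1_{g(\hat z)\geq\tau}\,\diff\tau$ show that the constraint $f(z)-g(\hat z)\leq \ds 1_{d(z,\hat z)>r}$ is equivalent to $\ds 1_{f(z)\geq\tau}-\ds 1_{g(\hat z)\geq\tau}\leq \ds 1_{d(z,\hat z)>r}$ for every $\tau\in[0,1]$, which in turn means that the superlevel set $\cC_\tau=\{\hat z:g(\hat z)\geq\tau\}$ contains the $r$-neighborhood of $\cB_\tau=\{z:f(z)\geq\tau\}$.

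Since the objective is linear in $f$ and $g$, the supremum is unchanged when we restrict to characteristic functions of Borel sets. This reduces the dual program to
\[
\OT_{c_r}(\P,\hat\P) = \sup\left\{\P(\cB)-\hat\P(\cC)\;:\;\cB,\cC\subseteq\cZ\text{ Borel},\;\cC\supseteq\cB_r\right\}.
\]
Finally, for any fixed Borel set $\cB$, the admissible $\cC$ minimizing $\hat\P(\cC)$ is $\cC=\cB_r$ itself (which is Borel because $z\mapsto d(z,\cB)$ is $1$-Lipschitz and therefore Borel measurable), giving $\sup_\cC\{\P(\cB)-\hat\P(\cC):\cC\supseteq\cB_r\}=\P(\cB)-\hat\P(\cB_r)$. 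Taking the supremum over $\cB$ yields the claimed identity. There is no real obstacle here, as every nontrivial step has already been carried out inside the proof of Proposition~\ref{prop:dual-levy-prokhorov}; the only point requiring mild care is the measurability of $\cB_r$, which follows from the continuity of the metric.
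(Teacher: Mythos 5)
Your proposal is correct and follows essentially the same route as the paper: the paper states that Corollary~\ref{cor:dual:OT:c:r} follows immediately from the last part of the proof of Proposition~\ref{prop:dual-levy-prokhorov}, which is precisely the argument you retrace (Kantorovich duality, normalization to $[0,1]$, layer-cake decomposition, reduction to characteristic functions, then choosing $\cC=\cB_r$). Your additional remark on the Borel measurability of $\cB_r$ is a sound, if minor, clarification that the paper leaves implicit.
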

	
	\subsubsection{Total Variation Ambiguity Sets Revisited}
	In Section~\ref{section:TV} we showed that the total variation distance constitutes an instance of a $\phi$-divergence; see Proposition~\ref{prop:tv}. We can now demonstrate that the total variation distance is also an instance of an optimal transport discrepancy.  
	
	\begin{proposition}
		\label{prop:TV=OT}
		If $c(z, \hat z) = \ds{1}_{z\neq \hat z}$ for all $z,\hat z \in \cZ$, then we have
		$$\TV(\P, \hat \P) = \OT_{c}(\P, \hat \P) = \inf_{\gamma \in \Gamma(\P, \hat \P)} \, \gamma(Z \neq \hat Z). $$
	\end{proposition}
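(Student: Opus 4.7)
The plan is to reduce the claim directly to Corollary~\ref{cor:dual:OT:c:r}, together with an elementary complementation argument. The second equality $\OT_c(\P,\hat \P) = \inf_{\gamma \in \Gamma(\P,\hat \P)} \gamma(Z \neq \hat Z)$ is immediate from Definition~\ref{def:OT}, since for every coupling~$\gamma$ one has $\E_\gamma[c(Z,\hat Z)] = \E_\gamma[\ds 1_{Z \neq \hat Z}] = \gamma(Z \neq \hat Z)$. So only the first equality $\TV(\P,\hat \P) = \OT_c(\P,\hat \P)$ requires real work.

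For the first equality, I would endow $\cZ \subseteq \R^d$ with the Euclidean metric $d(z,\hat z) = \|z-\hat z\|_2$, which is continuous as required by the derivation of Corollary~\ref{cor:dual:OT:c:r}. With this metric, the cost $c(z,\hat z) = \ds 1_{z\neq \hat z}$ coincides with $c_r$ of the corollary for $r = 0$, since $d(z,\hat z) > 0$ if and only if $z \neq \hat z$. Moreover, the $0$-neighborhood of any Borel set $\cB \subseteq \cZ$ reduces to $\cB$ itself, because $d(z,z') \leq 0$ forces $z = z'$. Invoking Corollary~\ref{cor:dual:OT:c:r} with $r = 0$ therefore yields
$$\OT_c(\P,\hat \P) \;=\; \sup\{\P(\cB) - \hat \P(\cB) : \cB \subseteq \cZ \text{ is a Borel set}\}.$$

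The remaining step is to argue that this one-sided supremum equals the two-sided supremum $\sup_\cB |\P(\cB) - \hat \P(\cB)|$ that defines~$\TV(\P,\hat \P)$. The inequality~`$\leq$' is trivial, because every term of the signed sup is dominated by the corresponding absolute value. For the reverse inequality I would use complement symmetry: for any Borel set~$\cB$, the identity $\P(\cB^c) - \hat \P(\cB^c) = \hat \P(\cB) - \P(\cB)$ shows that at least one of~$\cB$ and~$\cB^c$ realizes $|\P(\cB) - \hat \P(\cB)|$ as a \emph{signed} difference, so it is attained inside the one-sided supremum. Taking the supremum over all Borel sets closes the gap.

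The entire argument is essentially a direct unpacking of Corollary~\ref{cor:dual:OT:c:r}; the only mild subtlety is the complementation step that converts the one-sided supremum into the two-sided one. There is no serious obstacle, since all the heavy lifting (Kantorovich duality for a lower semicontinuous indicator cost, and the reduction of the resulting dual to characteristic test functions) has already been carried out in the proof of Proposition~\ref{prop:dual-levy-prokhorov}.
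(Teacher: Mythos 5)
Your proposal is correct and follows essentially the same route as the paper's proof: the second equality by unpacking Definition~\ref{def:OT}, the first by invoking Corollary~\ref{cor:dual:OT:c:r} with $r=0$ for a continuous metric, and the passage from the one-sided to the two-sided supremum via complementation of Borel sets. No gaps.
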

	\begin{proof}
		By Definition~\ref{def:TV}, the total variation distance satisfies
		\begin{align*}
			\TV(\P, \hat \P) 
			&= \sup \left\{ \left| \P(\cB) - \hat\P(\cB) \right|: \cB \, \subseteq \cZ\text{ is a Borel set } \right\} \\
			& = \sup \left\{ \; \P(\cB) - \hat\P(\cB)\, : \cB \, \subseteq \cZ\text{ is a Borel set } \right\}= \OT_{c}(\P, \hat \P),
		\end{align*}
		where the second equality holds because the complement of any Borel set is again a Borel set. The third equality follows from Corollary~\ref{cor:dual:OT:c:r} for~$r=0$, which applies because $c(z, \hat z) = \ds{1}_{d(z, \hat z) > 0}$ for any (continuous) metric~$d$ on~$\cZ$. Since $c(z, \hat z) = \ds{1}_{z\neq \hat z}$, we also have
		$$
		\OT_{c}(\P, \hat \P) = \inf_{\gamma \in \Gamma(\P, \hat \P)} \, \E_{\gamma} \left[ \ds{1}_{Z \neq \hat Z} \right] = \inf_{\gamma \in \Gamma(\P, \hat \P)} \, \gamma( Z \neq \hat Z).
		$$
		This observation completes the proof.
	\end{proof}
	
	Proposition~\ref{prop:TV=OT} readily implies that any total variation ambiguity set can also be viewed as a special instance of an optimal transport ambiguity set.
	
	\subsubsection{$\infty$-Wasserstein Ambiguity Sets}
	Section~\ref{sec:p-Wasserstein-balls} focuses exclusively on $p$-Wasserstein distances corresponding to finite exponents~$p\in[1,\infty)$. The $\infty$-Wasserstein distance requires special treatment.
	
	\begin{definition}[$\infty$-Wasserstein Distance]
		\label{def:infty-Wasserstein-distance}
		The $\infty$-Wasserstein distance $\W_\infty:\cP(\cZ) \times \cP(\cZ) \to [0,\infty]$ corresponding to a continuous metric $d(\cdot,\cdot)$ on~$\cZ$ is
		\begin{align}
			\label{prob:W_infty}
			\W_\infty(\P, \hat \P)  = \inf_{\gamma \in \Gamma(\P, \hat \P)} \esssup_\gamma\left[d(Z, \hat Z)\right],
		\end{align}
		where the essential supremum of $d(Z,\hat Z)$ under~$\gamma$ is given by
		\begin{align*}
			\esssup_\gamma\left[d(Z, \hat Z)\right] = \inf_{\tau\in\R} \left\{ \tau: \gamma (d(Z, \hat Z) > \tau) = 0 \right\}.
		\end{align*}
	\end{definition}
	
	Definition~\ref{def:infty-Wasserstein-distance} makes sense because the $\infty$-Wasserstein distance can be obtained from the $p$-Wasserstein distance in the limit when~$p$ tends to infinity.
	
	\begin{proposition}[{\citet{givens1984class}}]
		\label{prop:W:p:infty}
		For any $\P, \hat \P \in \cP(\cZ)$ we have
		\begin{align*}
			\W_\infty(\P, \hat \P) = \lim_{p \to \infty} \, \W_p(\P, \hat \P)
			= \sup_{p \geq 1} \, \W_p(\P, \hat \P).
		\end{align*}
	\end{proposition}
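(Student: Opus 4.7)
The plan is to prove the two equalities separately. Monotonicity of $p \mapsto \W_p(\P, \hat\P)$ will yield $\sup_{p \geq 1} \W_p = \lim_{p \to \infty} \W_p$ immediately and will also give the easy inequality $\lim_{p \to \infty} \W_p \leq \W_\infty$. The reverse inequality $\lim_{p \to \infty} \W_p \geq \W_\infty$ is the core of the argument and will rely on weak compactness of the coupling set together with the Portmanteau theorem.

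\emph{Monotonicity and the easy direction.} Fix any coupling $\gamma \in \Gamma(\P, \hat\P)$. For $1 \leq p \leq q$, Jensen's inequality applied to the convex function $t \mapsto t^{q/p}$ yields $(\E_\gamma[d(Z,\hat Z)^p])^{1/p} \leq (\E_\gamma[d(Z,\hat Z)^q])^{1/q} \leq \esssup_\gamma d(Z,\hat Z)$. Taking the infimum over $\gamma \in \Gamma(\P, \hat\P)$ shows that $\W_p \leq \W_q$ and $\W_p \leq \W_\infty$. Since $p \mapsto \W_p$ is thus nondecreasing, $\sup_{p \geq 1} \W_p = \lim_{p \to \infty} \W_p$, and this common value is at most $\W_\infty(\P, \hat\P)$.

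\emph{Reverse inequality.} Set $M := \sup_{p \geq 1} \W_p(\P, \hat\P)$ and assume $M < \infty$ (otherwise the reverse inequality is trivial). For each $n \in \N$, pick a near-optimal coupling $\gamma_n \in \Gamma(\P, \hat\P)$ satisfying $(\E_{\gamma_n}[d(Z, \hat Z)^n])^{1/n} \leq \W_n(\P, \hat\P) + 1/n \leq M + 1/n$. Since $\P$ and $\hat\P$ are Borel probability measures on the Polish space $\cZ \subseteq \R^d$, they are tight; hence $\Gamma(\P, \hat\P)$ is tight and relatively weakly compact by Prokhorov's theorem, and it is weakly closed because the two coordinate projections are continuous. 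Extract a weakly convergent subsequence $\gamma_{n_k} \rightharpoonup \gamma^\star \in \Gamma(\P, \hat\P)$. Fix $\epsilon > 0$ and consider the set $A_\epsilon = \{(z, \hat z) \in \cZ \times \cZ : d(z, \hat z) > M + \epsilon\}$, which is open by continuity of $d$. Markov's inequality gives
\begin{equation*}
    \gamma_{n_k}(A_\epsilon) \,\leq\, \frac{\E_{\gamma_{n_k}}[d(Z, \hat Z)^{n_k}]}{(M + \epsilon)^{n_k}} \,\leq\, \left( \frac{M + 1/n_k}{M + \epsilon} \right)^{n_k} \,\longrightarrow\, 0 \quad \text{as } k \to \infty.
\end{equation*}
The Portmanteau theorem for open sets then yields $\gamma^\star(A_\epsilon) \leq \liminf_k \gamma_{n_k}(A_\epsilon) = 0$, so $\esssup_{\gamma^\star} d(Z, \hat Z) \leq M + \epsilon$. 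Letting $\epsilon \downarrow 0$ gives $\W_\infty(\P, \hat\P) \leq \esssup_{\gamma^\star} d(Z, \hat Z) \leq M$, completing the proof.

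\emph{Main obstacle.} The delicate step is the compactness argument: one must know that the weak limit of the near-optimal couplings is itself a coupling of $\P$ and $\hat\P$, and one must select the right super-level set to feed into Portmanteau (open super-level sets give the correct-sided inequality; closed ones would not). The Markov bound is the device that converts control of the $p$-th moment into geometric decay of the tail probability of $d$, which is what forces $\gamma^\star$ to place no mass where $d$ exceeds $M$.
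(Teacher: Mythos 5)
Your proof is correct, and the hard direction $\lim_{p\to\infty}\W_p\geq\W_\infty$ is handled by a genuinely different device than the paper's. Both arguments use weak compactness of $\Gamma(\P,\hat\P)$ and the Portmanteau theorem for open sets, but the paper works with \emph{exact} optimal couplings $\gamma_p$ (justified by Lemma~\ref{lem:OT-solvability}), fixes a tolerance $\varepsilon>0$, and defines the open set $\cB=\{d>\esssup_{\gamma_\infty}[d(Z,\hat Z)]-\varepsilon\}$, which has positive $\gamma_\infty$-mass by definition of the essential supremum; it then bounds $\W_{p(j)}$ from below by restricting its defining integral to $\cB$ and uses $\gamma_{p(j)}(\cB)^{1/p(j)}\to 1$ to conclude. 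That route needs a separate case when $\esssup_{\gamma_\infty}[d(Z,\hat Z)]=\infty$. You instead set $M=\sup_{p\geq 1}\W_p$, use near-optimal couplings $\gamma_n$ (so solvability of the optimal transport problem is never invoked), and apply Markov's inequality to show $\gamma_{n_k}(\{d>M+\epsilon\})$ decays geometrically to zero; Portmanteau then forces the weak limit $\gamma^\star$ to put zero mass on this open super-level set, giving $\esssup_{\gamma^\star}[d(Z,\hat Z)]\leq M+\epsilon$ and hence $\W_\infty\leq M$ after $\epsilon\downarrow 0$. Your version is slightly cleaner in that it avoids the case distinction and the appeal to attainment of the optimal transport infimum; it buys this by pushing the quantitative work into the Markov estimate rather than into the $\W_{p(j)}\geq\gamma_{p(j)}(\cB)^{1/p(j)}(\cdot)$ lower bound. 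One tiny remark: the $M=0$ case needs no separate treatment because the Markov ratio $((M+1/n_k)/(M+\epsilon))^{n_k}$ still tends to $0$, but you could also have dispatched it by noting that $\W_1=0$ already forces $\P=\hat\P$.
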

	\begin{proof}
		If $p \geq q \geq 1$, then $f(t)=t^{q/p}$ is concave on~$\R_+$. This implies that
		\begin{align*}
			\W_p(\P, \hat \P) 
			& = \inf_{\gamma \in \Gamma(\P, \hat \P)} \left(\E_\gamma[d(Z, \hat Z)^p]^{\frac{q}{p}}\right)^{\frac{1}{q}} \geq  \inf_{\gamma \in \Gamma(\P, \hat \P)} \left(\E_\gamma[d(Z, \hat Z)^q]\right)^{\frac{1}{q}} = \W_q(\P, \hat \P)
		\end{align*}
		thanks to Jensen's inequality. Hence, $\W_p(\P, \hat \P)$ is non-decreasing in the exponent~$p$ as long as $p \in [1, \infty)$. In addition, for any transportation plan~$\gamma \in \Gamma(\P, \hat \P)$ and exponent~$p \in [1, \infty)$, the definition of the essential supremum readily implies that
		\begin{align*}
			\left( \E_\gamma [d(Z, \hat Z)^p] \right)^{\frac{1}{p}} \leq \esssup_\gamma [d(Z, \hat Z)^p]^{\frac{1}{p}}= \esssup_\gamma [d(Z, \hat Z)].
		\end{align*}
		Minimizing both sides of this inequality across all $\gamma\in\Gamma(\P, \hat \P)$ further implies that $\W_p(\P, \hat \P)\leq \W_\infty(\P, \hat \P)$ for all $p\in[1,\infty)$. In summary, we may thus conclude that
		$$
		\lim_{p \to \infty} \W_p(\P, \hat \P) = \sup_{p \geq 1} \W_p(\P, \hat \P) \leq \W_\infty(\P, \hat \P).
		$$
		It remains to be shown that the last inequality holds in fact as an equality. To see this, 
		fix some tolerance~$\varepsilon > 0$. For any $p\in\N$, let $\gamma_p\in\Gamma(\P,\hat\P)$ be a coupling with $\E_{\gamma_p} [d(Z, \hat Z)^p]^{1/p}=\W_p(\P, \hat \P)$. Note that $\gamma_p$ exists because, as we will see in Corollary~\ref{cor:compact:Gamma} and Proposition~\ref{prop:semicontinuity} below, $\Gamma(\P,\hat\P)$ is weakly compact and $\E_\gamma [d(Z, \hat Z)^p]$ is weakly lower semicontinuous in~$\gamma$. Next, let $\{\gamma_{p(j)}\}_{j\in\N}$ be a subsequence that converges weakly to some coupling~$\gamma_\infty\in \Gamma(\P,\hat\P)$, which exists again because $\Gamma(\P,\hat\P)$ is weakly compact. We proceed by case distinction.
		
		\paragraph{Case~1.} If $\esssup_{\gamma_\infty} [d(Z, \hat Z)]$ is finite, define the open set 
		\[
		\cB = \left\{ (z, \hat z) \in \cZ \times \cZ: d(z, \hat z) > \esssup_{\gamma_\infty} [d(Z, \hat Z)] - \varepsilon \right\},
		\]
		and note that $\gamma_\infty(\cB) > 0$ by the definition of the essential supremum. We then find
		\begin{align*}
			\W_{p(j)}(\P, \hat \P)& \geq \left( \int_{\cB} d(z, \hat z)^{p(j)} \, \diff \gamma_{p(j)}(z, \hat z) \right)^{\frac{1}{p(j)}} \\
			&\geq \gamma_{p(j)}(\cB)^{\frac{1}{p(j)}} \left(\esssup_{\gamma_\infty} [d(Z, \hat Z)] - \varepsilon\right)\\& \geq \gamma_{p(j)}(\cB)^{\frac{1}{p(j)}} \left(\W_\infty(\P, \hat \P) - \varepsilon\right).
		\end{align*}
		Since $\cB$ is open and $\gamma_{p(j)}$ converges weakly to~$\gamma_\infty$ as $j$ grows, the Portmanteau theorem \citep[Theorem~2.1\,(iiv)]{billingsley2013convergence} implies that $\liminf_{j\to\infty} \gamma_{p(j)}(\cB)\geq \gamma_\infty(\cB)>0$. Thus, $\gamma_{p(j)}(\cB)^{1/p(j)}$ converges to~$1$ as $j$ grows, and we obtain
		\[
		\lim_{p \to \infty} \W_p(\P, \hat \P) \geq \W_\infty(\P, \hat \P) - \varepsilon.
		\]
		As this inequality holds for any tolerance~$\varepsilon>0$, the above reasoning finally implies that $\W_p(\P, \hat \P)$ converges indeed to~$\W_\infty(\P, \hat \P)$ for large~$p$.
		
		\paragraph{Case~2.} If $\esssup_{\gamma_\infty} [d(Z, \hat Z)]=\infty$, then we replace $\esssup_{\gamma_\infty} [d(Z, \hat Z)]$ in the definition of the open set~$\cB$ with an arbitrarily large constant. Proceeding as in Case~1 eventually reveals that $\lim_{p \to \infty} \W_p(\P, \hat \P) = \W_\infty(\P, \hat \P)=\infty$.
	\end{proof}
	
	To develop some intuition for Proposition~\ref{prop:W:p:infty}, consider the optimal transport problem in the definition of $\W_p(\P, \hat \P)$. If~$p>1$, then the cost $c(z,\hat z)=d(z,\hat z)^p$ of transporting one unit of probability mass from~$\hat z$ to~$z$ grows superlinearly with the distance~$d(z,\hat z)$. Hence, parts of the distribution~$\hat\P$ that are transported further under an optimal transportation plan contribute more to~$\W_p(\P, \hat \P)$. In addition, as~$p$ tends to infinity, eventually only the portion of the distribution~$\hat\P$ that is transported the furthest has an impact on $\W_\infty(\P, \hat \P)$. Even more, only the largest transportation {\em distance} matters, whereas the {\em amount} of probability mass transported is irrelevant. 
	
	Despite Proposition~\ref{prop:W:p:infty}, the optimal transport problems in the definitions of the Wasserstein distances of order~$p<\infty$ and of order~$p=\infty$ are fundamentally different. Indeed, if $p<\infty$, then the objective function $\E_\gamma[d(Z, \hat Z)^p]$ of the optimal transport problem is linear in the tansportation plan~$\gamma$. If~$p=\infty$, on the other hand, then the objective function $\esssup_{\gamma}[d(Z,\hat Z)]$ is not even convex, but rather quasi-convex, in~$\gamma$ \citep[Lemma~2.2]{jylha2015optimal}; see also \citep{champion2008wasserstein}. Thus, $\infty$-Wasserstein distances require a more subtle treatment. 
	
	The next proposition relates the $\infty$-Wasserstein distance to a standard optimal transport problem. Therefore, it has computational relevance. 
	
	\begin{proposition}
		\label{prop:dual-W_infty}
		If the transportation cost function~$c_r$ corresponding to $r\geq 0$ is defined through $c_r(z,\hat z)=\ds 1_{d(z,\hat z)>r}$ for all $z,\hat z\in\cZ$, then we have
		\begin{align*}
			\W_\infty (\P, \hat{\P}) 
			= \inf \left\{r \geq 0: \OT_{c_r}(\P, \hat{\P}) \leq 0 \right\}.
		\end{align*}
	\end{proposition}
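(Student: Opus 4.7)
The plan is to prove two inequalities:
\[
\W_\infty(\P,\hat\P) \;\leq\; \inf\{r\ge 0:\OT_{c_r}(\P,\hat\P)\le 0\} \quad\text{and}\quad \W_\infty(\P,\hat\P) \;\geq\; \inf\{r\ge 0:\OT_{c_r}(\P,\hat\P)\le 0\},
\]
where everything reduces to the elementary equivalence
\[
\esssup_\gamma[d(Z,\hat Z)] \leq r \quad\Longleftrightarrow\quad \gamma\bigl(d(Z,\hat Z) > r\bigr) = 0 \quad\Longleftrightarrow\quad \E_\gamma[c_r(Z,\hat Z)] = 0,
\]
which follows directly from the definition of the essential supremum (monotone continuity of $\gamma$ applied to the increasing union $\{d>r\}=\bigcup_n\{d>r+1/n\}$ gives the nontrivial direction).

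For the easy direction ($\geq$), I would fix any $s > \W_\infty(\P,\hat\P)$ and, by the definition of the infimum in~\eqref{prob:W_infty}, choose $\gamma \in \Gamma(\P,\hat\P)$ with $\esssup_\gamma[d(Z,\hat Z)] < s$. The equivalence above then gives $\gamma(d(Z,\hat Z) > s) = 0$, hence $\E_\gamma[c_s] = 0$, so $\OT_{c_s}(\P,\hat\P) \leq 0$. Therefore the right-hand side is at most $s$, and letting $s \downarrow \W_\infty(\P,\hat\P)$ finishes this direction.

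For the other direction ($\leq$), fix any $r \ge 0$ with $\OT_{c_r}(\P,\hat\P) \leq 0$. Since $c_r \geq 0$, the optimal transport value is in fact $0$, so there is a minimizing sequence $\{\gamma_n\} \subseteq \Gamma(\P,\hat\P)$ with $\gamma_n(d(Z,\hat Z) > r) \to 0$. The main issue is then to pass to the limit: since $\{\P\}$ and $\{\hat\P\}$ are tight, Prokhorov's theorem implies that $\Gamma(\P,\hat\P)$ is weakly compact (this is the content of the forward-referenced Corollary~\ref{cor:compact:Gamma}), so a subsequence converges weakly to some $\gamma^\star \in \Gamma(\P,\hat\P)$. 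Because $d$ is continuous, the set $\{(z,\hat z): d(z,\hat z) > r\}$ is open, and the Portmanteau theorem gives
\[
\gamma^\star\bigl(d(Z,\hat Z)>r\bigr) \;\leq\; \liminf_{n\to\infty}\gamma_n\bigl(d(Z,\hat Z)>r\bigr) \;=\; 0.
\]
The equivalence then yields $\esssup_{\gamma^\star}[d(Z,\hat Z)] \leq r$, so $\W_\infty(\P,\hat\P) \leq r$, and taking the infimum over admissible $r$ completes the proof.

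The main obstacle is the passage to the limit in the second direction: one must argue why a $\gamma^\star$ attaining $\gamma^\star(d>r)=0$ actually exists rather than just a minimizing sequence. The combination of weak compactness of $\Gamma(\P,\hat\P)$ with the openness of $\{d>r\}$ (so that the indicator $c_r$ is lower semicontinuous and Portmanteau yields the correct one-sided inequality) is what makes this work; alternatively, one could invoke the forward-referenced Proposition~\ref{prop:semicontinuity} on weak lower semicontinuity of $\gamma \mapsto \E_\gamma[c_r]$ to conclude directly that the infimum is attained.
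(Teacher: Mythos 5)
Your proof is correct and relies on the same two ingredients the paper uses: the equivalence $\esssup_\gamma[d(Z,\hat Z)]\le r\iff\gamma(d(Z,\hat Z)>r)=0\iff\E_\gamma[c_r(Z,\hat Z)]=0$, and solvability of the optimal transport problem via weak compactness of $\Gamma(\P,\hat\P)$ (Corollary~\ref{cor:compact:Gamma}) together with weak lower semicontinuity of $\gamma\mapsto\E_\gamma[c_r(Z,\hat Z)]$ (Proposition~\ref{prop:semicontinuity}). The paper merely packages these two facts as a chain of equalities rather than two separate inequalities, and invokes the solvability lemma directly instead of unfolding the minimizing-sequence/Portmanteau argument, but the mathematical content is identical.
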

	\begin{proof}
		Recall that $\OT_{c_r}(\P, \hat \P) = \inf \{ \E_\gamma[c_r(Z, \hat Z)]:\gamma \in \Gamma(\P, \hat \P)\}$. Note that the underlying optimal transport problem is solvable because $\Gamma(\P,\hat\P)$ is weakly compact and because $\E_\gamma [d(Z, \hat Z)^p]$ is weakly lower semicontinuous in~$\gamma$ thanks to Corollary~\ref{cor:compact:Gamma} and Proposition~\ref{prop:semicontinuity} below, respectively. Therefore, we have
		\begin{align*}
			& \inf \left\{r \geq 0: \OT_{c_r}(\P, \hat{\P}) \leq 0 \right\} \\
			&\quad = \inf \left\{ r \geq 0: \exists \gamma \in \Gamma(\P, \hat \P) \text{ with } \E_\gamma[c_r(Z, \hat Z)] = 0 \right\} \\
			&\quad = \inf_{\gamma \in \Gamma(\P, \hat \P),\, r \in \R_+} \left\{ r : \gamma [d(Z, \hat Z) > r] = 0 \right\} = \W_\infty(\P, \hat \P),
		\end{align*}
		where the first equality holds because $\OT_{c_r}(\P, \hat \P)$ is non-negative and because the underlying optimal transport problem is solvable. The second equality follows from the definitions of $c_r$ and the $\infty$-Wasserstein distance.
	\end{proof}
	
	Combining Proposition~\ref{prop:dual-W_infty} with Corollary~\ref{cor:dual:OT:c:r} immediately yields the following equivalent characterization of the $\infty$-Wasserstein distance.
	
	\begin{corollary}[{\citet{givens1984class}}]
		\label{cor:dual-W_infty}
		The $\infty$-Wasserstein distance satisfies
		\begin{align*}
			\W_\infty (\P, \hat{\P}) = \inf \left\{r \geq 0: \P(\cB)  \leq \hat\P(\cB_r) \text{ for all Borel sets } \cB\subseteq\cZ\right\},
		\end{align*}
		where the $r$-neighborhood $\cB_r$ is defined in~\eqref{eq:Br}.
	\end{corollary}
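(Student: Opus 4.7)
The plan is to obtain the corollary by a direct substitution, chaining together the two results the statement cites. Proposition~\ref{prop:dual-W_infty} already rewrites $\W_\infty(\P,\hat\P)$ as the infimal radius $r\geq 0$ for which $\OT_{c_r}(\P,\hat\P)\leq 0$, with $c_r(z,\hat z)=\ds 1_{d(z,\hat z)>r}$. All that remains is to translate the constraint $\OT_{c_r}(\P,\hat\P)\leq 0$ into a statement about probability mass on enlargements of Borel sets.

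First, I would invoke Corollary~\ref{cor:dual:OT:c:r}, which for the very same cost $c_r$ gives
\begin{equation*}
    \OT_{c_r}(\P, \hat{\P}) = \sup \left\{ \P(\cB) - \hat\P(\cB_r) : \cB \subseteq \cZ \text{ is a Borel set} \right\}.
\end{equation*}
Since $\cB=\emptyset$ is admissible and yields $0$, this supremum is always non-negative, so $\OT_{c_r}(\P,\hat\P)\leq 0$ is equivalent to $\OT_{c_r}(\P,\hat\P)=0$, which in turn is equivalent to $\P(\cB)\leq \hat\P(\cB_r)$ for every Borel set $\cB\subseteq\cZ$.

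Combining this equivalence with the expression for $\W_\infty(\P,\hat\P)$ furnished by Proposition~\ref{prop:dual-W_infty} immediately gives
\begin{equation*}
    \W_\infty(\P, \hat\P) = \inf\left\{ r\geq 0 : \P(\cB)\leq \hat\P(\cB_r) \text{ for all Borel sets } \cB\subseteq\cZ \right\},
\end{equation*}
which is the claim. No real obstacle arises, as both input results are already proved; the only substantive check is the reduction from ``$\leq 0$'' to ``$=0$'', which is immediate from the non-negativity of $\OT_{c_r}$.
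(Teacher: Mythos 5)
Your proposal is correct and matches the paper's own (unwritten) argument exactly: the paper states that the corollary follows immediately by combining Proposition~\ref{prop:dual-W_infty} with Corollary~\ref{cor:dual:OT:c:r}, which is precisely the chain of substitutions you carry out. The extra observation that $\OT_{c_r}(\P,\hat\P)\leq 0$ forces equality is harmless but not even needed, since the constraint $\sup_{\cB}\{\P(\cB)-\hat\P(\cB_r)\}\leq 0$ is directly equivalent to the stated family of inequalities.
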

	The $\infty$-Wasserstein ambiguity set of radius~$r\geq 0$ around $\hat\P\in\cP(\cZ)$ is defined as
	\begin{align}
		\label{eq:infty-wasserstein-ambiguity-set}
		\cP= \left\{ \P \in \cP(\cZ) \, : \, \W_\infty(\P, \hat \P) \leq r \right\}.
	\end{align}
	Proposition~\ref{prop:dual-W_infty} implies that $\cP$ coincides with an optimal transport ambiguity set with transportation cost function $c_r(z,\hat z)=\ds 1_{d(z,\hat z)>r}$, that is, we have
	\begin{align*}
		\cP= \left\{ \P \in \cP(\cZ) \, : \, \OT_{c_r}(\P, \hat{\P}) \leq 0 \right\}.
	\end{align*}
	DRO with $\infty$-Wasserstein ambiguity sets has strong connections to adversarial machine learning \citep{gao2017wasserstein,garcia2022regularized,trillos2022adversarial,garcia2023analytical,bungert2023geometry,bungert2024mean,gao2024wasserstein,pydi2024many,frank2024adversarial,frank2024existence} and kernel density estimation \citep{xu2012distributional}. In addition, $\infty$-Wasserstein ambiguity sets are used in two- and multi-stage stochastic programming \citep{xie2020tractable,bertsimas2020two,bertsimas2023data}, portfolio optimization~\citep{nguyen2021robustifying}, and robust learning \citep{nguyen2020distributionallyrobust,wang2024wasserstein}.

	\subsection{Other Ambiguity Sets}
	
	There exist several ambiguity sets that cannot be classified as moment, $\phi$-divergence or optimal transport ambiguity sets. In the following we offer a brief overview of these ambiguity sets without providing extensive mathematical details. 
	
	\subsubsection{Marginal Ambiguity Sets}
	\label{sec:marginal-ambiguity-sets}
	Marginal ambiguity sets specify the marginal distributions of multiple subvectors of~$Z$ without detailing their joint distribution. The simplest example of a marginal ambiguity set is the Fr\'echet ambiguity set, which specifies the marginal distributions of all individual components of~$Z$ but provides no information about their copula. Thus, the Fr\'echet ambiguity set is parametrized by $d$ marginal cumulative distribution functions $F_i:\R\rightarrow [0,1]$, $i\in[d]$, and can be represented as
	\begin{align}
		\label{eq:frechet}
		\cP =\left\{ \P \in \cP(\R^d): \P (Z_i \leq z_i) = F_i (z_i) \; \;\forall z_i \in \R, \,\forall i \in [d] \right\}.
	\end{align}
	Here, $F_i$ is an arbitrary cumulative distribution function, that is, a right-continuous, non-decreasing function with $\lim_{z_i\to-\infty}F_i(z_i)=0$ and $\lim_{z_i\to+\infty}F_i(z_i)=1$. Fr\'echet ambiguity sets are relevant for probabilistic logic. Imagine that each~$Z_i$ represents a binary variable that evaluates to~$1$ if a certain event occurs and to~$0$ otherwise, and assume that the probability of each event is known, whereas the joint distribution of all events is unknown. In this setting, \citet{boole1854investigation} was interested in computing bounds on the probability of a composite event encoded by a Boolean function of the variables~$Z_i$, $i\in[d]$. Almost a century later, \citet{frechet1935generalisation} derived explicit inequalities for the probabilities of such composite events, which are now called Fr\'echet inequalities. Note that these Fr\'echet inequalities can be obtained by minimizing or maximizing the probability of the composite event over all distributions in a Fr\'echet ambiguity set with Bernoulli marginals. More recently, there has been growing interest in generalized Fr\'echet inequalities, which bound the risk of general (not necessarily Boolean) functions of~$Z$ with respect to all distributions in a Fr\'echet ambiguity set with general (not necessarily Bernoulli) marginals. For example, a wealth of Fr\'echet inequalities for the risk of a sum of random variables have emerged in finance and risk management \citep{ruschendorf1983solution,ruschendorf1991frechet,embrechts2006bounds,wang2011complete,wang2013bounds,puccetti2013sharp,van2016frechet,blanchet2020convolution}. In addition, \citet{natarajan2009persistency} derive sharp bounds for the worst-case expectation of a piecewise affine functions over a Fr\'echet ambiguity set. We highlight that Fr\'echet ambiguity sets are also relevant because they coincide with the feasible sets of multi-marginal optimal transport problems, which can sometimes be solved in polynomial time \citep{pass2015multi,altschuler2023polynomial,natarajan2023distributionally}. 
	
	General marginal ambiguity sets specify the marginal distributions of several (possibly overlapping) subsets of the set $\{Z_i:i\in[d]\}$ of random variables. However, checking whether such an ambiguity set is non-empty is NP-complete even if each~$Z_i$ is a Bernoulli random variable and each subset accommodates merely two elements \citep{honeyman1980testing,georgakopoulos1988probabilistic}. Computing worst-case expectations over marginal ambiguity sets is thus intractable unless the subsets of random variables with known marginals are disjoint \citep{doan2012complexity} or if the corresponding overlap graph displays a running intersection property \citep{doan2015robustness}.
	
	Marginal ambiguity sets are attractive because, given limited statistical data, it is far easier to estimate low-dimensional marginals than their global dependence structure. However, even univariate marginals cannot be estimated exactly. For this reason, several researchers study marginal ambiguity sets that provide only limited information about the marginals such as bounds on marginal moments or marginal dispersion measures \citep{bertsimas2004probabilistic,bertsimas2006persistence,bertsimas2006tight,chen2010cvar,mishra2012choice,natarajan2018asymmetry}. 
	
	A related stream of literature focuses on ambiguity sets under which the random variables~$Z_i$, $i\in[d]$, are {\em independent} and governed by ambiguous marginal distributions. For example, the Hoeffding ambiguity set contains all joint distributions on a box with independent (and completely unknown) marginals, whereas the Bernstein ambiguity set contains all distributions from within the Hoeffding ambiguity set subject to marginal moment bounds \citep{nemirovski2007convex,hanasusanto2015distributionally}. Bernstein ambiguity sets that constrain the mean as well as the mean-absolute deviation of each marginal are used to derive safe tractable approximations for distributionally robust chance constrained programs \citep{postek2018robust}, two-stage integer programs \citep{postek2018robust,postek2019approximation}, and queueing systems \citep{wang2024distributionally}. 
	
	DRO with marginal ambiguity sets has close connections to submodularity and to the theory of comonotonicity in risk management \citep{tchen1980inequalities,ruschendorf2013mathematical,bach2013learning,bach2019submodular,natarajan2023distributionally,long2024supermodularity}. It has a broad range of diverse applications ranging from discrete choice modeling \citep{natarajan2009persistency,mishra2014theoretical,chen2022distributionally,ruan2022nonparametric}, queuing theory \citep{van2022mad}, transportation \citep{wang2020distributionally,shehadeh2023distributionally}, chance constrained programming \citep{xie2022optimized}, scheduling \citep{mak2015appointment}, inventory management \citep{liu2024newsvendor}, the analysis of complex networks \citep{chen2020correlation,van2021robust,brugman2022sharpest} and mechanism design \citep{carroll2017robustness,gravin2018separation,chen2023screening,wang2024minimax,wang2024power}, etc. For further details we refer to the comprehensive monograph by \citet{natarajan2021optimization}.

	\subsubsection{Mixture Ambiguity Sets and Structural Ambiguity Sets}
	
	Let~$\Theta\subseteq \R^m$ be a Borel set and~$\P_\theta\in\cP(\cZ)$ a parametric distribution that is uniquely determined by~$\theta\in\Theta$. Assume that $\P_\theta(Z\in\cB)$ is a Borel measurable function of~$\theta$ for every fixed Borel set~$\cB\subseteq \cZ$. The parametric distribution family $\{\P_\theta:\theta\in\Theta\}$ can then be used as a mixture family, which induces the mixture ambiguity set
	\begin{align}
		\label{eq:mixture-ambiguity-set}
		\cP = \left\{ \int_{\Theta} \P_\theta \, \diff \Q(\theta): \Q\in \cP(\Theta)  \right\}.
	\end{align}
	Thus, $\cP$ contains all distributions that can be represented as mixtures of the distributions~$\P_\theta$, $\theta\in\Theta$. Put differently, for every $\P\in\cP$ there exists a mixture distribution~$\Q\in\cP(\Theta)$ with $\P(Z\in\cB) = \int_{\Theta} \P_\theta(Z\in\cB) \, \diff \Q(\theta)$ for all Borel sets~$\cB\subseteq \cZ$. This construction ensures that $\cP\subseteq \cP(\cZ)$ is convex. For example, if $\P_\theta$ is a Gaussian distribution whose mean and covariance matrix are encoded by~$\theta$, then~$\cP$ contains (possibly continuous) mixtures of Gaussians. Mixture ambiguity sets corresponding to compact parameter sets~$\Theta$ are studied by \citet{lasserre2021distributionally}, who develop a semidefinite programming-based hierarchy of increasingly tight inner approximations for the feasible set of a distributionally robust chance constraint. 
	
	Note that~$\cP$ can be viewed as the convex hull of the parametric distribution family $\{\P_\theta:\theta\in\Theta\}$. A classical result in convex analysis due to Minkowski asserts that any compact convex subset of a Euclidean vector space coincides with the convex hull of its extreme points. Choquet theory \citep{phelps2001lectures} seeks similar extreme point representations for convex compact subsets of topological vector spaces. For example, if~$\{\P_\theta:\theta\in\Theta\}$ is the set of all extreme distributions of a weakly compact convex ambiguity set~$\cP$, then~\eqref{eq:mixture-ambiguity-set} constitutes a Choquet representation of~$\cP$.
	
	Families of distributions that share certain structural properties sometimes admit a Choquet representation of the form~\eqref{eq:mixture-ambiguity-set}. For example, let~$\cP$ be the family of all distributions~$\P\in\cP(\R^d)$ that are point symmetric about the origin. This means that $\P(Z\in\cB)=\P(-Z\in\cB)$ for every Borel set~$\cB\subseteq\R^d$. One can then show that all extreme distributions of~$\cP$ are representable as~$\P_\theta=\half\delta_{+\theta}+\half\delta_{-\theta}$ for some~$\theta\in\R^d$. Thus, $\cP$ admits a Choquet representation of the form~\eqref{eq:mixture-ambiguity-set}. As another example, let~$\cP$ be the family of all distributions~$\P\in\cP(\R^d)$ that are $\alpha$-unimodal about the origin for some~$\alpha>0$. This means that $t^\alpha\P(Z\in\cB/t)$ is non-decreasing in~$t>0$ for every Borel set~$\cB\subseteq\R^d$. One can then show that every extreme distribution of~$\cP$ is a distribution~$\P_\theta$ supported on the line segment from~$0$ to~$\theta\in\R^d$ with the property that $\P_\theta(\|Z\|_2\leq t\|\theta\|_2)=t^\alpha$ for all~$t\in[0,1]$. Thus, $\cP$ admits again a Choquet representation of the form~\eqref{eq:mixture-ambiguity-set}. We remark that $d$-unimodal distributions on~$\R^d$ are also called star-unimodal. One readily verifies that a distribution with a continuous probability density function is star-unimodal if and only if the density function is non-increasing along each ray emanaging from the origin. In addition, one can show that the family of all $\alpha$-unimodal distributions converges---in a precise sense---to the family of {\em all} possible distributions on~$\R^d$ as~$\alpha$ tends to infinity. For more information on structural distribution families and their Choquet representations we refer to 
	\citep{dharmadhikari1988unimodality}.

	The moment ambiguity sets of Section~\ref{sec:moment-ambiguity-sets} are known to contain discrete distributions with only very few atoms;  see Section~\ref{sec:finite-convex-reformulations}. However, uncertainties encountered in real physical, technical or economic systems are unlikely to follow such discrete distributions. Instead, they are often expected to be unimodal. Hence, an effective means to eliminate the pathological discrete distributions from a moment ambiguity set is to intersect it with the structural ambiguity set of all $\alpha$-unimodal distributions for some~$\alpha>0$. \citet{popescu2005semidefinite} combines ideas from Choquet theory and sums-of-squares polynomial optimization to approximate worst-case expectations over the resulting intersection ambiguity sets by a hierarchy of increasingly accurate bounds, each of which is computed by solving a tractable semidefinite program. \citet{van2016generalized} and \citet{van2019distributionally} extend this approach and establish {\em exact} semidefinite programming reformulations for the worst-case probability of a polyhedron and the worst-case conditional value-at-risk of a piecewise linear convex loss function across all $\alpha$-unimodal distributions in a Chebyshev ambiguity set; see also \citep{hanasusanto2015perspective}. \citet{li2019ambiguous} demonstrate that these semidefinite programming reformulations can sometimes be simplified to highly tractable second-order cone programs. Complementing moment information with structural information generally leads to less conservative DRO models as \citet{li2016distributionally} demonstrate in the context of a power system application. \citet{lam2021orthounimodal} consider another basic notion of distributional shape known as orthounimodality and build a corresponding Choquet representation to address multivariate extreme event estimation. More recently, \citet{lam2024shape} combine Choquet theory with importance sampling and likelihood ratio techniques for modeling distribution shapes.

	\subsubsection{Non-Standard $\phi$-Divergence and Optimal Transport Ambiguity Sets}
	A wealth of non-standard $\phi$-divergences and optimal transport discrepancies have been proposed to measure the dissimilarity between probability distributions. They offer great flexibility in designing ambiguity sets with complementary computational and statistical properties. Non-standard distance measures notably include smoothed $\phi$-divergences \citep{zeitouni1991universal,yang2018robust,liu2023smoothed} as well as combinations of $\phi$-divergences and optimal transport discrepancies \citep{reid2011information,dupuis2022formulation,vanparys2024efficient}. In addition, they include coherent Wasserstein distances \citep{li2022general} and Sinkhorn divergences \citep{wang2021sinkhorn} as well as divergences based on causal optimal transport
	\citep{analui2014distributionally,pflug2014multistage,yang2022decision,aroradata2022data,jiang2024sensitivity}, outlier-robust optimal transport \citep{nietert2024outlier,nietert2024robust}, mixed-feature optimal transport \citep{selvi2022wasserstein,belbasi2023s}, cluster-based optimal transport \citep{wang2022mean}, partial optimal transport \citep{esteban2022distributionally}, sliced optimal transport \citep{olea2022out}, multi-marginal optimal transport \citep{lau2022wasserstein,trillos2023multimarginal,rychener2024wasserstein}, and constrained conditional moment optimal transport \citep{li2022tikhonov,blanchet2023unifying,sauldubois2024first}.   
	
	\subsubsection{Ambiguity Sets Based on Integral Probability Metrics}
	
	Let~$\cF$ be a family of Borel measurable test functions $f:\cZ\to\R$ such that $f\in\cF$ if and only if~$-f\in\cF$. The integral probability metric generated by~$\cF$ is defined~via
	\begin{align*}
		\D_{\cF}(\P, \hat \P) = \sup_{f \in \cF} \; \int_{\cZ} f(z) \, \diff \P(z) - \int_{\cZ} f(\hat z) \, \diff \hat \P(\hat z)
	\end{align*}
	for all distributions~$\P,\hat\P\in\cP(\cZ)$ under which all test functions~$f\in\cF$ are integrable. The underlying maximization problem probes how well the test functions can distinguish~$\P$ from~$\hat\P$. By construction, $\D_{\cF}$ constitutes a pseudo-metric, that is, it is non-negative and symmetric (because $\cF=-\cF$), vanishes if its arguments match, and satisfies the triangle inequality. In addition, $\D_{\cF}$ becomes a proper metric if~$\cF$ separates distributions, in which case $\D_{\cF}(\P, \hat \P)$ vanishes only if $\P=\hat\P$. The ambiguity set of radius~$r\geq 0$ around~$\hat\P\in\cP(\cZ)$ with respect to~$\D_{\cF}$ is defined as
	\[
	\cP=\left\{ \P\in\cP(\cZ) : \D_{\cF}(\P, \hat \P)\leq r\right\}.
	\]
	The proof of Proposition~\ref{prop:tv} reveals that the total variation distance is the integral probability metric generated by all Borel functions $f: \cZ \to [-1/2, 1/2]$; see~\eqref{eq:TV:IPM2}. The Kantorovich-Rubinstein duality established in Corollary~\ref{cor:kanorovich-rubinstein-duality} further shows that the $1$-Wasserstein distance is the integral probability metric generated by all Lipschitz continuous  functions $f :\cZ\to\R$ with $\lip(f) \leq 1$. In addition, if~$\cH$ is a reproducing kernel Hilbert space of Borel functions $f:\cZ\to\R$ with Hilbert norm~$\|\cdot\|_\cH$, then the maximum mean discrepancy distance corresponding to~$\cH$ is the integral probability metric generated by the standard unit ball $\cF = \{ f \in \cH : \| f \|_\cH \leq 1 \}$ in~$\cH$. Maximum mean discrepancy ambiguity sets are studied in \citep{staib2019distributionally,zhu2020mmd,zhu2021kernel,zeng2022generalization,iyengar2022hedging}. \citet{husain2020distributional} uncovers a deep connection between DRO problems and regularized empirical risk minimization problems, which holds whenever the ambiguity set is defined via an integral probability metric.

	\section{Topological Properties of Ambiguity Sets}
	\label{sec:topology}
	A fundamental question of theoretical as well as practical interest is whether nature's subproblem in~\eqref{eq:primal:dro} is solvable or, in other words, whether the inner supremum in~\eqref{eq:primal:dro} is attained. In this section we will investigate under what conditions the Weierstrass extreme value theorem applies to nature's subproblem. That is, we will develop easily checkable conditions under which the ambiguity set~$\cP$ is weakly compact and the expected loss~$\E_\P[\ell(x,Z)]$ is weakly upper semicontinuous in~$\P$. Throughout this discussion, we assume that~$\cZ$ is a closed subset of~$\R^d$.
	
	A classical result by Baire asserts that a function on the real line is lower semicontinuous if and only if it can be represented as the pointwise supremum of a non-decreasing sequence of continuous functions \citep{baire1905semicontinuity}. Below we will use the following multivariate generalization of this result.
	
	\begin{lemma}[{\citet[p.~132]{stromberg2015introduction}}]
		\label{lem:baire-semicontinuity}
		A function $f:\cZ\rightarrow (-\infty, +\infty]$ is lower semicontinuous if and only if there is a non-decreasing sequence of continuous functions~$f_i: \cZ\rightarrow \R$, $i\in\N$, with $f(z)=\sup_{i\in\N} f_i(z)$ for all~$z\in\cZ$.
	\end{lemma}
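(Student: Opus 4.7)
The plan is to handle the two implications separately, with the forward direction hinging on a Moreau-Yosida inf-convolution together with a preliminary reduction to the bounded-below case.

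The reverse implication is elementary: if $f = \sup_{i \in \N} f_i$ with each $f_i$ continuous, then for any $\alpha \in \R$ the superlevel set $\{z \in \cZ : f(z) > \alpha\} = \bigcup_{i \in \N} \{z \in \cZ : f_i(z) > \alpha\}$ is a countable union of open sets and hence open, which is the defining characterization of lower semicontinuity of $f$.

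For the forward implication, first suppose $f$ is bounded below; after a harmless additive shift we may take $f \geq 0$. Setting
\[
    f_i(z) = \inf_{y \in \cZ} \bigl\{ f(y) + i \|z - y\| \bigr\}, \quad i \in \N,
\]
with $\|\cdot\|$ the Euclidean norm, I would verify four properties: (i) each $f_i$ is $i$-Lipschitz continuous as a pointwise infimum of $i$-Lipschitz-in-$z$ affine functions; (ii) $0 \leq f_i \leq f$, with the upper bound from the choice $y = z$ and the lower from $f \geq 0$; (iii) the sequence is non-decreasing in $i$, since raising $i$ only increases the integrand at every fixed $y$; and (iv) $\sup_{i \in \N} f_i(z) = f(z)$. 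For (iv), given $\alpha < f(z)$, lower semicontinuity at $z$ yields $\delta > 0$ with $f(y) > \alpha$ on $\cZ \cap \{y : \|y - z\| < \delta\}$, while for $y$ with $\|y-z\| \geq \delta$ the bound $f(y) \geq 0$ gives $f(y) + i\|z-y\| \geq i\delta > \alpha$ as soon as $i > \alpha/\delta$. Taking the infimum over $y$ yields $f_i(z) \geq \alpha$ for all sufficiently large $i$, so $\sup_i f_i(z) \geq f(z)$, which together with (ii) gives equality.

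To treat a general $f$ that need not be bounded below, I would reduce to the previous step via a squashing transformation, and this is where the main obstacle lies: a direct inf-convolution of $f$ can be $-\infty$ everywhere (e.g., for $f(y) = -\|y\|^2$ on $\cZ = \R^d$), and naive lower truncations of $f$ break the monotonicity of the approximants. The fix is to pick a continuous strictly increasing bijection $\psi : (-\infty, +\infty] \to (-1, 1]$ (for instance, $\tanh$ extended by $\psi(+\infty) = 1$) and set $g = \psi \circ f$, so that $g : \cZ \to (-1, 1]$ is lsc and bounded; the bounded-below construction applied to $g + 1 \geq 0$ then produces non-decreasing continuous $g_i \uparrow g$ with $-1 \leq g_i \leq g$. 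Re-running the argument in (iv) and exploiting that $g(z) > -1$ because $f(z) > -\infty$ shows that $g_i(z) > -1$ strictly for every $z$ and every $i$, so $\psi^{-1}(g_i(z))$ is well defined in $(-\infty, +\infty]$. I would finally set $f_i(z) = \min\bigl(i,\, \psi^{-1}(g_i(z))\bigr)$ under the convention $\min(i, +\infty) = i$; the clip at height $i$ ensures real-valuedness at points where $f = +\infty$, continuity across the plateau $\{g_i = 1\}$ holds because $\psi^{-1}(g_i(z))$ diverges to $+\infty$ exactly where $g_i(z) \to 1$, monotonicity $f_i \leq f_{i+1}$ is inherited from $g_i \leq g_{i+1}$ together with the growth of the clip height, and $\sup_i f_i(z) = f(z)$ splits cleanly into the cases $f(z) < +\infty$ (using continuity of $\psi^{-1}$ on $(-1, 1)$) and $f(z) = +\infty$ (in which case $f_i(z) = i$ eventually, so the supremum is $+\infty$).
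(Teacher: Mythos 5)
Your proof is correct, and the route you take---the Pasch--Hausdorff/Moreau--Yosida inf-convolution approximation, applied after squashing $f$ through an increasing homeomorphism to handle unboundedness from below---is essentially the standard textbook argument behind this result (the paper itself does not reproduce a proof; it simply cites Stromberg). Your observation that iterated lower truncation $\max(f,-n)$ breaks the monotonicity in $i$ of the approximants is well taken, and the $\psi$-squashing followed by the clip $\min(i,\cdot)$ is a clean way around it; you correctly check that $g_i(z) > -1$ strictly by re-running the lsc argument at $z$, that continuity survives across the plateau $\{g_i = 1\}$, and that the clip preserves monotonicity. Two small inaccuracies worth flagging. First, in step (i) the functions $z \mapsto f(y) + i\|z-y\|$ are $i$-Lipschitz but not affine (a norm is not linear); the ``pointwise infimum of $i$-Lipschitz functions is $i$-Lipschitz'' argument is what actually does the work and it is correct. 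Second, in the case $f(z) = +\infty$ at the very end, your claim that ``$f_i(z) = i$ eventually'' need not hold: $g_i(z)$ can remain strictly below $1$ for all $i$, in which case $\psi^{-1}(g_i(z))$ is finite and may or may not exceed $i$. What is true, and what the argument actually needs, is that $g_i(z) \uparrow 1$ forces $\psi^{-1}(g_i(z)) \to +\infty$, so that $\min\bigl(i,\psi^{-1}(g_i(z))\bigr) \to +\infty$ because both arguments of the $\min$ diverge; the conclusion $\sup_i f_i(z) = +\infty = f(z)$ then follows.
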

	If~$f$ is bounded from below, then the continuous functions~$f_i$ can be assumed to be uniformly bounded. Indeed, if $f(z)\geq 0$, say, then the continuous function $f_i(z)$ can be replaced with the bounded continuous function $f'_i(z)=\min\{\max\{f_i(z),0\}, i \}$. The sequence $f'_i$, $i\in\N$, is still non-decreasing and converges pointwise to~$f$. 
	
	\begin{definition}[Weak Convergence of Probability Distributions]
		\label{def:weak-convergence}
		A sequence of probability distributions $\P_j\in\cP(\cZ)$, $j\in\N$, converges weakly to~$\P\in\cP(\cZ)$ if for every bounded and continuous function $f:\cZ\rightarrow\R$ we have
		\[
		\lim_{j\in\N} \E_{\P_j} \left[ f (Z) \right] = \E_{\P} \left[ f (Z) \right].
		\]
	\end{definition}
	
	There is a close link between the continuity properties of the expected value of~$f(Z)$ with respect to the distribution~$\P$ and the continuity properties of~$f$. Recall that a function $F:\cP(\cZ)\to\overline\R$ is weakly continuous if  $\lim_{i\to\infty}F(\P_i)=F(\P)$ for every sequence $\P_i\in\cP(\cZ)$, $i\in\N$, that converges weakly to~$\P$. Weak lower and upper semicontinuity are defined analogously in the obvious way.
	
	\begin{proposition}[Continuity of Expected Values]
		\label{prop:semicontinuity}
		If $f:\cZ\rightarrow [-\infty,+\infty]$  is lower semicontinuous and bounded from below, then $\E_\P [ f (Z)]$ is weakly lower semicontinuous in~$\P\in\cP(\cZ)$. Conversely, if $f$ is upper semicontinuous and bounded from above, then $\E_\P [ f (Z)]$ is weakly upper semicontinuous in~$\P\in\P(\cZ)$. Finally, if~$f$ is continuous and bounded, then $\E_\P [ f (Z)]$ is weakly continuous in~$\P\in\P(\cZ)$.
	\end{proposition}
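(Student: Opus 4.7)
The plan is to deduce all three claims from Lemma~\ref{lem:baire-semicontinuity} together with the defining property of weak convergence (Definition~\ref{def:weak-convergence}).

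First, I would prove the lower semicontinuity statement. Assume that $f:\cZ\rightarrow[-\infty,+\infty]$ is lower semicontinuous and bounded from below, say $f\geq -M$ for some $M\in\R_+$. Replacing $f$ with $f+M$ merely shifts the expected value by the constant $M$ and thus does not affect the semicontinuity of $\E_\P[f(Z)]$ in $\P$. I may therefore assume without loss of generality that $f\geq 0$. By Lemma~\ref{lem:baire-semicontinuity} and the remark immediately following it, there exists a non-decreasing sequence of continuous and \emph{uniformly bounded} functions $f_i:\cZ\to\R$, $i\in\N$, such that $f(z)=\sup_{i\in\N}f_i(z)$ for every $z\in\cZ$. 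Fix now any sequence of probability distributions $\P_j\in\cP(\cZ)$, $j\in\N$, converging weakly to $\P\in\cP(\cZ)$. For every fixed $i\in\N$, the function $f_i$ is bounded and continuous, so Definition~\ref{def:weak-convergence} yields $\lim_{j\to\infty}\E_{\P_j}[f_i(Z)]=\E_\P[f_i(Z)]$. The monotone convergence theorem, applicable because $0\leq f_i\uparrow f$, further implies that $\E_\P[f(Z)]=\sup_{i\in\N}\E_\P[f_i(Z)]$. Combining these observations, I would estimate
\begin{align*}
    \liminf_{j\to\infty}\E_{\P_j}[f(Z)]
    \;\geq\; \liminf_{j\to\infty}\E_{\P_j}[f_i(Z)]
    \;=\; \E_\P[f_i(Z)] \quad\forall i\in\N,
\end{align*}
where the inequality uses $f_i\leq f$. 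Taking the supremum over $i\in\N$ on the right hand side gives $\liminf_{j\to\infty}\E_{\P_j}[f(Z)]\geq \E_\P[f(Z)]$, which is the desired weak lower semicontinuity.

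For the second statement, I would apply the first statement to the function $-f$, which is lower semicontinuous and bounded from below whenever $f$ is upper semicontinuous and bounded from above, and use that $\E_\P[-f(Z)]=-\E_\P[f(Z)]$. This shows that $-\E_\P[f(Z)]$ is weakly lower semicontinuous, equivalently, that $\E_\P[f(Z)]$ is weakly upper semicontinuous. The third statement then follows at once by combining the first two, since a continuous and bounded $f$ is simultaneously lower semicontinuous and bounded from below as well as upper semicontinuous and bounded from above, so $\E_\P[f(Z)]$ is both weakly lower and weakly upper semicontinuous, hence weakly continuous.

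The only delicate point is the uniform boundedness of the approximants $f_i$, which is needed so that each $\E_{\P_j}[f_i(Z)]$ is finite and the interchange via Definition~\ref{def:weak-convergence} is legitimate; this is handled by the reduction $f\geq 0$ and the remark following Lemma~\ref{lem:baire-semicontinuity}. Everything else is a routine combination of monotone convergence with the definition of weak convergence.
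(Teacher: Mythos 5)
Your proposal is correct and follows essentially the same route as the paper: approximate $f$ from below by bounded continuous functions via Lemma~\ref{lem:baire-semicontinuity}, combine the monotone convergence theorem with the definition of weak convergence, and interchange the supremum over the approximants with the limit inferior over the sequence of distributions; the paper likewise reduces to $f\geq 0$ and omits the upper-semicontinuity and continuity cases as analogous. The only cosmetic quibble is your phrase ``uniformly bounded'': the approximants $f_i$ are each bounded (by a constant depending on $i$), not bounded by a common constant, but this is all your argument actually uses.
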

	\begin{proof}
		Assume first that $f$ is lower semicontinuous and bounded from below. In the following, we assume without loss of generality that~$f$ is in fact non-negative. Then, by Lemma~\ref{lem:baire-semicontinuity}, there is a non-decreasing sequence of bounded, continuous and non-negative functions~$f_i$, $i\in\N$, with $f(z)=\sup_{i\in\N} f_i(z)$. If~$\P_j\in\cP(\cZ)$, $j\in\N$, is any sequence of distributions that converges weakly to~$\P$, then we find
		\begin{align*}
			\liminf_{j\in\N} \E_{\P_j} \left[ f (Z) \right]&= \sup_{k\in\N} \inf_{j\geq k} \E_{\P_j} \left[ \sup_{i\in\N} f_i(Z) \right] \\
			&= \sup_{k\in\N} \inf_{j\geq k}  \sup_{i\in\N} \E_{\P_j} \left[ f_i(Z) \right] \\
			&\geq \sup_{i\in\N}  \sup_{k\in\N} \inf_{j\geq k}  \E_{\P_j} \left[ f_i(Z) \right] \\
			&= \sup_{i\in\N} \E_{\P} \left[ f_i(\xi) \right]  = \E_{\P} \left[ f(Z) \right].
		\end{align*}
		Here, both the second and the last equality follow from the monotone convergence theorem, which applies because each~$f_i$ is bounded and thus integrable with respect to any probability distribution and because the~$f_i$, $i\in\N$, form a non-decreasing sequence of non-negative functions. The inequality follows from the interchange of the supremum over~$i$ and the infimum over~$j$, and the third equality holds because~$\P_j$ converges weakly to~$\P$ and because~$f_i$ is continuous and bounded. This shows that~$\E_\P [ f (Z)]$ is weakly lower semicontinuous in~$\P$. 
		
		The proofs of the assertions regarding weak upper semicontinuity and weak continuity are analogous and therefore omitted for brevity.
	\end{proof}
	
	In the following we equip the family $\cP(\cZ)$ of all probability distributions on~$\cZ$ with the weak topology, which is generated by the open sets
	\[
	U_{f,\delta}=\left\{ \P\in \cP(\cZ):  \left|\E_{ \P} [f(Z)]\right|<\delta\right\}
	\]
	encoded by any continuous bounded function~$f:\cZ\to\R$ and tolerance~$\delta>0$. The weak topology on $\cP(\cZ)$ is metrized by the Prokhorov metric \cite[Theorem~6.8]{billingsley2013convergence}, and therefore the notions of sequential compactness and compactness are equivalent on $\cP(\cZ)$; see, {\em e.g.}, \cite[Theorem~28.2]{munkres2000topology}.
	
	\begin{definition}[Tightness]
		A family $\cP \subseteq \cP(\cZ)$ of distributions is tight if for any tolerance $\varepsilon > 0$ there is a compact set ${\cC} \subseteq \cZ$ with $\P(Z\not\in {\cC}) \leq\varepsilon$ for all $\P\in\cP$.
	\end{definition}
	
	A classical result by Prokhorov asserts that a distribution family is weakly compact if and only if it is tight and weakly closed. Prokhorov's theorem is the key tool to show that an ambiguity set is weakly compact. We state it without~proof.
	
	\begin{theorem}[{\citet[Theorem~5.1]{billingsley2013convergence}}]
		\label{thm:prokhorov}
		A family $\cP \subseteq \cP(\cZ)$ of distributions is weakly compact if and only if it is tight as well as weakly closed.
	\end{theorem}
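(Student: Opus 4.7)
The plan is to prove the two implications separately. Both rely on the fact that, because $\cZ \subseteq \R^d$ is Polish, the weak topology on $\cP(\cZ)$ is metrized by the Prokhorov metric. In particular, weak compactness coincides with weak sequential compactness, and every weakly compact subset is automatically weakly closed (Hausdorff spaces separate compact from limit points). The nontrivial parts are therefore (i) necessity of tightness and (ii) sufficiency of tightness.

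For necessity of tightness, I would argue by contradiction. Since $\cZ$ is $\sigma$-compact, the sets $K_m = \cZ \cap \bar B(0,m)$ form an increasing sequence of compacts with $\bigcup_m K_m = \cZ$. For each $m$ choose a continuous bounded function $f_m:\cZ\to[0,1]$ with $f_m \equiv 1$ on $K_m$ and $f_m \equiv 0$ off $K_{m+1}$ (for instance, $f_m(z) = \max\{0, 1-\dist(z, K_m)\}$ truncated appropriately). If $\cP$ were not tight, some $\varepsilon>0$ would admit a sequence $\P_n \in \cP$ with $\P_n(K_n) < 1-\varepsilon$. By weak sequential compactness, a subsequence $\P_{n_k}$ converges weakly to some $\P \in \cP(\cZ)$. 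For any fixed $m$ and all $n_k > m$, the monotonicity $f_m \leq \ds 1_{K_{m+1}}$ yields $\E_{\P_{n_k}}[f_m(Z)] \leq \P_{n_k}(K_{n_k}) < 1-\varepsilon$; passing to the weak limit gives $\E_\P[f_m(Z)] \leq 1-\varepsilon$ for every $m$. However, monotone convergence forces $\E_\P[f_m(Z)] \uparrow 1$ as $m\to\infty$, a contradiction.

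For sufficiency, I would produce a weakly convergent subsequence of any $\{\P_n\}\subseteq\cP$; its limit then lies in $\cP$ by weak closedness. The construction proceeds in three stages. First, tightness provides an increasing sequence of compact sets $K_k \subseteq \cZ$ with $\P_n(\cZ \setminus K_k) \leq 1/k$ for every $n$ and every $k$. Second, each $C(K_k)$ is separable; fixing a countable dense collection $\{f_{k,i}\}$ in its unit ball (extended continuously and boundedly to $\cZ$ via Tietze), a Cantor diagonal extraction yields a subsequence $\{\P_{n_j}\}$ along which $\E_{\P_{n_j}}[f_{k,i}(Z)]$ converges for every pair $(k,i)$. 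Third, the functional $\Lambda(f) = \lim_j \E_{\P_{n_j}}[f(Z)]$ is well-defined on a uniformly dense subset of $C_c(\cZ)$, extends to a positive linear functional of norm one on $C_0(\cZ)$, and is represented via the Riesz representation theorem by a Borel measure $\P$ on~$\cZ$. Tightness guarantees that $\P(\cZ) = 1$, so $\P \in \cP(\cZ)$.

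The main obstacle is bootstrapping the convergence from the dense test family to \emph{all} bounded continuous functions (Definition~\ref{def:weak-convergence}). For arbitrary $f\in C_b(\cZ)$ and arbitrary $\varepsilon>0$, the trick is to split
\begin{align*}
    \E_{\P_{n_j}}[f(Z)] - \E_\P[f(Z)] = \underbrace{\E_{\P_{n_j}}\!\big[f(Z) \ds 1_{Z\in K_k}\big] - \E_\P\!\big[f(Z)\ds 1_{Z\in K_k}\big]}_{\text{(I)}} + \underbrace{\text{tail terms}}_{\text{(II)}},
\end{align*}
where (II) is bounded by $2\|f\|_\infty / k$ uniformly in $j$ by tightness, and (I) is handled by uniform approximation of $f|_{K_k}$ within $\varepsilon$ by some~$f_{k,i}$ (using separability of $C(K_k)$) and then invoking the constructed convergence along the diagonal subsequence. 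Sending $j\to\infty$, then $k\to\infty$, then $\varepsilon\to 0$ completes the verification that $\P_{n_j} \to \P$ weakly, and thus $\cP$ is weakly sequentially compact.
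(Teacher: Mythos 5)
The paper states Prokhorov's theorem without proof, citing Billingsley, so there is no argument in the text to compare against; your blind proof is the standard Riesz-representation / Helly-selection construction for a locally compact $\cZ \subseteq \R^d$, and it is sound in structure. Two technical points deserve care in a complete write-up. First, your bound on the tail term~(II), namely $2\|f\|_\infty / k$, invokes $\P(\cZ \setminus K_k) \leq 1/k$ for the \emph{limit} measure~$\P$, which does not follow directly from tightness of the family $\{\P_n\}$ and must be established: for instance, test $\Lambda$ against a compactly supported $g$ with $0 \leq g \leq 1$ and $g \equiv 1$ on~$K_k$; then $\P(\mathrm{supp}\,g) \geq \int g \,\diff\P = \Lambda(g) = \lim_j \E_{\P_{n_j}}[g(Z)] \geq 1 - 1/k$, which gives tightness of~$\P$ after re-indexing. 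Second, the Riesz identity $\Lambda(g) = \int g\,\diff\P$ is available for $g \in C_0(\cZ)$, but a Tietze extension of a function on $K_k$ need not vanish at infinity; arrange the extensions $f_{k,i}$ to lie in $C_c(\cZ)$, e.g.\ by multiplying the raw Tietze extension by a compactly supported bump that is identically~$1$ on $K_k$. With these two details spelled out, the argument closes.
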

	
	In the following we revisit the ambiguity sets of Section~\ref{sec:ambiguity-sets} one by one and determine under what conditions they are tight, weakly closed and weakly compact. 
	
	\subsection{Moment Ambiguity Sets}
	
	The support-only ambiguity sets arguably form the simplest class of moment ambiguity sets because they impose no moment conditions at all. In fact, {\em all} other ambiguity sets considered in this paper are subsets of a support-only ambiguity set.
	
	\begin{proposition}[Support-Only Ambiguity Sets]
		\label{prop:support:only:compact}
		The set $\cP(\cZ)$ of all distributions supported on~$\cZ\subseteq \R^d$ is weakly compact if and only if $\cZ$ is compact.
	\end{proposition}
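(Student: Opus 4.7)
The plan is to invoke Prokhorov's theorem (Theorem~\ref{thm:prokhorov}) and reduce the claim to checking tightness, since weak closedness of $\cP(\cZ)$ will hold automatically whenever $\cZ$ itself is closed. For weak closedness, I would argue via the Portmanteau theorem: if $\P_n \in \cP(\cZ)$ converges weakly to~$\P \in \cP(\R^d)$, then applied to the open set $\R^d\setminus\cZ$ we get $0 = \liminf_n \P_n(\R^d\setminus\cZ) \geq \P(\R^d\setminus\cZ)$, so $\P$ is supported on~$\cZ$.

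For the ``if'' direction, suppose $\cZ$ is compact. Tightness is then immediate: for any $\varepsilon>0$, choose $\cC = \cZ$, which is compact, and observe that $\P(Z\notin\cC) = 0 \leq \varepsilon$ for every $\P \in \cP(\cZ)$. Combined with weak closedness, Prokhorov's theorem yields weak compactness.

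For the ``only if'' direction, I will prove the contrapositive: if $\cZ$ is not compact, then $\cP(\cZ)$ fails to be tight, and hence cannot be weakly compact by Prokhorov's theorem. Since $\cZ$ is closed by standing assumption, failure of compactness is equivalent to unboundedness, so there exists a sequence $\{z_n\}_{n\in\N}\subseteq\cZ$ with $\|z_n\|_2 \to \infty$. Consider the Dirac distributions $\delta_{z_n} \in \cP(\cZ)$. For any compact (and hence bounded) set $\cC \subseteq \cZ$, the points $z_n$ eventually lie outside~$\cC$, so $\delta_{z_n}(Z\notin\cC) = 1$ for all sufficiently large~$n$. Thus for $\varepsilon = 1/2$ no compact set~$\cC$ can satisfy $\P(Z\notin\cC)\leq \varepsilon$ uniformly over $\cP(\cZ)$, proving that $\cP(\cZ)$ is not tight.

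The argument is essentially a bookkeeping exercise once Prokhorov's theorem is in hand, and no step presents a serious obstacle; the only mild subtlety is ensuring that weak closedness does not fail on a technicality, which is why I rely on the Portmanteau characterization applied to the open complement~$\R^d\setminus\cZ$ rather than attempting a direct dual-pairing argument.
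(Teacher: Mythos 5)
Your proof is correct and follows essentially the same route as the paper: Prokhorov's theorem reduces the claim to tightness (which holds iff $\cZ$ is bounded, witnessed by Dirac masses escaping any compact set) plus weak closedness of $\cP(\cZ)$, which follows from closedness of $\cZ$. The only cosmetic difference is that you invoke the Portmanteau theorem for the open complement where the paper applies its Proposition~\ref{prop:semicontinuity} to the lower semicontinuous indicator $\delta_\cZ$ — these are the same argument in different clothing.
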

	\begin{proof}
		Note first that $\cP(\cZ)$ is tight if and only if~$\cZ$ is bounded. Indeed, if~$\cZ$ is bounded, then it is compact because~$\cZ$ is closed thanks to our blanket assumption. Given any~$\varepsilon>0$, we may thus set $\cC = \cZ$, which ensures that $\P( Z \notin \cC) = 0 \leq \varepsilon$ for all $\P \in \cP(\cZ)$. Hence, $\cP(\cZ)$ is tight. If~$\cZ$ is unbounded, on the other hand, then $\cP(\cZ)$ trivially fails to be tight. Indeed, for any compact set~$\cC \subseteq \cZ$, the complement $\cZ \backslash \cC$ is non-empty because~$\cC$ is bounded and~$\cZ$ is not. Hence, there exists a probability distribution $\P \in \cP(\cZ)$ supported on $\cZ\backslash \cC$ such that $\P(Z \notin \cC) = 1$.
		
		Next, note that~$\cP(\cZ)$ is weakly closed if and only if~$\cZ$ is closed. To see this, assume first that $cZ$ is closed. Recall that the indicator function $\delta_\cZ$ is defined by $\delta_\cZ(z)=0$ if $z\in\cZ$ and $\delta_\cZ(z)=+\infty$ if $z\notin\cZ$. Thus, it is lower semicontinuous and bounded below. By Proposition~\ref{prop:semicontinuity}, $\E_\P[\delta_\cZ(Z)]$ is therefore weakly lower semicontinuous in~$\P$. If $\P_j\in\cP(\cZ)$, $j\in\N$, converges weakly to~$\P$, we then have
		\begin{align*}
			0=\liminf_{j\in\N} \E_{\P_j}[\delta_\cZ(Z)] \geq \E_\P[\delta_\cZ(Z)] \geq 0,
		\end{align*}
		where the equality holds because~$\P_j$ is supported on~$\cZ$ for every~$j\in \N$, and the first inequality follows from weak lower semicontinuity. This implies that~$\P \in \cP(\cZ)$, and thus~$\cP(\cZ)$ is weakly closed. Conversely, assume that~$\cP(\cZ)$ is weakly closed, and consider a sequence~$z_j\in\cZ$, $j\in\N$, converging to~$z$. Then, the sequence of Dirac distributions $\delta_{z_j}$, $j\in\N$, converges weakly to~$\delta_z$, and thus we find
		\begin{align*}
			0=\liminf_{j\in\N} \E_{\delta_{z_j}}[\delta_\cZ(Z)] \geq \E_{\delta_z}[\delta_\cZ(Z)] \geq 0.
		\end{align*}
		Here, the first inequality holds again because $\E_\P[\delta_\cZ(Z)]$ is weakly lower semicontinuous in~$\P$. This implies that $\E_{\delta_z}[\delta_\cZ(Z)]=0$, which holds if and only if~$z\in\cZ$. Thus, $\cZ$ is closed. Given these insights, the claim follows from Theorem~\ref{thm:prokhorov}.
	\end{proof}
	
	By using Proposition~\ref{prop:support:only:compact}, we can now show that a moment ambiguity set of the form~\eqref{eq:moment-ambiguity-set} is weakly compact whenever the underlying support set~$\cZ$ is compact, the moment function~$f$ is continuous and the uncertainty set~$\cF$ is closed.
	
	\begin{proposition}[Moment Ambiguity Sets]   \label{prop:moment-ambiguity-sets-compact}
		If~$\cZ\subseteq\R^d$ is a compact support set, $f:\cZ\to\R^m$ is a continuous moment function and $\cF \subseteq \R^m$ is a closed uncertainty set, then the moment ambiguity set~$\cP$ defined in~\eqref{eq:moment-ambiguity-set} is weakly compact.
	\end{proposition}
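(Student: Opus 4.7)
The plan is to verify the two hypotheses of Prokhorov's theorem (Theorem~\ref{thm:prokhorov}): tightness and weak closedness of~$\cP$. Once both are established, weak compactness follows immediately.

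Tightness is essentially free. Since $\cZ$ is compact, Proposition~\ref{prop:support:only:compact} ensures that the ambient set $\cP(\cZ)$ is weakly compact and thus, by Prokhorov's theorem, tight. As every subfamily of a tight family is tight, $\cP\subseteq\cP(\cZ)$ inherits tightness directly: taking $\cC=\cZ$ in the definition works uniformly for all $\P\in\cP$.

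The main work lies in weak closedness, though even here the hypotheses make matters straightforward. Let $\P_j\in\cP$, $j\in\N$, be any sequence converging weakly to some $\P\in\cP(\R^d)$. First, weak closedness of $\cP(\cZ)$ (again from Proposition~\ref{prop:support:only:compact}) yields $\P\in\cP(\cZ)$. Next, because $f:\cZ\to\R^m$ is continuous on the compact set~$\cZ$, each component $f_i$ is bounded and continuous. Applying Proposition~\ref{prop:semicontinuity} componentwise, the map $\P\mapsto\E_\P[f(Z)]$ is weakly continuous, so
\begin{equation*}
    \E_{\P_j}[f(Z)] \;\longrightarrow\; \E_\P[f(Z)] \quad \text{as } j\to\infty.
\end{equation*}
Since $\E_{\P_j}[f(Z)]\in\cF$ for every $j$ and $\cF$ is closed, the limit satisfies $\E_\P[f(Z)]\in\cF$, so $\P\in\cP$. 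This establishes that $\cP$ is weakly closed.

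With $\cP$ both tight and weakly closed, Theorem~\ref{thm:prokhorov} delivers weak compactness. The only step that requires more than a line of justification is the continuity of the moment map, which is handled by invoking Proposition~\ref{prop:semicontinuity} on each coordinate~$f_i$; I do not anticipate any real obstacle, as all the hypotheses (compactness of $\cZ$, continuity of $f$, closedness of $\cF$) feed directly into the premises of the supporting results already established.
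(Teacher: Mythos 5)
Your proof is correct and follows essentially the same route as the paper's: tightness is inherited from the weakly compact set $\cP(\cZ)$, weak closedness follows from the weak continuity of $\P\mapsto\E_\P[f(Z)]$ (Proposition~\ref{prop:semicontinuity}) together with the closedness of $\cF$, and Prokhorov's theorem concludes. The only cosmetic difference is that you argue sequentially while the paper phrases closedness via pre-images of closed sets under continuous maps; since the weak topology on $\cP(\cZ)$ is metrizable, the two formulations are equivalent.
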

	
	\begin{proof}
		As~$\cZ$ is compact, the support-only ambiguity set~$\cP(\cZ)$ is weakly compact by virtue of Proposition~\ref{prop:support:only:compact}. Consequently, $\cP(\cZ)$ is tight and weakly closed. This readily implies that~$\cP$ is tight as a subset of a tight set remains tight. Proposition~\ref{prop:semicontinuity} further implies that $\E_\P[f(Z)]$ is weakly continuous in~$\P$. As~$\cF$ is closed and as the pre-image of any closed set under a continuous transformation is closed, we may conclude that $\cP_f=\{\P\in\cP(\R^d):\E_\P[f(Z)]\in\cF\}$ is weakly closed. Hence, $\cP=\cP(\cZ)\cap\cP_f$ is weakly closed as the intersection of two weakly closed sets. Given these insights, the claim follows readily from Theorem~\ref{thm:prokhorov}.
	\end{proof}
	
	The conditions of Proposition~\ref{prop:moment-ambiguity-sets-compact} are only sufficient but not necessary for weak compactness. The next examples show that moment ambiguity sets can be tight or weakly compact even if the support set~$\cZ$ or the moment function~$f$ are unbounded.
	
	\begin{example}[Markov Ambiguity Sets]
		The Markov ambiguity set~\eqref{eq:Markov} fails to be tight if~$\cZ=\R^d$. For example, if~$\cZ=\R$ and $\mu=0$, then for every compact set~${\cC}\subseteq\R$ there is a constant~$R>0$ such that the two-point distribution $\P=\half\delta_{-R}+ \half \delta_R$ is fully supported on the complement of~${\cC}$. However, the Markov ambiguity set~$\cP$ becomes tight if $\cZ=\R_+$ and $\mu=1$. Indeed, in this case Markov's inequality implies that $\P(Z\not\in {\cC}) \leq\varepsilon$ for every $\P\in\cP$ and $\varepsilon>0$ if we define ${\cC}$ as the compact interval $[0,1/\varepsilon]$. Even in this case, however, $\cP$ fails to be weakly closed. Indeed, the distributions $\P_i=\frac{i}{i+1}\delta_0+\frac{1}{i+1}\delta_{i+1}$ belong to~$\cP$ for all~$i\in\N$, but their weak limit~$\P=\delta_0$ is no member of~$\cP$. If~$\cZ$ is convex, one can extend this reasoning in the obvious way to show that~$\cP$ is weakly compact if and only if~$\cZ$ is compact.
	\end{example}
	
	The next example shows that Chebyshev ambiguity sets are tight irrespective of~$\cZ$. Nevertheless, they are not always weakly compact.
	
	\begin{example}[Chebyshev Ambiguity Sets]
		\label{ex:chebyshev-not-compact}
		The Chebyshev ambiguity set~$\cP$ defined in~\eqref{eq:Chebyshev} is always tight. To see this, assume without loss of generality that~$\mu=0$ and $M=I_d$, which can always be enforced by applying an affine coordinate transformation. Given any $\varepsilon>0$, we can define a compact set ${\cC} =\{z\in\cZ: \|z\|_2\leq \sqrt{d/\varepsilon}\}$. It is then easy to see that any distribution $\P\in\cP$ satisfies
		\begin{align*}
			\P(Z\not\in {\cC}) = \P\left(\|Z\|_2 > \sqrt{d/\varepsilon} \right) \leq \E_\P \left[ \|Z\|_2^2\cdot  \varepsilon/d \right] = \varepsilon,
		\end{align*}
		where the inequality holds because the quadratic function $q(z)= \| z\|_2^2\cdot \varepsilon/d$ majorizes the characteristic function of~$\cZ\backslash \cC$. Hence, $\cP$ is indeed tight. However, $\cP$ is not necessarily weakly closed. To see this, suppose that~$d=1$ and that~$\cZ=\R$. In this case the distributions $\P_i= \frac{1}{2i^2}\delta_{-i} + \frac{i^2-1}{i^2}\delta_0 + \frac{1}{2i^2}\delta_i$ have zero mean and unit variance for all~$i\in\N$. That is, they all belong to~$\cP$. However, they converge weakly to~$\P=\delta_0$, which is not an element of~$\cP$. Thus, $\cP$ fails to be weakly compact. 
		%
	\end{example}
	
	The family of all distributions on~$\R^d$ with bounded $p$-th-order moments is always weakly compact even though ambiguity sets that fix the $p$-th-order moments to prescribed values ({\em e.g.}, the Chebyshev ambiguity set) may {\em not} be weakly compact. 
	
	\begin{example}[$p$-th-Order Moment Ambiguity Sets]
		\label{ex:moment-ambiguity-set-compact}
		The ambiguity set
		\[
		\cP =\left\{ \P\in \cP(\cZ):   \E_\P[\|Z\|^p] \leq R \right\}
		\]
		induced by any norm~$\|\cdot\|$ on~$\R^d$ and two parameters $p,R>0$ is weakly compact. Using a similar reasoning as in Example~\ref{ex:chebyshev-not-compact}, one can show that for any $\varepsilon>0$ there exists a compact set, namely ${\cC}=\{z\in\cZ: \|z\|\leq (R/\varepsilon)^{1/p}\}$, which satisfies $\P(Z\not\in {\cC}) \leq\varepsilon$. Thus, $\cP$ is tight. To see that $\cP$ is also weakly closed, note that $f(z)=\|z\|^p$ is continuous and bounded below. By Proposition~\ref{prop:semicontinuity}, the expected value $\E_\P[\|Z\|^p]$ is therefore weakly lower semicontinuous in~$\P$ and has weakly closed sublevel sets. Therefore, $\cP$ is weakly compact by virtue of Theorem~\ref{thm:prokhorov}.
	\end{example}
	
	\subsection{$\phi$-Divergence Ambiguity Sets}
	In this section we show that $\phi$-divergence ambiguity sets of the form~\eqref{eq:phi-divergence-ambiguity-set} are weakly compact whenever the entropy function~$\phi$ grows superlinearly. Otherwise, if~$\phi$ grows at most linearly, then the corresponding $\phi$-divergence ambiguity sets generically fail to be weakly compact. Recall that an entropy function~$\phi$ in the sense of Definition~\ref{def:phi} grows superlinearly if and only if $\phi^\infty(1)=\infty$; see also Table~\ref{tab:phi-divergence}.
	
	\begin{lemma}[Worst-Case Probability Maps]
		\label{prop:worst-case-prob-map}
		Let~$\cP$ be the $\phi$-divergence ambiguity set of radius~$r> 0$ around~$\hat{\P}\in\cP(\cZ)$ defined in~\eqref{eq:phi-divergence-ambiguity-set}, and assume that~$\phi$ is continuous at~$1$ and that $\phi^\infty(1)=\infty$. Then, there is a continuous, concave and sur\-jective function $p:[0,1]\rightarrow[0,1]$ that depends only on~$\phi$ and $r$ such that
		\begin{align*}
			\sup_{\P\in \cP} \P(Z\in \cB) = p(\hat{\P}(Z\in\cB))
		\end{align*}
		for every Borel set~$\cB\subseteq\cZ$.
	\end{lemma}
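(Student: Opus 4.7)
The plan is to reduce the supremum defining the worst-case probability to a two-dimensional optimization depending only on $q=\hat\P(Z\in\cB)$, and then read off the advertised properties of $p$ from the joint convexity of the perspective $\phi^\pi$ together with the superlinear growth condition $\phi^\infty(1)=\infty$.

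Since $\phi^\infty(1)=\infty$, the discussion following Proposition~\ref{prop:dual-phi-divergences} guarantees that every $\P\in\cP$ satisfies $\P\ll\hat\P$, so that $\D_\phi(\P,\hat\P)=\E_{\hat\P}[\phi(L)]$ for the likelihood ratio $L=\diff\P/\diff\hat\P$. Fixing a Borel set $\cB$ with $q=\hat\P(\cB)\in(0,1)$, I introduce $a=\E_{\hat\P}[L\,\ds 1_\cB]$ and observe that $\E_{\hat\P}[L\,\ds 1_{\cZ\setminus\cB}]=1-a$. Applying Jensen's inequality to the conditional measures $\hat\P(\cdot\mid\cB)$ and $\hat\P(\cdot\mid\cZ\setminus\cB)$ yields
\[
    \E_{\hat\P}[\phi(L)] \;\geq\; q\,\phi(a/q) + (1-q)\,\phi((1-a)/(1-q)),
\]
and the two-valued likelihood ratio $L^\star=(a/q)\ds 1_\cB+((1-a)/(1-q))\ds 1_{\cZ\setminus\cB}$ saturates this inequality while reproducing the same objective value $a$. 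Consequently,
\[
    \sup_{\P\in\cP}\P(\cB) \;=\; p(q) \;:=\; \sup\bigl\{\,a\in[0,1]\,:\,F(q,a)\leq r\,\bigr\},
\]
with $F(q,a)=\phi^\pi(a,q)+\phi^\pi(1-a,1-q)$. The crucial point is that $F$ depends on $\cB$ only through $q$. The boundary cases $q\in\{0,1\}$ force $p(0)=0$ and $p(1)=1$ directly from $\P\ll\hat\P$.

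Because $\phi^\pi$ is jointly convex and lower semicontinuous on $\R\times\R_+$, as recorded in the notation section, $F$ is jointly convex and lower semicontinuous on $[0,1]^2$, and the sublevel set $K=\{(q,a)\in[0,1]^2:F(q,a)\leq r\}$ is convex and closed. The identity solution $(q,q)\in K$ (since $F(q,q)=q\phi(1)+(1-q)\phi(1)=0$) shows that every vertical slice of $K$ is non-empty, so $p$ is finite on $[0,1]$ with $p(q)\geq q$. Being the upper envelope of a convex set, $p$ is concave and upper semicontinuous, which forces continuity on the open interval $(0,1)$, and the squeeze $q\leq p(q)\leq 1$ gives $p(q)\to 1=p(1)$ at the right endpoint.

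The main obstacle is continuity at $q=0$, since for a general concave function upper semicontinuity only provides $\limsup_{q\downarrow 0}p(q)\leq p(0)=0$ without excluding a positive liminf a priori. Suppose for contradiction that $p(q_n)\geq\varepsilon>0$ along some sequence $q_n\downarrow 0$, and set $t_n=p(q_n)/q_n\to\infty$. Then
\[
    \phi^\pi(p(q_n),q_n) \;=\; q_n\,\phi(t_n) \;=\; p(q_n)\,\frac{\phi(t_n)}{t_n} \;\geq\; \varepsilon\,\frac{\phi(t_n)}{t_n}\longrightarrow\infty,
\]
because $\phi(t)/t\to\phi^\infty(1)=\infty$ as $t\to\infty$. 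The remaining term $\phi^\pi(1-p(q_n),1-q_n)$ is bounded below by an affine function of its arguments, since $\phi$ is proper convex and therefore admits a global affine minorant. Hence $F(q_n,p(q_n))$ diverges to $+\infty$, contradicting $F(q_n,p(q_n))\leq r$. This proves $\lim_{q\downarrow 0}p(q)=0=p(0)$, so $p$ is continuous on all of $[0,1]$. Surjectivity onto $[0,1]$ then follows from the intermediate value theorem together with $p(0)=0$ and $p(1)=1$.
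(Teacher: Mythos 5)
Your proof is correct but takes a genuinely different route from the paper's. The paper's argument is dual-based: it invokes the strong duality theorem for restricted $\phi$-divergence ambiguity sets (Theorem~\ref{thm:duality:restricted:phi}), defines $p(t)$ as the optimal value of the resulting finite-dimensional dual minimization over $(\lambda_0,\lambda)$, and reads off concavity and upper semicontinuity from the representation of $p$ as a pointwise infimum of affine functions of $t$. Your argument is primal-based: you observe via Jensen's inequality applied to the conditional measures $\hat\P(\cdot\mid\cB)$ and $\hat\P(\cdot\mid\cZ\setminus\cB)$ that the extremal likelihood ratio may be taken two-valued (constant on $\cB$ and on its complement), which collapses nature's problem to the scalar description $p(q)=\sup\{a\in[0,1]:F(q,a)\leq r\}$ for the explicit bivariate convex function $F(q,a)=\phi^\pi(a,q)+\phi^\pi(1-a,1-q)$; concavity and upper semicontinuity of $p$ then become the familiar properties of the upper envelope of a compact convex sublevel set $K$. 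Each route has merit. The paper's proof plugs directly into the duality machinery developed in Section~\ref{sec:duality-wc-expectation}, at the cost of a minor imprecision: the claim that ``$\hat\P(Z\in\cB)$ can adopt any value in $[0,1]$'' used to show $p$ maps into $[0,1]$ fails for atomic $\hat\P$, whereas your bounds $q\leq p(q)\leq 1$ follow directly from $F(q,q)=0$ and $a\in[0,1]$ with no assumption on $\hat\P$. Your explicit treatment of the endpoint $q\downarrow 0$ via the superlinear growth $\phi(t)/t\to\phi^\infty(1)=\infty$ is also cleaner; the paper obtains the same conclusion more tersely by noting that upper semicontinuity forbids an upward limit at the boundary while concavity forbids a downward one. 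One small point worth making explicit in your write-up: the contradiction at $q\downarrow 0$ relies on $F(q_n,p(q_n))\leq r$, which holds because $K$ is closed (by lower semicontinuity of $F$) and bounded, so the supremum defining $p(q_n)$ is attained.
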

	\begin{proof}
		The proof is constructive. That is, we define the function~$p$ through
		\begin{align*}
			p(t)= \inf_{\lambda_0\in\R, \lambda\in\R_+} \lambda_0 + \lambda r + t\cdot (\phi^*)^\pi \left( 1 - \lambda_0, \lambda\right) + (1-t) \cdot (\phi^*)^\pi\left(-\lambda_0, \lambda\right)
		\end{align*}
		for all $t\in[0,1]$. In the remainder we show that~$p$ satisfies all desired properties. By construction, $p$ depends only on~$\phi$ and~$r$ and coincides with the lower envelope of infinitely many linear functions in~$t$. Hence, $p$ is concave as well as upper semicontinuous. By the definition of~$\cP$ and by Theorem~\ref{thm:duality:restricted:phi} below, we also have
		\begin{align}
			\sup_{\P\in \cP} \P(Z\in \cB)=& \sup_{\P\in \cP(\cZ) } \left\{ \E_\P[ \ds 1_\cB(Z)] : \D_\phi(\P, \hat {\P}) \leq r\right\} \nonumber \\[-1ex] 
			= & \inf_{\lambda_0\in\R, \lambda\in\R_+} \lambda_0 + \lambda r + \E_{\hat{\P}}\left[ (\phi^*)^\pi \left( \ds 1_\cB(Z) - \lambda_0, \lambda\right) \right] \label{eq:p-identity}\\
			= &p(\hat{\P}(Z\in\cB)), \nonumber
		\end{align}
		for any Borel set~$\cB$, where the last equality follows from the definition of~$p$. As the worst-case probability on the left hand side of~\eqref{eq:p-identity} falls within~$[0,1]$ and as $\hat{\P}(Z\in\cB)$ can adopt any value in~$[0,1]$, it is clear that the range of~$p$ is a subset of~$[0,1]$. Next, we show that~$p$ is continuous. To this end, note that the concavity and finiteness of~$p$ on~$[0,1]$ imply via \cite[Theorem~10.1]{rockafellar1970convex} that $p$ is continuous on~$(0,1)$. In addition, its upper semicontinuity prevents~$p$ from jumping at~0 or at~1. Thus, $p$ is indeed continuous throughout~$[0,1]$. Finally, setting $\cB=\emptyset$ or $\cB=\cZ$ in~\eqref{eq:p-identity} shows that~$p(0)=0$ and~$p(1)=1$, respectively. Consequently, we may conclude that~$p$ is surjective. This observation completes the proof. 
	\end{proof}
	
	As $\hat{\P}\in\cP$, the worst-case probability map~$p$ from Lemma~\ref{prop:worst-case-prob-map} satisfies $p(t)\geq t$ for all~$t\in[0,1]$, that is, the worst-case probability is never smaller than the nominal probability. We remark that the map~$p$ also emerges in the study of distributionally robust chance constraints over $\phi$-divergence ambiguity sets with $\phi^\infty(1)=\infty$. Indeed, any such distributionally robust chance constraint with violation probability $\varepsilon\in(0,1)$ is equivalent to a classical chance constraint under the reference distribution~$\hat\P$ with (smaller) violation probability~$p^{-1}(\varepsilon)$; see \citep{ghaoui2003worst, jiang2016data, shapiro2017distributionally}. We can now show that divergence ambiguity sets corresponding to superlinear entropy functions are weakly compact. 
	
	\begin{proposition}[$\phi$-Divergence Ambiguity Sets] 
		\label{prop:compactness-phi-divergence-ambiguity-sets}
		If~$\phi$ is an entropy function with $\phi^\infty(1)=\infty$, then the corresponding $\phi$-divergence ambiguity set~$\cP$ defined in~\eqref{eq:phi-divergence-ambiguity-set} is weakly compact for any closed set $\cZ\subseteq\R^d$, distribution $\hat{\P}\in \cP(\cZ)$ and~$r\geq 0$.
	\end{proposition}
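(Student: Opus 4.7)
The plan is to verify the hypotheses of Prokhorov's theorem (Theorem~\ref{thm:prokhorov}) by establishing that the $\phi$-divergence ball $\cP$ is both tight and weakly closed.

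For tightness, I would invoke Lemma~\ref{prop:worst-case-prob-map} when $r>0$; the degenerate case $r=0$ follows by noting that $\cP$ is then a subset of the ball of any strictly positive radius $r'>0$, and tightness passes to subsets. The lemma supplies a continuous concave function $p:[0,1]\to[0,1]$ with $p(0)=0$ satisfying $\sup_{\P\in\cP}\P(Z\in\cB)=p(\hat\P(Z\in\cB))$ for every Borel set $\cB\subseteq\cZ$. Since $\hat\P$ is a Borel probability measure on the Polish space $\cZ\subseteq\R^d$, it is inner regular, so for every $\delta>0$ there exists a compact $\cC_\delta\subseteq\cZ$ with $\hat\P(Z\notin\cC_\delta)\leq\delta$. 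Applying the lemma to $\cB=\cZ\setminus\cC_\delta$ then yields $\sup_{\P\in\cP}\P(Z\notin\cC_\delta)\leq p(\delta)$, which is below any prescribed $\varepsilon>0$ once $\delta$ is small enough, thanks to continuity of $p$ at $0$ and $p(0)=0$.

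For weak closedness, the plan is to show that $\P\mapsto\D_\phi(\P,\hat\P)$ is weakly lower semicontinuous, whence its sublevel set $\cP$ is a weakly closed subset of the weakly closed set $\cP(\cZ)$ (weakly closed because $\cZ$ is closed, as noted in the proof of Proposition~\ref{prop:support:only:compact}). The starting point is the dual representation in Proposition~\ref{prop:dual-phi-divergences}: for each bounded Borel $f:\cZ\to\dom(\phi^*)$, the functional
\begin{align*}
    J_f(\P)\,=\,\int_\cZ f(z)\,\diff\P(z)\,-\,\int_\cZ\phi^*(f(z))\,\diff\hat\P(z)
\end{align*}
is an affine lower bound on $\D_\phi(\cdot,\hat\P)$, and it is weakly continuous in $\P$ whenever $f$ is bounded \emph{and continuous}. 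Taking the pointwise supremum of $J_f$ over bounded continuous $f$ therefore yields a weakly lower semicontinuous lower bound on $\D_\phi(\cdot,\hat\P)$.

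The main obstacle is the density claim that restricting the dual supremum to bounded continuous $f$ does not decrease its value, that is, $\D_\phi(\P,\hat\P)=\sup_{f\text{ bdd.\ cts.}}J_f(\P)$. I would establish this through a Lusin-type argument: given any bounded Borel $f:\cZ\to\dom(\phi^*)$ and any $\varepsilon>0$, I first truncate the range of $f$ into a compact subset $K\subseteq\mathrm{rint}(\dom(\phi^*))$, on which $\phi^*$ is continuous and hence bounded. I then apply Lusin's theorem to the finite Borel measure $\P+\hat\P$ on the Polish space $\cZ$ to obtain a continuous $K$-valued approximant $\tilde f$ agreeing with the truncated $f$ outside a set of $(\P+\hat\P)$-mass at most $\varepsilon$. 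Uniform boundedness of $\tilde f$ and of $\phi^*\circ\tilde f$ on $K$ then controls the deviations of both dual integrals simultaneously, which delivers the required density. Combining weak lower semicontinuity with tightness via Prokhorov's theorem finally yields weak compactness of $\cP$.
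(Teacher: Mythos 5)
Your proposal is correct and follows essentially the same route as the paper: tightness via the worst-case probability map of Lemma~\ref{prop:worst-case-prob-map} (the paper picks a ball of radius $R$ with $\hat\P(\|Z\|_2>R)\leq p^{-1}(\varepsilon)$ where you use inner regularity and continuity of $p$ at $0$, which is the same argument), and weak closedness by expressing $\D_\phi(\cdot,\hat\P)$ as a supremum of weakly continuous functionals after a Lusin-type density argument, followed by Prokhorov's theorem. The only cosmetic difference is that the paper applies Lusin's theorem to $\hat\P$ alone and then invokes $\P\ll\hat\P$ (a consequence of $\phi^\infty(1)=\infty$) to control the $\P$-integral, whereas you apply it to $\P+\hat\P$ directly; both work, and your explicit treatment of $r=0$ is a small extra care the paper omits.
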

	\begin{proof}
		We first show that~$\cP$ is tight. To this end, select any~$\varepsilon\in(0,1)$, and define~$p^{-1}(\varepsilon)$ as the unique~$t\in(0,1]$ satisfying $p(t)=\varepsilon$, where~$p$ represents the worst-case probability map from Lemma~\ref{prop:worst-case-prob-map}. Note that~$p^{-1}(\varepsilon)$ is well-defined because~$p$ is concave and surjective and because~$p(0)=0$ and~$p(1)=1$. Note also that $p^{-1}(\varepsilon) \leq \varepsilon$ because $p(t)\geq t$. Next, select a sufficiently large~$R>0$ such that $\hat{\P}(\|Z\|_2>R)\leq p^{-1}(\varepsilon)$, and define a compact set $\cC= \{z\in\cZ:\|z\|_2\leq R\}$. Lemma~\ref{prop:worst-case-prob-map} applied to $\cB=\cZ\backslash \cC$ then allows us to conclude that
		\[
		\sup_{\P\in\cP}\P(Z\notin\cC) = p(\hat{\P}(Z\notin \cC)) \leq p(p^{-1}(\varepsilon)) =\varepsilon,
		\]
		where the inequality follows from the monotonicity of~$p$ and choice of~$R$. We have thus shown that $\P(Z\notin\cC)\leq\varepsilon$ for all $\P\in\cP$, and thus~$\cP$ is tight.
		
		It remains to be shown that~$\cP$ is weakly closed. To this end, recall first that~$\cP(\cZ)$ is weakly closed because~$\cZ$ is closed; see Proposition~\ref{prop:support:only:compact}. Next, recall from Proposition~\ref{prop:dual-phi-divergences} that any $\phi$-divergence admits a dual representation of the form
		\begin{align}
			\label{eq:dual-phi-divergence}
			\D_\phi(\P, \hat{\P}) = \sup_{f\in\cF} \; \int_{\cZ}
			f(z)\, \diff\P(z)  -\int_\cZ  \phi^*(f(z)) \, \diff \hat{\P}(z),
		\end{align}
		where $\cF$ denotes the family of all bounded Borel functions $f:\cZ\rightarrow \dom(\phi^*)$. In fact, $\cF$ can be restricted to the space~$\cF^c$ of all {\em continuous} bounded functions without reducing the supremum in~\eqref{eq:dual-phi-divergence}. This is a direct consequence of Lusin's theorem, which ensures that for any~$\delta>0$ and~$f\in\cF$ there exists a compact set~$\cA\subseteq\cZ$ with~$\hat\P(Z\notin\cA)\leq \delta$ and a bounded continuous function~$f_\delta\in\cF^c$ that coincides with~$f$ on~$\cA$ and satisfies $\sup_{z\in\cZ} |f_\delta(z)| \leq \sup_{z\in\cZ} |f(z)| = \| f \|_\infty$. As the convex lower semicontinuous function~$\phi^*$ is continuous on its domain, both
		\[
		\phi^*_l= \inf_{s\in\dom(\phi^*)} \left\{ \phi^*(s) : |s|\leq \|f\|_\infty \right\} \quad \text{and}\quad \phi^*_u= \sup_{s\in\dom(\phi^*)} \left\{ \phi^*(s) : |s|\leq \|f\|_\infty \right\}
		\]
		are finite. Therefore, we have
		\begin{align*}
			& \int_{\cZ}
			f_\delta (z)\, \diff\P(z) - \int_\cZ  \phi^*(f_\delta(z)) \, \diff \hat{\P}(z) \\
			& \geq \int_{\cZ}
			f(z)\, \diff\P(z) - \int_\cZ  \phi^*(f(z)) \, \diff \hat{\P}(z)  -2\|f\|_\infty \,\P(Z\notin\cA)-(\phi^*_u- \phi^*_l)\, \hat\P(Z\notin\cA).
		\end{align*}
		As $\phi^\infty(1)=\infty$ implies $\P \ll \hat\P$ and as $\hat\P(Z\notin\cZ)\leq \delta$, both $\P(Z\notin\cA)$ and $\hat\P(Z\notin\cA)$ decay to~$0$ as~$\delta$ is reduced. Thus, the objective function value of~$f_\delta$ in problem~\eqref{eq:dual-phi-divergence} is asymptotically non-inferior to that of~$f$. This confirms that restricting~$\cF$ to~$\cF^c$ has no impact on the supremum in~\eqref{eq:dual-phi-divergence}. Recall now from Proposition~\ref{prop:semicontinuity} that, for any bounded continuous function~$f\in\cF^c$, the first integral in~\eqref{eq:dual-phi-divergence} is weakly continuous in~$\P$. Thus, $\D_\phi(\P, \hat{\P})$ is weakly lower semicontinuous in~$\P$ as a pointwise supremum of weakly continuous functions. This implies that any sublevel set of the function $f(\P)=\D_\phi(\P, \hat{\P})$ is weakly closed. We thus conclude that the divergence ambiguity set is weakly closed. The claim then follows from Theorem~\ref{thm:prokhorov}.
	\end{proof}
	
	The proof of Proposition~\ref{prop:compactness-phi-divergence-ambiguity-sets} critically relies on the assumption that~$\phi^\infty(1)=\infty$, which ensures that the divergence ambiguity set contains only distributions that are absolutely continuous with respect to~$\hat\P$. Below we show that if the entropy function~$\phi$ grows at most linearly (that is, if~$\phi^\infty(1)<\infty$) and~$\cZ$ is unbounded, then the corresponding divergence ambiguity set fails to be weakly compact. As a preparation, we first establish an upper bound on any $\phi$-divergence on~$\cP(\cZ)\times\cP(\cZ)$.
	
	\begin{lemma}[Upper Bounds on $\phi$-Divergences]
		\label{lem:maximum-radius-divergence-ambiguity-sets}
		If~$\phi$ is an entropy function and $\cZ\subseteq\R^d$ a closed set, then we have $\D_\phi(\P,\hat\P)\leq \phi(0)+\phi^\infty(1)$ for all~$\P,\hat\P\in\cP(\cZ)$. This upper bound is attained if~$\P$ and~$\hat\P$ are mutually singular, that is, if $\P\perp\hat\P$.
	\end{lemma}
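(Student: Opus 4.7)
The plan is to pick a conveniently normalized dominating measure so that the densities of $\P$ and $\hat\P$ form a pointwise convex combination, and then invoke convexity of the perspective $\phi^\pi$ together with its values along the two coordinate axes.

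First I would set $\rho = \P + \hat\P$, which clearly dominates both distributions, and define $u = \diff\P / \diff\rho$ and $v = \diff\hat\P / \diff\rho$. Since $(u+v)\,\diff\rho = \diff\P + \diff\hat\P = \diff\rho$, uniqueness of the Radon--Nikodym derivative gives $u + v = 1$ $\rho$-almost everywhere, and both $u$ and $v$ take values in $[0,1]$. Using Definition~\ref{def:D_phi}, the $\phi$-divergence then reads
\[
\D_\phi(\P, \hat \P) = \int_\cZ \phi^\pi(u(z), v(z)) \, \diff\rho(z).
\]

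The core step is to observe that, because $u + v = 1$, the pair $(u,v)$ is the convex combination $u\cdot(1,0) + v\cdot(0,1)$. Since $\phi^\pi$ is proper and convex on $\R\times\R_+$ (as noted in the excerpt, following \citet[p.~67]{rockafellar1970convex}), Jensen's inequality applied pointwise yields
\[
\phi^\pi(u(z), v(z)) \;\leq\; u(z)\, \phi^\pi(1, 0) + v(z)\, \phi^\pi(0, 1) \;=\; u(z)\, \phi^\infty(1) + v(z)\, \phi(0),
\]
where the identities $\phi^\pi(1,0) = \phi^\infty(1)$ and $\phi^\pi(0,1) = 1\cdot\phi(0/1) = \phi(0)$ come directly from the definition of the perspective. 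Integrating this bound against $\rho$ and using $\int_\cZ u \, \diff\rho = \P(\cZ) = 1$ and $\int_\cZ v \, \diff\rho = \hat\P(\cZ) = 1$ delivers the claim
\[
\D_\phi(\P, \hat \P) \leq \phi^\infty(1) + \phi(0).
\]

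For the attainment claim, suppose $\P \perp \hat\P$. By the definition of mutual singularity there is a Borel partition $\cZ = A \sqcup B$ with $\P(B) = 0$ and $\hat\P(A) = 0$. Under the same choice $\rho = \P + \hat\P$, one then has $u = \mathds{1}_A$ and $v = \mathds{1}_B$ $\rho$-almost everywhere. Plugging in, $\phi^\pi$ takes the value $\phi^\infty(1)$ on $A$ and $\phi(0)$ on $B$, so a direct computation using $\rho(A) = \P(A) = 1$ and $\rho(B) = \hat\P(B) = 1$ gives $\D_\phi(\P,\hat\P) = \phi^\infty(1) + \phi(0)$, matching the upper bound.

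The only step that requires care is the convexity argument: one must confirm that $\phi^\pi$ is genuinely convex on all of $\R\times\R_+$ (not merely on the open half-space $\R\times\R_{++}$) so that the inequality is valid at boundary points where $v=0$; this is precisely the content of the cited fact that $\phi^\pi$ is proper, convex and lower semicontinuous. Everything else is routine book-keeping with Radon--Nikodym derivatives.
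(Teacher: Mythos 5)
Your proof is correct, and it reaches the bound through the same pointwise affine majorant that drives the paper's argument, but you derive that majorant by a genuinely different mechanism. The paper works with $\phi$ itself: it uses a three-point convexity inequality and lets a parameter $\Delta$ tend to infinity to obtain $\phi(s)\leq\phi(0)+s\,\phi^\infty(1)$ for $s\geq 0$, and then integrates this against the representation $\int \frac{\diff\hat\P}{\diff\rho}\,\phi\bigl(\tfrac{\diff\P/\diff\rho}{\diff\hat\P/\diff\rho}\bigr)\diff\rho$; this requires a separate appeal to the convention $0\,\phi(s/0)=\phi^\infty(s)$ to cover the set where the density of $\hat\P$ vanishes. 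You instead exploit the normalization $u+v=1$ afforded by the choice $\rho=\P+\hat\P$ and apply convexity of the perspective $\phi^\pi$ along the segment from $(1,0)$ to $(0,1)$, which produces the same inequality $\phi^\pi(u,v)\leq u\,\phi^\infty(1)+v\,\phi(0)$ in one step and handles the boundary points $v=0$ uniformly, since the perspective is by definition proper, convex and lower semicontinuous on all of $\R\times\R_+$ with $\phi^\pi(\cdot,0)=\phi^\infty(\cdot)$. Your route buys a cleaner, limit-free derivation and automatic treatment of the degenerate cases; the paper's route makes the affine upper envelope of $\phi$ explicit, which is reusable elsewhere. The attainment argument is identical in both proofs. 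One cosmetic remark: the observation that the bound is trivially true when $\phi(0)$ or $\phi^\infty(1)$ is infinite could be stated up front (as the paper does), though your use of extended-valued convexity already covers it.
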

	
	\begin{proof}
		In the first part of the proof we derive the desired upper bound. To this end, assume that~$\phi(0)<\infty$ and~$\phi^\infty(1)<\infty$ for otherwise the upper bound is trivially satisfied. As the entropy function is convex, we then have
		\[
		\phi(s)\leq \frac{\Delta}{s+\Delta}\phi(0)+\frac{s}{s+\Delta}\phi(s+\Delta)~\iff~ \phi(s)\leq\phi(0)+s\, \frac{\phi(s+\Delta)-\phi(0)}{s+\Delta}
		\]
		for every~$s,\Delta\geq 0$. Letting~$\Delta$ tend to infinity, this implies that $\phi(s)\leq \phi(0)+s\, \phi^\infty(1)$ for all~$s\geq 0$. The $\phi$-divergence between any $\P,\hat\P\in\cP(\cZ)$ thus satisfies
		\begin{align*}
			\D_\phi(\P,\hat\P) & =\int_\cZ \frac{\diff \hat{\P}}{\diff \rho}(z)  \,  \phi\left( \frac{\frac{\diff \P}{\diff \rho}(z) }{ \frac{\diff \hat{\P}}{\diff \rho}(z)} \right) \diff \rho(z) \\
			&\leq \int_\cZ \frac{\diff \hat{\P}}{\diff \rho}(z)  \,  \phi(0) \, \diff \rho(z) + \int_\cZ \frac{\diff \P}{\diff \rho}(z)  \,  \phi^\infty(1) \, \diff \rho(z) = \phi(0)+\phi^\infty(1),
		\end{align*}
		where we may assume without loss of generality that the dominating measure~$\rho\in\cM_+(\cZ)$ is given by $\rho=\P+\hat\P$. This establishes the desired upper bound. It remains to be shown that this bound is attained even if~$\phi(0)$ or~$\phi^\infty(1)$ evaluate to infinity. To this end, suppose that~$\P$ and~$\hat\P$ are mutually singular. This means that there exist disjoint Borel sets~$\cB,\hat\cB\subseteq \cZ$ with $\P(Z\in\cB)=1$ and $\hat\P(Z\in\hat\cB)=1$. We thus have
		\begin{align*}
			\D_\phi(\P,\hat\P) & =\int_{\hat\cB} \frac{\diff \hat{\P}}{\diff \rho}(z)  \,  \phi( 0 )\, \diff \rho(z) + \int_{\cB} 0 \,  \phi\left( \frac{\frac{\diff \P}{\diff \rho}(z) }{ 0} \right) \diff \rho(z) \\
			&= \phi(0)+ \int_{\cB} \phi^\infty\left(\frac{\diff \P}{\diff \rho}(z) \right) \diff \rho(z)= \phi(0)+\phi^\infty(1).
		\end{align*}
		The first equality holds because $\frac{\diff \P}{\diff \rho}(z)=0$ for $\rho$-almost all~$z\in\hat\cB$ and $\frac{\diff \hat{\P}}{\diff \rho}(z)=0$ for $\rho$-almost all~$z\in\cB$. The second equality follows from the definition of the perspective function and exploits that the restriction of~$\rho$ to~$\hat\cB$ coincides with~$\hat\P$. The third equality, finally, holds because the restriction of~$\rho$ to~$\cB$ coincides with~$\P$. Note that the upper bound is attained even if $\phi(0)=\infty$ or $\phi^\infty(1)=\infty$.
	\end{proof}
	
	The following example reveals that $\phi$-divergence ambiguity sets fail to be weakly compact if~$\phi^\infty(1)<\infty$ and if the set~$\cZ$ without the atoms of~$\hat\P$ is unbounded.
	
	\begin{example}[$\phi$-Divergence Ambiguity Sets]
		Consider an entropy function~$\phi$ with~$\phi^\infty(1)<\infty$. By Lemma~\ref{lem:maximum-radius-divergence-ambiguity-sets}, $\D_\phi(\P,\hat\P)$ is bounded above by~$\overline r=\phi(0)+\phi^\infty(1)$ for all $\P,\hat\P\in\cP$. In addition, let~$\cP$ be the $\phi$-divergence ambiguity set with center~$\hat\P\in\cP(\cZ)$ and radius~$r\in(0,\overline r)$ defined in~\eqref{eq:phi-divergence-ambiguity-set}. Assume that for every $R>0$ there exists~$z_0\in\cZ$ with $\|z_0\|_2\geq R$ and~$\hat\P(Z=z_0)=0$. This assumption holds, for example, whenever~$\cZ$ is unbounded and convex, and it implies that~$\cP$ fails to be tight. To see this, fix an arbitrary compact set~$\cC\subseteq\cZ$, and select any point~$z_0\in\cZ\backslash\cC$ with $\hat \P(Z=z_0)=0$. Such a point exists by assumption. Next, consider the distributions $\P_\theta=(1-\theta)\,\hat\P+\theta\,\delta_{z_0}$ parametrized by~$\theta\in[0,1]$. Note that~$\hat\P$ and~$\delta_{z_0}$ are mutually singular and that $f(\theta)= \D_\phi(\P_\theta,\hat\P)$ is a convex continuous bijective function from~$[0,1]$ to~$[0,\overline r]$. Set now~$\varepsilon = \half f^{-1}(r)$. For $\theta=f^{-1}(r)$, the distribution~$\P_\theta$ satisfies $ \D_\phi(\P_\theta,\hat\P)=f(f^{-1}(r))=r$ and thus belongs to~$\cP$. In addition, $\P_\theta(Z\notin\cC) \geq f^{-1}(r)>\varepsilon$ because~$z_0\notin\cC$. Note that~$\varepsilon$ is independent of~$\cC$ and~$z_0$ as long as~$\hat\P(Z=z_0)=0$. As the compact set~$\cC$ was chosen arbitrarily, this implies that~$\cP$ fails to be tight and weakly compact.
	\end{example}


	\subsection{Marginal Ambiguity Sets}
	\label{sec:topology-marginal-ambiguity-sets}
	As a preparation towards exploring the topological properties of optimal transport ambiguity sets, we first study marginal ambiguity sets. The following proposition shows that Fr\'echet ambiguity sets, which prescribe the marginal distributions of all~$d$ individual components of $Z$, are always weakly compact.
	
	\begin{proposition}[Fr\'echet Ambiguity Sets]
		\label{prop:freched-compact}
		The Fr\'echet ambiguity set~$\cP$ defined in~\eqref{eq:frechet} is weakly compact for any cumulative distribution functions~$F_i$, $i\in [d]$.
	\end{proposition}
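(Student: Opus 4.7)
The plan is to invoke Prokhorov's theorem (Theorem~\ref{thm:prokhorov}), so I would establish separately that $\cP$ is tight and that $\cP$ is weakly closed.

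For tightness, I would exploit the fact that each prescribed marginal $F_i$ is itself a CDF on $\R$, so for any $\varepsilon > 0$ there exist $a_i, b_i \in \R$ with $F_i(b_i) - F_i(a_i) \geq 1 - \varepsilon/d$. The set $\cC = \prod_{i=1}^{d} [a_i, b_i]$ is then compact in $\R^d$, and a union bound gives
\begin{align*}
    \P(Z \notin \cC) \leq \sum_{i=1}^{d} \P(Z_i \notin [a_i, b_i]) = \sum_{i=1}^{d} \big(1 - (F_i(b_i) - F_i(a_i))\big) \leq \varepsilon
\end{align*}
for every $\P \in \cP$, which establishes tightness.

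For weak closedness, suppose $\P_j \in \cP$ converges weakly to some $\P \in \cP(\R^d)$. Since each coordinate projection $\pi_i : \R^d \to \R$ is continuous, the pushforward distributions $\P_j \circ \pi_i^{-1}$ converge weakly in $\cP(\R)$ to $\P \circ \pi_i^{-1}$. However, by construction, $\P_j \circ \pi_i^{-1}$ is the distribution with CDF $F_i$ for every $j\in\N$, so this sequence is constant and its weak limit coincides with itself. Hence $\P \circ \pi_i^{-1}$ must also have CDF $F_i$, which implies that $\P \in \cP$. Thus $\cP$ is weakly closed, and Prokhorov's theorem then yields weak compactness.

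I do not anticipate any genuine obstacle here; the only delicate point is justifying that the marginals commute with weak limits, but this follows immediately from the continuity of the projections and the fact that pushforward under a continuous map preserves weak convergence. No assumption of continuity or any other regularity on the $F_i$ is required, since the argument only uses that each $F_i$ is a bona fide CDF.
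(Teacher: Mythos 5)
Your proof is correct and follows essentially the same route as the paper's: tightness via a union bound on the marginal tails, weak closedness by noting that marginals commute with weak limits, and then Prokhorov's theorem. The only cosmetic difference is that you phrase the closedness step via pushforwards under the continuous coordinate projections, whereas the paper expresses the same fact through test functions and Proposition~\ref{prop:semicontinuity}; these are equivalent formulations of the same observation.
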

	
	\begin{proof}
		We first show that the Fr\'echet ambiguity set is tight. For any $\varepsilon>0$ and $i\in[d]$, we can set $\underline z_i$ and $\overline z_i$ to the $\varepsilon/(2d)$-quantile and the $(1-\varepsilon/(2d))$-quantile of the distribution function~$F_i$, respectively. Setting ${\cC}=\times_{i\in[d]}[\underline z_i,\overline z_i]$ yields
		\begin{align*}
			\P(Z\not\in {\cC}) \leq \sum_{i\in [d]}\P(Z_i\not\in[\underline z_i,\overline z_i]) = \sum_{i\in [d]} \varepsilon/d=\varepsilon,
		\end{align*}
		where the inequality follows from the union bound. Thus, $\cP$ is tight. It remains to be shown that~$\cP$ is weakly closed. Note that the distribution function of~$Z_i$ under~$\P$ matches $F_i$ if and only if for every bounded continuous function~$f$ we have
		\[
		\E_\P[f(Z_i)] = \int_{-\infty}^{+\infty} f(z_i)\,\diff F_i(z_i).
		\]
		This is true because every Borel distribution on~$\R$ constitutes a Radon measure. The set of all~$\P\in\cP(\R^d)$ satisfying the above equality for any fixed bounded and continuous function~$f$ and any fixed index~$i\in[d]$ is weakly closed by Proposition~\ref{prop:semicontinuity}. Hence, $\cP$ is weakly closed because closedness is preserved by intersection.
	\end{proof}
	
	It is straightforward to generalize Proposition~\ref{prop:freched-compact} from Fr\'echet ambiguity sets to generic marginal ambiguity sets as discussed in Section~\ref{sec:marginal-ambiguity-sets}, which prescribe {\em multivariate} marginal distributions. Details are omitted for brevity.

	\subsection{Optimal Transport Ambiguity Sets}
	
	Recall that~$\Gamma(\P,\hat\P)$ denotes the family of all transportation plans linking the probability distributions~$\P,\hat\P\in\cP(\cZ)$. Thus, $\Gamma(\P, \hat{\P})$ contains all joint distributions~$\gamma$ of~$Z$ and~$\hat Z$ with marginals~$\P$ and~$\hat{\P}$, respectively. The set~$\Gamma(\P,\hat\P)$ appears in the definition of the optimal transport discrepancy~$\OT_c(\P, \hat{\P})$; see Definition~\ref{def:OT}. The reasoning in Section~\ref{sec:topology-marginal-ambiguity-sets} immediately implies that~$\Gamma(\P,\hat\P)$ is weakly compact because it constitutes a marginal ambiguity set. This insight is formalized in the following simple corollary of Proposition~\ref{prop:freched-compact}. Its proof is omitted for brevity.
	
	\begin{corollary}[Transportation Plans]
		\label{cor:compact:Gamma}
		The set of all transportation plans $\Gamma(\P, \hat \P)$ with marginal distributions $\P, \hat \P \in \cP(\cZ)$ is weakly compact.
	\end{corollary}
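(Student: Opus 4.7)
The plan is to view $\Gamma(\P,\hat\P)$ as a marginal ambiguity set over $\cZ\times\cZ\subseteq\R^{2d}$ that prescribes the two multivariate marginals of $(Z,\hat Z)$, and then mirror the argument of Proposition~\ref{prop:freched-compact}. By Theorem~\ref{thm:prokhorov} it suffices to prove that $\Gamma(\P,\hat\P)$ is tight and weakly closed in $\cP(\cZ\times\cZ)$.

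For tightness, I will exploit that any single Borel probability distribution on a closed subset of $\R^d$ is itself tight (as a consequence of Ulam's theorem, or equivalently of Theorem~\ref{thm:prokhorov} applied to the singleton $\{\P\}$, which is trivially weakly closed). Fix $\varepsilon>0$; then there exist compact sets $\cC_1,\cC_2\subseteq\cZ$ with $\P(Z\notin\cC_1)\leq\varepsilon/2$ and $\hat\P(\hat Z\notin\cC_2)\leq\varepsilon/2$. The product $\cC=\cC_1\times\cC_2$ is compact in $\cZ\times\cZ$, and for every $\gamma\in\Gamma(\P,\hat\P)$ the marginal identities together with a union bound yield
\begin{align*}
\gamma\bigl((Z,\hat Z)\notin\cC\bigr)\leq \gamma(Z\notin\cC_1)+\gamma(\hat Z\notin\cC_2)=\P(Z\notin\cC_1)+\hat\P(\hat Z\notin\cC_2)\leq\varepsilon.
\end{align*}
Hence $\Gamma(\P,\hat\P)$ is tight.

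For weak closedness, observe that $\gamma\in\Gamma(\P,\hat\P)$ if and only if for every bounded continuous function $f:\cZ\to\R$ we have $\int_{\cZ\times\cZ}f(z)\,\diff\gamma(z,\hat z)=\int_\cZ f(z)\,\diff\P(z)$ and $\int_{\cZ\times\cZ}f(\hat z)\,\diff\gamma(z,\hat z)=\int_\cZ f(\hat z)\,\diff\hat\P(\hat z)$. Indeed, the ``only if'' direction is immediate, while the ``if'' direction follows because Borel probability measures on $\R^d$ are Radon and are therefore uniquely determined by their integrals against bounded continuous test functions. For each fixed bounded continuous $f$, Proposition~\ref{prop:semicontinuity} (applied on $\cZ\times\cZ$ to the bounded continuous functions $(z,\hat z)\mapsto f(z)$ and $(z,\hat z)\mapsto f(\hat z)$) ensures that both integrals are weakly continuous in $\gamma$, so each of the above equations defines a weakly closed subset of $\cP(\cZ\times\cZ)$. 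The set $\Gamma(\P,\hat\P)$ is the intersection of all these weakly closed sets and is therefore itself weakly closed.

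Combining tightness with weak closedness and invoking Theorem~\ref{thm:prokhorov} yields that $\Gamma(\P,\hat\P)$ is weakly compact. The only mildly subtle point is justifying that matching expectations against bounded continuous test functions already pins down the marginals exactly; everything else is a direct transcription of the Fr\'echet argument from univariate to multivariate marginals.
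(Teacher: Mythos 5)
Your proof is correct and follows exactly the route the paper intends: the paper omits the proof of this corollary, stating that it follows from the marginal-ambiguity-set reasoning of Proposition~\ref{prop:freched-compact}, and your argument (tightness of $\P$ and $\hat\P$ individually, a union bound over the product of compact sets, and weak closedness via matching integrals of bounded continuous test functions) is precisely that generalization from univariate to multivariate marginals. The same tightness estimate also appears verbatim in the paper's proof of Lemma~\ref{lem:Wasserstein-lsc}, so nothing further is needed.
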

	
	Corollary~\ref{cor:compact:Gamma} enables us to show that the optimal transport problem in~\eqref{eq:ot-discrepancy} is solvable as the transportation cost function is assumed to be lower semicontinuous.
	
	\begin{lemma}[Solvability of Optimal Transport Problems]
		\label{lem:OT-solvability}
		The infimum in~\eqref{eq:ot-discrepancy} is attained.
	\end{lemma}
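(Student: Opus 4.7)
The plan is to invoke the Weierstrass extreme value theorem on the set of transportation plans. Two ingredients are needed: weak compactness of the feasible set and weak lower semicontinuity of the objective functional $\gamma \mapsto \E_\gamma[c(Z,\hat Z)]$ on $\cP(\cZ\times\cZ)$.

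First, I would note that Corollary~\ref{cor:compact:Gamma} already gives weak compactness of $\Gamma(\P,\hat\P)$, viewed as a subset of $\cP(\cZ\times\cZ)$. Since $\cZ$ is closed in $\R^d$, the product $\cZ\times\cZ$ is closed in $\R^{2d}$, so the ambient space is covered by our standing setup and Prokhorov-type results apply verbatim.

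Second, I would apply Proposition~\ref{prop:semicontinuity} to the transportation cost $c$. By Definition~\ref{def:cost}, the function $c:\cZ\times\cZ\to[0,+\infty]$ is lower semicontinuous and non-negative, hence bounded from below. Proposition~\ref{prop:semicontinuity} therefore guarantees that the functional $\gamma \mapsto \E_\gamma[c(Z,\hat Z)]$ is weakly lower semicontinuous on $\cP(\cZ\times\cZ)$.

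Third, I would combine these two facts. A weakly lower semicontinuous functional on a (nonempty) weakly compact set attains its infimum; nonemptiness of $\Gamma(\P,\hat\P)$ is guaranteed by the product coupling $\P\otimes\hat\P$. Thus there exists $\gamma^\star\in\Gamma(\P,\hat\P)$ with $\E_{\gamma^\star}[c(Z,\hat Z)]=\OT_c(\P,\hat\P)$, which proves the claim. There is no real obstacle here: the proof is essentially a bookkeeping exercise that stitches Corollary~\ref{cor:compact:Gamma} and Proposition~\ref{prop:semicontinuity} together with the standard Weierstrass argument; the only point requiring a brief justification is that Proposition~\ref{prop:semicontinuity}, stated for $\cP(\cZ)$ with $\cZ\subseteq\R^d$, transfers to couplings on $\cZ\times\cZ\subseteq\R^{2d}$, which is immediate.
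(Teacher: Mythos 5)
Your proof is correct and takes essentially the same route as the paper: weak compactness of $\Gamma(\P,\hat\P)$ from Corollary~\ref{cor:compact:Gamma}, weak lower semicontinuity of $\gamma\mapsto\E_\gamma[c(Z,\hat Z)]$ from Proposition~\ref{prop:semicontinuity}, and Weierstrass. The additional remarks on nonemptiness via the product coupling and on the harmless passage from $\cZ$ to $\cZ\times\cZ$ are minor bookkeeping that the paper leaves implicit.
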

	\begin{proof}
		By Corollary~\ref{cor:compact:Gamma}, the set $\Gamma(\P, \hat{\P})$ is weakly compact. In addition, the transportation cost function $c(z,\hat z)$ is lower semicontinuous and bounded below. By Proposition~\ref{prop:semicontinuity}, the expected value $ \E_\gamma[ c(Z,\hat Z)]$ is therefore weakly lower semicontinuous in~$\gamma$. Thus, the optimal transport problem in~\eqref{eq:ot-discrepancy} is solvable thanks to Weierstrass' theorem, and its infimum is attained. 
	\end{proof}
	
	Lemma~\ref{lem:OT-solvability} allows us to prove that the optimal transport discrepancy~$\OT_c(\P, \hat{\P})$ constitutes a weakly lower semicontinuous function of its inputs~$\P$ and~$\hat{\P}$.
	
	\begin{lemma}[Weak Lower Semicontinuity of Optimal Transport Discrepancies]
		\label{lem:Wasserstein-lsc}
		The optimal transport discrepancy $\OT_c(\P,\hat{\P})$ is weakly lower semicontinuous jointly in~$\P$ and~$\hat\P$. 
	\end{lemma}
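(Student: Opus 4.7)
The plan is to establish the sequential form of weak lower semicontinuity: given sequences $\P_n\to\P$ and $\hat\P_n\to\hat\P$ weakly in $\cP(\cZ)$, we must show that $\liminf_{n\to\infty}\OT_c(\P_n,\hat\P_n)\geq \OT_c(\P,\hat\P)$. By passing to a subsequence, we may assume that $\OT_c(\P_n,\hat\P_n)$ converges to this liminf. For each $n$, Lemma~\ref{lem:OT-solvability} yields an optimal transportation plan $\gamma_n\in\Gamma(\P_n,\hat\P_n)$ with $\OT_c(\P_n,\hat\P_n)=\E_{\gamma_n}[c(Z,\hat Z)]$. The strategy is then to extract a weak limit $\gamma$ of the couplings $\gamma_n$, to verify that $\gamma$ is a coupling of the limiting marginals, and finally to invoke Proposition~\ref{prop:semicontinuity} to pass the liminf through the expectation.

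First, I would check that $\{\gamma_n\}_{n\in\N}$ is tight on $\cZ\times\cZ$. Since the two weakly convergent sequences $\{\P_n\}$ and $\{\hat\P_n\}$ are themselves tight (being sequentially relatively compact in the weak topology, they must be tight by Prokhorov's theorem, Theorem~\ref{thm:prokhorov}), for any $\varepsilon>0$ there exist compact sets $\cC_1,\cC_2\subseteq\cZ$ with $\P_n(Z\notin\cC_1)\leq \varepsilon/2$ and $\hat\P_n(\hat Z\notin\cC_2)\leq \varepsilon/2$ for every $n$. Then $\gamma_n((Z,\hat Z)\notin\cC_1\times\cC_2)\leq \P_n(Z\notin\cC_1)+\hat\P_n(\hat Z\notin\cC_2)\leq \varepsilon$ and $\cC_1\times\cC_2$ is compact in $\cZ\times\cZ$, so tightness holds. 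Another appeal to Prokhorov's theorem produces a subsequence $\gamma_{n_k}$ converging weakly to some $\gamma\in\cP(\cZ\times\cZ)$.

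It remains to identify the marginals of $\gamma$ and to pass to the limit inside the cost integral. Because the coordinate projections $(z,\hat z)\mapsto z$ and $(z,\hat z)\mapsto \hat z$ are continuous, weak convergence $\gamma_{n_k}\to\gamma$ implies that the pushforward marginals converge weakly; these marginals are $\P_{n_k}\to\P$ and $\hat\P_{n_k}\to\hat\P$, so $\gamma\in\Gamma(\P,\hat\P)$. Since $c:\cZ\times\cZ\to[0,+\infty]$ is lower semicontinuous and bounded below by zero, Proposition~\ref{prop:semicontinuity} applied on the product space gives
\begin{align*}
\liminf_{n\to\infty}\OT_c(\P_n,\hat\P_n)
= \lim_{k\to\infty}\E_{\gamma_{n_k}}\!\left[c(Z,\hat Z)\right]
\geq \E_{\gamma}\!\left[c(Z,\hat Z)\right]
\geq \OT_c(\P,\hat\P),
\end{align*}
where the last inequality uses $\gamma\in\Gamma(\P,\hat\P)$. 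This establishes weak lower semicontinuity.

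The main technical obstacle is the joint tightness of the couplings $\gamma_n$, which rests on the fact that weak convergence of each marginal sequence forces tightness via Prokhorov. A secondary, but purely formal, point is the identification of the marginals of the limiting coupling with $\P$ and $\hat\P$; this is immediate from the continuity of the projections but is what makes the argument genuinely joint in both arguments, rather than requiring one of them to be fixed.
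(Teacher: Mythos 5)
Your proof is correct and takes essentially the same route as the paper's: both establish joint tightness of the optimal couplings from tightness of the marginal sequences via Prokhorov's theorem, extract a weakly convergent subsequence of optimal couplings, identify the limit's marginals, and close with the weak lower semicontinuity of $\gamma\mapsto\E_\gamma[c(Z,\hat Z)]$ from Proposition~\ref{prop:semicontinuity}. Your version is slightly more explicit about first passing to a subsequence that realizes the liminf of $\OT_c(\P_n,\hat\P_n)$ and about using continuity of the coordinate projections to identify the marginals of the limiting coupling, but these are refinements in presentation rather than substance.
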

	
	\begin{proof}
		Assume that $\P_j$ and $\hat{\P}_j$, $j\in\N$, converge weakly to $\P$ and~$\hat{\P}$, respectively, and define the countable ambiguity sets~$\cP = \{\P_j\}_{j\in\N}$ and~$\hat{\cP} = \{\hat {\P}_j\}_{j\in\N}$. By the definition of sequential compactness, the weak closures of $\cP$ and $\hat{\cP}$ are weakly compact. Prokhorov's theorem (see Theorem~\ref{thm:prokhorov}) thus implies that both $\cP$ and $\hat{\cP}$ are tight. Hence, for any~$\varepsilon>0$ there exist two compact sets ${\cC}, \hat {\cC}\subseteq \R^d$ with
		\[
		\P_j(Z\not\in {\cC}) \leq \varepsilon/2 \quad \text{and}\quad \hat{\P}_j(\hat Z\not\in \hat {\cC}) \leq \varepsilon/2 \quad \forall j\in\N.
		\]
		Whenever $\gamma\in \Gamma(\P_j, \,\hat{\P}_j)$ for some $j\in\N$, we thus have
		\[
		\gamma\big((Z,\hat Z) \notin {\cC}\times\hat {\cC}\big) \leq \P_j(Z\notin {\cC}) + \hat{\P}_j(Z\notin \hat {\cC})\leq \varepsilon.
		\]
		As ${\cC} \times \hat {\cC}$ is compact and as~$\varepsilon$ was chosen arbitrarily, this reveals that the union
		\begin{align}
			\label{eq:pi-union}
			\bigcup_{j \in \N} \Gamma(\P_j, \,\hat{\P}_j)
		\end{align}
		is tight, which in turn implies via Prokhorov's theorem that its closure is weakly compact. Let now~$\gamma_j^\star$ be an optimal coupling of~$\P_j$ and~$\hat{\P}_j$, which solves problem~\eqref{eq:ot-discrepancy}, and which exists thanks to Lemma~\ref{lem:OT-solvability}. As all these optimal couplings belong to some weakly compact set ({\em i.e.}, the weak closure of~\eqref{eq:pi-union}), we may assume without loss of generality that $\gamma_j^\star$, $j\in\N$, converges weakly to some distribution~$\gamma$. Otherwise, we can pass to a subsequence. Clearly, we have $\gamma\in\Gamma(\P, \hat {\P})$. For~$\gamma^\star$ an optimal coupling of~$\P$ and~$\hat{\P}$, we then find
		\begin{align*}
			\liminf_{j\rightarrow \infty} \OT_c(\P_j,\hat{\P}_j) & = \liminf_{j\rightarrow \infty} \E_{\gamma_j^\star}[c(Z,\hat Z)] \\
			& \geq \E_{\gamma}[c(Z,\hat Z)] \geq \E_{\gamma^\star}[c(Z,\hat Z)] = \OT_c(\P,\hat{\P}),
		\end{align*}
		where the two equalities follow from the definitions of~$\gamma_j^\star$ and~$\gamma^\star$, respectively. The first inequality holds because $\E_{\gamma}[c(Z,\hat Z)]$ is weakly lower semicontinuous in~$\gamma$ thanks to Proposition~\ref{prop:semicontinuity}, and the second inequality follows from the suboptimality of~$\gamma$ in~\eqref{eq:ot-discrepancy}. Thus, $\OT_c(\P,\hat{\P})$ is weakly lower semicontinuous in~$\P$ and~$\hat{\P}$.
	\end{proof}
	
	Lemma~\ref{lem:Wasserstein-lsc} is inspired by \cite[Lemma~5.2]{PhilippeClement2008} and \cite[Theorem~1]{yue2020linear}. Next, we prove that Wasserstein ambiguity sets are weakly compact. Throughout this discussion we assume that the metric underlying the transportation cost function is induced by a norm~$\|\cdot\|$ on~$\R^d$. This assumption simplifies our derivations but could be relaxed. Recall that the $p$-Wasserstein distance~$\W_p(\P,\hat\P)$ for~$p\geq 1$ is the $p$-th root of~$\OT_c(\P,\hat\P)$, where the transportation cost function is set to~$c(z,\hat z)=\|z-\hat z\|^p$; see Definition~\ref{def:p-Wassertein}.
	
	\begin{theorem}[$p$-Wasserstein Ambiguity Sets]
		\label{thm:wasserstein-compactness}
		Assume that the metric~$d(\cdot,\cdot)$ on~$\cZ$ is induced by some norm~$\|\cdot\|$ on the ambient space~$\R^d$. If~$\hat{\P}\in\cP(\cZ)$ has finite $p$-th moments (i.e., $\E_{\hat {\P}}[\|Z\|^p]<\infty$) for some exponent~$p\geq 1$, then the $p$-Wasserstein ambiguity set~$\cP$ defined in~\eqref{eq:p-wasserstein-ball} 
		is weakly compact.
	\end{theorem}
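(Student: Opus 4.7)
The plan is to apply Prokhorov's theorem (Theorem~\ref{thm:prokhorov}) and verify separately that $\cP$ is weakly closed and tight. Weak closedness is essentially free: Lemma~\ref{lem:Wasserstein-lsc} asserts that $\OT_c(\P,\hat\P)$ is weakly lower semicontinuous jointly in its arguments, and since $\W_p^p(\P,\hat\P) = \OT_c(\P,\hat\P)$ for $c(z,\hat z)=\|z-\hat z\|^p$, the function $\P\mapsto \W_p(\P,\hat\P)$ is weakly lower semicontinuous. Consequently its sublevel set $\cP = \{\P\in\cP(\cZ):\W_p(\P,\hat\P)\leq r\}$ is weakly closed.

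The main work therefore lies in establishing tightness. The strategy is to leverage the assumption $\E_{\hat\P}[\|Z\|^p]<\infty$ to obtain a \emph{uniform} bound on the $p$-th moment across $\cP$, and then convert this into tightness via Markov's inequality. Fix any $\P\in\cP$. By Lemma~\ref{lem:OT-solvability}, the optimal transport problem defining $\W_p^p(\P,\hat\P)$ admits an optimizer $\gamma^\star\in\Gamma(\P,\hat\P)$, so in particular $\E_{\gamma^\star}[\|Z-\hat Z\|^p] = \W_p^p(\P,\hat\P)\leq r^p$. Applying the triangle inequality $\|Z\|\leq \|Z-\hat Z\|+\|\hat Z\|$ together with the convexity estimate $(a+b)^p\leq 2^{p-1}(a^p+b^p)$ for $a,b\geq 0$ and integrating against $\gamma^\star$ yields
\begin{equation*}
    \E_\P[\|Z\|^p] = \E_{\gamma^\star}[\|Z\|^p] \leq 2^{p-1}\bigl(\E_{\gamma^\star}[\|Z-\hat Z\|^p] + \E_{\hat\P}[\|Z\|^p]\bigr) \leq 2^{p-1}\bigl(r^p + \E_{\hat\P}[\|Z\|^p]\bigr) =: M,
\end{equation*}
where $M<\infty$ is a constant independent of $\P\in\cP$.

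Given any $\varepsilon>0$, set $R=(M/\varepsilon)^{1/p}$ and let $\cC=\{z\in\cZ:\|z\|\leq R\}$. Since $\cZ$ is closed and $\cC$ is closed and bounded in $\R^d$, the set $\cC$ is compact. By Markov's inequality, every $\P\in\cP$ satisfies
\begin{equation*}
    \P(Z\notin\cC) = \P(\|Z\|>R) \leq \frac{\E_\P[\|Z\|^p]}{R^p} \leq \frac{M}{R^p} = \varepsilon,
\end{equation*}
which establishes tightness. Combined with the weak closedness already observed, Prokhorov's theorem yields weak compactness of $\cP$. The main potential obstacle is the need to invoke solvability of the optimal transport problem (rather than working with an $\epsilon$-optimal coupling), but this is exactly provided by Lemma~\ref{lem:OT-solvability}, so the argument goes through cleanly.
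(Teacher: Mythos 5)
Your proof is correct and follows essentially the same strategy as the paper: apply Prokhorov's theorem, using Lemma~\ref{lem:Wasserstein-lsc} for weak closedness and a uniform $p$-th moment bound plus Markov's inequality for tightness. The only difference is cosmetic: the paper derives the moment bound $\E_\P[\|Z\|^p]\leq(r+\hat r)^p$ from the triangle inequality for the Wasserstein metric $\W_p(\P,\delta_0)\leq\W_p(\P,\hat\P)+\W_p(\hat\P,\delta_0)$, whereas you derive the bound $2^{p-1}(r^p+\E_{\hat\P}[\|Z\|^p])$ from the pointwise triangle inequality and the convexity estimate $(a+b)^p\leq 2^{p-1}(a^p+b^p)$ under an optimal coupling --- both routes are equally valid and deliver the same conclusion, and your observation that a near-optimal coupling would serve just as well in place of Lemma~\ref{lem:OT-solvability} is accurate.
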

	\begin{proof}
		We first show that all distributions~$\P\in\cP$ have uniformly bounded $p$-th moments. To this end, set $\hat r = \E_{\hat {\P}}[\|Z\|^p]<\infty$, and note that any $\P\in\cP$ satisfies
		\begin{align*}
			\big(  \E_{\P}[\|Z\|^p]\big)^{\frac{1}{p}} = \W_p(\P,\delta_0) & \leq \W_p(\P,\hat{\P}) + \W_p(\hat{\P}, \delta_0) \\
			& = \W_p(\P,\hat{\P}) + \left(  \E_{\hat{\P}}[\|Z\|^p]\right)^{\frac{1}{p}} \leq r + \hat r.
		\end{align*}
		Here, the first inequality holds because the $p$-Wasserstein distance is a metric and thus satisfies the triangle inequality, and the second inequality holds because~$\P\in\cP$. We therefore have $\E_{\P}[\|Z\|^p]\leq (r+\hat r)^p$ for every $\P\in\cP$. In other words, the Wasserstein ball~$\cP$ is a subset of the $p$-th-order moment ambiguity set discussed in Example~\ref{ex:moment-ambiguity-set-compact}. This implies that~$\cP$ is tight. Note further that $\cP$ is defined as a sublevel set of the function $f(\P) = \W_p(\P,\hat{\P})$, which is weakly lower semicontinuous thanks to Lemma~\ref{lem:Wasserstein-lsc}. Hence, $\cP$ is weakly closed. 
	\end{proof}
	
	Finally, we prove that the $\infty$-Wasserstein ambiguity set is always weakly compact. 
	
	\begin{corollary}[$\infty$-Wasserstein Ambiguity Sets]
		Assume that the metric~$d(\cdot,\cdot)$ on~$\cZ$ is induced by some norm~$\|\cdot\|$ on the ambient space~$\R^d$. Then, the $\infty$-Wasserstein ambiguity set defined in~\eqref{eq:infty-wasserstein-ambiguity-set} is weakly compact for every~$\hat\P\in\cP(\cZ)$.
	\end{corollary}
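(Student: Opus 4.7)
The strategy is to invoke Prokhorov's theorem (Theorem~\ref{thm:prokhorov}) by separately verifying that the ambiguity set $\cP = \{\P \in \cP(\cZ) : \W_\infty(\P, \hat\P) \leq r\}$ is weakly closed and tight. Both halves reduce to results that have already been established in this section.

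For weak closedness, I would exploit Proposition~\ref{prop:W:p:infty}, which gives the representation $\W_\infty(\P, \hat\P) = \sup_{p \geq 1} \W_p(\P, \hat\P)$. Consequently
\[
    \cP = \bigcap_{p \geq 1} \left\{ \P \in \cP(\cZ) : \W_p(\P, \hat\P) \leq r \right\},
\]
and it suffices to show that each set in the intersection is weakly closed. Since $\W_p^p(\P, \hat\P) = \OT_c(\P, \hat\P)$ for the lower semicontinuous transportation cost $c(z, \hat z) = \|z - \hat z\|^p$, Lemma~\ref{lem:Wasserstein-lsc} asserts that $\OT_c(\cdot, \hat\P)$ is weakly lower semicontinuous, hence the sublevel set $\{\P \in \cP(\cZ) : \OT_c(\P, \hat\P) \leq r^p\}$ is weakly closed. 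Their intersection $\cP$ is therefore also weakly closed; note that this argument requires no moment assumption on $\hat\P$, since empty sublevel sets are harmless.

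For tightness, I would invoke the coupling characterization of $\W_\infty$. Given $\varepsilon > 0$, tightness of the singleton $\{\hat\P\}$ yields a compact set $\cC \subseteq \cZ$ with $\hat\P(Z \notin \cC) \leq \varepsilon$. Define $\cC_r = \{z \in \cZ : d(z, \cC) \leq r\}$; since $d$ is induced by the ambient norm, $\cC_r$ is closed and bounded in $\R^d$, hence compact. For every $\P \in \cP$, Proposition~\ref{prop:dual-W_infty} together with Lemma~\ref{lem:OT-solvability} furnishes a coupling $\gamma \in \Gamma(\P, \hat\P)$ supported on $\{d(Z, \hat Z) \leq r\}$, and therefore $\{\hat Z \in \cC\} \subseteq \{Z \in \cC_r\}$ up to a $\gamma$-null set. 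This yields
\[
    \P(Z \notin \cC_r) \leq \gamma(\hat Z \notin \cC) = \hat\P(Z \notin \cC) \leq \varepsilon
\]
uniformly in $\P \in \cP$, which establishes tightness.

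The main technical subtlety will be confirming that the infimum in Definition~\ref{def:infty-Wasserstein-distance} is actually attained, so that the coupling~$\gamma$ in the tightness argument exists. Proposition~\ref{prop:dual-W_infty} is essential here: it rewrites $\W_\infty(\P, \hat\P) \leq r$ as $\OT_{c_{r'}}(\P, \hat\P) = 0$ for every $r' > r$, after which a weak-compactness-plus-Portmanteau argument on $\Gamma(\P, \hat\P)$ (Corollary~\ref{cor:compact:Gamma}) extracts a limiting coupling supported on $\{d(Z, \hat Z) \leq r\}$. If one prefers to bypass this attainment step, an equally viable route is to work with $\cC_{r + \delta}$ for $\delta > 0$, derive $\P(Z \notin \cC_{r + \delta}) \leq \varepsilon$ directly from Corollary~\ref{cor:dual-W_infty}, and then send $\delta \downarrow 0$ using continuity from above of~$\hat\P$.
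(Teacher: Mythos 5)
Your proposal is correct and follows essentially the same route as the paper: weak closedness via the intersection $\cP = \bigcap_{p\geq 1}\{\P : \W_p(\P,\hat\P)\leq r\}$ obtained from Proposition~\ref{prop:W:p:infty}, and tightness via the $r$-enlargement of a compact set carrying $1-\varepsilon$ of $\hat\P$'s mass, with the paper passing through Corollary~\ref{cor:dual-W_infty} rather than an explicit coupling. Your observation that weak closedness follows directly from Lemma~\ref{lem:Wasserstein-lsc} without any moment assumption on $\hat\P$ is a small but genuine tightening of the paper's citation of Theorem~\ref{thm:wasserstein-compactness}, which nominally requires $\E_{\hat\P}[\|Z\|^p]<\infty$.
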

	\begin{proof}
		We first show that~$\cP$ is tight. To this end, select any~$\varepsilon>0$ and any compact set~$\hat\cC\subseteq\cZ$ with $\hat\P(Z\not\in\hat\cC)\leq \varepsilon$. Note that~$\hat \cC$ is guaranteed to exist because~$\hat\P$ is a probability distribution. Next, define~$\cC$ as the $r$-neighborhood~$\hat\cC_r$ of~$\hat\cC$, that is, set
		\[
		\cC=\left\{z\in\cZ:\exists \hat z\in\hat\cC\text{ with }\|z-\hat z\|\leq r \right\},
		\]
		see also~\eqref{eq:Br}. One readily verifies that~$\cC$ inherits compactness from~$\hat\cC$. Any distribution~$\P\in\cP$ satisfies $\W_\infty(\P,\hat\P)\leq r$. Consequently, we find
		\begin{align*}
			\P(Z\not\in\cC)=\P(Z\in\cZ\backslash\cC)\leq \hat\P(Z\in\cZ\backslash\hat\cC)=\hat\P(Z\not\in\hat\cC)\leq \varepsilon,
		\end{align*}
		where the first inequality follows from Corollary~\ref{cor:dual-W_infty} and the observation that the $r$-neighborhood of~$\cZ\backslash\cC$ coincides with~$\cZ\backslash\hat\cC$. The second inequality follows from the definition of~$\hat\cC$. As~$\varepsilon$ was chosen arbitrarily, $\cP$ is tight. It remains to be shown that~$\cP$ is weakly closed. Proposition~\ref{prop:W:p:infty} readily implies that $\W_\infty(\P, \hat{\P})\leq r$ if and only if $\W_p(\P, \hat{\P})\leq r$ for all~$p\geq 1$. Thus, we may conclude that
		\begin{align*}
			\cP = \bigcap_{p \geq 1} \left\{ \P \in \cP(\R^d): \W_p(\P, \hat{\P}) \leq r \right\}.
		\end{align*}
		That is, the $\infty$-Wasserstein ambiguity set can be expressed as the intersection of all $p$-Wasserstein ambiguity sets for~$p\geq 1$, all of which are weakly closed by Theorem~\ref{thm:wasserstein-compactness}. Hence, $\cP$ is is indeed weakly closed, and the claim follows.
	\end{proof}
	
	\section{Duality Theory for Worst-Case Expectation Problems}
	\label{sec:duality-wc-expectation}
	The DRO problem~\eqref{eq:primal:dro} is often interpreted as a zero-sum game between the decision-maker and a fictitious adversary. The decision-maker moves first and thus selects~$x$ {\em before} seeing~$\P$. Therefore, $x$ is optimized against {\em all} distributions~$\P\in\cP$. In contrast, the adversary moves second and thus selects~$\P$ {\em after} seeing~$x$. Therefore, $\P$ is only optimized against {\em one particular} decision~$x\in\cX$. Put differently, the adversary's choice may adapt to the decision-maker's choice but {\em not} vice versa. 
	
	In this section we develop a duality theory for the adversary's subproblem, which aims to maximize the expected loss of a fixed decision~$x$ across all distributions in a convex ambiguity set~$\cP$. To avoid clutter, we suppress the dependence of the loss function~$\ell$ on the fixed decision~$x$ throughout this discussion, that is, we write~$\ell(z)$ instead of~$\ell(x, z)$. We thus address worst-case expectation problems of the form
	\begin{align}
		\label{eq:worst-case:expectation}
		\sup_{\P \in \cP} \; \E_\P [\ell(Z)].
	\end{align}
	Note that~$\cP$ represents a convex subset of the linear space of all finite signed Borel measures on~$\cZ$. Unless~$\cZ$ is finite, \eqref{eq:worst-case:expectation} thus constitutes an infinite-dimensional convex program with a linear objective function. For this problem to be well-defined, we assume that $\ell : \cZ \to \overline \R$ is a Borel function. In line with \citep[Section~14.E]{rockafellar2009variational}, we define $\E_\P [\ell(Z)]=-\infty$ if $\E_\P [\max\{\ell(Z),0\}]=\infty$ and $\E_\P [\min\{\ell(Z),0\}]=-\infty$. This means that infeasibility trumps unboundedness. More generally, throughout the rest of the paper, we assume that if the objective function of a minimization (maximization) problem can be expressed as the difference of two terms, both of which evaluate to~$\infty$, then the objective function value should be interpreted as $\infty$ ($-\infty$). This convention is in line with the rules of extended arithmetic used in \citep{rockafellar2009variational}.
	
	In the remainder we will show that~\eqref{eq:worst-case:expectation} can be dualized by using elementary tools from finite-dimensional convex analysis \citep{fenchel1953convex, rockafellar1970convex} for a broad class of finitely-parametrized ambiguity sets including all moment ambiguity sets (Section~\ref{sec:moment-ambiguity-sets-duality}), $\phi$-divergence ambiguity sets (Section~\ref{sec:phi:duality}) and optimal transport ambiguity sets (Section~\ref{sec:optimal:transport-duality}). We broadly adopt the proof strategies developed by \citet{shapiro2001duality} and \citet{zhang2022simple} for moment and optimal transport ambiguity sets, respectively, and we extend them to $\phi$-divergence ambiguity sets.
	
	
	\subsection{General Proof Strategy}
	\label{sec:duality-proof-strategy}
	In order to outline the high-level ideas for dualizing~\eqref{eq:worst-case:expectation}, we recall a basic result on the convexity of parametric infima; see, {\em e.g.}, \citep[Theorem~1]{rockafellar1974conjugate}.
	
	\begin{lemma}[Convexity of Optimal Value Functions]
		\label{lem:param:cvx}
		If~$\cU$ and~$\cV$ are arbitrary real vector spaces and $H: \cU \times \cV \to \overline \R$ is a convex function, then the optimal value function~$h:\cU\to\overline \R$ defined through $h(u) = \inf_{v \in \cV} H(u, v)$ is convex.
	\end{lemma}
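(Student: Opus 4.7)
The plan is to fix arbitrary points $u_1, u_2 \in \cU$ and a weight $\lambda \in [0,1]$, and to establish the convexity inequality
\begin{align*}
    h(\lambda u_1 + (1-\lambda) u_2) \leq \lambda h(u_1) + (1-\lambda) h(u_2)
\end{align*}
by exploiting the convexity of $H$ in the joint argument. The core observation is that for \emph{any} pair $v_1, v_2 \in \cV$, the point $(\lambda u_1 + (1-\lambda) u_2, \lambda v_1 + (1-\lambda) v_2)$ is the convex combination of $(u_1,v_1)$ and $(u_2,v_2)$, so the joint convexity of $H$ yields
\begin{align*}
    H\bigl(\lambda u_1 + (1-\lambda) u_2, \, \lambda v_1 + (1-\lambda) v_2\bigr) \leq \lambda H(u_1, v_1) + (1-\lambda) H(u_2, v_2).
\end{align*}
Since $\lambda v_1 + (1-\lambda) v_2 \in \cV$, the left-hand side is bounded below by $h(\lambda u_1 + (1-\lambda) u_2)$. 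Taking the infimum over $v_1$ and then over $v_2$ on the right-hand side gives the desired inequality.

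The only real obstacle is bookkeeping with extended real arithmetic, since $h$ may take the values $\pm\infty$. The plan is to split into cases. If $\lambda h(u_1) + (1-\lambda) h(u_2) = +\infty$, the inequality is trivial. If both $h(u_i)$ are finite, fix $\varepsilon > 0$ and choose $v_i \in \cV$ with $H(u_i, v_i) \leq h(u_i) + \varepsilon$; plugging these into the displayed inequality and letting $\varepsilon \downarrow 0$ yields the conclusion. If, say, $h(u_1) = -\infty$ and the right-hand side equals $-\infty$ (per the convention stated earlier in the paper that infeasibility trumps unboundedness, so $-\infty$ dominates when added to a finite or $-\infty$ quantity), then for each $M \in \R$ one can select $v_1$ with $H(u_1, v_1) < M$ and any $v_2$ with $H(u_2, v_2)$ finite (or a similar selection if $h(u_2) = -\infty$), so that the joint convexity inequality forces $h(\lambda u_1 + (1-\lambda) u_2) \leq \lambda M + (1-\lambda) H(u_2, v_2) \to -\infty$ as $M \to -\infty$.

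The proof is short and the argument is essentially a one-line application of the definition of convexity for $H$, followed by infimization. I do not expect any substantive difficulty beyond careful handling of the extended real values; no topological or measure-theoretic tools are needed.
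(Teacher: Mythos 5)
Your proof is correct and takes a genuinely different route from the paper's. The paper argues at the level of epigraphs: it writes $\epi(h)$ as an intersection over $\varepsilon > 0$ of projections of the convex sets $\epi(H-\varepsilon)$, then invokes preservation of convexity under intersections and linear maps. The $\varepsilon$-perturbation is there precisely because $\epi(h)$ need not equal the projection of $\epi(H)$ when the infimum over $v$ is not attained; this device also silently absorbs almost all of the extended-real bookkeeping. Your approach, by contrast, verifies the convexity inequality for $h$ pointwise by picking near-minimizers $v_1, v_2$ and applying joint convexity of $H$, paying for this directness with explicit case analysis over the values $\pm\infty$.

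Both arguments are valid and comparably short. The epigraph route buys a cleaner handling of the degenerate cases in one stroke; the pointwise route is more elementary and does not require the reader to keep track of projections and intersections. One small note on your Case~3: the parenthetical appeal to the paper's ``infeasibility trumps unboundedness'' convention is a bit out of place, since that convention concerns the interpretation of integrals with both parts infinite, not sums like $\lambda h(u_1) + (1-\lambda) h(u_2)$. What you actually need is the standard Rockafellar convention that $\infty + (-\infty) = +\infty$ for convexity purposes, which is exactly what puts the mixed case $h(u_1) = -\infty$, $h(u_2) = +\infty$ into your trivial Case~1 rather than Case~3. Your case split and the accompanying selection argument are correct under that convention, so the proof goes through. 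It would also be worth making explicit that you are using the ``strict inequality'' form of joint convexity (if $H(u_i,v_i) < \alpha_i$ then $H$ at the convex combination is $< \lambda\alpha_1 + (1-\lambda)\alpha_2$), which is the formulation that remains valid for extended-real-valued convex functions and is exactly what your choice of $v_i$ with $H(u_i,v_i) \leq h(u_i) + \varepsilon$ or $< M$ feeds into.
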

	
	\begin{proof}
		Note that~$h$ is a convex function if and only if its epigraph $\epi(h)$ is a convex set. By the definitions of the epigraph and the infimum operator, we find
		\begin{align*}
			\epi(h) 
			&= \{ (u, t) \in \cU \times \R : h(u) \leq t \} \\
			&= \{ (u, t) \in \cU \times \R : \exists v \in \cV ~~\text{with}~~ H(u, v) \leq t + \varepsilon ~~ \forall \varepsilon >0 \}\\
			&= \bigcap_{\varepsilon>0} \, \{ (u, t) \in \cU \times \R : \exists v \in \cV ~~\text{with}~~ H(u, v)-\varepsilon \leq t \}.
		\end{align*}
		Thus, $\epi(h)$ can be obtained by projecting $\cap_{\varepsilon >0} \epi(H-\varepsilon)$ to $\cU\times \R$. The claim then follows because~$\epi(H-\varepsilon)$ is convex for every~$\varepsilon>0$ thanks to the convexity of~$H$ and because convexity is preserved under intersections and linear transformations; see, {\em e.g.}, \citep[Theorems~2.1 \& 5.7]{rockafellar1970convex}.
	\end{proof}
	
	The following result marks a cornerstone of convex analysis. It states that the bi\-conjugate~$h^{**}$ (that is, the conjugate of~$h^*$) of a closed convex function~$h$ coincides with~$h$. Here, we adopt the standard convention that~$h$ is closed if it is lower semi\-continuous and either $h(u) > -\infty$ for all~$u\in\cU$ or~$h(u) = -\infty$ for all~$u\in\cU$. We use~$\cl(h)$ to denote the closure of~$h$, that is, the largest closed function below~$h$.
	
	\begin{lemma}[Fenchel–Moreau Theorem]
		\label{lem:bi:coincidence}
		For any convex function $h: \R^d \to \overline \R$, we have $h \geq h^{**}$. The inequality becomes an equality on~$\rint(\dom(h))$.
	\end{lemma}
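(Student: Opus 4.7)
The plan is to split the proof into three parts. First, I would establish $h \geq h^{**}$ pointwise using only the Fenchel--Young inequality: for any $u,y \in \R^d$ the definition of~$h^*$ yields $h^*(y) \geq u^\top y - h(u)$, hence $h(u) \geq u^\top y - h^*(y)$, and taking the supremum over~$y$ gives $h(u) \geq h^{**}(u)$. This step requires no convexity.

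Next, I would dispose of the degenerate cases. If $\dom(h) = \emptyset$ the claim is vacuous. If $h$ attains the value $-\infty$ at some point, convexity forces $h \equiv -\infty$ on $\rint(\dom h)$; on the dual side $h^*(y) \equiv +\infty$, so $h^{**} \equiv -\infty$ and equality is immediate. The remaining case is when $h$ is proper, which I treat separately.

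For proper convex~$h$ I would fix $u_0 \in \rint(\dom h)$ and $\varepsilon>0$. Since $(u_0, h(u_0)-\varepsilon) \notin \epi(h)$, a separating hyperplane argument produces $(y,-\alpha) \in \R^{d+1}\setminus\{0\}$ with
\[
  y^\top u - \alpha \beta \leq y^\top u_0 - \alpha\bigl(h(u_0) - \varepsilon\bigr) \quad \forall (u,\beta) \in \epi(h).
\]
Letting $\beta \to \infty$ forces $\alpha \geq 0$. When $\alpha > 0$, dividing gives the affine minorant $h(u) \geq (y/\alpha)^\top(u-u_0) + h(u_0) - \varepsilon$, from which $h^*(y/\alpha) \leq (y/\alpha)^\top u_0 - h(u_0) + \varepsilon$ and therefore $h^{**}(u_0) \geq h(u_0) - \varepsilon$. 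Sending $\varepsilon \downarrow 0$ and combining with the first step yields the desired equality at~$u_0$.

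The main technical obstacle is ruling out the vertical case $\alpha = 0$, in which the separating hyperplane yields no affine minorant of~$h$. This is exactly where the hypothesis $u_0 \in \rint(\dom h)$ matters: if $\alpha = 0$ one has $y^\top(u-u_0) \leq 0$ for all $u \in \dom h$, which is only compatible with the relative-interior location of~$u_0$ when $y$ is orthogonal to $\mathrm{aff}(\dom h) - u_0$. I would handle this by first restricting~$h$ to the affine hull of $\dom h$, where $u_0$ becomes an interior point and the argument above applies verbatim to deliver an affine minorant on $\mathrm{aff}(\dom h)$; extending it to~$\R^d$ by adding an arbitrary linear functional vanishing on $\mathrm{aff}(\dom h) - u_0$ preserves the minorant property. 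A cleaner alternative is to simply invoke \citep[Theorems~10.1 and~12.2]{rockafellar1970convex}: the former ensures that any proper convex function is continuous relative to~$\rint(\dom h)$, so that $h$ coincides with its closure $\cl(h)$ there, while the latter identifies $\cl(h)$ with~$h^{**}$ for proper convex~$h$. Combining the two identities gives $h = h^{**}$ on $\rint(\dom h)$ and completes the proof.
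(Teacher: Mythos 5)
Your proposal is correct, but it takes a more self-contained route than the paper. The paper's proof is a two-line citation: Theorem~12.2 of Rockafellar gives $h^{**}=\cl(h)\leq h$, and Theorem~10.1 gives that a convex $h$ agrees with $\cl(h)$ on $\rint(\dom(h))$. Your main argument instead reproves the result from first principles: Fenchel--Young for $h\geq h^{**}$, an explicit treatment of the improper cases, and a separating-hyperplane construction of affine minorants at points of $\rint(\dom(h))$, with the relative-interior hypothesis used exactly where it must be --- to exclude vertical separating hyperplanes (handled, as is standard, by passing to the affine hull of $\dom(h)$). This is the textbook proof of Fenchel--Moreau and is sound; one small point worth making explicit is that proper (or strict) separation of $(u_0,h(u_0)-\varepsilon)$ from $\epi(h)$ is available because this point does not lie in $\rint(\epi(h))$, so Rockafellar's proper-separation theorem applies even though $\epi(h)$ need not be closed. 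What your longer argument buys is independence from the closure machinery of Rockafellar's Theorems~10.1 and~12.2; what the paper's version buys is brevity, which is appropriate here since the lemma is a standard tool rather than a contribution. Your closing paragraph already identifies the paper's proof verbatim as the ``cleaner alternative,'' so the two approaches coincide in your fallback.
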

	
	\begin{proof}
		By \citep[Theorem~12.2]{rockafellar1970convex}, we have~$h^{**} = \cl(h)\leq h$. In addition, \citep[Theorem~10.1]{rockafellar1970convex} ensures that the convex function~$h$ is continuous on $\rint(\dom(h))$ and thus coincides with~$\cl(h)$ there. Hence, the claim follows.
	\end{proof}
	
	The main idea for dualizing the worst-case expectation problem~\eqref{eq:worst-case:expectation} is to represent its optimal value as~$-h(u)$, where~$h(u)=\inf_{v \in \cV} H(u, v)$, $\cU$ is a finite-dimensional space of parameters~$u$ that encode the ambiguity set~$\cP$ (such as a set of prescribed moments or a size parameter), and~$\cV$ is an infinite-dimensional space of finite signed measures on~$\cZ$. In addition, $H(u,v)$ represents the negative expected loss if the signed measure~$v$ happens to be a probability measure in~$\cP\subseteq\cV$ and evaluates to~$\infty$ otherwise. If~$H(u,v)$ is jointly convex on~$u$ and~$v$, then~$h(u)$ is convex by virtue of Lemma~\ref{lem:param:cvx}. A problem dual to~\eqref{eq:worst-case:expectation} can then be constructed from the bi-conjugate~$h^{**}(u)$. Lemma~\ref{lem:bi:coincidence} provides conditions for strong duality.

	\subsection{Moment Ambiguity Sets}
	\label{sec:moment-ambiguity-sets-duality}
	Recall from Section~\ref{sec:moment-ambiguity-sets} that the generic moment ambiguity set~\eqref{eq:moment-ambiguity-set} is defined~as
	\begin{align*}
		\cP = \left\{ \P \in \cP_f(\cZ) \, : \, \E_\P \left[ f (Z) \right] \in \cF \right\},
	\end{align*}
	where $\cZ \subseteq \R^d$ is a closed support set, $f: \cZ \to \R^m$ is a Borel measurable moment function, $\cF \subseteq \R^m$ is a closed moment uncertainty set, and $\cP_f(\cZ)$ denotes the family of all distributions~$\P\in\cP(\cZ)$ for which $\E_\P[f(Z)]$ is finite.\footnote{Clearly, $\E_\P[f(Z)]$ must be finite to belong to the closed set~$\cF$. Therefore, we may replace~$\cP(\cZ)$ with~$\cP_f(\cZ)$ in the definition of~$\cP$ without loss of generality. However, working with~$\cP_f(\cZ)$ is more convenient when we dualize the worst-case expectation problem~\eqref{eq:worst-case:expectation} over~$\cP$.} We may assume without loss of generality that~$\cF$ is covered by the convex set
	\begin{align*}
		\cC = \left\{ \E_\P[f(Z)] : \P \in \cP_f(\cZ) \right\}
	\end{align*}
	of all possible moments of any distribution on~$\cZ$. To rule out trivial special cases, we make the blanket assumption that~$\cZ$ and~$\cF$ are non-empty. 
	
	Clearly, problem~\eqref{eq:worst-case:expectation} over the moment ambiguity set~\eqref{eq:moment-ambiguity-set} can be recast as
	\begin{align}
		\label{eq:decomp}
		\sup_{\P \in \cP} \E_\P \left[ \ell (Z) \right] = \sup_{u \in \cF} \, \sup_{\P \in \cP_f(\cZ)} \big\{ \E_{\P} \left[ \ell(Z) \right]: \E_{\P} \left[ f(Z) \right] = u \big\} 
		= \sup_{u \in \cF} - h(1, u), 
	\end{align}
	where the auxiliary function~$h: \R\times\R^{m} \to \overline \R$ is defined through
	\begin{align}
		\label{eq:h:moment}
		h(u_0, u) = \inf_{v \in \cM_{f,+} (\cZ)} \left\{ -\int_{\cZ} \ell(z) \, \diff v(z): \int_{\cZ} \diff v(z) = u_0, 
		\int_{\cZ} f(z) \, \diff v(z) = u \right\}.
	\end{align}
	Here, the set~$ \cM_{f,+} (\cZ)$ stands for the family of all Borel measures~$v\in \cM_{+} (\cZ)$ for which the integral $\int_{\cZ}f(z)\,\diff v(z)$ is finite. Put differently, $ \cM_{f,+} (\cZ)$ represents the convex cone generated by $\cP_f(\cZ)$. As the objective and constraint functions of the minimization problem in~\eqref{eq:h:moment} are all jointly convex and jointly linear in~$v$, $u_0$ and~$u$, respectively, the equivalent reformulation that incorporates the constraints into the objective via indicator functions remains convex. This implies via Lemma~\ref{lem:param:cvx} that~$h$ is convex. Under a reasonable regularity condition, one can further show that the domain of~$h$ coincides with the cone generated by~$\{1\}\times \cC$. 
	
	\begin{lemma}[Domain of~$h$]
		\label{lem:domain:h:moment}
		If $\E_\P[\ell(Z)]>-\infty$ for every~$\P\in\cP_f(\cZ)$, then we have \[ \dom(h)=\cone(\{1\}\times \cC).\]
	\end{lemma}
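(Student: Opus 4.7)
The plan is to verify the two set inclusions separately, exploiting the bijective correspondence between non-negative measures $v \in \cM_{f,+}(\cZ)$ of positive total mass $u_0$ and rescaled probability distributions $\P = v/u_0 \in \cP_f(\cZ)$. This correspondence turns the feasibility conditions in~\eqref{eq:h:moment} into statements about moments of probability measures, which match the definition of $\cC$.

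For the inclusion $\dom(h) \subseteq \cone(\{1\} \times \cC)$, I would fix $(u_0, u) \in \dom(h)$ and use that $h(u_0, u) < +\infty$ forces the feasible set in~\eqref{eq:h:moment} to be nonempty. Any feasible $v$ must be non-negative with $\int_\cZ \diff v = u_0$, so $u_0 \geq 0$. In the degenerate case $u_0 = 0$ the only admissible $v$ is the zero measure, forcing $u = 0$ and hence $(u_0, u) = 0 \cdot (1, c) \in \cone(\{1\} \times \cC)$ for any $c \in \cC$. In the generic case $u_0 > 0$, the rescaled measure $\P = v/u_0$ lies in $\cP_f(\cZ)$ and satisfies $\E_\P[f(Z)] = u/u_0 \in \cC$, so $(u_0, u) = u_0 \cdot (1, u/u_0) \in \cone(\{1\} \times \cC)$.

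For the reverse inclusion, I would parametrize an arbitrary element of $\cone(\{1\} \times \cC)$ as either the origin or a point $(u_0, u_0 c)$ with $u_0 > 0$ and $c \in \cC$. The origin is handled by the feasible choice $v = 0$, which yields objective value $0$ and hence $h(0,0) \leq 0 < \infty$. In the generic case, I pick $\P \in \cP_f(\cZ)$ with $\E_\P[f(Z)] = c$ (which exists by definition of $\cC$) and set $v = u_0 \P$; this $v$ is feasible in~\eqref{eq:h:moment} and gives objective value $-u_0 \E_\P[\ell(Z)]$.

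The main (and really only) obstacle is the role of the hypothesis $\E_\P[\ell(Z)] > -\infty$, which I expect to enter exactly at this last step: without it, the candidate $v = u_0 \P$ could yield $-\int \ell\, \diff v = +\infty$, leaving open the possibility that $h(u_0, u) = +\infty$ and excluding $(u_0, u)$ from $\dom(h)$. With the hypothesis in force, $-u_0 \E_\P[\ell(Z)] < +\infty$ since $u_0 > 0$, so $h(u_0, u) < +\infty$ and $(u_0, u) \in \dom(h)$. The rest of the argument is essentially bookkeeping about the cone structure and the extended-arithmetic convention announced at the start of Section~\ref{sec:duality-wc-expectation}.
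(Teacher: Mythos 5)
Your proof is correct and takes essentially the same route as the paper: both arguments reduce the claim to analyzing which $(u_0,u)$ admit a feasible $v$ in~\eqref{eq:h:moment}, split on $u_0=0$ versus $u_0>0$ to pass between non-negative measures and probability measures in $\cP_f(\cZ)$, and invoke the hypothesis $\E_\P[\ell(Z)]>-\infty$ precisely to ensure that the candidate $v=u_0\P$ in the reverse inclusion has objective value $-u_0\E_\P[\ell(Z)]<+\infty$. If anything, you are a touch more careful than the paper in distinguishing nonemptiness of the constraint set from finiteness of the infimum, but this does not change the substance of the argument.
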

	\begin{proof}
		It is clear that $(u_0,u)\in\dom(h)$ if and only if $h(u_0,u)<\infty$, which is the case if and only if the minimization problem in~\eqref{eq:h:moment} is feasible. Thus, it remains to be shown that the problem in~\eqref{eq:h:moment} is feasible if and only if $(u_0,u)\in\cone (\{1\}\times\cC)$. To this end, assume first that the problem in~\eqref{eq:h:moment} is feasible at~$(u_0,u)$. This implies that there is~$v\in\cM_{f,+}(\cZ)$ with $\int_{\cZ} \diff v(z) = u_0$ and $\int_{\cZ} f(z) \, \diff v(z) = u$. Hence, $u_0\geq 0$. If~$u_0=0$, then we must have~$u=0$. If~$u_0>0$, on the other hand, then $v/u_0$ must be a probability measure in~$\cP_f(\cZ)$, which implies that~$u/u_0\in\cC$. In either case, $(u_0,u)$ is a non-negative multiple of a point in~$\{1\}\times\cC$ and thus belongs to~$\cone(\{1\}\times\cC)$. Next, assume that~$(u_0,u)\in \cone(\{1\}\times\cC)$. If~$u_0=0$, then~$u=0$, and indeed, the zero measure in~$\cM_{f,+}(\cZ)$ is feasible in~\eqref{eq:h:moment}. If~$u_0>0$, on the other hand, then~$u/u_0\in\cC$. By the definition of~$\cC$, there exists a distribution~$\P\in\cP_f(\cZ)$ with $\E_\P[f(Z)]=u/u_0$. As $\E_\P[\ell(Z)]>-\infty$, this implies that~$v=u_0\P$ is feasible in~\eqref{eq:h:moment}. We have thus shown that~\eqref{eq:h:moment} is feasible if and only if $(u_0,u)\in\cone (\{1\}\times\cC)$. This observation completes the proof. 
	\end{proof}
	
	The following proposition characterizes the bi-conjugate of~$h$.
	
	\begin{proposition}[Bi-conjugate of~$h$]
		\label{prop:duality:moment}
		The bi-conjugate of~$h$ defined in~\eqref{eq:h:moment} satisfies
		\begin{align*}
			h^{**}(u_0, u) = \sup_{\lambda_0 \in \R, \, \lambda \in \R^m} \big\{ -u_0\lambda_0 - u^\top \lambda : \lambda_0 + f(z)^\top\lambda \geq \ell(z) ~~ \forall z \in \cZ \big\}.
		\end{align*}
		If additionally $\E_\P[\ell(Z)]>-\infty$ for every~$\P\in\cP_f(\cZ)$, then~$h^{**}$ and~$h$ match on the cone generated by~$\{1\}\times \rint(\cC)$ except at the origin.
	\end{proposition}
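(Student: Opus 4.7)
The plan is to compute $h^*$ in closed form by interchanging the two suprema in its definition, read off $h^{**}$ as a support function of the resulting constraint set, and then invoke the Fenchel--Moreau theorem (Lemma~\ref{lem:bi:coincidence}) together with Lemma~\ref{lem:domain:h:moment} to deduce the coincidence of $h$ and $h^{**}$ on $\rint(\dom(h))$.

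For the first step, I would substitute~\eqref{eq:h:moment} into the definition of $h^*$, eliminate $u_0$ and $u$ using the two equality constraints of the inner infimum, and interchange the resulting suprema to obtain
\[
    h^*(\lambda_0, \lambda) = \sup_{v \in \cM_{f,+}(\cZ)} \int_\cZ \bigl( \ell(z) + \lambda_0 + f(z)^\top \lambda \bigr)\, \diff v(z).
\]
Since $\cM_{f,+}(\cZ)$ is a convex cone containing the zero measure and every positive multiple of a Dirac mass at a point of $\cZ$, this supremum vanishes when $\ell(z) + \lambda_0 + f(z)^\top \lambda \leq 0$ for all $z \in \cZ$ and equals $+\infty$ otherwise. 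Hence, $h^*$ is the indicator function of the set $\Lambda = \{(\lambda_0, \lambda) \in \R \times \R^m : \lambda_0 + f(z)^\top \lambda \leq -\ell(z) ~\forall z \in \cZ\}$, and the change of variables $(\lambda_0, \lambda) \leftarrow (-\lambda_0, -\lambda)$ in $h^{**}(u_0, u) = \sup_{(\lambda_0, \lambda) \in \Lambda}\{u_0 \lambda_0 + u^\top \lambda\}$ then produces the formula announced in the proposition.

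For the coincidence statement, Lemma~\ref{lem:param:cvx} ensures that $h$ is convex. Under the additional hypothesis that $\E_\P[\ell(Z)] > -\infty$ for every $\P \in \cP_f(\cZ)$, Lemma~\ref{lem:domain:h:moment} identifies $\dom(h) = \cone(\{1\} \times \cC)$. Since the generating slice $\{1\} \times \cC$ lies in the affine hyperplane $\{u_0 = 1\}$, which is disjoint from the origin, the standard relative-interior calculus for convex cones (see, e.g., \citep[Corollary~6.8.1]{rockafellar1970convex}) yields $\rint(\dom(h)) = \cone(\{1\} \times \rint(\cC)) \setminus \{0\}$. The Fenchel--Moreau theorem (Lemma~\ref{lem:bi:coincidence}) then implies $h = h^{**}$ on this set.

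The chief technical delicacy lies in the supremum interchange combined with the consistent handling of extended real-valued $\ell$: a point at which $\ell(z) = +\infty$ drives $h^*$ to $+\infty$ everywhere by placing Dirac masses of arbitrary weight at that point, so that $\Lambda$ is empty and the stated formula correctly returns $h^{**} \equiv -\infty$; a point at which $\ell(z) = -\infty$ renders the corresponding constraint trivially satisfied, again in agreement with the formula. Once these edge cases are dispatched, the remainder of the argument is a routine assembly of the three cited lemmas.
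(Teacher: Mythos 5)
Your proposal is correct and follows essentially the same route as the paper's proof: compute $h^*$ by eliminating $u_0,u$ via the equality constraints and interchanging suprema, observe that the resulting supremum over $v\in\cM_{f,+}(\cZ)$ is $0$ or $+\infty$ because $\cM_{f,+}(\cZ)$ contains all weighted Dirac masses, read off $h^{**}$, and then combine Lemma~\ref{lem:param:cvx}, Lemma~\ref{lem:domain:h:moment}, \citep[Corollary~6.8.1]{rockafellar1970convex} and Lemma~\ref{lem:bi:coincidence} for the coincidence on $\cone(\{1\}\times\rint(\cC))\setminus\{(0,0)\}$. Working with $h^*(\lambda_0,\lambda)$ and changing signs afterwards, rather than evaluating $h^*(-\lambda_0,-\lambda)$ directly as the paper does, is a purely cosmetic difference, and your extra remarks on the extended-real-valued edge cases for $\ell$ are a harmless addendum not needed in the paper's statement.
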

	\begin{proof}
		For any fixed~$(\lambda_0,\lambda)\in\R\times \R^m$, the convex conjugate of~$h$ satisfies
		\begin{align*}
			h^*(-\lambda_0, -\lambda) 
			&= \sup_{u_0\in\R,\, u \in \R^m} - u_0 \lambda_0 - u^\top \lambda - h(u_0, u) \\
			&= \left\{
			\begin{array}{cl}
				\sup & \displaystyle - u_0 \lambda_0 - u^\top \lambda + \int_{\cZ} \ell(z) \, \diff v(z) \\[2ex]
				\st & u_0 \in \R,\; u \in \R^m, \; v \in \cM_{f,+}(\cZ) \\[0.5ex] 
				& \displaystyle \int_{\cZ}  \diff v(z) = u_0, ~ \int_{\cZ} f(z) \; \diff v(z) = u
			\end{array} \right. \\[1ex]
			&= \sup_{v \in \cM_{f,+}(\cZ)}  \int_{\cZ} \big( \ell(z) - \lambda_0 - f(z)^\top \lambda \big)\, \diff v(z) \\[1ex]
			&= \left\{ \begin{array}{cl}
				0 & \text{~if~} \ell(z) - \lambda_0 - f(z)^\top \lambda \leq 0 ~~ \forall z \in \cZ ,\\
				\infty & \text{otherwise,}
			\end{array}
			\right. 
		\end{align*}
		where the last equality holds because~$\cM_{f,+}(\cZ)$ contains all weighted Dirac measures on~$\cZ$. Thus, for any fixed~$(u_0,u) \in\R\times \R^m$, the conjugate of~$h^*$ satisfies
		\begin{align*}
			h^{**}(u_0, u) 
			&= \sup_{\lambda_0\in\R,\, \lambda \in \R^m} - u_0\lambda_0 - u^\top \lambda - h^*(-\lambda_0, -\lambda) \\
			&=\sup_{\lambda_0\in\R,\, \lambda \in \R^m} \big\{ -u_0\lambda_0 - u^\top\lambda :  \lambda_0 + f(z)^\top \lambda \geq \ell(z) ~~ \forall z \in \cZ \big\}.
		\end{align*}
		This establishes the desired formula for the bi-conjugate of~$h$. Assume now that $\E_\P[\ell(Z)]>-\infty$ for every~$\P\in\cP_f(\cZ)$. It remains to be shown that $h(u_0,u)=h^{**}(u_0,u)$ for all~$(u_0,u)\neq (0,0)$ in the cone generated by~$\{1\}\times \rint(\cC)$. However, this follows immediately from Lemma~\ref{lem:bi:coincidence} and the observation that
		\[
		\rint(\dom(h)) = \rint(\cone(\{1\}\times\cC)) = \cone(\{1\}\times\rint(\cC))\backslash \{(0,0)\},
		\]
		where the two equalities hold because of Lemma~\ref{lem:domain:h:moment} and~\cite[Corollary~6.8.1]{rockafellar1970convex}, respectively. Therefore, the claim follows.
	\end{proof}
	
	Proposition~\ref{prop:duality:moment} implies that $h(1,u)=h^{**}(1,u)$ for all~$u\in\rint(\cC)$. The following main theorem exploits this relation to convert the maximization problem on the right hand side of~\eqref{eq:decomp} to an equivalent dual minimization problem. 
	
	\begin{theorem}[Duality Theory for Moment Ambiguity Sets]
		\label{thm:duality:moment}
		If~$\cP$ is the moment ambiguity set~\eqref{eq:moment-ambiguity-set}, then the following weak duality relation holds.  
		\begin{align}
			\label{eq:weak-duality-moments}
			\sup_{\P \in \cP} ~ \E_\P \left[ \ell(Z) \right] 
			\leq \left\{
			\begin{array}{cl}
				\inf & \lambda_0 + \delta_\cF^*(\lambda) \\[1ex]
				\st & \lambda_0 \in \R, \, \lambda \in \R^m \\ [1ex]
				& \lambda_0 + f(z)^\top\lambda \geq \ell(z) \quad \forall z \in \cZ.
			\end{array}
			\right.
		\end{align}
		If $\E_\P[\ell(Z)]>-\infty$ for all~$\P\in\cP_f(\cZ)$ and~$\cF \subseteq \cC$ is a convex and compact set with $\rint(\cF)\subseteq \rint(\cC)$, then strong duality holds, that is, \eqref{eq:weak-duality-moments} becomes an equality. 
	\end{theorem}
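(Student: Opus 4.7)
The plan is to establish weak duality by a direct primal-dual calculation and strong duality by combining Proposition~\ref{prop:duality:moment} with Sion's minimax theorem.

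For weak duality, the argument is self-contained. For any $\P \in \cP$ and any dual feasible pair $(\lambda_0, \lambda)$ satisfying $\lambda_0 + f(z)^\top \lambda \geq \ell(z)$ for all $z \in \cZ$, integrating this pointwise inequality against $\P$ and exploiting $\E_\P[f(Z)] \in \cF$ yields $\E_\P[\ell(Z)] \leq \lambda_0 + \lambda^\top \E_\P[f(Z)] \leq \lambda_0 + \delta_\cF^*(\lambda)$. Taking the supremum over $\P \in \cP$ and the infimum over feasible $(\lambda_0, \lambda)$ then delivers~\eqref{eq:weak-duality-moments}.

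For strong duality, the starting point is the decomposition $\sup_{\P \in \cP} \E_\P[\ell(Z)] = \sup_{u \in \cF} -h(1, u)$ from~\eqref{eq:decomp}. Define the concave function $G(u) = -h^{**}(1, u)$ on $\R^m$. By Proposition~\ref{prop:duality:moment}, the hypothesis $\rint(\cF) \subseteq \rint(\cC)$ guarantees that $-h(1, u) = G(u)$ for every $u \in \rint(\cF)$. I then reduce the outer supremum from $\cF$ to $\rint(\cF)$ on the $G$-side: for any $u \in \cF$ and a fixed $u_0 \in \rint(\cF)$, the points $u_t = (1 - t) u + t u_0$ belong to $\rint(\cF)$ for $t \in (0, 1]$, and concavity gives $G(u_t) \geq (1 - t) G(u) + t G(u_0)$; letting $t \to 0^+$ yields $\sup_{\rint(\cF)} G \geq G(u)$, hence $\sup_{\cF} G = \sup_{\rint(\cF)} G$ (the degenerate case in which $G \equiv -\infty$ on $\rint(\cF)$ is trivial). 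Chaining these identities gives
\[
    \sup_{\P \in \cP} \E_\P[\ell(Z)] \geq \sup_{u \in \rint(\cF)} -h(1, u) = \sup_{u \in \rint(\cF)} G(u) = \sup_{u \in \cF} G(u).
\]

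It remains to evaluate $\sup_{u \in \cF} G(u)$. Substituting the formula for $h^{**}$ from Proposition~\ref{prop:duality:moment} rewrites this quantity as $\sup_{u \in \cF} \inf_{(\lambda_0, \lambda) \in K} \{\lambda_0 + u^\top \lambda\}$, where the set $K = \{(\lambda_0, \lambda) \in \R \times \R^m : \lambda_0 + f(z)^\top \lambda \geq \ell(z)~\forall z \in \cZ\}$ is convex as an intersection of halfspaces. Since $\cF$ is convex and compact, $K$ is convex, and the bilinear objective is affine and continuous in each argument, Sion's minimax theorem permits swapping the two optimizations, yielding $\sup_{u \in \cF} G(u) = \inf_{(\lambda_0, \lambda) \in K} \{\lambda_0 + \delta_\cF^*(\lambda)\}$. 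Combined with weak duality, this closes the chain and establishes strong duality. The main obstacle is the relative interior reduction in the presence of extended real values, together with the careful verification of the hypotheses of Sion's minimax theorem; both are standard once the ingredients are in place.
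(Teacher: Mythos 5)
Your proof is correct, and the strong-duality part follows essentially the same route as the paper: the decomposition~\eqref{eq:decomp}, the identification $h=h^{**}$ on the cone over $\rint(\cC)$ from Proposition~\ref{prop:duality:moment}, a relative-interior reduction via concavity, and Sion's minimax theorem. Two genuine (if minor) differences are worth noting. First, your weak duality argument is more elementary than the paper's: you integrate the dual constraint pointwise against $\P$ and use $\E_\P[f(Z)]\in\cF$ directly, whereas the paper routes through $h\geq h^{**}$ and the max-min inequality; both are valid, and yours requires only the observation that $\E_\P[\ell(Z)]$ is well defined (possibly $-\infty$) because $\ell$ is dominated by the $\P$-integrable function $\lambda_0+f(\cdot)^\top\lambda$. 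Second, your closing of the strong-duality chain is slightly cleaner: you only need the trivial inequality $\sup_{u\in\cF}-h(1,u)\geq \sup_{u\in\rint(\cF)}-h(1,u)$ and let weak duality supply the reverse bound, whereas the paper must additionally argue that the concave function $-h(1,\cdot)$ cannot jump up on the relative boundary of $\cF$ to get an equality at that step. The extended-real-valued technicalities in your relative-interior reduction for $G=-h^{**}(1,\cdot)$ (choosing a base point $u_0\in\rint(\cF)$ with $G(u_0)>-\infty$, and the degenerate case $G\equiv-\infty$) are handled adequately.
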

	
	\begin{proof}
		For ease of exposition, we introduce 
		\[
		\cL = \left\{ (\lambda_0, \lambda) \in \R\times\R^m: \lambda_0 + f(z)^\top \lambda \geq \ell(z) ~ \forall z \in \cZ \right\}
		\]
		as a shorthand for the dual feasible set. Using the decomposition~\eqref{eq:decomp}, we find
		\begin{align*}
			\sup_{\P \in \cP} \; \E_\P \left[ \ell(Z) \right]= \sup_{u \in \cF} \; -h(1, u)
			&\leq \sup_{u \in \cF} \inf_{(\lambda_0, \lambda) \in \cL} ~ \lambda_0 + u^\top\lambda \\
			&\leq \inf_{(\lambda_0, \lambda) \in \cL} \sup_{u \in \cF} ~ \lambda_0 + u^\top\lambda \\
			&= \inf_{(\lambda_0, \lambda) \in \cL} ~ \lambda_0 + \delta_\cF^*(\lambda).
		\end{align*}
		Here, the first inequality exploits Proposition~\ref{prop:duality:moment} and Lemma~\ref{lem:bi:coincidence}, which ensures that~$h\geq h^{**}$, and the second inequality holds thanks to the max-min inequality. The last equality follows from the definition of the support function~$\delta^*_\cF$. This establishes the weak duality relation~\eqref{eq:weak-duality-moments}. Next, suppose that~$\cF$ is a convex compact set with~$\rint(\cF)\subseteq \rint(\cC)$. Under this additional assumption, we have
		\begin{align*}
			\sup_{\P \in \cP} \; \E_\P \left[ \ell (Z) \right]
			= \sup_{u \in \cF} \; -h(1, u)
			&= \sup_{u \in \rint(\cF)} -h(1, u) \\ 
			&= \sup_{u \in \rint(\cF)} \inf_{(\lambda_0, \lambda) \in \cL} ~ \lambda_0 + u^\top\lambda \\
			&= \sup_{u \in \cF} \inf_{(\lambda_0, \lambda) \in \cL} ~ \lambda_0 + u^\top\lambda
			= \inf_{(\lambda_0, \lambda) \in \cL} ~ \lambda_0 + \delta_\cF^*(\lambda),
		\end{align*}
		where the first equality exploits~\eqref{eq:decomp}. The second equality follows from two observations. First, $\rint(\cF)$ is non-empty and convex \cite[Theorem~6.2]{rockafellar1970convex}. Second, $-h(1,u)$ is concave in~$u$, which ensures that~$-h(1,u)$ cannot jump up on the boundary of its domain~$\cC$ and---in particular---on the boundary of~$\cF\subseteq\cC$. Taken together, these observations imply that we can restrict~$\cF$ to~$\rint(\cF)$ without reducing the supremum. The third equality follows from Proposition~\ref{prop:duality:moment}, which allows us to replace~$h$ with~$h^{**}$ on~$\rint(\cF) \subseteq \rint(\cC)$. The fourth equality holds because~$-h^{**}(1,u)$ is concave in~$u$, which allows us to change~$\rint(\cF)$ back to~$\cF$. Finally, the fifth equality follows from Sion's minimax theorem~\citep[Theorem~4.2]{sion1958general}, which applies because~$\cF$ is convex and compact, $\cL$ is convex and $\lambda_0+u^\top \lambda$ is biaffine in~$u$ and~$(\lambda_0,\lambda)$. Therefore, strong duality holds.
	\end{proof}
	
	Theorem~\ref{thm:duality:moment} shows that the worst-case expectation problem~\eqref{eq:worst-case:expectation} over the moment ambiguity set~\eqref{eq:moment-ambiguity-set} admits a semi-infinite dual. Indeed, the dual problem on the right hand side of~\eqref{eq:weak-duality-moments} accommodates finitely many decision variables but infinitely many constraints parametrized by the uncertainty realizations~$z\in\cZ$. The dual problem can also be interpreted as a robust optimization problem with uncertainty set~$\cZ$. Note that we did {\em not} assume~$\cZ$ to be convex. In addition, we emphasize that compactness of~$\cF$ is {\em not} a necessary condition for strong duality. Indeed, strong duality can also be established under Slater-type conditions \citep{zhen2023unification}. Finally, the condition $\rint(\cF)\subseteq \rint(\cC)$ is equivalent to the---seemingly weaker---requirement that~$\cF$ intersects~$\rint(\cC)$. Indeed, if~$\cF\cap \rint(\cC)\neq \emptyset$, then~$\cF$ is not entirely contained in the relative boundary of $\cC$, which implies via \citep[Corollary~6.5.2]{rockafellar1970convex} that $\rint(\cF)\subseteq \rint(\cC)$. 
	
	In the remainder of this section, we use Theorem~\ref{thm:duality:moment} to dualize worst-case expectations problems corresponding to popular classes of moment ambiguity sets. Recall from Section~\ref{sec:chebyshev-with-moment-uncertainty} that the Chebyshev ambiguity set~\eqref{eq:chebyshev-with-moments-in-F} is defined~as
	\begin{equation*}
		\cP = \left\{ \P \in \cP_2(\cZ): \E_\P[Z] = \mu, ~~\E_\P[Z Z^\top] = M ~~ \forall (\mu, M) \in \cF \right\},
	\end{equation*}
	where $\cF \subseteq \R^d \times \S_+^d$ is a closed moment uncertainty set, and $ \cP_2(\cZ)$ denotes the set of all distributions in~$\cP(\cZ)$ with finite second moments. Note that~$\cP$ is an instance of the generic moment ambiguity set~\eqref{eq:moment-ambiguity-set} with moment function~$f(z) = (z, z z^\top)$. 
	
	\begin{theorem}[Duality Theory for Chebyshev Ambiguity Sets]
		\label{thm:duality:Chebyshev}
		If~$\cP$ is the Chebyshev ambiguity set~\eqref{eq:chebyshev-with-moments-in-F}, then the following weak duality relation holds.
		\begin{align}
			\label{eq:weak-duality-chebyshev}
			\sup_{\P \in \cP} ~ \E_\P \left[ \ell(Z) \right] 
			\leq \left\{
			\begin{array}{cl}
				\inf & \lambda_0 + \delta_\cF^*(\lambda, \Lambda) \\[1ex]
				\st & \lambda_0 \in \R, \, \lambda \in \R^d, \, \Lambda \in \S^d \\ [1ex]
				& \lambda_0 + \lambda^\top z + z^\top \Lambda z \geq \ell(z) ~~ \forall z \in \cZ.
			\end{array}
			\right.
		\end{align}
		If~$\E_\P [ \ell (Z) ]>-\infty$ for all~$\P\in\cP_2(\cZ)$ and~$\cF$ is a convex compact set with $M\succ\mu\mu^\top$ for all~$(\mu,M)\in\rint(\cF)$, then strong duality holds, that is, \eqref{eq:weak-duality-chebyshev} becomes an~equality. 
	\end{theorem}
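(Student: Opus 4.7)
The plan is to derive Theorem~\ref{thm:duality:Chebyshev} as a direct specialization of Theorem~\ref{thm:duality:moment}, applied to the Borel moment function $f:\cZ\to\R^d\times\S^d$ given by $f(z)=(z,zz^\top)$. Equipping $\R^d\times\S^d$ with the pairing $\langle(\lambda,\Lambda),(z,Y)\rangle=\lambda^\top z+\Tr(\Lambda Y)$, the set $\cP_f(\cZ)$ of Theorem~\ref{thm:duality:moment} coincides with $\cP_2(\cZ)$, the constraint $\lambda_0+f(z)^\top(\lambda,\Lambda)\geq \ell(z)$ rewrites as $\lambda_0+\lambda^\top z+z^\top\Lambda z\geq \ell(z)$ using $\Tr(\Lambda zz^\top)=z^\top\Lambda z$, and the support function $\delta^*_\cF(\lambda,\Lambda)$ retains the form stated in the theorem. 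The weak duality inequality~\eqref{eq:weak-duality-chebyshev} is then immediate from~\eqref{eq:weak-duality-moments}.

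For strong duality I need to check the additional hypothesis $\rint(\cF)\subseteq\rint(\cC)$ of Theorem~\ref{thm:duality:moment}, where $\cC=\{(\E_\P[Z],\E_\P[ZZ^\top]):\P\in\cP_2(\cZ)\}$. As noted directly after Theorem~\ref{thm:duality:moment}, it is enough to show that $\cF\cap\rint(\cC)\neq\emptyset$. Since $\cF$ is non-empty and convex, $\rint(\cF)$ is non-empty; picking any $(\mu,M)\in\rint(\cF)$, the hypothesis of the theorem gives $M\succ\mu\mu^\top$, and the implicit condition $\cF\subseteq\cC$ (without which the ambiguity set would be empty) places $(\mu,M)$ in $\cC$. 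The technical core of the proof is therefore the inclusion
\[
    \left\{(\mu,M)\in\cC:M\succ\mu\mu^\top\right\}\subseteq\rint(\cC),
\]
after which Theorem~\ref{thm:duality:moment} immediately delivers the conclusion.

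I would establish this inclusion by an explicit perturbation argument. Given $(\mu,M)\in\cC$ with $\Sigma=M-\mu\mu^\top\succ 0$, symmetric two-point mixtures of the form $\tfrac{1}{2}\delta_{\mu+\varepsilon v}+\tfrac{1}{2}\delta_{\mu-\varepsilon v}$ have mean $\mu$ and second-moment matrix $\mu\mu^\top+\varepsilon^2 vv^\top$, while slightly asymmetric variants can shift the mean by an arbitrarily prescribed small vector. Letting $v$ range over an eigenbasis of $\Sigma$ and taking convex combinations of such mixtures with a fixed distribution realising $(\mu,M)$, one can realise every sufficiently small perturbation of $(\mu,M)$ inside $\mathrm{aff}(\cC)$ as a moment pair of some distribution supported in $\cZ$, which is exactly what $(\mu,M)\in\rint(\cC)$ demands. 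The main obstacle is ensuring that the perturbing point masses actually lie in $\cZ$: here the condition $M\succ\mu\mu^\top$ is crucial, because the existence of \emph{any} distribution on $\cZ$ with positive definite covariance $\Sigma$ forces $\mu$ to lie in the relative interior of $\mathrm{conv}(\cZ)$, which in turn leaves enough room around $\mu$ inside $\cZ$ to accommodate the two-point perturbations. Once this geometric fact is pinned down, the algebra of two-point mixtures completes the verification of $\rint(\cF)\subseteq\rint(\cC)$ and hence of strong duality.
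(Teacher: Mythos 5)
Your overall route is exactly the one the paper intends: Theorem~\ref{thm:duality:Chebyshev} is the specialization of Theorem~\ref{thm:duality:moment} to the moment function $f(z)=(z,zz^\top)$, and your treatment of weak duality is correct. Since the paper omits the proof as a ``direct corollary,'' the only substantive content of your proposal is the verification of the regularity condition $\rint(\cF)\subseteq\rint(\cC)$, and that is where there is a genuine gap.

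The inclusion $\{(\mu,M)\in\cC: M\succ\mu\mu^\top\}\subseteq\rint(\cC)$ that you isolate as the technical core is false for a general closed support set $\cZ$. Take $\cZ=\R\times\{0,1,2\}\subseteq\R^2$ and $\P=\tfrac14\bigl(\delta_{(1,0)}+\delta_{(-1,0)}+\delta_{(1,2)}+\delta_{(-1,2)}\bigr)$, so that $\mu=(0,1)$, $M=\mathrm{diag}(1,2)$ and $M-\mu\mu^\top=I_2\succ 0$. The linear functional $L(\mu',M')=2\mu_2'-M_{22}'$ satisfies $L(\E_\Q[Z],\E_\Q[ZZ^\top])=\E_\Q[2Z_2-Z_2^2]=\Q(Z_2=1)\geq 0$ for every $\Q\in\cP_2(\cZ)$, is non-constant on $\cC$ (it equals $1$ at the moments of $\delta_{(0,1)}$), and attains its minimum $0$ over $\cC$ at $(\mu,M)$. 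Hence $(\mu,M)$ lies on the relative boundary of $\cC$ despite having positive definite covariance, so with $\cF=\{(\mu,M)\}$ all stated hypotheses hold yet $\rint(\cF)\not\subseteq\rint(\cC)$. The precise point where your argument breaks is the claim that $\mu\in\rint(\mathrm{conv}(\cZ))$ ``leaves enough room around $\mu$ inside $\cZ$'' for the point masses at $\mu\pm\varepsilon v$: membership in the relative interior of the \emph{convex hull} says nothing about $\cZ$ itself containing a neighbourhood of $\mu$, and this fails whenever $\cZ$ is discrete, a sphere, or a union of parallel affine subspaces as above. Your perturbation does go through if $\cZ$ is convex, since then a nondegenerate covariance forces the support of $\P$ to affinely span $\R^d$ and hence $\mu\in\mathrm{int}(\mathrm{conv}(\mathrm{supp}\,\P))\subseteq\mathrm{int}(\cZ)$; but the theorem is stated for arbitrary closed $\cZ$, and the paper stresses after Theorem~\ref{thm:duality:moment} that convexity of $\cZ$ is not assumed. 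To close the gap you must either impose an additional hypothesis on $\cZ$ (or assume $\cF\cap\rint(\cC)\neq\emptyset$ directly, which by the remark following Theorem~\ref{thm:duality:moment} suffices) or supply a different verification of the relative-interior condition.
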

	
	Theorem~\ref{thm:duality:Chebyshev} is a direct corollary of Theorem~\ref{thm:duality:moment}. Thus, we omit its proof. Recall that the Chebyshev ambiguity~\eqref{eq:chebyshev-with-moments-in-F} set with uncertain moments encapsulates the support-only ambiguity set~$\cP(\cZ)$, the Markov ambiguity set~\eqref{eq:Markov}, and the Chebyshev ambiguity set~\eqref{eq:Chebyshev} with fixed moments as special cases. They are recovered by setting $\cF=\R^d\times\S^d$, $\cF=\{\mu\}\times \S^d$ and $\cF=\{\mu\}\times\{M\}$, respectively. The following lemma characterizes the support functions of these moment uncertainty sets in closed form. The proof is elementary and is thus omitted.
	
	\begin{lemma}[Support Functions of Elementary Sets]
		\label{lem:support:elementary}
		The following hold.
		\begin{enumerate}[label=(\roman*)]
			\item If $\cF = \R^d\times\S^d$, then $\delta^*_\cF(\lambda, \Lambda) = \delta_{\{ (0, 0) \}}(\lambda, \Lambda)$.
			\item If $\cF = \{\mu\}\times\S^d$, then $\delta^*_\cF(\lambda, \Lambda) = \lambda^\top \mu + \delta_{\{0\}}(\Lambda)$.
			\item If $\cF = \{\mu\}\times\{M\}$, then $\delta^*_\cF(\lambda, \Lambda) = \lambda^\top \mu + \Tr(\Lambda M)$.
		\end{enumerate}
	\end{lemma}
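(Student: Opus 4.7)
The plan is to evaluate each support function directly from its definition
\[
    \delta^*_\cF(\lambda,\Lambda) = \sup_{(\mu,M)\in\cF} \, \lambda^\top\mu + \Tr(\Lambda M),
\]
where I use the trace/Frobenius inner product $\langle \Lambda, M\rangle = \Tr(\Lambda M)$ as the canonical pairing on $\S^d$. In each of the three cases $\cF$ is a product set, so the supremum factorizes and the problem reduces to a handful of elementary maximizations over $\R^d$, $\S^d$, or a singleton. The key observation that drives cases (i) and (ii) is that $\R^d$ and $\S^d$ are linear subspaces, so the supremum of a nonzero linear functional over either of them equals $+\infty$, while the supremum of the zero functional is~$0$.

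For case (i), I would split the supremum and argue that $\sup_{\mu \in \R^d} \lambda^\top \mu = 0$ if $\lambda = 0$ and $+\infty$ otherwise (take $\mu = t\lambda$ and let $t \to \infty$), and similarly $\sup_{M \in \S^d} \Tr(\Lambda M) = 0$ if $\Lambda = 0$ and $+\infty$ otherwise (take $M = t\Lambda \in \S^d$ and let $t \to \infty$; note $\Tr(\Lambda^2) = \|\Lambda\|_F^2 > 0$ whenever $\Lambda \neq 0$). Adding the two pieces shows $\delta^*_\cF(\lambda,\Lambda) = 0$ precisely when $(\lambda,\Lambda) = (0,0)$, which is the indicator $\delta_{\{(0,0)\}}(\lambda,\Lambda)$. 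For case (ii) the supremum over~$\mu$ collapses to $\lambda^\top \mu$ since $\mu$ is fixed, while the supremum over $M \in \S^d$ is the same as in (i); collecting the pieces yields $\lambda^\top \mu + \delta_{\{0\}}(\Lambda)$.

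Case (iii) is immediate: $\cF = \{(\mu,M)\}$ is a singleton, so the supremum reduces to a single function evaluation, giving $\lambda^\top \mu + \Tr(\Lambda M)$.

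No step should present a real obstacle; the only point worth flagging is to remember that $\S^d$ is a linear subspace of $\R^{d\times d}$ (not merely a cone), so we can freely take positive \emph{and} negative scalings of any direction $\Lambda$ to drive the linear functional $\Tr(\Lambda\,\cdot)$ to~$+\infty$, which is precisely why the unboundedness half of the dichotomy goes through.
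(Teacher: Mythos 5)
Your proof is correct and takes the direct, elementary route that the paper itself alludes to when it states the proof is omitted; there is nothing to compare against, and your computation is exactly what one expects. The only technical point worth making explicit — which you do handle correctly — is that for a product set $\cF = \cF_1 \times \cF_2$ the supremum factorizes as $\delta^*_\cF = \delta^*_{\cF_1} + \delta^*_{\cF_2}$, and since each piece lies in $\{0, +\infty\}$ there is no ambiguity from extended-real arithmetic.
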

	When combined with Theorem~\ref{thm:duality:moment}, Lemma~\ref{lem:support:elementary} immediately leads to duality theorems for support-only, Markov, and Chebyshev ambiguity sets. For brevity, we omit the details. In Section~\ref{sec:chebyshev-with-moment-uncertainty}, we have also defined the Gelbrich ambiguity set as a Chebyshev ambiguity set with uncertain moments of the form~\eqref{eq:chebyshev-with-moments-in-F} with~$\cF$ representing the Gelbrich uncertainty set~\eqref{eq:Gelbrich:F} defined as
	\begin{align*}
		\cF = \left\{(\mu, M) \in \R^d \times \S_+^d~:\, \begin{array}{l}
			\exists \Sigma\,\in\S^d_+ \text{ with } M = \Sigma + \mu \mu^\top, \\ \G \left( (\mu, \Sigma), (\hat \mu, \hat \Sigma) \right) \leq r 
		\end{array}
		\right\},
	\end{align*}
	where~$\G$ is the Gelbrich distance of Definition~\ref{def:Gelbrich}.
	In the following we derive the support function~$\delta^*_\cF$ of the Gelbrich uncertainty set~$\cF$.

	\begin{lemma}[Support Function of Gelbrich Uncertainty Sets]
		\label{lem:support:Gelbrich}
		Let~$\cF$ be the Gelbrich uncertainty set~\eqref{eq:Gelbrich:F} of radius~$r\geq 0$ around $(\hat \mu, \hat \Sigma) \in \R^d \times \S_+^d$, where~$\G$ is the Gelbrich distance of Definition~\ref{def:Gelbrich}. 
		For any $(\lambda, \Lambda) \in \R^d \times \S^d$, we then have
		\begin{align*}
			\delta^*_{\cF}(\lambda, \Lambda) =
			\left\{ 
			\begin{array}{cl}
				\inf & \gamma \big( r^2 - \| \hat \mu \|^2 - \Tr(\hat \Sigma) \big) + \Tr(A) + \alpha \\[0.5ex]
				\st & \alpha, \gamma \in \R_+, \; A \in \S_+^d \\[1ex]
				& \begin{bmatrix} \gamma I_d- \Lambda & \gamma \hat \Sigma^\half \\[1ex] \gamma \hat \Sigma^\half & A \end{bmatrix} \succeq 0, \; \begin{bmatrix} \gamma I_d- \Lambda & \gamma \hat \mu + \frac{\lambda}{2} \\[1ex] (\gamma \hat \mu + \frac{\lambda}{2})^\top & \alpha \end{bmatrix} \succeq 0.
			\end{array} 
			\right.
		\end{align*}
	\end{lemma}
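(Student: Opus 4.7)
The plan is to express $\delta^*_{\cF}(\lambda, \Lambda)$ as a finite-dimensional conic program by invoking the SDP representation of the squared Gelbrich distance from Proposition~\ref{prop:Gelbrich:SDP}, and then to dualize via Lagrangian/conic duality. Parametrizing $(\mu, M) \in \cF$ through $M = \Sigma + \mu\mu^\top$ with $\Sigma \in \S_+^d$ satisfying $\G^2((\mu,\Sigma),(\hat\mu,\hat\Sigma)) \leq r^2$, and substituting the SDP representation of that constraint, I would obtain
\begin{align*}
    \delta^*_{\cF}(\lambda, \Lambda) = \sup_{\mu, \Sigma, C} \left\{ \lambda^\top \mu + \mu^\top \Lambda \mu + \Tr(\Lambda \Sigma) :\ \|\mu - \hat\mu\|_2^2 + \Tr(\Sigma + \hat\Sigma - 2C) \leq r^2,\ \begin{bmatrix}\Sigma & C \\ C^\top & \hat\Sigma\end{bmatrix} \succeq 0 \right\},
\end{align*}
where the linear matrix inequality already absorbs the constraint $\Sigma \succeq 0$.

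I would next introduce a multiplier $\gamma \in \R_+$ for the scalar constraint and a PSD multiplier $P = \begin{bmatrix} P_{11} & P_{12} \\ P_{12}^\top & P_{22} \end{bmatrix} \succeq 0$ for the LMI, and form the Lagrangian. Since it is affine in $\Sigma$ and $C$, boundedness of the inner supremum enforces $P_{11} = \gamma I_d - \Lambda$ (in particular, $\gamma I_d \succeq \Lambda$) and $P_{12} = -\gamma I_d$. The residual supremum over $\mu$ is the concave quadratic program $\sup_{\mu \in \R^d} \mu^\top(\Lambda - \gamma I_d)\mu + (\lambda + 2\gamma\hat\mu)^\top\mu - \gamma\|\hat\mu\|_2^2$, whose epigraph is expressed via a Schur complement by the second LMI in the statement upon introducing a scalar auxiliary variable $\alpha \in \R_+$ (with $\alpha \geq 0$ being forced by the LMI itself). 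Collecting the remaining constants yields the dual objective $\gamma(r^2 - \|\hat\mu\|_2^2 - \Tr(\hat\Sigma)) + \Tr(P_{22}\hat\Sigma) + \alpha$. When $\hat\Sigma \succ 0$, the substitution $A = \hat\Sigma^{\half} P_{22} \hat\Sigma^{\half}$ turns the last trace into $\Tr(A)$, and a congruence of $P$ (after harmlessly flipping the sign of its off-diagonal block) with $\mathrm{diag}(I_d, \hat\Sigma^{\half})$ turns $P \succeq 0$ into precisely the first LMI in the statement. The singular case $\hat\Sigma \not\succ 0$ is handled by replacing $\hat\Sigma$ with $\hat\Sigma + \varepsilon I_d$, applying the above derivation, and passing to the limit $\varepsilon \searrow 0$.

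Weak duality immediately yields $\delta^*_\cF(\lambda,\Lambda) \leq$ (dual optimum). For equality, I would verify Slater's condition on the dual side: choosing $\gamma$ large enough so that $\gamma I_d - \Lambda \succ 0$, and then $A$ and $\alpha$ sufficiently large, renders both linear matrix inequalities strictly positive definite. The standard Slater condition for semidefinite programming then gives $\delta^*_\cF(\lambda,\Lambda)$ equal to the infimum in the statement, with that infimum attained. The main obstacle I anticipate is the perturbation argument in the singular case, where I must track convergence of the dual optima as $\varepsilon \searrow 0$; here the compactness of $\cF$ from Proposition~\ref{prop:U:Gelbrich:SDP} plays a role by bounding the primal supremum uniformly in $\varepsilon$ and thus, via weak duality, the dual infimum too.
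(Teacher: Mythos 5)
Your dualization produces the correct SDP, and your elimination of the multipliers ($P_{11}=\gamma I_d-\Lambda$, $P_{12}=-\gamma I_d$, the Schur-complement encoding of the residual supremum over $\mu$, and the congruence/substitution $A=\hat\Sigma^{\half}P_{22}\hat\Sigma^{\half}$) matches the paper's computation step for step. The gap is in your justification of strong duality. By substituting $M=\Sigma+\mu\mu^\top$ directly into the objective, your primal becomes
\begin{align*}
\sup_{\mu,\Sigma,C}\Big\{\lambda^\top\mu+\mu^\top\Lambda\mu+\Tr(\Lambda\Sigma)\,:\,\dots\Big\},
\end{align*}
which maximizes an \emph{indefinite} quadratic in $\mu$ (whenever $\Lambda$ has a positive eigenvalue) over a convex set. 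This is not a semidefinite program, so "the standard Slater condition for semidefinite programming" does not yield zero duality gap here: dual strict feasibility closes the gap between your dual SDP and its \emph{conic bidual}, not between your dual SDP and the nonconvex primal you started from. Lagrangian weak duality gives only the inequality $\delta^*_\cF(\lambda,\Lambda)\le$ (dual infimum), and for a nonconvex maximization the reverse inequality needs a separate argument.

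The fix is exactly the lifting you skipped: Proposition~\ref{prop:U:Gelbrich:SDP} represents $\cF$ itself as the projection of an SDP-feasible set in the variables $(\mu,M,U,C)$ with $U\succeq\mu\mu^\top$, so that $\delta^*_\cF(\lambda,\Lambda)=\sup\{\lambda^\top\mu+\Tr(\Lambda M)\}$ becomes a \emph{linear} conic program, to which conic duality with your dual Slater point applies directly (this is the paper's route, via \citep[Theorem~1.4.2]{ben2001lectures}). Equivalently, you must show that your nonconvex primal has the same value as its bidual, and that statement is precisely the exactness of the lifted representation, i.e., Proposition~\ref{prop:U:Gelbrich:SDP}. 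Once you route through the lifted SDP, your $\varepsilon$-perturbation of $\hat\Sigma$ also becomes unnecessary, since the substitution $A\leftarrow\hat\Sigma^{\half}A_{22}\hat\Sigma^{\half}$ can be carried out for singular $\hat\Sigma$ as well.
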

	
	\begin{proof}
		By Proposition~\ref{prop:U:Gelbrich:SDP}, which provides a semidefinite representation of the Gelbrich uncertainty set~$\cF$, the support function of~$\cF$ satisfies
		\begin{align*}
			\delta^*_{\cF}(\lambda, \Lambda) 
			&= \left\{
			\begin{array}{cl}
				\sup & \mu^\top \lambda + \Tr(M \Lambda) \\[0.5ex]
				\st & \mu \in \R^d,\; M, U \in \S_+^d,\; C \in \R^{d \times d} \\[0.5ex]
				& \Tr(M - 2 \mu \hat \mu^\top - 2 C) \leq r^2 - \| \hat \mu \|^2 - \Tr(\hat \Sigma) \\[0.5ex]
				& \begin{bmatrix} M - U  & C \\ C^\top & \hat \Sigma \end{bmatrix} \succeq 0 ,\; \begin{bmatrix} U & \mu \\ \mu^\top & 1 \end{bmatrix} \succeq 0.
			\end{array}
			\right.
		\end{align*}
		By conic duality \cite[Theorem~1.4.2]{ben2001lectures}, the maximization problem in the above expression admits the dual minimization problem
		\begin{align*}
			\begin{array}{cl}
				\inf & \gamma \big( r^2 - \| \hat \mu \|^2 - \Tr(\hat \Sigma) \big) + \Tr(\hat \Sigma A_{22}) + \alpha \\ [0.5ex]
				\st & \alpha,\gamma \in \R_+,\; A_{11}, A_{22}, B\in\S^d_+ \\[1ex]
				& \begin{bmatrix} A_{11} & \gamma I_d\\ \gamma I_d& A_{22} \end{bmatrix} \succeq 0 ,\; \begin{bmatrix} B & \gamma \hat \mu + \frac{\lambda}{2} \\ (\gamma \hat \mu + \frac{\lambda}{2})^\top & \alpha \end{bmatrix} \succeq 0 ,\; \gamma I_d- \Lambda \succeq A_{11} \succeq B.
			\end{array}
		\end{align*}
		Strong duality holds because $\alpha = \| 2 \gamma \hat \mu + \lambda \|^2$, $\gamma = \max \{ \lambda_{\max}(\Lambda), 0 \} + 4$, $A_{11} = 2I$, $A_{22} = \gamma^2 I$ and $B = I$ represents a Slater point for the dual problem. At optimality, we have $\gamma I_d- \Lambda = A_{11} = B$. Hence, the dual problem can be further simplified to
		\begin{align*}
			\begin{array}{cl}
				\inf & \gamma \big( r^2 - \| \hat \mu \|^2 - \Tr(\hat \Sigma) \big) + \Tr(\hat \Sigma A_{22}) + \alpha \\ [0.5ex]
				\st & \alpha,\gamma \in \R_+, \; A_{22}\in\S^d_+\\[1ex]
				& \begin{bmatrix} \gamma I_d- \Lambda & \gamma I_d\\ \gamma I_d& A_{22} \end{bmatrix} \succeq 0 ,\; \begin{bmatrix} \gamma I_d- \Lambda & \gamma \hat \mu + \frac{\lambda}{2} \\ (\gamma \hat \mu + \frac{\lambda}{2})^\top & \alpha \end{bmatrix} \succeq 0.
			\end{array}
		\end{align*}
		The substitution $A\leftarrow\hat \Sigma^{\frac{1}{2}} A_{22} \hat \Sigma^{\frac{1}{2}}$ and the equivalence
		\[
		\begin{bmatrix} \gamma I_d- \Lambda & \gamma I_d\\ \gamma I_d& A_{22} \end{bmatrix} \succeq 0 \iff \begin{bmatrix} I_d& 0 \\ 0 & \hat\Sigma^\half \end{bmatrix} \begin{bmatrix} \gamma I_d- \Lambda & \gamma I_d\\ \gamma I_d& A_{22} \end{bmatrix} \begin{bmatrix} I_d& 0 \\ 0 & \hat\Sigma^\half \end{bmatrix} \succeq 0
		\]
		then yield the desired semidefinite program. Thus, the optimal value of this semi\-definite program equals indeed~$\delta^*_\cF(\lambda,\Lambda)$.
	\end{proof}
	
	Armed with Theorem~\ref{thm:duality:Chebyshev} and Lemma~\ref{lem:support:Gelbrich}, we are now prepared to dualize the worst-case expectation problem over a Gelbrich ambiguity set.  
	
	\begin{theorem}[Duality Theory for Gelbrich Ambiguity Sets] \label{thm:duality:Gelbrich}
		If~$\cP$ is the Chebyshev ambiguity set~\eqref{eq:chebyshev-with-moments-in-F} with~$\cF$ representing the Gelbrich uncertainty~\eqref{eq:Gelbrich:F}, then the following weak duality relation holds.
		\begin{align}
			\label{eq:weak-duality-gelbrich}
			\sup_{\P \in \cP} \E_\P \left[ \ell(Z) \right] 
			\leq \left\{\!\!
			\begin{array}{c@{~\,}l}
				\inf & \lambda_0 + \gamma \big( r^2 - \| \hat \mu \|^2 - \Tr(\hat \Sigma) \big) + \Tr(A) + \alpha \\[0.5ex]
				\st & \lambda_0 \in \R, \, \alpha, \gamma \in \R_+, \, \lambda \in \R^d, \, \Lambda \in \S^d, \, A \in \S_+^d \\ [0.5ex]
				& \lambda_0 + \lambda^\top z + z^\top \Lambda z \geq \ell(z) ~~ \forall z \in \cZ \\[0.5ex]
				& \begin{bmatrix} \gamma I_d- \Lambda & \gamma \hat \Sigma^\half \\[1ex] \gamma \hat \Sigma^\half & A \end{bmatrix} \succeq 0, \begin{bmatrix} \gamma I_d- \Lambda & \gamma \hat \mu + \frac{\lambda}{2} \\[1ex] (\gamma \hat \mu + \frac{\lambda}{2})^\top & \alpha \end{bmatrix} \succeq 0.
			\end{array}
			\right.
		\end{align}
		If $\E_\P [ \ell (Z) ]>-\infty$ for all~$\P\in\cP_2(\cZ)$ and~$r>0$, then strong duality holds, that is, the inequality~\eqref{eq:weak-duality-gelbrich} becomes an equality.
	\end{theorem}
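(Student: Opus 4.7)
The plan is to deduce Theorem~\ref{thm:duality:Gelbrich} directly from Theorem~\ref{thm:duality:Chebyshev} and Lemma~\ref{lem:support:Gelbrich}. By construction, the Gelbrich ambiguity set is a Chebyshev ambiguity set with uncertain moments of the form~\eqref{eq:chebyshev-with-moments-in-F}, with the moment uncertainty set taken to be the Gelbrich uncertainty set~$\cF$ from~\eqref{eq:Gelbrich:F}. Applying the weak duality part of Theorem~\ref{thm:duality:Chebyshev} immediately yields the bound
\begin{align*}
\sup_{\P\in\cP}\E_\P[\ell(Z)]\le \inf\left\{\lambda_0+\delta^*_\cF(\lambda,\Lambda):\lambda_0+\lambda^\top z+z^\top\Lambda z\ge \ell(z)~\forall z\in\cZ\right\}.
\end{align*}
Substituting the explicit semidefinite formula for $\delta^*_\cF(\lambda,\Lambda)$ from Lemma~\ref{lem:support:Gelbrich} and merging its inner infimum over $(\alpha,\gamma,A)$ with the outer infimum over $(\lambda_0,\lambda,\Lambda)$ yields the joint semidefinite program on the right hand side of~\eqref{eq:weak-duality-gelbrich}, establishing weak duality.

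For strong duality, the second part of Theorem~\ref{thm:duality:Chebyshev} requires checking that (i)~$\E_\P[\ell(Z)]>-\infty$ on $\cP_2(\cZ)$, which is assumed; (ii)~$\cF$ is convex and compact, which is Proposition~\ref{prop:U:Gelbrich:SDP}; and (iii)~$M\succ\mu\mu^\top$ for every $(\mu,M)\in\rint(\cF)$. The bulk of the work lies in verifying~(iii). To that end, I would use the representation $\cF=f(\cV)$ from the proof of Proposition~\ref{prop:U:Gelbrich:SDP}, where $\cV$ is the Gelbrich ball of radius~$r$ around~$(\hat\mu,\hat\Sigma)$ and $f(\mu,\Sigma)=(\mu,\Sigma+\mu\mu^\top)$. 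Since $f$ extends to a polynomial bijection of $\R^d\times\S^d$ onto itself with polynomial inverse $(\mu,M)\mapsto(\mu,M-\mu\mu^\top)$, it is a homeomorphism of the ambient space, and hence it maps topological interiors to topological interiors, i.e., $\text{int}(\cF)=f(\text{int}(\cV))$.

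The main obstacle is to certify that $\rint(\cF)$ is entirely captured by this topological interior and that it contains only pairs with strictly positive definite covariance. When $r>0$ the set $\cV$ has non-empty interior in $\R^d\times\S^d$: choosing $\epsilon>0$ small enough that $(\hat\mu,\hat\Sigma+\epsilon I)$ sits strictly inside the open Gelbrich ball (possible because the squared Gelbrich distance is continuous on its domain, as noted after the proof of Proposition~\ref{prop:Gelbrich:SDP}) and observing that $\hat\Sigma+\epsilon I\in\S^d_{++}$, a sufficiently small Euclidean ball around this point lies in both $\R^d\times\S^d_+$ and $\cV$. Convexity of $\cV$ then yields $\rint(\cV)=\text{int}(\cV)$, and applying the homeomorphism $f$ gives the analogous equality $\rint(\cF)=\text{int}(\cF)$. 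Any interior point of $\cV\subseteq\R^d\times\S^d_+$ must have $\Sigma\in\text{int}(\S^d_+)=\S^d_{++}$, so $\rint(\cF)=f(\rint(\cV))\subseteq\{(\mu,M):M-\mu\mu^\top\succ 0\}$, which is precisely condition~(iii). Invoking Theorem~\ref{thm:duality:Chebyshev} then upgrades the weak inequality~\eqref{eq:weak-duality-gelbrich} to the claimed equality.
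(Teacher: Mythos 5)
Your proposal is correct and follows the same route as the paper: weak duality from Theorem~\ref{thm:duality:Chebyshev} combined with Lemma~\ref{lem:support:Gelbrich}, and strong duality by verifying that every $(\mu,M)\in\rint(\cF)$ satisfies $M\succ\mu\mu^\top$. The only difference is that you spell out the justification for the relative-interior characterization — via the interior of the Gelbrich ball $\cV$ and the homeomorphism $f(\mu,\Sigma)=(\mu,\Sigma+\mu\mu^\top)$ — which the paper simply asserts from continuity of the Gelbrich distance and $r>0$.
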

	
	\begin{proof}
		Weak duality follows immediately from the first claim of Theorem~\ref{thm:duality:Chebyshev} and Lemma~\ref{lem:support:Gelbrich}. To prove strong duality, recall from Proposition~\ref{prop:U:Gelbrich:SDP} that the Gelbrich uncertainty set~$\cF$ is convex and compact. In addition, recall from the proof of Proposition~\ref{prop:Gelbrich:SDP} that the Gelbrich distance is continuous. As~$r>0$, this implies that
		\begin{align*}
			\rint(\cF) = \left\{(\mu, M) \in \R^d \times \S_+^d~: M \succ \mu \mu^\top,\; \G \left( (\mu, M-\mu\mu^\top), (\hat \mu, \hat \Sigma) \right) < r \right\},
		\end{align*}
		which in turn ensures that $M\succ\mu\mu^\top$ for all $(\mu,M)\in\rint(\cF)$. Therefore, strong duality follows from the second claim of Theorem~\ref{thm:duality:Chebyshev}. 
	\end{proof}
	
	We close this section with some historical remarks. The classical problem of moments asks whether there exists a distribution on~$\cZ$ with a given sequence of moments. In the language of this survey, the problem of moments thus seeks to determine whether a given moment ambiguity set of the form~\eqref{eq:moment-ambiguity-set} is non-empty, where~$f$ is a polynomial and~$\cF$ is a singleton. The analysis of moment problems has a long and distinguished history in mathematics dating back to the 19th century. Notable contributions were made by \citet{tchebichef1874valeurs, markov1884certain, stieltjes1894recherches, hamburger1920moment} and \citet{hausdorff1923moment}; see \citep{shohat1950problem} for an early survey. The study of moment problems with tools from mathematical optimization---in particular semi-infinite duality theory---was pioneered by \citet{isii1960extrema, isii1962sharpness}. \citet{shapiro2001duality} formulates the worst-case expectation problem over a family of distributions with prescribed moments as an infinite-dimensional conic linear program and establishes conditions for strong duality.

	\subsection{$\phi$-Divergence Ambiguity Sets}
	\label{sec:phi:duality}
	Recall from Section~\ref{sec:phi-divergence-amgibuity-sets} that the $\phi$-divergence ambiguity set~\eqref{eq:phi-divergence-ambiguity-set} is defined as
	\begin{align*}
		\cP = \left\{ \P \in \cP(\cZ) \, : \, \D_\phi(\P, \hat \P) \leq r \right\}.
	\end{align*}
	Here, $\cZ$ is a closed support set, $r\geq 0$ is a size parameter, $\phi$ is an entropy function in the sense of Definition~\ref{def:phi}, $\D_\phi$ is the corresponding $\phi$-divergence in the sense of Definition~\ref{def:D_phi}, and~$\hat\P\in\cP(\cZ)$ is a reference distribution. It is expedient to extend~$\D_\phi$ to arbitrary measures. By slight abuse of notation, we thus define the $\phi$-divergence of~$v\in\cM_+(\cZ)$ with respect to~$\hat v \in \cM_+(\cZ)$ as
	\begin{align*}
		\D_\phi(v, \hat v) = \int_{\cZ}
		\phi^\pi \left( \frac{\diff v}{\diff \rho}(z), \frac{\diff \hat v}{\diff \rho}(z) \right) \diff \rho(z), 
	\end{align*}
	where~$\rho\in\cM_+(\cZ)$ is a dominating measure with~$v , \hat v \ll \rho$. An obvious generalization of Proposition~\ref{prop:dual-phi-divergences} implies that $\D_\phi(v,\hat v)$ is convex in~$(v,\hat v)$ and independent of the choice of~$\rho$. By using the extension of~$\D_\phi$ to general measures, the worst-case expectation problem~\eqref{eq:worst-case:expectation} over the ambiguity set~\eqref{eq:phi-divergence-ambiguity-set} can now be recast as
	\begin{align*}
		\sup_{\P \in \cP} \; \E_\P[\ell(Z)] = - h(1, r),
	\end{align*}
	where the auxiliary function $h: \R^2 \to \overline \R$ is defined through
	\begin{align}
		\label{eq:h:phi}
		h(u_0, u) = \inf_{v \in \cM_+ (\cZ)} \left\{ -\int_{\cZ} \ell(z) \, \diff v(z): \int_{\cZ} \diff v(z) = u_0, \; 
		\D_\phi(v, \hat \P) \leq u \right\}.
	\end{align}
	As the objective and constraint functions of the minimization problem in~\eqref{eq:h:phi} are jointly convex in~$v$, $u_0$ and~$u$, Lemma~\ref{lem:param:cvx} implies that~$h$ is convex. Clearly, we have~$\dom(h)\subseteq \R_+^2$. Under mild regularity conditions, one can additionally show that $\{1\} \times \R_{++} \subseteq \rint(\dom(h))$.

	\begin{lemma}[Domain of~$h$]
		\label{lem:domain:h:phi-divergence}
		If $\E_{\hat\P}[\ell(Z)]>-\infty$ and~$\phi$ is continuous at~$1$, then \[\{1\} \times \R_{++} \subseteq \rint(\dom(h)).\]
	\end{lemma}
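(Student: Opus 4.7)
The plan is to exhibit, for each $u > 0$, an explicit open neighborhood of $(1,u)$ contained in $\dom(h)$. Since such a neighborhood lies in the interior of $\dom(h)$, it will in particular lie in $\rint(\dom(h))$, and there will be no need to determine the affine hull of $\dom(h)$. The natural candidate to test feasibility of a point $(u_0',u') \in \R^2$ is the scaled reference measure $v = u_0' \hat\P$, which has total mass $u_0'$ and thus immediately satisfies the first constraint in \eqref{eq:h:phi}.

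The main computation is to evaluate $\D_\phi(u_0'\hat\P,\hat\P)$. Choosing $\hat\P$ itself as the dominating measure, so that $\diff v/\diff\hat\P = u_0'$ and $\diff\hat\P/\diff\hat\P = 1$, the integrand in Definition~\ref{def:D_phi} collapses to $\phi^\pi(u_0',1) = \phi(u_0')$, so $\D_\phi(u_0'\hat\P,\hat\P) = \phi(u_0')$. The divergence constraint then becomes the scalar inequality $\phi(u_0') \le u'$, and the objective integral evaluates to $-u_0'\,\E_{\hat\P}[\ell(Z)]$, which is finite by hypothesis. Hence the feasibility of $(u_0',u')$ in \eqref{eq:h:phi} reduces to $u_0' > 0$ and $\phi(u_0') \le u'$.

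Now I would invoke the continuity of $\phi$ at $1$, combined with $\phi(1)=0$, to produce $\delta > 0$ with $|\phi(u_0')| < u/2$ whenever $|u_0'-1|<\delta$ (shrinking $\delta$ further so that $1-\delta>0$). For any $(u_0',u')$ in the open box $\{|u_0'-1|<\delta\} \times \{|u'-u|<u/2\}$, we then have $u_0'>0$, $u' > u/2 > \phi(u_0')$, so $v=u_0'\hat\P$ is feasible in \eqref{eq:h:phi} and $(u_0',u')\in\dom(h)$. This open box is a neighborhood of $(1,u)$ in $\R^2$, which yields the desired inclusion.

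I do not expect any serious obstacle: the only nontrivial ingredient is the continuity of $\phi$ at $1$, which is given, and the one-point continuity of $\phi$ combined with $\phi(1)=0$ is exactly what upgrades the trivial membership $(1,u)\in\dom(h)$ (via $v=\hat\P$) to membership of a whole neighborhood. It is worth noting in passing that the hypothesis $\E_{\hat\P}[\ell(Z)]>-\infty$ is used only to guarantee that the infimum defining $h$ is strictly less than $+\infty$ for the candidate measure $v=u_0'\hat\P$, i.e., to rule out the adversarial convention that turns ``$\infty-\infty$'' into $+\infty$ in the objective of \eqref{eq:h:phi}.
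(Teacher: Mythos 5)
Your proposal is correct and follows essentially the same route as the paper: test the scaled reference measure $v=u_0'\hat\P$, observe that $\D_\phi(u_0'\hat\P,\hat\P)=\phi^\pi(u_0',1)=\phi(u_0')$, and use $\phi(1)=0$ together with continuity of $\phi$ at $1$ to cover a whole neighborhood of $(1,u)$. Your explicit open box and the closing remark on the role of $\E_{\hat\P}[\ell(Z)]>-\infty$ (ruling out the adversarial $\infty-\infty=+\infty$ convention in the objective) just make explicit what the paper leaves implicit.
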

	
	\begin{proof}
		If~$u_0=1$ and~$u>0$, then~$v=\hat\P$ is feasible in~\eqref{eq:h:phi}. Indeed, $\hat\P$ obeys both constraints, and its objective function value satisfies $\E_{\hat\P}[\ell(Z)]>-\infty$. If we perturb~$u_0$ and~$u$ locally, then~$u_0\hat\P$ satisfies the equality constraint, and the objective function does {\em not} evaluate to~$-\infty$ for all~$u_0\geq 0$. The inequality constraint, on the other hand, is satisfied for all~$u>0$ and all~$u_0$ that are sufficiently close to~$1$ because 
		\[
		\D_\phi(u_0\hat \P,\hat\P)=\phi^\pi(u_0,1)=\phi(u_0)<u.
		\]
		Here, the first equality follows from the definition of~$\D_\phi$ with~$\rho=\hat\P$, the second equality follows from the definition of the perspective function~$\phi^\pi$, and the inequality holds because~$\phi(1)=0$, $u>0$ and~$\phi(u_0)$ is continuous at~$u_0=1$. This confirms that $(1,u)\in\rint(\dom(h))$ for every $u>0$, and thus the claim follows.
	\end{proof}
	
	The following two lemmas are instrumental to derive the bi-conjugate of~$h$. 
	
	\begin{lemma}[Conjugates of Scaled Perspective Functions]
		\label{lem:conjugate-of-perspective}
		If $\phi$ is an entropy function in the sense of Definition~\ref{def:phi}, $t\in\R$, $\beta\in \R_+$ and $\lambda\in\R_{++}$, then we have
		\begin{align*}
			\sup_{\alpha\in\R} t\alpha-\lambda\phi^\pi(\alpha,\beta)=\left\{ \begin{array}{ll}
				\beta \lambda\phi^*(t/\lambda) & \text{if }\beta>0, \\
				\lambda\delta_{\cl(\dom(\phi^*))}(t/\lambda) & \text{if }\beta=0.
			\end{array}\right.
		\end{align*}
	\end{lemma}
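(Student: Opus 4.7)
The plan is to prove the two cases separately, using the definition of the perspective function and the standard identity that relates the recession function of a proper closed convex function to the support function of the domain of its conjugate.

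For the case $\beta>0$, I would simply change variables: set $\alpha=\beta\alpha'$ and use $\phi^\pi(\alpha,\beta)=\beta\phi(\alpha/\beta)$ so that
\[
\sup_{\alpha\in\R}\, t\alpha-\lambda\phi^\pi(\alpha,\beta)=\sup_{\alpha'\in\R}\,\beta\bigl(t\alpha'-\lambda\phi(\alpha')\bigr)=\beta\lambda\sup_{\alpha'\in\R}\Bigl((t/\lambda)\alpha'-\phi(\alpha')\Bigr)=\beta\lambda\,\phi^*(t/\lambda),
\]
where the last equality is the definition of the conjugate. Here it is convenient that $\lambda>0$ so that the rescaling $\lambda\cdot\phi^*(t/\lambda)$ of the conjugate is unambiguous.

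For the case $\beta=0$, we have $\phi^\pi(\alpha,0)=\phi^\infty(\alpha)$ by convention, so the supremum becomes $\sup_{\alpha\in\R}\,t\alpha-\lambda\phi^\infty(\alpha)$, which is exactly the conjugate of the positively homogeneous function $\lambda\phi^\infty$ evaluated at~$t$. The key step is to invoke the classical identity $\phi^\infty=\sigma_{\cl(\dom(\phi^*))}$ valid for any proper closed convex function~$\phi$ (see, e.g., \cite[Theorem~13.3 and Corollary~13.3.1]{rockafellar1970convex}), namely that the recession function of~$\phi$ coincides with the support function of the closure of the domain of~$\phi^*$. Since the support function of a convex set is positively homogeneous, its conjugate is the indicator of the closure of that set; applying this with the positive scalar~$\lambda$ yields
\[
\sup_{\alpha\in\R}\,t\alpha-\lambda\phi^\infty(\alpha)=(\lambda\phi^\infty)^*(t)=\lambda\,\delta_{\cl(\dom(\phi^*))}(t/\lambda),
\]
as claimed (the factor~$\lambda$ in front is cosmetic, since the indicator only takes the values~$0$ and~$+\infty$, but writing it this way unifies the formula with the $\beta>0$ case).

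The main obstacle is the $\beta=0$ case, and specifically a clean justification that $\phi^\infty$ is the support function of $\cl(\dom(\phi^*))$. For a univariate entropy function this identity can also be read off directly from the fact that $\phi^\infty(1)=\sup\dom(\phi^*)$ and $\phi^\infty(-1)=-\inf\dom(\phi^*)$, which makes the argument almost elementary; however, appealing to the cited results from~\cite{rockafellar1970convex} keeps the proof short and transparent. Everything else is a routine calculation.
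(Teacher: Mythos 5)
Your proposal is correct and follows essentially the same route as the paper's proof: a rescaling of the variable for $\beta>0$, and for $\beta=0$ the identification of $\phi^\infty$ with the support function of $\dom(\phi^*)$ via \citep[Theorem~13.3]{rockafellar1970convex} together with the fact that the conjugate of a support function is the indicator of the closure of the set \citep[Theorem~13.2]{rockafellar1970convex}. The only cosmetic difference is your optional elementary remark that for univariate $\phi$ the identity can be read off from $\phi^\infty(\pm 1)$, which the paper does not need.
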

	\begin{proof}
		If~$\beta>0$, then we have
		\begin{align*}
			\sup_{\alpha\in\R} t\alpha-\lambda\phi^\pi(\alpha,\beta)= \sup_{\alpha \in \R} t \alpha - \lambda \beta \phi( \alpha/\beta)  =  \beta \, \sup_{\alpha \in \R} t \alpha - \lambda \phi( \alpha ) = \beta \lambda \phi^* ( t/\lambda ),
		\end{align*}
		where the three equalities follow from the definition of the perspective function~$\phi^\pi$, the substitution $\alpha\leftarrow \alpha/\beta$ and the replacement of $t$ by $\lambda t / \lambda$, respectively. Note that these manipulations are admissible because~$\beta,\lambda>0$. If $\beta = 0$, then we have 
		\begin{align*}
			\sup_{\alpha\in\R} t\alpha-\lambda\phi^\pi(\alpha,\beta)& = \sup_{\alpha \in \R} t \alpha - \lambda \phi^\infty ( \alpha ) \\
			& = \sup_{\alpha \in \R} t \alpha - \lambda \delta^*_{\dom(\phi^*)} ( \alpha ) = \lambda \delta_{\cl(\dom(\phi^*))}( t/\lambda),
		\end{align*}
		where the first equality holds again because of the definition of~$\phi^\pi$, and the second equality exploits \cite[Theorem~13.3]{rockafellar1970convex}. The third equality replaces $t$ with $\lambda t/\lambda$ and exploits the elementary observation that the conjugate of the support function of a convex set coincides with the indicator function of the closure of this set \citep[Theorem~13.2]{rockafellar1970convex}. Thus, the claim follows.
	\end{proof}
	
	\begin{lemma}[Domain of Conjugate Entropy Functions]
		\label{lem:domain-phi*}
		If $\phi$ is an entropy function in the sense of Definition~\ref{def:phi}, then we have
		\[
		\cl(\dom(\phi^*)) = \begin{cases}
			(-\infty, \phi^\infty(1)] & \text{if } \phi^\infty(1)<\infty,\\
			\R & \text{if } \phi^\infty(1)=\infty.
		\end{cases}
		\]
	\end{lemma}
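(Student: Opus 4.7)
The plan is to reduce the lemma to a standard result in convex analysis: for any closed proper convex function $f:\R^n\to\overline\R$, the recession function coincides with the support function of the domain of its conjugate, that is, $f^\infty(y) = \delta^*_{\dom f^*}(y)$ for all $y$. Applying this identity to $\phi$ at the two directions $y=1$ and $y=-1$ will pin down the supremum and infimum of $\dom\phi^*$, after which the result follows by a trivial case distinction.

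First, I would verify that the cited identity applies. By Definition~\ref{def:phi}, $\phi$ is lower semicontinuous, convex, and satisfies $\phi(1)=0$, so it is closed and proper. The conjugate $\phi^*$ is then also closed and proper, so $\dom\phi^*$ is a non-empty convex subset of $\R$, hence an interval. Invoking Rockafellar's recession theorem \citep[Theorem~13.3]{rockafellar1970convex} gives
\begin{align*}
    \phi^\infty(y) \;=\; \delta^*_{\dom\phi^*}(y) \;=\; \sup_{t\in\dom\phi^*}\, ty \qquad \forall y\in\R.
\end{align*}
Evaluating at $y=1$ yields $\phi^\infty(1) = \sup\dom\phi^*$, and evaluating at $y=-1$ yields $\phi^\infty(-1) = -\inf\dom\phi^*$.

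Next, I would compute $\phi^\infty(-1)$ directly from its definition. Since $\phi(s)=+\infty$ for all $s<0$, we have $\phi(1-\alpha)=+\infty$ for every $\alpha>1$, and therefore
\begin{align*}
    \phi^\infty(-1) \;=\; \lim_{\alpha\to\infty} \alpha^{-1}\bigl(\phi(1-\alpha)-\phi(1)\bigr) \;=\; +\infty,
\end{align*}
which forces $\inf\dom\phi^* = -\infty$. Thus $\dom\phi^*$ is an interval that is unbounded below and whose supremum equals $\phi^\infty(1)$.

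To conclude, I would split on whether $\phi^\infty(1)$ is finite. If $\phi^\infty(1)=+\infty$, then $\dom\phi^*$ is unbounded above and below, so it equals $\R$ and its closure is $\R$. If $\phi^\infty(1)<+\infty$, then $\dom\phi^*$ is either $(-\infty,\phi^\infty(1))$ or $(-\infty,\phi^\infty(1)]$, and in both cases its closure is $(-\infty,\phi^\infty(1)]$. There is no real obstacle beyond correctly citing the recession-support identity; once that is in hand, the entire argument is a one-line computation of $\phi^\infty(-1)$ followed by a case distinction.
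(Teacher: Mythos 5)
Your proof is correct and rests on the same key lemma as the paper's, namely \citep[Theorem~13.3]{rockafellar1970convex} identifying $\phi^\infty$ with the support function of $\dom(\phi^*)$. The only cosmetic difference is that you extract $\sup\dom(\phi^*)$ and $\inf\dom(\phi^*)$ by evaluating the support function at $y=\pm1$, whereas the paper takes the conjugate of the support function to obtain the indicator of $\cl(\dom(\phi^*))$ directly; in one dimension these amount to the same calculation.
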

	\begin{proof}
		As $\phi$ is proper, convex and closed, \citep[Theorem~8.5]{rockafellar1970convex} implies that its recession function~$\phi^\infty$ is positive homogeneous. Recall that $\phi(s)=\infty$ for every~$s<0$. We may thus conclude that $\phi^\infty(t)=t\,\phi^\infty(1)$ for $t>0$, $\phi^\infty(t)=0$ for~$t=0$ and $\phi^\infty(t)=\infty$ for $t<0$. In addition, \citep[Theorem~13.3]{rockafellar1970convex} implies that the support function of $\dom(\phi^*)$ coincides with the recession function~$\phi^\infty$. The indicator function of~$\cl(\dom(\phi^*))$ is known to coincide with the conjugate of the support function of~$\dom(\phi^*)$, and therefore it satisfies
		\[
		\delta_{\cl(\dom(\phi^*))} (s) = \sup_{t\in \R_+} \left( s- \phi^\infty(1)\right) t = \left\{\begin{array}{ll} 0 & \text{if } s\leq \phi^\infty(1), \\ \infty & \text{otherwise.}\end{array}\right.
		\]
		This shows that $\cl(\dom(\phi^*)) =(-\infty, \phi^\infty(1)]$ if $\phi^\infty(1)<\infty$ and that $\cl(\dom(\phi^*)) =\R$ otherwise. Hence, the claim follows.
	\end{proof}
	
	\begin{table}[!t]
		\setlength\tabcolsep{3pt}
		\footnotesize
		\centering
		\begin{tabular}{l @{\qquad} l @{\quad} l @{\quad} l}
			\toprule
			Divergence & $\phi(s)~(s\geq 0)$ & $\phi^\infty(1)$ & $\phi^*(t)$ \\ \hline
			Kullback-Leibler& $s \log(s) - s + 1$ & $\infty$ & $\mathrm{e}^t - 1$ \\[1ex]
			Likelihood & $-\log(s) + s - 1$ & $1$ & $-\log(1-t)$ \\[1ex]
			Total variation & $\half|s-1|$ & $\frac{1}{2}$ & $\max\{t, -1/2 \} + \delta_{(-\infty, 1/2]}(t)$ \\[1ex]
			Pearson $\chi^2$ & $(s-1)^2$ & $\infty$ & $(t/2+1)^2_+ - 1 $  \\[1ex]
			Neyman $\chi^2$ & $\frac{1}{s}(s-1)^2$ & $1$ & $2 - 2 \sqrt{1-t}$  \\[1ex]
			Cressie-Read for $\beta \in (0, 1)$ & $\frac{s^\beta - \beta s + \beta - 1}{\beta(\beta-1)}$ & $1$ & $\frac{ [(\beta - 1) t + 1 ]_+^{{\beta}/{(\beta-1)}}}{\beta} $  \\[1ex]
			Cressie-Read for $\beta > 1$ & $\frac{s^\beta - \beta s + \beta - 1}{\beta(\beta-1)}$ & $\infty$ & $\frac{ [(\beta - 1) t + 1 ]_+^{{\beta}/{(\beta-1)}}}{\beta} $  \\[1ex]
			\bottomrule
		\end{tabular}
		\caption{Examples of entropy functions, their asymptotic slopes and their conjugates. Here, for any $c\in\R$, we use the $[c]_+$ as a shorthand for $\max \{ c, 0 \}$.}
		\label{tab:phi-divergences-semiinfinite-reformulation}
	\end{table}
	
	\begin{proposition}[Bi-conjugate of~$h$]
		\label{prop:duality:divergence}
		Assume that $\E_{\hat\P}[\ell(Z)]>-\infty$. Then, the bi-conjugate of~$h$ defined in~\eqref{eq:h:phi} satisfies
		\begin{align*}
			h^{**}(u_0, u) = \left\{ \begin{array}{cl} \displaystyle \sup_{\lambda_0\in\R, \lambda \in \R_+} & \displaystyle -\lambda_0 u_0 - \lambda u - \E_{\hat \P} \left[ (\phi^*)^\pi \left( \ell(Z) - \lambda_0, \lambda \right) \right] \\
				\st & \displaystyle \lambda_0+ \lambda\,\phi^\infty(1) \geq \sup_{z\in\cZ} \ell(z),
			\end{array}\right.
		\end{align*}
		where the product $\lambda\,\phi^\infty(1)$ is assumed to evaluate to~$\infty$ if~$\lambda=0$ and~$\phi^\infty(1) =\infty$. If~$\phi$ is continuous at~$1$, then $h^{**}$ coincides with~$h$ on~$\{1 \} \times \R_{++}$.
	\end{proposition}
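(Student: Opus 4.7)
The plan is to follow the general strategy laid out in Section~\ref{sec:duality-proof-strategy}: compute the conjugate $h^*$ in closed form and then read off $h^{**}$ from the definition $h^{**}(u_0,u)=\sup_{\lambda_0,\lambda}[-\lambda_0 u_0-\lambda u-h^*(-\lambda_0,-\lambda)]$, finally obtaining the identity $h=h^{**}$ on $\{1\}\times\R_{++}$ as an application of the Fenchel--Moreau theorem (Lemma~\ref{lem:bi:coincidence}). Starting from the definition of $h$ in~\eqref{eq:h:phi}, I would substitute $u_0=\int_\cZ \diff v$ and optimize out the slack variable $u$: this shows that $h^*(-\lambda_0,-\lambda)=+\infty$ whenever $\lambda<0$ (since then $u$ may be pushed to $+\infty$), while for $\lambda\geq 0$ one is left with the reduced problem
\[
    h^*(-\lambda_0,-\lambda) \,=\, \sup_{v\in\cM_+(\cZ)} \;\int_\cZ \bigl(\ell(z)-\lambda_0\bigr)\,\diff v(z)\;-\;\lambda\,\D_\phi(v,\hat\P),
\]
with the implicit restriction $\D_\phi(v,\hat\P)<\infty$ in the case $\lambda=0$.

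The heart of the argument is to evaluate this supremum. I would invoke the Lebesgue decomposition $v=g\hat\P+v_s$ with $g\geq 0$ Borel and $v_s\perp\hat\P$, under which $\D_\phi(v,\hat\P)=\int_\cZ \phi(g)\,\diff\hat\P+\phi^\infty(1)\,v_s(\cZ)$, so that the sup factorises into independent suprema over $g$ and over $v_s$. For the absolutely continuous part, Rockafellar's interchange theorem~\cite[Theorem~14.60]{rockafellar2009variational} allows the supremum to be moved inside the $\hat\P$-integral, and Lemma~\ref{lem:conjugate-of-perspective} identifies the resulting pointwise supremum as $(\phi^*)^\pi(\ell(z)-\lambda_0,\lambda)$; this produces the expectation term of the proposition. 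The supremum over the singular part is zero when $\ell(z)\leq\lambda_0+\lambda\phi^\infty(1)$ for every $z$ outside the atoms of $\hat\P$, and equals $+\infty$ otherwise; the same pointwise inequality at atoms of $\hat\P$ is enforced automatically by the finiteness of $\phi^*$ on its effective domain (cf.\ Lemma~\ref{lem:domain-phi*}), because $\hat\P(\{z\})>0$ there causes the absolutely continuous integrand to blow up. Combining these two constraints yields the single condition $\lambda_0+\lambda\phi^\infty(1)\geq \sup_{z\in\cZ}\ell(z)$ stated in the proposition.

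Once $h^*$ is in closed form, the advertised formula for $h^{**}$ follows directly by its definition and a sign-flip relabeling of the conjugate variables. For the coincidence claim on $\{1\}\times\R_{++}$, Lemma~\ref{lem:domain:h:phi-divergence} (which uses the continuity of $\phi$ at $1$ together with the assumption $\E_{\hat\P}[\ell(Z)]>-\infty$) places this half-line inside $\rint(\dom h)$, and Lemma~\ref{lem:bi:coincidence} then delivers $h=h^{**}$ there. The main technical obstacle, I expect, is the careful bookkeeping needed to argue that the constraint must hold for \emph{every} $z\in\cZ$ rather than merely $\hat\P$-almost every $z$: non-atoms are controlled through the singular part (where mass may be placed on arbitrary $\hat\P$-null sets), whereas atoms are controlled through the effective domain of $\phi^*$, and one must verify that these two mechanisms dovetail without leakage. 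The degenerate regimes $\lambda=0$ with $\phi^\infty(1)=\infty$ and $\lambda>0$ with $\phi^\infty(1)=\infty$ additionally require the conventions of the proposition to be matched against the feasibility condition $\D_\phi(v,\hat\P)<\infty$, which in the superlinear regime forces $v\ll\hat\P$ and thereby suppresses the singular contribution altogether.
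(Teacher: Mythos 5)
Your plan follows the same broad three-step strategy as the paper (compute $h^*$, read off $h^{**}$, invoke Lemma~\ref{lem:bi:coincidence} together with Lemma~\ref{lem:domain:h:phi-divergence}), but the heart of your computation of $h^*$ uses the Lebesgue decomposition $v = g\hat\P + v_s$ with $v_s \perp \hat\P$, which is a genuinely cleaner route than the paper's. The paper instead introduces an arbitrary dominating measure $\rho$ with Radon--Nikodym derivatives $\alpha = \diff v/\diff\rho$, $\beta = \diff\hat\P/\diff\rho$, imposes the constraint $\beta = \diff\hat\P/\diff\rho$ $\rho$-a.s., partitions $\cZ$ into $\{\beta>0\}$ and $\{\beta=0\}$, and then has to optimize jointly over $(\rho,\beta)$ at the end. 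Your identity $\D_\phi(v,\hat\P) = \int_\cZ \phi(g)\,\diff\hat\P + \phi^\infty(1)\,v_s(\cZ)$ makes the factorization of the supremum into two decoupled pieces self-evident and avoids dragging $\rho$ through the whole calculation. Both arguments then extract the constraint $\lambda_0 + \lambda\,\phi^\infty(1) \geq \ell(z)$ for \emph{every} $z\in\cZ$ from exactly the same pair of complementary mechanisms: the absolutely continuous integral blows up at atoms of $\hat\P$ outside $\dom(\phi^*)$, while the singular part may be placed at arbitrary non-atoms. So the substance is the same, but your packaging is tighter.

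Where your write-up would need to be made rigorous is the $\lambda = 0$ boundary. Invoking ``conventions'' is not a proof: once $\lambda = 0$, the penalty $-\lambda \D_\phi(v,\hat\P)$ disappears from the objective and the restriction $\D_\phi(v,\hat\P) < \infty$ becomes a hard constraint that cannot be absorbed into $0\cdot\D_\phi$; verifying directly that $h^*(-\lambda_0, 0)$ equals the proposed closed form then requires tracking edge cases such as $\phi(0)=\infty$ (in which case $v=0$ is already infeasible) and boundary points of $\cl(\dom(\phi^*))$ where $\phi^*$ may be finite or infinite. The paper sidesteps all of this: it establishes the formula only for $\lambda>0$ (with the $\lambda<0$ case trivially giving $\infty$ on both sides), then observes that $h^*$ and the proposed right-hand side of its identity are \emph{both} closed convex functions of $(\lambda_0,\lambda)$ --- the latter by Fatou's lemma, using $(\phi^*)^\pi(t,\lambda)\geq t$ and $\E_{\hat\P}[\ell(Z)]>-\infty$ --- and that two closed convex functions agreeing off the hyperplane $\{\lambda=0\}$ must agree everywhere. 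You should either reproduce that closedness step or carry out the $\lambda=0$ bookkeeping explicitly; the former is shorter and more robust.
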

	
	As~$\phi(1)=0$, we have~$\phi^*(\tau) = \sup_{\alpha\in\R} \tau\alpha-\phi(\alpha)\geq\tau$ for all~$\tau\in\R$. This readily implies that $(\phi^*)^\pi(\tau,\lambda)\geq \tau$ for all $\tau, \lambda\in\R$. Hence, $\E_{\hat \P} [ (\phi^*)^\pi( \ell(Z) - \lambda_0, \lambda)]\geq \E_{\hat \P} [\ell(Z)-\lambda_0]$. In addition, $\phi^*$ is non-decreasing because $\dom(\phi)\subseteq \R_+$. Examples of common entropy functions and their conjugates are listed in Table~\ref{tab:phi-divergences-semiinfinite-reformulation}.

	\begin{proof}[Proof of Proposition~\ref{prop:duality:divergence}]
		For any fixed $(\lambda_0,\lambda)\in\R^2$, the conjugate of~$h$ satisfies
		\begin{align*}
			h^*(-\lambda_0, -\lambda) 
			&= \sup_{(u_0, u) \in \R^2} \; -\lambda_0 u_0 - \lambda u - h(u_0, u) \\
			&= \sup_{u \in \R_+, v \in \cM_+(\cZ)} \left\{ -\lambda u + \int_\cZ \left( \ell(z) - \lambda_0 \right) \diff v(z) : \D_\phi(v, \hat\P) \leq u \right\},
		\end{align*}
		where the second equality holds because $\int_\cZ \diff v(z) = u_0$ and~$\D_\phi(v, \hat\P)\geq 0$. As $\E_{\hat\P}[\ell(Z)]>-\infty$, the resulting maximization problem over~$u$ is unbounded whenever~$\lambda < 0$. If~$\lambda > 0$, on the other hand, then we find
		\begin{align*}
			h^*(-\lambda_0, -\lambda) 
			&= \sup_{v \in \cM_+(\cZ)}  \int_\cZ \left( \ell(z) - \lambda_0 \right) \diff v(z) - \lambda \D_\phi(v, \hat\P)\\[0.5ex]
			&= \left\{ \begin{array}{cl} \sup & \displaystyle \int_{\cZ} \left( \ell(z) - \lambda_0 \right) \frac{\diff v}{\diff \rho}(z) - \lambda \phi^\pi\left( \frac{\diff v}{\diff \rho}(z), \frac{\diff \hat \P}{\diff \rho}(z) \right) \diff \rho(z)\\[2ex]
				\st & v, \rho \in \cM_+(\cZ), ~  v\ll\rho,~ \hat\P\ll \rho,
			\end{array}\right.
		\end{align*}
		where the second equality exploits the definition of~$\D_\phi$. Note that~$\frac{\diff v}{\diff \rho}(z)$ and~$\frac{\diff \hat \P}{\diff \rho}(z)$ belong to the space~$\cL_1(\rho)$ of all $\rho$-integrable Borel functions that can be represented as the Radon-Nikodym derivative of some measure in~$\cM_+ (\cZ)$ with respect to~$\rho$. Introducing auxiliary decision variables $\alpha,\beta\in\cL_1(\rho)$ for the Radon-Nikodym derivatives of~$v$ and~$\hat\P$, respectively, and eliminating the measure~$v$ yields
		\begin{align}
			\label{eq:h*-phi-divergnence}
			h^*(-\lambda_0, -\lambda) = \left\{
			\begin{array}{cl}
				\sup & \displaystyle \int_{\cZ} \left( \ell(z) - \lambda_0 \right) \alpha(z) - \lambda \phi^\pi\left( \alpha(z), \beta(z) \right) \diff \rho(z) \\[2ex]
				\st & \rho \in \cM_+(\cZ), ~ \alpha,\beta \in \cL_1(\rho) \\[0.5ex]
				& \displaystyle \frac{\diff \hat \P}{\diff \rho} =\beta~~\rho\text{-a.s.}
			\end{array}
			\right.
		\end{align}
		For any $\rho \in \cM_+(\cZ)$ and $\beta \in \cL_1(\rho)$ with $\beta=\frac{\diff\hat\P}{\diff\rho}$ $\rho$-almost surely, we then find
		\begin{align}
			& \sup_{\alpha \in \cL_1(\rho)} \int_{\cZ} \left( \ell(z) - \lambda_0 \right) \alpha(z) - \lambda \phi^\pi\left( \alpha(z), \beta(z) \right) \diff \rho(z) \nonumber \\
			& = \int_{\cZ} \sup_{\alpha \in \R} \big\{ \left( \ell(z) - \lambda_0 \right) \alpha - \lambda \phi^\pi ( \alpha, \beta(z))\big\} \, \diff \rho(z), \label{eq:magic_equation}
		\end{align}
		where the equality follows from \citep[Theorem~14.60]{rockafellar2009variational}, which applies because the negation of the function in curly brackets in~\eqref{eq:magic_equation} constitutes a normal integrand in the sense of \citep[Definition~14.27]{rockafellar2009variational}. This can be verified by recalling that sums and perspectives of normal integrands are normal integrands \cite[Proposition~14.45 \& Example~14.48]{rockafellar2009variational}. Next, we partition~$\cZ$ into $\cZ_+(\beta) = \{ z \in \cZ: \beta(z) > 0 \}$ and $\cZ_0(\beta) = \{ z \in \cZ: \beta(z) = 0 \}$. By Lemma~\ref{lem:conjugate-of-perspective}, the integral~\eqref{eq:magic_equation} equals
		\begin{align*}
			& \int_{\cZ_+(\beta)} \lambda \phi^* \left( \frac{\ell(z) - \lambda_0}{\lambda} \right) \beta(z) \, \diff \rho(z) + \int_{\cZ_0(\beta)} \lambda \delta_{\cl(\dom(\phi^*))}\left( \frac{\ell(z) - \lambda_0}{\lambda} \right) \diff \rho(z).
		\end{align*}
		As $\beta=\frac{\diff\hat\P}{\diff\rho}$ $\rho$-almost surely, and as $\hat\P(Z\in\cZ_+(\beta))=1$, the first of these integrals simply reduces to an expectation with respect to the reference distribution and is thus independent of~$\beta$. The second integral still depends on~$\beta$ through the integration domain~$\cZ_0(\beta)$. Thus, partially maximizing over~$\alpha$ allows us to recast~\eqref{eq:h*-phi-divergnence} as 
		\begin{align*}
			& h^*(-\lambda_0, -\lambda) =  \E_{\hat \P} \left[ \lambda \phi^* \left( \frac{\ell(Z) - \lambda_0}{\lambda} \right) \right] \\ & \qquad + \sup_{\substack{\rho \in \cM_+(\cZ), \\ \beta\in\cL_1(\rho)}} \left\{\int_{\cZ_0(\beta)} \lambda \delta_{\cl(\dom(\phi^*))}\left( \frac{\ell(z) - \lambda_0}{\lambda} \right) \diff \rho(z): \frac{\diff \hat \P}{\diff \rho} =\beta~\rho\text{-a.s.} \right\}. 
		\end{align*}
		If there exists~$z_0\in\cZ$ with $(\ell(z_0)-\lambda_0)/\lambda\not\in\cl(\dom(\phi^*))$, then~$h^*(-\lambda_0, -\lambda)=\infty$. To see this, assume first that~$z_0$ is an atom of~$\hat\P$. In this case, the expectation in the first line evaluates to~$\infty$. If~$z_0$ is {\em not} an atom of~$\hat\P$, then the supremum in the second line evaluates to~$\infty$ because we may set $\rho=\hat\P+\delta_{z_0}$ and define $\beta\in\cL_1(\rho)$ through $\beta(z)=1$ if $z\neq z_0$ and $\beta(z_0)=0$. Hence, we may conclude that
		\begin{align*}
			h^*(-\lambda_0, -\lambda) 
			= \begin{cases}
				\E_{\hat \P} \left[ \lambda \phi^* \left( \frac{\ell(Z) - \lambda_0}{\lambda} \right) \right] & \text{if} ~ \frac{\ell(z) - \lambda_0}{\lambda} \in \cl(\dom (\phi^*)) ~ \forall z \in \cZ, \\
				\infty & \text{otherwise.}
			\end{cases}
		\end{align*}
		Note that this formula was derived under the assumption that~$\lambda> 0$. Note also that, by Lemma~\ref{lem:domain-phi*}, the condition $(\ell(z) - \lambda_0)/\lambda \in \cl(\dom (\phi^*))$ is equivalent to the requirement that $ \lambda_0+ \lambda\,\phi^\infty(1)$ is larger than or equal to $\sup_{z\in\cZ} \ell(z)$. We claim that
		\begin{align}
			& h^*(-\lambda_0, -\lambda) \nonumber \\
			&= \begin{cases}
				\E_{\hat \P} \left[ (\phi^*)^\pi \left( \ell(Z) - \lambda_0, \lambda \right) \right] & \text{if } \lambda\geq 0\text{ and } \displaystyle \lambda_0+ \lambda\,\phi^\infty(1) \geq \sup_{z\in\cZ} \ell(z), \\
				\infty & \text{otherwise,}
			\end{cases}
			\label{eq:general-h*-phi-divergence}
		\end{align}
		for all~$\lambda_0,\lambda\in\R$. Indeed, the above reasoning and the definition of the perspective function $(\phi^*)^\pi$ ensure that~\eqref{eq:general-h*-phi-divergence} holds whenever $\lambda\neq 0$. Note that $h^*$ is convex and closed thanks to \citep[Theorem~12.2]{rockafellar1970convex}. The expression on the right hand side of~\eqref{eq:general-h*-phi-divergence} is also convex and closed in~$(\lambda_0,\lambda)$. In particular, it is lower semicontinuous thanks to Fatou's lemma, which applies because $\phi(1)=0$ such that $(\phi^*)^\pi(t,\lambda)\geq t$ for all~$t\in\R$ and~$\lambda\in\R_+$ and because $\E_{\hat\P}[\ell(Z)]>-\infty$. Observe also that $(\phi^*)^\pi$ is proper, closed and convex thanks to \citep[page~35, page~67 \& Theorem~13.3]{rockafellar1970convex}. Hence, \eqref{eq:general-h*-phi-divergence} must indeed hold for all $\lambda_0,\lambda\in\R$. 
		
		Given~\eqref{eq:general-h*-phi-divergence}, we finally obtain
		\begin{align*}
			h^{**}(u_0, u)
			&= \sup_{\lambda_0, \lambda \in \R} ~ -\lambda_0 u_0 - \lambda u - h^*(-\lambda_0, -\lambda) \\
			&= \left\{ \begin{array}{cl} \displaystyle \sup_{\lambda_0\in\R, \lambda \in \R_+} & \displaystyle -\lambda_0 u_0 - \lambda u - \E_{\hat \P} \left[ (\phi^*)^\pi \left( \ell(Z) - \lambda_0, \lambda \right) \right] \\
				\st & \displaystyle \lambda_0+ \lambda\,\phi^\infty(1) \geq \sup_{z\in\cZ} \ell(z),
			\end{array}\right.
		\end{align*}
		which establishes the desired formula for the bi-conjugate of~$h$. It remains to be shown that if $\phi$ is continuous at~$1$, then $h(1,u)=h^{**}(1,u)$ for all~$u\in\R_{++}$. However, this follows immediately from Lemmas~\ref{lem:bi:coincidence} and~\ref{lem:domain:h:phi-divergence}.
	\end{proof}
	
	The following main theorem uses Proposition~\ref{prop:duality:divergence} to dualize the worst-case expectation problem~\eqref{eq:worst-case:expectation} with a $\phi$-divergence ambiguity set. 
	
	\begin{theorem}[Duality Theory for $\phi$-Divergence Ambiguity Sets]
		\label{thm:duality:phi}
		Assume that $\E_{\hat\P}[\ell(Z)]>-\infty$. If~$\cP$ is the $\phi$-divergence ambiguity set~\eqref{eq:phi-divergence-ambiguity-set}, then the following weak duality relation holds.
		\begin{align}
			\label{eq:weak-duality-phi-divergence}
			\sup_{\P \in \cP} \; \E_\P[\ell(Z)] \leq \left\{ \begin{array}{cl} \displaystyle \inf_{\lambda_0\in\R, \lambda \in \R_+} & \displaystyle \lambda_0  + \lambda r + \E_{\hat \P} \left[ (\phi^*)^\pi \left( \ell(Z) - \lambda_0, \lambda \right) \right] \\
				\st & \displaystyle  \lambda_0+ \lambda\,\phi^\infty(1) \geq \sup_{z\in\cZ} \ell(z)
			\end{array}\right.
		\end{align}
		Here, the product $\lambda\,\phi^\infty(1)$ is assumed to evaluate to~$\infty$ if~$\lambda=0$ and~$\phi^\infty(1) =\infty$. If additinally~$r>0$ and~$\phi$ is continuous at~$1$, then strong duality holds, that is, the inequality~\eqref{eq:weak-duality-phi-divergence} collapses to an equality.
	\end{theorem}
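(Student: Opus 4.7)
The plan is to follow the template established in the proof of Theorem~\ref{thm:duality:moment}, which is to reduce the worst-case expectation problem to the evaluation of the auxiliary function~$h$ from equation~\eqref{eq:h:phi} at a single point and then compare~$h$ with its bi-conjugate~$h^{**}$ using the Fenchel--Moreau theorem (Lemma~\ref{lem:bi:coincidence}). The heavy lifting has already been carried out in Proposition~\ref{prop:duality:divergence}, so the argument reduces to packaging these ingredients together.

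First, I would observe that by the definition of~$h$ in~\eqref{eq:h:phi} and of the $\phi$-divergence ambiguity set~\eqref{eq:phi-divergence-ambiguity-set}, restricting the inner infimum in~\eqref{eq:h:phi} to probability measures (that is, measures with total mass one) exactly recovers the feasible set of the worst-case expectation problem. Therefore, we obtain the identity
\begin{align*}
    \sup_{\P \in \cP} \E_\P[\ell(Z)] = - h(1, r).
\end{align*}
Lemma~\ref{lem:bi:coincidence} then gives $h(1,r) \geq h^{**}(1,r)$, which translates into
\begin{align*}
    \sup_{\P \in \cP} \E_\P[\ell(Z)] \leq -h^{**}(1,r).
\end{align*}
Substituting the explicit expression for $h^{**}$ from Proposition~\ref{prop:duality:divergence} and converting the outer supremum into an infimum by negating the objective yields precisely the right hand side of~\eqref{eq:weak-duality-phi-divergence}. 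This establishes the weak duality relation.

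For strong duality, I would invoke the second statement of Proposition~\ref{prop:duality:divergence}, which ensures that $h(u_0, u) = h^{**}(u_0, u)$ on $\{1\} \times \R_{++}$ provided that $\phi$ is continuous at~$1$. Under the additional assumption $r > 0$, the point~$(1, r)$ belongs to this set, so $h(1,r) = h^{**}(1,r)$ and the inequality in~\eqref{eq:weak-duality-phi-divergence} collapses to an equality. The continuity hypothesis on~$\phi$ feeds into Proposition~\ref{prop:duality:divergence} via Lemma~\ref{lem:domain:h:phi-divergence}, which places~$(1,r)$ in $\rint(\dom(h))$ where the Fenchel--Moreau theorem guarantees coincidence of $h$ and $h^{**}$.

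There is no real technical obstacle left at this stage, since Proposition~\ref{prop:duality:divergence} already absorbs all of the nontrivial analysis (the perspective manipulation via Lemma~\ref{lem:conjugate-of-perspective}, the identification of $\cl(\dom(\phi^*))$ via Lemma~\ref{lem:domain-phi*}, and the interchange of supremum and integral via the normal integrand machinery). The one point I would be careful about is the convention $\lambda \phi^\infty(1) = \infty$ when $\lambda = 0$ and $\phi^\infty(1) = \infty$: this ensures that when $\phi^\infty(1) = \infty$ the support constraint on~$\ell$ disappears as soon as $\lambda > 0$, but if $\lambda = 0$ the constraint degenerates into a pointwise upper bound on~$\ell$ by~$\lambda_0$, matching the behavior of the perspective $(\phi^*)^\pi(\cdot, 0) = (\phi^*)^\infty(\cdot)$ in the objective. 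Verifying this consistency is essentially a bookkeeping exercise.
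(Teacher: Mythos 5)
Your proposal is correct and follows essentially the same route as the paper's own proof: both reduce the worst-case expectation to $-h(1,r)$, apply Lemma~\ref{lem:bi:coincidence} to get $-h(1,r)\leq -h^{**}(1,r)$, and then read off weak and strong duality from the two claims of Proposition~\ref{prop:duality:divergence}. Your added remarks on where the continuity of~$\phi$ at~$1$ enters (via Lemma~\ref{lem:domain:h:phi-divergence}) and on the extended-arithmetic convention for $\lambda\,\phi^\infty(1)$ are accurate but not needed beyond what the cited proposition already supplies.
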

	
	\begin{proof}
		Recall first that
		\[
		\sup_{\P \in \cP} \E_\P[\ell(Z)] = - h(1, r)\leq -h^{**}(1,r),
		\]
		where the inequality holds because of Lemma~\ref{lem:bi:coincidence}. Weak duality thus follows from the first claim in Proposition~\ref{prop:duality:divergence}. If~$\phi$ is additionally continuous at~$1$, and if~$r>0$, then strong duality follows from the second claim in Proposition~\ref{prop:duality:divergence}.
	\end{proof}
	
	Recall now that the {\em restricted} $\phi$-divergence ambiguity set~\eqref{eq:restricted-phi-divergence-ambiguity-set} is defined as
	\begin{align*}
		\cP = \left\{ \P \in \cP(\cZ) \, : \, \P \ll \hat \P, \; \D_\phi(\P, \hat \P) \leq r \right\}.
	\end{align*}
	That is, $\cP$ contains all distributions from within the (unrestricted) $\phi$-divergence ambiguity set~\eqref{eq:phi-divergence-ambiguity-set} that are absolutely continuous with respect to~$\hat\P$. The worst-case expected loss over~$\cP$ can again be expressed as~$-h(1,r)$, where~$h(u_0,u)$ is now defined as the infimum of the optimization problem~\eqref{eq:h:phi} with the additional constraint~$v\ll\hat \P$. One readily verifies that~$h$ remains convex and that $\{1\}\times\R_{++}$ is still contained in~$\rint(\dom(h))$ despite this restriction. Indeed, the proof of Lemma~\ref{lem:domain:h:phi-divergence} remains valid almost verbatim.

	\begin{theorem}[Duality Theory for Restricted $\phi$-Divergence Ambiguity Sets]
		\label{thm:duality:restricted:phi}
		Assume that $\E_{\hat\P}[\ell(Z)]>-\infty$. If~$\cP$ is the restricted $\phi$-divergence ambiguity set~\eqref{eq:restricted-phi-divergence-ambiguity-set}, then the following weak duality relation holds.
		\begin{align}
			\label{eq:weak-duality-restricted-phi-divergence}
			\sup_{\P \in \cP} \; \E_\P[\ell(Z)] \leq \inf_{\lambda_0\in\R, \lambda \in \R_+} \lambda_0  + \lambda r + \E_{\hat \P} \left[ (\phi^*)^\pi \left( \ell(Z) - \lambda_0, \lambda \right) \right] 
		\end{align}
		If additionally~$r>0$ and~$\phi$ is continuous at~$1$, then strong duality holds, that is, the inequality~\eqref{eq:weak-duality-restricted-phi-divergence} collapses to an equality.
	\end{theorem}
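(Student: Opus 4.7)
My plan is to mirror the proof of Theorem~\ref{thm:duality:phi} step by step, exploiting the simplification that the additional constraint $\P\ll\hat\P$ allows us to dispense with a free dominating measure and to work directly with~$\hat\P$ as the reference. The restriction has the effect of collapsing the set~$\cZ_0(\beta)$ that appeared in the proof of Proposition~\ref{prop:duality:divergence}, and this is precisely why the constraint $\lambda_0+\lambda\,\phi^\infty(1)\geq \sup_{z\in\cZ}\ell(z)$ disappears in the present theorem.

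First, I would parametrize the problem analogously to~\eqref{eq:h:phi} by defining
\[
h(u_0,u)=\inf_{v\in\cM_+(\cZ),\,v\ll\hat\P}\Big\{-\!\int_{\cZ}\ell(z)\,\diff v(z):\int_{\cZ}\diff v(z)=u_0,\ \D_\phi(v,\hat\P)\leq u\Big\},
\]
so that $\sup_{\P\in\cP}\E_\P[\ell(Z)]=-h(1,r)$. Convexity of~$h$ follows from Lemma~\ref{lem:param:cvx}, and the excerpt already notes that the proof of Lemma~\ref{lem:domain:h:phi-divergence} carries over verbatim to yield $\{1\}\times\R_{++}\subseteq\rint(\dom(h))$. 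Next I would compute $h^*(-\lambda_0,-\lambda)$. Writing $v=\alpha\hat\P$ with $\alpha=\tfrac{\diff v}{\diff\hat\P}\in\cL^1(\hat\P)_+$, the constraint $v\ll\hat\P$ lets me take $\rho=\hat\P$, so $\beta\equiv 1$ and the partition $\cZ=\cZ_+(\beta)\cup\cZ_0(\beta)$ degenerates to $\cZ_+(\beta)=\cZ$. An application of \cite[Theorem~14.60]{rockafellar2009variational} to interchange supremum and integral, followed by Lemma~\ref{lem:conjugate-of-perspective} with $\beta=1$, gives for $\lambda>0$
\[
h^*(-\lambda_0,-\lambda)=\E_{\hat\P}\!\left[\lambda\,\phi^*\!\big((\ell(Z)-\lambda_0)/\lambda\big)\right]=\E_{\hat\P}\!\left[(\phi^*)^\pi(\ell(Z)-\lambda_0,\lambda)\right],
\]
and $h^*(-\lambda_0,-\lambda)=+\infty$ for $\lambda<0$ since the objective is unbounded above in~$u$. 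The $\lambda=0$ case is handled by the standard closure argument as in the proof of Proposition~\ref{prop:duality:divergence}: both sides are proper closed convex functions of $(\lambda_0,\lambda)$ (using Fatou's lemma and the bound $(\phi^*)^\pi(t,\lambda)\geq t$ coming from $\phi(1)=0$), and they agree on $\{\lambda>0\}$, so they must agree for $\lambda=0$ as well.

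Taking the bi-conjugate then yields
\[
h^{**}(u_0,u)=\sup_{\lambda_0\in\R,\,\lambda\in\R_+}-\lambda_0 u_0-\lambda u-\E_{\hat\P}\!\left[(\phi^*)^\pi(\ell(Z)-\lambda_0,\lambda)\right],
\]
with no side constraint on $\lambda_0+\lambda\phi^\infty(1)$, since the integrand is automatically handled $\hat\P$-almost surely rather than pointwise on~$\cZ$. Weak duality~\eqref{eq:weak-duality-restricted-phi-divergence} then follows from $-h(1,r)\leq -h^{**}(1,r)$ via Lemma~\ref{lem:bi:coincidence}. For strong duality under $r>0$ and continuity of $\phi$ at~$1$, I would invoke Lemma~\ref{lem:bi:coincidence} together with $(1,r)\in\rint(\dom(h))$ to conclude $h(1,r)=h^{**}(1,r)$.

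The main obstacle I anticipate is the $\lambda=0$ boundary case in the formula for $h^*$: a naive computation suggests one needs $\ell(z)\leq\lambda_0$ $\hat\P$-almost surely, and I must verify that this is precisely what $(\phi^*)^\pi(\ell(Z)-\lambda_0,0)=(\phi^*)^\infty(\ell(Z)-\lambda_0)$ encodes when integrated against~$\hat\P$, so that no extra pointwise constraint on~$\cZ$ survives. Closedness of $h^*$ together with the elementary bound on $(\phi^*)^\pi$ from $\phi(1)=0$ should suffice, exactly as in the unrestricted proof; the rest is bookkeeping. Beyond that, the argument is a direct transcription of the proof of Theorem~\ref{thm:duality:phi}.
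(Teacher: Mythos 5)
Your proposal follows exactly the route the paper takes: the paper's own proof is the two-line remark that one should repeat the argument for Proposition~\ref{prop:duality:divergence} with the additional constraint $v\ll\hat\P$, observing that this lets one fix the dominating measure $\rho=\hat\P$, so $\beta\equiv 1$, the set $\cZ_0(\beta)$ is empty, and the $\sup_{z\in\cZ}\ell(z)$ constraint disappears; weak and strong duality then follow as in Theorem~\ref{thm:duality:phi}. You have filled in the details the paper leaves implicit (the $\lambda=0$ boundary treatment via closedness of $h^*$, the $\hat\P$-a.s.\ rather than pointwise interpretation of the recession term), and your handling of those details is sound.
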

	
	Note that if~$(\lambda_0,\lambda)$ is feasible in~\eqref{eq:weak-duality-restricted-phi-divergence}, then $(\ell(Z)-\lambda_0,\lambda)$ belongs $\hat\P$-almost surely to~$\dom((\phi^*)^\pi)$. Otherwise, its objective function value equals~$\infty$. In view of Lemma~\ref{lem:domain-phi*}, this implies that $\lambda_0+\lambda\,\phi^\infty(1)\geq \esssup_{\hat\P}[\ell(Z)]$. In contrast, if~$(\lambda_0,\lambda)$ is feasible in~\eqref{eq:weak-duality-phi-divergence}, then it satisfies the constraint $\lambda_0+ \lambda\,\phi^\infty(1) \geq \sup_{z\in\cZ} \ell(z)$, which is more restrictive unless $\phi^\infty(1)=\infty$. Hence, the dual problem in~\eqref{eq:weak-duality-restricted-phi-divergence} has a (weakly) larger feasible set and a (weakly) smaller infimum than the dual problem in~\eqref{eq:weak-duality-phi-divergence}. This is perhaps unsurprising because~\eqref{eq:weak-duality-restricted-phi-divergence} corresponds to the worst-case expectation problem over the restricted $\phi$-divergence ambiguity set, which is (weakly) smaller than the corresponding {\em un}restricted $\phi$-divergence ambiguity set. Note also that the solution of a worst-case expectation problem over an unrestricted $\phi$-divergence ambiguity set depends on~$\cZ$ and not just on the support of~$\hat\P$.

	\begin{proof}[Proof of Theorem~\ref{thm:duality:restricted:phi}]
		If $h(u_0,u)$ is defined as the infimum of the optimization problem~\eqref{eq:h:phi} with the additional constraint~$v\ll\hat \P$, then one can show that
		\begin{align*}
			h^{**}(u_0, u) = \sup_{\lambda_0, \lambda \in \R} -\lambda_0 u_0 - \lambda u - \E_{\hat \P} \left[ (\phi^*)^\pi \left( \ell(Z) - \lambda_0, \lambda \right) \right].
		\end{align*}
		Indeed, one can proceed as in the proof of Proposition~\ref{prop:duality:divergence}. However, the reasoning simplifies significantly because the additional constraint~$v\ll\hat\P$ allows us to set the dominating measure~$\rho$ in the definition of~$\D_\phi$ to~$\hat\P$. Thus, the Radon-Nikodym derivative~$\beta=\diff\hat\P/\diff\rho$ is $\hat\P$-almost surely equal to~$1$. This in turn implies that the calculation of~$h^*$ requires no case distinction, that is, the set~$\cZ_0(\beta)$ is empty. 
		
		Given the bi-conjugate of~$h$, both weak and strong duality can then be established exactly as in the proof of Theorem~\ref{thm:duality:phi}. Details are omitted for brevity.
	\end{proof}
	
	\citet[Proposition~5]{van2021data} establish a strong duality result for worst-case expectations over likelihood ambiguity sets as introduced in Section~\ref{sec:likelihood-ambiguity-sets}. Theorem~\ref{thm:duality:phi} extends this result to general $\phi$-divergence ambiguity sets with a significantly shorter proof that only uses tools from convex analysis. \citet{ben2013robust} establish a strong duality result akin to Theorem~\ref{thm:duality:restricted:phi} for restricted $\phi$-divergence ambiguity sets under the assumption that the reference distribution~$\hat \P$ is discrete. \citet{shapiro2017distributionally} extends this result to general reference distributions by using tools from infinite-dimensional analysis.
	In contrast, our proof of Theorem~\ref{thm:duality:restricted:phi} establishes the same duality result using finite-dimensional convex analysis. 
	

	\subsection{Optimal Transport Ambiguity Sets}
	\label{sec:optimal:transport-duality}
	Recall from Section~\ref{sec:optimal:transport} that the optimal transport ambiguity set~\eqref{eq:OT-ambiguity-set} is defined~as
	\begin{align*}
		\cP = \left\{ \P \in \cP(\cZ) \, : \, \OT_c(\P, \hat \P) \leq r \right\}.
	\end{align*}
	Here, $\cZ$ is a closed support set, $r\geq 0$ is a size parameter, $c$ is a transportation cost function in the sense of Definition~\ref{def:cost}, $\OT_c$ is the corresponding optimal transport discrepancy in the sense of Definition~\ref{def:OT}, and~$\hat\P\in\cP(\cZ)$ is a reference distribution. In analogy to Section~\ref{sec:phi:duality}, the worst-case expectation problem~\eqref{eq:worst-case:expectation} over the ambiguity set~\eqref{eq:OT-ambiguity-set} can now be reformulated as 
	\begin{align*}
		\sup_{\P \in \cP} \; \E_\P[\ell(Z)] = - h(r),
	\end{align*}
	where the auxiliary function $h: \R \to \overline \R$ is defined through
	\begin{align}
		\label{eq:h:OT}
		h(u) = \inf_{\P \in \cP(\cZ)} \left\{ - \E_\P [\ell(Z)] \, : \, 
		\OT_c(\P, \hat \P) \leq u \right\}.
	\end{align}
	As the objective and constraint functions of the minimization problem in~\eqref{eq:h:OT} are jointly convex in~$\P$ and~$u$, Lemma~\ref{lem:param:cvx} implies that~$h$ is convex. Recall also that~$c$ is non-negative and satisfies~$c(z,z)=0$ for all~$z\in\cZ$. If~$\E_{\hat \P}[\ell(\hat Z)]>-\infty$, it is therefore easy to show that~$\dom(h)=\R_+$.
	
	The following lemma will be instrumental for deriving the bi-conjugate of~$h$. Recall that~$\Gamma(\P, \hat{\P})$ denotes the set of all couplings of~$\P$ and~$\hat \P$; see Definition~\ref{def:OT}. 
	
	\begin{lemma}[Interchangeability Principle]
		\label{lem:interchangeability}
		If~$c$ is a transportation cost function, $\ell$ is upper semicontinuous and~$\lambda~\geq 0$, then we have
		\begin{align*}
			\sup_{\P \in \cP(\cZ)} \sup_{\gamma \in \Gamma(\P, \hat\P)} \E_{\gamma} \left[\ell(Z) -\lambda c(Z, \hat Z) \right] 
			= \E_{\hat\P} \left[ \sup_{z \in \cZ} \; \ell(z) - \lambda c(z,\hat Z) \right].
		\end{align*}
	\end{lemma}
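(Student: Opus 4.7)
The plan is to establish the identity by proving the two inequalities separately. Observe first that the double supremum on the left hand side can be reorganized: since $\P$ is uniquely determined as the first marginal of any $\gamma\in\Gamma(\P,\hat\P)$, the double supremum equals the single supremum over all couplings $\gamma\in\cP(\cZ\times\cZ)$ whose second marginal equals $\hat\P$.

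For the easy direction ``$\leq$,'' I would fix an arbitrary admissible $\gamma$ and use the pointwise bound
\begin{align*}
\ell(z)-\lambda c(z,\hat z)\;\leq\;\sup_{z'\in\cZ}\,\bigl(\ell(z')-\lambda c(z',\hat z)\bigr) \;=:\;\varphi(\hat z) \qquad \forall (z,\hat z)\in\cZ\times\cZ.
\end{align*}
Integrating both sides against $\gamma$ and then invoking the marginal constraint $\gamma_2=\hat\P$ (so that $\int \varphi(\hat z)\,\diff\gamma(z,\hat z)=\E_{\hat\P}[\varphi(\hat Z)]$) yields $\E_{\gamma}[\ell(Z)-\lambda c(Z,\hat Z)]\leq\E_{\hat\P}[\varphi(\hat Z)]$, and taking the supremum over $\gamma$ proves the upper bound.

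For the reverse direction ``$\geq$,'' the strategy is to construct, for each $\varepsilon>0$, a measurable selection $z^\star_\varepsilon:\cZ\to\cZ$ that nearly attains $\varphi(\hat z)$ pointwise, and then build a coupling concentrated on the graph of $z^\star_\varepsilon$. Concretely, define $\P_\varepsilon := \hat\P\circ (z^\star_\varepsilon)^{-1}$ and let $\gamma_\varepsilon$ be the pushforward of $\hat\P$ under $\hat z\mapsto(z^\star_\varepsilon(\hat z),\hat z)$; then $\gamma_\varepsilon\in\Gamma(\P_\varepsilon,\hat\P)$ and
\begin{align*}
\E_{\gamma_\varepsilon}[\ell(Z)-\lambda c(Z,\hat Z)]\;=\;\E_{\hat\P}\bigl[\ell(z^\star_\varepsilon(\hat Z))-\lambda c(z^\star_\varepsilon(\hat Z),\hat Z)\bigr]\;\geq\;\E_{\hat\P}[\varphi(\hat Z)]-\varepsilon,
\end{align*}
provided the selection satisfies $\ell(z^\star_\varepsilon(\hat z))-\lambda c(z^\star_\varepsilon(\hat z),\hat z)\geq \varphi(\hat z)-\varepsilon$ on $\{\varphi<\infty\}$ and $\ell(z^\star_\varepsilon(\hat z))-\lambda c(z^\star_\varepsilon(\hat z),\hat z)\geq 1/\varepsilon$ on $\{\varphi=\infty\}$. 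Letting $\varepsilon\downarrow 0$ then gives the lower bound.

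The main obstacle is the measurable selection step, which requires showing both that $\varphi$ is Borel measurable and that an $\varepsilon$-approximate measurable maximizer $z^\star_\varepsilon$ exists. Since $\ell$ is upper semicontinuous and $c$ is lower semicontinuous with $\lambda\geq 0$, the function $F(\hat z,z):=\ell(z)-\lambda c(z,\hat z)$ is jointly Borel measurable and upper semicontinuous in $z$ for each fixed $\hat z$, so $-F$ is a normal integrand in the sense of \citep[Definition~14.27]{rockafellar2009variational}. Applying \citep[Theorem~14.60]{rockafellar2009variational} to $-F$ (or, equivalently, the measurable maximum theorem) supplies the required measurability of $\varphi$ and the existence of the Borel selection $z^\star_\varepsilon$, completing the argument. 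A minor technicality to verify is that if $\E_{\hat\P}[\varphi(\hat Z)]=+\infty$, the construction on $\{\varphi=\infty\}$ drives the left hand side to $+\infty$ as well, so the equality holds in the extended real sense.
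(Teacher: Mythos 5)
Your proposal is correct and follows essentially the same route as the paper: the upper bound via the pointwise inequality integrated against an arbitrary coupling, and the lower bound via a (near-optimal) Borel selection $\hat z\mapsto z^\star_\varepsilon(\hat z)$ whose graph pushforward yields an admissible coupling, with measurability secured through the normal-integrand machinery of \citep[Theorem~14.60]{rockafellar2009variational}. The only cosmetic difference is that the paper invokes that theorem to interchange the supremum and the integral directly (packaging the $\varepsilon$-selection step you spell out explicitly), which is the same argument in condensed form.
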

	
	One can show that Lemma~\ref{lem:interchangeability} remains valid, for example, if~$\cZ$ is a Polish (separable metric) space equipped with its Borel $\sigma$-algebra and even if~$c$ and~$\ell$ fail to be lower and upper semicontinuous, respectively \citep[Proposition~1]{zhang2022simple}. 
	
	\begin{proof}[Proof of Lemma~\ref{lem:interchangeability}] 
		Define $L:\cZ\to\overline\R$ through $L(\hat z) = \sup_{z \in \cZ} \ell(z) - \lambda c(z, \hat z)$. If~$\lambda=1$, then~$L$ reduces to the $c$-transform of~$\ell$ defined in~\eqref{eq:c-transofrm-of-f}. Note first that $\lambda c(z, \hat z) - \ell(z)$ is lower semicontinuous in~$(z,\hat z)$ and thus constitutes a normal integrand thanks to \citep[Example~14.31]{rockafellar2009variational}. This implies via \citep[Theorem 14.37]{rockafellar2009variational} that $L(\hat z)$ is Borel-measurable. 
		
		Observe next that, by the definition of~$L$, we have $\ell(z) - \lambda c(z, \hat z) \leq L(\hat z)$ for all~$z, \hat z \in \cZ$. This inequality persists if we integrate both sides with respect to any coupling~$\gamma \in \Gamma(\P, \hat\P)$ for any distribution~$\P\in\cP(\cZ)$, and therefore we obtain
		\begin{align*}
			\sup_{\P \in \cP(\cZ)} \sup_{\gamma \in \Gamma(\P, \hat\P)} \E_{\gamma} \left[\ell(Z) -\lambda c(Z, \hat Z) \right] \leq \E_{\hat\P} \left[ L(\hat Z) \right].
		\end{align*}
		It remains to prove the reverse inequality. To this end, observe that
		\begin{align*}
			\E_{\hat\P} \left[ L(\hat Z) \right] = \E_{\hat\P} \left[ \sup_{z \in \cZ} \ell(z) - \lambda c(z, \hat Z) \right] &= \sup_{f \in \cF} \E_{\hat\P} \left[ \ell(f(\hat Z)) - \lambda c(f(\hat Z), \hat Z) \right] \\
			&\leq \sup_{\P \in \cP(\cZ)} \sup_{\gamma \in \Gamma(\P, \hat\P)} \E_{\gamma} \left[\ell(Z) -\lambda c(Z, \hat Z) \right],
		\end{align*}
		where $\cF$ denotes the family of all Borel functions $f:\cZ\to\cZ$. The second equality follows from \citep[Theorem~14.60]{rockafellar2009variational}, which applies because $\lambda c(z, \hat z) - \ell(z)$ is a normal integrand. Note that the joint distribution of~$f(\hat Z)$ and~$\hat Z$ under~$\hat\P$ coincides with the pushforward distribution $\gamma=\hat\P\circ g^{-1}$, where $g:\cZ\to \cZ\times\cZ$ is defined through $g(\hat z)=(f(\hat z), \hat z)$. By construction, we have $\gamma\in\Gamma(\hat\P\circ f^{-1}, \hat \P)$. The inequality in the above expression therefore holds because~$\hat\P\circ f^{-1}\in\cP(\cZ)$. This observation completes the proof. 
	\end{proof}
	
	\begin{proposition}[Bi-conjugate of~$h$]
		\label{prop:duality:ot}
		Assume that $\E_{\hat\P}[\ell(\hat Z)]>-\infty$ and that~$\ell$ is upper semicontinuous. Then, the bi-conjugate of~$h$ defined in~\eqref{eq:h:OT} satisfies
		\begin{align*}
			h^{**}(u) = \sup_{\lambda \geq 0} ~ - \lambda r - \E_{\hat \P} \left[\sup_{z \in \cZ} \ell(z) - \lambda c(z, \hat Z) \right].
		\end{align*}
		In addition, $h^{**}$ coincides with~$h$ on~$\R_{++}$.
	\end{proposition}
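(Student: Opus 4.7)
The plan is to follow the blueprint laid out in Section~\ref{sec:duality-proof-strategy}: compute the Fenchel conjugate $h^{*}$ in closed form, take a second conjugate to obtain $h^{**}$, and then invoke Lemma~\ref{lem:bi:coincidence} to conclude equality with $h$ on the relative interior of $\dom(h)$.

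First, I would pin down the effective domain of $h$. Because $c$ is non-negative and vanishes on the diagonal, the coupling $\gamma=(\mathrm{id},\mathrm{id})_{\sharp}\hat\P$ witnesses $\OT_c(\hat\P,\hat\P)=0$, so $\hat\P$ is feasible in~\eqref{eq:h:OT} for every $u\geq 0$. Combined with the hypothesis $\E_{\hat\P}[\ell(\hat Z)]>-\infty$, this yields $\dom(h)=\R_+$ and $h$ is finite-valued and non-increasing on $\R_+$. Next, for $\lambda<0$, I would send $u\to\infty$ in $-\lambda u-h(u)$ and use the upper bound $h(u)\leq -\E_{\hat\P}[\ell(\hat Z)]$ to conclude $h^{*}(-\lambda)=+\infty$, so the conjugate is only interesting on $\lambda\geq 0$.

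For $\lambda\geq 0$, I would unfold $h$ inside the definition of $h^{*}$ and observe that the monotonicity of both the objective $-\lambda u$ and of the constraint $\OT_c(\P,\hat\P)\leq u$ in $u$ permits setting $u=\OT_c(\P,\hat\P)$ without loss, giving
\[
h^{*}(-\lambda)=\sup_{\P\in\cP(\cZ)}\,\E_{\P}[\ell(Z)]-\lambda\,\OT_c(\P,\hat\P).
\]
Because $\lambda\geq 0$, the factor $-\lambda$ may be pulled inside the infimum defining $\OT_c$, converting it into a supremum over couplings; this step turns the problem into a joint supremum over $\P\in\cP(\cZ)$ and $\gamma\in\Gamma(\P,\hat\P)$ of $\E_{\gamma}[\ell(Z)-\lambda c(Z,\hat Z)]$. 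At this point the interchangeability principle of Lemma~\ref{lem:interchangeability}, which applies because $\ell$ is upper semicontinuous and $c$ is a transportation cost function, collapses the double supremum to
\[
h^{*}(-\lambda)=\E_{\hat\P}\!\left[\sup_{z\in\cZ}\,\ell(z)-\lambda c(z,\hat Z)\right].
\]
Conjugating once more and recalling that $h^{*}(-\lambda)=+\infty$ for $\lambda<0$ produces the claimed formula for $h^{**}$.

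For the second assertion, I would note that $\dom(h)=\R_+$ gives $\rint(\dom(h))=\R_{++}$, so Lemma~\ref{lem:bi:coincidence} yields $h=h^{**}$ pointwise on $\R_{++}$. The only delicate point I foresee is the passage from $-\lambda\,\OT_c(\P,\hat\P)$ to $\sup_{\gamma}-\lambda\E_{\gamma}[c(Z,\hat Z)]$ when the optimal transport problem need not attain its infimum in a way compatible with the outer supremum over $\P$; this is precisely where Lemma~\ref{lem:interchangeability} does the heavy lifting, because it bypasses the need to argue about optimal couplings by directly evaluating the joint supremum as an integral of a pointwise supremum. A secondary subtlety is the measurability of the integrand $\hat z\mapsto \sup_{z\in\cZ}\ell(z)-\lambda c(z,\hat z)$, but upper semicontinuity of this function in $\hat z$ (as a pointwise supremum of upper semicontinuous functions) ensures Borel measurability, so the expectation in the final formula is well defined.
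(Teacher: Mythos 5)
Your proposal is correct and follows essentially the same route as the paper's proof: identify $\dom(h)=\R_+$, compute $h^*(-\lambda)$ by saturating the constraint $\OT_c(\P,\hat\P)\leq u$, absorb $-\lambda$ into the coupling infimum, collapse the double supremum via Lemma~\ref{lem:interchangeability}, conjugate once more, and invoke Lemma~\ref{lem:bi:coincidence} on $\rint(\dom(h))=\R_{++}$. The only difference is that the paper derives the conjugate formula for $\lambda>0$ and then extends it to $\lambda=0$ by a separate closedness/Fatou argument, whereas you treat $\lambda\geq 0$ uniformly; the boundary value $\lambda=0$ is slightly delicate (because of $0\cdot\OT_c(\P,\hat\P)$ when the transport cost is infinite) but does not affect the final supremum defining $h^{**}$, so your argument stands.
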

	
	
	\begin{proof}
		For any fixed~$\lambda\in\R$, the conjugate of~$h$ satisfies
		\begin{align*}
			h^*(-\lambda) 
			&= \sup_{u \in \R} \; - \lambda u - h(u) \\
			&= \sup_{u \in \R_+, \P \in \cP(\cZ)}  \left\{ -\lambda u + \E_{\P} [\ell(Z)] : \OT_c(\P, \hat\P) \leq u \right\},
		\end{align*}
		where the second equality holds because $\OT_c(\P, \hat\P)\geq 0$. As $\E_{\hat\P}[\ell(\hat Z)]>-\infty$, the resulting maximization problem is unbounded if~$\lambda < 0$. If~$\lambda > 0$, then we find
		\begin{align}
			h^*(-\lambda) 
			&= \sup_{\P \in \cP(\cZ)} \; \E_{\P} [\ell(Z)] -\lambda \OT_c(\P, \hat\P) \nonumber \\
			&= \sup_{\P \in \cP(\cZ)} \; \sup_{\gamma \in \Gamma(\P, \hat\P)} \; \E_{\P} [\ell(Z)] -\lambda \E_{\gamma} [c(Z, \hat Z)] \nonumber \\  
			&= \sup_{\P \in \cP(\cZ)} \sup_{\gamma \in \Gamma(\P, \hat\P)} \E_{\gamma} \left[\ell(Z) -\lambda c(Z, \hat Z)
			\right] \nonumber \\
			&= \E_{\hat\P} \left[ \sup_{z \in \cZ} \; \ell(z) - \lambda c(z,\hat Z) \right],
			\label{eq:general-h*-OT}
		\end{align}
		where the second equality follows from Definition~\ref{def:OT}, the third equality holds because the marginal distribution of~$Z$ under~$\gamma$ is given by~$\P$, and the fourth equality exploits Lemma~\ref{lem:interchangeability}. The above reasoning implies that $h^*(-\lambda)$ coincides with~\eqref{eq:general-h*-OT} for all~$\lambda> 0$. However, this formula remains valid at~$\lambda=0$. To see this, note that $h^*$ is convex and closed thanks to \citep[Theorem~12.2]{rockafellar1970convex}. The last expectation in~\eqref{eq:general-h*-OT} is also convex and closed in~$\lambda$ thanks to Fatou's lemma, which applies because $\sup_{z \in \cZ} \ell(z) - \lambda c(z,\hat z)$ is larger than or equal to~$\ell(\hat z)$ and lower semicontinuous in~$\lambda$ for every~$\hat z\in\cZ$ and because $\E_{\hat\P}[\ell(\hat Z)]>-\infty$. Hence, the last expectation in~\eqref{eq:general-h*-OT} is indeed convex and lower-semicontinuous in~$\lambda$, and thus it coincides indeed with $h^*(-\lambda)$ for all~$\lambda\in\R_+$.

		
		Given~\eqref{eq:general-h*-OT}, we finally obtain the following formula for the bi-conjugate of~$h$.
		\begin{align*}
			h^{**}(u)
			&= \sup_{\lambda \geq 0} ~ -\lambda u - h^*(-\lambda) = \sup_{\lambda \geq 0} ~ - \lambda u - \E_{\hat \P} \left[\sup_{z \in \cZ} \ell(z) - \lambda c(z, \hat Z) \right]
		\end{align*}
		Here, the first equality holds because~$h^*(-\lambda)=\infty$ whenever~$\lambda<0$. The second equality follows from~\eqref{eq:general-h*-OT}, which holds for any~$\lambda\geq 0$. This establishes the desired formula for~$h^{**}$. Lemma~\ref{lem:bi:coincidence} and our earlier observation that~$\dom(h)=\R_+$ finlly imply that $h(u)=h^{**}(u)$ for all~$u\in\R_{++}$.
	\end{proof}

	The following main theorem uses Proposition~\ref{prop:duality:ot} to dualize the worst-case expectation problem~\eqref{eq:worst-case:expectation} with an optimal transport ambiguity set. 
	
	\begin{theorem}[Duality Theory for Optimal Transport Ambiguity Sets]
		\label{thm:duality:OT}
		Assume that $\E_{\hat \P}[\ell(\hat Z)]>-\infty$ and $\ell$ is upper semicontinuous. If~$\cP$ is the optimal transport ambiguity set defined in~\eqref{eq:OT-ambiguity-set}, then the following weak duality relation holds.
		\begin{align}
			\label{eq:weak-duality-ot}
			\sup_{\P \in \cP} \; \E_\P[\ell(Z)] 
			\leq \inf_{\lambda \in \R_+} \; \lambda r + \E_{\hat\P} \left[ \sup_{z \in \cZ} \; \ell(z) - \lambda c(z,\hat Z) \right].
		\end{align}
		If~$r > 0$, then strong duality holds, that is, \eqref{eq:weak-duality-ot} collapses to an equality.
	\end{theorem}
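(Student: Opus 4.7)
The proof will essentially be a short distillation of the machinery already assembled. The plan is to reduce everything to the auxiliary function $h$ defined in~\eqref{eq:h:OT}, for which Proposition~\ref{prop:duality:ot} has already done the heavy lifting.

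First, I would observe that by the very definition of $h$, the worst-case expectation admits the representation
\[
    \sup_{\P \in \cP} \E_\P[\ell(Z)] = -h(r),
\]
since $\cP$ is precisely the sublevel set $\{\P \in \cP(\cZ) : \OT_c(\P,\hat\P) \leq r\}$. Next, to establish the weak duality bound~\eqref{eq:weak-duality-ot}, I would invoke Lemma~\ref{lem:bi:coincidence}, which yields $h \geq h^{**}$ everywhere and thus $-h(r) \leq -h^{**}(r)$. Substituting the explicit formula for $h^{**}$ from Proposition~\ref{prop:duality:ot} then gives
\[
    -h^{**}(r) = \inf_{\lambda \geq 0} \; \lambda r + \E_{\hat\P}\!\left[\,\sup_{z \in \cZ} \ell(z) - \lambda c(z,\hat Z)\right],
\]
which is exactly the right-hand side of~\eqref{eq:weak-duality-ot}. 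This proves weak duality.

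For strong duality under the additional assumption $r > 0$, I would appeal to the second assertion of Proposition~\ref{prop:duality:ot}, which states that $h$ and $h^{**}$ agree on $\R_{++}$. Since $r \in \R_{++}$, we obtain $h(r) = h^{**}(r)$, and the weak duality inequality derived above collapses to an equality. This completes the proof.

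There is no real obstacle here, because the two pieces of genuine analytic content -- the interchangeability step (Lemma~\ref{lem:interchangeability}) needed to compute $h^*$ in closed form, and the coincidence of $h$ with its biconjugate on the relative interior of its domain -- are both already packaged in Proposition~\ref{prop:duality:ot}. The only point to be slightly careful about is that $\dom(h) = \R_+$ (a fact tacitly used when asserting $r \in \rint(\dom(h))$), which follows immediately from $c(z,z) = 0$ and the assumption $\E_{\hat\P}[\ell(\hat Z)] > -\infty$: the Dirac-like coupling $\gamma = (\mathrm{id},\mathrm{id})_\# \hat\P$ certifies $\OT_c(\hat\P,\hat\P) = 0$, hence $\hat\P$ is feasible for every $u \geq 0$ and $h(u) \leq -\E_{\hat\P}[\ell(\hat Z)] < \infty$.
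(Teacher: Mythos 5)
Your proof is correct and matches the paper's argument step for step: both reduce the claim to $-h(r) \leq -h^{**}(r)$ via Lemma~\ref{lem:bi:coincidence} and then invoke the two parts of Proposition~\ref{prop:duality:ot} for the formula for $h^{**}$ and the coincidence $h = h^{**}$ on $\R_{++}$, respectively. Your closing remark on $\dom(h) = \R_+$ is a worthwhile sanity check but is not strictly needed in the theorem's proof, since Proposition~\ref{prop:duality:ot} already establishes $h = h^{**}$ on $\R_{++}$ directly.
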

	
	\begin{proof}
		Recall first that
		\[
		\sup_{\P \in \cP} \E_\P[\ell(Z)] = - h(r)\leq -h^{**}(r),
		\]
		where the inequality holds because of Lemma~\ref{lem:bi:coincidence}. Weak duality thus follows from the first claim in Proposition~\ref{prop:duality:ot}. If~$r>0$, then strong duality follows from the second claim in Proposition~\ref{prop:duality:ot}. This concludes the proof.
	\end{proof}
	
	\citet{mohajerin2018data} and \citet{zhao2018data} use semi-infinite duality theory to prove Theorem~\ref{thm:duality:OT} in the special case when~$\OT_c$ is the $1$-Wasserstein distance and when the reference distribution~$\hat\P$ is discrete. \citet{blanchet2019quantifying} and \citet{gao2016distributionally} prove a generalization of Theorem~\ref{thm:duality:OT} by leveraging a Fenchel duality theorem in Banach spaces and by devising a constructive argument, respectively. They both allow for arbitrary optimal transport discrepancies as well as arbitrary reference distributions on Polish spaces. The proof shown here, which exploits the interchangeability principle of Lemma~\ref{lem:interchangeability} and elementary tools from convex analysis, is due to \citet{zhang2022simple}.

	\section{Duality Theory for Worst-Case Risk Problems}
	\label{sec:duality-wc-risk}
	The standard DRO problem~\eqref{eq:primal:dro} assumes that the decision-maker is risk-neutral and ambiguity-averse. Risk-neutrality means that if the distribution of~$Z$ is known, then decisions are ranked by their {\em expected} loss. Ambiguity-aversion means that if the distribution of~$Z$ is ambiguous, then expectations are evaluated under a distribution in the ambiguity set~$\cP$ that is {\em most detrimental} to the decision-maker.
	
	If low-probability events have a disproportionate negative impact on the decision-maker, then it is {\em in}appropriate to use the expected loss as a decision criterion even if the distribution of~$Z$ is known. Instead, it is expedient to rank decisions by the {\em risk} of their loss with respect to a law-invariant risk measure. A law-invariant risk measure~$\varrho$ assigns each (univariate) loss distribution in~$\cP(\R)$ a riskiness index. If the loss is representable as~$\ell(Z)$, where $\ell:\R^d\to\R$ is a Borel function and~$Z$ is a $d$-dimensional random vector with probability distribution~$\P$, then the distribution of the loss~$\ell(Z)$ is given by the pushforward distribution $\P\circ\ell^{-1}$. Throughout this paper, we use~$\varrho_\P[\ell(Z)]$ to denote the risk~$\varrho(\P\circ\ell^{-1})$ of such a loss distribution. These conventions are formalized in the following definition. Here and in the remainder we use $\cL(\R^d)$ to denote the family of all Borel functions $\ell:\R^d\to\R$.
	
	\begin{definition}[Law-Invariant Risk Measure]
		\label{def:law-invariant-risk-measure}
		A law-invariant risk measure is a function $\varrho:\cP(\R)\to \overline{\R}$. We use~$\varrho_\P[\ell(Z)]$ to denote~$\varrho(\P\circ\ell^{-1})$ for any Borel function $\ell\in\cL(\R^d)$, Borel distribution~$\P\in\cP(\R^d)$ and dimension~$d\in\N$.
	\end{definition}
	
	
	A law-invariant risk measure~$\varrho$ has the property that if~$\P_1\circ\ell_1^{-1} = \P_2\circ\ell_2^{-1}$ for two different Borel functions~$\ell_1$ and~$\ell_2$ and two different distributions~$\P_1$ and~$\P_2$ on~$\R^{d_1}$ and~$\R^{d_2}$, respectively, then $\varrho_{\P_1}[\ell_1(Z_1)] = \varrho_{\P_2}[\ell_2(Z_2)]$. In fact, this property is the very reason for why~$\varrho$ is called `law-invariant.'
	
	The notation~$\varrho_\P[\ell(Z)]$ is consistent with our usual conventions for the expected value~$\E_\P[\ell(Z)]$, which is a special instance of a law-invariant risk measure. Also, it makes the dependence of the risk on~$\P$ explicit, which is necessary when~$\P$ is ambiguous. We stress that, in contrast to most of the literature on risk measures, our definition of a law-invariant risk measure~$\varrho$ is {\em not} tied to a particular probability space. A prime example of a law-invariant risk measure is the value-at-risk.
	
	\begin{definition}[Value-at-Risk]
		\label{def:var}
		The value-at-risk (VaR) at level~$\beta\in(0,1)$ of an uncertain loss~$\ell(Z)$ with $\ell\in\cL(\R^d)$ and $Z\sim\P\in\cP(\R^d)$ is given by
		\begin{align}
			\label{eq:var}
			\beta\VaR_{\P} [\ell(Z)] = \inf \left\{ \tau \in \R : \P(\ell(Z) \leq \tau) \geq 1-\beta \right\}.
		\end{align}
	\end{definition}
	The VaR is indeed law-invariant because $\P(\ell(Z) \leq \tau)=F(\tau)$ depends on~$\ell$ and~$\P$ only indirectly through the cumulative distribution function~$F$ associated with the pushfoward distribution $\P\circ\ell^{-1}$. Note that the infimum in~\eqref{eq:var} is attained because~$F$ is non-decreasing and right-continuous. By construction, the VaR at level~$\beta$ represents the smallest number~$\tau^\star$ that weakly exceeds the loss with probability~$1-\beta$. Thus, it coincides with the leftmost $(1-\beta)$-quantile of the loss distribution~$F$. For later reference we remark that the $\beta$-VaR can be reformulated~as
	\begin{align}
		\label{eq:var-refomulation}
		\beta\VaR_{\P} [\ell(Z)] = \inf \left\{ \tau \in \R : \P(\ell(Z) \geq \tau) \leq \beta \right\}.
	\end{align}
	However, the infimum in~\eqref{eq:var-refomulation} may {\em not} be attained. Note that the VaR is well-defined and finite for {\em any} loss function~$\ell\in\cL(\R^d)$ and for {\em any} distribution~$\P\in\cP(\R^d)$. Nonetheless, other law-invariant risk measures are finite only for certain sub-classes of loss functions and distributions. In the remainder of this paper we will often study risk measures that display some or all of the following structural properties.
	

	\begin{definition}[Properties of Risk Measures] \label{def:invariance-properties}
		\hspace{-2mm} A law-invariant risk measure~$\varrho$ is 
		\begin{enumerate}[label=(\roman*)]
			\item {\em translation-invariant} if 
			\[
			\varrho_\P[\ell(Z) + c] = \varrho_\P[\ell(Z)] + c\quad \forall \ell\in\cL(\R^d),\; \forall c \in \R,\;\forall \P\in\cP(\R^d);
			\]
			\item {\em scale-invariant} if 
			\[
			\varrho_\P[c\ell(Z)] = c\varrho_\P[\ell(Z)]\quad \forall \ell\in\cL(\R^d),\; \forall c \in \R_+,\; \forall \P\in\cP(\R^d);
			\]
			\item {\em monotone} if 
			\begin{align*}
				& \varrho_\P[\ell_1(Z)] \leq \varrho_\P[\ell_2(Z)]\\
				& \hspace{1cm}\forall \ell_1,\ell_2\in\cL(\R^d)\text{ with }\ell_1(Z)\leq \ell_2(Z)~\P\text{-a.s.},\; \forall \P\in\cP(\R^d);
			\end{align*}
			\item {\em convex} if 
			\begin{align*}
				&\varrho_\P[\theta\ell_1(Z)+(1-\theta)\ell_2(Z)] \leq \theta\varrho_\P[\ell_1(Z)] +(1-\theta)\varrho_\P[\ell_2(Z)]\\
				& \hspace{3cm} \forall \ell_1,\ell_2\in\cL(\R^d),\; \forall \theta\in[0,1],\; \forall \P\in\cP(\R^d).
			\end{align*}
		\end{enumerate}
	\end{definition}
	A {\em coherent} risk measure is translation-invariant, scale-invariant, monotone as well as convex \citep{artzner1999coherent}. In addition, a {\em convex} risk measure is translation-invariant, monotone and convex (but not necessarily scale-invariant).

	
	
	Any law-invariant risk measure~$\varrho$ gives rise to a risk-averse DRO problem
	\begin{align}
		\label{eq:dro-risk}
		\inf_{x \in \cX} \; \sup_{\P \in \cP} \; \varrho_\P \left[ \ell (x, Z) \right].
	\end{align}
	This problem seeks a decision~$x$ that minimizes the worst-case risk of the random loss~$\ell(x,Z)$ with respect to all distributions of~$Z$ in the ambiguity set~$\cP$. Below we will show that the duality theory for worst-case expectation problems developed in Section~\ref{sec:duality-wc-expectation} has ramifications for a broad class of worst-case risk problems of the~form
	\begin{align}
		\label{eq:worst-case:risk}
		\sup_{\P \in \cP} \varrho_{\P}[\ell(Z)].
	\end{align}
	Here, we suppress as usual the dependence of the loss function on~$x$ to avoid clutter. 

	\subsection{Optimized Certainty Equivalents}
	We now describe a class of law-invariant risk measures for which the {\em risk-averse} DRO problem~\eqref{eq:dro-risk} can be converted to an equivalent {\em risk-neutral} DRO problem of the form~\eqref{eq:primal:dro}. This will show that many risk-averse DRO problems are susceptible to methods developed for risk-neutral problems. The risk measures studied in this section are induced by disutility functions in the sense of the following definition.
	
	\begin{definition}[Disutility Function]
		\label{def:generating}
		A disutility function $g : \R \to \R$ is a convex (and therefore continuous) function with $g(0) = 0$ and $g(\tau) > \tau$ for all $\tau \neq 0$.
	\end{definition}
	
	\citet{ben1986expected} use disutility functions to construct a class of law-invariant risk measures, which they term optimized certainty equivalents. Recall that if the objective function of a minimization (maximization) problem can be expressed as the difference of two terms, both of which evaluate to~$\infty$ ({\em e.g.}, the positive and negative parts of an integral), then it should be interpreted as~$\infty$ ($-\infty$). 
	
	\begin{definition}[Optimized Certainty Equivalent]
		\label{def:regular}
		The optimized certainty equivalent induced by the disutility function~$g$ is the law-invariant risk measure~$\varrho$~with
		\begin{align}
			\label{eq:regular}
			\varrho_{\P} [\ell(Z)] = \inf_{\tau \in \R} \; \tau + \E_\P \left[ g(\ell(Z) - \tau) \right].
		\end{align}
	\end{definition}
	
	The expected disutility $\E_\P [ g(\ell(Z))]$ represents a deterministic present loss that the decision-maker considers to be equally (un)desirable as the random future loss~$\ell(Z)$. If it is possible to shift a deterministic portion~$\tau$ of the loss~$\ell(Z)$ to the present, then the decision-maker will solve the minimization problem in~\eqref{eq:regular} in order to strike an optimal trade-off between present and future losses. Hence, it is natural to interpret $\varrho_{\P} [\ell(Z)]$ as an `optimized certainty equivalent.' 
	
	There is also an intimate relation between optimized certainty equivalents and a class of~$\phi$-divergences. To see this, let~$\phi$ be an entropy function in the sense of Definition~\ref{def:phi} with $\phi^\infty(1)=\infty$. Assume also that~$\phi$ is twice continuously differentiable on a neighborhood of~$1$ with $\phi'(1)=0$ and $\phi''(1)>0$. Under these conditions, $\phi^*$ constitutes a disutility function in the sense of Definition~\ref{def:generating}. Indeed, $\phi^*$ is real-valued because~$\phi^\infty(1)=\infty$ and satisfies $\phi^*(t)\geq t$ for all $t\in\R$ because $\phi(1)=0$. Finally, we have $\phi^*(0)=0$ because $\phi'(1)=0$ and $\phi^*(t)>t$ for all $t\neq 0$ because $\phi''(t)>0$. If $\E_{\hat\P}[\ell(Z)]>-\infty$, then the optimized certainty equivalent induced by the disutility function~$g=\phi^*$ satisfies
	\begin{align}
		\label{eq:oce-vs-phi}
		\inf_{\lambda_0 \in \R} \lambda_0 + \E_{\hat \P}[\phi^*(\ell(Z) - \lambda_0)] = \sup_{\P\in\cP(\cZ)} \E_\P[\ell(Z)] - \D_\phi(\P, \hat \P)
	\end{align}
	and thus coincides with the optimal value of a penalty-based distributionally robust optimization model with a $\phi$-divergence penalty. The equality in the above expression follows from \citet[Theorem~4.2]{ben2007old}, which is reminiscent of the strong duality theorem for worst-case expectation problems over restricted $\phi$-divergence ambiguity sets (see Theorem~\ref{thm:duality:restricted:phi}). The assumption that $\phi^\infty(1)=\infty$ ensures indeed that~$\D_\phi(\P, \hat \P)$ is finite only if~$\P \ll \hat \P$. We also remark that if~$g$ is a disutility function in the sense of Definition~\ref{def:generating} and if~$g$ is non-decreasing, then $g^*$ constitutes an entropy function in the sense of Definition~\ref{def:phi}.

	We will see below that the optimized certainty equivalents encapsulate several widely used risk measures as special cases. Notable examples include the mean-variance risk measure, the mean-median risk measure, the conditional value-at-risk or the entropic risk measure. More generally, \citet{rockafellar2006generalized,rockafellar2008risk} show that virtually any regular risk measure admits a representation of the form~\eqref{eq:regular} provided that the expected disutility is replaced with a more general measure of regret; see also \citep{rockafellar2014random,rockafellar2015measures} and the survey papers \citep{rockafellar2013superquantiles,royset2022risk}.

	\begin{definition}[Mean-Variance Risk Measure]
		The mean-variance risk measure with risk-aversion coefficient $\beta \in (0, \infty)$ is the law-invariant risk measure~$\varrho$ with 
		\[
		\varrho_\P[\ell(Z)] = \E_\P[\ell(Z)] + \beta \cdot \V_\P [\ell(Z)],
		\]
		where $\V_\P [\ell(Z)]$ denotes the variance of $\ell(Z)$ under~$\P$.
	\end{definition}
	
	We call a function $f:\R\to\overline\R$ coercive if $\lim_{i\to\infty} f(\tau_i)=\infty$ for every sequence $\{\tau_i\}_{i\in\N}$ with $\lim_{i\to\infty}|\tau_i|=\infty$. Coercivity will play a key role in re-expressing worst-case optimized certainty equivalents in terms of worst-case expectations.

	\begin{proposition}[Mean-Variance Risk Measure]
		\label{prop:mean-variance-risk}
		The mean-variance risk measure~$\varrho$ with risk-aversion coefficient~$\beta\in(0,\infty)$ is the optimized certainty equivalent induced by the disutility function $g(\tau) = \tau + \beta \tau^2$. The objective function of problem~\eqref{eq:regular} is coercive in~$\tau$ and is uniquely minimized by $\tau^\star = \E_\P[\ell(Z)]$.
	\end{proposition}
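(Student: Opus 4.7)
The plan is to verify the two assertions in turn: first, that $g(\tau) = \tau + \beta \tau^2$ is a valid disutility function whose induced optimized certainty equivalent reproduces the mean-variance risk measure; and second, that the objective in~\eqref{eq:regular} is coercive with unique minimizer $\tau^\star = \E_\P[\ell(Z)]$. The three conditions of Definition~\ref{def:generating} are immediate: $g$ is convex because $g''(\tau) = 2\beta > 0$, we have $g(0) = 0$ by inspection, and $g(\tau) - \tau = \beta \tau^2 > 0$ for every $\tau \neq 0$. Hence $g$ qualifies as a disutility function and induces a bona fide optimized certainty equivalent.

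The key algebraic step is a cancellation argument. Substituting $g$ into~\eqref{eq:regular} and using the linearity of the expectation yields
\begin{align*}
\tau + \E_\P [g(\ell(Z) - \tau)] & = \tau + \E_\P [\ell(Z) - \tau] + \beta \, \E_\P[(\ell(Z) - \tau)^2]\\
& = \E_\P[\ell(Z)] + \beta \, \E_\P[(\ell(Z) - \tau)^2],
\end{align*}
so the linear $\tau$-terms cancel and the $\tau$-dependence enters only through the quadratic term. If $\E_\P[\ell(Z)^2] < \infty$, then $\mu := \E_\P[\ell(Z)]$ is finite, and the standard identity $\E_\P[(\ell(Z)-\tau)^2] = \V_\P[\ell(Z)] + (\mu-\tau)^2$ turns the objective into $\mu + \beta \, \V_\P[\ell(Z)] + \beta(\mu - \tau)^2$. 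This is a strictly convex quadratic in~$\tau$ with unique minimizer $\tau^\star = \mu = \E_\P[\ell(Z)]$ and minimum value $\E_\P[\ell(Z)] + \beta \, \V_\P[\ell(Z)]$, which matches the definition of the mean-variance risk measure. Coercivity is immediate from the dominating $\beta(\mu-\tau)^2$ term as $|\tau| \to \infty$.

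The only subtlety — which I would treat as a short side remark rather than the main thrust of the proof — is the degenerate case in which $\E_\P[\ell(Z)^2] = +\infty$. Here the cancellation above must be read under the extended-arithmetic convention stated in Section~\ref{sec:duality-wc-expectation}, since the objective is a difference of two terms that may both be infinite. In that case one verifies directly that $\E_\P[(\ell(Z)-\tau)^2] = +\infty$ for every $\tau \in \R$ (by expanding and applying Fatou or by noting that translations preserve square-integrability), so the objective equals $+\infty$ uniformly, remains trivially coercive, and agrees with $\E_\P[\ell(Z)] + \beta \, \V_\P[\ell(Z)] = +\infty$. The uniqueness claim is then vacuous, or alternatively the infimum is attained on all of~$\R$ and the proposition should be read as identifying $\tau^\star = \E_\P[\ell(Z)]$ only when this quantity is finite. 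This edge case is the only place where care is required; otherwise the argument is a one-line completion of the square.
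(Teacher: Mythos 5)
Your proof is correct and follows essentially the same route as the paper's: write the objective as $\E_\P[\ell(Z)] + \beta\,\E_\P[(\ell(Z)-\tau)^2]$ after the linear terms cancel, complete the square to identify $\tau^\star = \E_\P[\ell(Z)]$, and read off coercivity and the mean-variance value. The paper states these steps without detail; your added verification that $g$ satisfies Definition~\ref{def:generating} and your remark on the $\E_\P[\ell(Z)^2]=+\infty$ case are harmless elaborations of points the paper leaves implicit.
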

	\begin{proof}
		The objective function of problem~\eqref{eq:regular} corresponding to the disutility function~$g$ is given by
		$\E_\P[\ell(Z) + \beta(\ell(Z)-\tau)^2]$. This function is ostensibly coercive in~$\tau$ and is minimized by $\tau^\star = \E_\P[\ell(Z)]$. Substituting~$\tau^\star$ back into the objective function shows that the optimized certainty equivalent induced by~$g$ coincides indeed with the mean-variance risk measure with risk-aversion coefficient~$\beta$.
	\end{proof}
	
	\begin{definition}[Mean-MAD Risk Measure]
		The mean-median absolute deviation (MAD) risk measure with risk-aversion coefficient $ \beta \in (0, \infty)$ is the law-invariant risk measure~$\varrho$ with 
		\[
		\varrho_\P[\ell(Z)] = \E_\P[\ell(Z)] + \beta \cdot \E_\P \big[ | \ell(Z) - \M_\P [\ell(Z)] | \big],
		\]
		where $\M_\P [\ell(Z)]$ denotes the median of $\ell(Z)$ under~$\P$.
	\end{definition}
	
	\begin{proposition}[Mean-MAD Risk Measure]
		\label{prop:mean-median-risk}
		The mean-MAD risk measure~$\varrho$ with risk-aversion coefficient~$\beta\in(0,\infty)$ is the optimized certainty equivalent induced by the disutility function $g(\tau) = \tau + \beta |\tau|$. The objective function of problem~\eqref{eq:regular} is coercive in~$\tau$ and is minimized by $\tau^\star = \M_\P [\ell(Z)]$.
	\end{proposition}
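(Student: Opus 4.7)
The plan is to verify the three claims of the proposition in turn: first that $g$ qualifies as a disutility function, then that the inner minimization in~\eqref{eq:regular} reduces to a minimization of mean absolute deviation, and finally that this minimum is attained at the median and equals (after rescaling by $\beta$ and adding the mean) the mean-MAD risk measure.

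First I will check Definition~\ref{def:generating}. The function $g(\tau)=\tau+\beta|\tau|$ is a sum of two convex functions and therefore convex; it satisfies $g(0)=0$; and for $\tau\neq 0$ we have $g(\tau)-\tau=\beta|\tau|>0$ because $\beta>0$. Hence $g$ is a valid disutility function. Substituting $g$ into~\eqref{eq:regular} and using linearity of expectation yields
\begin{align*}
    \varrho_\P[\ell(Z)] & = \inf_{\tau\in\R}\; \tau+\E_\P\bigl[(\ell(Z)-\tau)+\beta|\ell(Z)-\tau|\bigr] \\
    & = \E_\P[\ell(Z)] + \beta \inf_{\tau\in\R}\; \E_\P\bigl[|\ell(Z)-\tau|\bigr],
\end{align*}
where the $\tau$-terms outside the absolute value cancel.

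Next I would establish coercivity of $f(\tau)=\tau+\E_\P[g(\ell(Z)-\tau)]$ in $\tau$. From the computation above, $f(\tau)=\E_\P[\ell(Z)]+\beta\,\E_\P[|\ell(Z)-\tau|]$, and the reverse triangle inequality gives $|\ell(Z)-\tau|\geq |\tau|-|\ell(Z)|$. Taking expectations (and noting that the optimized certainty equivalent is only informative when $\E_\P[|\ell(Z)|]<\infty$) yields $f(\tau)\geq \E_\P[\ell(Z)]+\beta(|\tau|-\E_\P[|\ell(Z)|])$, which tends to $+\infty$ as $|\tau|\to\infty$. Hence $f$ is coercive and the infimum in~\eqref{eq:regular} is attained.

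Finally, I would show that the median minimizes $\tau\mapsto \E_\P[|\ell(Z)-\tau|]$, the only nontrivial step. Let $m=\M_\P[\ell(Z)]$ and fix $\tau>m$ (the case $\tau<m$ is symmetric). A direct case distinction on the sign of $\ell(Z)-m$ and $\ell(Z)-\tau$ shows that
\[
    |\ell(Z)-\tau|-|\ell(Z)-m| \;\geq\; (\tau-m)\,\ds 1_{\ell(Z)\leq m}-(\tau-m)\,\ds 1_{\ell(Z)>m}
\]
$\P$-almost surely. Taking expectations and using the defining property $\P(\ell(Z)\leq m)\geq 1/2$ of the median yields
\[
    \E_\P[|\ell(Z)-\tau|]-\E_\P[|\ell(Z)-m|] \;\geq\; (\tau-m)\bigl(2\P(\ell(Z)\leq m)-1\bigr) \;\geq\; 0.
\]
Thus $\tau^\star=m$ is a minimizer, and substitution into the simplified expression for $\varrho_\P[\ell(Z)]$ recovers exactly $\E_\P[\ell(Z)]+\beta\,\E_\P[|\ell(Z)-\M_\P[\ell(Z)]|]$, the mean-MAD risk measure. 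The only mild subtlety is uniqueness: unlike Proposition~\ref{prop:mean-variance-risk}, the minimizer here need not be unique when the loss distribution has a flat region at its median, so the proposition only asserts that the median is \emph{a} minimizer (`is minimized by'), not \emph{the} unique one.
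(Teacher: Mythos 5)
Your proposal is correct and follows the same route as the paper's proof: substitute $g$ into~\eqref{eq:regular}, observe that the $\tau$-terms outside the absolute value cancel, note coercivity, and identify the median as a minimizer. The paper simply asserts these facts as "ostensible," whereas you supply the details (the reverse-triangle-inequality coercivity bound and the standard case-distinction argument for the median), and your closing remark about non-uniqueness of the minimizer is accurate and consistent with the proposition's wording.
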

	\begin{proof}
		The objective function of problem~\eqref{eq:regular} corresponding to the disutility function~$g$ is given by
		$\E_\P[\ell(Z) + \beta |\ell(Z)-\tau|]$. This function is ostensibly coercive in~$\tau$ and is minimized by $\tau^\star = \M_\P[\ell(Z)]$. Substituting~$\tau^\star$ back into the objective function yields the mean-MAD risk measure with risk-aversion coefficient~$\beta$.
	\end{proof}
	
	\begin{definition}[Conditional Value-at-Risk]
		\label{def:cvar}
		The conditional VaR (CVaR) at level $\beta\in(0,1)$ is the law-invariant risk measure denoted as~$\beta\CVaR$ with
		\begin{align}
			\label{eq:CVaR}
			\beta\CVaR_{\P} [\ell(Z)] = \inf_{\tau\in\R} \tau+\frac{1}{\beta} \E_\P\left[ \max \left\{\ell(Z)-\tau,0 \right\} \right].
		\end{align}
	\end{definition}
	
	Note that $\beta\CVaR_{\P} [\ell(Z)]$ converges to~$\E_\P[\ell(Z)]$ as~$\beta$ tends to~$1$. One can further show that it converges to the essential supremum $\esssup_\P[\ell(Z)]$ as $\beta$ tends to~$0$.
	
	\begin{proposition}[CVaR]
		\label{prop:CVaR}
		The CVaR at level~$\beta\in(0,1)$ is the optimized certainty equivalent induced by the disutility function $g(\tau) = \beta^{-1} \max\{\tau,0\}$. The objective function of problem~\eqref{eq:regular} is coercive in~$\tau$ and is minimized by $\tau^\star = \beta\VaR_\P [\ell(Z)]$.
	\end{proposition}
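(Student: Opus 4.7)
The plan is to verify three things in sequence: that $g(\tau)=\beta^{-1}\max\{\tau,0\}$ is a disutility function in the sense of Definition~\ref{def:generating}, that the optimized certainty equivalent induced by $g$ coincides verbatim with the CVaR definition~\eqref{eq:CVaR}, and that the resulting objective in~\eqref{eq:regular} is coercive with the VaR as a minimizer.

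The first two claims require only elementary verification. The function $g$ is convex (as the scaled positive part of the identity), vanishes at zero, and satisfies $g(\tau)>\tau$ for all $\tau\neq 0$: for $\tau<0$ we have $g(\tau)=0>\tau$, while for $\tau>0$ we have $g(\tau)=\tau/\beta>\tau$ because $\beta\in(0,1)$. Plugging this $g$ into~\eqref{eq:regular} then literally reproduces~\eqref{eq:CVaR}, so the two risk measures are identical.

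For coercivity of $f(\tau):=\tau+\beta^{-1}\E_\P[\max\{\ell(Z)-\tau,0\}]$, I would use two elementary lower bounds on $\max\{\ell(Z)-\tau,0\}$. The bound $\max\{\ell(Z)-\tau,0\}\geq 0$ yields $f(\tau)\geq\tau$, which diverges as $\tau\to+\infty$. The bound $\max\{\ell(Z)-\tau,0\}\geq\ell(Z)-\tau$ yields $f(\tau)\geq(1-\beta^{-1})\tau+\beta^{-1}\E_\P[\ell(Z)]$, which diverges as $\tau\to-\infty$ because $1-\beta^{-1}<0$. These estimates implicitly assume that $\E_\P[\ell(Z)]$ is finite; otherwise $f\equiv +\infty$ on the relevant half-line and coercivity is vacuous.

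For the minimizer I would invoke first-order optimality for convex functions. The Lipschitz continuity of $\max\{\cdot,0\}$ and dominated convergence imply that $f$ is convex with one-sided derivatives $f'_-(\tau)=1-\beta^{-1}\P(\ell(Z)\geq\tau)$ and $f'_+(\tau)=1-\beta^{-1}\P(\ell(Z)>\tau)$. Thus $\tau^\star$ minimizes $f$ if and only if
$$\P(\ell(Z)>\tau^\star)\leq\beta\leq\P(\ell(Z)\geq\tau^\star).$$
The main obstacle is verifying that $\tau^\star=\beta\VaR_\P[\ell(Z)]$ satisfies both inequalities, and care is needed when $\ell(Z)$ has an atom at $\tau^\star$. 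The upper bound on $\P(\ell(Z)>\tau^\star)$ follows from the right-continuity of the cdf of $\ell(Z)$ together with the reformulation~\eqref{eq:var-refomulation} of the VaR, while the lower bound on $\P(\ell(Z)\geq\tau^\star)$ follows by passing to the left limit at $\tau^\star$ in the strict inequality $\P(\ell(Z)>\tau)>\beta$, which is valid for every $\tau<\tau^\star$ again by~\eqref{eq:var-refomulation}.
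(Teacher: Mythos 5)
Your proposal is correct and follows essentially the same route as the paper's proof: identify~\eqref{eq:CVaR} as the $g(\tau)=\beta^{-1}\max\{\tau,0\}$ instance of~\eqref{eq:regular}, note that coercivity follows from $\beta\in(0,1)$, and verify that $\tau^\star = \beta\VaR_\P[\ell(Z)]$ satisfies the first-order optimality condition of the convex objective. The paper's proof simply asserts each of these steps as ``evident'' or ``readily verified,'' whereas you supply the supporting computations --- the check that $g$ is a valid disutility function, the two linear lower bounds giving coercivity, and the one-sided derivatives yielding the atom-safe optimality criterion $\P(\ell(Z)>\tau^\star)\leq\beta\leq\P(\ell(Z)\geq\tau^\star)$.
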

	\begin{proof}
		It is evident that problem~\eqref{eq:CVaR} is an instance of problem~\eqref{eq:regular} corresponding to the given disutility function~$g$. In addition, as~$\beta\in(0,1)$, it is evident that the objective function of problem~\eqref{eq:CVaR} is coercive in~$\tau$. Finally, one readily verifies that $\tau^\star = \beta\VaR_\P [\ell(Z)]$ solves the first-order optimality condition of the unconstrained convex program~\eqref{eq:CVaR} and thus constitutes a minimizer. 
	\end{proof}
	
	By substituting $\tau^\star = \beta\VaR_\P [\ell(Z)]$ into the objective function of problem~\eqref{eq:CVaR}, it becomes now clear that $\beta\CVaR_{\P}[\ell(Z)] \geq \beta\VaR_{\P}[\ell(Z)]$. If the loss~$\ell(Z)$ has a continuous distribution under~$\P$, then one can further use~\eqref{eq:CVaR} to show that 
	\[
	\beta\CVaR_{\P}[\ell(Z)] = \E_\P\left[\ell(Z)\left|\ell(Z)\geq \beta\VaR_{\P}[\ell(Z)]\right.\right].
	\]
	Hence, the CVaR at level~$\beta$ coincides with the expectation of the upper $\beta$-tail of the loss distribution, which implies that $\beta\CVaR_{\P}[\ell(Z)]$ is generically {\em strictly} larger than $\beta\VaR_{\P}[\ell(Z)]$. For details we refer to \citep{rockafellar2000optimization, rockafellar2002cvar-general}. 
	
	\begin{definition}[Entropic Risk Measure]
		The entropic risk measure with risk-aver\-sion parameter $\beta \in (0, \infty)$ is the law-invariant risk measure denoted as~$\beta\ERM$~with
		\begin{align}
			\label{eq:entropic}
			\beta\ERM_{\P} [\ell(Z)] = \frac{1}{\beta} \log \E_\P\left[ \exp\big(\beta \ell(Z)\big) \right] .
		\end{align}
	\end{definition}
	
	Using a Taylor expansion, one can show that $\beta\ERM_{\P} [\ell(Z)]$ converges to the expected value~$\E_\P[\ell(Z)]$ as~$\beta$ tends to~$0$. Similarly, one can show that $\beta\ERM_{\P} [\ell(Z)]$ converges to the essential supremum $\esssup_\P[\ell(Z)]$ as $\beta$ tends to~$\infty$.

	\begin{proposition}[Entropic Risk Measure]
		\label{prop:ERM}
		The entropic risk measure with risk-aversion parameter~$\beta\in(0,1)$ is the optimized certainty equivalent induced by the disutility function $g(\tau)=\beta^{-1}( \exp(\beta\tau)-1)$. The objective function of problem~\eqref{eq:regular} is coercive in~$\tau$ and is minimized by $\tau^\star = \beta^{-1}\log(\E_\P[\exp(\beta \ell(Z))])$.
	\end{proposition}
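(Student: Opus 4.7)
The plan is to verify the three claims in sequence: (i) that $g(\tau) = \beta^{-1}(\mathrm{e}^{\beta\tau}-1)$ is a valid disutility function, (ii) that the resulting optimized certainty equivalent coincides with $\beta\ERM_\P[\ell(Z)]$, and (iii) that the minimization in~\eqref{eq:regular} is coercive with minimizer $\tau^\star = \beta^{-1}\log \E_\P[\exp(\beta\ell(Z))]$. Throughout, let $M = \E_\P[\exp(\beta\ell(Z))] \in (0,+\infty]$, so that $\beta\ERM_\P[\ell(Z)] = \beta^{-1}\log M$.

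First I would check Definition~\ref{def:generating}. Clearly $g(0) = 0$, $g$ inherits convexity from the exponential, and the strict convexity of $\exp$ together with a first-order Taylor expansion at $0$ yields $\mathrm{e}^{\beta\tau} > 1 + \beta\tau$ for every $\tau \neq 0$, which rearranges to $g(\tau) > \tau$. Hence $g$ is a bona fide disutility function.

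Next I would compute the objective in~\eqref{eq:regular} explicitly. Using Tonelli's theorem to pull the deterministic factor $\mathrm{e}^{-\beta\tau}$ outside the expectation, one finds
\begin{align*}
F(\tau) \,:=\, \tau + \E_\P\!\left[ g(\ell(Z) - \tau)\right] = \tau + \frac{M}{\beta}\,\mathrm{e}^{-\beta\tau} - \frac{1}{\beta}.
\end{align*}
If $M = +\infty$, then $F(\tau) = +\infty$ for all $\tau \in \R$ and the identity $\inf_\tau F(\tau) = \beta^{-1}\log M$ holds trivially; any $\tau$ (including the formal expression $\tau^\star$) is a minimizer in the extended sense. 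Assume henceforth that $M \in (0,+\infty)$. Then $F$ is smooth and strictly convex, and $F(\tau) \to +\infty$ as $\tau \to +\infty$ (the linear term dominates since $\mathrm{e}^{-\beta\tau} \to 0$) and as $\tau \to -\infty$ (the exponential term dominates since $M > 0$), establishing coercivity.

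Finally I would solve the first-order condition $F'(\tau) = 1 - M\mathrm{e}^{-\beta\tau} = 0$, which gives the unique stationary point $\tau^\star = \beta^{-1}\log M$. Substituting back,
\begin{align*}
F(\tau^\star) \,=\, \frac{1}{\beta}\log M + \frac{M}{\beta} \cdot \frac{1}{M} - \frac{1}{\beta} \,=\, \frac{1}{\beta}\log M \,=\, \beta\ERM_\P[\ell(Z)],
\end{align*}
confirming both that $\tau^\star$ minimizes $F$ and that the optimized certainty equivalent induced by $g$ agrees with the entropic risk measure. There is essentially no analytical obstacle here; the only minor subtlety is the bookkeeping in the case $M = +\infty$, which is handled separately at the start so that the main calculation proceeds under a finiteness assumption that legitimizes differentiation and the algebraic simplification at $\tau^\star$.
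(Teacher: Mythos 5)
Your proof is correct and follows the same core approach as the paper: substitute the disutility function into the optimized certainty equivalent formula, solve the first-order optimality condition to obtain $\tau^\star = \beta^{-1}\log\E_\P[\exp(\beta\ell(Z))]$, and substitute back to recover $\beta\ERM_\P[\ell(Z)]$. You add helpful rigor beyond the paper's version by explicitly verifying that $g$ satisfies Definition~\ref{def:generating}, by separating out the case $M = +\infty$, and by spelling out the two-sided limit argument for coercivity rather than merely asserting it.
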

	
	\begin{proof}
		By the definition of~$g$, we have
		\begin{align*}
			\inf_{\tau\in\R} \tau+\E_\P\left[ g(\ell(Z)-\tau) \right] & = \inf_{\tau\in\R} \tau + \frac{1}{\beta} \E_\P\left[ \exp\big(\beta (\ell(Z) - \tau)\big) - 1 \right] \\
			& = \frac{1}{\beta} \log \E_\P\left[ \exp\big(\beta \ell(Z)\big) \right] = \beta\ERM_{\P} [\ell(Z)].
		\end{align*}
		The second equality holds because the unconstrained convex minimization problem over~$\tau$ is uniquely solved by $\tau^\star = \beta^{-1}\log(\E_\P[\exp(\beta \ell(Z))])$, which can be verified by inspecting the problem's first-order optimality condition. In addition, as~$\beta\in(0,1)$, it is clear that the problem's objective function is coercive in~$\tau$.
	\end{proof}

	\citet{kupper2009representation} show that, with the exception of the expected value, the entropic risk measure is the only relevant law-invariant risk measure that obeys the tower property. That is, for any random vectors~$Z_1$ and~$Z_2$ it satisfies
	\begin{align*}
		\beta\ERM_{\P} [\ell(Z_2)] = \beta\ERM_{\P}[\beta\ERM_{\P} [\ell(Z_2)|Z_1]],
	\end{align*}
	where the {\em conditional} entropic risk measure $\beta\ERM_{\P} [Z_1|Z_2]$ is defined in the obvious way by replacing the unconditional expectation in~\eqref{eq:entropic} with a conditional expectation. The entropic risk measure is often used for modeling risk-aversion in {\em dynamic} optimization problems, where the dynamic consistency of the decisions taken at different points in time is a concern. For example, it occupies center stage in finance \citep{follmer2008stochastic}, risk-sensitive control \citep{whittle1990risk,bacsar1995h} and economics \citep{hansen2008robustness}.

	\begin{proposition}[Dual Representation of the Entropic Risk Measure]
		\label{prop:dual-erm}
		Assume that $\E_{\hat\P}[\ell(Z)]>-\infty$. Then, the entropic risk measure admits the dual representation
		\begin{align*}
			\beta\ERM_{\hat \P} [\ell(Z)] = \sup_{\P \in \cP(\cZ)} \E_\P [\ell(Z)] - \frac{1}{\beta} \cdot \KL(\P, \hat \P).
		\end{align*}
	\end{proposition}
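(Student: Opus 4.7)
The plan is to prove the two inequalities separately, writing $Z = \E_{\hat\P}[\exp(\beta \ell(Z))] \in (0, +\infty]$ so that the entropic risk measure equals $\frac{1}{\beta}\log Z$. The key tools are the Donsker--Varadhan variational formula from Proposition~\ref{prop:donsker:KL} for the upper bound, and an explicit Gibbs-distribution construction for the lower bound. Throughout I will use the truncations $\ell_M = \min\{\ell, M\}$ and $\tilde\ell_M = \max\{\min\{\ell, M\}, -M\}$, which are bounded Borel functions converging monotonically (respectively, pointwise) to $\ell$ as $M\to\infty$.

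For the $\leq$ direction, I apply Proposition~\ref{prop:donsker:KL} with the bounded test function $f = \beta\tilde\ell_M$ to obtain, for every $\P \in \cP(\cZ)$,
\[
    \E_\P[\tilde\ell_M(Z)] - \tfrac{1}{\beta}\KL(\P,\hat\P) \;\leq\; \tfrac{1}{\beta}\log \E_{\hat\P}[\exp(\beta \tilde\ell_M(Z))].
\]
Monotone convergence applied to $\exp(\beta \tilde\ell_M) \uparrow \exp(\beta \ell)$ (on the set where $\ell$ is eventually non-negative) together with a dominated convergence argument on the set where $\ell < 0$ (using the assumption $\E_{\hat\P}[\ell(Z)] > -\infty$ and the uniform bound $\exp(\beta \tilde\ell_M) \leq 1$ there) show that the right-hand side tends to $\frac{1}{\beta}\log Z$. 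A parallel limit argument, splitting $\tilde\ell_M$ into positive and negative parts and invoking the adversarial convention whenever $\E_\P[\ell(Z)]$ is of the form $+\infty - (+\infty)$, passes the left-hand side to $\E_\P[\ell(Z)] - \frac{1}{\beta}\KL(\P,\hat\P)$. Taking the supremum over $\P$ yields the desired inequality.

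For the $\geq$ direction I construct a near-optimizer via the Gibbs distribution associated with the upper-truncated loss. For each $M > 0$, set $Z_M = \E_{\hat\P}[\exp(\beta \ell_M(Z))] \in (0, e^{\beta M}]$ and define $\P^\star_M \in \cP(\cZ)$ through $\tfrac{\diff \P^\star_M}{\diff \hat\P} = \exp(\beta \ell_M)/Z_M$, which is a bona fide probability distribution absolutely continuous with respect to $\hat\P$. A direct calculation using $\log(\diff \P^\star_M / \diff \hat\P) = \beta \ell_M - \log Z_M$ gives $\KL(\P^\star_M, \hat\P) = \beta \E_{\P^\star_M}[\ell_M(Z)] - \log Z_M$, where finiteness of $\E_{\P^\star_M}[\ell_M^-(Z)]$ follows from the bound $\exp(\beta \ell_M) \leq 1$ on $\{\ell < 0\}$ and the assumption $\E_{\hat\P}[\ell(Z)] > -\infty$. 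Since $\ell \geq \ell_M$ pointwise, this rearranges to
\[
    \E_{\P^\star_M}[\ell(Z)] - \tfrac{1}{\beta}\KL(\P^\star_M, \hat\P) \;\geq\; \tfrac{1}{\beta}\log Z_M,
\]
and letting $M \to \infty$ so that $Z_M \uparrow Z$ by monotone convergence completes the proof. The main obstacle I anticipate is the careful bookkeeping of extended-real-valued integrals in the limit $M \to \infty$ of the $\leq$ direction, since $\E_\P[\ell(Z)]$ may legitimately equal $+\infty$ while $\KL(\P,\hat\P) = +\infty$ too, and the limiting inequality must be interpreted under the adversarial convention spelled out in the paper's notation.
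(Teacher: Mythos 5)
Your proof is correct, and it takes a genuinely different route from the paper's. The paper observes that the entropic risk measure is the optimized certainty equivalent induced by the disutility function $g(t)=\beta^{-1}(\exp(\beta t)-1)=(\beta^{-1}\phi)^*(t)$, where $\phi$ is the KL entropy function, and then invokes the general strong duality relation~\eqref{eq:oce-vs-phi} between optimized certainty equivalents and $\phi$-divergence-penalized worst-case expectations (due to Ben-Tal and Teboulle). This yields the identity in two lines but leans on the whole optimized-certainty-equivalent machinery. You instead prove the two inequalities directly: the upper bound via Donsker--Varadhan with truncated test functions, the lower bound via the exponentially tilted Gibbs distribution $\diff\P_M^\star/\diff\hat\P \propto \exp(\beta\ell_M)$, with monotone and dominated convergence passing $M\to\infty$. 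Your route is more self-contained and has the pedagogical advantage of exhibiting the (near-)maximizer explicitly and transparently, at the cost of being longer and requiring careful truncation bookkeeping under the paper's adversarial extended-arithmetic conventions. The paper actually remarks after the proof that the Donsker--Varadhan route is a valid alternative, so your approach is one the authors anticipated. The only place to be slightly careful: when $\E_\P[\ell^+]=\E_\P[\ell^-]=\infty$, the limit $\E_\P[\tilde\ell_M]$ need not converge, but as you note the inequality is then trivial under the convention $\E_\P[\ell]=-\infty$ for a maximization objective, so nothing breaks.
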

	
	\begin{proof}
		Let~$\phi$ be the entropy function of the Kullback-Leibler divergence. Thus, we have $\phi^*(t)=\mathrm{e}^t-1$ for all $t\in\R$; see Table~\ref{tab:phi-divergences-semiinfinite-reformulation}. By  Proposition~\ref{prop:ERM}, the entropic value-at-risk is the optimized certainty equivalent induced by the disutility function
		\[
		g(t)=\beta^{-1}( \exp(\beta t)-1) = \beta^{-1}\phi^*(\beta t)= (\beta^{-1}\phi)^*(t),
		\]
		where the last equality uses \citep[Theorem~16.1]{rockafellar1970convex}. This implies that
		\begin{align*}
			\beta\ERM_{\hat \P} [\ell(Z)] & = \inf_{\tau\in\R} \tau+\E_\P\left[ (\beta^{-1}\phi)^*(\ell(Z)-\tau) \right] \\
			& = \sup_{\P\in\cP(\cZ)} \E_\P[\ell(Z)] - \D_{\beta^{-1}\phi}(\P, \hat \P) \\
			& = \sup_{\P \in \cP(\cZ)} \E_\P [\ell(Z)] - \beta^{-1} \KL(\P, \hat \P).
		\end{align*}
		Here, the second equality follows from the strong duality relation~\eqref{eq:oce-vs-phi}, which applies because $\E_{\hat\P}[\ell(Z)]>-\infty$, and the third equality holds because the entropy function~$\phi$ was assumed to induce the Kullback-Leibler divergence. 
	\end{proof}
	
	We remark that Proposition~\ref{prop:dual-erm} can also be proved by leveraging the Donsker-Varadhan formula from Proposition~\ref{prop:donsker:KL} in lieu of the duality relation~\eqref{eq:oce-vs-phi}.


One can show that every optimized certainty equivalent~$\varrho$ is translation-invariant and convex. If the underlying disutility function~$g$ is non-decreasing, then~$\varrho$ is also monotone, and if~$g$ is positive homogeneous, then~$\varrho$ is also scale-invariant.

In the remainder we will show that if~$\varrho$ is any optimized certainty equivalent, then the worst-case risk problem~\eqref{eq:worst-case:risk} can be reduced a worst-case expectation problem of the form~\eqref{eq:worst-case:expectation}. This reduction is predicated on a lopsided minimax theorem to be derived below, and it allows us to extend the duality theory for worst-case expectation problems of Section~\ref{sec:duality-wc-expectation} to a rich class of worst-case risk problems.

\subsection{Lopsided Minimax Theorems}
A generic minimax problem can be represented as
\begin{align*}
	\inf_{u \in \cU} \, \sup_{v \in \cV} \, H(u, v),
\end{align*}
where~$\cU$ and~$\cV$ are arbitrary spaces, and $H:\cU\times\cV\to\overline \R$ is an arbitrary function. A minimax theorem provides conditions under which the infimum and supremum operators can be interchanged without changing the problem's optimal value. The following minimax theorem inspired by \citep[Example~13]{rockafellar1974conjugate} will be essential for solving worst-case risk problems with optimized certainty equivalents. Recall from Section~\ref{sec:duality-proof-strategy} that a convex function is closed if it is either proper and lower semicontinuous or identically equal to~$+\infty$ or to $-\infty$.

\begin{theorem}[Lopsided Minimax Theorem]
	\label{thm:minimax}
	Suppose that~$\cU$ is an arbitrary vector space and~$\cV$ is a locally convex topological vector space. Assume also that the function $H: \cU \times \cV \to \overline\R$ is such that $H(u,v)$ is convex in~$u$ and such that $-H(u,v)$ is convex and closed in~$v$. If $\sup_{v\in\cV}\inf_{u\in\cU} H(u,v)>-\infty$ and for every $\alpha\in\R$ there exists~$u \in \cU$ such that $\{ v \in \cV : H(u, v) \geq \alpha \}$ is compact, then we have
	\begin{align*}
		\inf_{u \in \cU} \, \sup_{v \in \cV} \, H(u,v) = \sup_{v \in \cV} \, \inf_{u \in \cU} \, H(u,v). 
	\end{align*}
\end{theorem}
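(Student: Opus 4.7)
The plan is to establish the easy direction $\sup_{v} \inf_u H \le \inf_u \sup_v H$ directly from the max-min inequality, and then prove the reverse inequality by a finite intersection argument on compact super-level sets. Set $\alpha^* := \sup_v \inf_u H(u,v)$ and $\beta^* := \inf_u \sup_v H(u,v)$ and suppose, for contradiction, that $\alpha^* < \beta^*$. Fix any $\alpha \in (\alpha^*, \beta^*)$, which is a real number thanks to the assumption $\alpha^* > -\infty$, and introduce, for each $u \in \cU$, the super-level set $C_u = \{v \in \cV : H(u,v) \ge \alpha\}$. Each $C_u$ is convex (because $H(u,\cdot)$ is concave on $\cV$, so its super-level sets are convex), closed (because $-H(u,\cdot)$ is convex and closed by hypothesis, so $H(u,\cdot)$ is upper semicontinuous), and non-empty (because $\sup_v H(u,v) \ge \beta^* > \alpha$). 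By the hypothesis applied to this particular $\alpha$, there also exists $u_0 \in \cU$ such that $C_{u_0}$ is compact.

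The core of the proof is to verify the finite intersection property: for any $u_1, \ldots, u_n \in \cU$, the intersection $C_{u_0} \cap C_{u_1} \cap \cdots \cap C_{u_n}$ is non-empty, or equivalently
\[
    \sup_{v \in \cV} \min_{0 \le i \le n} H(u_i, v) > \alpha.
\]
To this end, consider the function
\[
    K(\lambda, v) \;=\; \sum_{i=0}^n \lambda_i \, H(u_i, v)
\]
on $\Delta_{n+1} \times \cV$, where $\Delta_{n+1}$ is the probability simplex. The map $K(\lambda,\cdot)$ is concave and upper semicontinuous on the convex set $\cV$ sitting inside a locally convex TVS, while $K(\cdot,v)$ is affine on the compact convex set $\Delta_{n+1}$. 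Sion's minimax theorem thus applies and yields
\[
    \min_{\lambda \in \Delta_{n+1}} \sup_{v \in \cV} K(\lambda, v) \;=\; \sup_{v \in \cV} \min_{\lambda \in \Delta_{n+1}} K(\lambda, v) \;=\; \sup_{v \in \cV} \min_{0 \le i \le n} H(u_i, v).
\]
The left-hand side admits a strict lower bound via the convexity of $H(\cdot, v)$ on $\cU$: setting $u_\lambda := \sum_i \lambda_i u_i$, one has $K(\lambda, v) \ge H(u_\lambda, v)$, so $\sup_v K(\lambda, v) \ge \sup_v H(u_\lambda, v) \ge \beta^* > \alpha$ for every $\lambda \in \Delta_{n+1}$. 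Hence the right-hand side is also strictly greater than $\alpha$, producing a $v^\star \in \cV$ with $H(u_i, v^\star) > \alpha$ for $i = 0, \ldots, n$ and thereby the finite intersection property.

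With the finite intersection property in hand, compactness of $C_{u_0}$ together with closedness of each $C_u \cap C_{u_0}$ inside $C_{u_0}$ forces $\bigcap_{u \in \cU} C_u \neq \emptyset$. Picking any $v^\star$ in this intersection gives $\inf_u H(u, v^\star) \ge \alpha$ and hence $\alpha^* \ge \alpha > \alpha^*$, the desired contradiction.

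The main obstacle I foresee is the careful invocation of Sion's theorem when $H$ may take extended real values: one must justify that $K(\lambda, v)$ is well-defined and upper semicontinuous in $v$ even when infinite values occur. This can be addressed by first passing to the sets where $H(u_i,\cdot)$ is finite (their super-level sets above $\alpha$ are contained in $C_{u_i}$) and invoking Sion's theorem for the restriction, or alternatively by extending Sion's argument directly to $\overline{\R}$-valued payoff functions using the paper's extended-arithmetic conventions. A secondary, more routine step is verifying that the algebraic convexity of $H(\cdot,v)$ on the vector space $\cU$ really delivers $H(u_\lambda, v) \le \sum_i \lambda_i H(u_i, v)$ for arbitrary probability weights, which is immediate by induction on $n$ from the two-point definition.
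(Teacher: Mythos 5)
Your argument is correct in outline but follows a genuinely different route from the paper. The paper proves the lopsided minimax theorem through conjugate duality: it introduces the topological dual $\cV^*$, defines $F(u,v^*)=\sup_{v\in\cV}H(u,v)-\inner{v^*}{v}$ and $h(v^*)=\inf_{u\in\cU}F(u,v^*)$, identifies $h(0)$ and $h^{**}(0)$ with the two sides of the minimax identity, and uses the compactness hypothesis to show that a sublevel set of $h^*$ is non-empty and compact, whence $h$ is bounded above near $0$ and $h(0)=h^{**}(0)$ by Rockafellar's conjugate duality theorems. You instead run the classical Ky Fan--Kneser argument: weak duality from the max--min inequality, then a finite-intersection-property argument on the closed convex super-level sets $C_u=\{v:H(u,v)\geq\alpha\}$, one of which is compact, with the finite intersection step supplied by Sion's theorem on the probability simplex combined with the convexity of $H(\cdot,v)$. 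Your route is more elementary and self-contained (it needs only Sion plus compactness), whereas the paper's route stays inside the Fenchel--Moreau machinery it uses everywhere else and, importantly, handles extended-real-valued $H$ with no extra effort because conjugates are defined for such functions from the outset.

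The one place your sketch needs shoring up is exactly the point you flag: applying Sion to $K(\lambda,v)=\sum_i\lambda_i H(u_i,v)$ when $H$ takes infinite values. Two remarks. First, the $+\infty$ values are harmless: since $-H(u_i,\cdot)$ is closed convex, it either is proper (so $H(u_i,\cdot)<+\infty$ everywhere) or is identically $\pm\infty$; the case $H(u_i,\cdot)\equiv-\infty$ is excluded by $\beta^*>-\infty$ and the case $H(u_i,\cdot)\equiv+\infty$ gives $C_{u_i}=\cV$, so such indices can be dropped. Hence $K$ is a well-defined $[-\infty,+\infty)$-valued function, concave and upper semicontinuous in $v$. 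Second, your proposed patch of restricting $v$ to the set $D$ where all $H(u_i,\cdot)$ are finite is not quite enough as stated: for $\lambda$ on a proper face of the simplex, $\sup_{v\in D}K(\lambda,v)$ can be strictly smaller than $\sup_{v\in\cV}K(\lambda,v)$, so the lower bound $\geq\beta^*$ is only immediate on the relative interior of $\Delta_{n+1}$; one then has to propagate it to the faces, e.g.\ by noting that $\lambda\mapsto\sup_{v\in D}K(\lambda,v)$ is convex on $\Delta_{n+1}$ and a convex function cannot exceed, at a boundary point, the limit superior along a segment from the relative interior. (Alternatively, $K(\cdot,v)$ fails to be lower semicontinuous in $\lambda$ when some $H(u_i,v)=-\infty$, which is the same obstruction seen from the Sion side.) These repairs are routine convexity arguments, so I would classify this as a sound alternative proof whose only thin spot is the extended-arithmetic bookkeeping you already identified.
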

\begin{proof}
	Let $\cV^*$ be the topological dual of~$\cV$, and define the bilinear form $\inner{\cdot}{\cdot}:\cV^*\times\cV\to\R$ through $\inner{v^*}{v}=v^*(v)$. If we equip~$\cV^*$ with the weak topology induced by~$\cV$, then $\inner{\cdot}{v}$ is a continuous linear functional on~$\cV^*$ for every~$v\in\cV$, and every continuous linear functional on~$\cV^*$ can be represented in this way.
	
	Define $F:\cU\times\cV^* \to\overline\R$ through $F(u,v^*)=\sup_{v\in\cV} H(u,v)-\inner{v^*}{v}$, which is jointly convex in~$u$ and~$v^*$ thanks to Lemma~\ref{lem:param:cvx}. Thus, $F(u,v^*)=(-H)^*(u,-v^*)$, where the conjugate of $-H(u,v)$ is evaluated with respect to its second argument~$v$ only. As $-H(u,v)$ is convex and closed in~$v$, this implies via Lemma~\ref{lem:bi:coincidence} that $F^{*}(u,v)=-H(u,-v)$. Here, again, the conjugate of~$F(u,v^*)$ is evaluated with respect to its second argument~$v^*$ only. In addition, define $h:\cV^*\to\overline \R$ through $h(v^*)= \inf_{u\in\cU} F(u,v^*)$, which is convex in~$v^*$. Thus, we find
	\[
	h(0)= \inf_{u\in\cU} F(u,0)= \inf_{u\in\cU} \sup_{v\in\cV} H(u,v),
	\]
	where the two equalities follow from the definitions of~$h$ and~$F$, respectively. In addition, we also have
	\begin{align*}
		h^{**}(0) &= \sup_{v\in\cV} -h^*(-v)= \sup_{v\in\cV} \inf_{v^*\in\cV^*} \inner{v^*}{v}+h(v^*) \\
		& = \sup_{v\in\cV} \inf_{u\in\cU} \inf_{v^*\in\cV^*} \inner{v^*}{v}+F(u,v^*) \\
		&= \sup_{v\in\cV} \inf_{u\in\cU} -F^{*}(u,-v) =\sup_{v\in\cV} \inf_{u\in\cU} H(u,v),
	\end{align*}
	where the first two equalities follow from the definitions of the bi-conjugate~$h^{**}$ and the conjugate~$h^*$, respectively, and the third equality exploits the definition of~$h$. The fourth equality follows from the definition of the conjugate~$F^*$, and the last equality holds because $F^{*}(u,v)=-H(u,-v)$. Thus, the desired minimax result holds if we manage to prove that $h(0)=h^{**}(0)$.
	
	
	By the definitions of~$h^*$ and~$h$ and by the relation~$F^*(u,v)=-H(u,-v)$, we have
	\begin{align*}
		\{v\in\cV: h^*(v)\leq \alpha\} & = \left\{v\in\cV: \sup_{v^*\in\cV^*} \inner{v^*}{v}- h(v^*)\leq \alpha \right\} \\
		& = \left\{v\in\cV: \sup_{u\in\cU} \sup_{v^*\in\cV^*} \inner{v^*}{v}- F(u,v^*)\leq \alpha \right\} \\
		& = \left\{v\in\cV: \sup_{u\in\cU} -H(u,-v)\leq \alpha \right\} \\ &= - \bigcap_{u\in\cU} \{v\in\cV: H(u,v)\geq -\alpha \}
	\end{align*}
	for any~$\alpha\in\R$. Hence, $\{v\in\cV: h^*(v)\leq \alpha\}$ is representable as an intersection of closed sets, at least one of which is compact. Therefore, the intersection is also compact. Selecting $\alpha > \inf_{v\in\cV} h^*(v)$, which is possible because $\sup_{v\in\cV}\inf_{u\in\cU} H(u,v)>-\infty$ implies that $\inf_{v\in\cV}h^*(v)<\infty$, we further ensure that the compact set $\{v\in\cV: h^*(v)\leq \alpha\}$ is non-empty. This implies via \cite[Theorem~10\,(b)]{rockafellar1974conjugate} that~$h^{**}(v^*)$ and~$h(v^*)$ are both bounded above on a neighborhood of~$0$. By \citep[Theorem~17\,(a)]{rockafellar1974conjugate}, this in turn implies that $h(0)=h^{**}(0)$, which establishes the desired minimax equality.
\end{proof}

Swapping the roles of~$u$ and~$v$ leads to the following immediate corollary.

\begin{corollary}[Reverse Lopsided Minimax Theorem]
	\label{cor:minimax}
	Suppose that~$\cU$ is a locally convex topological vector space and~$\cV$ is an arbitrary vector space. Assume also that the function $H: \cU \times \cV \to \overline\R$ is such that $H(u,v)$ is convex and closed in~$u$ and such that $-H(u,v)$ is convex in~$v$. If $\inf_{u\in\cU} \sup_{v\in\cV} H(u,v)<\infty$ and for every $\alpha\in\R$ there exists $v \in \cV$ such that $\{ u \in \cU : H(u, v) \leq \alpha \}$ is compact, then we have
	\begin{align*}
		\inf_{u \in \cU} \, \sup_{v \in \cV} \, H(u,v) = \sup_{v \in \cV} \, \inf_{u \in \cU} \, H(u,v). 
	\end{align*}
\end{corollary}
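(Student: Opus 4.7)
The plan is to deduce Corollary~\ref{cor:minimax} from Theorem~\ref{thm:minimax} by a symmetric change of variables that swaps inf and sup simultaneously with negating the objective. Specifically, I will set $\tilde \cU = \cV$, $\tilde \cV = \cU$, and define
\begin{equation*}
    \tilde H : \tilde \cU \times \tilde \cV \to \overline\R, \qquad \tilde H(\tilde u, \tilde v) = -H(\tilde v, \tilde u).
\end{equation*}
The goal is then to verify that the five hypotheses of Theorem~\ref{thm:minimax} hold for $\tilde H$ and to translate the resulting minimax identity back into the variables of the corollary.

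First, I would match the spaces: the hypothesis of Theorem~\ref{thm:minimax} requires the first factor to be an arbitrary vector space and the second factor to be locally convex topological, which is precisely what our choice $\tilde\cU = \cV$ and $\tilde\cV = \cU$ provides, since in the corollary $\cV$ is arbitrary and $\cU$ is locally convex topological. Next, I would translate the convexity and closedness requirements. Convexity of $\tilde H(\tilde u,\tilde v)$ in $\tilde u$ amounts to convexity of $v \mapsto -H(v, u)$, which is exactly the assumed convexity of $-H(\cdot,\cdot)$ in its second argument. Likewise, convexity and closedness of $-\tilde H(\tilde u,\tilde v)$ in $\tilde v$ amount to convexity and closedness of $u \mapsto H(u,v)$, which is the assumed convexity and closedness of $H$ in its first argument.

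I would then translate the two quantitative conditions. The finiteness condition $\sup_{\tilde v}\inf_{\tilde u} \tilde H(\tilde u,\tilde v) > -\infty$ becomes $\sup_{u} \inf_{v} [-H(u,v)] > -\infty$, i.e.\ $-\inf_u \sup_v H(u,v) > -\infty$, which is precisely the assumption $\inf_u \sup_v H(u,v) < \infty$ in the corollary. The compactness condition demands that for every $\alpha \in \R$ there exist $\tilde u \in \tilde \cU$ with $\{\tilde v \in \tilde \cV : \tilde H(\tilde u,\tilde v) \geq \alpha\}$ compact; rewriting this set as $\{u \in \cU : H(u,v) \leq -\alpha\}$ and noting that $\alpha$ ranges over all of $\R$, this is identical (up to relabeling $\alpha \mapsto -\alpha$) to the corollary's assumption that for every $\beta \in \R$ there exists $v \in \cV$ with $\{u \in \cU : H(u,v) \leq \beta\}$ compact.

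Having verified all hypotheses, Theorem~\ref{thm:minimax} yields $\inf_{\tilde u}\sup_{\tilde v} \tilde H(\tilde u,\tilde v) = \sup_{\tilde v} \inf_{\tilde u} \tilde H(\tilde u,\tilde v)$. Unfolding the definition of $\tilde H$, the left-hand side equals $\inf_v \sup_u [-H(u,v)] = -\sup_v \inf_u H(u,v)$ and the right-hand side equals $\sup_u \inf_v [-H(u,v)] = -\inf_u \sup_v H(u,v)$. Multiplying by $-1$ gives the asserted equality $\inf_u \sup_v H(u,v) = \sup_v \inf_u H(u,v)$. I do not anticipate a genuine obstacle here, since the transformation is purely symbolic; the only thing that requires attention is making sure the hypothesis about compact {\em sub}level sets of $H(\cdot,v)$ in the corollary correctly corresponds to the hypothesis about compact {\em super}level sets of $\tilde H(\tilde u, \cdot)$ in the theorem, which it does because passing from $H$ to $-\tilde H$ flips the inequality while $\alpha$ ranges freely over $\R$.
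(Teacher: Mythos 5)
Your proof is correct and matches the paper's approach exactly: the paper simply remarks that ``swapping the roles of~$u$ and~$v$ leads to the following immediate corollary,'' and your substitution $\tilde H(\tilde u,\tilde v) = -H(\tilde v,\tilde u)$ is the precise formalization of that swap, with each hypothesis correctly mapped onto its counterpart (including the sign flip that turns superlevel sets into sublevel sets).
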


A function $h_v(u)=H(u,v)$ whose sublevel sets $\{ u \in \cU : h_v(u) \leq \alpha \}$ are all compact is commonly referred to as {\em inf-compact} \citep{hartung1982extension}. The following lemma provides an easily checkable sufficient condition for the inf-compactness of $h_v(u)$ in case~$\cU$ is a Euclidean space. To this end, recall that a function $h_v$ is {\em coercive} if for every sequence $\{ u_i \}_{i\in \N}$ with $\lim_{i \to \infty} \| u_i \|_2 = \infty$, we have $\lim_{i \to \infty} h_v(u_i) = \infty$. 

\begin{lemma}[Inf-Compactness]
\label{lem:infcomp}
Suppose that $\cU$ is a Euclidean space and $H:\cU \times \cV \to \overline \R$ is lower semicontinuous and coercive in its first argument. Then, the sublevel sets $\{ u \in \cU : H(u, v) \leq \alpha \}$ are compact for all~$v\in\cV$ and~$\alpha \in \R$.
\end{lemma}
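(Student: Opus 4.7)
The plan is a short and direct application of the Heine–Borel theorem. Fix $v \in \cV$ and $\alpha \in \R$, and let $S = \{u \in \cU : H(u,v) \leq \alpha\}$. I would establish compactness of $S$ by separately verifying closedness and boundedness, which suffices since $\cU$ is a finite-dimensional Euclidean space.

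First, I would argue that $S$ is closed. Since $H(\cdot,v)$ is lower semicontinuous by hypothesis, each of its sublevel sets---in particular $S$---is closed. This step is immediate from the definition of lower semicontinuity and requires no further work.

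Second, I would show that $S$ is bounded by contradiction. Suppose there were a sequence $\{u_i\}_{i \in \N} \subseteq S$ with $\lim_{i \to \infty} \|u_i\|_2 = \infty$. Then, by the coercivity of $H(\cdot,v)$, we would have $\lim_{i \to \infty} H(u_i,v) = \infty$, which contradicts the membership $u_i \in S$, that is, $H(u_i,v) \leq \alpha$ for every $i \in \N$. Hence $S$ is bounded.

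Combining closedness with boundedness in the finite-dimensional Euclidean space $\cU$, the Heine–Borel theorem yields compactness of $S$. There is no real obstacle here; the result is essentially a textbook consequence of the definitions, and the only care needed is to note that finite-dimensionality of $\cU$ is used precisely at the Heine–Borel step.
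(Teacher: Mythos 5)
Your proposal is correct and follows essentially the same route as the paper's proof: closedness of the sublevel set from lower semicontinuity, boundedness by contradiction via coercivity, and compactness from Heine--Borel in the finite-dimensional space $\cU$. No gaps.
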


\begin{proof}
To show that the sublevel set $\cU_\alpha(v)= \{ u \in \cU : H(u, v) \leq \alpha \}$ is compact, note first that~$\cU_\alpha(v)$ is closed because $H(u,v)$ is lower semicontinuous in~$u$. In order to prove that~$\cU_\alpha(v)$ is also bounded, assume for the sake of contradiction that there exists a sequence $\{ u_i \}_{i \in \N} \in \cU_\alpha(v)$ with $\lim_{i \to \infty} \| u_i \| = \infty$. As $H(u, v)$ is coercive in~$u$, we have $\lim_{i \to \infty} H(u_i, v) = \infty$. However, this contradicts the assumption that $H(u_i,v)\leq \alpha$ for all~$i\in\N$. Thus, $\cU_\alpha(v)$ must be bounded and compact.
\end{proof}

Note that if $H_0:\cU_0\times\cV_0\to\overline\R$ is defined on convex sets~$\cU_0\subseteq\cU$ and~$\cV_0\subseteq\cV$, then it can be extended to a function $H:\cU\times\cV\to\overline\R$ on the underlying vector spaces~$\cU$ and~$\cV$ by setting
\[
H(u,v) =\left\{ \begin{array}{ll}
H_0(u,v) & \text{if $u\in\cU_0$ and $v\in\cV_0$,} \\
+\infty &  \text{if $u\not\in\cU_0$ and $v\in\cV_0$,} \\
-\infty & \text{if $v\not\in\cV_0$.} \\
\end{array} \right.
\]
This construction guarantees that 
\[
\inf_{u\in\cU} \sup_{v\in\cV} H(u,v) = \inf_{u\in\cU_0} \sup_{v\in\cV_0} H_0(u,v) \quad \text{and}\quad \sup_{v\in\cV} \inf_{u\in\cU} H(u,v) = \sup_{v\in\cV_0} \inf_{u\in\cU_0} H_0(u,v).
\]
It also guarantees that if $H_0(u,v)$ is convex and closed in~$u$ and concave in~$v$, then so is~$H(u,v)$. Thus, the feasible sets in any convex-concave minimax problem can always be extended to the underlying vector spaces without changing the problem.

We now leverage Corollary~\ref{cor:minimax} to derive a minimax theorem for optimized certainty equivalents. This result exploits the inf-compactness of the objective function of problem~\eqref{eq:regular} in~$\tau$. \citet{shafiee2024general} establish similar minimax theorems for a more general class of regular risk and deviation measures introduced by \citet{rockafellar2013fundamental}.

\begin{theorem}[Minimax Theorem for Optimized Certainty Equivalents]
\label{thm:duality:regular}
Suppose that $\cP \subseteq \cP(\cZ)$ is non-empty and convex, $\varrho$ is any optimized certainty equivalent induced by a disutility function~$g$, $\sup_{\P\in\cP}\E_\P[g(\ell(Z))]<\infty$, and $\E_\P[\ell(Z)]>-\infty$ for all~$\P\in\cP$. Then, $G(\tau,\P)=\tau + \E_\P[ g(\ell(Z) - \tau)]$ for~$\tau\in\R$ and~$\P\in\cP$ satisfies
\begin{align*}
	\sup_{\P \in \cP}  \varrho_{\P} [\ell(Z)] 
	= \sup_{\P \in \cP}  \inf_{\tau \in \R} G(\tau,\P)
	= \inf_{\tau \in \R} \sup_{\P \in \cP}  G(\tau,\P).
\end{align*}
\end{theorem}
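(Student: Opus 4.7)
The first equality, $\sup_{\P \in \cP} \varrho_\P[\ell(Z)] = \sup_{\P \in \cP} \inf_{\tau \in \R} G(\tau, \P)$, is immediate from Definition~\ref{def:regular}. The plan for the second (nontrivial) equality is to invoke the reverse lopsided minimax theorem (Corollary~\ref{cor:minimax}), taking $\cU = \R$ for the variable $\tau$ (locally convex as a Euclidean space) and extending $G$ to an ambient vector space $\cV$ containing $\cP$ by setting $G(\tau, \P) = -\infty$ for $\P \notin \cP$, as prescribed in the remarks following Lemma~\ref{lem:infcomp}. This extension preserves concavity in $\P$ and the convex-closed property in $\tau$ (the constant $-\infty$ is closed in the paper's convention).

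The structural hypotheses of Corollary~\ref{cor:minimax} are routine: $G(\cdot, \P)$ is convex in $\tau$ since $g$ is convex and expectation is linear, while $G(\tau, \cdot)$ is affine (hence concave) on $\cP$. The boundedness condition $\inf_\tau \sup_\P G(\tau, \P) < \infty$ follows by evaluating at $\tau = 0$, where $\sup_{\P \in \cP} G(0, \P) = \sup_{\P \in \cP} \E_\P[g(\ell(Z))] < \infty$ by hypothesis. The crucial ingredient---and the main obstacle---is to establish compactness of the sublevel sets $\{\tau \in \R : G(\tau, \P) \leq \alpha\}$ for some $\P \in \cP$, for every $\alpha \in \R$. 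By Lemma~\ref{lem:infcomp}, it suffices to show that, for any fixed $\P \in \cP$, $G(\cdot, \P)$ is both lower semicontinuous and coercive in $\tau$.

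For both properties I would leverage the decomposition $G(\tau, \P) = \E_\P[\ell(Z)] + \E_\P[h(\ell(Z) - \tau)]$ obtained by writing $g(t) = t + h(t)$ with $h(t) := g(t) - t$. The strict dominance $g(\tau) > \tau$ for $\tau \neq 0$ in Definition~\ref{def:generating} implies $h \geq 0$ with equality only at the origin, so the expectation of $h(\ell(Z) - \tau)$ is unambiguously defined in $[0, \infty]$ and the decomposition is licit. A standard convexity argument applied to $h$ (for $0 < t_0 < t$, writing $t_0 = (t_0/t)\,t + (1 - t_0/t)\,0$ yields $h(t) \geq (t/t_0)\,h(t_0) > 0$, and analogously for $t < t_0 < 0$) shows that $h(t) \to \infty$ as $|t| \to \infty$. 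Lower semicontinuity of $\E_\P[h(\ell(Z) - \tau)]$ in $\tau$ then follows from Fatou's lemma together with continuity of $h$, which is in turn a consequence of the convexity of $g$; properness of $G(\cdot, \P)$ follows from the decomposition, the assumption $\E_\P[\ell(Z)] > -\infty$, and finiteness of $\E_\P[g(\ell(Z))]$ at $\tau = 0$. Coercivity of $G(\cdot, \P)$ follows from a second application of Fatou: as $|\tau| \to \infty$, $\ell(Z) - \tau \to \mp\infty$ pointwise, hence $h(\ell(Z) - \tau) \to +\infty$ pointwise, so $\E_\P[h(\ell(Z) - \tau)] \to \infty$, while $\E_\P[\ell(Z)] > -\infty$ prevents cancellation. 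With all hypotheses of Corollary~\ref{cor:minimax} verified, the minimax equality follows.
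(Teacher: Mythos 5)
Your proof is correct and follows the paper's overall template (invoke Corollary~\ref{cor:minimax} with $\cU=\R$, extend $G$ to an ambient measure space in the way the paper prescribes, and verify the boundedness and inf-compactness hypotheses), but the way you establish inf-compactness is genuinely different. The paper obtains coercivity from a pair of global linear lower bounds $g(\tau)\geq(1\pm\varepsilon)\tau-1$, whose existence it outsources to an external result (\citealp[Lemma~C.10]{zhen2023unification}); substituting these into $G(\tau,\P)$ gives linear lower bounds with slopes $\pm\varepsilon$, hence compactness of sublevel sets by direct estimation, with Fatou invoked only for closedness. You instead split off $h:=g-\mathrm{id}$, which is convex, non-negative and vanishes only at the origin, establish $h(t)\to\infty$ as $|t|\to\infty$ by an elementary convexity argument, and then derive \emph{both} closedness and coercivity of $\tau\mapsto\E_\P[h(\ell(Z)-\tau)]$ from Fatou (the latter via the pointwise divergence of $h(\ell(z)-\tau)$). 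Your route is more self-contained---it replaces the cited quantitative lemma with a two-line convexity bound---at the modest cost of invoking a second, soft Fatou limit where the paper has a one-line deterministic estimate. One small point worth making explicit in a final write-up: the decomposition $G(\tau,\P)=\E_\P[\ell(Z)]+\E_\P[h(\ell(Z)-\tau)]$ requires splitting an expectation, which is justified here because $h\geq 0$ and $\E_\P[\ell(Z)]$ is in fact \emph{finite} (the hypothesis $\E_\P[\ell(Z)]>-\infty$ combines with $g\geq\mathrm{id}$ and $\E_\P[g(\ell(Z))]<\infty$ to rule out $+\infty$), so no $\infty-\infty$ ambiguity arises.
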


\begin{proof}
Note first that $G(\tau,\P)$ is convex in~$\tau$ and concave (in fact, linear) in~$\P$. In addition, $G(\tau,\P)$ is closed in~$\tau$. To see this, observe that
\begin{align*}
	\liminf_{\tau'\to\tau} G(\tau',\P)& = \liminf_{\tau'\to\tau} \E_\P[\tau' +  g(\ell(Z) - \tau')] \\ 
	& \geq \E_\P[\liminf_{\tau'\to\tau} \tau' +  g(\ell(Z) - \tau')] \\
	& \geq \E_\P[\tau +  g(\ell(Z) - \tau)] =G(\tau,\P),
\end{align*}
where the two inequalities follow from Fatou's lemma and the continuity of~$g$, respectively. Fatou's lemma applies because any disutility function satisfies $g(\tau)\geq\tau$ for all $\tau\in\R$, which implies that $\tau+ g(\ell(z)-\tau)\geq \ell(z)$ for all~$z\in\cZ$ and $\tau\in \R$. Note also that $\E_\P[\ell(Z)]$ is finite by assumption. Next, we show that $G(\tau,\P)$ is inf-compact in~$\tau$. To this end, recall that $g(0)=0$ and $g(\tau)> \tau$ for all~$\tau\neq 0$. As~$g$ is also convex, this implies that $g(\tau)$ must grow faster than~$\tau$ as~$\tau$ tends to~$+\infty$ and that~$g(\tau)$ must decay slower than~$\tau$ as~$\tau$ tends to~$-\infty$. Hence, there exists $\varepsilon>0$ with $g(\tau)\geq (1+\varepsilon)\tau-1$ and $g(\tau)\geq (1-\varepsilon)\tau-1$ for all~$\tau\in\R$. For a formal proof of this assertion we refer to \citep[Lemma~C.10]{zhen2023unification}. This implies that
\begin{align*}
	G(\tau,\P)\geq \tau+(1+\varepsilon)\left( \E_\P[\ell(Z)]-\tau\right) -1 = -\varepsilon\tau + (1+\varepsilon)  \E_\P[\ell(Z)] -1
\end{align*}
and 
\begin{align*}
	G(\tau,\P)\geq \tau+(1-\varepsilon)\left( \E_\P[\ell(Z)]-\tau\right) -1 = \varepsilon\tau + (1+\varepsilon)  \E_\P[\ell(Z)] -1
\end{align*}
for all~$\tau\in\R$, and thus $\{\tau\in\R:G(\tau,\P)\leq \alpha\}$ is compact for every~$\alpha\in\R$.

Next, set~$\cU=\R$, and define~$\cV=\cM(\R^d)$ as the space of all finite signed Borel measures on~$\R^d$. In addition, define the function $H:\cU\times\cV\to\overline\R$ through
\[
H(u,v) = \left\{\begin{array}{ll} 
	G(u,v) & \text{if } v\in\cP,\\
	-\infty & \text{if } v\not\in\cP.
\end{array}\right.
\]
By construction, $H(u,v)$ is convex and closed in~$u$ and concave in~$v$. Recall from Section~\ref{sec:duality-proof-strategy} that a convex function is closed if it is either proper and lower semi\-continuous or identically equal to~$-\infty$ or to ~$+\infty$. In addition, we have
\[
\sup_{v\in\cV} H(0,v)=\sup_{\P\in\cP} G(0,\P)=\sup_{\P\in\cP} \E_\P[g(\ell(Z))]<\infty.
\]
and the sublevel sets $\{u\in\cU:H(u,v)\leq\alpha\}$ are compact for every~$\alpha\in\R$ provided that $v\in\cP$. The claim thus follows from Corollary~\ref{cor:minimax}.
\end{proof}

Theorem~\ref{thm:duality:regular} implies that if~$\beta\in(0,1)$, then the worst-case $\beta$-CVaR satisfies
\begin{align}
\label{eq:worst-case-cvar}
\sup_{\P\in\cP} \beta\CVaR_\P[\ell(Z)] =\inf_{\tau\in\R} \tau+\frac{1}{\beta} \sup_{\P\in\cP}\E_\P\left[ \max\{\ell(Z)-\tau,0\} \right] 
\end{align}
for {\em any} non-empty convex ambiguity set~$\cP\subseteq\cP(\cZ)$ provided that $\E_\P[|\ell(Z)|]<\infty$ for all~$\P\in\cP$. In the extant literature, the interchange of the supremum over~$\P$ and the infimum over~$\tau$ is often justified with Sion's minimax theorem \citep{sion1958general}. However, many studies overlook that Sion's minimax theorem only applies if~$\cP$ is weakly compact and $\E_\P[ \max\{\ell(Z)-\tau,0\}]$ is weakly upper semi\-continuous in~$\P$. As shown in Section~\ref{sec:topology}, unfortunately, many popular ambiguity sets fail to be weakly compact. In addition, $\E_\P[ \max\{\ell(Z)-\tau,0\}]$ fails to be weakly upper semi\-continuous unless the loss function~$\ell$ is upper semi\-continuous and bounded on~$\cZ$; see Proposition~\ref{prop:semicontinuity}. All non-trivial convex loss functions on~$\R^d$ violate this condition.  In contrast, Theorem~\ref{thm:duality:regular} offers a more general result that exploits the inf-compactness in~$\tau$ but obviates any restrictive topological conditions on~$\cP$ or~$\ell$.

\subsection{Moment Ambiguity Sets}

Recall that the generic moment ambiguity set~\eqref{eq:moment-ambiguity-set} is defined as
\begin{align*}
\cP = \left\{ \P \in \cP_f(\cZ) \, : \, \E_\P \left[ f (Z) \right] \in \cF \right\},
\end{align*}
where $\cZ \subseteq \R^d$ is a non-empty closed support set, $f: \cZ \to \R^m$ is a Borel measurable moment function, $\cF \subseteq \R^m$ is a non-empty closed moment uncertainty set, and $\cP_f(\cZ)$ denots the family of all distributions $\P\in\cP(\cZ)$ for which $\E_\P[f(Z)]$ is finite. Recall also that $\cC = \left\{ \E_\P[f(Z)] : \P \in \cP_f(\cZ) \right\}$ represents the family of all possible moments of any distribution on~$\cZ$. The next theorem establishes a duality result for the worst-case risk problem~\eqref{eq:worst-case:risk} with a moment ambiguity set. 

\begin{theorem}[Duality Theory for Moment Ambiguity Sets II]
\label{thm:duality:risk:moment}
If~$\cP$ is the moment ambiguity set~\eqref{eq:moment-ambiguity-set} and $\varrho$ is an optimized certainty equivalent induced by a disutility function $g$, then the following weak duality relation holds.  
\begin{align}
	\label{eq:weak-duality-moments-risk}
	\sup_{\P \in \cP} ~ \varrho_\P \left[ \ell(Z) \right] 
	\leq \left\{
	\begin{array}{cl}
		\inf & \tau + \lambda_0 + \delta_\cF^*(\lambda) \\[1ex]
		\st & \tau, \lambda_0 \in \R, \, \lambda \in \R^m \\ [1ex]
		& \lambda_0 + f(z)^\top \lambda \geq g(\ell(z) - \tau) ~~ \forall z \in \cZ
	\end{array}
	\right.
\end{align}
If $\sup_{\P\in\cP}\E_\P[g(\ell(Z))]<\infty$, $\E_\P[\ell(Z)]>-\infty$ for all~$\P\in\cP_f(\cZ)$, and~$\cF \subseteq \cC$ is a convex and compact set with $\rint(\cF)\subseteq \rint(\cC)$, then strong duality holds, that is, the inequality~\eqref{eq:weak-duality-moments-risk} becomes an equality.
\end{theorem}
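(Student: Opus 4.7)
The plan is to combine the minimax theorem for optimized certainty equivalents (Theorem~\ref{thm:duality:regular}) with the duality theory for moment ambiguity sets applied to worst-case expectations (Theorem~\ref{thm:duality:moment}). Substituting the defining infimum of $\varrho_\P[\ell(Z)]$ into the outer supremum shows that
\begin{align*}
    \sup_{\P\in\cP} \varrho_\P[\ell(Z)] = \sup_{\P\in\cP} \inf_{\tau\in\R} \tau + \E_\P[g(\ell(Z)-\tau)],
\end{align*}
and the dual on the right hand side of~\eqref{eq:weak-duality-moments-risk} can be rewritten as
\begin{align*}
    \inf_{\tau\in\R} \tau + \inf_{\lambda_0\in\R,\lambda\in\R^m} \left\{ \lambda_0 + \delta^*_\cF(\lambda): \lambda_0 + f(z)^\top\lambda \geq g(\ell(z)-\tau)~\forall z\in\cZ\right\}.
\end{align*}
This decomposition reduces the theorem to the two-step problem of (i) interchanging~$\sup_\P$ with $\inf_\tau$, and (ii) dualizing the resulting inner worst-case expectation problem whose loss function is $z \mapsto g(\ell(z)-\tau)$.

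For weak duality, no minimax argument is needed. I would simply observe that for every~$\tau\in\R$ and every $\P\in\cP$,
\begin{align*}
    \varrho_\P[\ell(Z)] \leq \tau + \E_\P[g(\ell(Z)-\tau)],
\end{align*}
take the supremum over $\P\in\cP$, and then invoke the weak duality inequality of Theorem~\ref{thm:duality:moment} applied to the moment ambiguity set~$\cP$ with the loss function $z\mapsto g(\ell(z)-\tau)$. Finally, minimizing over~$\tau\in\R$ yields~\eqref{eq:weak-duality-moments-risk}. This step only requires that the inner worst-case expectation be well-defined, which is automatic.

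For strong duality, I would first apply Theorem~\ref{thm:duality:regular}, whose hypotheses are exactly the additional standing assumptions of the theorem, to exchange the supremum over $\P\in\cP$ and the infimum over $\tau\in\R$. This yields
\begin{align*}
    \sup_{\P\in\cP}\varrho_\P[\ell(Z)] = \inf_{\tau\in\R} \tau + \sup_{\P\in\cP} \E_\P[g(\ell(Z)-\tau)].
\end{align*}
I would then invoke strong duality from Theorem~\ref{thm:duality:moment} for the inner worst-case expectation problem. The conditions on~$\cF$ and~$\cC$ transfer directly. The only non-trivial check is the requirement $\E_\P[g(\ell(Z)-\tau)] > -\infty$ for every $\P\in\cP_f(\cZ)$ and every~$\tau\in\R$, but this follows from the elementary fact that a disutility function satisfies $g(t)\geq t$ for all $t\in\R$ (a straightforward consequence of convexity combined with $g(0)=0$ and $g(t)>t$ for $t\neq 0$), which yields $g(\ell(z)-\tau)\geq \ell(z)-\tau$, hence $\E_\P[g(\ell(Z)-\tau)]\geq \E_\P[\ell(Z)]-\tau > -\infty$ by the assumption~$\E_\P[\ell(Z)]>-\infty$.

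The main obstacle will be verifying that the preconditions of Theorem~\ref{thm:duality:regular} are indeed met in a way that makes the minimax swap rigorous; all remaining steps are mechanical applications of Theorem~\ref{thm:duality:moment} to a modified loss function, and the interchange of two nested infima to produce the joint minimization over $(\tau,\lambda_0,\lambda)$ in~\eqref{eq:weak-duality-moments-risk}.
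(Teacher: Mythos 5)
Your proposal is correct and takes essentially the same approach as the paper: the max-min inequality combined with the weak duality bound from Theorem~\ref{thm:duality:moment} gives~\eqref{eq:weak-duality-moments-risk}, while strong duality follows by invoking Theorem~\ref{thm:duality:regular} to swap $\sup_\P$ and $\inf_\tau$ and then applying Theorem~\ref{thm:duality:moment} with the modified loss $z\mapsto g(\ell(z)-\tau)$. The one place you go beyond the paper's terse proof is in explicitly checking that $\E_\P[g(\ell(Z)-\tau)]>-\infty$ via $g(t)\geq t$ (which holds directly from $g(0)=0$ and $g(t)>t$ for $t\neq 0$, without even invoking convexity) — a detail the paper leaves implicit but that is indeed needed to apply Theorem~\ref{thm:duality:moment} to the modified loss.
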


\begin{proof}
The max-min inequality implies that
\begin{align*}
	\sup_{\P \in \cP} \varrho_\P \left[ \ell(Z) \right] & = \sup_{\P \in \cP} \inf_{\tau\in\R} \tau+\E_\P\left[ g(\ell(Z)-\tau) \right] \\
	& \leq \inf_{\tau\in\R} \sup_{\P \in \cP} \tau+\E_\P\left[ g(\ell(Z)-\tau) \right].
\end{align*}
The inner maximization problem in the resulting upper bound constitutes a worst-case expectation problem. Hence, it is bounded above by the dual problem derived in Theorem~\ref{thm:duality:moment}. Substituting this dual problem into the above expression yields~\eqref{eq:weak-duality-moments-risk}. Strong duality follows from the minimax theorem for optimized certainty equivalents (Theorem~\ref{thm:duality:regular}) and the strong duality result for worst-case expectation problems (Theorem~\ref{thm:duality:moment}), which apply under the given assumptions.
\end{proof}

The semi-infinite constraint in~\eqref{eq:weak-duality-moments-risk} involves the composite function~$g(\ell(z)-\tau)$, which fails to be concave in~$z$ even if~$g$ is non-decreasing and~$\ell$ is concave. Thus, checking whether a given~$(\tau,\lambda_0,\lambda)$ satisfies the semi-infinite constraint in~\eqref{eq:weak-duality-moments-risk} is generically hard. In fact, \citet[Theorem~1]{chen2024robust} prove that evaluating the worst-case entropic risk is NP-hard even if~$\ell$ is linear and~$\cP$ is a Markov ambiguity set. Hence, while providing theoretical insights, Theorem~\ref{thm:duality:regular} does not necessarily pave the way towards an efficient method for solving worst-case risk problems of the form~\eqref{eq:worst-case:risk}. Nevertheless, Theorem~\ref{thm:duality:regular} provides a concise reformulation for~\eqref{eq:worst-case:risk} that is susceptible to approximate iterative solution procedures.

\subsection{$\phi$-Divergence Ambiguity Sets}
Recall that the $\phi$-divergence ambiguity set~\eqref{eq:phi-divergence-ambiguity-set} is defined~as
\begin{align*}
\cP = \left\{ \P \in \cP(\cZ) \, : \, \D_\phi(\P, \hat \P) \leq r \right\},
\end{align*}
where $\cZ$ is a closed support set, $r\geq 0$ is a size parameter, $\phi$ is an entropy function in the sense of Definition~\ref{def:phi}, $\D_\phi$ is the corresponding $\phi$-divergence in the sense of Definition~\ref{def:D_phi}, and~$\hat\P\in\cP(\cZ)$ is a reference distribution. The next theorem establishes a duality result for worst-case risk problems over $\phi$-divergence ambiguity sets. The proof follows from Theorems~\ref{thm:duality:phi} and~\ref{thm:duality:regular} and is thus omitted.

\begin{theorem}[Duality Theory for $\phi$-Divergence Ambiguity Sets II]
\label{thm:duality:risk:phi}
Assume that $\E_{\hat \P}[\ell(Z)]>-\infty$. If~$\cP$ is the $\phi$-divergence ambiguity set~\eqref{eq:phi-divergence-ambiguity-set}, and $\varrho$ is an optimized certainty equivalent induced by a disutility function~$g$, then the following weak duality relation holds.
\begin{equation*}
	\begin{aligned}
		\sup_{\P \in \cP} \; \varrho_\P[\ell(Z)] \leq \left\{ \begin{array}{cl} \displaystyle \inf_{\tau, \lambda_0\in\R, \lambda \in \R_+} & \displaystyle \tau + \lambda_0  + \lambda r + \E_{\hat \P} \left[ (\phi^*)^\pi \left( g(\ell(Z) - \tau) - \lambda_0, \lambda \right) \right] \\
			\st & \displaystyle  \lambda_0+ \lambda\,\phi^\infty(1) \geq \sup_{z\in\cZ} g(\ell(z)-\tau)
		\end{array}\right.
	\end{aligned}
\end{equation*}
If $\sup_{\P\in\cP}\E_\P[g(\ell(Z))]<\infty$, $\E_\P[\ell(Z)]>-\infty$ for all $\P\in\cP$, $r>0$ and~$\phi$ is con\-tinuous at~$1$, then strong duality holds, that is, the inequality becomes an equality.
\end{theorem}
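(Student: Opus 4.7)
The plan is to combine the strong duality theorem for worst-case expectations over $\phi$-divergence ambiguity sets (Theorem~\ref{thm:duality:phi}) with the minimax theorem for optimized certainty equivalents (Theorem~\ref{thm:duality:regular}).

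First, I will unfold the definition of the optimized certainty equivalent to write
\[
    \sup_{\P\in\cP} \varrho_\P[\ell(Z)] = \sup_{\P\in\cP}\inf_{\tau\in\R} \tau + \E_\P[g(\ell(Z)-\tau)].
\]
For weak duality, the max-min inequality yields the upper bound $\inf_{\tau\in\R}\sup_{\P\in\cP} \tau+\E_\P[g(\ell(Z)-\tau)]$. For each fixed $\tau\in\R$, the inner worst-case expectation problem has loss function $\ell_\tau(z) = g(\ell(z)-\tau)$, which is Borel measurable because $g$ is continuous and $\ell$ is Borel. Since any disutility function satisfies $g(t)\geq t$, we have $\E_{\hat\P}[\ell_\tau(Z)]\geq \E_{\hat\P}[\ell(Z)]-\tau>-\infty$, so the hypothesis of Theorem~\ref{thm:duality:phi} is met. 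Applying that theorem with $\ell$ replaced by~$\ell_\tau$ immediately produces the stated dual bound after consolidating the infima over $\tau$, $\lambda_0$ and~$\lambda$.

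For strong duality, the first step is to swap the supremum over $\P$ and the infimum over $\tau$ via Theorem~\ref{thm:duality:regular}. This step is permissible precisely because of the assumptions $\sup_{\P\in\cP}\E_\P[g(\ell(Z))]<\infty$ and $\E_\P[\ell(Z)]>-\infty$ for all $\P\in\cP$, together with the convexity of the $\phi$-divergence ambiguity set. The result is the identity
\[
    \sup_{\P\in\cP} \varrho_\P[\ell(Z)] = \inf_{\tau\in\R}\Bigl(\tau + \sup_{\P\in\cP} \E_\P[g(\ell(Z)-\tau)]\Bigr).
\]
Next, I will apply the strong duality part of Theorem~\ref{thm:duality:phi} to the inner worst-case expectation for each fixed~$\tau$; this is where the assumptions $r>0$ and continuity of $\phi$ at~$1$ enter. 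Merging the resulting infimum over $(\lambda_0,\lambda)$ with the outer infimum over~$\tau$ yields the desired equality.

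The main obstacle is verifying that the hypotheses of Theorem~\ref{thm:duality:phi} hold uniformly in~$\tau$, so that strong duality can indeed be invoked inside the outer infimum. The crucial observation is that $\E_{\hat\P}[\ell_\tau(Z)]>-\infty$ for every $\tau\in\R$, as argued above. A minor technical point is ensuring that the semi-infinite constraint $\lambda_0+\lambda\phi^\infty(1)\geq\sup_z g(\ell(z)-\tau)$, inherited from Theorem~\ref{thm:duality:phi}, is correctly carried through when $\phi^\infty(1)=\infty$ (with the convention $0\cdot\infty=\infty$ stated in the theorem); this is handled by the same convention used in Theorem~\ref{thm:duality:phi}, so no additional work is needed.
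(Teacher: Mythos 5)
Your proposal is correct and follows exactly the strategy the paper prescribes (and spells out in detail for the analogous moment-ambiguity case, Theorem~\ref{thm:duality:risk:moment}): weak duality via the max-min inequality combined with the weak part of Theorem~\ref{thm:duality:phi} applied to $\ell_\tau(z)=g(\ell(z)-\tau)$, and strong duality via Theorem~\ref{thm:duality:regular} to interchange $\sup_\P$ with $\inf_\tau$ followed by the strong part of Theorem~\ref{thm:duality:phi}. Your observation that $g(t)\geq t$ guarantees $\E_{\hat\P}[g(\ell(Z)-\tau)]>-\infty$ uniformly in~$\tau$ is precisely the check needed to apply Theorem~\ref{thm:duality:phi} inside the outer infimum.
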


A duality result akin to Theorem~\ref{thm:duality:risk:phi} also holds for worst-case risk problems over {\em restricted} $\phi$-divergence ambiguity sets of the form
\begin{align*}
\cP = \left\{ \P \in \cP(\cZ) \, : \, \P \ll \hat \P, \; \D_\phi(\P, \hat \P) \leq r \right\}.
\end{align*}
The proof of the next theorem follows immediately from Theorems~\ref{thm:duality:restricted:phi} and~\ref{thm:duality:regular}.

\begin{theorem}[Duality Theory for Restricted $\phi$-Divergence Ambiguity Sets II]
\label{thm:duality:risk:phi-restricted}
Assume that $\E_{\hat \P}[\ell(Z)]>-\infty$. If~$\cP$ is the restricted $\phi$-divergence ambiguity set~\eqref{eq:restricted-phi-divergence-ambiguity-set}, and $\varrho$ is an optimized certainty equivalent induced by a disutility function~$g$, then the following weak duality relation holds.
\begin{align*}
	\sup_{\P \in \cP} \; \varrho_\P[\ell(Z)] \leq \inf_{\tau, \lambda_0 \in \R, \, \lambda \in \R_+} ~ \tau + \lambda_0 + \lambda r + \E_{\hat \P} \left[ (\phi^*)^\pi \left( g(\ell(Z) - \tau) - \lambda_0, \lambda \right) \right].
\end{align*}
If $\sup_{\P\in\cP}\E_\P[g(\ell(Z))]<\infty$, $\E_\P[\ell(Z)]>-\infty$ for all $\P\in\cP$, $r>0$ and~$\phi$ is con\-tinuous at~$1$, then strong duality holds, that is, the inequality becomes an equality.
\end{theorem}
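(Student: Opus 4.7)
The plan is to piece the result together by combining the two cited theorems in a straightforward way, using the representation of the optimized certainty equivalent from Definition~\ref{def:regular}. I would start by writing
\[
    \sup_{\P\in\cP}\varrho_\P[\ell(Z)]=\sup_{\P\in\cP}\inf_{\tau\in\R}\left\{\tau+\E_\P[g(\ell(Z)-\tau)]\right\}\leq \inf_{\tau\in\R}\left\{\tau+\sup_{\P\in\cP}\E_\P[g(\ell(Z)-\tau)]\right\},
\]
where the inequality is the standard max-min inequality. For each fixed $\tau\in\R$, the inner problem $\sup_{\P\in\cP}\E_\P[g(\ell(Z)-\tau)]$ is a worst-case expectation problem over the restricted $\phi$-divergence ambiguity set with modified loss $\ell_\tau(z)=g(\ell(z)-\tau)$. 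I would apply Theorem~\ref{thm:duality:restricted:phi} to this inner problem, which requires only that $\E_{\hat\P}[\ell_\tau(Z)]>-\infty$; this is inherited from the assumption $\E_{\hat\P}[\ell(Z)]>-\infty$ together with the lower bound $g(s)\geq s$ that every disutility function satisfies (since $g(0)=0$ and $g$ is convex). Substituting the resulting dual expression yields the weak duality bound stated in the theorem.

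For the strong duality direction, I would first invoke Theorem~\ref{thm:duality:regular} to interchange the supremum over $\P\in\cP$ and the infimum over $\tau\in\R$, giving
\[
    \sup_{\P\in\cP}\varrho_\P[\ell(Z)]=\inf_{\tau\in\R}\sup_{\P\in\cP}\left\{\tau+\E_\P[g(\ell(Z)-\tau)]\right\}.
\]
The hypotheses of Theorem~\ref{thm:duality:regular} are exactly the conditions $\sup_{\P\in\cP}\E_\P[g(\ell(Z))]<\infty$ and $\E_\P[\ell(Z)]>-\infty$ for all $\P\in\cP$, which are imposed in the strong duality assertion. Next, for each fixed $\tau$, I would apply the strong duality part of Theorem~\ref{thm:duality:restricted:phi} to rewrite the inner worst-case expectation as an infimum over $(\lambda_0,\lambda)\in\R\times\R_+$; this step uses $r>0$ and the continuity of $\phi$ at $1$. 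Combining the two infima over $\tau$ and $(\lambda_0,\lambda)$ gives the desired equality.

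I expect no real obstacles here since the argument is entirely modular; the only subtlety is verifying that the side conditions of Theorems~\ref{thm:duality:restricted:phi} and~\ref{thm:duality:regular} transfer cleanly to the composite loss $g\circ(\ell-\tau)$. In particular, $\E_{\hat\P}[g(\ell(Z)-\tau)]>-\infty$ follows from $g(s)\geq s$ and $\E_{\hat\P}[\ell(Z)]>-\infty$, which justifies the application of Theorem~\ref{thm:duality:restricted:phi} uniformly in $\tau$. Given the brevity of the argument, I would simply outline these two reductions rather than reproduce any of the underlying duality calculations.
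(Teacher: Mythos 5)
Your proposal is correct and follows essentially the same route as the paper, which derives this result by combining the max--min inequality with Theorem~\ref{thm:duality:restricted:phi} applied to the shifted loss $g(\ell(\cdot)-\tau)$ for weak duality, and invokes Theorem~\ref{thm:duality:regular} to interchange the supremum and infimum for strong duality. Your verification that $\E_{\hat\P}[g(\ell(Z)-\tau)]>-\infty$ via the bound $g(s)\geq s$ is exactly the side condition that needs checking, so the argument is complete.
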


\subsection{Optimal Transport Ambiguity Sets}

Recall that the optimal transport ambiguity set~\eqref{eq:OT-ambiguity-set} is defined as
\begin{align*}
\cP = \left\{ \P \in \cP(\cZ) \, : \, \OT_c(\P, \hat \P) \leq r \right\},
\end{align*}
where $\cZ$ is a closed support set, $r\geq 0$ is a size parameter, $c$ is a transportation cost function in the sense of Definition~\ref{def:cost}, $\OT_c$ is the corresponding optimal transport discrepancy in the sense of Definition~\ref{def:OT}, and~$\hat\P\in\cP(\cZ)$ is a reference distribution. The next theorem establishes a duality result for worst-case risk problems over optimal transport ambiguity sets. Its proof follows immediately from Theorems~\ref{thm:duality:OT} and~\ref{thm:duality:regular} and is thus omitted.

\begin{theorem}[Duality Theory for Optimal Transport Ambiguity Sets II]
\label{thm:duality:risk:OT}
Assume that $\E_{\hat\P}[\ell(\hat Z)]>-\infty$ and $\ell$ is upper semicontinuous. If~$\cP$ is the optimal transport ambiguity set defined in~\eqref{eq:OT-ambiguity-set} and~$\varrho$ is an optimized certainty equivalent induced by a disutility function~$g$, then the following weak duality relation holds.
\begin{align*}
	\sup_{\P \in \cP} \; \varrho_\P[\ell(Z)]
	\leq \inf_{\tau \in \R, \, \lambda \in \R_+} \; \tau + \lambda r + \E_{\hat\P} \left[ \sup_{z \in \cZ} \; g(\ell(z) - \tau) - \lambda c(z,\hat Z) \right].
\end{align*}
If $\sup_{\P\in\cP}\E_\P[g(\ell(Z))]<\infty$, $\E_\P[\ell(Z)]>-\infty$ for all $\P\in\cP$ and~$r > 0$, then strong duality holds, that is, the inequality becomes an equality.
\end{theorem}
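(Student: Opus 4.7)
\smallskip

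The plan is to chain together the minimax theorem for optimized certainty equivalents (Theorem~\ref{thm:duality:regular}) with the strong duality theorem for worst-case expectations over optimal transport ambiguity sets (Theorem~\ref{thm:duality:OT}). Throughout, I define the auxiliary function $G(\tau,\P) = \tau + \E_\P[g(\ell(Z)-\tau)]$ as in Theorem~\ref{thm:duality:regular}, so that by the definition of the optimized certainty equivalent we have
\begin{align*}
    \sup_{\P\in\cP}\varrho_\P[\ell(Z)] \;=\; \sup_{\P\in\cP}\inf_{\tau\in\R} G(\tau,\P).
\end{align*}

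For weak duality, I would apply the max-min inequality to obtain
\begin{align*}
    \sup_{\P\in\cP}\inf_{\tau\in\R} G(\tau,\P) \;\leq\; \inf_{\tau\in\R}\Big(\tau + \sup_{\P\in\cP}\E_\P[g(\ell(Z)-\tau)]\Big),
\end{align*}
and then invoke the weak duality half of Theorem~\ref{thm:duality:OT} to the inner worst-case expectation problem with loss function $z\mapsto g(\ell(z)-\tau)$. This yields the upper bound $\lambda r + \E_{\hat\P}[\sup_{z\in\cZ} g(\ell(z)-\tau)-\lambda c(z,\hat Z)]$ for every $\lambda\in\R_+$. Combining these inequalities gives the desired weak duality relation after taking an infimum over $(\tau,\lambda)\in\R\times\R_+$.

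For strong duality, I would first invoke Theorem~\ref{thm:duality:regular} to legitimately interchange $\sup_{\P\in\cP}$ and $\inf_{\tau\in\R}$; the hypotheses $\sup_{\P\in\cP}\E_\P[g(\ell(Z))]<\infty$, $\E_\P[\ell(Z)]>-\infty$ for all $\P\in\cP$, and the convexity of the optimal transport ball supply exactly the assumptions required by that theorem. This produces the identity
\begin{align*}
    \sup_{\P\in\cP}\varrho_\P[\ell(Z)] \;=\; \inf_{\tau\in\R}\Big(\tau + \sup_{\P\in\cP}\E_\P[g(\ell(Z)-\tau)]\Big).
\end{align*}
Then, for each fixed $\tau\in\R$, I would apply the strong duality half of Theorem~\ref{thm:duality:OT} to the inner worst-case expectation; the assumption $r>0$ is precisely the condition needed there, and the integrability requirement $\E_{\hat\P}[g(\ell(\hat Z)-\tau)]>-\infty$ holds because the disutility function satisfies $g(s)\geq s$ everywhere, giving $\E_{\hat\P}[g(\ell(\hat Z)-\tau)]\geq \E_{\hat\P}[\ell(\hat Z)]-\tau>-\infty$. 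Taking the resulting infimum also over $\lambda\in\R_+$ produces the desired equality.

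The main obstacle will be verifying that the function $z\mapsto g(\ell(z)-\tau)$ is upper semicontinuous for every fixed $\tau$, as required to invoke Theorem~\ref{thm:duality:OT}. Since $g$ is convex and real-valued it is continuous, but continuity composed with mere upper semicontinuity of $\ell$ does not in general preserve upper semicontinuity unless $g$ is monotone. I would address this by noting that for the most relevant cases (CVaR, entropic risk, and more generally any $g$ arising as the conjugate of an entropy function, which is automatically non-decreasing), monotonicity of $g$ ensures the composition is upper semicontinuous. For non-monotone disutility functions such as the mean-variance case, the argument would additionally require either imposing continuity of $\ell$ or using the normal integrand machinery of \citep{rockafellar2009variational} as in the proof of Lemma~\ref{lem:interchangeability}, after which the rest of the chain of reductions proceeds unchanged.
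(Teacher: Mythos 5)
Your proposal is correct and follows exactly the route the paper intends: the proof is omitted there precisely because it is the verbatim analogue of the proof of Theorem~\ref{thm:duality:risk:moment}, namely the max--min inequality plus the weak-duality half of Theorem~\ref{thm:duality:OT} for the inequality, and Theorem~\ref{thm:duality:regular} combined with the strong-duality half of Theorem~\ref{thm:duality:OT} for the equality. Your closing observation is also well taken: Theorem~\ref{thm:duality:OT} requires the composed loss $z\mapsto g(\ell(z)-\tau)$ to be upper semicontinuous, which does not follow from upper semicontinuity of $\ell$ alone when $g$ is non-monotone (e.g., the mean-variance disutility), so the statement implicitly needs either a non-decreasing $g$, a continuous $\ell$, or the normal-integrand generalization of Lemma~\ref{lem:interchangeability} that you mention.
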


Worst-case risk problems with optimal transport ambiguity sets are studied by \citet{pflug2007ambiguity}, \citet{pichler:2013} and \citet{wozabal2014robustifying} in the context of portfolio selection with linear loss functions and by \citet{esfahani2018inverse} in the context of inverse optimization using the CVaR. \citet{sadana2024data} investigate worst-case entropic risk measures over $\infty$-Wasserstein balls and establish tractable reformulations under standard convexity assumptions. \citet{kent2021modified} and \citet{sheriff2023nonlinear} develop customized Frank-Wolfe algorithms in the space of probability distribution to address worst-case risk problems involving generic loss functions and risk measures. Specifically, \citet{kent2021modified} work with Wasserstein gradient flows and use the corresponding notions of smoothness to establish the convergence of their Frank-Wolfe algorithm. In contrast, \citet{sheriff2023nonlinear} work with G\^ateaux derivatives, which leads to a different notion of smoothness and thus to a different convergence analysis. Both algorithms display sublinear convergence rates. When the reference distribution~$\hat \P$ is discrete or when only samples from $\hat \P$ are used, the algorithms' iterates represent discrete distributions with progressively increasing bit sizes. Theorem~\ref{thm:duality:risk:OT} provides a compact, albeit potentially nonconvex, reformulation of the worst-case risk problem. This reformulation is amenable to primal-dual gradient methods in the finite-dimensional space of the dual variables, which are guaranteed to converge to a stationary point. 

Worst-case risk problems represent special instances of optimization problems over spaces of probability distributions. The mainstream methods to address such problems leverage the machinery of Wasserstein gradient flows \citep{ambrosio2008gradient}. Wasserstein gradient flows have recently been used in the context of distributionally robust optimization problems \citep{lanzetti2022first,lanzetti2024variational, xu2024flow}, nonconvex optimization \citep{chizat2018global,chizat2022sparse} or variational inference~\citep{jiang2024algorithms,lambert2022variational,diao2023forward,zhang2020theoretical}. The results of this section are new and complementary to these existing works.

\section{Analytical Solutions of Nature's Subproblem}
\label{sec:analytical-wc}
A key challenge in DRO is to handle the worst-case expectation problem embedded in~\eqref{eq:primal:dro}. This problem is solved by the fictitious adversary---commonly thought of as {\em nature}--- once the decision-maker has committed to an~$x\in\cX$. It maximizes a linear function over a convex subset of an infinite-dimensional space of measures and thus appears to be intractable. Therefore, considerable research effort has been devoted to identifying conditions under which this problem is efficiently solvable. We now show that it can actually be solved {\em analytically} in interesting situations.

The duality theory derived in Section~\ref{sec:duality-wc-expectation} motivates the following simple strategy for finding analytical solutions of nature's subproblem. Construct feasible solutions for the primal worst-case expectation problem and its dual, and show that their objective function values match. If such matching solutions can be found, then both of them must be optimal in their respective optimization problems thanks to weak duality. As we will see below, this simple strategy succeeds surprisingly often. In addition, we will see that analytical solutions for worst-case {\em expectation} problems can sometimes be generalized to analytical solutions for worst-case {\em risk} problems of the form~\eqref{eq:worst-case:risk}. The material reviewed in this section covers several decades of research in DRO from the 1950s until the present day.

\subsection{Jensen Bound}
\label{sec:jensen}
Consider the worst-case expectation problem
\begin{subequations}
\begin{align}
	\label{eq:primal-Markov}
	\sup_{\P \in \cP(\cZ)} \left\{ \E_\P \left[ \ell (Z) \right] \;:\; \E_\P \left[ Z \right] = \mu \right\},
\end{align}
which maximizes the expected value of $\ell(Z)$ over the Markov ambiguity set of all distributions supported on~$\cZ$ with mean~$\mu$. The Markov ambiguity set is a moment ambiguity set of the form~\eqref{eq:moment-ambiguity-set} with~$f(z)=z$ and~$\cF=\{\mu\}$. By Theorem~\ref{thm:duality:moment} and as the support function of $\cF$ is linear, the problem dual to~\eqref{eq:primal-Markov} is given by
\begin{align}
	\label{eq:dual-Markov}
	\inf_{\lambda_0\in\R,\, \lambda\in\R^d} \left\{ \lambda_0+\lambda^\top \mu \;:\; \lambda_0+\lambda^\top z\geq \ell(z) ~~ \forall z\in\cZ \right\}.
\end{align}
\end{subequations}
Intuitively, this dual problem aims to find an affine function~$a(z)=\lambda_0+\lambda^\top z$ that majorizes the loss function~$\ell(z)$ on~$\cZ$ and has minimal expected value~$\E_\P[a(Z)]$ under any distribution~$\P$ feasible in the primal problem~\eqref{eq:primal-Markov}.

\begin{proposition}[Jensen Bound]
\label{prop:jensen}
Suppose that~$\cZ$ is convex, $\mu\in\cZ$, $\ell$ is concave, and~$\lambda^\star$ is any supergradient of~$\ell$ at~$\mu$. Then, the primal problem~\eqref{eq:primal-Markov} is solved by~$\P^\star=\delta_\mu$, and the dual problem~\eqref{eq:dual-Markov} is solved by~$(\lambda_0^\star,\lambda^\star)$, where~$\lambda_0^\star=\ell(\mu)-\mu^\top \lambda^\star$. In addition, the optimal values of~\eqref{eq:primal-Markov} and~\eqref{eq:dual-Markov} both equal~$\ell(\mu)$.    
\end{proposition}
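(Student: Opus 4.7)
The plan is to apply the general proof strategy announced at the start of Section~\ref{sec:analytical-wc}: exhibit a primal feasible distribution and a dual feasible pair whose objective values both equal $\ell(\mu)$, and then invoke weak duality from Theorem~\ref{thm:duality:moment} to conclude that both are optimal and that the common optimal value is $\ell(\mu)$. Note that \eqref{eq:primal-Markov} is an instance of the worst-case expectation problem over the moment ambiguity set~\eqref{eq:moment-ambiguity-set} with $f(z)=z$ and $\cF=\{\mu\}$, so Theorem~\ref{thm:duality:moment} applies and the weak duality inequality~\eqref{eq:weak-duality-moments} collapses to the inequality $\sup\eqref{eq:primal-Markov}\leq \inf\eqref{eq:dual-Markov}$ (using Lemma~\ref{lem:support:elementary}\,(iii) with $M$ absent).

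First, I would check primal feasibility of $\P^\star=\delta_\mu$. Since $\mu\in\cZ$ by hypothesis, the Dirac distribution $\delta_\mu$ belongs to $\cP(\cZ)$, and its mean satisfies $\E_{\delta_\mu}[Z]=\mu$. Consequently $\P^\star$ is feasible in~\eqref{eq:primal-Markov}, and its objective value equals $\E_{\delta_\mu}[\ell(Z)]=\ell(\mu)$.

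Second, I would check dual feasibility of $(\lambda_0^\star,\lambda^\star)$ with $\lambda_0^\star=\ell(\mu)-\mu^\top\lambda^\star$. By the very definition of a supergradient of a concave function, the assumption that $\lambda^\star$ supports $\ell$ at $\mu$ means
\begin{align*}
    \ell(z)\leq \ell(\mu)+(\lambda^\star)^\top (z-\mu)=\lambda_0^\star+(\lambda^\star)^\top z \quad \forall z\in\cZ,
\end{align*}
which is precisely the semi-infinite constraint in~\eqref{eq:dual-Markov}. The objective value of $(\lambda_0^\star,\lambda^\star)$ in~\eqref{eq:dual-Markov} is $\lambda_0^\star+(\lambda^\star)^\top \mu=\ell(\mu)$.

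Combining these two bounds with the weak duality relation supplied by Theorem~\ref{thm:duality:moment} yields
\begin{align*}
    \ell(\mu)=\E_{\P^\star}[\ell(Z)]\leq \sup\eqref{eq:primal-Markov}\leq \inf\eqref{eq:dual-Markov}\leq \lambda_0^\star+(\lambda^\star)^\top\mu=\ell(\mu),
\end{align*}
so every inequality is tight and both candidate solutions are optimal with common value $\ell(\mu)$. The argument is quite short, and I do not anticipate any real obstacle: the only subtle point would ordinarily be the existence of a supergradient of the concave function $\ell$ at the (possibly boundary) point $\mu\in\cZ$, but this is assumed outright in the statement of the proposition, so we do not need to invoke any interiority hypothesis on $\mu$ relative to $\dom(\ell)$.
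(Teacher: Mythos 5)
Your proof is correct and follows essentially the same route as the paper: exhibit $\P^\star=\delta_\mu$ as primal feasible with value $\ell(\mu)$, exhibit $(\lambda_0^\star,\lambda^\star)$ as dual feasible via the supergradient inequality with value $\ell(\mu)$, and conclude by weak duality from Theorem~\ref{thm:duality:moment}.
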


\begin{proof}
By construction, $\P^\star$ is feasible in the primal worst-case expectation problem, and its objective function value amounts to~$\ell(\mu)$. In addition, $(\lambda_0^\star,\lambda^\star)$ is feasible in the dual robust optimization problem because~$\lambda^\star$ is a supergradient of~$\ell$ at~$\mu$, and its objective function value amounts to~$\ell(\mu)$, too. Hence, by weak duality as established in Theorem~\ref{thm:duality:moment}, $\P^\star$ is primal optimal, and~$(\lambda_0^\star,\lambda^\star)$ is dual optimal. 
\end{proof}

Proposition~\ref{prop:jensen} implies Jensen's inequality $\E_\P [ \ell (Z)] \leq \E_{\P^\star} [ \ell (Z)]= \ell (\E_\P [ Z])$, which holds for all distributions~$\P$ feasible in~\eqref{eq:primal-Markov} \citep{jensen1906fonctions}. Proposition~\ref{prop:jensen} further shows that~\eqref{eq:dual-Markov} is solved by any affine function tangent to~$\ell$ at~$\mu$. 

If the loss function~$\ell(x,z)$ in the DRO problem~\eqref{eq:primal:dro} is concave in~$z$ for any fixed~$x\in\cX$, then Proposition~\ref{prop:jensen} implies that the {\em same} distribution~$\P^\star$ solves the inner maximization problem in~\eqref{eq:primal:dro} for every~$x\in\cX$. Hence, the DRO problem~\eqref{eq:primal:dro} reduces to the (non-robust) stochastic program $\inf_{x\in\cX}\E_{\P^\star} [\ell(x,Z)]$.

Jensen's inequality is traditionally used to approximate hard {\em stochastic} optimization problems of the form $\inf_{x\in\cX} \E_\P[\ell(x,Z)]$, where~$\P$ is a known continuous distribution of~$Z$. Proposition~\ref{prop:jensen} implies that if~$\ell(x,z)$ is concave in~$z$ for any~$x\in\cX$, then replacing~$\P$ with~$\P^\star=\delta_{\E_\P[Z]}$ leads to a conservative approximation of this stochastic program. As~$\P^\star$ is discrete (in fact, a Dirac distribution), the resulting approximate problem is much easier to solve. Its approximation quality can be improved by partitioning~$\cZ$ into finitely many convex cells and constructing separate Jensen bounds for all cells \cite[Section~10.1]{birge2011introduction}.

\subsection{Edmundson-Madansky Bound}
\label{sec:edmundson-madansky}
The worst-case expectation problem~\eqref{eq:primal-Markov} over a Markov ambiguity set and its dual~\eqref{eq:dual-Markov} can also be solved in closed form if~$\ell$ is convex and~$\cZ$ is a simplex.

\begin{proposition}[Edmundson-Madansky Bound]
\label{prop:EM-bound}
Suppose that~$\cZ$ is the probability simplex in~$\R^d$ with vertices $e_i$, $i\in[d]$, $\mu\in\rint(\cZ)$, and~$\ell$ is convex and real-valued. Then, the primal problem~\eqref{eq:primal-Markov} is solved by~$\P^\star=\sum_{i=1}^d \mu_i\delta_{e_i}$, and the dual problem~\eqref{eq:dual-Markov} is solved by~$(\lambda_0^\star,\lambda^\star)$, where~$\lambda^\star_0=0$ and~$\lambda_i^\star=\ell(e_i)$ for all $i\in[d]$. In addition, the optimal values of~\eqref{eq:primal-Markov} and~\eqref{eq:dual-Markov} both equal~$\sum_{i=1}^d \E_\P [ Z_i] \ell(e_i)$.
\end{proposition}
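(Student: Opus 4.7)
The plan is to mirror the proof strategy of Proposition~\ref{prop:jensen}: exhibit matching primal and dual feasible solutions and invoke weak duality from Theorem~\ref{thm:duality:moment}. Since the Markov ambiguity set is an instance of the generic moment ambiguity set~\eqref{eq:moment-ambiguity-set} with $f(z)=z$ and $\cF=\{\mu\}$, weak duality between~\eqref{eq:primal-Markov} and~\eqref{eq:dual-Markov} is already guaranteed by Theorem~\ref{thm:duality:moment}. Hence it suffices to verify feasibility of the two candidate solutions and show that they produce identical objective values.

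First I would check primal feasibility of $\P^\star=\sum_{i=1}^d \mu_i\delta_{e_i}$. As $\mu\in\rint(\cZ)$, the coefficients $\mu_i$ are positive and sum to one, so $\P^\star\in\cP(\cZ)$. The mean constraint is verified immediately, since $\E_{\P^\star}[Z]=\sum_{i=1}^d \mu_i e_i=\mu$ because $\{e_i\}_{i\in[d]}$ is the standard basis. The corresponding primal objective value is therefore $\E_{\P^\star}[\ell(Z)]=\sum_{i=1}^d \mu_i\ell(e_i)$.

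Next I would verify dual feasibility of $(\lambda_0^\star,\lambda^\star)$ with $\lambda_0^\star=0$ and $\lambda_i^\star=\ell(e_i)$. The key observation is that every $z\in\cZ$ admits the unique representation $z=\sum_{i=1}^d z_i e_i$ as a convex combination of the vertices of the simplex. Exploiting convexity of~$\ell$, I obtain
\[
\ell(z)=\ell\Bigl(\sum_{i=1}^d z_i e_i\Bigr)\leq \sum_{i=1}^d z_i \ell(e_i)=\lambda_0^\star+(\lambda^\star)^\top z,
\]
so the semi-infinite dual constraint holds throughout~$\cZ$. The dual objective value is $\lambda_0^\star+(\lambda^\star)^\top\mu=\sum_{i=1}^d \mu_i\ell(e_i)$, which coincides with the primal value computed above. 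Weak duality then forces both candidates to be optimal in their respective problems, and the common optimal value is $\sum_{i=1}^d \mu_i\ell(e_i)$ as claimed.

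There is no substantive obstacle here: the argument reduces to two short verifications, and the only point worth emphasizing is that convexity of~$\ell$ together with the simplex structure of~$\cZ$ furnishes a \emph{global} linear majorant of~$\ell$ on~$\cZ$, rather than merely a tangent majorant as in the Jensen bound. This is precisely what allows the primal solution to be supported on the extreme points of~$\cZ$ instead of collapsing to a single atom at~$\mu$.
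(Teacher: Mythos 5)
Your proof is correct and follows essentially the same route as the paper's: verify feasibility of the given primal and dual candidates, compute matching objective values $\sum_{i=1}^d \mu_i\ell(e_i)$, and invoke weak duality from Theorem~\ref{thm:duality:moment}. The closing remark contrasting the global linear majorant here with the tangent majorant in the Jensen bound is a nice observation but not a departure from the paper's argument.
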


\begin{proof}
As~$\mu$ belongs to the probability simplex, $\P^\star$ is feasible in the primal worst-case expectation problem with objective function value $\sum_{i=1}^d \mu_i\ell(e_i)$. Also, as~$\ell$ is convex, Jensen's inequality implies that
\[
\lambda_0^\star +z^\top \lambda^\star = \sum_{i=1}^d z_i\ell(e_i)\geq \ell\left(\sum_{i=1}^d z_ie_i\right)=\ell(z)\quad \forall z\in \cZ.
\]
We conclude that $(\lambda_0^\star,\lambda^\star)$ is feasible in the dual robust optimization problem, and its objective function value amounts to $\sum_{i=1}^d \mu_i\ell(e_i)$, too. Hence, by weak duality as established in Theorem~\ref{thm:duality:moment}, $\P^\star$ is primal optimal, and~$(\lambda_0^\star,\lambda^\star)$ is dual optimal.
\end{proof}

Proposition~\ref{prop:EM-bound} implies the Edmundson-Madansky inequality, which states that $\E_\P [ \ell (Z)] \leq \E_{\P^\star} [ \ell (Z)]= \sum_{i=1}^d \E_\P [ Z_i] \ell(e_i)$ for all distributions~$\P$ feasible in~\eqref{eq:primal-Markov} \citep{edmundson:56, madansky:59}, and it shows that~\eqref{eq:dual-Markov} is solved by an affine function that touches~$\ell$ at the vertices~$e_i$, $i\in[d]$, of~$\cZ$. We emphasize, however, that Proposition~\eqref{prop:EM-bound} remains valid with minor modifications if~$\cZ$ is an arbitrary regular simplex in~$\R^d$, that is, the convex hull of~$d+1$ affinely independent vectors $v_i\in \R^d$, $i\in[d+1]$; see \citep{birgewets:86, gassmannziemba:86}.

If the loss function~$\ell(x,z)$ in~\eqref{eq:primal:dro} is convex in~$z$ for any fixed~$x\in\cX$, then Proposition~\ref{prop:EM-bound} implies that the DRO problem~\eqref{eq:primal:dro} is equivalent to the stochastic program $\inf_{x\in\cX}\E_{\P^\star} [\ell(x,Z)]$, where $\P^\star$ is {\em independent} of~$x$. As~$\P^\star$ is a discrete distribution with~$d$ atoms, this stochastic program is usually easy to solve.

\subsection{Barycentric Approximation}
Consider the worst-case expectation problem
\begin{subequations}
\label{eq:baycentric-pd}
\begin{align}
	\label{eq:primal-barycentric}
	\sup_{\P \in \cP(\cV\times\cW)} \left\{ \E_\P \left[ \ell (V,W) \right] : \E_\P \left[ V \right] = \bar v,\, \E_\P \left[ W \right] = \bar w,\, \E_\P \left[ VW^\top \right] = C\right\},
\end{align}
which maximizes the expected value of~$\ell (V,W)$ across all distributions of $Z=(V,W)$ on~$\cV\times\cW$ under which~$V$ and~$W$ have mean vectors~$\bar v$ and~$\bar w$, respectively, and cross moment matrix~$C$. Note that if~$V$ and~$W$ are uncorrelated, then $C=\bar v\bar w^\top$. Problem~\eqref{eq:primal-barycentric} optimizes over a moment ambiguity set of the form~\eqref{eq:moment-ambiguity-set} with~$f(v,w)=(v, w,vw^\top)$ and~$\cF=\{ \bar v\}\times \{\bar w\} \times \{C\}$. By Theorem~\ref{thm:duality:moment} and as the support function of~$\cF$ is linear, the problem dual to~\eqref{eq:primal-barycentric} is given by
\begin{align}
	\label{eq:dual-barycentric}
	\begin{array}{cl}
		\inf &  \lambda_0+ \lambda_v^\top \bar v + \lambda_w^\top \bar w + \inner{\Lambda}{C}\\
		\st & \lambda_0\in\R, ~\lambda_v\in\R^{d_v},~ \lambda_w\in\R^{d_w},~\Lambda \in\R^{d_v\times d_w} \\
		& \lambda_0+\lambda_v^\top v+ \lambda_w^\top w+ v^\top \Lambda w\geq \ell(v,w) \quad \forall v\in\cV,\; \forall w\in \cW.
	\end{array}
\end{align}
\end{subequations}
This dual problem seeks a bi-affine function~$b(v,w)=\lambda_0+\lambda_v^\top v+\lambda_w^\top w+ v^\top \Lambda w$ that majorizes the loss function~$\ell( v,w)$ on~$\cV\times\cW$ and minimizes~$\E_\P[b( V, W)]$ under any distribution~$\P$ feasible in~\eqref{eq:primal-barycentric}. The following proposition shows that problems~\eqref{eq:primal-barycentric} and~\eqref{eq:dual-barycentric} can be solved in closed form if~$\ell$ is a concave-convex saddle function and~$\cW$ is a simplex. Below, we use~$e_i$ to denote the $i$-th standard basis vector in~$\R^{d_w}$, $i\in[d_w]$, and~$e$ to denote the vector of ones in~$\R^{d_w}$.

\begin{proposition}[Barycentric Approximation]
\label{prop:barycentric-bound}
Suppose that~$\cV\subseteq \R^{d_v}$ is convex and $\cW\subseteq \R^{d_w}$ is the probability simplex with vertices~$e_i$, $i\in[d_w]$. Suppose also that the loss function~$\ell (v,w)$ is concave and superdifferentiable in~$v$ for any fixed~$w$ and convex in~$w$ for any fixed~$v$. In addition, suppose that~$\bar v\in \cV$, $\bar w\in\rint(\cW)$ and $Ce=\bar v$ and that problem~\eqref{eq:primal-barycentric} is feasible. Then~\eqref{eq:primal-barycentric} is solved~by
\[
\P^\star = \sum_{i=1}^{d_w} \bar w_i \, \delta_{(Ce_i/\bar w_i,e_i)}.
\]
If $\Lambda_i^\star$ is any super\-gradient in $\partial_{v} \ell(Ce_i/\bar w_i, e_i)$ for all $i\in[d_w]$ and 
\[
\lambda^\star_{w,i}=\ell(Ce_i/\bar w_i,e_i)-(\Lambda_i^\star)^\top Ce_i/ \bar w_i\quad \forall i\in[d_w],
\]
then the dual problem~\eqref{eq:dual-barycentric} is solved by~$(\lambda^\star_0,\lambda_v^\star, \lambda_w^\star, \Lambda^\star)$, where~$\lambda^\star_0=0$ and $\lambda_v^\star=0$, while~$\lambda_w^\star$ has elements~$\lambda^\star_{w,i}$ and~$\Lambda^\star$ has columns~$\Lambda_i^\star$, $i\in[d_w]$. The optimal values of~\eqref{eq:primal-barycentric} and~\eqref{eq:dual-barycentric} coincide and are both equal to
\[
\sum_{i=1}^{d_w}\mu_{w,i} \,\ell(Ce_i/\bar w_i, e_i).
\]
\end{proposition}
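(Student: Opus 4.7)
The plan is to follow the same primal–dual verification template used for the Jensen and Edmundson–Madansky bounds: exhibit a primal feasible $\P^\star$ and a dual feasible $(\lambda_0^\star,\lambda_v^\star,\lambda_w^\star,\Lambda^\star)$, show their objective values coincide, and invoke weak duality from Theorem~\ref{thm:duality:moment} to conclude joint optimality. Throughout, I will use that $\bar w\in\rint(\cW)$, so $\bar w_i>0$ for every $i\in[d_w]$, making the ratios $Ce_i/\bar w_i$ well defined.

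First I would verify primal feasibility of $\P^\star=\sum_{i=1}^{d_w}\bar w_i\,\delta_{(Ce_i/\bar w_i,e_i)}$. Direct computation using $\sum_i Ce_i=Ce=\bar v$ and $\sum_i e_ie_i^\top=I_{d_w}$ gives $\E_{\P^\star}[V]=\bar v$, $\E_{\P^\star}[W]=\bar w$, and $\E_{\P^\star}[VW^\top]=C$; the primal objective then evaluates to $\sum_i\bar w_i\,\ell(Ce_i/\bar w_i,e_i)$. The slightly delicate point is that $\P^\star$ must be supported in $\cV\times\cW$, which requires $Ce_i/\bar w_i\in\cV$ for each $i$. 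For this I would use the feasibility hypothesis: picking any feasible $\P$, the point $Ce_i/\bar w_i=\int V\cdot(W_i/\bar w_i)\,\mathrm{d}\P$ is the barycenter of $V\in\cV$ under the probability measure $(W_i/\bar w_i)\,\mathrm{d}\P$, and therefore belongs to the closed convex set $\cV$.

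Next I would check dual feasibility of the proposed $(\lambda_0^\star,\lambda_v^\star,\lambda_w^\star,\Lambda^\star)$. Substituting the prescribed values, the candidate bi-affine function simplifies to
\begin{align*}
b(v,w)=\sum_{i=1}^{d_w} w_i\Bigl[\,\ell(Ce_i/\bar w_i,e_i)+(\Lambda_i^\star)^\top\bigl(v-Ce_i/\bar w_i\bigr)\Bigr].
\end{align*}
The supergradient inequality from concavity of $\ell(\cdot,e_i)$ at $Ce_i/\bar w_i$ yields $\ell(v,e_i)\leq\ell(Ce_i/\bar w_i,e_i)+(\Lambda_i^\star)^\top(v-Ce_i/\bar w_i)$ for every $v\in\cV$ and $i\in[d_w]$, so $b(v,w)\geq\sum_i w_i\,\ell(v,e_i)$. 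Since $w=\sum_i w_i e_i$ on the simplex $\cW$ and $\ell(v,\cdot)$ is convex, Jensen's inequality then gives $\sum_i w_i\,\ell(v,e_i)\geq\ell(v,w)$, so $b(v,w)\geq\ell(v,w)$ on $\cV\times\cW$ as required.

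Finally I would compute the dual objective. Using $\lambda_0^\star=0$, $\lambda_v^\star=0$, and $\langle\Lambda^\star,C\rangle=\sum_i(\Lambda_i^\star)^\top Ce_i$, the cross-term $\langle\Lambda^\star,C\rangle$ cancels exactly the correction $-\sum_i(\Lambda_i^\star)^\top Ce_i$ contained in $(\lambda_w^\star)^\top\bar w$, leaving $\sum_i\bar w_i\,\ell(Ce_i/\bar w_i,e_i)$, which matches the primal objective. Weak duality from Theorem~\ref{thm:duality:moment} then certifies optimality of both $\P^\star$ and $(\lambda_0^\star,\lambda_v^\star,\lambda_w^\star,\Lambda^\star)$ and pins down the common optimal value. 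The main conceptual hurdle is the argument that $Ce_i/\bar w_i\in\cV$ from feasibility; the rest is the double use of the two directional convexity/concavity properties of $\ell$, which dovetail precisely because $\cW$ is the probability simplex and each $e_i$ is a vertex.
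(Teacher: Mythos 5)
Your proposal is correct and follows exactly the paper's own proof strategy: primal feasibility via the moment computations and the barycenter argument for $Ce_i/\bar w_i\in\cV$, dual feasibility via the supergradient inequality chained with Jensen's inequality on the simplex, matching objective values, and weak duality from Theorem~\ref{thm:duality:moment}. You are in fact slightly more explicit than the paper where it waives the objective-value cancellation with a ``details omitted for brevity.''
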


The condition $Ce=\bar v$ is necessary for~\eqref{eq:primal-barycentric} to be feasible. Indeed, if~$\P$ is feasible in~\eqref{eq:primal-barycentric}, then we have $Ce=\E_\P[ VW^\top e]=\E_\P[ V] =\bar v$. Here, the second equality holds because~$\P(W\in\cW)=1$ and $\cW$ is the probability simplex in~$\R^{d_w}$. However, the condition $Ce=\bar v$ is {\em not} sufficient for~\eqref{eq:primal-barycentric} to be feasible. Indeed, if the support set~$\cV=\{\bar v\}$ is a singleton, then~$C=\E_\P[VW^\top]=\bar v\bar w^\top$. That is, $V$ and~$W$ must be uncorrelated. Hence, $\cV$ and~$C$ cannot be selected independently. To circumvent this problem, Proposition~\ref{prop:barycentric-bound} requires~\eqref{eq:primal-barycentric} to be feasible.

\begin{proof}[Proof of Proposition~\ref{prop:barycentric-bound}]
Note that~$\bar w>0$ and~$e^\top \bar w=1$ because~$\bar w$ belongs to the relative interior of the probability simplex~$\cW$. Thus, $\P^\star$ is indeed a well-defined probability distribution, that is, the atoms of~$\P^\star$ have positive probabilities that sum to~$1$. In addition, $\P^\star$ is supported on~$\cV\times\cW$ because 
\[
Ce_i/\bar w_i = \E_\P\left[\frac{VW_i}{\E_\P[W_i]}\right] = \E_\P\left[ V\, \frac{\E_\P[W_i|V]}{\E_\P[W_i]}\right] \in\cV \quad \text{and} \quad e_i\in\cW \quad \forall i\in[d_w],
\]
where $\P$ is any distribution feasible in~\eqref{eq:primal-barycentric}. Note also that if~$V$ and~$W$ are uncorrelated, in which case $C=\bar v\bar w^\top$, then the $i$-th generalized barycenter $Ce_i/\bar w_i$ of~$\cV$ simplifies to~$\bar v$ for every~$i\in [d_w]$. Recalling that $Ce=\bar v$, we further have
\begin{align*}
	\E_{\P^\star}\left[ V\right] = \sum_{i=1}^{d_w} \bar w_i \, Ce_i/\bar w_i=\bar v,\quad \E_{\P^\star}\left[ W\right] = \sum_{i=1}^{d_w} \bar w_i \, e_i=\bar w
\end{align*}
and
\begin{align*}
	\E_{\P^\star}\left[ VW^\top \right] = \sum_{i=1}^{d_w} \bar w_i \, Ce_ie_i^\top/\bar w_i=C.
\end{align*}
In summary, we have shown that~$\P^\star$ is feasible in~\eqref{eq:primal-barycentric}. A similar calculation reveals that the objective function value of~$\P^\star$ in~\eqref{eq:primal-barycentric} is given by the formula in the proposition statement. Details are omitted for brevity.

To show that~$(\lambda^\star_0,\lambda_v^\star, \lambda_w^\star, \Lambda^\star)$ is feasible in~\eqref{eq:dual-barycentric}, note first that
\begin{align*}
	&\lambda^\star_0+(\lambda_v^\star)^\top v+ (\lambda_w^\star)^\top w+v^\top \Lambda^\star w  \\ & \quad = \sum_{i=1}^{d_w} w_i\left[ \ell(Ce_i/ \bar w_i,e_i) +(\Lambda_i^\star)^\top \left( v- Ce_i/\bar w_i \right) \right] \geq \sum_{i=1}^{d_w} w_i \, \ell(v,e_i) \geq \ell(v,w) 
\end{align*}
for all~$v\in \cV$ and~$w\in \cW$. The first inequality follows from the concavity of~$\ell(v,w)$ in~$v$ and the definition of~$\Lambda_i^\star$ as a supergradient, while the second inequality follows from the convexity of $\ell(v,w)$ in~$w$ and Jensen's inequality. Hence, $(\lambda^\star_0,\lambda_v^\star, \lambda_w^\star, \Lambda^\star)$ is indeed feasible in~\eqref{eq:dual-barycentric}. A similar calculation reveals that the objective function value of~$(\lambda^\star_0,\lambda_v^\star, \lambda_w^\star, \Lambda^\star)$ in~\eqref{eq:dual-barycentric} is given by the formula in the proposition statement. Consequently, by weak duality as established in Theorem~\ref{thm:duality:moment}, we have shown that~$\P^\star$ is primal optimal and~$(\lambda^\star_0,\lambda_v^\star, \lambda_w^\star, \Lambda^\star)$ is dual optimal. 
\end{proof}

Proposition~\ref{prop:barycentric-bound} remains valid with obvious minor modifications if~$\cW$ is defined as an arbitrary regular simplex in~$\R^{d_w}$ \citep{frauendorfer:92}. If $z=(v,w)$ and the loss function~$\ell(x,z)=\ell(x,v,w)$ in~\eqref{eq:primal:dro} is concave in~$v$ and convex in~$w$ for any fixed~$x\in\cX$, then Proposition~\ref{prop:barycentric-bound} implies that the DRO problem~\eqref{eq:primal:dro} is equivalent to the stochastic program $\inf_{x\in\cX}\E_{\P^\star} [\ell(x,U,V)]$, where~$\P^\star$ is {\em independent} of~$x$. As~$\P^\star$ is a discrete distribution with~$d_w$ atoms, this stochastic program is usually easy to solve. Traditionally, the distribution~$\P^\star$ is used to approximate hard {\em stochastic} optimization problems of the form $\inf_{x\in\cX} \E_\P[\ell(x,V,W)]$, where~$\P$ is a {\em known} continuous distribution of~$(V,W)$. Proposition~\ref{prop:barycentric-bound} implies that if~$\ell(x,v,w)$ is concave in~$v$ and convex in~$w$ for any~$x\in\cX$, then replacing~$\P$ with~$\P^\star$ leads to a conservative approximation, which is termed the {\em upper barycentric approximation} of the original stochastic program \citep{frauendorfer:92}. Barycentric approximations for more general stochastic programs involving loss functions that may fail to be convex and/or concave are derived by \citet{kuhn2005generalized-bounds}.

\subsection{Ben-Tal and Hochman Bound}
Consider the worst-case expectation problem
\begin{subequations}
\begin{align}
	\label{eq:primal-ben-tal-hochman}
	\sup_{\P \in \cP(\cZ)} \left\{ \E_\P \left[ \ell (Z) \right] \;:\;  \E_\P \left[ Z \right] = \mu,~ \E_\P \left[ |Z-\mu| \right] = \sigma \right\},
\end{align}
which maximizes the expected value of~$\ell(Z)$ over the family of all univariate distributions supported on~$\cZ$ with mean~$\mu$ and mean absolute deviation~$\sigma$. Note that problem~\eqref{eq:primal-ben-tal-hochman} optimizes over a moment ambiguity set of the form~\eqref{eq:moment-ambiguity-set} with~$f(z)=(z,|z-\mu|)$ and~$\cF=\{ \mu\}\times \{\sigma\}$. By Theorem~\ref{thm:duality:moment} and as the support function of~$\cF$ is linear, the problem dual to~\eqref{eq:primal-ben-tal-hochman} is given by
\begin{align}
	\label{eq:dual-ben-tal-hochman}
	\inf_{\lambda_0,\lambda_1,\lambda_2\in\R} \left\{ \lambda_0+\lambda_1 \mu +\lambda_2\sigma\;:\; \lambda_0+\lambda_1 z +\lambda_2|z-\mu|\geq \ell(z) ~~ \forall z\in\cZ \right\}.
\end{align}
\end{subequations}
Intuitively, this dual problem aims to approximate the loss function from above with a piecewise linear continuous function that has a kink at~$\mu$. The problems~\eqref{eq:primal-ben-tal-hochman} and~\eqref{eq:dual-ben-tal-hochman} can be solved in closed form if~$\ell$ is convex.

\begin{proposition}[Ben-Tal and Hochman Bound]
\label{prop:ben-tal-hochman}
Assume that~$\cZ=[0,1]$, $\mu\in(0,1)$ and $\sigma\in [0,2\mu(1-\mu)]$. Suppose also that $\ell$ is a real-valued convex function. Then, the primal problem~\eqref{eq:primal-ben-tal-hochman} is solved by
\[
\P^\star= \frac{\sigma}{2\mu}\,\delta_0+\left(1-\frac{\sigma}{2\mu}- \frac{\sigma}{2(1-\mu)}\right) \delta_\mu + \frac{\sigma}{2(1-\mu)} \, \delta_1,
\]
and the dual problem~\eqref{eq:dual-ben-tal-hochman} is solved by
\begin{align*}
	\lambda^\star_0&= \frac{(1-\mu)\ell(0)+\ell(\mu) -\mu\ell(1)}{2(1-\mu)}, \\
	\lambda^\star_1 & = \frac{(\mu-1)\ell(0)+(1-2\mu)\ell(\mu)+\mu \ell(1)}{2\mu(1-\mu)}, \\
	\lambda^\star_2&= \frac{(1-\mu)\ell(0) -\ell(\mu) +\mu\ell(1)}{2\mu(1-\mu)}.
\end{align*}
In addition, the optimal values of~\eqref{eq:primal-ben-tal-hochman} and~\eqref{eq:dual-ben-tal-hochman} coincide and are both equal to
\[
\frac{\sigma}{2\mu}\,\ell(0)+\left(1-\frac{\sigma}{2\mu}- \frac{\sigma}{2(1-\mu)}\right) \ell(\mu) + \frac{\sigma}{2(1-\mu)} \, \ell(1).
\]
\end{proposition}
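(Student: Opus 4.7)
My plan is to mirror the argument used for the Jensen, Edmundson–Madansky, and barycentric propositions: exhibit the proposed primal and dual feasible solutions, show that their objective values coincide, and invoke weak duality from Theorem~\ref{thm:duality:moment}. Since the moment function $f(z)=(z,|z-\mu|)$ is continuous and the uncertainty set $\cF=\{\mu\}\times\{\sigma\}$ is a singleton with linear support function, Theorem~\ref{thm:duality:moment} directly yields~\eqref{eq:dual-ben-tal-hochman} as a valid dual that upper-bounds~\eqref{eq:primal-ben-tal-hochman}.

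On the primal side, I would first check that the assumption $\sigma\in[0,2\mu(1-\mu)]$ guarantees that all three weights in $\P^\star$ are non-negative; they ostensibly sum to one, so $\P^\star\in\cP([0,1])$. Routine arithmetic gives $\E_{\P^\star}[Z]=\mu$ and $\E_{\P^\star}[|Z-\mu|]=\frac{\sigma}{2\mu}\mu+\frac{\sigma}{2(1-\mu)}(1-\mu)=\sigma$, so $\P^\star$ is primal feasible. Reading off the objective from the three-point support yields the value $\frac{\sigma}{2\mu}\ell(0)+(1-\frac{\sigma}{2\mu}-\frac{\sigma}{2(1-\mu)})\ell(\mu)+\frac{\sigma}{2(1-\mu)}\ell(1)$ claimed in the proposition.

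On the dual side, I would set $a(z)=\lambda_0^\star+\lambda_1^\star z+\lambda_2^\star|z-\mu|$ and verify, by direct substitution into the given formulas for $(\lambda_0^\star,\lambda_1^\star,\lambda_2^\star)$, the three identities $a(0)=\ell(0)$, $a(\mu)=\ell(\mu)$ and $a(1)=\ell(1)$. The only nontrivial remaining step is dual feasibility: we need $a(z)\ge \ell(z)$ for every $z\in[0,1]$. This is the main (though still mild) obstacle, and here convexity of $\ell$ enters essentially. Because $a$ is affine separately on $[0,\mu]$ and on $[\mu,1]$ with endpoint values matching $\ell$, on each subinterval $a$ is precisely the chord of $\ell$ through the two endpoints; convexity of $\ell$ then forces $\ell(z)\le a(z)$ on $[0,\mu]\cup[\mu,1]=[0,1]$.

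Finally, to evaluate the dual objective $\lambda_0^\star+\lambda_1^\star\mu+\lambda_2^\star\sigma$, I would use $a(\mu)=\ell(\mu)$ to rewrite it as $\ell(\mu)+\lambda_2^\star\sigma$ and then apply the elementary partial fraction identity $\frac{1}{\mu(1-\mu)}=\frac{1}{\mu}+\frac{1}{1-\mu}$ to match it term-for-term with the primal value computed above. Since the primal and dual objective values agree and weak duality in Theorem~\ref{thm:duality:moment} asserts $\sup\le\inf$, both $\P^\star$ and $(\lambda_0^\star,\lambda_1^\star,\lambda_2^\star)$ must be optimal for their respective problems, and the common optimal value is the one stated.
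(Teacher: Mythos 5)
Your proposal is correct and follows essentially the same route as the paper's proof: verify primal feasibility of the three-point distribution $\P^\star$, verify dual feasibility of $(\lambda_0^\star,\lambda_1^\star,\lambda_2^\star)$ by noting that the piecewise affine majorant interpolates the convex loss at $z\in\{0,\mu,1\}$ and therefore dominates it on $[0,1]$, match the two objective values, and conclude by weak duality from Theorem~\ref{thm:duality:moment}. The only difference is cosmetic: you spell out the partial-fraction computation of the dual objective that the paper dismisses as an ``elementary calculation.''
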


\begin{proof}
The assumptions about~$\mu$ and~$\sigma$ imply that~$\P^\star$ is supported on~$\cZ$ and that the probabilities of the three atoms are non-negative and sum to~$1$. Also, we have
\begin{align*}
	\E_{\P^\star}\left[ Z\right] = \left(\mu-\frac{\sigma}{2}- \frac{\sigma \mu}{2(1-\mu)}\right) + \frac{\sigma }{2(1-\mu)} =\mu \quad\text{and}\quad \E_{\P^\star}\left[ | Z - \mu | \right] =\sigma.
\end{align*}
Thus, $\P^\star$ is feasible in~\eqref{eq:primal-ben-tal-hochman}. In addition, one readily verifies that the objective function value of~$\P^\star$ in~\eqref{eq:primal-ben-tal-hochman} is given by the formula in the proposition statement. 

Next, note that the piecewise linear function $\lambda_0^\star+\lambda^\star_1 z+\lambda_2^\star |z-\mu|$ coincides with the loss function~$\ell(z)$ for every~$z\in\{0,\mu,1\}$. As the loss function is convex, we may thus conclude that $\lambda_0^\star+\lambda^\star_1 z+\lambda_2^\star |z-\mu|$ majorizes~$\ell(z)$ for every~$z\in [0,1]=\cZ$. This shows that $(\lambda_0^\star, \lambda^\star_1,\lambda^\star_2)$ is feasible in~\eqref{eq:dual-ben-tal-hochman}. An elementary calculation further reveals that the objective function value of $(\lambda_0^\star, \lambda^\star_1,\lambda^\star_2)$ in~\eqref{eq:dual-ben-tal-hochman} is given by the formula in the proposition statement. Weak duality as established in Theorem~\ref{thm:duality:moment} thus implies that~$\P^\star$ is primal optimal and that $(\lambda_0^\star, \lambda^\star_1,\lambda^\star_2)$ is dual optimal. 
\end{proof}

Proposition~\ref{prop:ben-tal-hochman} readily extends to support sets of the form $\cZ=[a,b]$ for any~$a,b\in\R$ with $a<\mu<b$ by applying a linear coordinate transformation. If~$\ell(x,z)$ in~\eqref{eq:primal:dro} is convex in~$z$ for any fixed~$x\in\cX$, then Proposition~\ref{prop:ben-tal-hochman} implies that the DRO problem~\eqref{eq:primal:dro} is equivalent to the stochastic program $\inf_{x\in\cX}\E_{\P^\star} [\ell(x,Z)]$, where the three-point distribution $\P^\star$ is {\em independent} of~$x$. Traditionally, this stochastic program is used as a conservative approximation for
a stochastic program of the form $\inf_{x\in\cX}\E_{\P} [\ell(x,Z)]$, where~$\P$ is a known continuous distribution \citep{ben-talhochman:72}. Unlike the Jensen and Edmundson-Madansky bounds, which only use information about the location of~$\P$, and unlike the barycentric approximation, which only uses information about the location and certain cross-moments of~$\P$, the Ben-Tal and Hochman bound uses information about the location as well as the dispersion of~$\P$. Thus, it provides a tighter approximation.

If~$Z$ is a $d$-dimensional random vector with {\em independent} components~$Z_i$, $i\in[d]$, each of which has a known mean and mean absolute deviation, then one can show that the worst-case expected value of a convex loss function is attained by $\P^\star=\otimes_{i=1}^d \P_i^\star$, where each~$\P_i^\star$ is a three-point distribution constructed as in Proposition~\ref{prop:ben-tal-hochman} \citep{ben-talhochman:72}. In this case, $\P^\star$ is a discrete distribution with~$3^d$ atoms. Hence, evaluating expected values with respect to~$\P^\star$ is generically hard but becomes tractable for a class of exponential loss functions that offer safe approximations for chance constraints \citep{postek2018robust}.

\subsection{Scarf's Bound}
\label{sec:scarf-bound}
Consider the worst-case expectation problem
\begin{subequations}
\begin{align}
	\label{eq:primal-Chebyshev}
	\sup_{\P \in \cP(\cZ)} \left\{ \E_\P \left[ \ell (Z) \right] \;:\;  \E_\P \left[ Z \right] = 0,~ \E_\P \left[ Z^2 \right] = \sigma^2 \right\},
\end{align}
which maximizes the expected value of $\ell(Z)$ over the Chebyshev ambiguity set of all univariate distributions supported on~$\cZ$ with mean~$0$ and variance~$\sigma^2$. This Chebyshev ambiguity set is a moment ambiguity set of the form~\eqref{eq:moment-ambiguity-set} with~$f(z)=(z, z^2)$ and~$\cF=\{0\}\times\{\sigma^2\}$. By Theorem~\ref{thm:duality:moment} and as the support function of~$\cF$ is linear, the problem dual to~\eqref{eq:primal-Chebyshev} is given by
\begin{align}
	\label{eq:dual-Chebyshev}
	\inf_{\lambda_0,\lambda_1,\lambda_2\in\R} \left\{ \lambda_0+\lambda_2\sigma^2\;:\; \lambda_0+\lambda_1 z +\lambda_2 (z-\mu)^2\geq \ell(z) ~~ \forall z\in\cZ \right\}.
\end{align}
\end{subequations}
This dual problem seeks a quadratic function $q(z)=\lambda_0+\lambda_1z+\lambda_2 z^2$ that majorizes the loss function~$\ell(z)$ throughout~$\cZ$ and has minimal expectation $\E_\P[q(Z)]$ under any distribution~$\P$ with mean~$0$ and variance~$\sigma^2$. The problems~\eqref{eq:primal-Chebyshev} and~\eqref{eq:dual-Chebyshev} can be solved in closed form if~$\ell$ is a ramp function.

\begin{proposition}[Scarf's Bound]
\label{prop:scarf}
If~$\cZ=\R$, $\sigma^2\in\R_+$ and~$\ell(z)=\max\{z-a,0\}$ is a ramp function with a kink at~$a\in\R$, then the primal problem~\eqref{eq:primal-Chebyshev} is solved by
\[
\P^\star = \half\left(1+\frac{a}{\sqrt{a^2+\sigma^2}} \right)\delta_{a-\sqrt{a^2+\sigma^2}} + \half\left(1-\frac{a}{\sqrt{a^2+\sigma^2}} \right)\delta_{a+\sqrt{a^2+\sigma^2}},
\]
and the dual problem~\eqref{eq:dual-Chebyshev} is solved by
\begin{align*}
	\lambda^\star_0 = \frac{\left( a -\sqrt{a^2+\sigma^2} \right)^2}{4\sqrt{a^2+\sigma^2}}, \quad \lambda^\star_1 = -\frac{a -\sqrt{a^2+\sigma^2}}{2\sqrt{a^2+\sigma^2}} \quad \text{and} \quad \lambda^\star_2 = \frac{1}{4\sqrt{a^2+\sigma^2}}.
\end{align*}
The optimal values of~\eqref{eq:primal-Chebyshev} and~\eqref{eq:dual-Chebyshev} are both equal to $\half( \sqrt{a^2+\sigma^2} -a )$.
\end{proposition}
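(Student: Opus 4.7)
The plan is to mirror the primal--dual matching strategy used in Propositions~\ref{prop:jensen}--\ref{prop:ben-tal-hochman}. I would exhibit the candidate $\P^\star$ as primal feasible and $(\lambda_0^\star,\lambda_1^\star,\lambda_2^\star)$ as dual feasible, show that both achieve the claimed objective value $\half(\sqrt{a^2+\sigma^2}-a)$, and then conclude optimality of both via the weak duality bound of Theorem~\ref{thm:duality:moment}.

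Primal feasibility is routine. Since $|a|/\sqrt{a^2+\sigma^2}\leq 1$, the two weights of $\P^\star$ lie in $[0,1]$ and sum to $1$, so $\P^\star\in\cP(\R)$. Direct substitution then confirms $\E_{\P^\star}[Z]=0$ and $\E_{\P^\star}[Z^2]=\sigma^2$. The expected loss collapses to a single term because $a-\sqrt{a^2+\sigma^2}\leq a$ forces $\ell(a-\sqrt{a^2+\sigma^2})=0$, leaving $\half(1-a/\sqrt{a^2+\sigma^2})\,\sqrt{a^2+\sigma^2}=\half(\sqrt{a^2+\sigma^2}-a)$.

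Dual feasibility is the crux. Writing $s=\sqrt{a^2+\sigma^2}$ and $q(z)=\lambda_0^\star+\lambda_1^\star z+\lambda_2^\star z^2$, I need $q(z)\geq 0$ and $q(z)\geq z-a$ for every $z\in\R$. The key observation, which I expect to be the main obstacle absent the explicit candidate, is that both required inequalities arise from perfect-square factorizations:
\[
q(z)=\frac{(z-(a-s))^2}{4s},\qquad q(z)-(z-a)=\frac{(z-(a+s))^2}{4s}.
\]
Both identities follow by expanding the right-hand sides and simplifying with the relation $s^2=a^2+\sigma^2$. Geometrically, $q$ is the unique convex quadratic that is tangent to the line $y=0$ at the left atom of $\P^\star$ and to the line $y=z-a$ at the right atom, and the factorizations record these tangencies. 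Non-negativity of the squares then yields dual feasibility.

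Finally, the dual objective evaluates to $\lambda_0^\star+\lambda_2^\star\sigma^2=((a-s)^2+\sigma^2)/(4s)$, which simplifies via $s^2=a^2+\sigma^2$ to $(s-a)/2=\half(\sqrt{a^2+\sigma^2}-a)$, matching the primal value. Weak duality from Theorem~\ref{thm:duality:moment} therefore certifies that $\P^\star$ solves~\eqref{eq:primal-Chebyshev} and $(\lambda_0^\star,\lambda_1^\star,\lambda_2^\star)$ solves~\eqref{eq:dual-Chebyshev}, with common optimal value $\half(\sqrt{a^2+\sigma^2}-a)$.
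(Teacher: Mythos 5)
Your proposal is correct and follows essentially the same primal--dual matching strategy as the paper: verify feasibility of $\P^\star$ and $(\lambda_0^\star,\lambda_1^\star,\lambda_2^\star)$, match the objective values, and invoke weak duality from Theorem~\ref{thm:duality:moment}. The only (cosmetic) difference is in the dual feasibility check, where you factor $q(z)-(z-a)$ directly as the perfect square $(z-(a+s))^2/(4s)$, whereas the paper verifies the tangency of $q$ to the line $z-a$ at $a+s$ by checking the value and derivative there and then appeals to convexity; both arguments are valid.
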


\begin{proof}
Note that the two-point distribution~$\P^\star$ is well-defined, that is, its atoms have non-negative probabilities that sum to~$1$. By the definition of~$\P^\star$, we also have
\begin{align*}
	\E_{\P^\star}\left[ Z\right] & = \half\left(1+\frac{a}{\sqrt{a^2+\sigma^2}} \right)\left(a-\sqrt{a^2+\sigma^2}\right) \\
	& \qquad + \half\left(1-\frac{a}{\sqrt{a^2+\sigma^2}} \right)\left(a+\sqrt{a^2+\sigma^2}\right) =0.
\end{align*}
Similarly, it is easy to verify that $\E_{\P^\star}[ Z^2]=\sigma^2$. This shows that~$\P^\star$ is feasible in~\eqref{eq:primal-Chebyshev}. The objective function value of~$\P^\star$ is
\[
\E_{\P^\star}\left[ \ell(Z) \right] = \E_{\P^\star}\left[ \max\{Z-a,0\} \right] = \half\left( \sqrt{a^2+\sigma^2} -a \right).
\]
Next, observe that the dual variables $(\lambda^\star_0, \lambda^\star_1, \lambda^\star_2)$ defined in the proposition statement give rise to the quadratic function
\[
q^\star(z)=\lambda_0^\star+\lambda_1^\star z+\lambda_2^\star z^2= \frac{1}{4\sqrt{a^2+\sigma^2}}\left( z-a+\sqrt{a^2+\sigma^2}\right)^2.
\]
We will now show that $q^\star(z)\geq \max\{z-a,0\}=\ell(z)$ for all~$z\in\cZ$. Clearly, $q^\star$ is non-negative and evaluates to~$0$ at~$a-\sqrt{a^2+\sigma^2}$. In addition, $q^\star$ touches the affine function~$z-a$ at~$a+\sqrt{a^2+\sigma^2}$. To see this, note that
\[
q^\star\left(a+\sqrt{a^2+\sigma^2} \right) = \sqrt{a^2+\sigma^2} \quad \text{and}\quad \frac{\diff}{\diff z} q^\star\left(a+\sqrt{a^2+\sigma^2} \right) = 1.
\]
Hence, $q^\star$ majorizes the ramp function~$\ell(z)$, implying that $(\lambda_0^\star, \lambda_1^\star, \lambda_2^\star)$ is dual feasible. Also, the objective function value of $(\lambda_0^\star, \lambda_1^\star, \lambda_2^\star)$ is given by
\[
\lambda_0^\star+\lambda_2^\star \sigma^2 = \frac{1}{4\sqrt{a^2+\sigma^2}}\left( \sigma^2 +\left( a -\sqrt{a^2+\sigma^2} \right)^2\right) = \half\left( \sqrt{a^2+\sigma^2} -a \right).
\]
As the objective function values of~$\P^\star$ and~$(\lambda_0^\star, \lambda_1^\star, \lambda_2^\star)$ match, weak duality as established in Theorem~\ref{thm:duality:moment} thus implies that~$\P^\star$ is primal optimal and that $(\lambda_0^\star, \lambda^\star_1,\lambda^\star_2)$ is dual optimal. This observation completes the proof.
\end{proof}

Proposition~\ref{prop:scarf} was first derived by~\citet{Scarf:58} in his pioneering treatise on the distributionally robust newsvendor problem; see also \cite[Theorem~1]{jagannathan:77}. Note that if the mean of~$Z$ is known to equal~$\mu\neq 0$ instead of~$0$, then Scarf's bound remains valid if we replace~$a$ with~$a-\mu$. \citet{gallegomoon:93} extend Scarf's bound to more general loss functions such as wedge functions or ramp functions with a discontinuity, whereas \citet{natarajan2018asymmetry} extend Scarf's bound to more general ambiguity sets that not only contain information about the mean and variance of~$Z$ but also about its {\em semi}variance. In addition, \citet{das2021heavy} discuss variants of Scarf’s bound that rely on information about the mean and the $\alpha$-th moment of~$Z$ for any~$\alpha>1$.

Proposition~\ref{prop:scarf} is often used to reformulate DRO problems of the form~\eqref{eq:primal:dro} whose objective function is given by the expected value of a ramp function. Examples include distributionally robust newsvendor, support vector machine or mean-CVaR portfolio selection problems. In most of these applications, the location~$a$ of the kind of the ramp function is a decision variable or a function of the decision variables. Thus, the worst-case distribution~$\P^\star$ is decision-dependent, which means that Proposition~\ref{prop:scarf} does {\em not} enable us to reduced the DRO problem~\eqref{eq:primal:dro} to a stochastic program with a single fixed worst-case distribution.

\subsection{Marshall and Olkin Bound}
\label{sec:marshall&olkin}
Consider the worst-case probability problem
\begin{subequations}
\begin{align}
	\label{eq:primal-marshall-olkin}
	\sup_{\P \in \cP(\cZ)} \left\{ \P \left(Z \in\cC \right) \;:\;  \E_\P \left[ Z \right] = 0,~ \E_\P \left[ ZZ^\top \right] = I_d \right\},
\end{align}
which maximizes the probability of the event~$Z\in\cC$ over the Chebyshev ambiguity set of all distributions on~$\cZ=\R^d$ with mean~$0$ and covariance matrix~$I_d$. This Chebyshev ambiguity set is a moment ambiguity set of the form~\eqref{eq:moment-ambiguity-set} with~$f(z)=(z, zz^\top)$ and~$\cF=\{0\}\times\{I_d\}$. If we set~$\ell$ to the characteristic function of~$\cC$ defined through $\ell(z)=\ds{1}_{z\in\cC}$ for all~$z\in\cZ$, then the worst-case probability problem~\eqref{eq:primal-marshall-olkin} can be recast as a worst-case expectation problem. By Theorem~\ref{thm:duality:moment} and as the support function of~$\cF$ is linear, the corresponding dual problem is thus given by
\begin{align}
	\label{eq:dual-marshall-olkin}
	\inf_{\lambda_0\in\R,\lambda\in\R^d,\Lambda\in\S^d} \left\{ \lambda_0+ \inner{\Lambda}{I_d} \;:\; \lambda_0+\lambda^\top z +z^\top\Lambda z\geq \ell(z) ~~ \forall z\in\cZ \right\}.
\end{align}
\end{subequations}
The problems~\eqref{eq:primal-marshall-olkin} and~\eqref{eq:dual-marshall-olkin} can be solved analytically if~$\cC$ is convex and closed.

\begin{proposition}[Marshall and Olkin Bound]
\label{prop:marshall-olkin}
Suppose that~$\cZ=\R^d$, $\cC\subseteq\R^d$ is convex and closed, and~$\ell$ is the characteristic function of~$\cC$. Set~$\Delta=\min_{z\in\cC}\|z\|_2$, and let~$z_0\in\R^d$ be the unique minimizer of this problem. Then, the optimal values of~\eqref{eq:primal-marshall-olkin} and~\eqref{eq:dual-marshall-olkin} are both equal to $(1+\Delta^2)^{-1}$. If~$\Delta=0$, then the supremum of~\eqref{eq:primal-marshall-olkin} may not be attained. However, if~$\Delta>0$, then~\eqref{eq:primal-marshall-olkin} is solved by
\[
\P^\star = \frac{1}{1+\Delta^2}\,\delta_{z_0} + \frac{\Delta^2}{1+\Delta^2}\Q,
\]
where $\Q\in\cP(\cZ)$ is an arbitrary distribution with mean~$-z_0/\Delta^2$ and covariance matrix $\frac{1+\Delta^2}{\Delta^2} \,(I_d-z_0 z_0^\top/\Delta^2)$. For any~$\Delta\geq 0$, problem~\eqref{eq:dual-Chebyshev} is solved by
\[
\lambda_0^\star = \frac{1}{(1+\Delta^2)^2},\quad \lambda^\star = \frac{2z_0}{(1+\Delta^2)^2}\quad \text{and}\quad \Lambda^\star=\frac{z_0z_0^\top}{(1+\Delta^2)^2}.
\]
\end{proposition}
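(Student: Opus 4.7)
The plan is to mirror the proof strategies used for Scarf's and the Ben-Tal--Hochman bounds: construct primal and dual feasible solutions whose objective values both equal $(1+\Delta^2)^{-1}$ and invoke weak duality from Theorem~\ref{thm:duality:moment}. The theorem applies here because $\ell(z)=\ds 1_{z\in\cC}$ is Borel and bounded, the moment function $f(z)=(z,zz^\top)$ is continuous, and the uncertainty set $\cF=\{0\}\times\{I_d\}$ is a singleton.

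I begin with dual feasibility of $(\lambda_0^\star,\lambda^\star,\Lambda^\star)$. A direct expansion gives
\[
    q^\star(z)\;:=\;\lambda_0^\star+(\lambda^\star)^\top z+z^\top\Lambda^\star z \;=\; \frac{(1+z_0^\top z)^2}{(1+\Delta^2)^2},
\]
which is nonnegative on $\R^d$ and hence dominates $\ell$ outside $\cC$. The only substantive point is to show that $q^\star(z)\geq 1$ for every $z\in\cC$. Since $z_0$ is the unique minimizer of $\tfrac{1}{2}\|z\|_2^2$ over the closed convex set $\cC$, the variational inequality for this projection yields $z_0^\top(z-z_0)\geq 0$ for all $z\in\cC$, so $z_0^\top z\geq\|z_0\|_2^2=\Delta^2$ and therefore $(1+z_0^\top z)^2\geq(1+\Delta^2)^2$. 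This establishes dual feasibility, and the dual objective evaluates to $\lambda_0^\star+\Tr(\Lambda^\star)=(1+\Delta^2)/(1+\Delta^2)^2=(1+\Delta^2)^{-1}$.

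Next assume $\Delta>0$, so that $z_0\neq 0$. The prescribed covariance of $\Q$ is a positive multiple of the orthogonal projector onto $z_0^\perp$ and therefore lies in $\S_+^d$, so a distribution $\Q$ with the stated mean and covariance exists. A direct calculation yields $\E_{\P^\star}[Z]=0$, and substituting the identity $\E_\Q[ZZ^\top]=\frac{1+\Delta^2}{\Delta^2}(I_d-z_0z_0^\top/\Delta^2)+z_0z_0^\top/\Delta^4$ into the mixture shows that all $z_0z_0^\top$ contributions cancel to yield $\E_{\P^\star}[ZZ^\top]=I_d$. The decisive observation is that $z_0$ lies in the null space of the covariance of $\Q$, so $\Q$ concentrates on the affine hyperplane $\{z:z_0^\top z=-1\}$; because $z_0^\top z\geq\Delta^2>-1$ for every $z\in\cC$, this hyperplane is disjoint from $\cC$. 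Hence $\Q(Z\in\cC)=0$ \emph{irrespective of the particular choice of $\Q$}, which gives $\P^\star(Z\in\cC)=1/(1+\Delta^2)$, matching the dual bound. Weak duality then certifies that both $\P^\star$ and $(\lambda_0^\star,\lambda^\star,\Lambda^\star)$ are optimal.

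For $\Delta=0$ the origin lies in $\cC$ and $q^\star\equiv 1$, so the dual value equals $1=(1+\Delta^2)^{-1}$; however no feasible distribution can place unit mass at $0$ because its covariance must equal $I_d$. A perturbation closes the gap: for any zero-mean distribution $\Q_n$ with covariance $nI_d$ (for instance the uniform distribution on $\pm\sqrt{nd}\,e_i$, $i\in[d]$), the mixture $\P_n=(1-1/n)\delta_0+\frac{1}{n}\Q_n$ is feasible and satisfies $\P_n(Z\in\cC)\geq 1-1/n\to 1$, identifying the supremum as $1$ while showing it may fail to be attained. The main technical hurdle throughout is the projection inequality underlying dual feasibility; once it is in hand, the algebraic cancellation in the second-moment computation and the null-space argument are the remaining ingredients that close the loop.
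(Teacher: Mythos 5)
Your proof is correct and follows the same weak-duality strategy as the paper; the only notable differences are two small improvements in presentation. For dual feasibility you observe directly that $z_0^\top z \geq \Delta^2$ on $\cC$ forces $(1+z_0^\top z)^2 \geq (1+\Delta^2)^2$, which is cleaner than the paper's argument via monotonicity of $q^\star$ along the $z_0$-direction; and for the primal objective value you explicitly justify $\Q(Z\in\cC)=0$ by noting that $z_0$ lies in the null space of $\Q$'s covariance, so $\Q$ concentrates on the hyperplane $\{z_0^\top z=-1\}$, which is disjoint from $\cC$---a point the paper states without proof (it also follows a posteriori from weak duality, since $\delta_{z_0}$ alone already contributes $(1+\Delta^2)^{-1}$, but your direct argument is more transparent).
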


\begin{proof}
Assume first that~$\Delta=0$, that is, $0\in\cC$. For every~$j\in\N$, let~$\Q_j\in\cP(\cZ)$ be any distribution with mean~$0$ and covariance matrix~$j I_d$, and set
\[
\P_j=(1-1/j)\,\delta_0+(1/j)\,\Q_j.
\]
We thus have $\E_{\P_j}[Z]=0$ and $\E_{\P_j}[ZZ^\top] = I_d$, which implies that~$\P_j$ is feasible in~\eqref{eq:primal-marshall-olkin}. In addition, the objective function value of~$\P_j$ in~\eqref{eq:primal-marshall-olkin} satisfies
\[
\P_j(Z\in\cC) =1-1/j+\Q_j(Z\in\cZ)/j\geq 1-j^{-1}.
\]
Driving~$j$ to infinity reveals that problem~\eqref{eq:primal-marshall-olkin} is trivial for~$\Delta=0$ and that its supremum equals~$1$. Assume now that~$\Delta>0$, and let~$\Q\in\cP(\cZ)$ be an arbitrary distribution with mean~$-z_0/\Delta^2$ and covariance matrix $\frac{1+\Delta^2}{\Delta^2} \,(I_d-z_0 z_0^\top/\Delta^2)$. Such a distribution is guaranteed to exist because $I_d\succeq z_0 z_0^\top/\Delta^2$. In addition, define~$\P^\star$ as in the proposition statement. By construction, we have $\E_{\P^\star}[ Z] =0$ and
\begin{align*}
	\E_{\P^\star}\left[ ZZ^\top \right] & = \frac{z_0z_0^\top}{1+\Delta^2} + \frac{\Delta^2}{1+\Delta^2} \, \E_{\Q}\left[ ZZ^\top \right] \\
	& = \frac{z_0z_0^\top}{1+\Delta^2} + I_d- \frac{z_0 z_0^\top}{\Delta^2} + \frac{\Delta^2}{1+\Delta^2} \, \frac{z_0 z_0^\top}{\Delta^4}= I_d.
\end{align*}
Also, the objective function value of~$\P^\star$ in~\eqref{eq:primal-marshall-olkin} is given by $\P^\star(Z\in\cC) =  (1+\Delta^2)^{-1}$. Next, use $(\lambda^\star_0, \lambda^\star, \Lambda^\star)$ defined in the proposition to 
construct the quadratic function
\[
q^\star(z)=\lambda^\star_0+(\lambda^\star)^\top z+ z^\top \Lambda^\star z=\frac{(z_0^\top z + 1)^2}{(1+\Delta^2)^2}.
\]
Note that $q^\star$ is non-negative and constant on any hyperplane perpendicular to~$z_0$. If $\Delta>0$, we have~$q^\star(z_0)=1$ as well as~$q^\star(-z_0/\Delta^2)=0$. 
Thus, at every~$z\in\cZ$ with~$z_0^\top z\geq -1$, the quadratic function~$q^\star(z)$ is non-decreasing in the direction of~$z_0$. As~$z_0$ minimizes the differentiable convex function~$\|z\|_2^2$ over the convex closed set~$\cC$, we have~$z_0^\top (z-z_0)\geq 0$ for all~$z\in\cC$. By the monotonicity properties of~$q^\star$, this implies that~$q^\star(z)\geq 1$ for every~$z\in\cC$. Hence, the quadratic function~$q^\star$ majorizes the indicator function~$\ell$ on~$\cZ$, which implies that~$(\lambda_0^\star, \lambda^\star, \Lambda^\star)$ is dual feasible. If $\Delta=0$, then $q^\star (z)=1$ for all~$z\in\cZ$, and~$(\lambda_0^\star, \lambda^\star, \Lambda^\star)$ is also dual feasible. In any case, one readily verifies that its objective function value is given~by
\[
\lambda^\star_0+ \inner{\Lambda^\star}{I_d} = \left(1+\Delta^2\right)^{-1}.
\]
As the objective function values of~$\P^\star$ and~$(\lambda_0^\star, \lambda^\star, \Lambda^\star)$ for $\Delta>0$ match, weak duality as established in Theorem~\ref{thm:duality:moment} implies that~$\P^\star$ is primal optimal and that~$(\lambda_0^\star, \lambda^\star, \Lambda^\star)$ is dual optimal. If $\Delta=0$, then the optimal value~$1$ of the primal problem also matches the objective function value of~$(\lambda_0^\star, \lambda^\star, \Lambda^\star)$ in~\eqref{eq:dual-marshall-olkin}. Hence, $(\lambda_0^\star, \lambda^\star, \Lambda^\star)$ remains dual optimal even though the supremum of the primal problem may not be attained. This observation completes the proof.
\end{proof}

\subsection{Chebyshev Risk}
\label{sec:chebyshev-risk}
Analytical solutions of worst-case {\em expectation} problems sometimes enable us to evaluate the worst-case {\em risk} of a random variable if the underlying risk measure is law-invariant, translation-invariant as well as scale-invariant; see Definition~\ref{def:invariance-properties}. 
%
%
For example, it is elementary to verify that the $\beta$-VaR and $\beta$-CVaR constitute law-invariant, translation-invariant as well as scale-invariant risk measures for every fixed~$\beta \in(0,1)$. If the distribution of~$Z$ is unknown except for its mean $\mu\in\R^d$ and covariance matrix~$\Sigma\in\S_+^d$, then it is natural to quantify the riskiness of an uncertain loss~$\ell(Z)$ under a law-invariant risk measure~$\varrho$ by the corresponding {\em Chebyshev risk}. Specifically, the Chebyshev risk of~$\ell(Z)$ is defined as the worst-case risk
\begin{align*}
\sup_{\P\in \cP(\mu, \Sigma)} \varrho_\P[\ell(Z)],
\end{align*}
where $\cP(\mu,\Sigma)$ denotes the Chebyshev ambiguity set that contains all probability distributions on~$\R^d$ with mean~$\mu\in\R^d$ and covariance matrix~$\Sigma\in\S^d_+$. 

We now describe a powerful tool for analyzing the Chebyshev risk with respect to any law-, translation- and scale-invariant risk measure. To this end, recall that if~$Z$ follows some distribution~$\P$ on~$\R^d$, then~$L=\ell(Z)$ follows the pushforward distribution~$\P\circ \ell^{-1}$ on~$\R$. If~$\P$ is uncertain and only known to belong to some ambiguity set~$\cP$, then the distribution of~$L=\ell(Z)$ is also uncertain and only known to belong to the pushforward ambiguity set $\cP\circ \ell^{-1}=\{\P\circ \ell^{-1}:\P\in\cP\}$. The following proposition due to \citet{popescu2007robust} shows that linear pushforwards of Chebyshev ambiguity sets are again Chebyshev ambiguity sets.

\begin{proposition}[Pushforwards of Chebyshev Ambiguity Sets] 
\label{prop:projection-chebyshev}
\hspace{-2mm} If $\mu\in\R^d$, $\Sigma\in\S_+^d$, $\theta\in\R^d$, and $\ell:\R^d\to\R$ is the linear transformation defined through $\ell(z)=\theta^\top z$, then the pushforward of the Chebyshev ambiguity set $\cP(\mu,\Sigma)$ is the Chebyshev ambiguity set of all distributions on~$\R$ with mean~$\theta^\top\mu$ and variance~$\theta^\top\Sigma\theta$, that is,
\[
\cP(\mu,\Sigma)\circ \ell^{-1} = \cP(\theta^\top \mu, \theta^\top\Sigma\theta).
\]
\end{proposition}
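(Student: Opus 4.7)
The plan is to establish the set equality by proving both inclusions. For the easy direction $\cP(\mu,\Sigma)\circ\ell^{-1}\subseteq \cP(\theta^\top\mu,\theta^\top\Sigma\theta)$, I would simply invoke linearity of expectation and the familiar transformation rule for covariances: if $Z\sim\P$ with mean $\mu$ and covariance $\Sigma$, then $L=\theta^\top Z$ has mean $\theta^\top\mu$ and variance $\theta^\top\Sigma\theta$, so $\P\circ\ell^{-1}\in\cP(\theta^\top\mu,\theta^\top\Sigma\theta)$.

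The harder direction is to prove $\cP(\theta^\top\mu,\theta^\top\Sigma\theta)\subseteq\cP(\mu,\Sigma)\circ\ell^{-1}$. For this I would fix an arbitrary $\Q\in\cP(\theta^\top\mu,\theta^\top\Sigma\theta)$ and explicitly construct $\P\in\cP(\mu,\Sigma)$ with $\P\circ\ell^{-1}=\Q$. The idea is to split $Z$ into a rank-one component that carries all the dependence on $L\sim\Q$ and an orthogonal noise term $W$ independent of $L$. Concretely, assuming the non-degenerate case $\sigma^2:=\theta^\top\Sigma\theta>0$, I would set
\begin{align*}
    Z \;=\; \mu + \frac{\Sigma\theta}{\sigma^2}(L-\theta^\top\mu) + W,
\end{align*}
where $L\sim\Q$ and $W$ is an independent random vector with mean $0$ and covariance $\Sigma-\Sigma\theta\theta^\top\Sigma/\sigma^2$ (e.g.\ a Gaussian with that covariance matrix). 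The routine checks are: (i) this covariance matrix is positive semidefinite because it equals $P\Sigma P^\top$ with $P=I_d-\theta\theta^\top\Sigma/\sigma^2$; (ii) $\E[Z]=\mu$; (iii) since $\theta^\top(\Sigma-\Sigma\theta\theta^\top\Sigma/\sigma^2)\theta=0$, we have $\theta^\top W=0$ almost surely, hence $\theta^\top Z=L$ almost surely, so the pushforward of $\P:=\mathrm{Law}(Z)$ under $\ell$ is $\Q$; (iv) the covariance decomposes additively into $\sigma^{-2}\Sigma\theta\theta^\top\Sigma\cdot\V_\Q[L]/\sigma^2\cdot (\cdots)$, which after simplification gives $\Cov(Z)=\Sigma$.

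The two degenerate cases need a brief separate argument. If $\theta=0$ then every $\P\in\cP(\mu,\Sigma)$ yields $\P\circ\ell^{-1}=\delta_0$ while $\cP(0,0)=\{\delta_0\}$, so both sides coincide. If $\theta\neq 0$ but $\sigma^2=0$, then any $\P\in\cP(\mu,\Sigma)$ satisfies $\V_\P[\theta^\top Z]=0$ and $\E_\P[\theta^\top Z]=\theta^\top\mu$, forcing $\P\circ\ell^{-1}=\delta_{\theta^\top\mu}$, which is also the unique element of $\cP(\theta^\top\mu,0)$.

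The main obstacle is verifying that the construction of $\P$ really produces a distribution with covariance exactly $\Sigma$ (not merely some covariance matching $\Sigma$ along the direction $\theta$), which is why the careful choice of the coefficient $\Sigma\theta/\sigma^2$ in front of $(L-\theta^\top\mu)$ matters: this is the unique scaling for which the rank-one contribution $\sigma^{-2}\Sigma\theta\theta^\top\Sigma$ combined with the residual covariance of $W$ telescopes back to $\Sigma$. Once this algebraic identity is observed, the rest of the proof is mechanical.
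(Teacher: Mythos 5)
Your proof is correct and follows essentially the same route as the paper: the easy inclusion by the change-of-variables/covariance transformation rule, and the reverse inclusion by the explicit rank-one construction $Z=\mu+\sigma^{-2}\Sigma\theta\,(L-\theta^\top\mu)+W$ with an independent residual carrying covariance $\Sigma-\sigma^{-2}\Sigma\theta\theta^\top\Sigma$ (the paper writes the residual as $(I_d-\sigma^{-2}\Sigma\theta\theta^\top)M$ for an auxiliary $M\sim(\mu,\Sigma)$, which is the same decomposition). The only blemish is a transposition slip in your positive-semidefiniteness check: with $P=I_d-\sigma^{-2}\theta\theta^\top\Sigma$ the correct identity is $P^\top\Sigma P=\Sigma-\sigma^{-2}\Sigma\theta\theta^\top\Sigma$, not $P\Sigma P^\top$.
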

\begin{proof}
Select first any distribution~$\P\in\cP(\mu,\Sigma)$. If the random vector~$Z$ follows~$\P$, then the random variable $L=\ell(Z)$ follows~$\P\circ \ell^{-1}$. Thus, we have
\[
\E_{\P\circ \ell^{-1}}[L] = \E_\P[\ell(Z)]=\E_\P[\theta^\top Z]=\theta^\top \mu,
\]
where the first equality follows from the measure-theoretic change of variables formula. Similarly, one can show that $\E_{\P\circ \ell^{-1}}[(L-\theta^\top\mu)^2] =\theta^\top\Sigma\theta$. Thus, we find
\[
\cP(\mu,\Sigma)\circ \ell^{-1} \subseteq \cP(\theta^\top \mu, \theta^\top\Sigma\theta).
\]
Next, select any $\Q_L\in \cP(\theta^\top \mu, \theta^\top\Sigma\theta)$. If~$\theta^\top\Sigma\theta = 0$, then~$\Q_L=\delta_{\theta^\top \mu}$, which coincides with the pushforward distribution~$\P\circ \ell^{-1}$ for any~$\P\in\cP(\mu,\Sigma)$. In the remainder of the proof we may thus assume that~$\theta^\top\Sigma\theta \neq 0$. Let now~$L$ be a random variable governed by~$\Q_L$, and let~$M$ be a $d$-dimensional random vector governed by an arbitrary distribution~$\Q_M\in\cP(\R^d)$ with mean~$\mu$ and covariance matrix~$\Sigma$. For example, we can set~$\Q_M$ to the normal distribution $\cN(\mu,\Sigma)$. Assume~$L$ and~$M$ are independent. Then, the distribution~$\P$ of the $d$-dimensional random vector
\[
\textstyle Z=\frac{1}{\theta^\top\Sigma\theta}\Sigma \theta\, L + \left(I_d-\frac{1}{\theta^\top\Sigma\theta}\Sigma \theta\theta^\top \right) M
\]
belongs to~$\cP(\mu, \Sigma)$. By the construction of~$L$ and~$M$, we have indeed
\[
\E_\P[Z]= \textstyle \frac{1}{\theta^\top\Sigma\theta}\Sigma \theta \,  \E_{\Q_L}[L] + \left(I_d-\frac{1}{\theta^\top\Sigma\theta}\Sigma \theta\theta^\top \right) \E_{\Q_M}[M] =\mu
\]
and
\begin{align*}
	& \E_\P[(Z-\mu)(Z-\mu)^\top] \\
	&\qquad =  \textstyle \frac{1}{\theta^\top\Sigma\theta}\Sigma \theta \theta^\top\Sigma + \left(I_d-\frac{1}{\theta^\top\Sigma\theta}\Sigma \theta\theta^\top \right)\Sigma \left(I_d-\frac{1}{\theta^\top\Sigma\theta}\theta\theta^\top \Sigma \right) = \Sigma.
\end{align*}
The first equality in the above expression holds because~$L$ and~$M$ are independent, $L$ has variance~$\theta^\top\Sigma\theta$ and~$M$ has covariance matrix~$\Sigma$. By construction, we further have $\ell(Z)=\theta^\top Z=L$, which implies that $\P\circ \ell^{-1}=\Q_L$. We have thus shown that for every $\Q_L\in \cP(\theta^\top \mu, \theta^\top\Sigma\theta)$ there exists $\P\in \cP(\mu, \Sigma)$ with $\P\circ \ell^{-1}=\Q_L$, that is,
\[
\cP(\mu,\Sigma)\circ \ell^{-1} \supseteq \cP(\theta^\top \mu, \theta^\top\Sigma\theta).
\]
This observation completes the proof.
\end{proof}

Generalizations of Proposition~\ref{prop:projection-chebyshev} to multi-dimensional affine transformations and to subfamilies of the Chyebyshev ambiguity set that contain only distributions with certain structural properties (such as symmetry, linear unimodality or log-concavity etc.) are presented in \citep{yu2009projection}; see also~\citep{chen2011tight}.

We now show that if the risk measure~$\varrho$ is law-, translation- and scale-invariant and the loss function~$\ell$ is linear, then the Chebyshev risk reduces to a mean-standard deviation risk measure, which involves the standard risk coefficient of~$\varrho$. 

\begin{definition}[Standard Risk Coefficient]
\label{def:standard-risk-coefficient}
The standard risk coefficient of a law-invariant risk measure~$\varrho$ is given by $\alpha = \sup_{\Q \in \cP(0, 1)} \varrho_\Q [L]$.
\end{definition} 

Thus, the standard risk coefficient of~$\varrho$ is defined as the worst-case risk of an uncertain loss~$L$ whose distribution~$\Q$ is only known to have mean~$0$ and variance~$1$.

\begin{proposition}[Chebyshev Risk]
\label{prop:chebyshev-risk-mean-sdev}
If~$\varrho$ is a law-, translation- and scale-invariant risk measure with standard risk coefficient~$\alpha$, there is~$\theta\in\R^d$ with $\ell(z)=\theta^\top z$ for all~$z\in\R^d$, and $\cP(\mu, \Sigma)$ is the Chebyshev ambiguity set of all distributions on~$\R^d$ with mean~$\mu\in\R^d$ and covariance matrix~$\Sigma\in\S^d_+$, then the Chebyshev risk satisfies
\begin{align*}
	\sup_{\P\in \cP(\mu, \Sigma)} \varrho_\P[\ell(Z)] = \theta^\top \mu + \alpha \sqrt{\theta^\top \Sigma \theta}.
\end{align*}
\end{proposition}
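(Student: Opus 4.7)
The strategy is to reduce the multivariate worst-case risk problem on $\cP(\mu,\Sigma)$ to a univariate problem on the standardized Chebyshev class $\cP(0,1)$, exploiting the three structural properties of $\varrho$ in sequence: law-invariance pushes the problem to the real line, while translation- and scale-invariance allow us to standardize the mean and variance.

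First I would invoke Proposition~\ref{prop:projection-chebyshev} to note that the pushforward of $\cP(\mu,\Sigma)$ under the linear map $\ell(z)=\theta^\top z$ equals $\cP(\theta^\top\mu,\theta^\top\Sigma\theta)$. Combined with the law-invariance of $\varrho$ and the notational convention $\varrho_\P[\ell(Z)]=\varrho(\P\circ\ell^{-1})$, this gives
\begin{align*}
\sup_{\P\in\cP(\mu,\Sigma)}\varrho_\P[\ell(Z)]=\sup_{\Q\in\cP(\theta^\top\mu,\theta^\top\Sigma\theta)}\varrho(\Q).
\end{align*}

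Next, assuming the non-degenerate case $\sigma^2:=\theta^\top\Sigma\theta>0$, I would write a random variable $L$ governed by $\Q\in\cP(\theta^\top\mu,\sigma^2)$ as $L=\theta^\top\mu+\sigma L'$, where $L':=(L-\theta^\top\mu)/\sigma$ has mean $0$ and variance $1$. Applying translation-invariance and then scale-invariance (the latter is legitimate since $\sigma\geq 0$) yields $\varrho_\Q[L]=\theta^\top\mu+\sigma\,\varrho_{\Q'}[L']$, where $\Q'$ denotes the distribution of $L'$. The affine map $L\mapsto L'$ induces a bijection between $\cP(\theta^\top\mu,\sigma^2)$ and $\cP(0,1)$, so taking suprema on both sides and invoking the definition of the standard risk coefficient $\alpha=\sup_{\Q'\in\cP(0,1)}\varrho_{\Q'}[L']$ gives the desired identity $\theta^\top\mu+\alpha\sqrt{\theta^\top\Sigma\theta}$.

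The one subtle point, and the main obstacle of the argument, is the degenerate case $\theta^\top\Sigma\theta=0$, where the above normalization is ill-defined. In this case $\cP(\theta^\top\mu,0)=\{\delta_{\theta^\top\mu}\}$, so it suffices to show $\varrho(\delta_{\theta^\top\mu})=\theta^\top\mu$. To this end I would observe that scale-invariance with $c=0$ applied to any random loss gives $\varrho(\delta_0)=0$, and then translation-invariance with the constant loss $0$ shifted by $\theta^\top\mu$ yields $\varrho(\delta_{\theta^\top\mu})=\theta^\top\mu=\theta^\top\mu+\alpha\cdot 0$, matching the claimed formula.
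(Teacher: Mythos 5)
Your proof is correct and follows essentially the same route as the paper: both rely on Proposition~\ref{prop:projection-chebyshev}, the three invariance properties, the definition of the standard risk coefficient, and a separate treatment of the degenerate case $\theta^\top\Sigma\theta=0$. The only cosmetic difference is the order of operations (you push forward to the real line first and then standardize, whereas the paper applies translation- and scale-invariance to the multivariate loss before invoking the pushforward lemma), which changes nothing of substance.
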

\begin{proof}
If $\theta^\top\Sigma\theta=0$, then 
\begin{align*}
	\sup_{\P \in \cP(\mu,\Sigma)} \varrho_\P \left[\theta^\top Z\right]& =  \theta^\top \mu + \sup_{\P \in \cP(\mu,\Sigma)} \varrho_\P \left[\theta^\top (Z-\mu)\right]\\
	& =  \theta^\top \mu + \sup_{\P \in \cP(\mu,\Sigma)} \varrho_\P \left[0\right] = \theta^\top \mu ,
\end{align*}
where the first equality holds because~$\varrho$ is translation invariant, whereas the second equality holds because $\theta^\top(Z-\mu)$ equals $0$ in law under any~$\P\in\cP(\mu,\Sigma)$ and because~$\varrho$ is law-invariant. Finally, the third equality follows from the scale-invariance of~$\varrho$. If~$\theta^\top\Sigma\theta>0$, on the other hand, then we have
\begin{align*}
	\sup_{\P \in \cP(\mu,\Sigma)} \varrho_\P \left[\theta^\top Z\right]& =  \theta^\top \mu + \sup_{\P \in \cP(\mu,\Sigma)} \varrho_\P \left[\theta^\top (Z-\mu)\right] \\
	& = \theta^\top \mu + \sup_{\P \in \cP(\mu,\Sigma)} \varrho_\P \left[\frac{\theta^\top (Z-\mu)}{\sqrt{\theta^\top \Sigma \theta}}\right] \sqrt{\theta^\top \Sigma \theta}\\
	& = \theta^\top \mu + \alpha \sqrt{\theta^\top \Sigma \theta},
\end{align*}
where the first two equalities follow from the translation- and scale-invariance of~$\varrho$, respectively. The third equality follows from Proposition~\ref{prop:projection-chebyshev}, the law-invariance of~$\varrho$ and the definition of~$\alpha$. Indeed, the pushforward of the multivariate Chebyshev ambiguity set~$\cP(\mu,\Sigma)$ under the transformation $\ell(z)=\theta^\top(z-\mu)/\sqrt{\theta^\top\Sigma\theta}$ coincides with the univariate standard Chebyshev ambiguity set~$\cP(0,1)$.
\end{proof}

The standard risk coefficient of a generic law-invariant risk measure may be difficult to compute. We now show, however, that the standard risk coefficients of the VaR and the CVaR match and are available in closed form.

\begin{proposition}[Standard Risk Coefficients of VaR and CVaR] 
\label{prop:var=cvar}
For any $\beta\in(0,1)$, the standard risk coefficients of the $\beta$-VaR and the $\beta$-CVaR coincide, that is,
\begin{align*}
	\sup_{\Q \in \cP(0,1)} \beta\CVaR_{\Q}[L] 
	= \sup_{\Q \in \cP(0,1)} \beta\VaR_{\Q}[L] = \sqrt{\frac{1-\beta}{\beta}}.
	\end{align*}\end{proposition}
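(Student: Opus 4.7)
The strategy is to compute $\sup_{\Q} \beta\CVaR_\Q[L]$ first using the duality machinery of Sections~\ref{sec:duality-wc-expectation}--\ref{sec:duality-wc-risk} together with Scarf's bound, and then sandwich $\sup_{\Q} \beta\VaR_\Q[L]$ between the CVaR value (from above, via the pointwise inequality $\VaR \leq \CVaR$) and the value of an explicit sequence of distributions (from below).

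For the CVaR part, recall from Proposition~\ref{prop:CVaR} that $\beta\CVaR$ is the optimized certainty equivalent induced by the disutility $g(\tau) = \beta^{-1}\max\{\tau,0\}$. The hypotheses of Theorem~\ref{thm:duality:regular} (equivalently, formula~\eqref{eq:worst-case-cvar}) hold on $\cP(0,1)$: this set is convex, $\E_\Q[L]=0 > -\infty$, and by Cauchy--Schwarz $\E_\Q[\max\{L,0\}] \leq \sqrt{\E_\Q[L^2]} = 1$, so $\sup_\Q \E_\Q[g(L)] < \infty$. Therefore
\begin{align*}
    \sup_{\Q \in \cP(0,1)} \beta\CVaR_\Q[L] = \inf_{\tau \in \R} \left( \tau + \frac{1}{\beta} \sup_{\Q \in \cP(0,1)} \E_\Q[\max\{L-\tau,0\}] \right).
\end{align*}
The inner worst-case expectation is exactly the setting of Scarf's bound (Proposition~\ref{prop:scarf}) with $a = \tau$, $\mu = 0$, $\sigma^2 = 1$, and evaluates to $\half(\sqrt{\tau^2+1}-\tau)$. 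This reduces the problem to the smooth one-dimensional convex minimization
\begin{align*}
    \inf_{\tau \in \R} \; \tau + \frac{1}{2\beta}\left(\sqrt{\tau^2+1}-\tau\right).
\end{align*}
The first-order condition $\tau/\sqrt{\tau^2+1} = 1 - 2\beta$ yields the unique minimizer $\tau^\star = (1-2\beta)/(2\sqrt{\beta(1-\beta)})$, at which $\sqrt{(\tau^\star)^2+1} = 1/(2\sqrt{\beta(1-\beta)})$; a routine substitution gives the optimal value $\sqrt{(1-\beta)/\beta}$.

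For the VaR part, the pointwise inequality $\beta\VaR_\Q[L] \leq \beta\CVaR_\Q[L]$ (visible from Proposition~\ref{prop:CVaR} by substituting $\tau = \beta\VaR_\Q[L]$ into the defining infimum and using non-negativity of $\max\{L-\tau,0\}$) immediately yields $\sup_{\Q} \beta\VaR_\Q[L] \leq \sqrt{(1-\beta)/\beta}$. For the matching lower bound, consider the two-point distributions
\begin{align*}
    \Q_\varepsilon = (\beta+\varepsilon)\delta_{a_\varepsilon} + (1-\beta-\varepsilon)\delta_{b_\varepsilon}, \qquad a_\varepsilon = \sqrt{\tfrac{1-\beta-\varepsilon}{\beta+\varepsilon}}, \quad b_\varepsilon = -\tfrac{(\beta+\varepsilon)a_\varepsilon}{1-\beta-\varepsilon},
\end{align*}
parametrized by small $\varepsilon > 0$. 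A direct check confirms $\E_{\Q_\varepsilon}[L] = 0$ and $\E_{\Q_\varepsilon}[L^2] = 1$, so $\Q_\varepsilon \in \cP(0,1)$. Since $\Q_\varepsilon(L \leq \tau) = 1-\beta-\varepsilon < 1-\beta$ on $[b_\varepsilon,a_\varepsilon)$ and equals $1$ at $\tau = a_\varepsilon$, Definition~\ref{def:var} gives $\beta\VaR_{\Q_\varepsilon}[L] = a_\varepsilon$, which tends to $\sqrt{(1-\beta)/\beta}$ as $\varepsilon \downarrow 0$. This completes the matching lower bound.

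The main technical step is the scalar minimization in Step~2, which is routine but must be carried out explicitly. A mild subtlety worth flagging in the write-up is that the supremum over $\Q$ of $\beta\VaR$ is \emph{not} attained: the limiting distribution $\Q_0$ itself has VaR equal to $b_0 = -\sqrt{\beta/(1-\beta)}$ rather than $a_0$, because shifting the mass of the upper atom from $\beta+\varepsilon$ to $\beta$ causes the quantile to jump from $a_\varepsilon$ to $b_0$. This discontinuity of VaR in the distribution is the reason the approximation argument via $\Q_\varepsilon$ is necessary.
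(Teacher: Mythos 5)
Your proof is correct, but it is organized differently from the paper's. The paper anchors the sandwich on the VaR side: it first computes $\sup_{\Q}\beta\VaR_{\Q}[L]$ \emph{exactly} by writing the worst-case VaR as the smallest $\tau$ with $\sup_{\Q}\Q(L\geq\tau)\leq\beta$ and invoking the Marshall--Olkin bound (Proposition~\ref{prop:marshall-olkin}), and then only needs the max--min \emph{inequality} plus Scarf's bound to upper-bound the worst-case CVaR — so, as its footnote stresses, no minimax theorem is required. You anchor the sandwich on the CVaR side instead: you compute $\sup_{\Q}\beta\CVaR_{\Q}[L]$ exactly by invoking the full minimax theorem for optimized certainty equivalents (Theorem~\ref{thm:duality:regular}, whose hypotheses you correctly verify, and which the paper's footnote confirms is applicable here even though Sion's theorem is not, since $\cP(0,1)$ fails to be weakly compact), and you replace the Marshall--Olkin machinery with an explicit sequence $\Q_\varepsilon$ of feasible two-point distributions — in effect re-deriving by hand the one-dimensional primal construction underlying Marshall--Olkin. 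Both routes funnel into the same scalar minimization over $\tau$ with the same minimizer $\tau^\star$. One observation: your explicit lower bound $\sup_{\Q}\beta\VaR_{\Q}[L]\geq\lim_{\varepsilon\downarrow 0}a_\varepsilon$ actually renders Theorem~\ref{thm:duality:regular} dispensable in your own argument, since the chain $\sqrt{(1-\beta)/\beta}\leq\sup_{\Q}\beta\VaR_{\Q}[L]\leq\sup_{\Q}\beta\CVaR_{\Q}[L]\leq\sqrt{(1-\beta)/\beta}$ already closes with only the max--min inequality on the CVaR side; invoking the minimax theorem is harmless but heavier than necessary. Your closing remark that the VaR supremum is not attained, because the quantile jumps down in the limit $\varepsilon\downarrow 0$, is correct and explains why the approximating sequence is genuinely needed.
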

	
	\begin{proof} As $\beta\CVaR_{\Q}[L]$ upper bounds $\beta\VaR_{\Q}[L]$ for every $\Q\in \cP(0,1)$, we have
\begin{align}
	\label{eq:wcvar-wccvar}
	\sup_{\Q \in \cP(0,1)} \beta\CVaR_{\Q}[L] 
	\geq \sup_{\Q \in \cP(0,1)} \beta\VaR_{\Q}[L].
\end{align}
The rest of the proof proceeds as follows. We first derive an analytical formula for the worst-case $\beta$-VaR on the right hand side ({\em Step~1}). Next, we prove that the same analytical formula provides an upper bound on the worst-case $\beta$-CVaR on the left hand side ({\em Step~2}). The claim then follows from the above inequality.

{\em Step~1.} We first express the worst-case $\beta$-VaR as its smallest upper bound to find
\begin{align*}
	\sup_{\Q \in \cP(0,1)} \beta\VaR_{\Q}[L]
	& = \inf_{\tau\in\R} \left\{ \tau : \beta\VaR_{\Q}(L) \leq \tau \; \forall \Q\in \cP(0,1) \right\} \\
	& = \inf_{\tau\in\R} \left\{ \tau : \Q(L \geq \tau) \leq \beta \; \forall \Q\in \cP(0,1) \right\} \\
	& = \inf_{\tau\in\R} \bigg\{ \tau : \frac{1}{1+\tau^2} \leq \beta \bigg\} = \sqrt{\frac{1-\beta}{\beta}}.
\end{align*}
The second equality in the above derivation follows from~\eqref{eq:var-refomulation}, and the third equality follows from the Marshall and Olkin bound of Proposition~\ref{prop:marshall-olkin}. The final formula is obtained by analytically solving the minimization problem over~$\tau$.

{\em Step~2.} The max-min inequality\footnote{The Chebyshev ambiguity set~$\cP(0,1)$ is {\em not} weakly compact (see Example~\ref{ex:chebyshev-not-compact}). Therefore, Sion's minimax theorem does not allow us to interchange the infimum over~$\tau$ and the supremum over~$\Q$. While we could instead invoke Theorem~\ref{thm:duality:regular}, this is actually not needed to prove Proposition~\ref{prop:var=cvar}.} and the definition of the $\beta$-CVaR imply that
\begin{align*}
	\sup_{\Q \in \cP(0,1)} \beta\CVaR_{\Q}[L]
	&\leq \inf_{\tau \in \R} \sup_{\Q \in \cP(0,1)}  \tau + \frac{1}{\beta} \E_\Q \left[\max\{L - \tau,0\} \right] \\
	&= \inf_{\tau \in \R} \tau + \frac{1}{2\beta}\left( \sqrt{1+\tau^2} -\tau \right) = \sqrt{\frac{1-\beta}{\beta}},
\end{align*}
where the first equality follows from Scarf's bound derived in Proposition~\ref{prop:scarf}, and the last equality is obtained by analytically solving the convex minimization problem over~$\tau$. The unique minimizer is given~by
\[
\tau^\star = \frac{1-2\beta}{2\sqrt{\beta(1-\beta)}}.
\]
This completes {\em Step~2}. The claim then follows by combining the analytical formula for the worst-case $\beta$-VaR found in {\em Step~1} and the analytical upper bound on the worst-case $\beta$-CVaR found in {\em Step~2} with the elementary inequality~\eqref{eq:wcvar-wccvar}.
\end{proof}

Propositions~\ref{prop:chebyshev-risk-mean-sdev} and~\ref{prop:var=cvar} provide an analytical formula for the Chebyshev risk of a linear loss function provided that the underlying risk measure is the VaR or the CVaR. The formula for the worst-case VaR was first derived in \citep{lanckriet2001minimax, lanckriet2002robust,ghaoui2003worst}; see also~\citep{calafiore2006distributionally}. The equality of the worst-case VaR and the worst-case CVaR was discovered in~\citep{zymler2013distributionally}. It not only holds for linear but also for arbitrary concave and arbitrary quadratic (not necessarily concave) loss functions. Proposition~\ref{prop:chebyshev-risk-mean-sdev} follows from \citep{nguyen2021mean}. The standard risk coefficient can be characterized in closed form for a wealth of law-, translation- and scale-invariant risk measures other than the VaR and the CVaR. It is available, for instance, for all spectral risk measures and all risk measures that admit a Kusuoka representation \citep{li2018closed} as well as all distortion risk measures \citep{cai2023distributionally}; see also~\citep{nguyen2021mean}.

\subsection{Gelbrich Risk}
\label{sec:gelbrich-risk}
Denote by~$\cG_r (\hat\mu, \hat\Sigma)$ the Gelbrich ambiguity set of all distributions~$\P\in\cP(\R^d)$ whose mean-covariance pairs $(\mu,\Sigma)\in\R^d\times\S_+^d$ reside in a ball of radius~$r\geq 0$ around~$(\hat \mu,\hat \Sigma)\in\R^d\times\S_+^d$ with respect to the Gelbrich distance; see Definition~\ref{def:Gelbrich}. Recall from Section~\ref{sec:chebyshev-with-moment-uncertainty} that the Gelbrich ambiguity set accounts for moment ambiguity and thus often provides a more realistic account of uncertainty than a na\"ive Chebyshev ambiguity set. If the distribution of~$Z$ is only known to have a mean-covariance pair close to~$(\hat \mu,\hat \Sigma)$, then it is natural to quantify the riskiness of an uncertain loss~$\ell(Z)$ under a law-invariant risk measure~$\varrho$ by the {\em Gelbrich risk}
\begin{align*}
\sup_{\P\in \cG_r (\hat\mu, \hat\Sigma)} \varrho_\P[\ell(Z)].
\end{align*}
By construction, $\cG_r (\hat\mu, \hat\Sigma)$ is the union of all Chebyshev ambiguity sets~$\cP(\mu,\Sigma)$ corresponding to a mean-covariance pair~$(\mu,\Sigma)$ with $\G ( (\mu, \Sigma), (\hat \mu, \hat \Sigma)) \leq r$. This decomposition of the Gelbrich ambiguity set into Chebyshev ambiguity sets allows us via Proposition~\ref{prop:chebyshev-risk-mean-sdev} to derive an analytical formula for the Gelbrich risk.

\begin{proposition}[Gelbrich Risk]
\label{prop:gelbrich-risk-mean-sdev}
Assume that~$\varrho$ is a law-, translation- and scale-invariant risk measure with standard risk coefficient~$\alpha\in\R_+$, there is~$\theta\in\R^d$ with $\ell(z)=\theta^\top z$ for all~$z\in\R^d$, and $\cG_r(\hat \mu, \hat\Sigma)$ is the Gelbrich ambiguity set of all distributions on~$\R^d$ whose mean-covariance pairs have a Gelbrich distance of at most~$r\geq 0$ from~$(\hat\mu, \hat\Sigma)\in\R^d\times \S^d_+$. Then, the Gelbrich risk satisfies
\begin{align}
	\label{eq:gelbrich-risk-regularization}
	\sup_{\P \in \cG_r (\hat\mu, \hat\Sigma)} \; \varrho_\P \left[ \theta^\top Z \right] 
	= \hat \mu^\top \theta + \alpha \sqrt{\theta^\top \hat \Sigma \theta } + r \sqrt{1+ \alpha^2}\, \| \theta \|_2.
\end{align}
\end{proposition}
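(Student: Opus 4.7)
The plan is to reduce the Gelbrich risk to a finite-dimensional optimization over mean-covariance pairs and then decouple it into a mean part and a covariance part, each solvable in closed form.

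First, I would observe that the Gelbrich ambiguity set decomposes as the union of all Chebyshev ambiguity sets $\cP(\mu,\Sigma)$ whose parameters lie in the Gelbrich ball, i.e. satisfy $\G((\mu,\Sigma),(\hat\mu,\hat\Sigma))\le r$. Applying Proposition~\ref{prop:chebyshev-risk-mean-sdev} to each inner Chebyshev set reduces the problem to
\begin{equation*}
    \sup_{\P \in \cG_r(\hat\mu,\hat\Sigma)} \varrho_\P[\theta^\top Z] = \sup_{(\mu,\Sigma)\,:\,\G((\mu,\Sigma),(\hat\mu,\hat\Sigma))\le r}\; \theta^\top\mu + \alpha\sqrt{\theta^\top\Sigma\theta}.
\end{equation*}
Since the squared Gelbrich distance splits as $\|\mu-\hat\mu\|_2^2 + \Tr(\Sigma+\hat\Sigma-2(\hat\Sigma^{1/2}\Sigma\hat\Sigma^{1/2})^{1/2})$, I would parametrize the budget by letting $r_1 = \|\mu-\hat\mu\|_2$ and $r_2$ equal the square root of the trace term, constrained by $r_1^2+r_2^2 \le r^2$.

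Next, I would handle the two pieces independently. The mean piece $\sup_{\|\mu-\hat\mu\|_2\le r_1}\theta^\top\mu = \theta^\top\hat\mu + r_1\|\theta\|_2$ by Cauchy-Schwarz, with equality at $\mu = \hat\mu + r_1\theta/\|\theta\|_2$. For the covariance piece, I would invoke the fact that the Bures distance squared coincides with the squared $2$-Wasserstein distance between centered Gaussians $\cN(0,\Sigma)$ and $\cN(0,\hat\Sigma)$, which follows from the closed-form Wasserstein formula for Gaussians and the Gelbrich bound of Theorem~\ref{theorem:gelbrich}. Pushing forward along the linear map $z\mapsto\theta^\top z$ yields univariate Gaussians whose $2$-Wasserstein distance is simply $|\sqrt{\theta^\top\Sigma\theta}-\sqrt{\theta^\top\hat\Sigma\theta}|$. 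Since pushforwards along a $\|\theta\|_2$-Lipschitz map contract the $2$-Wasserstein distance by at most $\|\theta\|_2$, this gives
\begin{equation*}
    \sqrt{\theta^\top\Sigma\theta} \;\le\; \sqrt{\theta^\top\hat\Sigma\theta} + r_2 \|\theta\|_2,
\end{equation*}
with equality attained by an explicit construction (e.g.\ $\Sigma = (\hat\Sigma^{1/2}+t\theta\theta^\top)^2$ for a suitable scalar $t$, which makes the Bures upper bound tight).

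Finally, combining both pieces yields the joint bound $\theta^\top\hat\mu + \alpha\sqrt{\theta^\top\hat\Sigma\theta} + (r_1 + \alpha r_2)\|\theta\|_2$, and one more Cauchy-Schwarz step over the split $(r_1,r_2)\in\R_+^2$ with $r_1^2+r_2^2\le r^2$ gives $r_1+\alpha r_2\le r\sqrt{1+\alpha^2}$, with equality at $r_1=r/\sqrt{1+\alpha^2}$ and $r_2=\alpha r/\sqrt{1+\alpha^2}$. Substituting these optimal choices produces the claimed formula~\eqref{eq:gelbrich-risk-regularization}. The main obstacle is the sharpness of the covariance bound: one must exhibit a specific covariance matrix on the Bures sphere of radius $r_2$ that attains $\sqrt{\theta^\top\Sigma\theta} = \sqrt{\theta^\top\hat\Sigma\theta} + r_2\|\theta\|_2$, which requires a careful rank-one-type construction rather than merely invoking the contraction inequality.
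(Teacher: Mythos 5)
Your proposal takes the same first step as the paper — decomposing the Gelbrich ambiguity set into Chebyshev slices and invoking Proposition~\ref{prop:chebyshev-risk-mean-sdev} to reduce the problem to a finite-dimensional maximization over mean-covariance pairs. From that point on, however, your route is genuinely different and substantially more geometric. The paper dualizes the constrained maximization over $(\mu,\Sigma)$ via a Lagrange multiplier $\gamma$ and then grinds through the two dual subproblems explicitly: the $\Sigma$-subproblem requires the substitution $B\gets(\hat\Sigma^{1/2}\Sigma\hat\Sigma^{1/2})^{1/2}$, a Lyapunov equation to certify uniqueness, the Sherman--Morrison formula to invert $\gamma I - \lambda\theta\theta^\top$, and a separate continuity argument to pass from $\hat\Sigma\succ 0$ to singular $\hat\Sigma$. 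Your proposal instead exploits the additive split of $\G^2$ into $\|\mu-\hat\mu\|_2^2$ plus the squared Bures distance, treats each piece with a one-line argument (Cauchy--Schwarz for the mean, Lipschitz contraction of $\W_2$ under the pushforward $z\mapsto\theta^\top z$ for the covariance), and recombines with a final Cauchy--Schwarz over the budget split $(r_1,r_2)$. This avoids duality entirely, makes the appearance of $\sqrt{1+\alpha^2}$ transparent, and handles singular $\hat\Sigma$ without a separate continuity step. The trade-off is that the contraction inequality only delivers the upper bound, so you must separately exhibit a covariance matrix that makes it tight.

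That sharpness step is the one place your write-up is not quite right, and you flag it yourself. Your candidate $\Sigma=(\hat\Sigma^{1/2}+t\theta\theta^\top)^2$ has Bures distance $t\|\theta\theta^\top\|_F=t\|\theta\|_2^2$ from $\hat\Sigma$ only when $\theta\theta^\top$ commutes with $\hat\Sigma^{1/2}$, i.e.\ when $\theta$ is an eigenvector of $\hat\Sigma$; for general $\hat\Sigma$ the identity $(\hat\Sigma^{1/2}\Sigma\hat\Sigma^{1/2})^{1/2}=\hat\Sigma^{1/2}(\hat\Sigma^{1/2}+t\theta\theta^\top)$ fails and the Bures formula does not simplify. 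A construction that does work in the generic case $\theta^\top\hat\Sigma\theta>0$ is the positive semidefinite affine pushforward $\Sigma_t=A_t\hat\Sigma A_t$ with $A_t=I+t\,\theta\theta^\top/\|\theta\|_2$ for $t\ge 0$. Since $A_t\succeq 0$, the map $z\mapsto A_t z$ is Brenier-optimal (cf.\ the remark after Theorem~\ref{theorem:gelbrich} on positive semidefinite affine pushforwards), so $\W_2(\cN(0,\Sigma_t),\cN(0,\hat\Sigma))^2=\E_{\hat\P}\|(A_t-I)Z\|_2^2=t^2\,\theta^\top\hat\Sigma\theta$, which equals the squared Bures distance. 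Meanwhile $A_t\theta=(1+t\|\theta\|_2)\theta$ gives $\sqrt{\theta^\top\Sigma_t\theta}=(1+t\|\theta\|_2)\sqrt{\theta^\top\hat\Sigma\theta}$. Choosing $t=r_2/\sqrt{\theta^\top\hat\Sigma\theta}$ then yields $\sqrt{\theta^\top\Sigma_t\theta}=\sqrt{\theta^\top\hat\Sigma\theta}+r_2\|\theta\|_2$, exactly saturating the contraction bound. The degenerate case $\theta^\top\hat\Sigma\theta=0$ means $\hat\Sigma\theta=0$, so $\hat\Sigma$ and $\theta\theta^\top$ act on orthogonal subspaces and your commuting-rank-one construction does go through: $\Sigma=\hat\Sigma+r_2^2\,\theta\theta^\top/\|\theta\|_2^2$ has Bures distance $r_2$ and $\sqrt{\theta^\top\Sigma\theta}=r_2\|\theta\|_2$. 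With this patch your proof is complete and, in my view, cleaner than the paper's.
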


\begin{proof}
Assume first that~$\hat \Sigma \succ 0$. If $\theta=0$, then the claim holds trivially because~$\varrho$ is law- and scale-invariant. If $r=0$, then the claim follows immediately from Proposition~\ref{prop:chebyshev-risk-mean-sdev}. We may thus assume that~$\theta \neq 0$ and~$r > 0$. In this case, we have
\begin{align*}
	\sup_{\P \in \cG_r (\hat\mu, \hat\Sigma)} \; \varrho_\P \left[ \theta^\top Z \right] 
	&= \left\{ 
	\begin{array}{cl}
		\sup & \displaystyle \sup_{\P \in \cP(\mu, \Sigma)} \, \varrho_\P \left[ \theta^\top Z \right] \\[2ex]
		\st & \mu \in \R^d, \ \Sigma \in \S_+^d, \ \G \big( (\mu, \Sigma), (\hat \mu, \hat \Sigma) \big) \leq r
	\end{array}
	\right. \\
	&= \left\{ 
	\begin{array}{cl}
		\sup & \mu^\top \theta + \alpha \sqrt{\theta^\top \Sigma \theta} \\[0.5ex]
		\st & \mu \in \R^d, \ \Sigma \in \S_+^d \\
		& \|\mu - \hat \mu \|^2 + \Tr \left(\Sigma + \hat \Sigma - 2 \big( \hat \Sigma^\half \Sigma \hat \Sigma^\half \big)^\half \right) \leq r^2,
	\end{array}
	\right.
\end{align*}
where the first equality exploits the decomposition of the Gelbrich ambiguity set into Chebyshev ambiguity sets. The second equality follows from Proposition~\ref{prop:chebyshev-risk-mean-sdev} and Definition~\ref{def:Gelbrich}. By dualizing the resulting convex optimization problem, we~find
\begin{equation}
	\label{eq:gelbrich-dual}
	\begin{aligned}
		\sup_{\P \in \cG_r (\hat\mu, \hat\Sigma)} \; \varrho_\P \left[ \theta^\top Z \right] = \inf_{\gamma \in \R_+} & \Bigg\{ \gamma \big( r^2 - \Tr(\hat \Sigma) \big) + \sup_{\mu \in \R^d} \Big\{ \mu^\top \theta - \gamma \| \mu - \hat \mu \|^2  \Big\} \\[-0.5ex]
		& +\sup_{\Sigma \in \S_+^d} \Big\{ \alpha \sqrt{\theta^\top \Sigma \theta} - \gamma \Tr \big(\Sigma - 2 \big( \hat \Sigma^\half \Sigma \hat \Sigma^\half \big)^\half \big) \Big\} \Bigg\}.
	\end{aligned}
\end{equation}
Strong duality holds because~$r > 0$, which implies that $(\hat\mu,\hat\Sigma)$ constitutes a Slater point for the primal maximization problem. If $\gamma = 0$, then the maximization problems over~$\mu$ and~$\Sigma$ in~\eqref{eq:gelbrich-dual} are unbounded. We may thus restrict~$\gamma$ to be strictly positive. For any fixed~$\gamma > 0$, the maximization problem over~$\mu$ can be solved in closed form. Its optimal value is given by~$\hat \mu^\top \theta + \| \theta \|^2 / (4 \gamma)$.
By introducing an auxiliary variable~$t$, the maximization problem over~$\Sigma$ can be reformulated as
\begin{align}
	\label{eq:Sigma-problem-dual-objective}
	\begin{array}{cl}
		\sup & \alpha t - \gamma \Tr \left(\Sigma - 2 \big( \hat \Sigma^\half \Sigma \hat \Sigma^\half \big)^\half \right) \\
		\st & t \in \R_+, \ \Sigma \in \S_+^d, \; t^2 - \theta^\top \Sigma \theta \leq 0.
	\end{array}
\end{align} 
Note that $t=0$ and $\Sigma= \theta\theta^\top$ form a Slater point for~\eqref{eq:Sigma-problem-dual-objective} because~$\theta\neq 0$. Thus, problem~\eqref{eq:Sigma-problem-dual-objective} admits a strong dual. The variable substitution~$B \gets (\hat \Sigma^{\frac{1}{2}} \Sigma \hat \Sigma^{\frac{1}{2}})^{\frac{1}{2}}$ allows us to reformulate this dual problem more concisely as
\begin{align}
	\label{eq:Sigma-problem-dual-objective2}
	\inf_{\lambda \in \R_+} ~ \sup_{t \in\R_+} ~ \alpha t - \lambda t^2  + \sup_{B \in\S^d_+} ~ \Tr \left(B^2 \Delta_\lambda \right) + 2 \gamma \Tr(B),
\end{align}
where~$\Delta_\lambda = \hat \Sigma^{-\half}(\lambda \theta \theta^\top - \gamma I_d) \hat \Sigma^{-\half}$ for any~$\lambda \geq 0$. Note that~$\Delta_\lambda$ is well-defined because~$\hat \Sigma\succ 0$. Recall now that the standard risk coefficient~$\alpha$ was assumed to be non-negative. If~$\lambda>0$, then the supremum over~$t$ in~\eqref{eq:Sigma-problem-dual-objective2} evaluates to~$\alpha^2/(4\lambda)$. Otherwise, if~$\lambda=0$, then this supremum evaluates to~$+\infty$. From now on, we may thus restrict the outer minimization problem in~\eqref{eq:Sigma-problem-dual-objective2} to strictly positive~$\lambda$. Similarly, if~$\Delta_\lambda \not \prec 0$, then the supremum over~$B$ in~\eqref{eq:Sigma-problem-dual-objective2} evaluates to~$+\infty$. From now on, we may thus restrict the outer minimization problem in~\eqref{eq:Sigma-problem-dual-objective2} to~$\lambda$ that satisfy $\gamma I_d - \lambda \theta \theta^\top \succ 0$. This constraint is equivalent to~$\lambda < \gamma \| \theta \|^{-2}$ and guarantees that~$\Delta_\lambda \prec 0$. As~$\lambda>0$, this in turn implies that~$B^\star=-\gamma\Delta_\lambda^{-1}$ is positive definite and satisfies the first-order optimality condition $B \Delta_\lambda + \Delta_\lambda B + 2\gamma I_d=0$. Note that this optimality condition can be interpreted as a continuous Lyapunov equation, and therefore its solution~$B^\star$ is in fact unique; see, {\em e.g.}, \cite[Theorem~12.5]{hespanha2009linear}. By making the implicit constraints on~$\lambda$ explicit and by evaluating the two suprema over~$t$ and~$B$ analytically, problem~\eqref{eq:Sigma-problem-dual-objective2} can finally be reformulated as
\begin{align*}
	& \inf_{0 < \lambda < \gamma \| \theta \|^{-2}} \;
	\frac{\alpha^2}{4\lambda} + \gamma^2 \Tr \left( \hat \Sigma^\half (\gamma I_d- \lambda \theta \theta^\top)^{-1} \hat \Sigma^{\half} \right)
	\\ &= \inf_{0 < \lambda < \gamma \| \theta \|^{-2}} \; \frac{\alpha^2}{4 \lambda} + \gamma \Tr \left(\hat \Sigma \right) + \frac{\theta^\top \hat \Sigma \theta}{\lambda^{-1} -\| \theta \|^2 / \gamma}=
	\gamma \Tr \left(\hat \Sigma \right) + \frac{\alpha^2}{4} \frac{\| \theta \|^2}{\gamma} + \alpha \sqrt{\theta^\top \hat \Sigma \theta}.        
\end{align*}
Here, the first equality exploits the Sherman-Morrison formula \cite[Corollary~2.8.8]{bernstein2009matrix} to rewrite the inverse matrix, and the second equality is obtained by solving the minimization problem over~$\lambda$ analytically. Indeed, the infimum is attained at the unique solution~$\lambda^\star$ of the first-order condition 
\[
\frac{1}{\lambda}  = \frac{\| \theta \|^2}{\gamma} + \frac{2}{\alpha} \sqrt{\theta^\top \hat \Sigma \theta}
\]
in the interior of the feasible set. In summary, we have solved both embedded subproblems in~\eqref{eq:gelbrich-dual} analytically. Substituting their optimal values into~\eqref{eq:gelbrich-dual}~yields
\begin{align*}
	\sup_{\P \in \cG_r(\hat \mu, \hat \Sigma)} \; \varrho_\P [\theta^\top Z]  
	&= \inf_{\gamma \geq 0} ~ \hat \mu^\top \theta + \alpha \sqrt{\theta^\top \hat \Sigma \theta} +  \gamma r^2 + \frac{1 + \alpha^2}{4} \frac{\| \theta \|^2}{\gamma} \\
	&= \hat \mu^\top \theta + \alpha \sqrt{\theta^\top \hat \Sigma \theta} + r \sqrt{1+ \alpha^2}\, \| \theta \|.
\end{align*}
Here, the second equality is obtained by solving the minimization problem over~$\gamma$ in closed form. We have thus established the desired formula~\eqref{eq:gelbrich-risk-regularization} for~$\hat \Sigma \succ 0$. 

It remains to be shown that~\eqref{eq:gelbrich-risk-regularization} remains valid even if~$\hat \Sigma$ is singular. To this end, use~$J(\hat \Sigma)$ as a shorthand for the Gelbrich risk as a function of~$\hat \Sigma$. By leveraging Berge's maximum theorem~\cite[pp.~115--116]{berge1963topological} and the continuity of the Gelbrich distance (see the discussion after Proposition~\ref{prop:Gelbrich:SDP}), it is easy to show that~$J(\hat \Sigma)$ is continuous on~$\S_+^d$. The claim thus follows by noting that~\eqref{eq:gelbrich-risk-regularization} holds for all $\hat\Sigma\succ 0$, that both sides of~\eqref{eq:gelbrich-risk-regularization} are continuous in~$\hat\Sigma$ and that every~$\hat\Sigma\in\S^d_+$ can be expressed as a limit of positive definite matrices.
\end{proof}

Proposition~\ref{prop:gelbrich-risk-mean-sdev} is due to~\citet{nguyen2021mean}. It shows that, for a broad class of risk measures, the worst-case risk over a Gelbrich ambiguity set reduces to a Markowitz-type mean-variance risk functional with a 2-norm regularization term. We emphasize that the risk measure~$\rho$ enters the resulting optimization model only indirectly through the standard risk coefficient~$\alpha$.

\subsection{Worst-Case Expectations over Kullback-Leibler Ambiguity Sets}
\label{sec:KL-risk}

Consider the worst-case expectation problem
\begin{subequations}
\begin{align}
	\label{eq:primal-KL-risk}
	\sup_{\P \in \cP(\cZ)} \left\{ \E_\P \left[ \ell (Z) \right] \;:\;  \KL(\P, \hat{\P}) \leq r \right\},
\end{align}
which maximizes the expected value of~$\ell(Z)$ over the Kullback-Leibler ambiguity set of all distributions supported on~$\cZ$ whose Kullback-Leibler divergence with respect to~$\hat\P\in\cP(\cZ)$ is at most~$r\geq 0$. The Kullback-Leibler ambiguity set is a $\phi$-divergence ambiguity set of the form~\eqref{eq:phi-divergence-ambiguity-set}, where~$\phi$ satisfies~$\phi(s)=s\log(s)-s+1$ for all~$s\geq 0$.  As~$\phi^\infty(1)=+\infty$, we have~$\KL(\P, \hat{\P})=\infty$ unless~$\P\ll\hat \P$. Hence, problem~\eqref{eq:primal-KL-risk} maximizes only over distributions~$\P$ that are absolutely continuous with respect to~$\hat\P$. Note that~$\phi^*(t)=e^{t}-1$ for all~$t\in\R$. By Theorem~\ref{thm:duality:phi} and the definition of the perspective function, the problem dual to~\eqref{eq:primal-KL-risk} is thus given~by
\begin{align}
	\label{eq:dual-KL-risk}
	\inf_{\lambda_0 \in\R,\,\lambda \in \R_+} ~ \lambda_0+ \lambda (r-1) + \E_{\hat \P} \left[ \lambda \exp \left( \frac{\ell(Z) - \lambda_0}{\lambda} \right) \right].
\end{align}
\end{subequations}
The problems~\eqref{eq:primal-KL-risk} and~\eqref{eq:dual-KL-risk} can be solved in closed form if the loss function~$\ell$ is linear and the nominal distribution~$\hat\P$ is Gaussian.

\begin{proposition}[Worst-Case Expectations over KL Ambiguity Sets]
\label{prop:KL-analytical}
Suppose that~$\cZ=\R^d$, $\hat\P\in\cP(\cZ)$ is a normal distribution with mean~$\hat\mu\in\R^d$ and covariance matrix~$\hat\Sigma\in\S^d_{++}$, and~$r> 0$. Suppose also that~$\ell$ is linear, that is, there exists~$\theta\in\R^d$ with $\ell(z)=\theta^\top z$ for all~$z\in\cZ$. Then, the primal problem~\eqref{eq:primal-KL-risk} is solved by the normal distribution~$\P^\star$ with mean~$\hat\mu +(2r)^\half \hat \Sigma\theta/ (\theta^\top\hat\Sigma\theta)^\half$ and covariance matrix~$\hat\Sigma$. The dual problem~\eqref{eq:dual-KL-risk} is solved by~$(\lambda_0^\star,\lambda^\star)$, where~$\lambda^\star= (\theta^\top\hat\Sigma\theta)^\half/(2r)^\half$ and
\[
\lambda_0^\star = \lambda^\star \log \E_{\hat\P} \left[ \exp \left( \ell(Z)/\lambda^\star\right) \right].
\]
The optimal values of~\eqref{eq:primal-KL-risk} and~\eqref{eq:dual-KL-risk} are both equal to~$\hat\mu^\top\theta + (2r)^\half  (\theta^\top\hat\Sigma\theta)^\half$.    
\end{proposition}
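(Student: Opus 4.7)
The plan is to verify the proposition by constructing primal and dual feasible solutions that attain the same objective value, and then to appeal to weak duality from Theorem~\ref{thm:duality:phi} (and the derivation of the dual~\eqref{eq:dual-KL-risk}) to conclude that both solutions are optimal. We may assume $\theta \neq 0$, since the case $\theta = 0$ reduces to $\hat\mu^\top\theta = 0$ and both $\P^\star = \hat\P$ and $(\lambda_0^\star,\lambda^\star) = (0,0)$ trivially attain the optimal value~$0$.

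For primal feasibility, I would first verify that $\P^\star$ belongs to the Kullback--Leibler ball of radius~$r$ around~$\hat\P$. Since $\P^\star$ and $\hat\P$ are both normal with the same covariance matrix~$\hat\Sigma$, one obtains the closed-form expression
\[
\KL(\P^\star,\hat\P) \;=\; \tfrac{1}{2}\,(\mu^\star-\hat\mu)^\top\hat\Sigma^{-1}(\mu^\star-\hat\mu),
\]
where $\mu^\star = \hat\mu + \sqrt{2r}\,\hat\Sigma\theta/\sqrt{\theta^\top\hat\Sigma\theta}$. Substituting gives $\KL(\P^\star,\hat\P) = r$, so $\P^\star\in\cP$. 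A direct computation of $\E_{\P^\star}[\theta^\top Z] = \theta^\top\mu^\star$ then produces the primal objective value $\hat\mu^\top\theta + \sqrt{2r}\,\sqrt{\theta^\top\hat\Sigma\theta}$.

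For dual feasibility, I would first note that $\lambda^\star = \sqrt{\theta^\top\hat\Sigma\theta}/\sqrt{2r} > 0$, and since $\phi^\infty(1) = +\infty$ for the Kullback--Leibler entropy function, the semi-infinite constraint $\lambda_0 + \lambda\,\phi^\infty(1)\geq \sup_{z\in\cZ}\ell(z)$ in~\eqref{eq:weak-duality-phi-divergence} is automatically satisfied whenever $\lambda > 0$. It thus suffices to evaluate the dual objective~\eqref{eq:dual-KL-risk} at $(\lambda_0^\star,\lambda^\star)$. Since $\theta^\top Z$ under~$\hat\P$ is a univariate Gaussian with mean $\hat\mu^\top\theta$ and variance $\theta^\top\hat\Sigma\theta$, its moment generating function yields
\[
\E_{\hat\P}\!\left[\exp\!\left(\theta^\top Z/\lambda^\star\right)\right] \;=\; \exp\!\left(\hat\mu^\top\theta/\lambda^\star + \theta^\top\hat\Sigma\theta/(2(\lambda^\star)^2)\right),
\]
so $\lambda_0^\star = \hat\mu^\top\theta + \theta^\top\hat\Sigma\theta/(2\lambda^\star)$. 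Observing that the choice $\lambda_0^\star = \lambda^\star\log\E_{\hat\P}[\exp(\ell(Z)/\lambda^\star)]$ exactly cancels the exponential factor in the dual objective, the dual expression collapses to $\lambda_0^\star + \lambda^\star r$. Substituting the expression for $\lambda_0^\star$ and using $\theta^\top\hat\Sigma\theta/(2\lambda^\star) = \sqrt{2r}\,\sqrt{\theta^\top\hat\Sigma\theta}/2$ and $\lambda^\star r = \sqrt{2r}\,\sqrt{\theta^\top\hat\Sigma\theta}/2$ gives the dual objective value $\hat\mu^\top\theta + \sqrt{2r}\,\sqrt{\theta^\top\hat\Sigma\theta}$.

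Since the primal and dual objective values match, weak duality (the first assertion of Theorem~\ref{thm:duality:phi}, which applies because $\E_{\hat\P}[\ell(Z)] = \hat\mu^\top\theta$ is finite) forces both $\P^\star$ and $(\lambda_0^\star,\lambda^\star)$ to be optimal in their respective problems. There is no genuine obstacle: the computation is routine once one recognizes that the optimal perturbation stays Gaussian with the same covariance (so that the KL divergence simplifies) and that the optimal dual multiplier $\lambda_0^\star$ is precisely the one prescribed by the first-order condition in $\lambda_0$. The only step requiring mild care is confirming that the constraint in~\eqref{eq:weak-duality-phi-divergence} is rendered trivial by $\phi^\infty(1)=+\infty$ and $\lambda^\star > 0$, so that the dual can indeed be written in the simpler form~\eqref{eq:dual-KL-risk} used for the evaluation.
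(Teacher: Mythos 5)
Your proof is correct and follows essentially the same weak-duality strategy as the paper's: both arguments construct the same explicit primal and dual solutions, show their objective values match, and invoke Theorem~\ref{thm:duality:phi}. The only cosmetic differences are that you verify primal feasibility via the closed-form Gaussian KL formula rather than computing the Radon--Nikodym derivative explicitly, and you check $(\lambda_0^\star,\lambda^\star)$ by direct substitution rather than deriving them from first-order conditions; both routes require the same Gaussian moment-generating-function computation.
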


\begin{proof}
Focus first on the dual problem~\eqref{eq:dual-KL-risk}, and fix any~$\lambda\geq 0$. Then, the partial minimization problem over~$\lambda_0$ is solved by
\[
\lambda_0^\star(\lambda) = \lambda \log \E_{\hat \P} \left[ \exp \left( \ell(Z)/\lambda \right) \right].
\]
Substituting this parametric minimizer back into~\eqref{eq:dual-KL-risk} shows that the optimal value of the dual problem~\eqref{eq:dual-KL-risk} is given by
\begin{align*}
	\inf_{\lambda \in \R_+} ~ \lambda r + \lambda \log \E_{\hat \P} \left[ \exp \left( \frac{\ell(Z)}{\lambda}\right) \right] & = \inf_{\lambda \in\R_+} ~ \lambda r + \hat\mu^\top\theta + \frac{1}{2\lambda} \theta^\top\hat\Sigma\theta \\ & = \hat\mu^\top\theta +  (2r)^\half (\theta^\top\hat\Sigma\theta)^\half,
\end{align*}
where the first equality exploits the linearity of~$\ell$, the normality of~$\hat\P$ and the formula for the expected value of a log-normal distribution. The second equality holds because the minimization problem over~$\lambda\geq 0$ is solved by~$\lambda^\star= (\theta^\top\hat\Sigma\theta)^\half/(2r)^\half$. Next, define~$\P^\star\in\cP(\cZ)$ as the normal distribution with mean 
$\mu^\star=\hat\mu +\hat \Sigma\theta/\lambda^\star$ and covariance matrix~$\Sigma^\star = \hat\Sigma$. Comparing the density functions of~$\hat\P$ and~$\P^\star$ shows~that
\[
\frac{\diff\P^\star}{\diff\hat\P}(z) =\exp\left( \frac{\theta^\top(z-\hat \mu)}{\lambda^\star} - \frac{\theta^\top\hat\Sigma\theta}{2(\lambda^\star)^2}\right) \quad \forall z\in\cZ.
\]
By Definition~\ref{def:KL}, we thus obtain
\begin{align*}
	\KL(\P^\star, \hat{\P}) = \int_\cZ \log\left( \frac{\diff\P^\star}{\diff\hat\P}(z) \right) \diff\P^\star(z) =\frac{\theta^\top\hat\Sigma\theta}{2(\lambda^\star)^2}=r,
\end{align*}
where the second and the third equalities follow readily from our formula for the Radon-Nikodym derivative $\diff\P^\star/\diff\hat\P$ and from basic algebra, respectively. Hence, $\P^\star$ is feasible in~\eqref{eq:dual-KL-risk}. In addition, its objective function value is given by
\[
\E_{\P^\star}[\ell(Z)] = \theta^\top\mu^\star = \hat\mu^\top\theta +  (2r)^\half (\theta^\top\hat\Sigma\theta)^\half.
\]
As the objective function values of~$\P^\star$ and~$(\lambda_0^\star, \lambda^\star)$ with $\lambda^\star_0=\lambda^\star_0(\lambda^\star)$ match, weak duality as established in Theorem~\ref{thm:duality:phi} implies that~$\P^\star$ is primal optimal and that~$(\lambda_0^\star, \lambda^\star)$ is dual optimal. This observation completes the proof.
\end{proof}

Proposition~\ref{prop:KL-analytical} is due to \citet{hu2013kullback}. It is also reminiscent of risk-sensitive control theory \citep{hansen2008robustness}. In this stream of literature, a fictitious adversary may perturb the distribution of the exogenous noise terms of an optimal control problem {\em arbitrarily} but incurs a penalty equal to the Kullback-Leibler divergence with respect to a Gaussian baseline model.

\subsection{Worst-Case Expectations over Total Variation Balls}
\label{sec:TV-bound}
Consider the worst-case expectation problem
\begin{subequations}
\begin{align}
	\label{eq:primal-TV-risk}
	\sup_{\P \in \cP(\cZ)} \left\{ \E_\P \left[ \ell (Z) \right] \;:\;  \TV(\P,\hat\P)\leq r \right\},
\end{align}
which maximizes the expected value of~$\ell(Z)$ over a total variation ball of radius~$r\in [0,1]$ around~$\hat\P\in\cP(\cZ)$. Recall from Section~\ref{section:TV} that the total variation distance is a $\phi$-divergence and that the underlying entropy function satisfies $\phi(s)=\half|s-1|$ for all~$s\geq 0$ and $\phi(s)=\infty$ for all~$s<0$. Recall also that the total variation distance between two distributions is bounded above by~$1$ and that this bound is attained if the two distributions are mutually singular. An elementary calculation reveals that the conjugate entropy function satisfies $\phi^*(t)=\max\{t+\half,0\}-\half$ if $t\le \half$ and $\phi^*(t)=+\infty$ if $t> \half$. By Theorem~\ref{thm:duality:phi}, the problem dual to~\eqref{eq:primal-TV-risk} is thus given~by 
\begin{align}
	\label{eq:dual-TV-risk}
	\begin{array}{cl}
		\displaystyle \inf_{\lambda_0\in\R,\, \lambda\in\R_+} & \displaystyle \lambda_0+\lambda \left(r-\half\right) + \E_{\hat\P} \left[\max \left\{\ell(Z) - \lambda_0+\frac{\lambda}{2}, 0 \right\} \right] \\[2ex]
		\st &  \displaystyle \lambda_0+ \lambda/2 \geq \sup_{z\in\cZ} \ell(z).
	\end{array}
\end{align}
\end{subequations}
The problems~\eqref{eq:primal-TV-risk} and~\eqref{eq:dual-TV-risk} can be solved in closed form if~$\cZ$ is compact.

\begin{proposition}[Worst-Case Expectations over Total Variation Balls]
\label{prop:TV-analytical}
Suppose that~$\cZ\subseteq \R^d$ is compact, $\hat\P\in\cP(\cZ)$ and~$r\in(0,1)$, and define~$\beta_r=1-r$. In addition, assume that $\E_{\hat\P}[\ell(Z)]>-\infty$ and $\ell$ is upper semicontinuous. Then, the optimal values of~\eqref{eq:primal-TV-risk} and~\eqref{eq:dual-TV-risk} are both equal to
\begin{align}
	\label{eq:wctv-sup-cvar}
	(1-\beta_r)\cdot \sup_{z\in\cZ} \ell(z) + \beta_r\cdot \beta_r\CVaR_{\hat \P}\left[\ell(Z) \right]. 
\end{align}
\end{proposition}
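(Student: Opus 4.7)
The plan is to invoke strong duality from Theorem~\ref{thm:duality:phi} and then evaluate the dual problem~\eqref{eq:dual-TV-risk} analytically. The entropy function $\phi(s)=\half|s-1|$ inducing the total variation distance is continuous at $s=1$, the radius $r>0$ is strictly positive by assumption, and $\E_{\hat\P}[\ell(Z)]>-\infty$ is granted. Theorem~\ref{thm:duality:phi} therefore ensures that the primal supremum in~\eqref{eq:primal-TV-risk} coincides with the infimum of~\eqref{eq:dual-TV-risk}, so it only remains to simplify the latter.

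First, let $M=\sup_{z\in\cZ}\ell(z)$, and note that $M$ is finite and attained because $\cZ$ is compact and $\ell$ is upper semicontinuous (Weierstrass' theorem). Next, apply the linear variable substitution $\tau=\lambda_0-\lambda/2$ to~\eqref{eq:dual-TV-risk}. A straightforward calculation recasts its objective as $\tau+\lambda r+\E_{\hat\P}[\max\{\ell(Z)-\tau,0\}]$ and its constraint as $\lambda\geq M-\tau$ (in addition to $\lambda\geq 0$). Because the coefficient of $\lambda$ in the objective equals $r>0$, the inner minimization over $\lambda$ for fixed $\tau$ is solved by $\lambda^\star(\tau)=\max\{M-\tau,0\}$.

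Substituting $\lambda^\star(\tau)$ back and using $r=1-\beta_r$, the dual problem reduces to minimizing
\[
(1-\beta_r)M+\beta_r\Bigl(\tau+\tfrac{1}{\beta_r}\E_{\hat\P}[\max\{\ell(Z)-\tau,0\}]\Bigr)
\]
over $\tau\leq M$, and to $\inf_{\tau>M}\tau=M$ over $\tau>M$ (since $\ell(Z)\leq M<\tau$ $\hat\P$-almost surely makes the expectation vanish). By Definition~\ref{def:cvar} the inner infimum over $\tau\in\R$ in the first display equals $\beta_r\CVaR_{\hat\P}[\ell(Z)]$, and Proposition~\ref{prop:CVaR} identifies its optimizer as $\tau^\star=\beta_r\VaR_{\hat\P}[\ell(Z)]$. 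Because $\tau^\star\leq\esssup_{\hat\P}[\ell(Z)]\leq M$, the constraint $\tau\leq M$ is inactive, so the first case delivers the candidate value $(1-\beta_r)M+\beta_r\cdot\beta_r\CVaR_{\hat\P}[\ell(Z)]$. Since $\beta_r\CVaR_{\hat\P}[\ell(Z)]\leq M$, this value is bounded by $M$ and therefore dominates the $\tau>M$ branch, yielding the claimed expression~\eqref{eq:wctv-sup-cvar}.

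The main obstacle is purely computational: tracking the substitution $\tau=\lambda_0-\lambda/2$ and the identity $r=1-\beta_r$ carefully, and verifying that the CVaR minimizer $\tau^\star=\beta_r\VaR_{\hat\P}[\ell(Z)]$ indeed satisfies $\tau^\star\leq M$ so that the auxiliary upper bound $\tau\leq M$ can be dropped. All other steps---applying Theorem~\ref{thm:duality:phi}, solving the inner minimization over $\lambda$, and recognizing the CVaR definition---are routine.
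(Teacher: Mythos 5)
Your proof is correct, and the analytical reduction of the dual is essentially the same as the paper's (both arrive at the CVaR structure after eliminating one variable, and both verify that the resulting constraint $\tau\leq\overline\ell$ is inactive at the CVaR minimizer $\tau^\star=\beta_r\VaR_{\hat\P}[\ell(Z)]$). The genuinely different step is how strong duality is established. You invoke the strong-duality clause of Theorem~\ref{thm:duality:phi} directly, which is valid here since $r>0$, $\phi(s)=\half|s-1|$ is continuous at $s=1$, and $\E_{\hat\P}[\ell(Z)]>-\infty$. The paper instead uses only weak duality from Theorem~\ref{thm:duality:phi} and then \emph{constructs} a primal distribution $\P^\star$ whose expected loss matches the dual value, thereby re-deriving strong duality from first principles; this requires a case distinction based on whether $\hat\P(\ell(Z)<\overline\ell)\leq r$ and a careful partition of $\cZ$. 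Your argument is shorter and cleaner for proving the stated equality, but it does not yield the worst-case distribution, which the paper's constructive argument produces and which is used in the remarks that follow the proposition. A minor further difference: the paper eliminates $\lambda_0$ first by observing the objective is non-decreasing in $\lambda_0$ and then substitutes $\tau=\overline\ell-\lambda$; you substitute $\tau=\lambda_0-\lambda/2$ first and then eliminate $\lambda$ by a linear-programming argument, which forces you to handle the $\tau>\overline\ell$ branch explicitly (you do so correctly, and that branch is dominated).
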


The proof of Proposition~\ref{prop:TV-analytical} will reveal that~\eqref{eq:primal-TV-risk} and~\eqref{eq:dual-TV-risk} are both solvable. Indeed, we will construct optimal solutions~$\P^\star$ and~$(\lambda_0^\star, \lambda^\star)$ for~\eqref{eq:primal-TV-risk} and~\eqref{eq:dual-TV-risk}, respectively. A precise description of these optimizers is cumbersome and thus omitted from the proposition statement. If the loss~$\ell(Z)$ has a continuous distribution under~$\hat\P$, however, then~$\P^\star$ admits a simpler and more intuitive description. Indeed, in this case, $\P^\star$ is obtained from~$\hat \P$ by shifting the probability mass of all outcomes~$z\in\cZ$ associated with a high loss~$\ell(z)\geq \beta_r\VaR_{\hat \P}[\ell(Z)]$ to some outcome $z\in\cZ$ associated with the highest possible loss $\ell(z)=\max_{z'\in\cZ}\ell(z')$.


\begin{proof}[Proof of Proposition~\ref{prop:TV-analytical}]
For ease of notation, set $\overline\ell=\sup_{z\in\cZ}\ell(z)$. Focus first on the dual problem~\eqref{eq:dual-TV-risk}, and fix any~$\lambda\geq 0$. Note that the dual objective function is non-decreasing in~$\lambda_0$. The partial minimization problem over~$\lambda_0$ is therefore solved by $\lambda_0^\star(\lambda)=\overline\ell-\lambda / 2$. Substituting this parametric minimizer back into~\eqref{eq:dual-TV-risk} shows that the optimal value of the dual problem is given by
\begin{align*}
	&\overline\ell + \inf_{\lambda\in\R_+} ~ \lambda (r-1) + \E_{\hat\P} \big[\max \big\{\ell( Z) - \overline\ell +\lambda, 0 \big\} \big] \\
	& \quad = r\,\overline\ell + (1-r) \inf_{\tau\leq\overline\ell} ~ \tau + (1-r)^{-1} \E_{\hat\P} \big[\max \big\{\ell(Z) - \tau, 0 \big\} \big],
\end{align*}
where the equality follows from the substitution $\tau\leftarrow \overline \ell-\lambda$. By Definition~\ref{def:cvar}, the infimum over~$\tau$ evaluates to $\beta_r\CVaR_{\hat \P}[\ell(Z)]$ with $\beta_r=1-r$. Recall that this infimum is attained by $\tau^\star=\beta_r\VaR_{\hat \P}[\ell(Z)]$, which is bounded above by~$\overline\ell$. In summary, we have thus shown that the optimal value of problem~\eqref{eq:dual-TV-risk} equals 
\begin{align*}
	(1-\beta_r)\cdot \overline \ell + \beta_r\cdot \beta_r\CVaR_{\hat \P}\left[\ell(Z) \right]. 
\end{align*}
To construct a primal maximizer, assume first that $\hat\P(\ell(Z)<\overline\ell)\leq r$, which implies that $\beta_r\CVaR_{\hat \P}[\ell(Z)] = \overline \ell$. Thus, the optimal value of the dual problem~\eqref{eq:dual-TV-risk} simplifies to~$\overline\ell$, which is attained by any distribution~$\P^\star$ that is obtained from~$\hat\P$ by moving all probability mass from $\{z\in\cZ:\ell(z)<\overline\ell\}$ to $\{z\in\cZ:\ell(z)=\overline\ell\}$.

Next, assume that $\hat\P(\ell(Z)<\overline\ell)> r$, which implies that $\beta_r\VaR_{\hat \P}[\ell(Z)]<\overline\ell$. In this case, we partition~$\cZ$ into the following four subsets.
\begin{align*}
	\cZ_1&=\big\{z\in\cZ: \beta_r\VaR_{\hat \P}[\ell(Z)]>\ell(z)\big\}\\
	\cZ_2& =\big\{z\in\cZ: \overline\ell>\ell(z)=\beta_r\VaR_{\hat \P}[\ell(Z)]\big\}\\
	\cZ_3& =\big\{z\in\cZ: \overline\ell>\ell(z)>\beta_r\VaR_{\hat \P}[\ell(Z)]\big\}\\
	\cZ_4&=\big\{z\in\cZ: \overline\ell=\ell(z)\big\}
\end{align*}
Note that~$\cZ_1$ and~$\cZ_3$ can be empty, whereas~$\cZ_2$ and $\cZ_4$ must be non-empty. We also define~$\hat\P_i$ as the nominal distribution~$\hat\P$ conditioned on the event~$Z\in\cZ_i$ for all~$i\in[4]$, and we define $\U_{\cZ_4}$ as the uniform distribution on~$\cZ_4$. Next, we set
\begin{align*}
	\P^\star & = \left(\beta_r-\hat\P(Z\in\cZ_3)-\hat\P(Z\in\cZ_4)\right)\cdot \hat\P_2 + \\ & \hspace{2cm}\hat\P(Z\in\cZ_3)\cdot \hat \P_3 + \hat\P(Z\in\cZ_4)\cdot \hat \P_4+ (1-\beta_r)\cdot \U_{\cZ_4}.
\end{align*}
Thus, $\P^\star$ is a mixture of four probability distributions. As the non-negative mixture probabilities sum to~$1$, $\P^\star$ is a probability distribution. Using~$\rho=\hat \P+\U_{\cZ_4}$ as a dominating measure for $\hat\P$ and $\P^\star$ and recalling that~$\phi(s)=\half|s-1|$ if~$s\geq 0$, we~find
\begin{align*}
	& \TV(\P^\star,\hat\P)= \D_\phi(\P^\star,\hat\P) 
	= \half\sum_{i=1}^4\int_{\cZ_i} \left| \frac{\diff \P^\star}{\diff \rho}(z)-\frac{\diff \hat{\P}}{\diff \rho}(z)\right| \diff \rho(z) \\
	&= \hat\P(Z\in\cZ_1) + \left(\hat\P(Z\in\cZ_2) +\hat\P(Z\in\cZ_3) +\hat\P(Z\in\cZ_4) -\beta_r\right) + 0 + (1-\beta_r) = r,
\end{align*}
where the third equality follows from the definition of~$\P^\star$ and the relation
\[
\hat\P(Z\in\cZ_2) +\hat\P(Z\in\cZ_3) +\hat\P(Z\in\cZ_4)= \hat\P \left(\ell(Z)\geq \beta_r\VaR_{\hat \P}[\ell(Z)] \right)\geq \beta_r,
\]
and the last equality follows from the definition of~$\beta_r$. Thus, $\P^\star$ is feasible in~\eqref{eq:primal-TV-risk}. In addition, the objective function value of~$\P^\star$ in~\eqref{eq:primal-TV-risk} amounts to 
\begin{align*}
	& \E_{\P^\star} \left[ \ell (Z) \right] = (1-\beta_r)\cdot \overline \ell \\[-0.5ex]
	& \qquad + \E_{\hat\P} \left[ \left. \ell (Z) \right| \ell(Z) > \beta_r\VaR_{\hat \P}[\ell(Z)]  \right] \cdot \hat\P\left( \ell(Z) > \beta_r\VaR_{\hat \P}[\ell(Z)] \right)\\
	& \qquad + \E_{\hat\P} \left[ \left. \ell (Z) \right| \ell(Z) = \beta_r\VaR_{\hat \P}[\ell(Z)]  \right] \cdot \left(\beta_r- \hat\P\left( \ell(Z) > \beta_r\VaR_{\hat \P}[\ell(Z)] \right)\right) \\
	& = (1-\beta_r)\cdot \overline \ell + \beta_r\cdot \beta_r\CVaR_{\hat \P}\left[\ell(Z) \right].
\end{align*}
Here, the second equality follows from \cite[Theorem~4.47 \& Remark~4.48]{follmer2008stochastic}. Note that if the marginal distribution of~$\ell(Z)$ is continuous under~$\hat \P$, then the above derivation simplifies. Indeed, in this case we have
\[
\hat\P\left( \ell(Z) > \beta_r\VaR_{\hat \P}[\ell(Z)] \right)=\beta_r
\]
and
\[
\E_{\hat\P} \left[ \left. \ell (Z) \right| \ell(Z) > \beta_r\VaR_{\hat \P}[\ell(Z)]  \right] = \beta_r\CVaR_{\hat \P}\left[\ell(Z) \right].
\]
Irrespective of~$\hat \P$, the objective function value of~$\P^\star$ in~\eqref{eq:primal-TV-risk} matches the optimal value of~\eqref{eq:dual-TV-risk}. Weak duality as established in Theorem~\ref{thm:duality:phi} thus implies that~$\P^\star$ solves the primal problem~\eqref{eq:primal-TV-risk}. This observation completes the proof. 
\end{proof}

\citet{jiang2018risk} and \citet{shapiro2017distributionally} study a variant of problem~\eqref{eq:primal-TV-risk} that maximizes over a {\em restricted} total variation ball. Thus, they additionally impose~$\P\ll\hat\P$ in~\eqref{eq:primal-TV-risk}. The supremum of the resulting restricted problem amounts to
\[
(1-\beta_r)\cdot \text{ess\,sup}_{\hat\P}[\ell(Z)] + \beta_r\cdot \beta_r\CVaR_{\hat \P}\left[\ell(Z) \right],
\]
which may be strictly smaller than~\eqref{eq:wctv-sup-cvar}. If additionally~$\ell(Z)$ has a continuous marginal distribution under~$\hat\P$, then the supremum is no longer attained.

\subsection{Worst-Case Expectations over L\'evy-Prokhorov Balls}
\label{sec:levyprokhorov}
Consider the worst-case expectation problem
\begin{subequations}
\begin{align}
	\label{eq:primal-LP-risk}
	\sup_{\P \in \cP(\cZ)} \left\{ \E_\P \left[ \ell (Z) \right] \;:\;  \LP(\P,\hat\P)\leq r \right\},
\end{align}
which maximizes the expected value of~$\ell(Z)$ over a L\'evy-Prokhorov ball of radius~$r\in [0,1]$ around~$\hat\P\in\cP(\cZ)$. We assume here that the L\'evy-Prokhorov distance is induced by a norm~$\|\cdot\|$ on~$\R^d$. By Proposition~\ref{prop:dual-levy-prokhorov}, the L\'evy-Prokhorov ball of radius~$r\in(0,1)$ coincides with the optimal transport ambiguity set
\[
\cP = \left\{ \P \in \cP(\cZ) \, : \, \OT_{c_r}(\P, \hat{\P}) \leq r \right\},
\]
where the transportation cost function~$c_r$ is defined through $c_r(z,\hat z)= \ds 1_{\|z - \hat z\|> r}$. Theorem~\ref{thm:duality:OT} thus implies that the problem dual to~\eqref{eq:primal-LP-risk} is given by
\begin{align}
	\label{eq:dual-LP-risk}
	\inf_{\lambda\in\R_+} \left\{ \lambda r+ \E_{\hat \P}\left[\sup_{z\in\cZ} \ell(z) -\lambda c_r(z,\hat Z) \right]\right\}
\end{align}
\end{subequations}
whenever~$\ell$ is upper semicontinuous. If~$\cZ$ is compact, then we can leverage Proposition~\ref{prop:TV-analytical} to solve the problems~\eqref{eq:primal-LP-risk} and~\eqref{eq:dual-LP-risk} in closed form.

\begin{proposition}[Worst-Case Expectations over L\'evy-Prokhorov Balls]
\label{prop:LP-analytical}
Suppose that~$\cZ\subseteq \R^d$ is compact, $\hat\P\in\cP(\cZ)$ and~$r\in(0,1)$, and define~$\beta_r=1-r$. In addition, assume that $\E_{\hat\P}[\ell(Z)]>-\infty$ and $\ell$ is upper semicontinuous. Then, the optimal values of~\eqref{eq:primal-LP-risk} and~\eqref{eq:dual-LP-risk} are both equal to
\begin{align}
	\label{eq:wclp-sup-cvar}
	(1-\beta_r)\cdot \sup_{z\in\cZ} \ell(z) + \beta_r\cdot \beta_r\CVaR_{\hat \P}\left[\ell_r(\hat Z) \right],
\end{align}
where $\ell_r(\hat z)=\sup_{z\in\cZ}\{\ell(z):\|z-\hat z\|\leq r\}$ is an adversarial loss function that assigns each~$\hat z\in\cZ$ the worst-case loss in the $r$-neighborhood of~$\hat z$.
\end{proposition}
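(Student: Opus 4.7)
The plan is to reduce \eqref{eq:primal-LP-risk} to the worst-case expectation problem over an optimal transport ambiguity set, dualize it via Theorem~\ref{thm:duality:OT}, and then simplify the resulting dual in a way that surfaces a CVaR of the adversarial loss~$\ell_r$. By Proposition~\ref{prop:dual-levy-prokhorov}, the L\'evy-Prokhorov ball of radius~$r$ coincides with $\{\P\in\cP(\cZ):\OT_{c_r}(\P,\hat\P)\leq r\}$ for $c_r(z,\hat z)=\ds 1_{\|z-\hat z\|>r}$. Since $c_r$ is lower semicontinuous, non-negative and vanishes on the diagonal, and since $r>0$, $\ell$ is upper semicontinuous with $\E_{\hat\P}[\ell(\hat Z)]>-\infty$, Theorem~\ref{thm:duality:OT} applies and yields strong duality between~\eqref{eq:primal-LP-risk} and~\eqref{eq:dual-LP-risk}. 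So it suffices to compute the infimum in~\eqref{eq:dual-LP-risk} in closed form.

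The first key step is the pointwise identity
\[
    \sup_{z\in\cZ}\bigl[\ell(z)-\lambda c_r(z,\hat z)\bigr]=\max\bigl\{\ell_r(\hat z),\,\overline{\ell}-\lambda\bigr\}\quad\forall\hat z\in\cZ,\;\forall\lambda\geq 0,
\]
where $\overline{\ell}=\sup_{z\in\cZ}\ell(z)$. To establish it I would split the supremum over $z$ into the contributions from $B_r(\hat z)=\{z\in\cZ:\|z-\hat z\|\leq r\}$ (where $c_r=0$, giving $\ell_r(\hat z)$) and from its complement (where $c_r=1$). By Weierstrass, the maximum of~$\ell$ over the compact set~$\cZ$ is attained at some $z^\star$ with $\ell(z^\star)=\overline{\ell}$; if $z^\star\in B_r(\hat z)$ then $\ell_r(\hat z)=\overline{\ell}$ and the identity is immediate, while otherwise $z^\star$ lies in the complement and witnesses the value $\overline{\ell}-\lambda$. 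Along the way I would also note that $\ell_r$ inherits upper semicontinuity (hence Borel measurability) from $\ell$ by a standard subsequential argument, and that $\ell_r$ is bounded above by $\overline{\ell}<\infty$, so all integrals below are well defined.

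The second step is purely algebraic. Substituting the identity into~\eqref{eq:dual-LP-risk} and performing the change of variable $\tau=\overline{\ell}-\lambda$ (so that $\tau\leq\overline{\ell}$) gives
\begin{align*}
\lambda r+\E_{\hat\P}\!\left[\max\{\ell_r(\hat Z),\tau\}\right]
&=(1-\beta_r)\overline{\ell}+\beta_r\Bigl(\tau+\beta_r^{-1}\E_{\hat\P}\bigl[\max\{\ell_r(\hat Z)-\tau,0\}\bigr]\Bigr),
\end{align*}
using $\beta_r=1-r$ and the decomposition $\max\{a,\tau\}=\tau+\max\{a-\tau,0\}$. Minimizing over $\tau\leq\overline{\ell}$ yields the stated value because the unconstrained minimizer in the CVaR representation of Definition~\ref{def:cvar} is $\tau^\star=\beta_r\VaR_{\hat\P}[\ell_r(\hat Z)]\leq\esssup_{\hat\P}\ell_r(\hat Z)\leq\overline{\ell}$, so the constraint $\tau\leq\overline{\ell}$ is inactive and the inner infimum equals $\beta_r\CVaR_{\hat\P}[\ell_r(\hat Z)]$. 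Combined with strong duality from Theorem~\ref{thm:duality:OT}, this gives~\eqref{eq:wclp-sup-cvar}.

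The main obstacle is the pointwise identity for the inner supremum: the case analysis requires attainment of $\overline{\ell}$ (which uses upper semicontinuity of~$\ell$ and compactness of~$\cZ$) together with the observation that whenever $\ell_r(\hat z)<\overline{\ell}$ the argmax $z^\star$ must lie outside $B_r(\hat z)$. Everything else is either a standard invocation of duality (Theorem~\ref{thm:duality:OT}) or elementary manipulation of the CVaR formula from Proposition~\ref{prop:CVaR}. In particular, strong duality for worst-case expectations over total variation balls as in Proposition~\ref{prop:TV-analytical} is not needed directly, though the two proofs share the same CVaR-style reduction and reveal the structural parallel between the two ambiguity sets.
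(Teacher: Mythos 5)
Your proof is correct, but it takes a different route from the paper's. The paper proves this result by reduction: it first shows (Lemma~\ref{lem:PL-to-TV}) that the inner supremum $\sup_{z\in\cZ}\{\ell(z)-\lambda\ds 1_{\|z-\hat z\|>r}\}$ equals $\sup_{z\in\cZ}\{\ell_r(z)-\lambda\ds 1_{z\neq\hat z}\}$, so that the dual~\eqref{eq:dual-LP-risk} becomes the dual of a worst-case expectation of the adversarial loss~$\ell_r$ over a \emph{total variation} ball, and then invokes Proposition~\ref{prop:TV-analytical} wholesale. You instead evaluate the inner supremum directly as $\max\{\ell_r(\hat z),\overline\ell-\lambda\}$ and redo the change of variables $\tau=\overline\ell-\lambda$ to surface the CVaR; your case analysis (if $\ell_r(\hat z)<\overline\ell$ then the global maximizer of $\ell$, which exists by compactness and upper semicontinuity, must lie outside $B_r(\hat z)$, so the complement contributes exactly $\overline\ell-\lambda$) is sound, and the verification that the constraint $\tau\leq\overline\ell$ is inactive because $\beta_r\VaR_{\hat\P}[\ell_r(\hat Z)]\leq\overline\ell$ is the same argument the paper uses inside the proof of Proposition~\ref{prop:TV-analytical}. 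What your route buys is a shorter, self-contained computation that bypasses the TV result entirely; what it gives up is the explicit worst-case distribution. The paper's reduction, by routing through the primal maximizer of the TV problem and pushing it forward under a measurable selection $\psi(\hat z)\in\arg\max\{\ell(z):\|z-\hat z\|\leq r\}$, additionally establishes that the supremum in~\eqref{eq:primal-LP-risk} is attained — a fact the surrounding text advertises as a byproduct of the proof. Since the proposition statement only claims equality of the optimal values, your argument suffices for the claim as stated, but you should note that it does not deliver primal solvability.
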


The proof of Proposition~\ref{prop:LP-analytical} will reveal that~\eqref{eq:primal-LP-risk} and~\eqref{eq:dual-LP-risk} are both solvable. However, a precise description of the respective optimizers is cumbersome and thus omitted from the proposition statement. Note that the adversarial loss function~$\ell_r$ inherits upper semicontinuity from~$\ell$ thanks to \cite[Theorem~2, p.~116]{berge1963topological}. The following lemma is needed in the proof of Proposition~\ref{prop:LP-analytical}.

\begin{lemma}
\label{lem:PL-to-TV}
Assume that~$\cZ\subseteq\R^d$ is compact, $\ell$ is upper semicontinuous, $\hat z\in\cZ$ and~$r,\lambda\geq 0$. Then, the following identity holds.
\begin{align*}
	\sup_{z\in\cZ} \left\{\ell(z)-\lambda \cdot \ds 1_{\|z-\hat z\|> r}\right\} \;=\; \sup_{z\in\cZ}\left\{ \ell_r(z)-\lambda \cdot \ds 1_{z\neq \hat z} \right\}.
\end{align*}
\end{lemma}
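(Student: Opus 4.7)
The plan is to reduce the claimed identity to a clean two-sided comparison of two maxima. To this end, I would first split the supremum on each side according to whether or not the indicator penalty is active, and introduce the three scalar shorthands
\[
A \,=\, \sup_{z\in\cZ,\,\|z-\hat z\|\leq r} \ell(z), \qquad
B \,=\, \sup_{z\in\cZ,\,\|z-\hat z\|> r} \ell(z), \qquad
D \,=\, \sup_{z\in\cZ,\,z\neq \hat z} \ell_r(z),
\]
with the convention that the supremum over the empty set equals $-\infty$. By the very definition of $\ell_r$ we have $A=\ell_r(\hat z)$, so the two sides of the asserted identity take the form
\[
\text{LHS} \;=\; \max\{A,\,B-\lambda\}, \qquad \text{RHS} \;=\; \max\{A,\,D-\lambda\}.
\]

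The inequality $\text{LHS}\leq\text{RHS}$ is the easy direction. Indeed, for any $z\in\cZ$ with $\|z-\hat z\|>r$ one has $z\neq\hat z$ and, by choosing $z'=z$ in the supremum defining $\ell_r(z)$, also $\ell(z)\leq \ell_r(z)$. Taking the supremum over such $z$ yields $B\leq D$, and hence $\text{LHS}\leq\text{RHS}$.

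For the reverse inequality $\text{RHS}\leq\text{LHS}$ the key step is to establish that $D\leq \max\{A,B\}$. This will follow by a simple case distinction: for any $z\in\cZ\setminus\{\hat z\}$ and any $z'\in\cZ$ with $\|z'-z\|\leq r$, either $\|z'-\hat z\|\leq r$, in which case $\ell(z')\leq A$, or $\|z'-\hat z\|>r$, in which case $\ell(z')\leq B$. Therefore $\ell(z')\leq \max\{A,B\}$, and taking the supremum first over $z'$ and then over $z$ gives $D\leq\max\{A,B\}$. Since $\lambda\geq 0$, this yields
\[
D-\lambda \;\leq\; \max\{A-\lambda,\,B-\lambda\} \;\leq\; \max\{A,\,B-\lambda\} \;=\; \text{LHS},
\]
so that $\text{RHS}=\max\{A,D-\lambda\}\leq\text{LHS}$.

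Combining the two inequalities proves the identity. I do not anticipate any real obstacle: upper semicontinuity of $\ell$ and compactness of $\cZ$ are inherited from the ambient Proposition but are not actually needed for this algebraic identity, and the degenerate cases in which the sets defining $B$ or $D$ are empty are handled uniformly by the $\sup\emptyset=-\infty$ convention, since the bound $D\leq\max\{A,B\}$ remains valid in the extended reals.
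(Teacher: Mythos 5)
Your proof is correct. It reduces the identity to the algebra $\mathrm{LHS}=\max\{A,B-\lambda\}$, $\mathrm{RHS}=\max\{A,D-\lambda\}$, with $B\leq D\leq\max\{A,B\}$, and the nonnegativity of $\lambda$ closes the chain. The paper proves the same two inequalities, but phrases the argument in terms of attained maximizers: it picks $z^\star\in\arg\max f$ and $\tilde z\in\arg\max g$ (as well as $\arg\max\ell$ and $\arg\max\{\ell(z'):\|z'-\hat z\|\leq r\}$) and performs the same case split on whether the penalty is active. That version therefore \emph{does} invoke compactness of $\cZ$ and upper semicontinuity of $\ell$ (to ensure these maximizers exist), whereas your supremum-level argument, carried out in the extended reals with $\sup\emptyset=-\infty$, is self-contained and strictly more elementary. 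Your observation that the compactness and semicontinuity hypotheses are not needed for this particular identity is accurate; they are needed elsewhere in Proposition~\ref{prop:LP-analytical} (e.g.\ to make $\ell_r$ well-behaved and to invoke the duality theorem), which is presumably why the paper keeps them in the lemma statement. In short: same decomposition and same key inequality $D\leq\max\{A,B\}$, but you buy a slightly cleaner and more general proof by working with suprema rather than maximizers.
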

\begin{proof}
For ease of notation we introduce two auxiliary functions~$f$ and~$g$ from~$\cZ$ to~$\overline\R$, which are defined through $f(z)=\ell(z)-\lambda \cdot \ds 1_{\|z-\hat z\|> r}$ and $g(z)=\ell_r(z)-\lambda \cdot \ds 1_{z\neq \hat z}$ for all~$z\in\cZ$. Note that both~$f$ and~$g$ are upper semicontinuous.

First, select~$z^\star\in\arg\max_{z\in\cZ} f(z)$, which exists because~$\cZ$ is compact and~$f$ is upper semicontinuous. If~$\|z^\star-\hat z\|> r$, then the definition of~$\ell_r$ implies that
\[
\sup_{z\in\cZ}f(z)=f(z^\star)=\ell(z^\star)-\lambda\leq \ell_r(z^\star)-\lambda =g(z^\star)\leq\sup_{z\in\cZ} g(z).
\]
On the other hand, if~$\|z-\hat z\|\leq r$, then
\[
\sup_{z\in\cZ}f(z)=f(z^\star)=\ell(z^\star)\leq \ell_r(\hat z)=g(\hat z)\leq\sup_{z\in\cZ} g(z).
\]
Next, select~$\tilde z\in\arg\max_{z\in\cZ} g(z)$. If~$\tilde z\neq\hat z$, then with $z^\star\in\arg\max_{z\in\cZ} \ell(z)$ we have
\[
\sup_{z\in\cZ}g(z)=g(\tilde z)=\ell_r(\tilde z)-\lambda\leq \ell(z^\star)-\lambda\leq f(z^\star) = \sup_{z\in\cZ}f(z),
\]
where the inequalities follow from the definition of~$z^\star$ and the non-negativity of~$\lambda$. Conversely, if~$\tilde z=\hat z$, then with $z_r^\star\in\arg\max_{z'\in\cZ} \{\ell(z'):\|z'-\hat z\|\leq r\}$ we have
\[
\sup_{z\in\cZ}g(z)=g(\tilde z)=\ell_r(\hat z) =\ell(z^\star_r) = f(z_r^\star)= \sup_{z\in\cZ}f(z)
\]
Thus, the claim follows.
\end{proof}

\begin{proof}[Proof of Proposition~\ref{prop:LP-analytical}]
Lemma~\ref{lem:PL-to-TV} allows us to reformulate the dual problem~\eqref{eq:dual-LP-risk} in terms of the adversarial loss function~$\ell_r$ as
\begin{align}
	\label{eq:dual-LP-risk-reformulated}
	\inf_{\lambda\in\R_+} \left\{ \lambda r+ \E_{\hat \P}\left[\sup_{z\in\cZ} \ell_r(z)-\lambda \cdot \ds 1_{z\neq \hat z} \right]\right\}.
\end{align}
As $r>0$, $\cZ$ is compact and $\ell_r$ is upper semicontinuous, Theorem~\ref{thm:duality:OT} implies that~\eqref{eq:dual-LP-risk-reformulated} is the strong dual of a problem that maximizes the expected value of the adversarial loss function~$\ell_r$ over an optimal transport ambiguity set corresponding to the transportation cost function~$c_0(z,\hat z)=\ds 1_{z\neq\hat z}$. Its optimal value thus matches
\begin{align*}
	\sup_{\P \in \cP(\cZ)} \left\{ \E_\P \left[ \ell_r (Z) \right] \;:\;  \OT_{c_0} (\P,\hat\P)\leq r \right\} \;=\; \sup_{\P \in \cP(\cZ)} \left\{ \E_\P \left[ \ell_r (Z) \right] \;:\;  \TV(\P,\hat\P)\leq r \right\},
\end{align*}
where the equality holds because $\TV = \OT_{c_0}$ as shown in Proposition~\ref{prop:TV=OT}. Since $\sup_{z\in\cZ} \ell_r(z)=\sup_{z\in\cZ} \ell(z)=\overline\ell$, Proposition~\ref{prop:TV-analytical} readily implies that the supremum of the resulting maximization problem over a total variation ball is given by
\begin{align*}
	(1-\beta_r)\cdot \overline \ell + \beta_r\cdot \beta_r\CVaR_{\hat \P}\left[\ell_r(\hat Z) \right], 
\end{align*}
Assume now that $\psi:\cZ\to\cZ$ is a Borel measurable function satisfying
\[
\psi(\hat z) \in \arg\max_{z\in\cZ} \left\{ \ell(z) : \|z - \hat z\|\leq r\right\} \quad \forall \hat z\in\cZ,
\]
which exists thanks to \cite[Corollary~14.6 and Theorem~14.37]{rockafellar2009variational}, and define~$\hat\P_\psi=\hat\P\circ \psi^{-1}$ as the pushforward distribution of~$\hat\P$ under~$\psi$. Next, we construct a primal maximizer under the assumption that $\hat\P_\psi(\ell(Z)<\overline\ell)> r$. To this end, we partition~$\cZ$ into the following four subsets.
\begin{align*}
	\cZ_1&=\big\{z\in\cZ: \beta_r\VaR_{\hat \P_\psi}[\ell(\hat Z)]>\ell(z)\big\}\\
	\cZ_2& =\big\{z\in\cZ: \overline\ell>\ell(z)=\beta_r\VaR_{\hat \P_\psi}[\ell(\hat Z)]\big\}\\
	\cZ_3& =\big\{z\in\cZ: \overline\ell>\ell(z)>\beta_r\VaR_{\hat \P_\psi}[\ell(\hat Z)]\big\}\\
	\cZ_4&=\big\{z\in\cZ: \overline\ell=\ell(z)\big\}
\end{align*}
We also define~$\hat\P_{i}$ as the distribution~$\hat\P_\psi$ conditioned on the event~$\hat Z\in\cZ_i$ for all~$i\in[4]$, and we define $\U_{\cZ_4}$ as the uniform distribution on~$\cZ_4$. Next, we set
\begin{align*}
	\P^\star & = \left(\beta_r-\hat\P_\psi(\hat Z\in\cZ_3)-\hat\P_\psi(\hat Z\in\cZ_4)\right)\cdot \hat\P_{2} + \\ & \hspace{2cm}\hat\P_\psi(\hat Z\in\cZ_3)\cdot \hat \P_{3} + \hat\P_\psi(\hat Z\in\cZ_4)\cdot \hat \P_{4}+ (1-\beta_r)\cdot \U_{\cZ_4}.
\end{align*}
Note that~$\P^\star$ is constructed as in the proof of Proposition~\ref{prop:TV-analytical}, the only difference being that~$\hat\P$ is now replaced with its pushforward distribution~$\hat\P_\psi$. We then find
\begin{align*}
	\LP(\P^\star,\hat\P) & \leq \max \left\{\OT_{c_r}(\P^\star,\hat\P), r \right\}  \\ 
	& \leq \max \left\{ \OT_{c_r}(\P^\star,\hat\P_\psi) + \OT_{c_r}(\hat\P_\psi,\hat \P) , r \right\} \\
	& \leq  \max \left\{\TV(\P^\star,\hat\P_\psi) , r \right\}=r,
\end{align*}
where the first inequality follows from Proposition~\ref{prop:dual-levy-prokhorov}, and the second inequality holds because~$c_r$ is a pseudo-metric on~$\cZ$, which implies that~$\OT_{c_r}$ is a pseudo-metric on~$\cP(\cZ)$ and thus satisfies the triangle inequality. The third inequality holds because $\OT_{c_r}(\hat\P_\psi,\hat \P)=0$ and because~$c_0(z,\hat z)\geq c_r(z,\hat z)$ for all~$z,\hat z\in\cZ$, which implies that $\OT_{c_r}(\P^\star,\hat\P_\psi)\leq \TV(\P^\star,\hat\P_\psi)$. Finally, the equality follows from the proof of Proposition~\ref{prop:TV-analytical}, which ensures that $\TV(\P^\star,\hat\P_\psi)=r$. We also have
\begin{align*}
	\E_{\P^\star}\left[ \ell(Z) \right] & = (1-\beta_r)\cdot \overline \ell + \beta_r\cdot \beta_r\CVaR_{\hat \P_\psi }\left[\ell(\hat Z) \right] \\
	& = (1-\beta_r)\cdot \overline \ell + \beta_r\cdot \beta_r\CVaR_{\hat \P}\left[\ell(\psi(\hat Z)) \right].
\end{align*}
where the two equalities follow again from the proof of Proposition~\ref{prop:TV-analytical} and from the measure-theoretic change of variables formula, respectively. As $\ell(\psi(\hat z))=\ell_r(\hat z)$ for every~$\hat z\in\cZ$, the objective function value of~$\P^\star$ in~\eqref{eq:primal-LP-risk} matches the optimal value of the dual problem~\eqref{eq:dual-LP-risk}. Weak duality as established in Theorem~\ref{thm:duality:OT} thus implies that~$\P^\star$ solves the primal problem~\eqref{eq:primal-LP-risk}. If $\hat\P_\psi(\ell(Z)<\overline\ell)\leq r$, the construction of a primal maximizer is simpler and thus omitted for brevity. 
\end{proof}

The results of this section were first obtained by \citet{bennouna2023holistic} under the assumption that the nominal distribution~$\hat\P$ is discrete.

\subsection{Worst-Case Expectations over $\infty$-Wasserstein Balls}
\label{sec:infty:wasserstein-risk}
Consider the worst-case expectation problem
\begin{subequations}
\begin{align}
	\label{eq:primal-infty-wasserstein-risk}
	\sup_{\P \in \cP(\cZ)} \left\{ \E_\P \left[ \ell (Z) \right] \;:\;  \W_\infty(\P,\hat\P) \leq r \right\},
\end{align}
which maximizes the expected value of~$\ell(Z)$ over an $\infty$-Wasserstein ball of radius~$r\in \R_+$ around~$\hat\P\in\cP(\cZ)$. We assume here that the $\infty$-Wasserstein distance is induced by a given norm~$\|\cdot\|$ on~$\R^d$. Recall from Proposition~\ref{prop:dual-W_infty} that the $\infty$-Wasserstein ambiguity set coincides with the optimal transport ambiguity set
\begin{align*}
	\cP = \left\{ \P \in \cP(\cZ) \, : \, \OT_{c_r}(\P, \hat{\P}) \leq 0 \right\},
\end{align*}
where the transportation cost function~$c_r$ is defined through $c_r(z,\hat z)= \ds 1_{\|z - \hat z\|> r}$. We emphasize that, while the radius of the $\infty$-Wasserstein ball under consideration is~$r$, the radius of the corresponding optimal transport ambiguity set~$\cP$ is~$0$. Theorem~\ref{thm:duality:OT} thus implies that the problem dual to~\eqref{eq:primal-infty-wasserstein-risk} is given by
\begin{align}
	\label{eq:dual-infty-wasserstein-risk}
	\inf_{\lambda\in \R_+} \, \E_{\hat \P}\left[\sup_{z\in\cZ} \ell(z) -\lambda c_r(z,\hat Z) \right]
\end{align}
\end{subequations}
whenever $\ell$ is upper semicontinuous. If~$\cZ$ is compact, then the problems~\eqref{eq:primal-infty-wasserstein-risk} and~\eqref{eq:dual-infty-wasserstein-risk} can be solved in closed form.

\begin{proposition}[Worst-Case Expectations over $\infty$-Wasserstein Balls]
\label{prop:infty-wasserstein-analytical}
Suppose that~$\cZ\subseteq \R^d$ is compact, $\hat\P\in\cP(\cZ)$, $r\in\R_+$, $\E_{\hat\P}[\ell(\hat Z)]>-\infty$ and~$\ell$ is upper semicontinuous. Define the adversarial loss function $\ell_r(\hat z)=\sup_{z\in\cZ}\{\ell(z):\|z-\hat z\|\leq r\}$ as in Proposition~\ref{prop:LP-analytical}, and let $\psi:\cZ\to\cZ$ be a Borel function that satisfies
\[
\psi(\hat z) \in \arg\max_{z\in\cZ} \left\{ \ell(z) : \|z - \hat z\|\leq r\right\} \quad \forall \hat z\in\cZ.
\]
Then, the primal problem~\eqref{eq:primal-infty-wasserstein-risk} is solved by $\P^\star=\hat\P\circ\psi^{-1}$. In addition, the optimal values of~\eqref{eq:primal-infty-wasserstein-risk} and~\eqref{eq:dual-infty-wasserstein-risk} are both equal to $\E_{\hat \P} [ \ell_r(\hat Z)]$.
\end{proposition}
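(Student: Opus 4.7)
The plan is to exhibit $\P^\star$ as primal feasible with objective value $\E_{\hat\P}[\ell_r(\hat Z)]$, then establish the same value as an upper bound via weak duality. Existence of the Borel selector $\psi$ follows, as noted earlier in the excerpt, from the measurable maximum theorem (\cite[Corollary~14.6 and Theorem~14.37]{rockafellar2009variational}) applied to the upper semicontinuous integrand $\ell(z)-\delta_{\cB_r(\hat z)}(z)$, where $\cB_r(\hat z)=\{z\in\cZ:\|z-\hat z\|\leq r\}$ is compact-valued and measurable in~$\hat z$.

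First I would verify primal feasibility. Consider the pushforward coupling $\gamma=\hat\P\circ(\psi,\mathrm{id})^{-1}\in\Gamma(\P^\star,\hat\P)$. By construction of $\psi$, we have $\|\psi(\hat z)-\hat z\|\leq r$ for all $\hat z\in\cZ$, hence $\gamma(\|Z-\hat Z\|>r)=0$. By Definition~\ref{def:infty-Wasserstein-distance} this gives $\W_\infty(\P^\star,\hat\P)\leq \mathrm{ess\,sup}_\gamma\|Z-\hat Z\|\leq r$, so $\P^\star\in\cP$. The measure-theoretic change of variables formula and the identity $\ell(\psi(\hat z))=\ell_r(\hat z)$ (which holds by definition of $\psi$) then yield
\[
\E_{\P^\star}[\ell(Z)] = \E_{\hat\P}[\ell(\psi(\hat Z))] = \E_{\hat\P}[\ell_r(\hat Z)].
\]
In particular, the primal optimal value is at least $\E_{\hat\P}[\ell_r(\hat Z)]$.

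For the matching upper bound I would use the weak duality relation in~\eqref{eq:dual-infty-wasserstein-risk}, which is a consequence of Theorem~\ref{thm:duality:OT} applied to the transportation cost $c_r(z,\hat z)=\ds{1}_{\|z-\hat z\|>r}$ with budget $0$ (weak duality only; strong duality is not needed here). Set $\bar\ell=\sup_{z\in\cZ}\ell(z)$, which is finite because $\cZ$ is compact and $\ell$ is upper semicontinuous. For any fixed $\hat z\in\cZ$ and $\lambda\geq 0$, splitting the supremum over $\{z:\|z-\hat z\|\leq r\}$ and its complement gives
\[
\sup_{z\in\cZ}\bigl\{\ell(z)-\lambda\,\ds{1}_{\|z-\hat z\|>r}\bigr\} \;=\; \max\bigl\{\ell_r(\hat z),\,\bar\ell-\lambda\bigr\}.
\]
This quantity is monotonically non-increasing in $\lambda$, converges pointwise to $\ell_r(\hat z)$ as $\lambda\to\infty$, and is dominated in absolute value by $\max\{|\bar\ell|,|\ell_r(\hat z)|\}$, which is $\hat\P$-integrable since $\ell_r\geq \ell$ and $\E_{\hat\P}[\ell(\hat Z)]>-\infty$.

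The final step is to apply the dominated convergence theorem to conclude
\[
\inf_{\lambda\in\R_+}\,\E_{\hat\P}\Bigl[\sup_{z\in\cZ}\ell(z)-\lambda c_r(z,\hat Z)\Bigr] \;\leq\; \lim_{\lambda\to\infty}\E_{\hat\P}\bigl[\max\{\ell_r(\hat Z),\bar\ell-\lambda\}\bigr] \;=\; \E_{\hat\P}[\ell_r(\hat Z)].
\]
Chaining this with weak duality yields $\sup_{\P\in\cP}\E_\P[\ell(Z)]\leq \E_{\hat\P}[\ell_r(\hat Z)]$. Combined with the lower bound from $\P^\star$, both the primal and dual optimal values equal $\E_{\hat\P}[\ell_r(\hat Z)]$, and $\P^\star$ is a primal maximizer. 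The main subtlety is the careful justification of the dominated convergence step and, more fundamentally, checking measurability of $\ell_r$ (so that $\E_{\hat\P}[\ell_r(\hat Z)]$ is well-defined); the latter follows because $\ell_r$ inherits upper semicontinuity from~$\ell$ via Berge's maximum theorem, as already observed in the remarks preceding the proposition.
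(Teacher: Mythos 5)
Your proof is correct and follows essentially the same route as the paper's: exhibit the pushforward $\P^\star=\hat\P\circ\psi^{-1}$ as feasible with objective value $\E_{\hat\P}[\ell_r(\hat Z)]$, then match this value from above by interchanging the infimum over $\lambda$ with the expectation in the dual (the paper uses monotone convergence where you use dominated convergence, and verifies feasibility via Proposition~\ref{prop:dual-W_infty} rather than directly from the coupling, but these are cosmetic differences). One tiny imprecision: your claimed identity $\sup_{z\in\cZ}\{\ell(z)-\lambda\ds{1}_{\|z-\hat z\|>r}\}=\max\{\ell_r(\hat z),\bar\ell-\lambda\}$ should have $\sup_{\|z-\hat z\|>r}\ell(z)$ in place of $\bar\ell$ (so it is only an upper bound as written), but since the exact supremum is sandwiched between $\ell_r(\hat z)$ and that upper bound, your limiting argument is unaffected.
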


\begin{proof}
Note that the Borel function~$\psi$ exists thanks to \cite[Corollary~14.6 and Theorem~14.37]{rockafellar2009variational}. This ensures that the pushforward distribution $\P^\star=\hat\P\circ \psi^{-1}$ is well-defined. Note also that~$\P^\star$ is feasible in~\eqref{eq:primal-infty-wasserstein-risk} because
\begin{align*}
	\W_\infty(\P^\star,\hat\P) = \inf \left\{ r' \geq 0: \OT_{c_{r'}}(\P^\star, \hat \P) \leq 0 \right\} \leq r,
\end{align*}
where the equality follows from Proposition~\ref{prop:dual-W_infty} with $d(z,\hat z)=\|z-\hat z\|$, and the inequality holds because $\OT_{c_r}(\P^\star, \hat \P) = 0$. We also have
\begin{align*}
	\E_{\P^\star} \left[ \ell(Z) \right] 
	= \E_{\hat \P} \left[ \ell(\psi(Z)) \right]
	= \E_{\hat \P} \left[ \ell_{r}(Z) \right].
\end{align*}
Next, note that $\sup_{z\in\cZ} \ell(z) -\lambda c_r(z,\hat z)$ is non-increasing in~$\lambda$ for any fixed~$\hat z\in\cZ$. Also, it is uniformly bounded above by $\sup_{z\in\cZ}\ell(z)$, which is a finite constant thanks to the compactness of~$\cZ$ and the upper semicontinuity of~$\ell$. By the monotone convergence theorem, the optimal value of the dual problem~\eqref{eq:dual-infty-wasserstein-risk} thus satisfies
\begin{align*}
	\inf_{\lambda\in\R_+} \, \E_{\hat \P}\left[\sup_{z\in\cZ} \ell(z) -\lambda c_r(z,\hat Z) \right] = \E_{\hat \P}\left[\inf_{\lambda\in\R_+} \,  \sup_{z\in\cZ} \ell(z) -\lambda c_r(z,\hat Z) \right] = \E_{\hat \P} \left[ \ell_{r}(\hat Z) \right],
\end{align*}
where the second equality holds because~$\cZ$ is compact. Weak duality as established in Theorem~\ref{thm:duality:OT} thus implies that~$\P^\star$ solves the primal problem~\eqref{eq:primal-infty-wasserstein-risk}.
\end{proof}

Proposition~\ref{prop:infty-wasserstein-analytical} shows that the worst-case expectation of the original loss~$\ell(Z)$ with respect to an $\infty$-Wasserstein ball coincides with the crisp expectation of the adversarial loss $\ell_r(\hat Z)$ with respect to the nominal distribution~$\hat \P$. This result was first discovered by \citet{gao2017wasserstein} for discrete nominal distributions and later extend by \citet{gao2024wasserstein} to general nominal distributions. The loss function~$\ell_r$ is routinely used in machine learning for the adversarial training of neural networks \citep{szegedy2014intriguing,ian15adversarial}. Proposition~\ref{prop:infty-wasserstein-analytical} thus reveals an intimate connection between adversarial training and distributionally robust optimization with respect to an $\infty$-Wasserstein ambiguity set. This connection has been further explored in the context of adversarial classification by \citet{garcia2022regularized,trillos2022adversarial,garcia2023analytical,bungert2023geometry,bungert2024mean,pydi2024many,frank2024adversarial} and \citet{frank2024existence}.

\subsection{Worst-Case Expectations over $1$-Wasserstein Balls}
\label{sec:wc-wasserstein}
Consider the worst-case expectation problem
\begin{subequations}
\begin{align}
	\label{eq:Wasserstein-1-Lipschitz}
	\sup_{\P \in \cP(\cZ)} \left\{ \E_\P \left[ \ell (Z) \right] \;:\;  \W_1(\P, \hat{\P}) \leq r \right\},
\end{align}
which maximizes the expected value of~$\ell(Z)$ over a $1$-Wasserstein ball of radius~$r\in \R_+$ around~$\hat\P\in\cP(\cZ)$. We assume here that the $1$-Wasserstein distance is induced by a given norm~$\|\cdot\|$ on~$\R^d$. Thus, the $1$-Wasserstein ambiguity set coincides with the optimal transport ambiguity set $\cP = \left\{ \P \in \cP(\cZ) : \OT_c(\P, \hat{\P}) \leq r \right\}$ corresponding to the transportation cost function~$c$ is defined through $c(z,\hat z)= \|z - \hat z\|$. Theorem~\ref{thm:duality:OT} thus implies that the problem dual to~\eqref{eq:Wasserstein-1-Lipschitz} is given by
\begin{align}
	\label{eq:Wasserstein-1-Lipschitz-dual}
	\inf_{\lambda\ge 0} \lambda r+ \E_{\hat \P}\left[\sup_{z\in\cZ} \ell(z) -\lambda \|z-\hat Z\| \right]
\end{align}
\end{subequations}
whenever $\ell$ is upper semicontinuous. If~$\cZ=\R^d$ and $\ell$ is convex and Lipschitz continuous, then the problems~\eqref{eq:Wasserstein-1-Lipschitz} and~\eqref{eq:Wasserstein-1-Lipschitz-dual} can be solved in closed form.

\begin{proposition}[Worst-Case Expectations over $1$-Wasserstein Balls]
\label{prop:1-wasserstein-analytical}
Suppose that~$\cZ= \R^d$, $\hat\P\in\cP(\cZ)$ and $r\in\R_+$. If $\E_{\hat\P}[\ell(\hat Z)]>-\infty$ and~$\ell$ is convex and Lipschitz continuous, then the optimal values of~\eqref{eq:Wasserstein-1-Lipschitz} and~\eqref{eq:Wasserstein-1-Lipschitz-dual} are equal to 
\[
\E_{\hat \P} [ \ell(\hat Z)] +r\lip(\ell).
\]
\end{proposition}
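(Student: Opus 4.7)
The plan is to invoke the strong duality result of Theorem~\ref{thm:duality:OT} and then evaluate the dual problem~\eqref{eq:Wasserstein-1-Lipschitz-dual} in closed form. The hypotheses of Theorem~\ref{thm:duality:OT} are satisfied: $\ell$ is real-valued and convex on all of $\R^d$, hence continuous, hence upper semicontinuous; the integrability $\E_{\hat\P}[\ell(\hat Z)] > -\infty$ is assumed. Throughout, I may assume $r > 0$, since for $r = 0$ the $1$-Wasserstein ball reduces to $\{\hat\P\}$ and the identity is trivial.

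The first step is to compute the $c$-transform $f_\lambda(\hat z) = \sup_{z \in \R^d} \ell(z) - \lambda \|z - \hat z\|$ that appears in the dual objective. Write $L = \lip(\ell)$. The Lipschitz property $\ell(z) - \ell(\hat z) \le L\|z - \hat z\|$ immediately yields
\[
    \ell(z) - \lambda \|z - \hat z\| \;\le\; \ell(\hat z) + (L - \lambda)\|z - \hat z\|,
\]
so whenever $\lambda \ge L$ the supremum is bounded above by $\ell(\hat z)$ and is attained at $z = \hat z$. Hence $f_\lambda(\hat z) = \ell(\hat z)$ for all $\hat z \in \R^d$ and all $\lambda \ge L$, and on that range the dual objective reduces to $\lambda r + \E_{\hat\P}[\ell(\hat Z)]$.

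The second step is to show that $f_\lambda \equiv +\infty$ on $\R^d$ whenever $\lambda < L$, so that the dual problem effectively imposes $\lambda \ge L$. This is where convexity enters. Using the standard characterization of the Lipschitz modulus for a finite convex function on $\R^d$ as $L = \sup\{\|g\|_\ast : g \in \partial \ell(z_0),\, z_0 \in \R^d\}$ (combined with the fact that finite convex functions on $\R^d$ have nonempty subdifferential everywhere), pick $z_0 \in \R^d$ and $g \in \partial \ell(z_0)$ with $\|g\|_\ast > \lambda$, and a unit vector $v$ with $g^\top v = \|g\|_\ast$. The subgradient inequality gives $\ell(z_0 + tv) \ge \ell(z_0) + t\|g\|_\ast$ for all $t \ge 0$, so
\[
    \ell(z_0 + tv) - \lambda \|z_0 + tv - \hat z\| \;\ge\; \ell(z_0) + t\|g\|_\ast - \lambda\bigl(\|z_0 - \hat z\| + t\bigr),
\]
which tends to $+\infty$ as $t \to \infty$ because $\|g\|_\ast - \lambda > 0$. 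This holds for every $\hat z$, so the dual objective is $+\infty$ for all $\lambda < L$.

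Combining the two steps, the dual~\eqref{eq:Wasserstein-1-Lipschitz-dual} reduces to $\inf_{\lambda \ge L}\{\lambda r + \E_{\hat\P}[\ell(\hat Z)]\}$, which is minimized at $\lambda = L$ (using $r \ge 0$) and yields the value $\E_{\hat\P}[\ell(\hat Z)] + r \lip(\ell)$. By strong duality from Theorem~\ref{thm:duality:OT}, the primal problem~\eqref{eq:Wasserstein-1-Lipschitz} shares this optimal value. The main obstacle is the rigorous verification of the third step above: the identification $f_\lambda \equiv +\infty$ for $\lambda < L$ hinges on the dual characterization $L = \sup_{g \in \bigcup_{z_0} \partial \ell(z_0)} \|g\|_\ast$, which uses both convexity and real-valuedness of $\ell$ on $\R^d$; the Lipschitz-only argument would suffice if we knew a maximizing subgradient direction existed, but convexity is needed to promote subgradient information into a global lower bound on $\ell$ along a ray.
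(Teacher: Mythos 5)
Your proof is correct, but it takes a genuinely different route from the paper's. The paper never invokes the strong duality half of Theorem~\ref{thm:duality:OT}: it argues purely by weak duality, exhibiting on the primal side an explicit sequence of distributions $\P_i$ (obtained by translating a fraction $q_i$ of the mass of $\hat\P$ a distance $r/q_i$ along a steepest-ascent direction extracted from the conjugate representation $\ell(z)=\sup_{y\in\dom(\ell^*)} z^\top y-\ell^*(y)$ and the identity $\lip(\ell)=\sup_{y\in\dom(\ell^*)}\|y\|_*$) whose objective values converge to $\E_{\hat\P}[\ell(\hat Z)]+r\lip(\ell)$, and on the dual side the single feasible point $\lambda^\star=\lip(\ell)$ with matching value. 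You instead rely on the strong duality statement of Theorem~\ref{thm:duality:OT} (handling $r=0$ separately, which is needed since that theorem requires $r>0$) and then evaluate the dual completely: your Step 1 coincides with the paper's dual computation, while your Step 2 --- showing via the subgradient characterization $\lip(\ell)=\sup\{\|g\|_*: g\in\partial\ell(z_0),\,z_0\in\R^d\}$ that the inner supremum is identically $+\infty$ for $\lambda<\lip(\ell)$ --- has no counterpart in the paper, which never needs to rule out small multipliers because it certifies optimality of $\lambda^\star$ through the primal sequence. The trade-off is clear: your argument is shorter and pins down the dual feasible region exactly as $[\lip(\ell),\infty)$, but it leans on the heavier duality machinery and forgoes the explicit description of the asymptotically worst-case distributions, which the paper's discussion around the proposition explicitly advertises (the $\cO(r/\Delta)$ mass-versus-distance trade-off). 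Both uses of convexity are essentially the same fact viewed from two sides: you promote a single subgradient into a global affine minorant along a ray, while the paper uses the full Fenchel--Moreau envelope to build the transported distributions.
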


Under the conditions of Proposition~\ref{prop:1-wasserstein-analytical}, the supremum of the primal problem~\eqref{eq:Wasserstein-1-Lipschitz} is usually {\em not} attained. The proof constructs a sequence of distributions that attain the supremum asymptotically. These distributions move an increasingly small portion of~$\hat\P$ increasingly far along the direction of steepest increase of~$\ell$. Intuitively, the amount of probability mass transported over a distance~$\Delta$ must decay as $\cO(r/\Delta)$ as~$\Delta$ grows. The dual problem~\eqref{eq:Wasserstein-1-Lipschitz-dual} is solved by~$\lambda^\star= \lip(\ell)$.

\begin{proof}[Proof of Proposition~\ref{prop:1-wasserstein-analytical}]
As the convex function~$\ell$ is Lipschitz continuous, it is in particular proper and closed. By the Fenchel-Moreau theorem (Lemma~\ref{lem:bi:coincidence}) $\ell$ thus admits the dual representation 
\begin{align*}
	\ell(z)=\sup_{y\in\dom(\ell^*)} z^\top y-\ell^*(y),
\end{align*}
where~$\ell^*$ denotes the convex conjugate of~$\ell$. Put differently, $\ell$ coincides with the pointwise supremum of the affine functions~$f_y(z)=y^\top z-\ell^*(y)$ parametrized by~$y\in\dom(\ell^*)$. H\"older's inequality then implies that
\[
\left| f_y(z)-f_y(\hat z)\right|=\left| y^\top(z-\hat z)\right|\leq \|y\|_*\|z-\hat z\|,
\]
where $\|\cdot\|_*$ denotes the norm dual to~$\|\cdot\|$. As H\"older's inequality is tight, $f_y$ is Lipschitz continuous with Lipschitz modulus $\lip(f_y)=\|y\|_*$. In addition, as the Lipschitz modulus of a supremum of affine functions coincides with the supremum of the corresponding Lipschitz moduli, the Lipschitz modulus of~$\ell$ is given by 
\[
\lip(\ell)=\sup_{y\in\dom(\ell^*)} \|y\|_*= \max_{y\in\cl(\dom(\ell^*))} \|y\|_*.
\]
The maximum in the last expression is attained by some~$y^\star\in\R^d$ because $\lip(\ell)<\infty$ by assumption. Next, define~$z^\star$ as any optimal solution of $\max_{\|z\|\leq 1} (y^\star)^\top z$. By construction, we thus have $(y^\star)^\top z^\star=\|y^\star\|_*$. We also introduce a sequence~$\{y_i\}_{i\in\N}$ in~$\dom(\ell^*)$ that converges to~$y^\star$, and we set $q_i=i^{-1} (1+|\ell^*(y_i)|)^{-1}$ for every~$i\in\N$. In addition, we define $f_i:\R^d\to\R^d$ through~$f_i(z)=z+rz^\star/q_i$ for any~$i\in\N$. Thus, $f_i$ represents the translation that shifts each point in~$\R^d$ along the direction~$z^\star$ by a distance equal to~$r/q_i$. We further define
\[
\P_i=(1-q_i)\,\hat\P + q_i\,\hat \P\circ f^{-1}_i,
\]
where $\hat \P\circ f^{-1}_i$ stands for the pushforward distribution of~$\hat \P$ under~$f_i$. Intuitively, $\P_i$ is obtained by decomposing~$\hat\P$ into two parts~$(1-q_i)\hat\P$ and~$q_i\hat\P$ and then translating the second part by~$rz^\star/q_i$. By construction, we thus have $\OT_c(\P_i, \hat{\P}) \leq r$ and
\begin{align*}
	\E_{\P_i}[\ell(Z)] & = (1-q_i)\, \E_{\hat \P}[\ell(Z)] + q_i\, \E_{\hat \P}[\ell(Z+rz^\star/q_i)] \\
	&\geq (1-q_i)\, \E_{\hat \P}[\ell(Z)] + q_i\, \E_{\hat \P}[(y_i)^\top(Z+rz^\star/q_i) - \ell^*(y_i)].
\end{align*}
Here, the inequality follows from the representation of~$\ell$ in terms of its conjugate~$\ell^*$. As~$i$ tends to infinity, $q_i$ as well as $q_i\ell^*(y_i)$ converge to~$0$, and~$y_i$ converges to~$y^\star$. Recall also that $(y^\star)^\top z^\star=\|y^\star\|_*=\lip(\ell)$. This shows that the supremum of the worst-case expectation problem~\eqref{eq:Wasserstein-1-Lipschitz} is bounded below by $\E_{\hat \P}[\ell(Z)] + r\lip(\ell)$. 

Next, define $\lambda^\star= \lip(\ell)$, and note that
\begin{align*}
	\ell(\hat z)\leq \sup_{z\in\cZ} \ell(z)-\lambda^\star\|z-\hat z\|\leq \sup_{z\in\cZ} \ell(\hat z)+\lip(\ell)\|z-\hat z\|-\lambda^\star\|z-\hat z\|=\ell(\hat z)
\end{align*}
for all~$\hat z\in\cZ$, where the second inequality follows from the Lipschitz continuity of~$\ell$, and the equality holds thanks to the definition of~$\lambda^\star$. Thus, the objective function value of~$\lambda^\star$ in the dual problem~\eqref{eq:Wasserstein-1-Lipschitz-dual} is given by
\[
\lambda^\star r+ \E_{\hat \P}\left[\sup_{z\in\cZ} \ell(z) -\lambda \|z-\hat Z\| \right] = \E_{\hat \P}\left[\ell(\hat{Z})\right] +r\lip(\ell).
\]
In summary, we have shown that---asymptotically for large~$i$---the objective function value of~$\P_i$ in~\eqref{eq:Wasserstein-1-Lipschitz} matches that of~$\lambda^\star$ in~\eqref{eq:Wasserstein-1-Lipschitz-dual}. By weak duality as established in Theorem~\ref{thm:duality:OT}, the supremum of the primal problem~\eqref{eq:Wasserstein-1-Lipschitz} thus coincides with the Lipschitz-regularized nominal loss $\E_{\hat \P}[\ell(Z)] + r\lip(\ell)$ and is asymptotically attained by the distribution~$\P_i$, which moves a fraction~$q_i$ of the total probabilty mass by a distance~$r/q_i$ along the direction~$z^\star$. 
\end{proof}

The connection between robustificaton and Lipschitz regularization was discovered by \citet{mohajerin2018data}. It offers a probabilistic interpretation for regularization techniques commonly used in statistics and machine learning \citep{shafieezadeh2015distributionally, shafieezadeh2019regularization}. Further extensions to nonconvex loss functions has been established in~\citep{blanchet2019multivariate,ho2020adversarial,shafiee2023new,gao2024wasserstein,zhang2024optimal}.

\subsection{$1$-Wasserstein Risk}
\label{sec:1-wasserstein-risk}
Consider a law-invariant risk measure~$\varrho$ that can be expressed as a superposition of CVaRs with different risk levels~$\beta\in[0,1]$. Specifically, assume that
\begin{align}
\label{eq:kusuoka-representation}
\varrho_\P \left[\ell( Z)\right] = \int_0^1 \beta\CVaR_{\P}[\ell(Z)]\, \diff \sigma(\beta)
\end{align}
for all $\P\in\cP(\cZ)$, where~$\sigma$ is a probability distribution on~$[0,1]$ with $\int_0^1\beta^{-1}\diff\sigma(\beta)<\infty$. Any~$\varrho$ with these properties is called a {\em spectral} risk measure \citep{ref:acerbi2002spectral}, and~\eqref{eq:kusuoka-representation} is termed a Kusuoka representation of~$\varrho$ \citep{ref:kusuoka2001coherent, ref:shapiro2013kusuoka}.

If the distribution of~$Z$ is only known to be close to~$\hat\P\in\cP(\cZ)$, then it is natural to quantify the riskiness of an uncertain loss~$\ell(Z)$ under a spectral risk measure~$\varrho$ by the {\em $1$-Wasserstein risk}, that is, the supremum of $\varrho_\P [\ell( Z)]$ over all distributions~$\P$ in a $1$-Wasserstain ball around~$\hat\P$. The $1$-Wasserstein risk is available in closed form whenever~$\cZ=\R^d$ and $\ell$ is convex and Lipschitz continuous.

\begin{proposition}[1-Wasserstein Risk]
\label{prop:worst-case-1-wasserstein-risk}
Let~$\varrho$ be a spectral risk measure satisfying~\eqref{eq:kusuoka-representation} with $\int_0^1\beta^{-1}\diff\sigma(\beta)<\infty$. Assume that~$\hat\P\in\cP(\R^d)$ with~$\E_{\hat\P} [\|Z\|]<\infty$ for some norm~$\|\cdot\|$ on $\R^d$. Define $\cP=\{\P\in\cP(\R^d): \W_1(\P, \hat{\P}) \leq r\}$, where $r\geq 0$ and $\W_1$ is the 1-Wasserstein distance with transportation cost function~$c(z,\hat z)=\|z-\hat z\|$. If~$\ell$ is convex and Lipschitz continuous with~$\lip(\ell)<\infty$, then we have
\[
\sup_{\P\in\cP}  \varrho_\P \left[\ell( Z)\right] = \varrho_{\hat \P} \left[\ell( Z)\right] + r \lip(\ell) \int_0^1 \beta^{-1}\diff\sigma(\beta).
\]
\end{proposition}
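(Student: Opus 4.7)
The plan is to establish matching upper and lower bounds on $\sup_{\P\in\cP}\varrho_\P[\ell(Z)]$ by combining the Kusuoka representation~\eqref{eq:kusuoka-representation} with the worst-case expectation formula of Proposition~\ref{prop:1-wasserstein-analytical}, applied pointwise in the Kusuoka integration variable $\beta$.

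For the upper bound, the max--min inequality first gives
\[
\sup_{\P\in\cP}\varrho_\P[\ell(Z)] \;\le\; \int_0^1 \sup_{\P\in\cP}\beta\CVaR_\P[\ell(Z)]\,\diff\sigma(\beta).
\]
For each $\beta\in(0,1]$ the $\beta$-CVaR is the optimized certainty equivalent induced by the disutility $g(\tau)=\beta^{-1}\max\{\tau,0\}$ (Proposition~\ref{prop:CVaR}). The hypotheses of Theorem~\ref{thm:duality:regular} hold: every $\P\in\cP$ satisfies $\E_\P[\|Z\|]<\infty$ via the triangle inequality $\W_1(\P,\delta_0)\le r + \W_1(\hat\P,\delta_0)$, which combined with the Lipschitz continuity of~$\ell$ yields $\E_\P[|\ell(Z)|]<\infty$, and $\sup_{\P\in\cP}\E_\P[g(\ell(Z))]$ is finite by Proposition~\ref{prop:1-wasserstein-analytical}. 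The theorem therefore swaps infimum and supremum to produce
\[
\sup_{\P\in\cP}\beta\CVaR_\P[\ell(Z)] = \inf_{\tau\in\R}\bigl\{\tau + \beta^{-1}\sup_{\P\in\cP}\E_\P[\max\{\ell(Z)-\tau,0\}]\bigr\}.
\]
The integrand of the inner expectation is convex and Lipschitz continuous with modulus at most~$\lip(\ell)$, so Proposition~\ref{prop:1-wasserstein-analytical} gives the bound $\sup_{\P\in\cP}\E_\P[\max\{\ell(Z)-\tau,0\}] \le \E_{\hat\P}[\max\{\ell(Z)-\tau,0\}] + r\lip(\ell)$. Taking $\inf_\tau$ yields $\sup_{\P\in\cP}\beta\CVaR_\P[\ell(Z)] \le \beta\CVaR_{\hat\P}[\ell(Z)] + \beta^{-1}r\lip(\ell)$, and integrating against $\diff\sigma$ using $\int_0^1\beta^{-1}\,\diff\sigma<\infty$ delivers the upper bound.

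For the lower bound, I would replicate the worst-case sequence used in the proof of Proposition~\ref{prop:1-wasserstein-analytical}. Choose $y^\star\in\cl(\dom(\ell^*))$ with $\|y^\star\|_*=\lip(\ell)$, $z^\star\in\R^d$ with $\|z^\star\|\le 1$ and $(y^\star)^\top z^\star = \lip(\ell)$, and a sequence $y_i\to y^\star$ in $\dom(\ell^*)$ together with $q_i=(i(1+|\ell^*(y_i)|))^{-1}\downarrow 0$. Define $\P_i = (1-q_i)\hat\P + q_i\,\hat\P\circ f_i^{-1}$ with $f_i(z)=z+(r/q_i)z^\star$; then $\W_1(\P_i,\hat\P)\le q_i\cdot (r/q_i)=r$, so $\P_i\in\cP$. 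Using the Fenchel lower bound $\ell(z+(r/q_i)z^\star) \ge y_i^\top z + r(y_i^\top z^\star)/q_i - \ell^*(y_i)$ in the optimization definition of CVaR, a direct case analysis shows that for every fixed $\beta\in(0,1]$,
\[
\lim_{i\to\infty} \beta\CVaR_{\P_i}[\ell(Z)] = \beta\CVaR_{\hat\P}[\ell(Z)] + \beta^{-1}r\lip(\ell),
\]
the intuition being that once $q_i<\beta$ the displaced mass lies entirely in the upper $\beta$-tail and contributes an average excess loss of $\beta^{-1}\cdot q_i\cdot r\lip(\ell)/q_i$ in the limit. Dominated convergence, applied with the $\sigma$-integrable dominating function $\beta\mapsto |\ell(0)|+ \lip(\ell)(\E_{\hat\P}[\|Z\|]+r)(1+\beta^{-1})$ (obtained by sandwiching $\beta\CVaR_{\P_i}[\ell(Z)]$ between $\E_{\P_i}[\ell(Z)]$ and $\beta^{-1}\E_{\P_i}[\max\{\ell(Z),0\}]$ and bounding both uniformly in~$i$), then interchanges the limit with the Kusuoka integral, yielding $\sup_{\P\in\cP}\varrho_\P[\ell(Z)]\ge \liminf_i\varrho_{\P_i}[\ell(Z)] \ge \varrho_{\hat\P}[\ell(Z)] + r\lip(\ell)\int_0^1\beta^{-1}\,\diff\sigma$.

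The principal technical obstacle is the pointwise CVaR asymptotics, particularly when the distribution of $\ell(Z)$ under $\hat\P$ has an atom at $\beta\VaR_{\hat\P}[\ell(Z)]$ so that the quantile function is flat at that level. This is handled along the lines of the proof of Proposition~\ref{prop:TV-analytical}, by partitioning $\cZ$ into the pre-images under~$\ell$ of the regions strictly above, equal to, and strictly below the nominal $\beta$-quantile and then carefully tracking how the displaced mass of probability $q_i$ mixes with the unshifted upper $\beta$-tail; a further small technicality is ensuring that $q_i\ell^*(y_i)\to 0$ so that the Fenchel lower bound is asymptotically tight along the chosen sequence, which is arranged by the specific choice of~$q_i$ above.
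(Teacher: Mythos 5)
Your proposal is correct, and the upper bound is essentially the paper's argument (max--min inequality followed by the pointwise-in-$\beta$ worst-case expectation bound from Proposition~\ref{prop:1-wasserstein-analytical}; the appeal to Theorem~\ref{thm:duality:regular} is harmless but not needed, since only the $\le$ direction of the interchange is used). Where you genuinely diverge is in the lower bound's limit interchange. The paper normalizes the loss so that $\inf_{\P\in\cP}\E_\P[\ell(Z)]\ge 0$, truncates the Kusuoka integral to $[\delta,1-\delta]$, invokes weak compactness/tightness of the Wasserstein ball (Theorem~\ref{thm:wasserstein-compactness}) to confine the CVaR minimizer $\tau=\beta\VaR_{\P_i}[\ell(Z)]$ to a single compact interval $[\underline\tau,\overline\tau]$ \emph{uniformly} over $\beta\in[\delta,1-\delta]$ and $\P\in\cP$, passes to the limit in $i$ by monotone convergence, and finally sends $\delta\to 0$, treating a possible atom $\sigma(\{1\})$ separately. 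You instead establish the exact pointwise limit $\lim_i\beta\CVaR_{\P_i}[\ell(Z)]=\beta\CVaR_{\hat\P}[\ell(Z)]+\beta^{-1}r\lip(\ell)$ for each fixed $\beta$ and interchange via dominated convergence with the explicit $\sigma$-integrable majorant obtained from $\E_{\P_i}[\ell(Z)]\le\beta\CVaR_{\P_i}[\ell(Z)]\le\beta^{-1}\E_{\P_i}[|\ell(Z)|]$ and the uniform first-moment bound $\E_{\P_i}[\|Z\|]\le\E_{\hat\P}[\|Z\|]+r$. This buys you a cleaner bookkeeping: no truncation, no uniform-in-$\beta$ tightness argument, and the atom at $\beta=1$ is absorbed automatically (the limit there is exactly Proposition~\ref{prop:1-wasserstein-analytical}). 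The price is that you must prove the pointwise limit for each $\beta$, and your sketch is thinnest precisely there: since $\beta\CVaR_{\P_i}$ is an \emph{infimum} over $\tau$, plugging the Fenchel lower bound in at a specific $\tau$ only bounds the objective, not the infimum; you still need to localize the minimizing $\tau$ to a compact interval independent of $i$ (for fixed $\beta$ this follows easily from $(1-q_i)\hat F(\tau)\le F_i(\tau)\le(1-q_i)\hat F(\tau)+q_i$ for the relevant distribution functions, or from Theorem~10 of Rockafellar--Uryasev as the paper does) before the Fenchel estimate, taken uniformly over that interval, yields the liminf inequality. The matching limsup comes for free from your upper bound since $\P_i\in\cP$. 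With that one step made explicit, the argument is complete.
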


\begin{proof}
The assumption $\int_0^1\beta^{-1}\diff\sigma(\beta)<\infty$ ensures that~$\sigma(\{0\})=0$, and the assumption~$\E_{\hat\P} [\|Z\|]<\infty$ ensures via the Lipschitz continuity of~$\ell$ that~$\E_{\hat\P} [\ell(Z)]$ is finite. We first bound the worst-case risk from above. To this end, note that
\begin{align*}
	\sup_{\P\in\cP}  \varrho_\P \left[\ell( Z)\right] &\leq \int_0^1 \sup_{\P\in\cP} \beta\CVaR_{\P}[\ell(Z)]\, \diff \sigma(\beta) \\
	& \leq \int_0^1 \inf_{\tau\in\R} \tau+\frac{1}{\beta} \sup_{\P\in\cP} \E_\P\left[ \max \left\{\ell(Z)-\tau,0 \right\} \right] \, \diff \sigma(\beta) \\
	& = \int_0^1 \inf_{\tau\in\R} \tau+\frac{1}{\beta} \left( \E_{\hat \P}\left[ \max \left\{\ell(Z)-\tau,0 \right\} \right] + r\lip(\ell) \right)\, \diff \sigma(\beta)\\
	&= \varrho_{\hat \P} \left[\ell( Z)\right] + r \lip(\ell) \int_0^1 \beta^{-1}\diff\sigma(\beta) <+\infty,
\end{align*}
where the first inequality holds because~$\P$ may adapt to~$\beta$ when the supremum is evaluated inside the integral, and the second inequality follows from the standard max-min inequality. The first equality follows from the results on worst-case expectations over 1-Wasserstein balls in Section~\ref{sec:wc-wasserstein}. 

To derive the converse inequality, we assume first that~$\sigma(\{1\})=0$. The general case will be addressed later. Note that $\mu = \inf_{\P\in\cP}\E_\P[\ell(Z)]$ is finite because~$\ell$ is Lipschitz continuous and because~$\E_{\hat\P} [\|Z\|]<\infty$, which implies via the proof of Theorem~\ref{thm:wasserstein-compactness} that all distributions in~$\cP$ have uniformly bounded first moment.
We may assume without loss of generality that~$\mu\geq 0$. Otherwise, we may replace~$\ell(z)$ with~$\ell(z)-\mu$, which simply increases the worst-case risk by~$-\mu$ because any spectral risk measure is translation invariant. The assumption that~$\mu\geq 0$ then implies that
\[
\beta\CVaR_{\P}[\ell(Z)]\geq \E_\P[\ell(Z)] \geq 0\quad \forall \beta\in[0,1],\; \forall \P\in\cP.
\]
Thus, we have
\begin{align*}
	\sup_{\P\in\cP}\varrho_\P \left[\ell( Z)\right] & = \sup_{\P\in\cP}\sup_{\delta> 0}\int_\delta^{1-\delta} \beta\CVaR_{\P}[\ell(Z)]\, \diff \sigma(\beta) \\
	& = \sup_{\delta> 0} \sup_{\P\in\cP}\int_\delta^{1-\delta} \beta\CVaR_{\P}[\ell(Z)]\, \diff \sigma(\beta),
\end{align*}
where the first equality follows from the monotone convergence theorem and the assumption that~$\sigma(\{0\})=\sigma(\{1\})=0$. Hence, for any~$\varepsilon>0$ there is~$\delta>0$ with
\begin{subequations}
	\label{eq:wcvar-approximation}
	\begin{align}
		\left|\sup_{\P\in\cP}\varrho_\P \left[\ell( Z)\right]- \sup_{\P\in\cP}\int_\delta^{1-\delta} \beta\CVaR_{\P}[\ell(Z)]\, \diff \sigma(\beta) \right|\leq\varepsilon
	\end{align}
	and
	\begin{align}
		\left|\int_0^1\beta^{-1}\diff\sigma(\beta) - \int_\delta^{1-\delta} \beta^{-1}\diff\sigma(\beta)\right|\leq \varepsilon.
	\end{align}
\end{subequations}
Recall now from Theorem~\ref{thm:wasserstein-compactness} that~$\mc P$ is weakly compact and thus tight. Hence, there exists a compact set~$\cC\subseteq\R^d$ with $\P(Z\notin\cC)\leq \delta/2$ for every~$\P\in\cP$. As~$\cC$ is compact, $\underline\tau =\min_{z\in\cC}\ell(z)$ and $\overline\tau =\max_{z\in\cC}\ell(z)$ are both finite. Using the trivial bounds $\P(\ell(Z)\geq\underline\tau)\geq \P(Z\in\cC)$ and $\P(\ell(Z)\leq\overline\tau)\geq \P(Z\in\cC)$ and noting that $\P(Z\in\cC)\geq 1-\delta/2$ for every~$\P\in\cP$, one can then readily show that
\begin{align*}
	\underline\tau\leq (1-\delta)\VaR_{\P}[\ell(Z)]\leq \beta\VaR_{\P}[\ell(Z)] \leq \delta\VaR_{\P}[\ell(Z)] \leq \overline \tau
\end{align*}
for all $\beta\in[\delta,1-\delta]$ and for all~$\P\in\cP$. Next, define $y_i\in\dom(\ell^*)$, $q_i\in[0,1]$, the function $f_i:\R^d\to\R^d$ and the distribution $\P_i=(1-q_i)\,\hat\P + q_i\,\hat \P\circ f^{-1}_i$ for $i\in\N$ as in Section~\ref{sec:wc-wasserstein}.
We then obtain
\begin{align}
	\label{eq:worst-case-spectral-w1}
	\begin{aligned}
		\sup_{\P\in\cP}  \varrho_\P \left[\ell( Z)\right] &\geq \int_\delta^{1-\delta} \inf_{\tau\in\R} \tau+\frac{1}{\beta} \E_{\P_i} \left[ \max \left\{\ell(Z)-\tau,0 \right\} \right] \, \diff \sigma(\beta) \\
		& = \int_\delta^{1-\delta} \inf_{\tau \in[\underline\tau, \overline\tau]} \tau+\frac{1-q_i}{\beta} \E_{\hat\P} \left[ \max \left\{\ell(Z)-\tau,0 \right\} \right] \\
		& \hspace{2cm} + \frac{q_i}{\beta} \E_{\hat\P} \left[ \max \left\{\ell(Z+rz^\star/q_i)-\tau,0 \right\} \right] \, \diff \sigma(\beta).
	\end{aligned}
\end{align}
The inequality in~\eqref{eq:worst-case-spectral-w1} holds because $\beta\CVaR_{\P}[\ell(Z)]\geq 0$ for all $\beta\in[0,1]$ by assumption and because~$\P_i\in\cP$ as shown in Section~\ref{sec:wc-wasserstein}. The equality follows from the definition of~$\P_i$ and from \cite[Theorem~10]{rockafellar2002cvar-general}, which ensures that the minimization problem over~$\tau$ is solved by~$\beta\VaR_{\P}[\ell(Z)]\in [\underline\tau, \overline\tau]$. As~$\ell$ is proper, convex and lower semicontinuous, and as~$y_i$ belongs to the domain of~$\ell^*$, the Fenchel-Moreau theorem further implies that
\begin{align*}
	\ell(z+rz^\star/q_i) &= \sup_{y\in\dom(\ell^*)} (z+rz^\star/q_i)^\top y-\ell^*(y) \geq (z+rz^\star/q_i)^\top y_i -\ell^*(y_i).
\end{align*}
The last expectation in~\eqref{eq:worst-case-spectral-w1} thus admits the lower bound
\begin{align*}
	\E_{\hat\P} \left[ \max \left\{\ell(Z+rz^\star/q_i)-\tau,0 \right\} \right] & \geq \E_{\hat\P} \left[ \ell(Z+rz^\star/q_i)-\tau \right] \\
	& \geq \E_{\hat \P}\left[y_i^\top Z\right] +r y_i^\top z^\star/q_i -\ell^*(y_i)-\overline \tau.
\end{align*}
Substituting this estimate into~\eqref{eq:worst-case-spectral-w1} and letting~$i$ tend to infinity yields
\begin{align*}
	\begin{aligned}
		\sup_{\P\in\cP}  \varrho_\P \left[\ell( Z)\right] &\geq \lim_{i\to\infty} \int_\delta^{1-\delta} \inf_{\tau\in[ \underline\tau, \overline\tau]} \tau+\frac{1-q_i}{\beta} \E_{\hat\P} \left[ \max \left\{\ell(Z)-\tau,0 \right\} \right] \, \diff \sigma(\beta) \\
		& \hspace{1cm} + r \lip(\ell) \int_\delta^{1-\delta} \beta^{-1}\, \diff \sigma(\beta) \\
		& = \int_\delta^{1-\delta} \beta\CVaR_{\P}[\ell(Z)] \, \diff \sigma(\beta) + r \lip(\ell) \int_\delta^{1-\delta} \beta^{-1}\, \diff \sigma(\beta),
	\end{aligned}
\end{align*}
where we have used that~$q_i$ as well as~$q_i\ell^*(y_i)$ converge to~$0$ and that~$y_i^\top z^\star$ converges to~$(y^\star)^\top z^\star=\lip(\ell)$ as~$i$ tends to infinity; see also Section~\ref{sec:wc-wasserstein}. The equality follows from the monotone convergence theorem, which applies because~$q_i$ is monotonically decreasing with~$i$. Letting~$\varepsilon$ tend to~$0$ thus implies via~\eqref{eq:wcvar-approximation} that
\begin{align*}
	\sup_{\P\in\cP}  \varrho_\P \left[\ell( Z)\right] &\geq \int_0^1 \beta\CVaR_{\P}[\ell(Z)] \, \diff \sigma(\beta) + r \lip(\ell) \int_0^1 \beta^{-1}\, \diff \sigma(\beta).
\end{align*}
This lower bound matches the upper bound derived in the first part of the proof, and thus the claim follows, provided that~$\sigma(\{1\})=0$. If the probability distribution~$\sigma$ has an atom at~$1$, then it can be decomposed as $\sigma=\hat\sigma+\sigma(\{1\})\cdot \delta_1$, where~$\hat\sigma$ is a non-negative measure on~$(0,1)$. We can thus decompose the risk under~$\P$ as
\[
\varrho_\P[\ell(Z)] = \int_0^1 \beta\CVaR_{\P}[\ell(Z)] \, \diff \hat \sigma(\beta) + \sigma(\{1\}) \cdot \E_\P[\ell(Z)].
\]
The first term in this decomposition can then be handled as above, and the second term can be handled as in Section~\ref{sec:wc-wasserstein}. Details are omitted for brevity.
\end{proof}

Proposition~\ref{prop:worst-case-1-wasserstein-risk} shows that the $1$-Wasserstein risk of a Lipschitz continuous convex loss function coincides with the sum of the nominal risk and a Lipschitz regularization term. It is asymptotically attained by the distribution~$\P_i$, which moves a fraction~$q_i$ of the total probability mass by a distance~$r/q_i$ along the direction~$z^\star$. Proposition~\ref{prop:1-wasserstein-analytical} emerges as a special case of Proposition~\ref{prop:worst-case-1-wasserstein-risk} when $\sigma=\delta_1$. The worst-case risk over $p$-Wasserstein balls for~$p\geq 1$ was first studied by \citet{pflug2012}, and a result akin to Proposition~\ref{prop:worst-case-1-wasserstein-risk} was obtained for linear loss functions. Extensions to more general risk measures were studied by \citet{pichler:2013} and \citet{wozabal2014robustifying}. The extension to convex loss functions is new.

\subsection{$p$-Wasserstein Risk}
\label{sec:p-wasserstein-risk}

We now show that if the loss function~$\ell(z)$ is linear, then the worst-case risk over a $p$-Wasserstein ball may be available in closed form even if~$p\in(1,\infty)$. The results of this section depend on the following lemma, which characterizes the conjugates of powers of norms; see also
\cite[Lemma~C.9]{zhen2023unification}.

\begin{lemma}[Conjugates of Powers of Norms]
\label{lem:p-norm}
Assume that $\|\cdot\|$ and $\|\cdot\|_*$ are mutually dual norms on~$\R^{d}$ and that $p,q \in (1,\infty)$ are conjugate exponents with $\frac{1}{p} + \frac{1}{q} = 1$. Define $\varphi(q)=(q-1)^{(q-1)}/q^q$. Then, the following statements hold.
\begin{itemize}
	\item[(i)] If $f(z) = \frac{1}{p} \|z\|^p $, then $f^*(y) = \frac{1}{q} \| y \|^q_*$. 
	\item[(ii)] If $g(z) = \| z-\hat z \|^p$, then $g^{*}(y) = y^\top\hat z +   \varphi(q) \left\| y \right\|^q_*$. 
\end{itemize}
\end{lemma}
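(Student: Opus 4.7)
The plan is to establish part~(i) via a polar decomposition argument and then derive part~(ii) by exploiting the translation property of convex conjugates, reducing it to a computation essentially parallel to that of part~(i).

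For part~(i), I would start from the definition $f^*(y) = \sup_{z \in \R^d} y^\top z - \frac{1}{p}\|z\|^p$ and write any $z \in \R^d$ in polar form as $z = tu$ with $t \geq 0$ and $\|u\| = 1$. The supremum factorizes as $\sup_{t \geq 0}\, t \cdot \bigl(\sup_{\|u\|=1} y^\top u\bigr) - \frac{1}{p} t^p$, and the inner supremum over $u$ equals $\|y\|_*$ by the very definition of the dual norm. This reduces the problem to the one-dimensional optimization $\sup_{t \geq 0} t\|y\|_* - \frac{1}{p}t^p$, which is concave in $t$ and is solved by the first-order condition $\|y\|_* = t^{p-1}$, i.e., $t^\star = \|y\|_*^{1/(p-1)} = \|y\|_*^{q-1}$ since $(p-1)(q-1)=1$. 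Substituting back yields $f^*(y) = \|y\|_*^q - \frac{1}{p}\|y\|_*^q = (1 - 1/p)\|y\|_*^q = \frac{1}{q}\|y\|_*^q$.

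For part~(ii), I would first use the translation identity: substituting $w = z - \hat z$ gives
\begin{equation*}
    g^*(y) = \sup_{z \in \R^d} y^\top z - \|z - \hat z\|^p = y^\top \hat z + \sup_{w \in \R^d} y^\top w - \|w\|^p.
\end{equation*}
The remaining supremum is handled exactly as in part~(i) via polar decomposition $w = tu$, reducing to $\sup_{t\geq 0} t\|y\|_* - t^p$. The first-order condition is now $\|y\|_* = p t^{p-1}$, so $t^\star = (\|y\|_*/p)^{q-1}$, and substituting back gives $\|y\|_*^q/p^{q-1} - \|y\|_*^q/p^q = (p-1) \|y\|_*^q / p^q$.

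The final step is the algebraic simplification $\frac{p-1}{p^q} = \frac{(q-1)^{q-1}}{q^q} = \varphi(q)$, which follows from the conjugacy relation $p = q/(q-1)$: one computes $p - 1 = 1/(q-1)$ and $p^q = q^q/(q-1)^q$, so the quotient equals $\bigl(1/(q-1)\bigr)\cdot (q-1)^q/q^q = (q-1)^{q-1}/q^q$. This yields $g^*(y) = y^\top \hat z + \varphi(q)\|y\|_*^q$, completing the proof. The calculations are entirely routine; the only mild subtlety worth flagging is the bookkeeping in translating between the exponents $p$ and $q$ via $(p-1)(q-1) = 1$ and $pq = p + q$, which governs why the coefficient in~(ii) is $\varphi(q)$ rather than the cleaner $1/q$ appearing in~(i).
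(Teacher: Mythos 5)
Your proof is correct and follows essentially the same route as the paper: part~(i) reduces to the scalar optimization $\sup_{t\geq 0} t\|y\|_* - \tfrac{1}{p}t^p$ (you get there via polar decomposition, the paper via a two-inequality sandwich with $t = \|z\|$, but these are the same idea), and part~(ii) uses the translation $w = z - \hat z$. The only cosmetic difference is that in~(ii) the paper rescales the remaining supremum by $p$ so as to reinvoke part~(i) directly (writing $\sup_z y^\top z - \|z\|^p = p \cdot \sup_z (y/p)^\top z - \tfrac{1}{p}\|z\|^p = \tfrac{p}{q}\|y/p\|_*^q$), whereas you redo the one-dimensional first-order calculation from scratch; both yield the same coefficient $\varphi(q)$ after the $p\leftrightarrow q$ bookkeeping.
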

\begin{proof}
As for assertion~(i), fix any~$z,y \in\R^{d}$. We then have
\begin{align*}
	z^\top y - \frac{1}{p} \|z\|^p \leq \|z\| \|y\|_* - \frac{1}{p} \|z\|^p \leq \max_{t\ge 0}~ t\|y\|_*-\frac{1}{p}t^p = \frac{1}{q} \|y\|^{q}_*,
\end{align*}
where the first inequality follows from the construction of the dual norm, and the second inequality is obtained by maximizing over~$t=\|z\|$. The equality holds because the maximization problem is solved by~$\tau=\|y\|^{1/(p-1)}_*$. Both inequalities collapse to equalities if~$z\in\arg\max_{\|z\|=\tau} z^\top y$. This allows us to conclude that
\[ 
f^*(y) = \sup_{z\in\R^d} z^\top y - \frac{1}{p} \|z\|^p  = \frac{1}{q} \|y\|^{q}_*. 
\]
As for assertion~(ii), note that
\begin{align*}
	g^*(y) & = \sup_{z\in\R^d} y^\top z  - \| z - \hat z \|^p =  y^\top \hat z +  p \cdot \sup_{z\in\R^d} (y/p)^\top z  - \frac{1}{p} \| z\|^p \\
	& = y^\top \hat z +   \frac{p}{q} \left\| y/p\right\|_*^q = y^\top \hat z +   \varphi(q) \left\|y\right\|^q_* ,
\end{align*}
where the last two equalities exploit assertion~(i) and the definition of~$\varphi(q)$.
\end{proof}

We now show that the worst-case CVaR of a linear loss function~$\ell(z)=\theta^\top z$ over a $p$-Wasserstein ball of radius~$r$ around~$\hat\P$ equals the sum of the nominal CVaR under~$\hat\P$ and a regularization term that scales with the norm of~$\theta$ and with~$r$.

\begin{proposition}[$p$-Wasserstein Risk]
\label{prop:worst-case-p-wasserstein-risk}
Assume that~$\hat\P\in\cP(\R^d)$ with~$\E_{\hat\P} [\|Z\|^p]<\infty$  for some~$p\in(1,\infty)$ and for some norm~$\|\cdot\|$ on $\R^d$. Define $\cP=\{\P\in\cP(\R^d): \W_p(\P, \hat{\P}) \leq r\}$, where~$r\geq 0$ and~$\W_p$ is the $p$-Wasserstein distance with transportation cost function~$c(z,\hat z)=\|z-\hat z\|^p$. If~$\theta\in\R^d$ and~$\beta\in(0,1)$, then
\[
\sup_{\P\in\cP}  \beta\CVaR_{\P}[\theta^\top Z] = \beta\CVaR_{\hat \P}[\theta^\top Z] + r \beta^{-1/p} \|\theta\|_*.
\]
\end{proposition}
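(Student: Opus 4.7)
The plan is to reduce the worst-case CVaR to a worst-case expectation via the optimized certainty equivalent representation, then dualize using the optimal transport strong duality theorem, and finally solve the resulting inner problems analytically using the conjugate-of-norm identities.

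First, I would recall from Proposition~\ref{prop:CVaR} that $\beta\CVaR$ is the optimized certainty equivalent induced by the disutility function $g(\tau)=\beta^{-1}\max\{\tau,0\}$, so
\[
\sup_{\P\in\cP}\beta\CVaR_\P[\theta^\top Z]=\sup_{\P\in\cP}\inf_{\tau\in\R}\;\tau+\frac{1}{\beta}\E_\P[\max\{\theta^\top Z-\tau,0\}].
\]
I would then invoke the minimax theorem for optimized certainty equivalents (Theorem~\ref{thm:duality:regular}) to swap the operators. Its hypotheses are readily verified: $\E_\P[\theta^\top Z]>-\infty$ follows from H\"older's inequality and $\E_{\hat\P}[\|Z\|^p]<\infty$, while $\sup_{\P\in\cP}\E_\P[g(\theta^\top Z)]<\infty$ follows from the uniform $p$-th moment bound on~$\cP$ derived in the proof of Theorem~\ref{thm:wasserstein-compactness}. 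Next, since $z\mapsto \max\{\theta^\top z-\tau,0\}$ is upper semicontinuous and $\E_{\hat\P}[\max\{\theta^\top\hat Z-\tau,0\}]$ is finite, the strong duality theorem for optimal transport ambiguity sets (Theorem~\ref{thm:duality:OT}) applied to the $p$-Wasserstein ball (an optimal transport ambiguity set of radius~$r^p$ with cost $c(z,\hat z)=\|z-\hat z\|^p$) yields
\[
\sup_{\P\in\cP}\beta\CVaR_\P[\theta^\top Z]=\inf_{\tau\in\R,\lambda\geq 0}\;\tau+\frac{\lambda r^p}{\beta}+\frac{1}{\beta}\E_{\hat\P}\left[\sup_{z\in\R^d}\max\{\theta^\top z-\tau,0\}-\lambda\|z-\hat Z\|^p\right].
\]
The case $r=0$ is trivial, so I would assume $r>0$ to ensure strong duality.

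The key analytical step is to evaluate the inner supremum. Exchanging $\sup_z$ with $\max\{\cdot,0\}$ (which is valid because the supremum of a pointwise maximum equals the maximum of the suprema), the sup reduces to $\max\{A(\tau,\lambda,\hat z),0\}$ where
\[
A(\tau,\lambda,\hat z)=\sup_{z\in\R^d}\;\theta^\top z-\tau-\lambda\|z-\hat z\|^p=\theta^\top\hat z-\tau+\frac{\varphi(q)\|\theta\|_*^q}{\lambda^{q-1}},
\]
using Lemma~\ref{lem:p-norm} (with the shift $u=z-\hat z$), where $q$ is the conjugate exponent of~$p$ and $\varphi(q)=(q-1)^{q-1}/q^q$. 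For $\lambda=0$ the supremum is $+\infty$ whenever $\theta\neq 0$, so the infimum is attained at some $\lambda>0$.

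Substituting the auxiliary variable $\tilde\tau=\tau-\varphi(q)\|\theta\|_*^q/\lambda^{q-1}$, the joint minimization over $\tau$ and $\lambda$ decouples: the inner infimum over $\tilde\tau$ recovers exactly $\beta\CVaR_{\hat\P}[\theta^\top Z]$ by the CVaR formula, leaving
\[
\sup_{\P\in\cP}\beta\CVaR_\P[\theta^\top Z]=\beta\CVaR_{\hat\P}[\theta^\top Z]+\inf_{\lambda>0}\left\{\frac{\lambda r^p}{\beta}+\frac{\varphi(q)\|\theta\|_*^q}{\lambda^{q-1}}\right\}.
\]
The residual one-dimensional convex minimization is solved by the first-order condition, and an elementary computation using the conjugate relation $(q-1)/q=1/p$ together with $\varphi(q)=(q-1)^{q-1}/q^q$ shows that the optimal value equals $r\beta^{-1/p}\|\theta\|_*$, yielding the claim. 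The main obstacle is bookkeeping: carefully tracking the exponents of $\beta$, $\lambda$, $r$, and $\|\theta\|_*$ through the conjugate computation and the final optimization, and verifying that the variable shift $\tilde\tau$ preserves the unconstrained minimization.
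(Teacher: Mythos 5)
Your proof is correct and reaches the same final expression, but it takes a genuinely different route from the paper. You invoke the minimax theorem for optimized certainty equivalents (Theorem~\ref{thm:duality:regular}) to interchange the supremum over $\P$ and the infimum over~$\tau$ as an equality, and you then apply strong duality for optimal transport ambiguity sets (Theorem~\ref{thm:duality:OT}) to get an exact dual reformulation; the rest is exact evaluation via Lemma~\ref{lem:p-norm}, your $\tilde\tau$ substitution, and the residual one-dimensional minimization over~$\lambda$. The paper instead proceeds entirely with \emph{weak} duality---the max-min inequality followed by the weak-duality direction of Theorem~\ref{thm:duality:OT}---to obtain the upper bound, and then closes the gap by constructing an explicit distribution $\P^\star=\beta\,\hat\P_+\circ f^{-1}+(1-\beta)\,\hat\P_-$ (a shifted mixture obtained by translating the upper $\beta$-tail of $\hat\P$ by $r z^\star/\beta^{1/p}$ along a direction of steepest increase) whose objective value matches the dual bound. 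Both routes are valid: yours is shorter and stays purely on the dual side, but you do have to check the hypotheses of the minimax theorem (finiteness of $\sup_{\P\in\cP}\E_\P[g(\ell(Z))]$ and the lower bound on $\E_\P[\ell(Z)]$, which you do correctly via the uniform $p$-th moment bound on the Wasserstein ball), whereas the paper's approach avoids any appeal to a minimax theorem and, as a byproduct, produces an explicit worst-case distribution---which is often useful in its own right. Your exchange of $\sup_z$ with $\max\{\cdot,0\}$ and the subsequent decoupling via $\tilde\tau=\tau-\varphi(q)\|\theta\|_*^q/\lambda^{q-1}$ correspond to the paper's use of translation invariance of $\beta\CVaR$ applied to the constant shift $\varphi(q)\lambda\|\theta/\lambda\|_*^q$; these are two equivalent ways of carrying out the same bookkeeping, and the closed-form minimization over $\lambda$ then yields $r\beta^{-1/p}\|\theta\|_*$ in both cases.
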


\begin{proof}
By the definition of the CVaR by \citet{rockafellar2000optimization}, we have
\begin{align}
	\label{eq:wc-cvar-p-wasserstein}
	\sup_{\P\in\cP}  \beta\CVaR_{\P}[\theta^\top Z] \leq \inf_{\tau\in\R} \tau + \frac{1}{\beta} \sup_{\P\in\cP} \E_{\P}\left[ \max\left\{ \theta^\top Z-\tau, 0\right\}\right],
\end{align}
where the inequality is obtained by interchanging the supremum over~$\P$ and the infimum over~$\tau$. The underlying worst-case expectation problem satisfies
\begin{align*}
	&\sup_{\P\in\cP} \E_{\P}\left[ \max\left\{ \theta^\top Z-\tau, 0\right\}\right] \\
	& \leq \inf_{\lambda\ge 0} \lambda r^p+ \E_{\hat \P}\left[\sup_{z\in\R^d} \max\left\{ \theta^\top z-\tau, 0\right\} -\lambda \|z-\hat Z\|^p \right]\\
	& = \inf_{\lambda\ge 0} \lambda r^p+ \E_{\hat \P}\left[ \max\left\{ \sup_{z\in\R^d} \theta^\top z-\tau -\lambda \|z-\hat Z\|^p , \sup_{z\in\R^d} -\lambda \|z-\hat Z\|^p \right\}  \right] \\
	& = \inf_{\lambda\ge 0} \lambda r^p+ \E_{\hat \P}\left[ \max\left\{ \theta^\top \hat Z-\tau +\varphi(q)\lambda  \left\| \theta / \lambda \right\|_*^q , 0 \right\}  \right]
\end{align*}
where the inequality exploits weak duality, and the first equality is obtained by interchanging the order of the two maximization operations. The second equality follows from Lemma~\ref{lem:p-norm}(ii). Substituting the resulting formula into~\eqref{eq:wc-cvar-p-wasserstein} and interchanging the infimum over~$\tau$ with the infimum over~$\lambda$ then yields
\begin{align*}
	& \sup_{\P\in\cP}  \beta\CVaR_{\P}[\theta^\top Z] \\
	& \leq \inf_{\lambda\ge 0} \frac{\lambda r^p}{\beta} + \inf_{\tau\in\R} \tau + \frac{1}{\beta}  \E_{\hat \P}\left[ \max\left\{ \theta^\top \hat Z-\tau +\varphi(q)\lambda  \left\| \theta / \lambda \right\|_*^q , 0 \right\}  \right]\\
	& = \inf_{\lambda\ge 0} \frac{\lambda r^p}{\beta} + \beta\CVaR_{\hat \P}[\theta^\top \hat Z +\varphi(q)\lambda  \left\| \theta / \lambda \right\|_*^q ] \\
	& = \beta\CVaR_{\hat \P}[\theta^\top \hat Z ] + \inf_{\lambda\ge 0} \frac{\lambda r^p}{\beta} +\varphi(q)\lambda  \left\| \theta / \lambda \right\|_*^q,
\end{align*}
where the equalities follow from the definition and the translation invariance of the CVaR, respectively. Solving the minimization problem over~$\lambda$ analytically yields
\begin{align*}
	\sup_{\P\in\cP}  \beta\CVaR_{\P}[\theta^\top Z] \leq  \beta\CVaR_{\hat \P}[\theta^\top \hat Z ] + r\beta^{-1/p} \|\theta\|_*.
\end{align*}

To derive the converse inequality, we use~$\tau_\beta$ as a shorthand for $\beta\VaR_{\hat \P}[\theta^\top \hat Z ]$, which is finite because~$\beta\in(0,1)$, and we select any $z^\star\in\arg\max_{\|z\|=1} \theta^\top z$. In addition, we decompose the nominal distribution as $\hat\P=\beta \, \hat\P_+ + (1-\beta)\, \hat\P_-$, where~$\hat\P_+$ and~$\hat\P_-$ are probability distributions supported on $\cZ_+=\{z\in\R^d:\theta^\top z\geq \tau_\beta\}$ and $\cZ_-=\{z\in\R^d:\theta^\top z\leq \tau_\beta\}$, respectively. Such a decomposition always exists thanks to the definition of~$\tau_\beta$. For example, if~$\hat\P(\theta^\top Z=\tau_\beta)=0$, as would be the case if~$\hat\P$ was absolutely continuous with respect to Lebesgue measure, then~$\hat\P_-$ and~$\hat\P_+$ can simply be obtained by conditioning~$\hat\P$ on~$\cZ_-$ and~$\cZ_+$, respectively. We also define $f:\R^d\to\R^d$ through $f(z)=z + rz^\star/\beta^{1/p}$. Thus, $f$ shifts all points in~$\R^d$ along the direction~$z^\star$ by a distance equal to~$r/\beta^{1/p}$. Finally, we set~$\P^\star=\beta\,\hat \P_+\circ f^{-1}+ (1-\beta)\, \hat\P_-$. Hence, $\P^\star$ is obtained by decomposing~$\hat\P$ into two parts~$\beta\,\hat\P_+$ and~$(1-\beta)\,\hat\P_-$ and then translating the first part by~$r z^\star/\beta^{1/p}$. We thus have $\W_p(\P^\star,\hat\P)\leq r$, and $\beta\VaR_{\P}[\theta^\top Z]=\tau_\beta$. This in turn implies that
\begin{align*}
	\sup_{\P\in\cP} \, &\beta\CVaR_{\P}[\theta^\top Z]  \geq  \beta\CVaR_{\P^\star}[\theta^\top Z]\\
	& = \tau_\beta+\frac{1}{\beta} \E_{\P^\star}\left[\max\left\{ \theta^\top Z-\tau_\beta,0\right\} \right] \\
	& = \tau_\beta + \E_{\hat\P_+}\left[\max\left\{ \theta^\top f(Z)-\tau_\beta,0\right\} \right]+\frac{1-\beta}{\beta} \E_{\hat\P_-}\left[\max\left\{ \theta^\top Z-\tau_\beta,0\right\} \right]\\
	& = \E_{\hat\P_+}\left[ \theta^\top Z\right]+ r\beta^{-1/p} \|\theta\|_* = \beta\CVaR_{\hat \P}[\theta^\top \hat Z ] + r\beta^{-1/p} \|\theta\|_*.
\end{align*}
Here, the first equality follows from the definition of the CVaR and from \cite[Theorem~10]{rockafellar2002cvar-general}, which ensures~$\tau$ matches~$\beta\VaR_{\P^\star}[\ell(Z)]=\tau_\beta$ at optimality. The second equality exploits the definition of~$\P^\star$, and the third equality holds because~$\theta^\top z^\star=\|\theta\|_*$ and because~$\theta^\top z\geq \tau_\beta$ for all~$z\in\cZ_+$ and~$\theta^\top z\leq \tau_\beta$ for all~$z\in\cZ_-$. Finally, the fourth equality follows from the construction of~$\hat\P_+$ and from \cite[Proposition~5]{rockafellar2002cvar-general}. This completes the proof.
\end{proof}

\section{Finite Convex Reformulations of Nature's Subproblem}
\label{sec:finite-convex-reformulations}

Although nature's subproblem admits analytical solutions in important special cases (\emph{cf.}~Section~\ref{sec:analytical-wc}), it can usually only be solved numerically. Sometimes, nature's subproblem can be reformulated as an equivalent convex optimization problem. In these cases, it can be addressed with off-the-shelf solvers. In other cases, however, it may be necessary or preferable to develop customized solution algorithms.

This section focuses on finite convex reductions. That is, we will describe conditions under which the dual worst-case expectation problems derived in Section~\ref{sec:duality-wc-expectation} can be reformulated as finite convex {\em minimization} problems. These finite reformulations are significant because they can be combined with the outer minimization problem over~$x \in \mathcal{X}$ to construct a reformulation of the overall DRO problem~\eqref{eq:primal:dro} as a classical minimization problem amenable to standard optimization software. We subsequently dualize the finite convex reformulations of nature's subproblem to obtain equivalent finite convex {\em maximization} problems. These finite bi-dual maximization problems are significant because their optimal solutions allow us to construct worst-case distributions that (asymptotically) attain the supremum of nature's subproblem~\eqref{eq:worst-case:expectation}. Even though we only address worst-case expectations, all results of this section readily extend to worst-case optimized certainty equivalents thanks to Theorem~\ref{thm:duality:regular}. For the sake of brevity, however, we will not elaborate on these extensions. To simplify notation, we will always suppress the dependence of the loss function~$\ell$ on the decision variables~$x$.

The remainder of this section develops as follows. In Section~\ref{sec:proof-strategy-tractability}, we first outline a general strategy for deriving finite convex dual and bi-dual reformulations of nature's subproblem~\eqref{eq:worst-case:expectation}. We subsequently exemplify this strategy for worst-case expectation problems over Chebyshev ambiguity sets (Section~\ref{sec:wc-distribution:chebyshev}), $\phi$-divergence ambiguity sets (Section~\ref{sec:wc-distribution:phi}) and optimal transport ambiguity sets (Section~\ref{sec:wc-distribution:transport}).

\subsection{General Proof Strategy}
\label{sec:proof-strategy-tractability}
The worst-case expectation problem~\eqref{eq:worst-case:expectation} constitutes a semi-infinite program that involves infinitely many decision variables (because it optimizes over a subset of an infinite-dimensional measure space) but only finitely many constraints ({\em e.g.}, moment conditions and/or bounds on the divergence or discrepancy to a reference distribution). The duality results of Section~\ref{sec:duality-wc-expectation} enable us to recast this semi-infinite maximization problem as a semi-infinite minimization problem with finitely many variables and infinitely many constraints. We then leverage reformulation techniques from robust optimization to recast the dual semi-infinite program as a finite-dimensional convex minimization problem. These techniques exploit standard results from convex analysis as well as the $\cS$-Lemma, which we review next. Throughout this discussion we adopt the convention that $0\cdot\infty=\infty$.

We first show that scaling and perspectivication constitute dual operations.

\begin{lemma}[Duality of Scaling and Perspectivication]
\label{lem:conjugate-perspective}
If $f:\R^d\to\overline\R$ is a proper, closed and convex function and $\alpha\in\R_+$ a fixed constant, then the following hold.
\begin{enumerate}[label=(\roman*)]
	\item \label{lem:conjugate-perspective-1} If $g(z)=\alpha f(z)$, then $g^*(y)= (f^*)^\pi(y, \alpha)$ for all~$y\in\R^d$.
	\item \label{lem:conjugate-perspective-2} If $g(z)=f^\pi(z,\alpha)$, then $g^*(y)= \cl(\alpha f^*)(y)$ for all~$y\in\R^d$.
\end{enumerate}
\end{lemma}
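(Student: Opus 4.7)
The plan is to handle both assertions by a case distinction on whether $\alpha>0$ or $\alpha=0$, since the perspective function is piecewise defined at these two regimes and the case $\alpha<0$ is ruled out by $\alpha\in\R_+$. For part~(i) with $\alpha>0$, I would simply factor $\alpha$ out of the supremum defining $g^*$: writing $g^*(y)=\sup_{z}\, y^\top z-\alpha f(z)=\alpha\sup_{z}\,(y/\alpha)^\top z-f(z)=\alpha f^*(y/\alpha)$, which by definition of the perspective is precisely $(f^*)^\pi(y,\alpha)$. For part~(ii) with $\alpha>0$, I would perform the substitution $w=z/\alpha$ inside $g^*(y)=\sup_z y^\top z-\alpha f(z/\alpha)$ to obtain $g^*(y)=\alpha f^*(y)$; since $\alpha f^*$ is itself proper, convex and closed for positive $\alpha$, it coincides with its closure, and the claim follows.

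The substantive work happens at $\alpha=0$, where the two identities rely on the correspondence between recession functions and support functions of dual domains. For part~(i) at $\alpha=0$, I would invoke the convention $0\cdot\infty=\infty$ to recognize that $g=0\cdot f$ coincides with the indicator function $\delta_{\dom(f)}$, so that $g^*=\delta_{\dom(f)}^*=\sigma_{\dom(f)}$. Applying \citep[Theorem~13.3]{rockafellar1970convex} to $f^*$ gives $(f^*)^\infty=\sigma_{\dom(f^{**})}$, and since $f$ is closed proper convex we have $f^{**}=f$, so $(f^*)^\infty=\sigma_{\dom(f)}$. By the definition of the perspective this equals $(f^*)^\pi(y,0)$, matching $g^*$ as required. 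For part~(ii) at $\alpha=0$, I would run the same correspondence in reverse: $g(z)=f^\pi(z,0)=f^\infty(z)=\sigma_{\dom(f^*)}(z)$, and the conjugate of a support function of a convex set is the indicator of its closure \citep[Theorem~13.2]{rockafellar1970convex}, giving $g^*=\delta_{\cl(\dom(f^*))}$. On the other hand, the convention $0\cdot\infty=\infty$ makes $0\cdot f^*$ equal to $\delta_{\dom(f^*)}$, whose closure is $\delta_{\cl(\dom(f^*))}$; hence $g^*=\cl(0\cdot f^*)=\cl(\alpha f^*)$.

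The only real subtlety, and the step I would be most careful about, is tracking the conventions at $\alpha=0$: the value of $0\cdot f(z)$ at points $z\notin\dom(f)$ is $+\infty$ by the adopted convention, and the closure operator in part~(ii) is not cosmetic but necessary, since $\dom(f^*)$ need not be closed and therefore $\delta_{\dom(f^*)}$ need not be lower semicontinuous. The equivalence $\delta_{\dom(f^*)}^{**}=\delta_{\cl(\dom(f^*))}$ and the analogous identity $\sigma_{\cl(\dom(f^*))}=\sigma_{\dom(f^*)}$ keep the computation internally consistent and reconcile the two natural definitions of $\alpha f^*$ at $\alpha=0$ through the closure. With these conventions pinned down, the proof reduces to the two elementary substitutions above plus a single invocation each of Theorems~13.2 and~13.3 of \citep{rockafellar1970convex}.
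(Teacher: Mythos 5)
Your proof is correct and takes essentially the same route as the paper: both arguments split on $\alpha>0$ versus $\alpha=0$, handle the positive case by the elementary change-of-variables/factoring computations, and handle the degenerate case via the recession-function/support-function correspondence from \citep[Theorems~13.2--13.3]{rockafellar1970convex} together with $f=f^{**}$ and the convention $0\cdot\infty=\infty$. The subtlety you single out at the end---that the closure in part~(ii) is genuinely needed when $\dom(f^*)$ fails to be closed---is exactly the point the paper's proof also hinges on.
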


\begin{proof}
We prove assertion~\ref{lem:conjugate-perspective-1} by case distinction. First, if~$\alpha > 0$, then we have
\begin{align*}
	g^*(y) 
	&= \sup_{z \in \R^d} y^\top z - \alpha f(z) 
	= \alpha \sup_{z \in \R^d} (y / \alpha)^\top z - f(z) \\
	&= \alpha f^*(y/\alpha)
	= (f^*)^\pi(y, \alpha).
\end{align*}
If $\alpha= 0$, on the other hand, then a similar reasoning shows that
\begin{align*}
	g^*(y) 
	&= \sup_{z \in \R^d} y^\top z - \delta_{\dom(f)}(z)
	= \delta^*_{\dom(f)}(y)= \delta^*_{\dom(f^{**})}(y)  \\
	&= (f^*)^\infty(y)
	= (f^*)^\pi(y, \alpha),
\end{align*}
where the first equality follows from our convention that $0\cdot\infty=\infty$, which implies that $0 f(z)=\delta_{\dom(f)}(z)$. The second equality follows from the definition of the support function, and the third equality holds because~$f$ is convex and closed, which implies via Lemma~\ref{lem:bi:coincidence} that $f=f^{**}$. Finally, the fourth equality follows from \citep[Theorem~13.3]{rockafellar1970convex}, and the last equality exploits the definition of the perspective function for~$\alpha=0$. This completes the proof of assertion~\ref{lem:conjugate-perspective-1}.

As for assertion~\ref{lem:conjugate-perspective-2}, assume first that $\alpha>0$, and note that
\begin{align*}
	g^*(y) & = \sup_{z\in\R^d} y^\top z -f^\pi(z,\alpha) = \alpha\; \sup_{z\in\R^d} y^\top (z/\alpha) -f(z/\alpha) =\alpha f^*(y)= \cl(\alpha f^*)(y),
\end{align*}
where the last equality holds because~$f^*$ is closed. If $\alpha=0$, then we have
\begin{align*}
	g^*(y) & = \sup_{z\in\R^d} y^\top z -f^\infty(z) = \sup_{z\in\R^d} y^\top (z) -\delta^*_{\dom(f^*)} =\delta_{\cl(\dom(f^*))}(y) = \cl(\alpha f^*)(y).
\end{align*}
Here, the first equality exploits the definition of the perspective. The second and the third equalities follow from \citep[Theorem~13.3]{rockafellar1970convex} and \citep[Theorem~13.2]{rockafellar1970convex}, respectively. The last equality, finally, holds because $0 f^*=\delta_{\dom(f^*)}$ by our conventions of extended arithmetic. This proves assertion~\ref{lem:conjugate-perspective-2}.
\end{proof}

The following lemma derives a formula for the conjugate of a sum of functions.

\begin{lemma}[Conjugates of Sums]
\label{lem:conjugate-sums}
If $f_k:\R^d\to\overline\R$, $k\in[K]$, are proper, convex and closed functions, then the conjugate of $f=\sum_{k\in[K]} f_k$ satisfies
\begin{equation}
	\label{eq:infimal-convolution}
	f^*(y)\leq \inf_{y_1,\ldots,y_K\in\R^d}\bigg\{ \sum_{k\in[K]} f_k^*(y_k) : \sum_{k\in[K]} y_k=y \bigg\} \quad \forall y\in\R^d.  
\end{equation}
If there exists $\bar z\in\cap_{k\in[K]} \rint(\dom(f_k))$, then the inequality in the above expression reduces to an equality, and the minimum is attained for every~$y\in\R^d$.
\end{lemma}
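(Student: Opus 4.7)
The plan is to establish the weak inequality and the equality separately, with the second part relying on the Fenchel--Rockafellar duality theorem combined with an induction on $K$.

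For the weak inequality, I would fix $y \in \R^d$ together with any decomposition $y = \sum_{k \in [K]} y_k$. A direct calculation gives
\[
    f^*(y) = \sup_{z \in \R^d} \sum_{k \in [K]} \big( y_k^\top z - f_k(z) \big) \leq \sum_{k \in [K]} \sup_{z \in \R^d} \big( y_k^\top z - f_k(z) \big) = \sum_{k \in [K]} f_k^*(y_k),
\]
because the supremum of a sum is at most the sum of the suprema. Taking the infimum over all admissible decompositions of $y$ yields the claimed inequality in \eqref{eq:infimal-convolution}.

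For the equality part, I would argue by induction on $K$. The case $K = 1$ is vacuous. For $K = 2$, the Fenchel--Rockafellar duality theorem (\emph{e.g.}, \citep[Theorem~31.1]{rockafellar1970convex}), applied to the proper closed convex functions $z \mapsto f_1(z) - y^\top z$ and $f_2$ whose effective domains share a relative-interior point by assumption, gives
\[
    f^*(y) = \inf_{y_1, y_2 \in \R^d} \big\{ f_1^*(y_1) + f_2^*(y_2) : y_1 + y_2 = y \big\}
\]
with the infimum attained. For the inductive step from $K-1$ to $K$, I would set $g = f_1 + \cdots + f_{K-1}$, use the induction hypothesis to express $g^*(u)$ as an attained infimal convolution of $f_1^*, \ldots, f_{K-1}^*$, and then apply the $K=2$ case to $g$ and $f_K$ in order to obtain $f^*(y) = \inf_{u + v = y} g^*(u) + f_K^*(v)$ with the infimum attained. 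Substituting the representation of $g^*$ and relabelling the auxiliary variables yields the desired formula together with the claim of attainment.

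The main obstacle will be verifying that the constraint qualification propagates through the induction, that is, that $\rint(\dom g) \cap \rint(\dom f_K) \neq \emptyset$ at every step. This relies on the identity $\rint(A \cap B) = \rint(A) \cap \rint(B)$ for convex sets whose relative interiors intersect \citep[Theorem~6.5]{rockafellar1970convex}. Applying this iteratively to $\dom g = \bigcap_{k < K} \dom f_k$ shows that $\rint(\dom g) = \bigcap_{k < K} \rint(\dom f_k)$, which contains the common point $\bar z$ that also lies in $\rint(\dom f_K)$ by hypothesis. Hence the Fenchel--Rockafellar hypothesis is satisfied at each stage of the induction, which completes the argument.
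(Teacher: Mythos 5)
Your proof is correct but takes a genuinely different route from the paper's. For the weak inequality the paper introduces auxiliary copies $z_1,\ldots,z_K$ of the variable, Lagrangian-relaxes the coupling constraints $z_k=z$, and invokes the max-min inequality, whereas you simply bound the supremum of a sum by the sum of suprema for a fixed admissible decomposition of $y$ and then take the infimum; your version is shorter and more transparent. For the equality part the paper cites \citep[Theorem~16.4]{rockafellar1970convex}, which treats the multi-function case in one stroke: if $\cap_k \rint(\dom f_k)\neq\emptyset$ then $(f_1+\cdots+f_K)^*$ equals the infimal convolution of the $f_k^*$ with attainment. You instead reduce to the binary case $K=2$ via Fenchel--Rockafellar duality (\citep[Theorem~31.1]{rockafellar1970convex}, applied after absorbing the linear term $y^\top z$ into $f_1$) and run an induction, propagating the constraint qualification with the identity $\rint\bigl(\bigcap_k \dom f_k\bigr)=\bigcap_k\rint(\dom f_k)$ from \citep[Theorem~6.5]{rockafellar1970convex}. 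This is valid and reduces everything to a single foundational duality theorem; one step that is implicit in your write-up but worth recording is that the partial sum $g=f_1+\cdots+f_{K-1}$ remains proper (because $\bar z$ lies in all the domains) and closed (a sum of lower semicontinuous functions is lower semicontinuous), so the $K=2$ case is indeed applicable to the pair $(g,f_K)$. The paper's citation is more economical; your inductive argument is slightly longer but makes the mechanism behind the constraint qualification explicit.
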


The infimum on the right hand side of~\eqref{eq:infimal-convolution} defines a function of~$y$. This function is called the {\em infimal convolution} of the functions $f_k^*$, $k\in[K]$. Thus, Lemma~\ref{lem:conjugate-sums} asserts that, under a mild Slater-type condition, the conjugate of a sum of functions coincides with the infimal convolution of the conjugates of these functions.

\begin{proof}[Proof of Lemma~\ref{lem:conjugate-sums}]
By using a standard variable splitting trick and the max-min inequality, one can show that the conjugate of~$f$ admits the following upper bound.
\begin{align*}
	f^*(y) 
	& = \sup_{z,z_1,\ldots, z_K\in\R^d} \bigg\{ y^\top z- \sum_{k\in[K]} f(z_k)\;:\; z_k =z ~~ \forall k \in [K] \bigg\} \\
	&= \sup_{z,z_1,\ldots, z_K\in\R^d} \;  \inf_{y_1, \ldots,y_K \in \R^d} y^\top z-\sum_{k \in[K]} f(z_k) - y_k^\top (z - z_k) \\
	&\leq \inf_{y_1, \ldots,y_K \in \R^d} \; \sup_{z,z_1,\ldots, z_K\in\R^d} y^\top z- \sum_{k \in[K]} f(z_k) - y_k^\top (z - z_k) \\
	&= \inf_{y_1, \ldots,y_K \in \R^d} \; \sup_{z\in\R^d} \bigg\{ y^\top z- \sum_{k\in[K]} y_k^\top z \bigg\} + \sum_{k\in[K]} \sup_{z_k\in\R^d} \big\{y_k^\top z_k - f(z_k) \big\}
\end{align*}
The supremum over~$z$ in the resulting expression evaluates to~$0$ if $\sum_{k\in[K]}y_k=y$ and to $\infty$ otherwise. In addition, the supremum over~$z_k$ evaluates to $f_k^*(y_k)$ for every $k\in[K]$. Substituting these analytical formulas into the last expression yields
\[
f^*(y)\leq \inf_{y_1,\ldots,y_K\in\R^d}\bigg\{ \sum_{k\in[K]} f^*(y_k) : \sum_{k\in[K]} y_k=y \bigg\}.
\]
If $\cap_{k\in[K]}\rint(\dom(f_k))$ is non-empty, then the above inequality becomes an equality, and the infimum is attained thanks to \citep[Theorem~16.4]{rockafellar1970convex}. 
\end{proof}

Consider now a classical optimization problem
\begin{align}
\label{eq:primal:convex} \tag{P}
\inf_{z \in \R^d} \big\{f(z): ~ g_k(z) \leq 0 ~~ \forall k \in [K] \big\}
\end{align}
with objective function~$f:\R^d\to\overline\R$ and constraint functions~$g_k:\R^d\to\overline\R$, $k \in [K]$. Below we will show that the problem dual to~\eqref{eq:primal:convex} is given by
\begin{align}
\label{eq:dual:convex} \tag{D}
\sup_{\substack{ \alpha_1,\ldots,\alpha_K \in \R_+ \\ \beta_0,\ldots,\beta_K \in \R^d}} \left\{ -f^*(\beta_0) - \sum_{k=1}^K (g^*_k)^\pi(\beta_k, \alpha_k) \; : \; \sum_{k =0}^K \beta_k = 0\right\}.
\end{align}

To this end, we adopt the following definition of a Slater point.

\begin{definition}[Slater Point]
\label{def:slater}
A Slater point of the set $\cZ = \{ z \in \R^d : g_k(z) \leq 0 ~ \forall k \in [K] \}$ is any vector $\bar z \in \cZ$ with $\bar z \in \rint(\dom(g_k))$ for all $k \in [K]$ and $g_k (\bar z) < 0$ for all $k \in[K]$ such that $g_k$ is nonlinear. A Slater point $\bar z$ of the set $\cZ$ is a Slater point of the minimization problem $\inf \{ f(z): z \in \cZ \}$ if $\bar z \in \rint(\dom(f))$.
\end{definition}

Slater points of maximization problems are defined in the obvious way. We simply replace the requirement $\bar z \in \rint(\dom(f))$ with $\bar z \in \rint(\dom(-f))$. Using Lemmas~\ref{lem:conjugate-perspective} and~\ref{lem:conjugate-sums}, we can now prove that~\eqref{eq:primal:convex} and~\eqref{eq:dual:convex} are indeed duals.

\begin{theorem}[Convex Duality]
\label{thm:convex:duality}
Assume that the functions~$f$ and $g_k$, $k \in [K]$, are proper, closed and convex. Then, the infimum of~\eqref{eq:primal:convex} is larger or equal to the supremum of~\eqref{eq:dual:convex}. In addition, the following strong duality relations hold.
\begin{enumerate}[label=(\roman*)]
	\item \label{lem:slater:duality} If~\eqref{eq:primal:convex} or~\eqref{eq:dual:convex} admits a Slater point, then the infimum of~\eqref{eq:primal:convex} matches the supremum of~\eqref{eq:dual:convex}, and~\eqref{eq:dual:convex} or~\eqref{eq:primal:convex} is solvable, respectively.
	\item \label{lem:compact:duality} If the feasible set of~\eqref{eq:primal:convex} or~\eqref{eq:dual:convex} is non-empty and bounded, then the infimum of~\eqref{eq:primal:convex} matches the supremum of~\eqref{eq:dual:convex}, and~\eqref{eq:primal:convex} or~\eqref{eq:dual:convex} is solvable, respectively.
\end{enumerate}
\end{theorem}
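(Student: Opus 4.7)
\bigskip
\noindent\textbf{Proof plan.} My plan is to derive the dual problem~\eqref{eq:dual:convex} from a Lagrangian relaxation of~\eqref{eq:primal:convex} in two stages, which will both establish weak duality and set up the machinery for strong duality. To begin, I introduce the Lagrangian $L(z,\alpha)=f(z)+\sum_{k=1}^K\alpha_kg_k(z)$ and write the primal as $\inf_{z\in\R^d}\sup_{\alpha\in\R^K_+}L(z,\alpha)$, since $\sup_{\alpha\geq 0}\sum_k\alpha_kg_k(z)$ equals $0$ when $z\in\cZ$ and $+\infty$ otherwise. The max-min inequality gives the Lagrangian dual bound $\sup_{\alpha\geq 0}\inf_{z}L(z,\alpha)$. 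For each fixed $\alpha\in\R^K_+$, the inner infimum equals $-h_\alpha^*(0)$, where $h_\alpha=f+\sum_k\alpha_kg_k$. By Lemma~\ref{lem:conjugate-sums}, $h_\alpha^*(0)$ is bounded above by the infimal convolution of $f^*$ and $(\alpha_kg_k)^*$ over $\sum_{k=0}^K\beta_k=0$, and by Lemma~\ref{lem:conjugate-perspective}\ref{lem:conjugate-perspective-1} we have $(\alpha_kg_k)^*(\beta_k)=(g_k^*)^\pi(\beta_k,\alpha_k)$. Chaining these two bounds yields $-h_\alpha^*(0)\geq\sup_{\beta_k:\sum\beta_k=0}-f^*(\beta_0)-\sum_k(g_k^*)^\pi(\beta_k,\alpha_k)$, and taking the supremum over $\alpha\geq 0$ proves weak duality $\text{(D)}\leq\text{(P)}$.

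For strong duality under a primal Slater point (part~\ref{lem:slater:duality}), I plan a perturbation-function argument. Let $\varphi(u)=\inf_z\{f(z):g_k(z)\leq u_k~\forall k\in[K]\}$, which is convex by Lemma~\ref{lem:param:cvx}, and note that $\varphi(0)$ is the primal optimal value. The Slater point $\bar z$ places $0$ in the relative interior of $\dom(\varphi)$, because perturbations of the nonlinear $g_k(\bar z)<0$ in both directions are still feasible and perturbations of the linear constraints can be absorbed via affine shifts of $\bar z$ within $\rint(\dom(f))\cap\bigcap_k\rint(\dom(g_k))$. Therefore $\varphi$ is continuous at $0$ relative to $\dom(\varphi)$ and coincides with $\varphi^{**}$ there by Lemma~\ref{lem:bi:coincidence}. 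A direct calculation identifies $-\varphi^*(-\alpha)$ with the Lagrangian relaxation $\inf_zL(z,\alpha)$ for $\alpha\geq 0$, so that $\varphi^{**}(0)=\sup_{\alpha\geq 0}\inf_zL(z,\alpha)$. Finally, the same Slater point lies in $\bigcap_k\rint(\dom(\alpha_kg_k))\cap\rint(\dom(f))$, which via Lemma~\ref{lem:conjugate-sums} upgrades the infimal-convolution inequality into an equality \emph{and} yields attainment in the $\beta$ variables. Combining these two equalities gives $\text{(P)}=\text{(D)}$ and attainment of (D). The converse case (dual Slater point implies strong duality and primal solvability) will be handled by an analogous perturbation argument applied to $-\text{(D)}$, using that Lemmas~\ref{lem:conjugate-perspective} and~\ref{lem:conjugate-sums} together show the bi-dual of (D) reproduces (P).

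For part~\ref{lem:compact:duality}, I plan to leverage boundedness together with lower semicontinuity. If the primal feasible set $\cZ$ is non-empty and bounded, then it is compact (since the $g_k$ are closed, hence lower semicontinuous), and Weierstrass' theorem delivers a primal minimizer; strong duality and dual solvability then follow from a classical argument that boundedness of a non-empty primal level set is equivalent, under closedness, to $\varphi$ being proper and lower semicontinuous at $0$ with a non-empty subdifferential $\partial\varphi(0)$, which is exactly the set of dual multipliers $\alpha^\star$ achieving $\sup_{\alpha\geq 0}\inf_zL(z,\alpha)=\text{(P)}$; the $\beta$-part of the dual optimizer is then recovered via Lemma~\ref{lem:conjugate-sums}. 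The symmetric statement (bounded dual feasible set implies primal solvability and strong duality) is obtained by applying the argument to the bi-dual.

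The main obstacles I anticipate are of a bookkeeping nature: tracking the distinction in Definition~\ref{def:slater} between linear and nonlinear $g_k$ (so that Lemma~\ref{lem:conjugate-sums}'s Slater hypothesis, which only requires $\rint$-intersection, is properly matched to the Slater point for the overall program), and giving precise meaning to "bounded feasible set" on the dual side, where the stated feasible region $\{\alpha\geq 0,\sum\beta_k=0\}$ is unbounded for trivial reasons and boundedness must be interpreted as boundedness of a non-empty sublevel set of the negative dual objective. Once these technicalities are resolved, all conclusions will reduce to applications of Lemmas~\ref{lem:bi:coincidence}, \ref{lem:conjugate-perspective} and~\ref{lem:conjugate-sums}.
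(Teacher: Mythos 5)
Your weak-duality argument (Lagrangian relaxation, followed by Lemma~\ref{lem:conjugate-sums} and Lemma~\ref{lem:conjugate-perspective}\,\ref{lem:conjugate-perspective-1}) matches the paper's proof exactly. Note that the paper itself \emph{does not} prove parts~\ref{lem:slater:duality} and~\ref{lem:compact:duality}; it defers both to \citep[Theorem~2]{zhen2023unification}, so there is no in-paper argument to compare against. I therefore evaluate your proposed proofs of those parts on their own merits.

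The argument for part~\ref{lem:compact:duality} contains a genuine error. You claim that boundedness of a non-empty primal feasible set is ``equivalent, under closedness, to $\varphi$ being proper and lower semicontinuous at~$0$ with a non-empty subdifferential $\partial\varphi(0)$,'' and you infer both strong duality and \emph{dual} solvability from this. The equivalence is false: a bounded feasible set does not imply $\partial\varphi(0)\neq\emptyset$. Consider $\inf\{x: x^2\leq 0\}$ with $f(x)=x$ and $g(x)=x^2$, both proper, closed and convex; the feasible set is the singleton $\{0\}$, yet $\varphi(u)=-\sqrt{u}$ for $u\geq 0$, which is lower semicontinuous at~$0$ but has $\partial\varphi(0)=\emptyset$, and the dual~\eqref{eq:dual:convex} is not attained (its supremum is approached only as $\alpha_1\to\infty$). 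So the conclusion of dual solvability you draw is wrong; the theorem itself is careful to claim only ``\eqref{eq:primal:convex} \emph{or}~\eqref{eq:dual:convex} is solvable, \emph{respectively},'' i.e., primal boundedness only yields primal solvability. The correct classical result you should invoke gives lower semicontinuity of~$\varphi$ at~$0$ (via the recession cone argument: the recession cone of $\{z: g_k(z)\leq u_k\}$ does not depend on~$u$, so boundedness at $u=0$ yields uniform boundedness near~$0$, and then a sequential compactness argument gives $\liminf_{u\to 0}\varphi(u)\geq\varphi(0)$), hence absence of a duality gap between~\eqref{eq:primal:convex} and the \emph{Lagrangian} dual, and Weierstrass gives primal attainment --- but nothing about $\partial\varphi(0)$. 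A further point you should address even after this repair: absence of a Lagrangian duality gap does not immediately give $\inf\text{\eqref{eq:primal:convex}}=\sup\text{\eqref{eq:dual:convex}}$, since passing from the Lagrangian dual value to~\eqref{eq:dual:convex} requires the interior-intersection hypothesis of Lemma~\ref{lem:conjugate-sums}, which the compactness assumption alone does not supply. For part~\ref{lem:slater:duality} the perturbation-function route you sketch is standard and viable, and you correctly note that the Slater point is the ingredient that makes Lemma~\ref{lem:conjugate-sums} hold with equality and attainment, closing the gap between the Lagrangian dual and~\eqref{eq:dual:convex}.
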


\begin{proof}
The max-min inequality readily implies that the infimum of~\eqref{eq:primal:convex} is bounded below by the optimal value of its Lagrangian dual, that is, we have
\begin{align*}
	\inf\text{\eqref{eq:primal:convex}} & = \inf_{z \in \R^d}\sup_{\alpha \in \R_+^K}  ~ f(z) + \sum_{k \in [K]} \alpha_k g_k(z) \\
	& \geq \sup_{\alpha \in \R_+^K} \inf_{z \in \R^d} ~ f(z) + \sum_{k \in [K]} \alpha_k g_k(z)\\
	& = \sup_{\alpha \in \R_+^K} - \sup_{z\in\R^d} 0^\top z - f(z)- \sum_{k\in[K]} \alpha_k g_k(z) \\
	& = \sup_{\alpha \in \R_+^K} - \bigg( f+ \sum_{k\in[K]} \alpha_k g_k \bigg)^*(0).
\end{align*}
The resulting lower bound involves the conjugate of a sum of several functions. By Lemma~\ref{lem:conjugate-sums}, the conjugate of this sum is bounded below by the infimal convolution of the conjugates of all functions in the sum. Consequently, we obtain
\begin{align}
	\label{eq:lb-convex-dual}
	\inf\text{\eqref{eq:primal:convex}} \geq \sup_{\substack{ \alpha_1,\ldots,\alpha_K \in \R_+ \\ \beta_0,\ldots,\beta_K \in \R^d}} \left\{ - f^*(\beta_0) - \sum_{k=1}^K (\alpha_k g_k)^*(\beta_k) \;:\; \sum_{k =0}^K \beta_k = 0 \right\}.
\end{align}
By Lemma~\ref{lem:conjugate-perspective}\,\ref{lem:conjugate-perspective-1}, we further have $(\alpha_k g_k)^*(\beta_k)= (g^*_k)^\pi(\beta_k, \alpha_k)$ for all $\beta_k\in\R^d$ and $\alpha_k\in\R_+$. Thus, the lower bound in~\eqref{eq:lb-convex-dual} matches the supremum of~\eqref{eq:dual:convex}. This proves weak duality. For a proof of strong duality and solvability under the conditions~(i) and~(ii), we refer to \citep[Theorem~2]{zhen2023unification}.
\end{proof}

Armed with Theorem~\ref{thm:convex:duality}, we can now show that the semi-infinite constraints appearing in the dual worst-case expectation problems derived in Section~\ref{sec:duality-wc-expectation} can systematically be reformulated in terms of finitely many convex constraints.

\begin{proposition}[Semi-Infinite Constraints I]
\label{prop:dual-reformulation}
Assume that the functions $f:\R^d \to \overline \R$ and $g_k:\R^d \to \overline \R$, $k \in [K]$, are proper, closed and convex, and that there is $\bar z \in \R^d$ with $\bar z \in \rint(\dom(g_k))$, $k \in [K]$, $\bar z \in \rint(\dom(f))$ and $g_k (\bar z) < 0$ for all $k \in [K]$ such that $g_k$ is nonlinear. Then the semi-infinite constraint
\begin{align*}
	f(z) \geq 0 \quad \forall z \in \R^d:~ g_k(z) \leq 0 ~~ \forall k \in [K]
\end{align*}
holds if and only if there exist $\alpha_1,\ldots,\alpha_K \in \R_+$ and $\beta_0, \ldots, \beta_K \in \R^d$ with
\begin{align*}
	f^*(\beta_0) + \sum_{k=1}^K (g^*_k)^\pi(\beta_k, \alpha_k) \leq 0 \quad\text{and} \quad \sum_{k=0}^K \beta_k = 0 .
\end{align*}
\end{proposition}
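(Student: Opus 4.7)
The plan is to deduce Proposition~\ref{prop:dual-reformulation} as a direct corollary of the convex duality result in Theorem~\ref{thm:convex:duality}. The key observation is that the semi-infinite constraint asserts exactly that the infimum of problem~\eqref{eq:primal:convex} is nonnegative, so the task reduces to translating this statement through strong duality.

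First, I would note the equivalence
\[
    f(z) \geq 0 \quad \forall z \in \R^d \text{ with } g_k(z) \leq 0 ~ \forall k \in [K] \quad \iff \quad \inf\text{\eqref{eq:primal:convex}} \geq 0.
\]
For the \emph{if} direction of the proposition, suppose $(\alpha, \beta)$ satisfy the stated conditions. Then $(\alpha, \beta)$ is feasible in the dual problem~\eqref{eq:dual:convex} with dual objective value $-f^*(\beta_0) - \sum_{k=1}^K (g_k^*)^\pi(\beta_k, \alpha_k) \geq 0$. By weak duality (the first assertion of Theorem~\ref{thm:convex:duality}), $\inf\text{\eqref{eq:primal:convex}} \geq \sup\text{\eqref{eq:dual:convex}} \geq 0$, and the semi-infinite constraint follows.

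For the \emph{only if} direction, assume the semi-infinite constraint holds, so $\inf\text{\eqref{eq:primal:convex}} \geq 0$. Here I would invoke the hypothesis on $\bar z$, which by Definition~\ref{def:slater} makes $\bar z$ a Slater point of problem~\eqref{eq:primal:convex}. Theorem~\ref{thm:convex:duality}\ref{lem:slater:duality} then delivers strong duality ($\inf\text{\eqref{eq:primal:convex}} = \sup\text{\eqref{eq:dual:convex}}$) together with solvability of the dual. Let $(\alpha^\star, \beta^\star)$ be a dual optimizer; then $\sum_{k=0}^K \beta_k^\star = 0$, $\alpha_k^\star \in \R_+$, and
\[
    -f^*(\beta_0^\star) - \sum_{k=1}^K (g_k^*)^\pi(\beta_k^\star, \alpha_k^\star) = \sup\text{\eqref{eq:dual:convex}} = \inf\text{\eqref{eq:primal:convex}} \geq 0,
\]
which is precisely the stated certificate.

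There is no real obstacle here since all the heavy lifting, namely the infimal convolution identity of Lemma~\ref{lem:conjugate-sums}, the duality of scaling and perspectivication of Lemma~\ref{lem:conjugate-perspective}, and the strong duality plus dual solvability under a Slater point, have already been established in Theorem~\ref{thm:convex:duality}. The only point that needs a brief mention is verifying that the hypothesis on $\bar z$ matches Definition~\ref{def:slater}: the conditions $\bar z \in \rint(\dom(g_k))$ and $g_k(\bar z) < 0$ for nonlinear $g_k$ make $\bar z$ a Slater point of the feasible set, and $\bar z \in \rint(\dom(f))$ upgrades it to a Slater point of problem~\eqref{eq:primal:convex}, which is exactly what Theorem~\ref{thm:convex:duality}\ref{lem:slater:duality} requires.
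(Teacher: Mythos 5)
Your proposal is correct and follows the same route as the paper: reduce the semi-infinite constraint to the statement that the infimum of~\eqref{eq:primal:convex} is non-negative, then invoke Theorem~\ref{thm:convex:duality} (weak duality for one direction; strong duality and dual solvability under the Slater condition for the other). The only stylistic difference is that you split the argument explicitly into the two directions, whereas the paper states the chain of equivalences more compactly.
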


\begin{proof}
The semi-infinite constraint in the statement of the proposition is satisfied if and only if the infimum of~\eqref{eq:primal:convex} is non-negative. Under the stated assumptions, Theorem~\ref{thm:convex:duality} implies that this is the case precisely when the supremum of~\eqref{eq:dual:convex} is non-negative. Since~\eqref{eq:primal:convex} admits a Slater point, the supremum of~\eqref{eq:dual:convex} is attained. Thus, the supremum of~\eqref{eq:dual:convex} is non-negative if and only if there are $\alpha_1,\ldots,\alpha_K \in \R_+$ and $\beta_0, \ldots, \beta_K \in \R^d$ satisfying the constraints in the statement of the proposition.
\end{proof}

Proposition~\ref{prop:dual-reformulation} enables us to derive finite convex reformulations of the semi-infinite constraints that appear in the dual of the worst-case expectation problem~\eqref{eq:worst-case:expectation} whenever the relevant objective and constraint functions are convex in~$z$.

Another similar reformulation technique relies on the $\cS$-Lemma (see, \emph{e.g.}, \citealt{polik2007survey}), which we present without a proof.

\begin{lemma}[{$\cS$-Lemma~\citep{yakubovich1971s}}]
\label{lem:S-lemma}
Assume that $f: \R^d \to \R$ and $g: \R^d \to \R$ are quadratic functions. If there exists a Slater point $\bar z \in \R^d$ such that $g(\bar z) < 0$, then the following two statements are equivalent.
\begin{enumerate}[label=(\roman*)]
	\item There is no $z \in \R^d$ such that $f(z) < 0$ and $g(z) \leq 0$.
	\item There exists $\alpha \in\R_+$ such that $f (z) + \alpha g(z) \geq 0$ for all $z \in \R^d$.
\end{enumerate}
\end{lemma}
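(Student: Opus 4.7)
The plan is to dispatch the easy direction first and then devote the bulk of the argument to the reverse direction via a hidden-convexity plus separation argument.

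The implication (ii) $\Rightarrow$ (i) is immediate: if $\alpha \in \R_+$ satisfies $f(z) + \alpha g(z) \geq 0$ for all $z \in \R^d$, then any $z$ with $g(z) \leq 0$ obeys $f(z) \geq -\alpha g(z) \geq 0$, ruling out the existence of $z$ as in~(i). So everything hinges on showing (i) $\Rightarrow$ (ii).

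For (i) $\Rightarrow$ (ii), I would work in the image plane $\R^2$. Introduce the set
\begin{equation*}
    \cK = \bigl\{ (u, v) \in \R^2 : \exists\, z \in \R^d \text{ with } f(z) \leq u, \; g(z) \leq v \bigr\} = \bigl\{(f(z), g(z)) : z \in \R^d\bigr\} + \R_+^2.
\end{equation*}
The crux of the proof is the claim that $\cK$ is convex. This is a version of Dines' theorem on the joint range of two quadratic forms. My plan is to homogenize the problem by writing $f(z) = (z^\top, 1) F (z^\top, 1)^\top$ and $g(z) = (z^\top, 1) G (z^\top, 1)^\top$ for symmetric matrices $F, G \in \S^{d+1}$, observe that the homogeneous joint range $\{(y^\top F y, y^\top G y) : y \in \R^{d+1}\}$ is a convex cone in $\R^2$ by Dines' theorem, and then descend to the affine slice $y_{d+1} = 1$ after adding the $\R_+^2$-slack. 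Executing this step rigorously, by exhibiting for any two points of $\cK$ and any convex weight $\theta \in [0,1]$ an explicit $z \in \R^d$ whose image lies below the claimed convex combination, is the main technical obstacle.

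Granted convexity of $\cK$, the hypothesis (i) rewrites as $\cK \cap \bigl((-\infty, 0) \times (-\infty, 0]\bigr) = \emptyset$. A hyperplane separation theorem then produces $(\lambda, \alpha) \in \R^2 \setminus \{(0,0)\}$ such that $\lambda u + \alpha v \geq 0$ on $\cK$ while $\lambda u + \alpha v \leq 0$ on $(-\infty,0) \times (-\infty, 0]$. Since $\cK + \R_+^2 \subseteq \cK$, the separating normal must satisfy $\lambda \geq 0$ and $\alpha \geq 0$. Taking $(u,v) = (f(z), g(z)) \in \cK$ yields $\lambda f(z) + \alpha g(z) \geq 0$ for every $z \in \R^d$.

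The final step is to upgrade $\lambda$ from non-negative to strictly positive using the Slater point. If $\lambda = 0$, then $\alpha g(z) \geq 0$ for all $z$; evaluating at the Slater point $\bar z$ with $g(\bar z) < 0$ forces $\alpha = 0$, contradicting $(\lambda, \alpha) \neq 0$. Hence $\lambda > 0$, and dividing through delivers the desired scalar $\alpha / \lambda \in \R_+$ with $f(z) + (\alpha/\lambda) g(z) \geq 0$ for every $z \in \R^d$, which is (ii). The hard part is really only the hidden-convexity claim for $\cK$; once that is in hand, the separation argument and the use of Slater proceed in a standard way.
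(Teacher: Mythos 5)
The paper states this lemma without proof (it cites \citet{yakubovich1971s} and the survey by \citet{polik2007survey}), so there is no in-paper argument to compare against; your proposal must stand on its own. Its skeleton is the standard image-space proof, and the peripheral steps are fine: the direction (ii)~$\Rightarrow$~(i), the separation of two disjoint convex sets in $\R^2$ (always possible in finite dimensions), the deduction $\lambda,\alpha\geq 0$ from the recession cone $\R_+^2$ of $\cK$, and the use of the Slater point to rule out $\lambda=0$.

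The genuine gap is the convexity of $\cK$, which you correctly identify as the crux but do not establish, and the route you sketch for it cannot be made to work. Dines' theorem gives convexity of the homogeneous joint range $C=\{(y^\top F y,\,y^\top G y):y\in\R^{d+1}\}$, but convexity of $C$ does not ``descend'' to the slice $y_{d+1}=1$: a point of $C$ realizing the convex combination $\theta(f(z_1),g(z_1))+(1-\theta)(f(z_2),g(z_2))$ is attained at some $y$ whose last coordinate is $\mu$; if $\mu\neq 0$, this only certifies that $\mu^2\,(f(z),g(z))$ equals the target for $z=(y_1,\dots,y_d)/\mu$, a positive rescaling that cannot be converted into the componentwise bounds $f(z)\leq u$ and $g(z)\leq v$ without sign information on $u$ and $v$; if $\mu=0$, the witness is a direction at infinity and produces no $z$ at all. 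The convexity of $\cK$ is nevertheless true, but it needs a different argument. The standard elementary one restricts $f$ and $g$ to the line through $z_1$ and $z_2$, reducing the claim to a statement about a pair of univariate quadratics $p,q$, and then argues by cases on the signs of their leading coefficients: Jensen's inequality handles the case where both restrictions are convex, letting $t\to\pm\infty$ handles the case where both are concave, and in the mixed case one derives a contradiction from the interlacing of the roots together with the proportionality $p(1)/p(0)=q(1)/q(0)=-\theta/(1-\theta)$ that infeasibility of both endpoints forces. Alternatively, the classical proof avoids the convexity of $\cK$ altogether: it proves the homogeneous S-lemma from Dines plus separation, and then reduces the inhomogeneous case to it by showing---via a limiting argument along rays $\bar z+s z_0$ emanating from the Slater point---that infeasibility of~(i) implies infeasibility of the homogenized system, including at the critical directions with $y_{d+1}=0$. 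Either way, the step you defer is not a routine technicality but the entire content of the lemma, and as proposed it would fail.
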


The $\cS$-Lemma allows us to derive a finite convex reformulations of semi-infinite constraints that require a (possibly indefinite) quadratic function to be non-negative over the feasible set of a single quadratic constraint. Note in particular that the involved functions~$f$ and~$g$ are \emph{not} required to be convex in~$z$.

\begin{proposition}[Semi-Infinite Constraints II]
\label{prop:SDP:reformulation}
Assume that $Q_0, Q_1 \in \S^d$, $q_0, q_1 \in \R^d$, and $r_0, r_1 \in \R$. In addition, assume that there exists a Slater point $\bar z \in \R^d$ such that $\bar z^\top Q_0 \bar z + 2 q_0^\top \bar z + r_0 < 0$. Then, the semi-infinite constraint
\begin{align*}
	z^\top Q_1 z + 2 q_1^\top z + r_1 \geq 0 \quad \forall z \in\R^d: ~z^\top Q_0 z + 2 q_0^\top z + r_0 \leq 0
\end{align*}
holds if and only if there exists $\alpha \in \R_+$ with
\begin{align*}
	\begin{bmatrix} Q_1 + \alpha Q_0 & q_1 + \alpha q_0 \\ q_1^\top + \alpha q_0^\top & r_1 + \alpha r_0  \end{bmatrix} \succeq 0.
\end{align*}
\end{proposition}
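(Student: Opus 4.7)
The plan is to identify the semi-infinite constraint with the hypothesis of the $\cS$-Lemma (Lemma~\ref{lem:S-lemma}) and then convert the resulting unconstrained quadratic non-negativity into a matrix linear matrix inequality via homogenization. I would begin by naming the two quadratic functions $f(z) = z^\top Q_1 z + 2 q_1^\top z + r_1$ and $g(z) = z^\top Q_0 z + 2 q_0^\top z + r_0$ and observing that the semi-infinite constraint in the proposition is the statement ``there is no $z \in \R^d$ with $f(z) < 0$ and $g(z) \le 0$.'' Indeed, $f(z) \ge 0$ for all $z$ satisfying $g(z) \le 0$ is just the contrapositive.

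Next, I would invoke Lemma~\ref{lem:S-lemma}. The Slater-type hypothesis $\bar z^\top Q_0 \bar z + 2 q_0^\top \bar z + r_0 < 0$ is exactly the assumption $g(\bar z) < 0$ needed in that lemma. Consequently, the semi-infinite constraint is equivalent to the existence of $\alpha \in \R_+$ such that
\begin{align*}
    f(z) + \alpha g(z) = z^\top (Q_1 + \alpha Q_0) z + 2 (q_1 + \alpha q_0)^\top z + (r_1 + \alpha r_0) \geq 0 \quad \forall z \in \R^d.
\end{align*}

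The remaining step is to rewrite this unconstrained quadratic non-negativity as the matrix inequality stated in the proposition. Setting
\begin{align*}
    M(\alpha) = \begin{bmatrix} Q_1 + \alpha Q_0 & q_1 + \alpha q_0 \\ q_1^\top + \alpha q_0^\top & r_1 + \alpha r_0 \end{bmatrix},
\end{align*}
the identity $[z^\top,\, t]\, M(\alpha)\, [z^\top,\, t]^\top = t^2 [f(z/t) + \alpha g(z/t)]$ for $t \neq 0$, together with the observation that $z^\top(Q_1+\alpha Q_0)z = \lim_{s\to\infty} s^{-2}[f(sz)+\alpha g(sz)]$, shows by standard homogenization that $f + \alpha g \ge 0$ on $\R^d$ if and only if $M(\alpha) \succeq 0$. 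Concretely, sufficiency follows by restricting the PSD quadratic form in $(z,t)$ to $t=1$; necessity follows because any $(z,t)$ with $t\neq 0$ is handled by the identity above, and the case $t=0$ is handled by a limiting argument along rays $sz$ as $s\to\infty$, which forces $Q_1+\alpha Q_0 \succeq 0$.

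The main obstacle, if any, is the $\cS$-Lemma itself, which is non-trivial but is assumed as a given in this presentation. With that in hand, the remaining bookkeeping — identifying the quadratic data, checking the Slater hypothesis, and translating quadratic non-negativity into the displayed $2\times 2$-block LMI — is mechanical and produces both directions of the claimed equivalence simultaneously.
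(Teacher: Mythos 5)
Your proof is correct and follows essentially the same route as the paper: invoke the $\cS$-Lemma (Lemma~\ref{lem:S-lemma}) using the Slater hypothesis, then homogenize the resulting unconstrained quadratic non-negativity into the $2\times 2$-block LMI. You spell out the homogenization equivalence in more detail (the $t\neq 0$ identity plus the $t=0$ limiting argument) than the paper, which simply cites homogeneity and continuity of quadratic forms, but the substance is identical.
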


\begin{proof}
We observe that
\begin{align*}
	& ~ z^\top Q_1 z + 2 q_1^\top z + r_1 \geq 0 \quad \forall z \in \R^d:~ z^\top Q_0 z + 2 q_0^\top z + r_0 \leq 0 \\
	\iff & ~ \exists \alpha \in \R_+ ~ \text{with} ~ z^\top (Q_1 + \alpha Q_0) \, z + 2 (q_1 + \alpha q_0)^\top z + r_1 + \alpha r_0 \geq 0 \quad \forall z \in \R^d \\
	\iff & ~ \exists \alpha \in \R_+ ~ \text{with} ~ \begin{bmatrix}
		z \\ 1
	\end{bmatrix}^\top \begin{bmatrix} Q_1 + \alpha Q_0 & q_1 + \alpha q_0 \\ q_1^\top + \alpha q_0^\top & r_1 + \alpha r_0  \end{bmatrix} \begin{bmatrix}
		z \\ 1
	\end{bmatrix}\geq 0\quad \forall z\in\R^d,
\end{align*}
where the first equivalence applies Lemma~\ref{lem:S-lemma} to $f(z) = z^\top Q_1 z + 2 q_1^\top z + r_1$ and $g(z) = z^\top Q_0 z + 2 q_0^\top z + r_0$. As quadratic forms are homogeneous of degree~$2$ as well as continuous, the last statement is equivalent to the desired positive semidefiniteness condition. This observation concludes the proof.
\end{proof}

Proposition~\ref{prop:SDP:reformulation} is particularly useful for deriving finite convex reformulations of the dual worst-case expectation problems over Chebyshev or Gelbrich ambiguity sets; see~\eqref{eq:weak-duality-chebyshev} and~\eqref{eq:weak-duality-gelbrich}. As we will see, the corresponding semi-infinite constraints fail to be convex in $z$, which implies that Proposition~\ref{prop:dual-reformulation} is not applicable.

Finite convex reformulations of the dual worst-case expectation problem~\eqref{eq:worst-case:expectation} are key to solving the DRO problem~\eqref{eq:primal:dro}. They allow us to combine the outer minimization over $x \in \mathcal{X}$ with the inner minimization over the auxiliary decision variables of the dual worst-case expectation problem to obtain a finite convex reformulation of~\eqref{eq:primal:dro}. However, the finite dual reformulations of~\eqref{eq:worst-case:expectation} do not allow us to readily identify worst-case distributions that (asymptotically) attain the supremum of~\eqref{eq:worst-case:expectation}. Such worst-case distributions enable decision-makers to evaluate how a given candidate decision performs under the most challenging conditions, which is the essence of stress testing and contamination experiments; see, {\em e.g.}, \citep{dupacova06contamination}. They also play a pivotal role in optimal uncertainty quantification, where they are used to determine the sharpest possible probabilistic bounds on quantities of interest, given limited information about the underlying probability distributions. We direct the readers to \citep{owhadi2013optimal,ghanem2017handbook} for more details.

To identify a worst-case distribution that attains the supremum of~\eqref{eq:worst-case:expectation}, or to identify a sequence of distributions that attain this supremum asymptotically, we consider the \emph{bi-dual} reformulation of the worst-case expectation problem~\eqref{eq:worst-case:expectation} that results from dualizing the finite convex dual of~\eqref{eq:worst-case:expectation}. The bi-dual can often be interpreted as a restriction of the worst-case expectation problem~\eqref{eq:worst-case:expectation} to a subset of distributions $\P \in \cP$ that are parametrized by finitely many decision variables. Strong duality between problem~\eqref{eq:worst-case:expectation}, its dual and its bi-dual then allows us to conclude that any optimal solution to this bi-dual problem represents a (sequence of) distribution(s) that attains the supremum of~\eqref{eq:worst-case:expectation} (asymptotically).

The idea of extracting worst-case distributions from the finite bi-dual of problem~\eqref{eq:worst-case:expectation} was formalized by~\citet[\ts~4.2]{delage2010distributionally} for Chebyshev ambiguity sets and later extended to optimal transport ambiguity sets by \citet{mohajerin2018data}. In Section~\ref{sec:wc-distribution:chebyshev} we will see that, for the Chebyshev ambiguity set~\eqref{eq:chebyshev-with-moments-in-F} with uncertain moments, the worst-case distributions constitute mixtures of distributions with first and second moments that are determined by the optimal solution of the finite bi-dual problem. For $\phi$-divergence ambiguity sets centered at a discrete distribution~$\hat \P$, Section~\ref{sec:wc-distribution:phi} will show that the worst-case distributions are supported on the atoms of~$\hat \P$ and (if $\phi$ grows at most linearly) on $\argmax_{z\in\cZ}\ell(z)$ with probability weights determined by the optimal solution to the finite bi-dual problem. Similarly, for the optimal transport ambiguity set~\eqref{eq:OT-ambiguity-set} centered at a discrete distribution~$\hat \P$, Section~\ref{sec:wc-distribution:transport} will show that the worst-case distributions constitute mixtures of discrete distributions, with the locations and probability weights of their atoms determined by the optimal solution to the finite bi-dual problem.

\subsection{Chebyshev Ambiguity Sets with Uncertain Moments}
\label{sec:wc-distribution:chebyshev}

Recall that the Chebyshev ambiguity set~\eqref{eq:chebyshev-with-moments-in-F} with uncertain moments is defined~as
\begin{equation*}
\cP = \left\{ \P \in \cP_2(\cZ): \E_\P[Z] = \mu, ~~\E_\P[Z Z^\top] = M ~~ \forall (\mu, M) \in \cF \right\},
\end{equation*}
where $\cF \subseteq \R^d \times \S_+^d$ represents a closed moment uncertainty set and $\cP_2(\cZ)$ stands for the family of all probability distributions on~$\cZ$ with finite second moments. This section combines the duality result for Chebyshev ambiguity sets (\emph{cf.}~Theorem~\ref{thm:duality:Chebyshev}) with the finite dual reformulation of the ensuing semi-infinite program (\emph{cf.}~Proposition~\ref{prop:SDP:reformulation}) to derive an equivalent reformulation of nature's subproblem~\eqref{eq:worst-case:expectation} as a finite-dimensional minimization problem. We also show how the corresponding bi-dual allows us to extract worst-case distributions $\mathbb{P}^\star \in \cP$ that attain the optimal value of~\eqref{eq:worst-case:expectation}. Since the support-only ambiguity sets (\emph{cf.}~Section~\ref{sec:support}), the Markov ambiguity sets (\emph{cf.}~Section~\ref{sec:Markov}), the Chebychev ambiguity sets with known moments (\emph{cf.}~Section~\ref{sec:Chebyshev}) and the mean-dispersion ambiguity sets (\emph{cf.}~Section~\ref{sec:mean-dispersion}) can all be viewed as special instances of the Chebyshev ambiguity set with uncertain moments, our results immediately extend to those ambiguity sets as well, and we do not re-derive the corresponding statements for the sake of brevity. Due to its recent applications in statistics \citep{nguyen2020distributionally}, signal processing \citep{nguyen2019bridging} and control \citep{taskesen2024distributionally}, however, we report the finite dual and bi-dual reformulations of the Gelbrich ambiguity set with moment uncertainty set~\eqref{eq:Gelbrich:F}. All reformulations derived in this section leverage Lemma~\ref{lem:S-lemma}. Thus, they require quadratic representations of the loss function~$\ell$ and the support set $\mathcal{Z}$ as detailed in the following assumption.

\begin{assumption}[Regularity Conditions for Chebyshev Ambiguity Sets]~
\label{assmp:quadratic}
\begin{enumerate}[label=(\roman*)]
	\item \label{assmp:quadratic:loss} The loss function $\ell$ is a point-wise maximum of quadratic functions,
	\begin{align}
		\label{eq:quadratic:loss}
		\ell(z) = \max_{j \in [J]} \ell_j(z) \quad \text{with} \quad \ell_j(z) = z^\top Q_j z + 2 q_j^\top z + q_j^0,
	\end{align}
	where $J \in \N$, $Q_j \in \S^d, q_j \in \R^d$, and $q_j^0 \in \R$ for all $j\in[J]$.
	\item \label{assmp:quadratic:support} The support set $\cZ$ is an ellipsoid of the form
	\begin{align}
		\label{eq:ellipsoid}
		\cZ = \{ z \in \R^d : (z - z_0)^\top Q_0 (z - z_0) \leq 1 \},
	\end{align}
	where $Q_0 \in \S_+^d$ and $z_0 \in \R^d$.
\end{enumerate}
\end{assumption}

Note that Assumption~\ref{assmp:quadratic} does {\em not} impose any convexity conditions on the quadratic component functions $z^\top Q_j z + 2 q_j^\top z + q_j^0$ that make up the loss function~$\ell$.

\begin{theorem}[Finite Dual Reformulation for Chebyshev Ambiguity Sets]
\label{thm:finite:convex:Chebyshev:I}
If~$\cP$ is the Chebyshev ambiguity set~\eqref{eq:chebyshev-with-moments-in-F} with any $\cF \subseteq \R^d \times \S_+^d$ and Assumption~\ref{assmp:quadratic} holds, then the worst-case expectation problem~\eqref{eq:worst-case:expectation} satisfies the weak duality relation
\begin{align*}
	& \sup_{\P \in \cP}~ \E_\P \left[ \ell(Z) \right] \\
	&\leq \left\{
	\begin{array}{cll}
		\inf & \lambda_0 + \delta_\cF^*(\lambda, \Lambda) \\
		\st & \lambda_0 \in \R, \, \lambda \in \R^d, \, \Lambda \in \S^d, \, \alpha \in \R^J_+ \\
		& \begin{bmatrix} \Lambda - Q_j + \alpha_j Q_0 & \frac{1}{2} \lambda - q_j - \alpha_j Q_0 z_0 \\ \left( \frac{1}{2} \lambda - q_j - \alpha_j Q_0 z_0 \right)^\top & \lambda_0 - q_j^0 + \alpha_j (z_0^\top Q_0 z_0 -1) \end{bmatrix} \succeq 0 & \forall j \in [J].
	\end{array}
	\right.
\end{align*}
If $\cF$ is a convex and compact set with $M\succ\mu\mu^\top$ for all~$(\mu,M)\in\rint(\cF)$, then strong duality holds, that is, the above inequality becomes an equality.
\end{theorem}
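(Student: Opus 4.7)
The plan is to combine the semi-infinite duality result of Theorem~\ref{thm:duality:Chebyshev} with the $\cS$-Lemma-based reformulation of Proposition~\ref{prop:SDP:reformulation}. First I would invoke Theorem~\ref{thm:duality:Chebyshev} to bound $\sup_{\P\in\cP} \E_\P[\ell(Z)]$ from above by the semi-infinite minimization problem
\begin{align*}
    \inf_{\lambda_0,\lambda,\Lambda}\;\left\{\lambda_0+\delta_\cF^*(\lambda,\Lambda) \,:\, \lambda_0+\lambda^\top z + z^\top \Lambda z \geq \ell(z) \quad \forall z\in\cZ\right\}.
\end{align*}

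Using the representation $\ell(z)=\max_{j\in[J]}\ell_j(z)$ from Assumption~\ref{assmp:quadratic}\ref{assmp:quadratic:loss}, I would then replace the single semi-infinite constraint by the system of $J$ semi-infinite constraints
\begin{align*}
    z^\top(\Lambda-Q_j)z + 2\bigl(\tfrac{1}{2}\lambda-q_j\bigr)^\top z + (\lambda_0-q_j^0)\geq 0 \quad \forall z\in\cZ,\;\forall j\in[J],
\end{align*}
and rewrite the ellipsoidal support set from Assumption~\ref{assmp:quadratic}\ref{assmp:quadratic:support} in quadratic form $z^\top Q_0 z + 2(-Q_0 z_0)^\top z + (z_0^\top Q_0 z_0-1)\leq 0$. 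The point $z_0$ is a Slater point of this quadratic inequality because the left-hand side evaluates to $-1<0$ there. Proposition~\ref{prop:SDP:reformulation} thus applies to each of the $J$ constraints separately, yielding a multiplier $\alpha_j\in\R_+$ and the linear matrix inequality displayed in the theorem statement. Concatenating these gives the claimed finite SDP upper bound, which establishes weak duality.

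For the strong duality part, I would verify that the hypotheses of Theorem~\ref{thm:duality:Chebyshev} are all met: convex compactness of $\cF$ and $M\succ\mu\mu^\top$ on $\rint(\cF)$ are assumed, and the technical requirement $\E_\P[\ell(Z)]>-\infty$ for every $\P\in\cP_2(\cZ)$ follows from Assumption~\ref{assmp:quadratic}\ref{assmp:quadratic:loss} because $\ell$ is a finite maximum of quadratics and therefore its negative part is dominated by an affine function of the first- and second-order moments of~$Z$, both of which are finite for $\P\in\cP_2(\cZ)$. Theorem~\ref{thm:duality:Chebyshev} then turns the first inequality into an equality, and the $\cS$-Lemma reformulation in Proposition~\ref{prop:SDP:reformulation} is itself an \emph{equivalence} (not just a sufficient condition), so the passage from the semi-infinite dual to the SDP preserves the optimal value.

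The main obstacle I anticipate is purely bookkeeping: keeping track of the signs and scalings when translating the semi-infinite inequality $\lambda_0+\lambda^\top z + z^\top \Lambda z\geq \ell_j(z)$ into the ``$f\geq 0$ whenever $g\leq 0$'' template of Proposition~\ref{prop:SDP:reformulation}, and verifying that the matrix coefficients $(Q_1+\alpha Q_0,\,q_1+\alpha q_0,\,r_1+\alpha r_0)$ produced by that proposition match the block entries claimed in the theorem, in particular that the off-diagonal block is $\tfrac12\lambda-q_j-\alpha_j Q_0 z_0$ rather than $\tfrac12\lambda-q_j+\alpha_j Q_0 z_0$. No deeper analytic difficulty is expected, since convexity of~$\ell_j$ or $\Lambda-Q_j$ is nowhere required: the $\cS$-Lemma handles genuinely indefinite quadratics, which is exactly why Proposition~\ref{prop:SDP:reformulation} is the right tool here rather than the convex-duality reformulation of Proposition~\ref{prop:dual-reformulation}.
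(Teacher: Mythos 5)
Your proposal is correct and follows essentially the same route as the paper's proof: Theorem~\ref{thm:duality:Chebyshev} for the semi-infinite dual, splitting the constraint over the $J$ quadratic pieces, and Proposition~\ref{prop:SDP:reformulation} (via the $\cS$-Lemma, with $z_0$ as the Slater point of the ellipsoid) for each piece, with strong duality inherited from Theorem~\ref{thm:duality:Chebyshev}. Your explicit verification that $\E_\P[\ell(Z)]>-\infty$ for all $\P\in\cP_2(\cZ)$ is a detail the paper compresses into "as the loss function is quadratic," and your sign bookkeeping for the off-diagonal block matches the stated LMI.
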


\begin{proof}
Weak duality follows from Theorem~\ref{thm:duality:Chebyshev} and from the following equivalent reformulation of the semi-infinite constraint in the dual problem~\eqref{eq:weak-duality-chebyshev}.
\begin{align*}
	& ~ \lambda_0 + \lambda^\top z + z^\top \Lambda z \geq \ell(z) \quad \forall z \in \cZ \\
	\iff & ~ \lambda_0 + \lambda^\top z + z^\top \Lambda z \geq z^\top Q_j z + 2 q_j^\top z + q_j^0 \quad \forall z \in \cZ, \forall j \in [J] \\
	\iff & ~ \exists \alpha \in \R^J_+~ \text{with} \\
	& \begin{bmatrix} \Lambda - Q_j + \alpha_j Q_0 & \frac{1}{2} \lambda - q_j - \alpha_j Q_0 z_0 \\ \left( \frac{1}{2} \lambda - q_j - \alpha_j Q_0 z_0 \right)^\top & \lambda_0 - q_j^0 + \alpha_j (z_0^\top Q_0 z_0 -1) \end{bmatrix} \succeq 0 ~~\forall j \in [J]
\end{align*}
Here, the first equivalence holds thanks to Assumption~\ref{assmp:quadratic}\,\ref{assmp:quadratic:loss}, and the second equivalence follows from Proposition~\ref{prop:SDP:reformulation}, which applies because $z_0 \in \rint(\cZ)$ constitutes a Slater point thanks to Assumption~\ref{assmp:quadratic}\,\ref{assmp:quadratic:support}. In addition, as the loss function is quadratic, strong duality follows readily from Theorem~\ref{thm:duality:Chebyshev}.
\end{proof}

Recall next that the Gelbrich ambiguity set~\eqref{eq:Gelbrich:F} is defined in as an instance of the Chebyshev ambiguity set~\eqref{eq:chebyshev-with-moments-in-F} with moment uncertainty set
\begin{align*}
\cF = \left\{(\mu, M) \in \R^d \times \S_+^d~:\, \begin{array}{l}
	\exists \Sigma\,\in\S^d_+ \text{ with } M = \Sigma + \mu \mu^\top, \\ \G \left( (\mu, \Sigma), (\hat \mu, \hat \Sigma) \right) \leq r
\end{array}
\right\}.
\end{align*}
Here, $(\hat \mu, \hat \Sigma)$ is a nominal mean-covariance pair, and~$r \geq 0$ is a size parameter. The next result follows directly from Theorems~\ref{thm:duality:Gelbrich} and~\ref{thm:finite:convex:Chebyshev:I}. We thus omit its proof.

\begin{theorem}[Finite Dual Reformulation for Gelbrich Ambiguity Sets]
\label{thm:finite:convex:Gelbrich:I}
If~$\cP$ is the Chebyshev ambiguity set~\eqref{eq:chebyshev-with-moments-in-F} with~$\cF$ given by~\eqref{eq:Gelbrich:F} and Assumption~\ref{assmp:quadratic} holds, then the worst-case expectation problem~\eqref{eq:worst-case:expectation} satisfies the weak duality relation
\begin{align*}
	&\sup_{\P \in \cP}~ \E_\P \left[ \ell(Z) \right] \\
	&\leq \left\{
	\begin{array}{cll}
		\inf & \lambda_0 + \gamma \big( r^2 - \| \hat \mu \|^2 - \Tr(\hat \Sigma) \big) + \Tr(A_0) + \alpha_0 \\
		\st & \lambda_0 \in \R, \, \alpha_0, \gamma \in \R_+, \, \alpha \in \R^J_+, \, \lambda \in \R^d, \, \Lambda \in \S^d, \, A_0 \in \S_+^d \\
		& \begin{bmatrix} \Lambda - Q_j + \alpha_j Q_0 & \frac{1}{2} \lambda - q_j - \alpha_j Q_0 z_0 \\ \left( \frac{1}{2} \lambda - q_j - \alpha_j Q_0 z_0 \right)^\top & \lambda_0 - q_j^0 + \alpha_j (z_0^\top Q_0 z_0 -1) \end{bmatrix} \succeq 0 & \forall j \in [J] \\[2.5ex]
		& \begin{bmatrix} \gamma I_d- \Lambda & \gamma \hat \Sigma^\half \\[1ex] \gamma \hat \Sigma^\half & A_0 \end{bmatrix} \succeq 0, \; \begin{bmatrix} \gamma I_d- \Lambda & \gamma \hat \mu + \frac{\lambda}{2} \\[1ex] (\gamma \hat \mu + \frac{\lambda}{2})^\top & \alpha_0 \end{bmatrix} \succeq 0.
	\end{array}
	\right.
\end{align*}
If $r>0$, then strong duality holds, that is, the above inequality becomes an equality.
\end{theorem}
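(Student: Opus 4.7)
The plan is to assemble the claim by splicing together the two results cited in the theorem statement, so no genuinely new argument is required. First I would invoke Theorem~\ref{thm:duality:Gelbrich}, which yields the weak duality inequality~\eqref{eq:weak-duality-gelbrich} under no assumptions on $\ell$ and upgrades it to equality whenever $r>0$. The dual problem in~\eqref{eq:weak-duality-gelbrich} already contains all of the scalar, vector and semidefinite variables $\lambda_0,\alpha,\gamma,\lambda,\Lambda,A$ that appear in the final statement, together with the two linear matrix inequalities coupling them to $(\hat\mu,\hat\Sigma)$. The only remaining ingredient that is not yet in a finite convex form is the semi-infinite constraint
\[
    \lambda_0 + \lambda^\top z + z^\top \Lambda z \geq \ell(z) \quad \forall z\in\cZ.
\]

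Next I would reformulate this semi-infinite constraint exactly as in the proof of Theorem~\ref{thm:finite:convex:Chebyshev:I}. Invoking Assumption~\ref{assmp:quadratic}\,\ref{assmp:quadratic:loss}, the constraint is equivalent to the $J$ separate semi-infinite constraints
\[
    \lambda_0 + \lambda^\top z + z^\top \Lambda z \geq z^\top Q_j z + 2 q_j^\top z + q_j^0 \quad \forall z\in\cZ,\; \forall j\in[J].
\]
Each of these has a quadratic left-hand side, and by Assumption~\ref{assmp:quadratic}\,\ref{assmp:quadratic:support} the set $\cZ$ is an ellipsoid, hence described by a single quadratic inequality admitting $z_0$ as a Slater point. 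Proposition~\ref{prop:SDP:reformulation} then converts each semi-infinite constraint into a single LMI parametrized by a non-negative multiplier $\alpha_j$, producing exactly the block of LMIs displayed in the theorem. Substituting these LMIs for the semi-infinite constraint in~\eqref{eq:weak-duality-gelbrich} yields the finite convex upper-bounding program claimed, and weak duality is established.

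For the strong duality claim when $r>0$, I would simply rely on the second part of Theorem~\ref{thm:duality:Gelbrich}: the inequality~\eqref{eq:weak-duality-gelbrich} becomes an equality under $r>0$, and since the reformulation of the semi-infinite constraint via the $\cS$-Lemma is an exact equivalence (not a relaxation), the semidefinite program obtained after applying Proposition~\ref{prop:SDP:reformulation} has the same optimal value as~\eqref{eq:weak-duality-gelbrich}. The main obstacle I anticipate is purely bookkeeping, namely confirming that the LMI block produced by the $\cS$-Lemma combines cleanly with the two Gelbrich LMIs without any hidden constraint qualification issue; since both sets of LMIs share only the variable $\Lambda$ and the reformulations are independent, no such issue arises, and the proof reduces to concatenating the two arguments.
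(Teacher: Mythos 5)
Your proof is correct and matches the paper's intended argument exactly: the paper omits the proof of this theorem precisely because it follows directly from Theorem~\ref{thm:duality:Gelbrich} combined with the $\cS$-Lemma reformulation (Proposition~\ref{prop:SDP:reformulation}) of the semi-infinite constraint used in Theorem~\ref{thm:finite:convex:Chebyshev:I}, which is exactly what you do. The only detail worth making explicit is that the strong-duality clause of Theorem~\ref{thm:duality:Gelbrich} also requires $\E_\P[\ell(Z)]>-\infty$ for all $\P\in\cP_2(\cZ)$; this holds automatically under Assumption~\ref{assmp:quadratic} because $\ell$ is a pointwise maximum of quadratic functions and every distribution in $\cP_2(\cZ)$ has finite second moments.
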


In order to characterize the extremal distributions that attain the supremum in the worst-case expectation problem~\eqref{eq:worst-case:expectation} over Chebyshev and Gelbrich ambiguity sets, we first derive the corresponding bi-duals of~\eqref{eq:worst-case:expectation}.

\begin{theorem}[Finite Bi-Dual Reformulation for Chebyshev Ambiguity Sets]
\label{thm:finite:convex:Chebyshev:II}
If~$\cP$ is the Chebyshev ambiguity set~\eqref{eq:chebyshev-with-moments-in-F} with $\cF \subseteq \R^d \times \S_+^d$ and Assumption~\ref{assmp:quadratic} holds, then the worst-case expectation problem~\eqref{eq:worst-case:expectation} satisfies the weak duality relation
\begin{align}
	&\sup_{\P \in \cP} ~~ \E_\P \left[ \ell(Z) \right] \notag \\
	&\hspace{-1em}\leq \left\{
	\begin{array}{c@{~}l@{~}l}
		\sup & \displaystyle \sum_{j \in [J]} \Tr(Q_j \Theta_j) + 2 q_j^\top \theta_j + q_j^0 p_j \\ [2ex]
		\st & \mu \in \R^d, \, M \in \S_+^d, \, p_j \in \R_+, \, \theta_j \in \R^d,\, \Theta_j \in \S_+^d & \forall j \in [J] \\ [1ex]
		& \begin{bmatrix} \Theta_j & ~\theta_j \\ \theta_j^\top & ~p_j \end{bmatrix} \succeq 0, \, \Tr(Q_0 \Theta_j) - 2 z_0^\top Q_0 \theta_j + z_0^\top Q_0 z_0 p_j \leq p_j & \forall j \in [J] \\[2ex]
		& \displaystyle \sum_{j \in [J]} p_{j} = 1, ~ \mu= \sum_{j \in [J]} \theta_j,~ M = \sum_{j \in [J]} \Theta_j, ~ (\mu, M) \in \cF.
	\end{array}
	\right. \hspace{-1ex}
	\label{eq:finite:chebyshev:II}
\end{align}
If $\cF$ is a convex and compact set with $M\succ\mu\mu^\top$ for all~$(\mu,M)\in\rint(\cF)$, then strong duality holds, that is, the inequality~\eqref{eq:finite:chebyshev:II} becomes an equality.
\end{theorem}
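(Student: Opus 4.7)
The plan is to establish weak duality by a direct primal construction and to obtain strong duality by applying semidefinite duality pointwise to the crisp Chebyshev ambiguity sets that compose $\cP$.

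For weak duality, I would pick any $\P \in \cP$, set $(\mu, M) = (\E_\P[Z], \E_\P[ZZ^\top])$ (which lies in $\cF$ by the definition of $\cP$), and partition $\cZ$ into Borel cells $\cZ_1, \ldots, \cZ_J$ on which $\ell(z) = \ell_j(z)$, breaking ties by lowest index. Setting $p_j = \P(Z \in \cZ_j)$, $\theta_j = \int_{\cZ_j} z \,\diff\P(z)$ and $\Theta_j = \int_{\cZ_j} z z^\top \,\diff\P(z)$ produces a feasible point of~\eqref{eq:finite:chebyshev:II}: positive semidefiniteness of the block matrix with diagonal blocks $\Theta_j$ and $p_j$ and off-diagonal block $\theta_j$ follows by integrating the rank-one PSD matrix $v_z v_z^\top$ with $v_z = (z^\top, 1)^\top$ over $\cZ_j$; the $Q_0$-inequality rewrites as $\int_{\cZ_j}(z-z_0)^\top Q_0 (z-z_0)\,\diff\P(z) \le p_j$ and follows from $\cZ_j \subseteq \cZ$ via~\eqref{eq:ellipsoid}; and $\sum_j p_j = 1$, $\sum_j \theta_j = \mu$, $\sum_j \Theta_j = M$ follow by summing over the partition. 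A direct computation shows that the induced objective value of~\eqref{eq:finite:chebyshev:II} equals $\E_\P[\ell(Z)]$, and supremizing over $\P \in \cP$ yields the stated inequality.

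For strong duality under the stated regularity on $\cF$, I would decompose $\sup_{\P \in \cP} \E_\P[\ell(Z)] = \sup_{(\mu, M) \in \cF} w(\mu, M)$, where $w(\mu, M)$ denotes the worst-case expectation over the crisp Chebyshev ambiguity set $\{\P \in \cP_2(\cZ): \E_\P[Z] = \mu,\; \E_\P[ZZ^\top] = M\}$. For each $(\mu, M) \in \rint(\cF)$, Theorem~\ref{thm:finite:convex:Chebyshev:I} applied with $\cF$ replaced by the singleton $\{(\mu, M)\}$ (so that $M \succ \mu\mu^\top$) yields strong duality between $w(\mu, M)$ and a finite SDP whose objective is $\lambda_0 + \lambda^\top \mu + \Tr(\Lambda M)$. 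This SDP admits a Slater point---take $\lambda = 0$, $\alpha_j = 1$ for all $j$, $\Lambda = t I_d$ and $\lambda_0 = t$ for sufficiently large $t > 0$ so that every SDP block is strictly positive definite---so by Theorem~\ref{thm:convex:duality}\,\ref{lem:slater:duality} its conic dual is exact and solvable. Assigning PSD multipliers decomposed into blocks $\Theta_j$, $\theta_j$, $p_j$ to the SDP constraints and collecting coefficients of $\lambda_0$, $\lambda$, $\Lambda$ and $\alpha_j$ in the Lagrangian recovers exactly the constraints of~\eqref{eq:finite:chebyshev:II} together with the linking equations $\sum_j \theta_j = \mu$ and $\sum_j \Theta_j = M$, while the dual objective becomes $\sum_j [\Tr(Q_j \Theta_j) + 2 q_j^\top \theta_j + q_j^0 p_j]$. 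Taking the supremum over $(\mu, M) \in \rint(\cF)$ absorbs these linking equations into the membership $(\mu, M) \in \cF$, reproducing~\eqref{eq:finite:chebyshev:II}.

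The main obstacle will be extending the pointwise identity on $\rint(\cF)$ to all of $\cF$: the relative boundary of $\cF$ may contain pairs $(\mu, M)$ with $M = \mu\mu^\top$, for which the crisp Chebyshev set collapses to the Dirac at $\mu$ and the SDP duality via Theorem~\ref{thm:finite:convex:Chebyshev:I} no longer applies verbatim. I would resolve this by noting that both $w(\mu, M)$ and the bi-dual optimal value in~\eqref{eq:finite:chebyshev:II} restricted to the singleton $\{(\mu, M)\}$ are concave and upper semicontinuous in $(\mu, M)$---the former as a pointwise infimum of affine functionals via its dual SDP representation, the latter via Lemma~\ref{lem:param:cvx} applied to its parametric formulation---so their suprema over the compact convex set $\cF$ coincide with their suprema over $\rint(\cF)$ and therefore agree, completing the identification.
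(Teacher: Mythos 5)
Your proof is correct, and your weak duality argument takes a genuinely different route from the paper. The paper establishes the inequality in~\eqref{eq:finite:chebyshev:II} by first decomposing $\sup_{\P\in\cP}\E_\P[\ell(Z)] = \sup_{(\mu,M)\in\cF}\sup_{\P\in\cP(\mu,M)}\E_\P[\ell(Z)]$, then bounding the inner supremum by the singleton dual SDP via Theorem~\ref{thm:finite:convex:Chebyshev:I}, and finally dualizing that SDP via a Slater argument to produce the bi-dual; the inequality in the weak-duality direction thus already invokes conic duality. Your construction is more elementary and arguably more transparent: from an arbitrary $\P\in\cP$, partitioning $\cZ$ into the regions where each $\ell_j$ is active and taking $p_j = \P(\cZ_j)$, $\theta_j = \int_{\cZ_j}z\,\diff\P$, $\Theta_j = \int_{\cZ_j}zz^\top\,\diff\P$ directly produces a feasible point of~\eqref{eq:finite:chebyshev:II} with matching objective. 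The PSD constraint follows from integrating $(z,1)(z,1)^\top$, the trace constraint rewrites as $\int_{\cZ_j}(z-z_0)^\top Q_0(z-z_0)\,\diff\P\leq p_j$, and the linking equations are immediate; this is an exact analogue of how the paper later recovers extremal distributions in Theorem~\ref{thm:finite:convex:Chebyshev:III} and fits nicely with that machinery. For strong duality, your approach and the paper's both proceed via the decomposition into crisp Chebyshev sets and pointwise SDP duality on $\rint(\cF)$; your treatment simply makes explicit the boundary argument that the paper leaves implicit (it is buried in the proof of Theorem~\ref{thm:duality:moment}). One small imprecision in your last paragraph: you do not need upper semicontinuity of $w(\mu,M)$ and should not claim it via the ``pointwise infimum of affine functionals'' description of the dual SDP, since that description only coincides with $w(\mu,M)$ where strong duality has already been established. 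Concavity of $w$ (immediate from Lemma~\ref{lem:param:cvx} applied to $h(1,\cdot)$, since $w(\mu,M)=-h(1,(\mu,M))$) is the only property needed: a concave function cannot increase as one moves from $\rint(\cF)$ to its relative boundary, so its supremum over $\cF$ equals its supremum over $\rint(\cF)$, and the same holds for the concave bi-dual value function. Replace the USC claim with this observation and the argument is airtight.
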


\begin{proof}
By decomposing the Gelbrich ambiguity set into Chebyshev ambiguity sets of the form $\cP(\mu, M) = \{\P \in \cP(\cZ) : \E_\P[Z] = \mu, \E_\P[Z Z^\top] = M \}$, we obtain
\begin{align}
	\label{eq:sup:sup}
	\sup_{\P \in \cP} \, \E_\P[\ell(Z)] = \sup_{(\mu, M) \in \cF} \, \sup_{\P \in \cP(\mu, M)} \, \E_\P[\ell(Z)].
\end{align}
The inner maximization problem on the right hand side of~\eqref{eq:sup:sup} represents a worst-case expectation problem over an instance of the ambiguity set~\eqref{eq:chebyshev-with-moments-in-F} with the moment uncertainty set being the singleton $\{ (\mu, M) \}$. The support function of this singleton is given by $\delta^*_{\{ (\mu, M) \}} (\lambda, \Lambda) = \lambda^\top \mu + \Tr(\Lambda M)$. Thus, Theorem~\ref{thm:finite:convex:Chebyshev:I} implies that the inner supremum on the right hand side of~\eqref{eq:sup:sup} is bounded above by
\begin{align*}
	\begin{array}{cll}
		\inf & \lambda_0 + \lambda^\top \mu + \Tr(\Lambda M) \\[1ex]
		\st & \lambda_0 \in \R, \, \lambda \in \R^d, \, \Lambda \in \S^d, \, \alpha \in \R^J_+ \\ [1ex]
		& \begin{bmatrix} \Lambda - Q_j + \alpha_j Q_0 & \frac{1}{2} \lambda - q_j - \alpha_j Q_0 z_0 \\ \left( \frac{1}{2} \lambda - q_j - \alpha_j Q_0 z_0 \right)^\top & \lambda_0 - q_j^0 + \alpha_j (z_0^\top Q_0 z_0 -1) \end{bmatrix} \succeq 0 & \forall j \in [J].
	\end{array}
\end{align*}
The dual of this semidefinite program can be represented as
\begin{align*}
	\begin{array}{cll}
		\sup & \displaystyle \sum_{j \in [J]} \Tr(Q_j \Theta_j) + 2 q_j^\top \theta_j + q_j^0 p_j \\ [1ex]
		\st & p_j \in \R_+, \; \theta_j \in \R^d,\; \Theta_j \in \S_+^d & \forall j \in [J] \\ [1ex]
		& \displaystyle \begin{bmatrix} \Theta_j & \theta_j \\ \theta_j^\top & p_j \end{bmatrix} \succeq 0, \; \Tr(Q_0 \Theta_j) - 2 z_0^\top Q_0 \theta_j + z_0^\top Q_0 z_0 p_j \leq p_j & \forall j \in [J] \\[2ex]
		& \displaystyle \sum_{j \in [J]} p_{j} = 1 , \, \sum_{j \in [J]} \theta_j = \mu, \sum_{j \in [J]} \Theta_j = M.
	\end{array}
\end{align*}
Strong duality holds because the primal minimization problem admits a Slater point. Indeed, by defining~$\Lambda=\lambda_0 I_d$ and setting~$\lambda_0$ to a large value, one can ensure that the linear matrix inequality in the primal problem holds strictly. Replacing the inner supremum on the right hand side of~\eqref{eq:sup:sup} with the above dual semidefinite program yields the upper bound in~\eqref{eq:finite:chebyshev:II}. If $\cF$ is convex and compact with $M\succ\mu\mu^\top$ for all~$(\mu,M)\in\rint(\cF)$, then~\eqref{eq:finite:chebyshev:II} becomes an equality thanks to Theorem~\ref{thm:finite:convex:Chebyshev:I}.
\end{proof}

Note that the bi-dual reformulation in~\eqref{eq:finite:chebyshev:II} is solvable whenever~$\cF$ is compact. Indeed, its objective function is ostensibly continuous. In addition, it is easy to verify that its feasible region is compact provided that~$\cF$ is compact.

\begin{theorem}[Finite Bi-Dual Reformulation for Gelbrich Ambiguity Sets]
\label{thm:finite:convex:Gelbrich:II}
If~$\cP$ is the Chebyshev ambiguity set~\eqref{eq:chebyshev-with-moments-in-F} with~$\cF$ given by~\eqref{eq:Gelbrich:F} and Assumption~\ref{assmp:quadratic} holds, then the worst-case expectation problem~\eqref{eq:worst-case:expectation} satisfies the weak duality relation
\begin{align}
	&\sup_{\P \in \cP} \E_\P \left[ \ell(Z) \right] \notag \\
	&\leq \left\{
	\begin{array}{cl@{~~}l}
		\max & \displaystyle \sum_{j \in [J]} \Tr(Q_j \Theta_j) + 2 q_j^\top \theta_j + q_j^0 p_j \\ [2ex]
		\st & \mu \in \R^d, ~ M, U \in \S_+^d, ~ C \in \R^{d \times d} \\
		& p_j \in \R_+, ~ \theta_j \in \R^d,~ \Theta_j \in \S_+^d & \forall j \in [J] \\[0.5ex]
		& \begin{bmatrix} M - U & C \\ C^\top & \hat \Sigma \end{bmatrix} \succeq 0, \, \begin{bmatrix} U & \mu \\ \mu^\top & 1 \end{bmatrix} \succeq 0, \, \begin{bmatrix} \Theta_j & \theta_j \\ \theta_j^\top & p_j \end{bmatrix} \succeq 0 & \forall j \in [J] \\ [2ex]
		& \Tr(Q_0 \Theta_j) - 2 z_0^\top Q_0 \theta_j + z_0^\top Q_0 z_0 p_j \leq p_j &   \forall j \in [J] \\[0.5ex]
		& \displaystyle \sum_{j \in [J]} p_{j} = 1, ~ \mu= \sum_{j \in [J]} \theta_j, ~ M = \sum_{j \in [J]} \Theta_j \\[3ex]
		& \| \hat \mu\|_2^2 - 2 \mu^\top \hat \mu + \Tr (M + \hat \Sigma - 2C) \leq r^2.
	\end{array}
	\right.
	\label{eq:finite:Gelbrich:II}
\end{align}
If $r>0$, then strong duality holds, that is, the above inequality becomes an equality.
\end{theorem}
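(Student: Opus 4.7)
My plan is to derive Theorem~\ref{thm:finite:convex:Gelbrich:II} as a direct corollary of Theorem~\ref{thm:finite:convex:Chebyshev:II}, by choosing $\cF$ in the latter to be the Gelbrich uncertainty set~\eqref{eq:Gelbrich:F}, and then substituting the semidefinite representation of~$\cF$ furnished by Proposition~\ref{prop:U:Gelbrich:SDP}. The key observation is that the Chebyshev ambiguity set~\eqref{eq:chebyshev-with-moments-in-F} with $\cF$ instantiated as the Gelbrich uncertainty set coincides with the Gelbrich ambiguity set~$\cP$ appearing in the statement of Theorem~\ref{thm:finite:convex:Gelbrich:II}. Since Assumption~\ref{assmp:quadratic} is in force, Theorem~\ref{thm:finite:convex:Chebyshev:II} immediately yields a weak-duality upper bound on $\sup_{\P\in\cP}\E_\P[\ell(Z)]$ as the optimal value of a maximization problem over the variables $\mu, M$ and $(p_j,\theta_j,\Theta_j)_{j\in[J]}$, subject to the per-$j$ LMIs and trace inequalities from~\eqref{eq:finite:chebyshev:II}, the coupling conditions $\sum_j p_j = 1$, $\mu = \sum_j \theta_j$, $M = \sum_j \Theta_j$, and the membership constraint $(\mu, M) \in \cF$.

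The next step is to replace the abstract constraint $(\mu,M)\in\cF$ by its explicit semidefinite description. By Proposition~\ref{prop:U:Gelbrich:SDP}, this membership is equivalent to the existence of auxiliary variables $U \in \S^d_+$ and $C \in \R^{d\times d}$ satisfying the two LMIs displayed in the statement of Theorem~\ref{thm:finite:convex:Gelbrich:II} together with the scalar inequality $\|\hat\mu\|_2^2 - 2\mu^\top\hat\mu + \Tr(M+\hat\Sigma - 2C) \leq r^2$. Substituting this representation into the bound derived in the previous paragraph turns it into the maximization problem on the right-hand side of~\eqref{eq:finite:Gelbrich:II}, thereby establishing weak duality.

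For the strong-duality assertion when $r > 0$, I would invoke the strong duality clause of Theorem~\ref{thm:finite:convex:Chebyshev:II}, which requires $\cF$ to be convex and compact with $M \succ \mu\mu^\top$ throughout $\rint(\cF)$. Proposition~\ref{prop:U:Gelbrich:SDP} already ensures that the Gelbrich uncertainty set is convex and compact, and the proof of Theorem~\ref{thm:duality:Gelbrich} identified $\rint(\cF)$ with the set of pairs $(\mu,M)$ satisfying $M \succ \mu\mu^\top$ and $\G((\mu, M-\mu\mu^\top),(\hat\mu,\hat\Sigma)) < r$. In particular, the relative-interior condition of Theorem~\ref{thm:finite:convex:Chebyshev:II} holds, so strong duality of the inner theorem upgrades the weak-duality bound to an equality in~\eqref{eq:finite:Gelbrich:II}.

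The main obstacle is purely bookkeeping: I must verify carefully that the substitution of Proposition~\ref{prop:U:Gelbrich:SDP}'s semidefinite representation of~$\cF$ into the optimization problem produced by Theorem~\ref{thm:finite:convex:Chebyshev:II} leaves the supremum unchanged and yields precisely the variables, constraints and objective function displayed in~\eqref{eq:finite:Gelbrich:II}. Because Proposition~\ref{prop:U:Gelbrich:SDP} provides a genuine exact reformulation rather than a relaxation of the Gelbrich uncertainty set, this substitution is lossless, and no additional technical machinery is required to complete the argument.
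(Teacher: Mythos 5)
Your proposal is correct and follows exactly the route the paper intends: the paper itself omits the proof, noting only that the theorem "follows from Proposition~\ref{prop:U:Gelbrich:SDP} and Theorem~\ref{thm:finite:convex:Chebyshev:II}," which is precisely your instantiation-plus-substitution argument, including the verification of the relative-interior condition via the characterization of $\rint(\cF)$ from the proof of Theorem~\ref{thm:duality:Gelbrich}.
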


The proof of Theorem~\ref{thm:finite:convex:Gelbrich:II} follows from Proposition~\ref{prop:U:Gelbrich:SDP} and Theorem~\ref{thm:finite:convex:Chebyshev:II} and is thus omitted. We are now ready to construct extremal distributions $\P^\star \in \cP(\cZ)$ that attain the supremum of the worst-case expectation problem~\eqref{eq:worst-case:expectation} over the Chebyshev ambiguity set~\eqref{eq:chebyshev-with-moments-in-F}. To this end, fix any maximizer $(\mu^\star, M^\star, p^\star, \theta^\star, \Theta^\star)$ of the bi-dual problem~\eqref{eq:finite:chebyshev:II}, which exists if $\cF$ is compact. Next, define the index sets
\begin{align*}
\mspace{-2mu}
\cJ^\infty = \big\{ j \in [J]: p_j^\star = 0, \, \Theta_j^\star \neq 0 \big\} \quad\text{and}\quad \cJ^+ = \big\{ j \in [J]: p_j^\star > 0 \big\},
\end{align*}
and define $\cJ = \cJ^+ \cup \cJ^\infty$. The extremal distributions $\P^\star$ will be constructed as mixtures of constituent distributions $\mathbb{P}_j$, $j\in \cJ$, corresponding to different pieces of the loss function~$\ell$. In the following, we use~$\P \sim (\mu, M)$ to indicate that the distribution~$\P$ has mean~$\mu$ and second-order moment matrix~$M$. Note that if~$\cZ= \{ z \in \R^d : (z - z_0)^\top Q_0 (z - z_0) \leq 1 \}$ is the ellipsoid from Assumption~\ref{assmp:quadratic}\,\ref{assmp:quadratic:support} and~$\P\in\cP(\cZ)$ is a distribution supported on~$\cZ$ with~$\P \sim (\mu, M)$, then we have
\[
1\geq \E_\P\left[ (Z - z_0)^\top Q_0 (Z - z_0)\right] = \Tr(Q_0 M) + 2 z_0^\top \mu + z_0^\top Q_0 z_0.
\]
The inequality in the above expression holds because $\P\in\cP(\cZ)$, and the equality holds because $\P \sim (\mu, M)$. The following lemma by
\citet[Proposition~6.1]{hanasusanto2015distributionally} shows the reverse implication. That is, if $\mu$ and $M$ satisfy the above inequality, then there is a (discrete) distribution $\P \sim (\mu, M)$ supported on~$\cZ$.

\begin{lemma}[Distributions on Ellipsoids with Given Moments]
\label{lem:distribution-on-ellipsoid}
If~$\cZ$ is the ellipsoid from Assumption~\ref{assmp:quadratic}\,\ref{assmp:quadratic:support}, and if $\Tr(Q_0 M) + 2 z_0^\top \mu + z_0^\top Q_0 z_0\leq 1$ for some $M\in\S_+^d$ and~$\mu\in\R^d$ with $M\succeq \mu\mu^\top$, then there exists a discrete distribution $\P\in\cP(\cZ)$ with at most $2d$ atoms that satisfies $\P \sim (\mu, M)$.
\end{lemma}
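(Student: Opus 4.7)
The plan is to construct an explicit two-point distribution along each eigendirection of the centered second-moment matrix $\Sigma := M - \mu \mu^\top \succeq 0$. The construction is cleanest after a preliminary change of coordinates that normalizes the ellipsoid~$\cZ$ to the closed unit Euclidean ball.

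First, assuming $Q_0 \succ 0$ (the singular case will be reduced to this one), I would factorize $Q_0 = R^\top R$ with $R$ non-singular and introduce $Y = R(Z - z_0)$, so that $Z \in \cZ$ if and only if $\|Y\|_2 \leq 1$. Writing $\tilde \mu = R(\mu - z_0)$ and $\tilde \Sigma = R \Sigma R^\top$, and expanding the hypothesis of the lemma through the decomposition $M = \Sigma + \mu \mu^\top$, an elementary computation shows that it is equivalent to $\|\tilde \mu\|_2^2 + \Tr(\tilde \Sigma) \leq 1$. It therefore suffices to construct a discrete distribution on the unit ball with mean $\tilde \mu$, covariance $\tilde \Sigma$ and at most $2d$ atoms; pulling this distribution back through $Z = R^{-1} Y + z_0$ then yields the desired measure on $\cZ$.

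Next, I would spectrally decompose $\tilde \Sigma = \sum_{i=1}^r \lambda_i v_i v_i^\top$ with $r = \mathrm{rank}(\tilde \Sigma) \leq d$ and orthonormal eigenvectors $v_i$, and place two atoms along each eigendirection, namely $y_i^+ = \tilde \mu + a_i v_i$ and $y_i^- = \tilde \mu - b_i v_i$, with positive scalars $a_i, b_i$ carrying probabilities $p_i^+$ and $p_i^-$. The mean-matching condition $p_i^+ a_i = p_i^- b_i$ pins down the ratio of the two probabilities, and a short expansion shows that the contribution of the $i$-th pair to the covariance in direction $v_i$ is $q_i a_i b_i$, where $q_i = p_i^+ + p_i^-$ is the total mass carried by that pair. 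Matching $\tilde \Sigma$ therefore requires $q_i a_i b_i = \lambda_i$, while probability normalization requires $\sum_i q_i = 1$. Setting $\beta_i = v_i^\top \tilde \mu$ and $\gamma_i = \sqrt{\beta_i^2 + 1 - \|\tilde \mu\|_2^2}$, the containment of $y_i^\pm$ in the unit ball reduces to $a_i \in (0, \gamma_i - \beta_i]$ and $b_i \in (0, \gamma_i + \beta_i]$, which forces $a_i b_i \leq \gamma_i^2 - \beta_i^2 = 1 - \|\tilde \mu\|_2^2$. Choosing $q_i = \lambda_i / \Tr(\tilde \Sigma)$ and $a_i b_i = \Tr(\tilde \Sigma)$ then satisfies all these constraints simultaneously, precisely because the hypothesis gives $\Tr(\tilde \Sigma) \leq 1 - \|\tilde \mu\|_2^2$.

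The subtlety that demands the most care is the atom count. A naive variant that places every pair on the boundary of the ball would generally leave residual probability mass to be absorbed by an extra atom at $\tilde \mu$, inflating the total to $2d + 1$. The scaling $a_i b_i = \Tr(\tilde \Sigma)$ introduced above avoids this extra atom by using each pair simultaneously to match the covariance and to contribute to the probability normalization, yielding exactly $2r \leq 2d$ atoms. Two small edge cases still require separate but routine bookkeeping: if $\tilde \Sigma = 0$ the distribution collapses to $\P = \delta_{\tilde \mu}$, and if $Q_0$ is only positive semidefinite, the ellipsoid degenerates to an unbounded cylinder, which is handled by running the preceding construction inside $\mathrm{range}(Q_0)$ and adjoining a Dirac component along $\ker(Q_0)$ to match the moments in that complementary subspace.
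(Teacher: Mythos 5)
Your construction for $Q_0 \succ 0$ is correct and clean. Normalizing to the unit ball, placing two atoms per eigendirection of $\tilde\Sigma$, and, in particular, the scaling $a_i b_i = \Tr(\tilde\Sigma)$ with $q_i = \lambda_i / \Tr(\tilde\Sigma)$ are all sound: the product bound $a_i b_i \leq (\gamma_i-\beta_i)(\gamma_i+\beta_i) = 1 - \|\tilde\mu\|_2^2$ is exactly where the hypothesis enters, and the choice $a_i b_i = \Tr(\tilde\Sigma)$ uses each pair simultaneously for covariance-matching and normalization, so that no leftover atom at $\tilde\mu$ is needed. (One side remark: the displayed moment inequality in the lemma statement has a sign error and a dropped $Q_0$; the correct form, which your reduction to $\|\tilde\mu\|_2^2 + \Tr(\tilde\Sigma) \leq 1$ implicitly assumes and which the computation in the text immediately preceding the lemma should also produce, is $\Tr(Q_0 M) - 2 z_0^\top Q_0 \mu + z_0^\top Q_0 z_0 \leq 1$.)

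The gap is in the degenerate case. Assumption~\ref{assmp:quadratic}\,\ref{assmp:quadratic:support} only requires $Q_0 \in \S_+^d$, and the lemma is invoked in that generality (only Lemma~\ref{lem:sufficient:chebyshev} separately imposes compactness). Your proposed fix---``running the preceding construction inside $\mathrm{range}(Q_0)$ and adjoining a Dirac component along $\ker(Q_0)$''---does not work. A Dirac component, or indeed any deterministic lift from $\mathrm{range}(Q_0)$ into $\ker(Q_0)$, contributes zero (respectively, rank-deficient) covariance in the $\ker(Q_0)$ directions, whereas $M$ may call for full-rank covariance there. Concretely, take $d = 2$, $Q_0 = e_1 e_1^\top$, $z_0 = 0$, $\mu = 0$ and $M = \Sigma$ equal to the identity, so that $\cZ = [-1,1]\times\R$ and the hypothesis holds with equality. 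Your range-part construction yields two atoms at $\pm e_1$, and any Dirac in the $e_2$-coordinate forces zero $e_2$-variance, which cannot match $M_{22} = 1$. A valid $2d = 4$-atom distribution does exist (equal weights at $(\pm 1,\pm 1)$), but it is not obtained by a range-restricted two-point scheme composed with a Dirac lift. Moreover, the natural repair of simply running your eigenvector argument in $\R^d$ against the cylinder constraint also fails in this example: the direction $e_1$ already forces $q_1 \geq \lambda_1 = 1$, leaving no probability budget for the second eigendirection $e_2$ even though $\lambda_2 = 1 > 0$. Handling the degenerate cylinder therefore requires choosing the atom directions jointly adapted to $Q_0$ and $\Sigma$ (not merely as eigenvectors of $\Sigma$ in the normalized coordinates), which the one-sentence dismissal does not supply; this is presumably the ``tedious'' part the paper declines to write out.
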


The proof of Lemma~\ref{lem:distribution-on-ellipsoid} is simple but tedious and thus omitted.

\begin{theorem}[Extremal Distributions of Chebyshev Ambiguity Sets]
\label{thm:finite:convex:Chebyshev:III}
If all conditions of Theorem~\ref{thm:finite:convex:Chebyshev:II} for weak as well as strong duality are satisfied and that $(\mu^\star, M^\star, p^\star, \theta^\star, \Theta^\star )$ solves~\eqref{eq:finite:chebyshev:II}, then the following hold.
\begin{enumerate}[label=(\roman*)]
	\item \label{thm:chebyshev:extremal} If $\cJ^\infty = \emptyset$, then there exist discrete distributions $\P_j^\star \sim (\theta_j^\star / p_j^\star, \Theta_j^\star / p_j^\star)$ supported on~$\cZ$ for all $j \in \cJ^+$, and~\eqref{eq:worst-case:expectation} is solved by $\P^\star = \sum_{j \in \cJ^+} p_j^\star\, \P^\star_j$. In addition, we have $\P^\star \sim (\mu^\star, M^\star)$, and $\P^\star$ is supported on~$\cZ$.
	\item \label{thm:chebyshev:assymptotic} If $\cJ^\infty \neq \emptyset$, then there exist discrete distributions $\P_j^m \sim (\theta_j^\star / p_j^m, \Theta_j^\star / p_j^m)$ supported on~$\cZ$ for all~$j\in\cJ$, where $p_j^m= (1 - |\cJ^\infty| /m) p_j^\star$ for $j\in\cJ^+$ and $p_j^m= 1/m$ for $j\in\cJ^\infty$, and where~$m$ is any integer with~$m\geq |\cJ^\infty|$. In addition, \eqref{eq:worst-case:expectation} is asymptotically solved by $\P^m = \sum_{j \in \cJ} p_j^m \, \P_j^m$ as $m$ grows.
\end{enumerate}
\end{theorem}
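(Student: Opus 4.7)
The plan is to exploit the strong duality between nature's subproblem and the finite bi-dual reformulation~\eqref{eq:finite:chebyshev:II} established in Theorem~\ref{thm:finite:convex:Chebyshev:II}: any admissible distribution $\P\in\cP$ whose expected loss matches the optimal bi-dual value must solve~\eqref{eq:worst-case:expectation}, and any sequence of admissible distributions whose expected losses converge to this value must solve~\eqref{eq:worst-case:expectation} asymptotically. The explicit construction hinges on Lemma~\ref{lem:distribution-on-ellipsoid}, which realizes any prescribed mean-second-moment pair on the ellipsoidal support set $\cZ$ via a discrete distribution with at most $2d$ atoms, whenever the moments are compatible with $\cZ$.

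As a preliminary step, I would extract structural information from bi-dual feasibility at the optimizer $(\mu^\star,M^\star,p^\star,\theta^\star,\Theta^\star)$. For every $j\in[J]$ with $p_j^\star=0$, positive semidefiniteness of the block matrix $\bigl[\begin{smallmatrix}\Theta_j^\star&\theta_j^\star\\(\theta_j^\star)^\top&0\end{smallmatrix}\bigr]$ forces $\theta_j^\star=0$. For $j\in\cJ^\infty$ the ellipsoid constraint then reduces to $\Tr(Q_0\Theta_j^\star)\leq 0$, which together with $Q_0,\Theta_j^\star\succeq 0$ yields $\Tr(Q_0\Theta_j^\star)=0$, and a factorization argument gives $Q_0\Theta_j^\star=0$. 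In particular, $\mathrm{range}(\Theta_j^\star)\subseteq\ker(Q_0)$ for every $j\in\cJ^\infty$, so $\cZ$ is necessarily unbounded along the null space of $Q_0$ whenever $\cJ^\infty\neq\emptyset$. Indices $j\notin\cJ$ satisfy $p_j^\star=\Theta_j^\star=0$ and hence $\theta_j^\star=0$ as well; they contribute nothing to $\mu^\star$, $M^\star$, or the bi-dual objective.

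For part~\ref{thm:chebyshev:extremal}, every $j\in\cJ^+$ has $p_j^\star>0$ because $\cJ^\infty=\emptyset$. I would divide both the block LMI and the ellipsoid constraint by $p_j^\star$, invoke a Schur complement argument on the former, and apply Lemma~\ref{lem:distribution-on-ellipsoid} to obtain discrete distributions $\P_j^\star\sim(\theta_j^\star/p_j^\star,\Theta_j^\star/p_j^\star)$ supported on $\cZ$. The mixture $\P^\star=\sum_{j\in\cJ^+}p_j^\star\P_j^\star$ is then supported on $\cZ$ and satisfies $\P^\star\sim(\mu^\star,M^\star)$, hence $\P^\star\in\cP$. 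Using the pointwise bound $\ell(z)\geq\ell_j(z)$ and the quadratic form of $\ell_j$, one obtains
\[
    \E_{\P^\star}[\ell(Z)]\geq\sum_{j\in\cJ^+}p_j^\star\,\E_{\P_j^\star}[\ell_j(Z)]=\sum_{j\in\cJ^+}\bigl[\Tr(Q_j\Theta_j^\star)+2q_j^\top\theta_j^\star+q_j^0 p_j^\star\bigr],
\]
which coincides with the optimal bi-dual value after absorbing the vanishing contributions from $j\notin\cJ^+$. Strong duality then closes the argument.

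The proof of part~\ref{thm:chebyshev:assymptotic} is an asymptotic variant of the above, and the step I expect to be hardest is that the normalized moments $\Theta_j^\star/p_j^\star$ for $j\in\cJ^\infty$ are ill-defined, so Lemma~\ref{lem:distribution-on-ellipsoid} cannot be applied directly. The remedy is to perturb the weights to the strictly positive values $p_j^m$ specified in the theorem and to construct the $\P_j^m$'s so that the weighted mixture preserves the total mean $\mu^\star$ and second moment $M^\star$ exactly while each $\P_j^m$ remains supported on $\cZ$. The construction for $j\in\cJ^\infty$ crucially exploits the structural identities $\theta_j^\star=0$ and $\mathrm{range}(\Theta_j^\star)\subseteq\ker(Q_0)$ derived above: the large second moment $\Theta_j^\star/p_j^m$ required at vanishing weight $p_j^m=1/m$ can be realized by placing mass far along null directions of $Q_0$, which remain inside $\cZ$. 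For $j\in\cJ^+$, an appropriate rescaling of the constituent distributions from part~\ref{thm:chebyshev:extremal} preserves both the block LMI and the ellipsoid constraint at the perturbed scale $p_j^m$. Once the $\P_j^m$ are in hand, $\P^m=\sum_{j\in\cJ}p_j^m\P_j^m$ lies in $\cP$ by construction, and a direct computation analogous to part~\ref{thm:chebyshev:extremal} shows that $\E_{\P^m}[\ell(Z)]$ differs from the optimal bi-dual value by terms of order $\cO(1/m)$, which vanish as $m\to\infty$; strong duality again completes the proof.
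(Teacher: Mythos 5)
Your proposal follows essentially the same route as the paper's proof: construct the mixture components via Lemma~\ref{lem:distribution-on-ellipsoid}, verify membership of the mixture in $\cP$ using the bi-dual constraints, and sandwich its expected loss between the bi-dual optimum (via strong duality from Theorem~\ref{thm:finite:convex:Chebyshev:II}) and the lower bound obtained from $\ell \geq \ell_j$. Your explicit derivation that $\theta_j^\star = 0$ and $Q_0\Theta_j^\star = 0$ for $j \in \cJ^\infty$, so that mass can escape along recession directions of $\cZ$, correctly fills in structure that the paper's terser treatment of part~(ii) ("similar arguments") leaves implicit.
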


\begin{proof}
As for assertion~\ref{thm:chebyshev:extremal}, the constraints of problem~\eqref{eq:finite:Gelbrich:II} imply that
\[
\Tr(Q_0 \Theta_j^\star/p_j^\star) - 2 z_0^\top Q_0 \theta_j^\star/p_j^\star + z_0^\top Q_0 z_0 \leq 1 
\]
and
\[
\begin{bmatrix} \Theta^\star_j & \theta^\star_j \\ (\theta^\star_j)^\top & p^\star_j \end{bmatrix} \succeq 0 \quad\iff\quad \Theta^\star_j/p^\star_j \succeq (\theta^\star_j/p^\star_j)(\theta^\star_j/p^\star_j)^\top
\]
for all $j\in\cJ^+$. Lemma~\ref{lem:distribution-on-ellipsoid} thus guarantees that there exist discrete distributions~$\P_j^\star \sim (\theta_j^\star / p_j^\star, \Theta_j^\star / p_j^\star)$, $j \in \cJ^+$, all of which are supported on~$\cZ$. Consequently, $\P^\star = \sum_{j \in \cJ^+} p_j^\star\, \P^\star_j$ is also supported on~$\cZ$. In addition, we have
\begin{align*}
	& \mathbb{E}_{\P^\star} [ Z ] = \sum_{j\in \cJ^+} p_j^\star \cdot \mathbb{E}_{\P^\star_j} [ Z ] = \sum_{j\in \cJ^+} p_j^\star \cdot \theta_j^\star / p_j^\star = \sum_{j\in \cJ^+} \theta_j^\star = \mu^\star 
\end{align*}
and
\begin{align*}
	\mathbb{E}_{\P^\star} [ Z Z^\top ] = \sum_{j\in \cJ^+} p_j^\star \cdot \mathbb{E}_{\P^\star_j} [ Z Z^\top ] = \sum_{j\in \cJ^+} p_j^\star \cdot \Theta^\star_j/p^\star_j = \sum_{j\in \cJ^+} \Theta_j^\star = M^\star,
\end{align*}
that is, $\P^\star \sim (\mu^\star, M^\star)$. As $(\mu^\star, M^\star) \in \mathcal{F}$, it is now clear that $\P^\star \in \cP$ and that
\begin{align*}
	\E_{\P^\star}[\ell(Z)] \leq \sup_{\P \in \cP} \, \E_\P[\ell(Z)] = \sum_{j \in [J]} \Tr(Q_j \Theta_j^\star) + 2 q_j^\top \theta_j^\star + q_j^0 p_j^\star,
\end{align*}
where the equality follows from strong duality as established in Theorem~\ref{thm:finite:convex:Chebyshev:II}. At the same time, the definition of~$\P^\star$ as a mixture distribution and the definition of~$\ell$ in~\eqref{eq:quadratic:loss} as a pointwise maximum of quadratic component functions implies that
\begin{align*}
	\mspace{-8mu}
	\E_{\P^\star}[\ell(Z)] 
	\geq \sum_{j \in \cJ^+} p_j^\star \cdot \E_{\P_j^\star}[\ell_j (Z)]  
	= \sum_{j \in [J]} \Tr(Q_j \Theta_j^\star) + 2 q_j^\top \theta_j^\star + q_j^0 p_j^\star.
\end{align*}
Specifically, the inequality holds because $\ell\geq \ell_j$ for every~$j\in[J]$, and the equality holds because $\theta^\star_j = 0$ and $\Theta_j^\star = 0$ whenever $p^\star_j = 0$. Indeed, if $p_j^\star = 0$, then $\Theta_j^\star = 0$ because the index set~$\cJ^\infty$ is empty, and     
the linear matrix inequality in~\eqref{eq:finite:chebyshev:II} implies that $\theta_j^\star = 0$ whenever $\Theta_j^\star = 0$. The above inequalities thus ensure that $\P^\star$ solves the worst-case expectation problem~\eqref{eq:worst-case:expectation}. This completes the proof of assertion~\ref{thm:chebyshev:extremal}.

Next, we address assertion~\ref{thm:chebyshev:assymptotic}. Similar arguments as in the proof of assertion~\ref{thm:chebyshev:extremal} can be used to show that $\P^m \in \cP$ for every $m \geq |\cJ^\infty|$. This implies that $\E_{\P^m} [\ell(Z)] \leq \sup_{\P \in \cP} \, \E_\P[\ell(Z)]$ whenever $m \geq |\cJ^\infty|$. In addition, we observe that
\begin{align*}
	\lim_{m \to \infty} \E_{\P^m} [\ell(Z)] 
	&\geq \lim_{m \to \infty} \sum_{j \in \cJ} p_j^m \cdot \E_{\P^m} [\ell_j (Z)]
	= \sum_{j \in \cJ} \lim_{m \to \infty} p_j^m \cdot \E_{\P^m} [\ell_j (Z)] \\
	&= \sum_{j \in [J]} \Tr(Q_j \Theta_j^\star) + 2 q_j^\top \theta_j^\star + q_j^0 p_j^\star
	= \sup_{\P \in \cP} \, \E_\P[\ell(Z)],
\end{align*}
where the second equality exploits the definition of~$\P^m$ and the third equality follows from strong duality as established in Theorem~\ref{thm:finite:convex:Chebyshev:II}. This completes the proof.
\end{proof}

Theorem~\ref{thm:finite:convex:Chebyshev:III} also applies to the Gelbrich ambiguity set, which constitutes a Chebyshev ambiguity set of the form~\eqref{eq:chebyshev-with-moments-in-F} with~$\cF$ given by~\eqref{eq:Gelbrich:F}. The extremal distribution~$\P^\star$ identified in Theorem~\ref{thm:finite:convex:Chebyshev:III}\,\ref{thm:chebyshev:extremal} constitutes a mixture of different distributions~$\P^\star_j$, each of which corresponds to a component~$\ell_j$ of the loss function~$\ell$; see Assumption~\ref{assmp:quadratic}\,\ref{assmp:quadratic:loss}. The mixture components~$\P^\star_j$ may be set to {\em any} distributions on~$\cZ$ that satisfy the prescribed moment conditions. Note that {\em discrete} distributions consistent with these requirements are guaranteed to exist thanks to Lemma~\ref{lem:distribution-on-ellipsoid}. However, if $\cZ = \R^d$, say, then one could also set~$\P^\star_j$ to the Gaussian distribution with the given first and second moments. From the proof of Theorem~\ref{thm:finite:convex:Chebyshev:III} it becomes clear that~$\P^\star_j$ must be supported on $\{z\in\cZ:\ell_j(z)\geq \ell_{j'}(z)~\forall j'\neq j\}$, which is generically nonconvex. Therefore, \citet[\ts~2.2]{kuhn2019wasserstein} conjectured that the construction of~$\P^\star_j$ is NP-hard. From the proof of Lemma~\ref{lem:distribution-on-ellipsoid} in \citep[\ts~6]{hanasusanto2015distributionally} it becomes clear, however, that~$\P^\star_j$ can be constructed efficiently. Similar comments are in order for the distributions $\P^m_j$ appearing in Theorem~\ref{thm:finite:convex:Chebyshev:III}\,\ref{thm:chebyshev:assymptotic}.

If $\cJ^\infty\neq \emptyset$, then the extremal distributions constructed in Theorem~\ref{thm:finite:convex:Chebyshev:III} contain diverging mixture components whose covariance matrices explode along certain recession directions of the support set~$\cZ$ (that is, along the eigenvectors of~$\Theta_j^\star$, $j\in \cJ^\infty$, corresponding to non-zero eigenvalues). However, these diverging mixture components are assigned weights that decay with their variances such that the covariance matrix of the entire mixture distribution remains bounded.

The following lemma establishes a sufficient condition for $\cJ^\infty$ to be empty, which ensures via Theorem~\ref{thm:finite:convex:Chebyshev:III}\,\ref{thm:chebyshev:extremal} that problem~\eqref{eq:worst-case:expectation} is solvable. 

\begin{lemma}
\label{lem:sufficient:chebyshev}
If all conditions of Theorem~\ref{thm:finite:convex:Chebyshev:III} are satisfied and the support set~$\cZ$ defined in~\eqref{eq:ellipsoid} is compact, then~$\cJ^\infty = \emptyset$, and thus problem~\eqref{eq:worst-case:expectation} is solvable.
\end{lemma}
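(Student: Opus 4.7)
The plan is to reach a contradiction by showing that if $\cZ$ is compact then every $j$ with $p_j^\star = 0$ must also satisfy $\Theta_j^\star = 0$, so that the index set $\cJ^\infty$ is forced to be empty; the solvability of~\eqref{eq:worst-case:expectation} then follows immediately from Theorem~\ref{thm:finite:convex:Chebyshev:III}\,\ref{thm:chebyshev:extremal}.

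First I would observe that compactness of the ellipsoid $\cZ = \{z \in \R^d : (z-z_0)^\top Q_0 (z-z_0) \leq 1\}$ forces $Q_0 \succ 0$. Indeed, if $Q_0 v = 0$ for some $v \neq 0$, then $z_0 + tv \in \cZ$ for every $t \in \R$, contradicting boundedness. Next, fix any $j \in \cJ^\infty$, so that $p_j^\star = 0$ and $\Theta_j^\star \neq 0$. From the linear matrix inequality
\[
\begin{bmatrix} \Theta_j^\star & \theta_j^\star \\ (\theta_j^\star)^\top & p_j^\star \end{bmatrix} \succeq 0
\]
in~\eqref{eq:finite:chebyshev:II} and the fact that a positive semidefinite matrix with a zero on the diagonal must have the corresponding row and column equal to zero, I would conclude that $\theta_j^\star = 0$.

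Substituting $p_j^\star = 0$ and $\theta_j^\star = 0$ into the constraint
\[
\Tr(Q_0 \Theta_j^\star) - 2 z_0^\top Q_0 \theta_j^\star + z_0^\top Q_0 z_0\, p_j^\star \leq p_j^\star
\]
of~\eqref{eq:finite:chebyshev:II} leaves the inequality $\Tr(Q_0 \Theta_j^\star) \leq 0$. Because $\Theta_j^\star \succeq 0$ and $Q_0 \succ 0$, we have $\Tr(Q_0 \Theta_j^\star) \geq \lambda_{\min}(Q_0)\, \Tr(\Theta_j^\star)$ with $\lambda_{\min}(Q_0) > 0$, so $\Tr(\Theta_j^\star) = 0$ and hence $\Theta_j^\star = 0$. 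This contradicts $\Theta_j^\star \neq 0$, and therefore $\cJ^\infty = \emptyset$. An application of Theorem~\ref{thm:finite:convex:Chebyshev:III}\,\ref{thm:chebyshev:extremal} then yields an optimal distribution $\P^\star \in \cP$ for~\eqref{eq:worst-case:expectation}, establishing solvability.

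There is no genuine obstacle here; the argument is short and elementary once the two structural facts are in place, namely that compactness of $\cZ$ forces $Q_0$ to be positive definite, and that the LMI together with $p_j^\star = 0$ forces $\theta_j^\star = 0$. The only minor subtlety to document carefully is the passage from $\Tr(Q_0 \Theta_j^\star) = 0$ with $Q_0 \succ 0$ and $\Theta_j^\star \succeq 0$ to $\Theta_j^\star = 0$, which can be handled either by the $\lambda_{\min}$ bound above or by writing $\Tr(Q_0 \Theta_j^\star) = \|Q_0^{1/2} (\Theta_j^\star)^{1/2}\|_F^2$ and invoking invertibility of $Q_0^{1/2}$.
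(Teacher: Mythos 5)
Your proof is correct and follows essentially the same route as the paper's: the LMI with $p_j^\star = 0$ forces $\theta_j^\star = 0$, the trace constraint then gives $\Tr(Q_0 \Theta_j^\star) \leq 0$, and positive definiteness of $Q_0$ (from compactness of $\cZ$) forces $\Theta_j^\star = 0$. You merely spell out in more detail the three elementary facts that the paper states without justification.
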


\begin{proof}
If $p_j^\star = 0$ for some $j \in [J]$, then the linear matrix inequality in~\eqref{eq:finite:chebyshev:II} implies that $\theta_j^\star = 0$. Consequently, the $j$-th trace inequality simplifies to $\Tr(Q_0 \Theta_j^\star) \leq 0$. As~$Q_0 \succ 0$ because~$\cZ$ is compact, we thus find that $\Theta_j^\star = 0$. In summary, we have shown that~$p^\star_j=0$ implies $\Theta_j^\star = 0$, and therefore $\cJ^\infty$ is empty as desired.
\end{proof}

We conclude this section with some remarks on worst-case expectation problems with more generic moment ambiguity sets. Translated into our terminology,  \citet{richter1957parameterfreie} and \citet{rogosinski1958moments} show that if $\cP = \{ \P \in \cP(\cZ): \E_\P[f(Z)] =\mu \}$ for some $f:\cZ\to\R^m$ and~$\mu\in\R^m$, and if~\eqref{eq:worst-case:expectation} is solvable, then the supremum in~\eqref{eq:worst-case:expectation} is attained by a {\em discrete} distribution with at most~$m+2$ atoms. See \citep[Theorem~7.32]{shapiro2009lectures} for modern proof of this result. Note also that, under the given assumptions, the worst-case expectation problem~\eqref{eq:worst-case:expectation} can be recast as 
\begin{equation}
\label{eq:standard-LP}
\sup_{\varrho\in \cM_+(\cZ)} \left\{ \int_{\cZ} \ell(z)\,\diff\rho(z) \;:\; \int_\cZ \diff\rho(z)=1,~~\int_\cZ f(z)\, \diff\rho(z)=\mu \right\}.
\end{equation}
Problem~\eqref{eq:standard-LP} constitutes an {\em infinite}-dimensional linear program over the non-negative Borel measures on~$\cZ$ with $m+1$ linear equality constraints. Every {\em finite}-dimensional linear program with non-negative variables and $m+1$ equality constraints is known to admit an optimal basic feasible solution with at most $m+1$ non-zero entries. The infinite-dimensional analog of a basic feasible solution is a discrete measure with at most $m+1$ atoms. Accordingly, one can prove that if~\eqref{eq:standard-LP} is solvable, then its supremum is attained by a measure with at most $m+1$ atoms \citep[Corollary~5 and Proposition~6(v)]{pinelis2016extreme}. This result strengthens the Richter-Rogosinski theorem. However, the minimum number of atoms required for an optimal measure cannot be reduced beyond $m+1$ without additional assumptions.

The above reasoning implies that the worst-case expectation problem~\eqref{eq:worst-case:expectation} and its reformulation~\eqref{eq:standard-LP} as a semi-infinite linear program can be reduced to a finite-dimensional optimization problem over the locations and probabilities of the $m+1$ atoms of a discrete measure. Finite reductions of this type are routinely studied in optimal uncertainty quantification \citep{owhadi2013optimal}. However, they generically represent {\em nonconvex} optimization problems. Indeed, even the integral of a linear function with respect to a discrete measure involves products of the probabilities and the coordinates of the measure's atoms. If~\eqref{eq:standard-LP} is solvable and~$\ell$ is representable as a pointwise maximum of~$J$ concave functions, then the $m+1$ atoms of an extremal measure can be further condensed. That is, using an induction argument and an iterative application of Jensen's inequality, one can show that~\eqref{eq:standard-LP} is solved by a discrete measure with at most~$J$ atoms \citep[Lemma~3.1]{han2015convexUQ}. This result is significant even though~$J$ is not necessarily smaller than~$m+1$. It implies that~\eqref{eq:standard-LP} admits a finite reduction that optimizes over discrete measures with~$J$ atoms. And this (nonconvex) finite reduction is intimately related to the dual problem~\eqref{eq:weak-duality-moments} derived in Theorem~\ref{thm:duality:moment} through a `{\em primal-worst-equals-dual-best}' duality scheme for robust optimization problems \citep{beck2009duality}. Specifically, \eqref{eq:weak-duality-moments} can be viewed as a `{\em primal-worst}' robust optimization problem, and the finite reduction corresponding to discrete measures with~$J$ atoms can be viewed as the corresponding `{\em dual-best}' optimization problem \citep{zhen2023unification}. These problems share the same optimal value under mild regularity conditions. In addition, the (dual best) finite reduction can be convexified by applying a variable transformation and a perspectification trick \citep[Theorem~1.1]{han2015convexUQ}. The same convex reformulation can also be obtained by dualizing the finite dual reformulation of the (primal worst) problem~\eqref{eq:weak-duality-moments} as outlined in Section~\ref{sec:proof-strategy-tractability}. For further details we refer to \citep{zhen2023unification}.

\subsection{$\phi$-Divergence Ambiguity Sets}
\label{sec:wc-distribution:phi}
Recall that the $\phi$-divergence ambiguity set~\eqref{eq:phi-divergence-ambiguity-set} is defined~as
\begin{align*}
\cP = \left\{ \P \in \cP(\cZ) \, : \, \D_\phi(\P, \hat \P) \leq r \right\}, 
\end{align*}
where~$r$ is a size parameter, $\phi$ is an entropy function in the sense of Definition~\ref{def:phi}, $\D_\phi$ is the corresponding $\phi$-divergence in the sense of Definition~\ref{def:D_phi}, and~$\hat\P\in\cP(\cZ)$ is a reference distribution. In the following, we first demonstrate that the worst-case expectation problem~\eqref{eq:worst-case:expectation} over a $\phi$-divergence ambiguity sets can be reformulated as a finite convex program whenever~$\hat \P$ is discrete and $\ell$ is real-valued. 

\begin{assumption}[Discrete Reference Distribution]
\label{assmp:discrete:reference}
We have $\hat{\P} = \sum_{i \in [N]} \hat p_i \delta_{\hat z_i}$ for some $N \in \N$, where the probabilities~$\hat p_i$, $i\in[N]$, are strictly positive and sum to~$1$, and where~$\hat z_i \in \cZ$ for every $i\in[N]$. In addition, $\ell(z)\in\R$ for all $z\in\cZ$.
\end{assumption} 

The requirement that~$\hat p_i$ be positive for every $i\in[N]$ is non-restrictive because atoms with zero probability can simply be eliminated without changing~$\hat\P$. 

\begin{theorem}[Finite Dual Reformulation for $\phi$-Divergence Ambiguity Sets]
\label{thm:finite:convex:phi:I}
If~$\cP$ is the $\phi$-divergence ambiguity set~\eqref{eq:phi-divergence-ambiguity-set} and Assumption~\ref{assmp:discrete:reference} holds, then the worst-case expectation problem~\eqref{eq:worst-case:expectation} satisfies the weak duality relation
\begin{align}
	\label{eq:finite-dual-phi}
	\sup_{\P \in \cP} \; \E_\P[\ell(Z)] \leq \left\{ \begin{array}{cll} \displaystyle \inf_{\lambda_0\in\R,\lambda\in\R_+} & \displaystyle \lambda_0  + \lambda r + \sum_{i \in [N]} \hat p_i \cdot (\phi^*)^\pi \left( \ell(\hat z_i) - \lambda_0, \lambda \right)  \\[3ex]
		\st & \displaystyle  \lambda_0+ \lambda\,\phi^\infty(1) \geq \sup_{z\in\cZ} \ell(z),
	\end{array}\right.
\end{align}
where the product $\lambda\,\phi^\infty(1)$ is assumed to evaluate to~$\infty$ if~$\lambda=0$ and~$\phi^\infty(1) =\infty$. 
If~$r>0$ and~$\phi$ is continuous at~$1$, then strong duality holds, that is, the above inequality becomes an equality.
\end{theorem}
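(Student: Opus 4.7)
The plan is to derive Theorem~\ref{thm:finite:convex:phi:I} as a direct specialization of the general duality result in Theorem~\ref{thm:duality:phi}. Since the ambiguity set $\cP$ is literally of the form~\eqref{eq:phi-divergence-ambiguity-set}, the only things that need to be checked are (i) that the hypothesis $\E_{\hat\P}[\ell(Z)] > -\infty$ of Theorem~\ref{thm:duality:phi} is met, and (ii) that the abstract dual objective collapses to the finite sum appearing in~\eqref{eq:finite-dual-phi} under Assumption~\ref{assmp:discrete:reference}.

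First, I would verify (i). Under Assumption~\ref{assmp:discrete:reference}, $\hat\P = \sum_{i \in [N]} \hat p_i \delta_{\hat z_i}$ with $N$ finite and each $\hat p_i > 0$, while $\ell$ is real-valued on $\cZ$. Consequently $\E_{\hat\P}[\ell(Z)] = \sum_{i \in [N]} \hat p_i \ell(\hat z_i)$ is a finite sum of real numbers, so in particular $\E_{\hat\P}[\ell(Z)] \in \R$. This is precisely what Theorem~\ref{thm:duality:phi} requires, so the weak duality bound
\[
\sup_{\P \in \cP} \E_\P[\ell(Z)] \leq \inf_{\lambda_0 \in \R,\, \lambda \in \R_+} \bigg\{ \lambda_0 + \lambda r + \E_{\hat \P}\big[(\phi^*)^\pi(\ell(Z) - \lambda_0, \lambda)\big] : \lambda_0 + \lambda\,\phi^\infty(1) \geq \sup_{z \in \cZ} \ell(z) \bigg\}
\]
applies.

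Next, for (ii), I would observe that since $\hat\P$ is supported on $\{\hat z_1, \ldots, \hat z_N\}$, the expectation in the dual objective reduces by linearity to
\[
\E_{\hat \P}\big[(\phi^*)^\pi(\ell(Z) - \lambda_0, \lambda)\big] = \sum_{i \in [N]} \hat p_i \cdot (\phi^*)^\pi(\ell(\hat z_i) - \lambda_0, \lambda),
\]
which is exactly the sum appearing on the right-hand side of~\eqref{eq:finite-dual-phi}. This substitution establishes the claimed weak duality inequality.

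For the strong duality claim, I would invoke the second part of Theorem~\ref{thm:duality:phi}: when $r > 0$ and $\phi$ is continuous at $1$, that theorem asserts that the weak inequality collapses to an equality, and this conclusion transfers verbatim to~\eqref{eq:finite-dual-phi}. There is no genuine obstacle here; the only thing worth being careful about is that the constraint $\lambda_0 + \lambda \phi^\infty(1) \geq \sup_{z \in \cZ} \ell(z)$ is inherited without change from Theorem~\ref{thm:duality:phi}, and that the convention $\lambda \phi^\infty(1) = \infty$ when $\lambda = 0$ and $\phi^\infty(1) = \infty$ is the same in both statements. Since Assumption~\ref{assmp:discrete:reference} makes $\hat\P$-integrability of $\ell$ automatic, the proof amounts to little more than rewriting an abstract expectation as a finite sum.
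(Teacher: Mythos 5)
Your proposal is correct and follows exactly the paper's route: the paper itself notes that Theorem~\ref{thm:finite:convex:phi:I} is an immediate corollary of Theorem~\ref{thm:duality:phi}, obtained by rewriting $\E_{\hat\P}[(\phi^*)^\pi(\ell(Z)-\lambda_0,\lambda)]$ as a finite weighted sum under the discrete reference distribution, and that the strong duality part transfers verbatim. Your explicit verification that Assumption~\ref{assmp:discrete:reference} (finitely many atoms, $\ell$ real-valued) guarantees $\E_{\hat\P}[\ell(Z)] > -\infty$ is a small but worthwhile sanity check that the paper leaves implicit.
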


Theorem~\ref{thm:finite:convex:phi:I} is an immediate corollary of Theorem~\ref{eq:weak-duality-phi-divergence}. Indeed, problem~\eqref{eq:finite-dual-phi} is obtained from~\eqref{eq:weak-duality-phi-divergence} by re-expressing the integral with respect to the discrete reference distribution~$\hat \P$ as a weighted sum. Thus, no proof is required. Recall now that the {\em restricted} $\phi$-divergence ambiguity set is defined as the set of all distributions~$\P\in \cP$ with~$\P\ll \hat \P$. It is straightforward to verify that if~$\cP$ is discrete, then the corresponding worst-case expectation problem~\eqref{eq:worst-case:expectation} admits a finite convex reformulation that is given by a relaxation of~\eqref{eq:finite-dual-phi} without constraints. Details are omitted for brevity. Next, we derive a finite convex program dual to~\eqref{eq:finite-dual-phi} that allows us to construct an extremal distribution.

\begin{theorem}[Finite Bi-Dual Reformulations for $\phi$-Divergence Ambiguity Sets]
\label{thm:finite:convex:phi:II}
If~$\cP$ is the $\phi$-divergence ambiguity set~\eqref{eq:phi-divergence-ambiguity-set}, Assumption~\ref{assmp:discrete:reference} holds, $r>0$ and~$\phi$ is continuous at~$1$, then problem~\eqref{eq:worst-case:expectation} satisfies the strong duality relation
\begin{align}
	\label{eq:finite:phi:II}
	\sup_{\P \in \cP} \; \E_\P[\ell(Z)] 
	= \left\{ \begin{array}{cll} \displaystyle \max_{p_0,\ldots, p_N\in\R_+} & \displaystyle p_0 \overline \ell + \sum_{i \in [N]}  p_i \ell(\hat z_i) \\[3ex]
		\st & \displaystyle  p_0+ \sum_{i \in [N]}  p_i =1\\
		& \displaystyle p_0\phi^\infty(1) + \sum_{i \in [N]}  \hat p_i \phi\left(\frac{p_i}{\hat p_i}\right) \leq r,
	\end{array}\right.
\end{align}
where $\overline\ell$ is a shorthand for $\sup_{z\in\cZ} \ell(z)$. The product $p_0\phi^\infty(1)$ is assumed to equal~$0$ if $p_0=0$ and $\phi^\infty(1)=\infty$. Similarly, $p_0\overline\ell$ is assumed to equal~$0$ if~$p_0=0$ and~$\overline\ell=\infty$.
\end{theorem}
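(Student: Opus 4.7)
The plan is to derive~\eqref{eq:finite:phi:II} as the strong Lagrangian dual of the minimization problem~\eqref{eq:finite-dual-phi} established in Theorem~\ref{thm:finite:convex:phi:I}. Since that theorem already identifies the optimal value of~\eqref{eq:finite-dual-phi} with $\sup_{\P\in\cP}\E_\P[\ell(Z)]$ under the hypotheses $r>0$ and $\phi$ continuous at~$1$, it suffices to show that~\eqref{eq:finite:phi:II} and~\eqref{eq:finite-dual-phi} share the same optimal value and that the supremum in~\eqref{eq:finite:phi:II} is attained.

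I would treat the right-hand side of~\eqref{eq:finite:phi:II} (after negating its objective) as a primal convex minimization problem in the sense of Section~\ref{sec:proof-strategy-tractability}, with the linear constraints $p_i\ge 0$ and $\sum_{i=0}^N p_i=1$ together with the single nonlinear convex inequality $p_0\phi^\infty(1)+\sum_i \hat p_i\phi(p_i/\hat p_i)\le r$. Introducing a multiplier $\lambda_0\in\R$ for the equality and $\lambda\in\R_+$ for the divergence inequality, the Lagrangian decouples across the coordinates $p_0,\ldots,p_N$. The supremum over $p_0\ge 0$ of $p_0(\overline\ell-\lambda_0-\lambda\phi^\infty(1))$ is finite precisely when $\lambda_0+\lambda\phi^\infty(1)\ge\overline\ell$, reproducing the explicit constraint of~\eqref{eq:finite-dual-phi}. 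For each $i\in[N]$, the substitution $t_i=p_i/\hat p_i$ and the fact that $\phi\equiv+\infty$ on $\R_-$ reduce the partial supremum to
\[
    \hat p_i\sup_{t_i\in\R}\bigl[t_i(\ell(\hat z_i)-\lambda_0)-\lambda\phi(t_i)\bigr] \;=\; \hat p_i\,(\lambda\phi)^*(\ell(\hat z_i)-\lambda_0) \;=\; \hat p_i\,(\phi^*)^\pi(\ell(\hat z_i)-\lambda_0,\lambda),
\]
where the last equality follows from Lemma~\ref{lem:conjugate-perspective}\,\ref{lem:conjugate-perspective-1} and covers both $\lambda>0$ and $\lambda=0$. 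Assembling these computations identifies the Lagrangian dual exactly with~\eqref{eq:finite-dual-phi}.

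Strong duality then follows from Theorem~\ref{thm:convex:duality}(ii): the feasible set of~\eqref{eq:finite:phi:II} is non-empty (the nominal choice $p_i=\hat p_i$, $p_0=0$ yields divergence $0<r$) and bounded (being contained in the probability simplex); the same statement also yields attainment of the primal supremum. Combining this with Theorem~\ref{thm:finite:convex:phi:I} completes the proof.

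The main obstacle will be the consistent handling of the perspective $(\phi^*)^\pi$ at the boundary $\lambda=0$, together with the extended-arithmetic conventions spelled out in the theorem statement: $p_0\phi^\infty(1)=0$ and $p_0\overline\ell=0$ whenever $p_0=0$, and $\lambda\phi^\infty(1)=\infty$ whenever $\lambda=0$ and $\phi^\infty(1)=\infty$. These conventions are exactly what is needed for the primal and dual to align in the degenerate regimes $\phi^\infty(1)=\infty$, which effectively forces $p_0=0$ in the primal and makes the dual constraint vacuous, and $\overline\ell=\infty$, which forces $p_0=0$ in order to keep the primal objective finite.
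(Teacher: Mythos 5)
Your proposal is correct and follows essentially the same route as the paper: both derive \eqref{eq:finite:phi:II} by Lagrangian duality from the finite dual \eqref{eq:finite-dual-phi}, with the multiplier for the explicit constraint becoming $p_0$, the per-atom conjugate computation $\hat p_i(\lambda\phi)^*(\ell(\hat z_i)-\lambda_0)=\hat p_i(\phi^*)^\pi(\ell(\hat z_i)-\lambda_0,\lambda)$ via Lemma~\ref{lem:conjugate-perspective}\,\ref{lem:conjugate-perspective-1}, and the substitution $p_i=\hat p_i y_i$. The only cosmetic difference is that you certify strong duality via Theorem~\ref{thm:convex:duality}\,\ref{lem:compact:duality} (non-empty bounded feasible set of the maximization problem), while the paper uses the Slater point $p_0=0$, $y_i=1$ of that same problem; both are valid under the stated hypotheses $r>0$ and continuity of $\phi$ at~$1$.
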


The finite bi-dual reformulation~\eqref{eq:finite:phi:II} can readily be derived from the primal worst-case expectation problem~\eqref{eq:worst-case:expectation} or from its finite dual reformulation~\eqref{eq:finite-dual-phi}. We find it insightful to derive~\eqref{eq:finite:phi:II} from~\eqref{eq:finite-dual-phi}. This is also more consistent with the general proof strategy outlined in Section~\ref{sec:proof-strategy-tractability}. We will briefly touch on the derivation of~\eqref{eq:finite:phi:II} from the primal problem~\eqref{eq:worst-case:expectation} after the proof.

\begin{proof}[Proof of Theorem~\ref{thm:finite:convex:phi:II}]
Assume first that $\phi^\infty(1)<\infty$. Under the assumptions stated in the theorem, the worst-case expectation problem~\eqref{eq:worst-case:expectation} and its dual~\eqref{eq:finite-dual-phi} share the same optimal value thanks to Theorem~\ref{thm:finite:convex:phi:I}. By dualizing the single explicit constraint in~\eqref{eq:weak-duality-phi-divergence} and using Lemma~\ref{lem:conjugate-perspective}\,\ref{lem:conjugate-perspective-1}, we thus find
\begin{align*}
	\sup_{\P \in \cP} \; \E_\P[\ell(Z)] &=  \inf_{\lambda_0\in\R,\lambda\in\R_+} \lambda_0  + \lambda r + \sum_{i \in [N]} \hat p_i \bigg(\sup_{y_i\in\R_+} y_i\left( \ell(\hat z_i)-\lambda_0\right)-\lambda\phi(y_i)\bigg) \\
	&\qquad + \sup_{p_0\in\R_+} \left( \overline \ell -\lambda_0 -\lambda\phi^\infty(1)\right)\,p_0.
\end{align*}
Interchanging the infima and suprema and rearranging terms further yields
\begin{align*}
	& \sup_{\P \in \cP} \; \E_\P[\ell(Z)] \\[-1ex] & =  \sup_{p_0,y_1,\ldots,y_N\in\R_+} p_0\overline \ell + \sum_{i\in[N]} \hat p_i y_i \ell(\hat z_i) + \inf_{\lambda_0\in\R} \bigg(1-p_0- \sum_{i\in[N]} \hat p_i y_i \bigg) \lambda_0 \\
	&\hspace{2cm} +\inf_{\lambda\in\R_+} \bigg(r-p_0\phi^\infty(1)- \sum_{i\in[N]} \phi(y_i)  \bigg)  \lambda \\
	& = \left\{ \begin{array}{cll} \displaystyle \sup_{p_0,y_0\ldots, y_N\in\R_+} & \displaystyle p_0\overline \ell + \sum_{i \in [N]}  \hat p_i y_i \ell(\hat z_i) \\[3ex]
		\st & \displaystyle  p_0+ \sum_{i \in [N]}  \hat p_i y_i=1,~~ p_0\phi^\infty(1) + \sum_{i \in [N]}  \hat p_i \phi(y_i) \leq r.
	\end{array} \right.
\end{align*}
The first equality in the above expression follows from strong duality, which holds because~$r>0$ and~$\phi$ is continuous at~$1$. Indeed, these conditions ensure that the resulting maximization problem admits a Slater point with $p_0=0$ and $y_i=1$ for all $i\in[N]$. The substitution $p_i\leftarrow \hat p_i y_i$, $i\in[N]$, finally shows that the obtained problem is equivalent to~\eqref{eq:finite:phi:II}. This proves the claim for $\phi^\infty(1)<\infty$.

Suppose next that $\phi^\infty(1) = \infty$ in which case $0 \, \phi^\infty(1)$ evaluates to~$\infty$. Hence, the constraint in~\eqref{eq:weak-duality-phi-divergence} is satisfied for any~$(\lambda_0,\lambda)\in\R\times\R_+$ and is thus redundant. By repeating the steps from the first part of the proof with obvious minor modifications shows that~\eqref{eq:finite:phi:II} still holds if we assume that~$p_0\phi^\infty(1)$ and $p_0\overline\ell$ evaluate to~$0$ when~$p_0=0$. Indeed, this means that~$p_0=0$ is the only feasible solution in~\eqref{eq:finite:phi:II}, and problem~\eqref{eq:finite:phi:II} can be simplified by eliminating~$p_0$ altogether.
\end{proof}

The finite bi-dual reformulation on the right hand side of~\eqref{eq:finite:phi:II} has a linear objective function and a compact convex feasible region. Therefore, it is solvable thanks to Weierstrass' maximum theorem. In particular, note that the feasible region is a subset of the probability simplex in~$\R^{N+1}$. If there exists a worst-case scenario $\hat z_0\in\argmax_{z\in\cZ} \ell(\hat z)$ (which must satisfy $\ell(z_0)=\overline\ell$), then any maximizer $p^\star$ of the bi-dual can be used to construct an extremal distribution $\P^\star= \sum_{i=0}^N p^\star_i \delta_{\hat z_i}$ for the worst-case expectation problem~\eqref{eq:worst-case:expectation}. Indeed, the constraints of problem~\eqref{eq:finite:phi:II} ensure that~$p_0^\star,\ldots,p_N^\star$ are non-negative probabilities that sum to~$1$. Thus,~$\P^\star$ is a valid distribution supported on~$\cZ$. Setting $\rho=\sum_{i=0}^N \delta_{\hat z_i}$, we also find 
\begin{align*}
\D_\phi(\P^\star,\hat\P) & = \int_{\cZ}
\phi^\pi \left( \frac{\diff \P}{\diff \rho}(z), \frac{\diff \hat{\P}}{\diff \rho}(z) \right) \diff \rho(z)\\
&= \phi^\pi \left(p_0^\star,0\right) + \sum_{i\in[N]} \phi^\pi \left( p^\star_i, \hat p_i\right) \leq r,
\end{align*}
where the first equality exploits the definition of~$\D_\phi$, and the second equality exploits our choice of the reference distribution~$\rho$. In addition, the inequality follows from the constraints of problem~\eqref{eq:finite:phi:II} and the observation that
\[
\phi^\pi (p_0^\star,0) = \phi^\infty(p_0^\star) = p_0^\star \phi^\infty(1).
\]
This confirms that~$\P^\star$ is feasible in~\eqref{eq:worst-case:expectation}. Also, its objective function value equals
\[
\E_{\P^\star}[\ell(Z)] = \sum_{i=0}^N p_i^\star \ell(\hat z_i).
\]
As~$\ell(\hat z_0)=\overline\ell$, we may conclude that $\E_{\P^\star}[\ell(Z)]$ coincides with the maximum of the bi-dual reformulation in~\eqref{eq:finite:phi:II}, which in turn matches the supremum of~\eqref{eq:worst-case:expectation} by virtue of Theorem~\ref{thm:finite:convex:phi:II}. Hence, $\P^\star$ is indeed a maximizer of problem~\eqref{eq:worst-case:expectation}.

Recall that if~$\phi^\infty(1)=\infty$, then~$\D_\phi(\P,\hat\P)=\infty$ unless~$\P\ll\hat\P$. Therefore, every distribution~$\P$ in a $\phi$-divergence ambiguity set around~$\hat\P$ must be absolutely continuous with respect to~$\hat\P$. If~$\phi^\infty(1)<\infty$, on the other hand, then~$\P$ can assign a positive probability to points in~$\cZ$ that have zero probability under~$\hat\P$. Note that $\D_\phi(\P,\hat\P)$ only depends on {\em how much} probability mass~$\P$ removes from the support of~$\hat\P$, but it does not depend on {\em where} that probability mass is moved. As nature aims to maximize the expected loss, it will move all of this probability mass to a point with maximal loss within~$\cZ$ ({\em i.e.}, to some point~$\hat z_0 \in\argmax_{z\in\cZ} \ell(\hat z)$). 

If~$\cP$ is the {\em restricted} $\phi$-divergence ambiguity set~\eqref{eq:restricted-phi-divergence-ambiguity-set}, Assumption~\ref{assmp:discrete:reference} holds, $r>0$ and~$\phi$ is continuous at~$1$, then Theorem~\ref{thm:finite:convex:phi:II} remains valid with a minor modification. That is, one must append the constraint~$p_0=0$ to the finite bi-dual reformulation on the right hand side of~\eqref{eq:finite:phi:II}. Details are omitted for brevity.

\subsection{Optimal Transport Ambiguity Sets}
\label{sec:wc-distribution:transport}

Recall that the optimal transport ambiguity set~\eqref{eq:OT-ambiguity-set} is defined~as
\begin{align*}
\cP = \left\{ \P \in \cP(\cZ) \, : \, \OT_c(\P, \hat \P) \leq r \right\},
\end{align*}
where~$r\geq 0$ is a size parameter, $c$ is a transportation cost function in the sense of Definition~\ref{def:cost}, $\OT_c$ is the corresponding optimal transport discrepancy in the sense of Definition~\ref{def:OT}, and~$\hat\P\in\cP(\cZ)$ is a reference distribution. We will first show that the worst-case expectation problem~\eqref{eq:worst-case:expectation} over an optimal transport ambiguity set can often be reformulated as a finite convex minimization problem. To this end, we restrict attention to discrete reference distributions as in Assumption~\ref{assmp:discrete:reference}, and we impose convexity conditions on the transportation cost function, the loss function, and the support set $\cZ$. In addition, we impose a mild technical condition on the support points of the discrete reference distribution~$\hat \P$.

\begin{assumption}[Regularity Conditions for Optimal Transport Ambiguity Sets] ~\vspace{-3.5ex}
\label{assmp:convexity}
\begin{enumerate}[label=(\roman*)]
	\item \label{assmp:loss} The loss function $\ell$ is a point-wise maximum of $J \in \N$ concave functions, that is, $\ell(z) = \max_{j \in [J]} \ell_{j}(z)$, where $-\ell_{j}:\cZ\to\R$ is proper, convex and closed.
	\item \label{assmp:convex:support} 
	The support set is representable as $\cZ = \{z \in \R^d: g_k(z) \leq 0 ~ \forall k \in [K] \}$ for some $K \in \N$, where $g_k:\cZ\to\overline\R$ is proper, convex and closed.
	\item \label{assmp:cost} The transportation cost function~$c(z,\hat z)$ is convex in~$z$ for every fixed~$\hat z \in \mathcal{Z}$.
	\item \label{assmp:slater} The support point $\hat z_i$ belongs to $\rint(\dom(c(\cdot, \hat z_i)))$ and constitutes a Slater point for~$\cZ$ in the sense of Definition~\ref{def:slater} for every $i \in [N]$.
\end{enumerate}
\end{assumption}

Assumption~\ref{assmp:convexity}\,\ref{assmp:loss} is non-restrictive because any continuous function~$\ell$ on a compact set~$\cZ$ can be uniformly approximated by a pointwise maximum of finitely many concave functions~$\ell_j$, $j\in[J]$, albeit maybe at the expense of requiring large numbers~$J$ of pieces. Assumptions~\ref{assmp:convexity}\,\ref{assmp:convex:support} and~\ref{assmp:cost} are restrictive but satisfied by support sets and transportation cost functions commonly encountered in applications. Finally, Assumption~\ref{assmp:convexity}\,\ref{assmp:slater} is of a purely technical nature and can always be enforced by slightly perturbing the problem data.

\begin{theorem}[Finite Dual Reformulation for Optimal Transport Ambiguity Sets]
\label{thm:finite:convex:OT:I}
If~$\cP$ is the optimal transport ambiguity set~\eqref{eq:OT-ambiguity-set} and Assumptions~\ref{assmp:discrete:reference} and~\ref{assmp:convexity} hold, then the worst-case expectation problem~\eqref{eq:worst-case:expectation} obeys the weak duality relation
\begin{align}
	\nonumber
	&\sup_{\P \in \cP} ~ \E_\P[\ell(Z)] \\
	&\leq \left\{
	\begin{array}{cl@{\;}l}
		\inf & \displaystyle \lambda r + \sum_{i \in [N]} \hat p_i s_i \\[2.5ex]
		\st & \lambda\in\R_+,~ \alpha_{ijk} \in \R_+, ~ s_i \in \R & \forall i \in [N], j \in [J], k \in [K]\\
		& \zeta^{\ell}_{ij}, \zeta^{c}_{ij}, \zeta^{g}_{ijk} \in \R^d & \forall i \in [N], j \in [J], k \in [K] \\[1ex]
		& (-\ell_j)^*(\zeta^{\ell}_{ij}) + (c_i^*)^\pi( \zeta^{c}_{ij}, \lambda) \\[0.5ex]
		& \hspace{5mm} +\displaystyle \sum_{k \in [K]} (g_k^*)^\pi( \zeta^{g}_{ijk}, \alpha_{ijk}) \leq s_i & \forall i \in [N], j \in [J] \\[3ex]
		& \displaystyle \zeta^{\ell}_{ij} + \zeta^{c}_{ij} + \sum_{k \in [K]} \zeta^{g}_{ijk} = 0 & \forall i \in [N], j \in [J],
	\end{array} \right.
	\label{eq:finite-dual-ot}
\end{align}
where $c_i : \cZ \to \overline \R$ is defined through $c_i(z) = c(z, \hat z_i)$ for every $i \in [N]$. If~$r>0$, then strong duality holds, that is, the above inequality becomes an equality.
\end{theorem}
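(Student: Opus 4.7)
The plan is to specialize the abstract strong duality result of Theorem~\ref{thm:duality:OT}, then convert the resulting semi-infinite program into a finite convex program by epigraph reformulation combined with Proposition~\ref{prop:dual-reformulation} and the conjugate calculus from Lemmas~\ref{lem:conjugate-perspective} and~\ref{lem:conjugate-sums}.

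First, I would invoke Theorem~\ref{thm:duality:OT}, whose hypotheses are met because Assumption~\ref{assmp:convexity}\,\ref{assmp:loss} implies that $\ell$ is upper semicontinuous (it is even continuous on $\cZ$) and Assumption~\ref{assmp:discrete:reference} together with $\ell$ being real-valued guarantees $\E_{\hat\P}[\ell(\hat Z)]>-\infty$. This yields the weak duality bound
\begin{align*}
    \sup_{\P \in \cP} \; \E_\P[\ell(Z)]
    \;\leq\; \inf_{\lambda \in \R_+} \lambda r + \sum_{i\in[N]} \hat p_i \sup_{z\in\cZ} \bigl(\ell(z) - \lambda\, c_i(z)\bigr),
\end{align*}
with equality when $r>0$; here I have used the discreteness of $\hat\P$ to replace the expectation by a finite sum. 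Decomposing $\ell = \max_{j\in[J]}\ell_j$ lets me pull the maximum over $j$ outside the inner supremum, and introducing epigraph variables $s_i\in\R$ to majorize each $\sup_{z\in\cZ}\bigl(\ell_j(z)-\lambda c_i(z)\bigr)$ converts the problem to minimizing $\lambda r + \sum_i \hat p_i s_i$ subject to the semi-infinite constraint
\begin{align*}
    s_i + (-\ell_j)(z) + \lambda\, c_i(z) \;\geq\; 0 \qquad \forall z\in\cZ,\ \forall i\in[N],\ \forall j\in[J].
\end{align*}

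Next, I would apply Proposition~\ref{prop:dual-reformulation} to each of these semi-infinite constraints, identifying the objective function $f$ in that proposition with $s_i + (-\ell_j)(\cdot) + \lambda\, c_i(\cdot)$ and the constraint functions with the $g_k$ that define $\cZ$ through Assumption~\ref{assmp:convexity}\,\ref{assmp:convex:support}. Using Lemma~\ref{lem:conjugate-sums} to split the conjugate of the sum $(-\ell_j)(\cdot)+\lambda c_i(\cdot)$ into an infimal convolution of the individual conjugates, and using Lemma~\ref{lem:conjugate-perspective}\,\ref{lem:conjugate-perspective-1} to rewrite $(\lambda c_i)^*$ as the perspective $(c_i^*)^\pi(\cdot,\lambda)$, each semi-infinite constraint becomes the system
\begin{align*}
    (-\ell_j)^*(\zeta^\ell_{ij}) + (c_i^*)^\pi(\zeta^c_{ij},\lambda) + \sum_{k\in[K]}(g_k^*)^\pi(\zeta^g_{ijk},\alpha_{ijk}) \;\leq\; s_i,
\end{align*}
together with $\zeta^\ell_{ij}+\zeta^c_{ij}+\sum_k \zeta^g_{ijk}=0$ and $\alpha_{ijk}\in\R_+$, which is exactly the form appearing in the theorem statement. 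Strong duality in~\eqref{eq:finite-dual-ot} when $r>0$ then follows because strong duality already holds at the level of Theorem~\ref{thm:duality:OT}, and Proposition~\ref{prop:dual-reformulation} produces an equivalent (not merely a relaxation) of each semi-infinite constraint.

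The main subtlety will be verifying the Slater-type hypothesis of Proposition~\ref{prop:dual-reformulation} uniformly in $i$ and $j$: one must exhibit a common $\bar z$ lying in the relative interiors of the domains of $-\ell_j$, $c_i$ and each $g_k$, and satisfying $g_k(\bar z)<0$ for every nonlinear $g_k$. The natural candidate is $\bar z = \hat z_i$, which by Assumption~\ref{assmp:convexity}\,\ref{assmp:slater} is a Slater point for $\cZ$ and lies in $\rint(\dom(c_i))$; since $\ell_j$ is real-valued on $\cZ$ by Assumption~\ref{assmp:convexity}\,\ref{assmp:loss}, we also have $\hat z_i\in\rint(\dom(-\ell_j))$ after extending $-\ell_j$ by $+\infty$ off $\cZ$. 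Care must also be taken at the boundary case $\lambda=0$ so that the perspective $(c_i^*)^\pi(\cdot,0)=(c_i^*)^\infty(\cdot)$ is interpreted correctly; this is already built into Lemma~\ref{lem:conjugate-perspective}, but flagging it cleanly in the proof is what I would expect to require the most attention.
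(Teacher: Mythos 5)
Your proposal is correct and takes essentially the same route as the paper's own proof: both begin with Theorem~\ref{thm:duality:OT} plus discreteness of $\hat\P$ to reach the semi-infinite dual, split $\ell=\max_j\ell_j$ into separate constraints with epigraph variables $s_i$, and then convert each constraint to a finite one by the conjugate machinery of Lemmas~\ref{lem:conjugate-sums} and~\ref{lem:conjugate-perspective} together with the Slater point $\hat z_i$ guaranteed by Assumption~\ref{assmp:convexity}\,\ref{assmp:slater}. The only cosmetic difference is that you package the last step through Proposition~\ref{prop:dual-reformulation} while the paper applies Theorem~\ref{thm:convex:duality} directly to the embedded maximization over $z$; since Proposition~\ref{prop:dual-reformulation} is itself a corollary of that theorem, the two are equivalent, and your flagged subtleties (the $\lambda=0$ boundary case of the perspective, and identifying a valid Slater point for the joint constraint system) are the correct places where care is required.
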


The dual minimization problem of Theorem~\eqref{thm:finite:convex:OT:I} constitutes a finite convex program because the conjugates~$(-\ell_j)^*$, $c_i^*$ and~$g_k^*$ and their perspectives are convex functions. It accommodates~$\cO(NJK)$ decision variables and~$\cO(NJ)$ constraints.

\begin{proof}[Proof of Theorem~\ref{thm:finite:convex:OT:I}]
By Theorem~\ref{thm:duality:OT}, we have
\begin{align*}
	\sup_{\P \in \cP} ~ \E_\P \left[ \ell (Z) \right]
	\leq \left\{
	\begin{array}{cll}
		\inf & \displaystyle \lambda r + \sum_{i \in [N]} \hat p_i s_i \\
		\st & \lambda \in \R_+, ~ s_i \in \R & \forall i \in [N] \\
		& \displaystyle \sup_{z \in \cZ}~ \ell(z) - \lambda c(z, \hat z_i) \leq s_i & \forall i \in [N],
	\end{array}
	\right.
\end{align*}
where~$s_i$ represents an auxiliary epigraphical decision variable for any $i\in[N]$. By Assumption~\ref{assmp:convexity}\,\ref{assmp:loss} and the definition of the functions~$c_i$, $i\in[N]$, the above minimization problem is equivalent to the following robust convex program.
\begin{align}
	\label{eq:strong:duality:robust}
	\begin{array}{cll}
		\inf & \displaystyle \lambda r + \sum_{i \in [N]} \hat p_i s_i \\[1ex]
		\st & \lambda \in \R_+, ~ s_i \in \R \\
		& \displaystyle \sup_{z \in \cZ} \, \ell_j(z) - \lambda c_i(z) \leq s_i ~~ \forall i \in [N], \, j \in[J]
	\end{array}
\end{align}
For any fixed~$i\in[N]$ and~$j\in[J]$, Assumptions~\ref{assmp:convexity}\,\ref{assmp:loss} and~\ref{assmp:convexity}\,\ref{assmp:convex:support} imply that the embedded maximization problem over~$z$ constitutes a convex program. In addition, this problem admits a Slater point~$\hat z_i$ thanks to Assumptions~\ref{assmp:convexity}\,\ref{assmp:loss} and~\ref{assmp:convexity}\,\ref{assmp:slater}. In order to dualize this convex program, we first recall from Lemma~\ref{lem:conjugate-sums} that the conjugate of $f(z)=-\ell_j(z)+\lambda c_i(z)$ at~$\zeta\in\R^d$ can be represented as 
\[
f^*(\zeta) = \min_{\zeta_{ij}^\ell,\zeta_{ij}^c \in\R^d} \left\{ (-\ell_j)^*(\zeta_{ij}^\ell) + (c_i^*)^\pi(\zeta^c_{ij}, \lambda) : \zeta_{ij}^\ell + \zeta_{ij}^c = \zeta\right\}.
\]
By Theorem~\ref{thm:convex:duality}, we thus obtain
\begin{align*}
	\sup_{z \in \cZ} ~ \ell_j(z) - \lambda c_i(z) 
	= \left\{ 
	\begin{array}{cl}
		\min & \displaystyle (-\ell_j)^*(\zeta^{\ell}_{ij}) \!+\! (c_i^*)^\pi( \zeta^{c}_{ij}, \lambda) + \sum_{k \in [K]} (g_k^*)^\pi( \zeta^{g}_{ijk}, \alpha_{ijk}) \\[2.5ex]
		\st & \alpha_{ijk} \in \R_+, ~ \zeta^{\ell}_{ij}, \zeta^{c}_{ij}, \zeta^{g}_{ijk} \in \R^d \quad \forall k \in [K] \\[1.5ex]
		& \displaystyle \zeta^{\ell}_{ij} + \zeta^{c}_{ij} + \sum_{k \in [K]} \zeta^{g}_{ijk} = 0.
	\end{array}
	\right.
\end{align*}
Next, we replace each embedded maximization problem in~\eqref{eq:strong:duality:robust} with its equivalent dual minimization problem, and we eliminate the corresponding minimization operators, which is allowed because all minima are attained. This yields the desired finite convex reformulation of the problem dual to~\eqref{eq:worst-case:expectation}, and it establishes weak duality. If~$r > 0$, then strong duality follows from Theorem~\ref{thm:duality:OT}.
\end{proof}

The finite convex reformulation of Theorem~\ref{thm:finite:convex:OT:I} was first derived under the more restrictive assumption that $c(z, \hat z) = \| z - \hat z \|$ by \citet[Theorem~4.2]{mohajerin2018data} and later generalized to arbitrary convex transportation cost functions by \citet[\textsection~6]{zhen2023unification}. We next derive a finite convex bi-dual for the worst-case expectation problem~\eqref{eq:worst-case:expectation} over the optimal transport ambiguity set~\eqref{eq:OT-ambiguity-set}, which forms the basis for identifying extremal distributions that (asymptotically) attain the supremum in~\eqref{eq:worst-case:expectation}. Our derivation will rely on the following two lemmas.

First, we derive a formula for the conjugate of a scaled perspective function. 

\begin{lemma}[Conjugates of Scaled Perspectives I]
\label{lem:perspective-multiplication}
If $f:\R^d\to\overline\R$ is proper, convex and closed, and if $\alpha\in\R_+$, then, for all $y\in\R^d$ and $y_0\in\R$, we have 
\begin{align*}
	(\alpha f^\pi)^*(y, y_0) = \begin{cases}
		0 & \text{if~} (f^*)^\pi(y, \alpha) \leq - y_0 ,\\
		\infty & \text{otherwise.}
	\end{cases}
\end{align*}
\end{lemma}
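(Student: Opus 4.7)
The plan is to compute the conjugate $(\alpha f^\pi)^*(y, y_0)$ by exploiting the identity
\[
\alpha f^\pi(z, t) = f^\pi(\alpha z, \alpha t) \qquad \forall \alpha \geq 0, \; z \in \R^d, \; t \in \R,
\]
which I would verify first (separately for $t > 0$, where it reduces to $\alpha t f(z/t) = (\alpha t) f((\alpha z)/(\alpha t))$, for $t = 0$, where it follows from the positive homogeneity of $f^\infty$, and for $t < 0$, where both sides are $+\infty$). This reduces the problem to computing the conjugate of the perspective itself.

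I would handle the case $\alpha > 0$ by the substitution $(z', t') = (\alpha z, \alpha t)$ in the supremum defining the conjugate, which yields $(\alpha f^\pi)^*(y, y_0) = (f^\pi)^*(y/\alpha, y_0/\alpha)$. The core calculation is then to show that
\[
(f^\pi)^*(y, y_0) = \begin{cases} 0 & \text{if } f^*(y) + y_0 \leq 0, \\ +\infty & \text{otherwise.} \end{cases}
\]
For this, I would split the defining supremum by $t \geq 0$: at $t > 0$, the inner substitution $w = z/t$ and the definition of $f^*$ give a contribution $\sup_{t>0} t(f^*(y) + y_0)$, which is $0$ when $f^*(y) + y_0 \leq 0$ and $+\infty$ when $f^*(y) + y_0 > 0$. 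At $t = 0$, using $f^\pi(z,0) = f^\infty(z) = \delta^*_{\dom(f^*)}(z)$ from \cite[Theorem~13.3]{rockafellar1970convex}, the contribution $\sup_z y^\top z - \delta^*_{\dom(f^*)}(z)$ equals $\delta_{\cl(\dom(f^*))}(y)$, and I would argue that this produces no new constraint because the finiteness required by the $t>0$ branch already forces $y \in \dom(f^*) \subseteq \cl(\dom(f^*))$. Unwinding the substitution then gives $(\alpha f^\pi)^*(y, y_0) = 0$ iff $\alpha f^*(y/\alpha) \leq -y_0$, i.e., iff $(f^*)^\pi(y, \alpha) \leq -y_0$, as required.

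For $\alpha = 0$, the convention $0 \cdot \infty = \infty$ gives $0 \cdot f^\pi = \delta_{\dom(f^\pi)}$, so $(0 \cdot f^\pi)^* = \delta^*_{\dom(f^\pi)}$. I would again split by $t \geq 0$: the $t > 0$ part contributes $\sup_{t>0} t(\delta^*_{\dom(f)}(y) + y_0)$, and using $\delta^*_{\dom(f)} = \delta^*_{\dom(f^{**})} = (f^*)^\infty$ (again by \cite[Theorem~13.3]{rockafellar1970convex} applied to $f^*$), this contribution equals $0$ when $(f^*)^\infty(y) + y_0 \leq 0$ and $+\infty$ otherwise. By definition, $(f^*)^\infty(y) = (f^*)^\pi(y, 0)$, which matches the claim. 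The $t = 0$ contribution $\delta^*_{\dom(f^\infty)}(y)$ again yields no additional constraint in the regime where the $t > 0$ part is finite.

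The main obstacle I anticipate is the careful bookkeeping in the $\alpha = 0$ case: ensuring that the $t = 0$ contribution to $\delta^*_{\dom(f^\pi)}$ is subsumed by the $t > 0$ condition, since $\dom(f^\infty)$ involves the barrier cone of $\dom(f^*)$ rather than $\dom(f^*)$ itself. An alternative route, which may be cleaner, is to apply Lemma~\ref{lem:conjugate-perspective}\ref{lem:conjugate-perspective-1} to the proper closed convex function $f^\pi$ on $\R^d \times \R$, yielding $(\alpha f^\pi)^* = ((f^\pi)^*)^\pi(\cdot, \alpha)$, and then use that $(f^\pi)^* = \delta_C$ with $C = \{(y, y_0) : f^*(y) + y_0 \leq 0\}$ to conclude via $\alpha C = \{(y, y_0) : (f^*)^\pi(y, \alpha) + y_0 \leq 0\}$ for $\alpha > 0$ and $\rec(C) = \{(y, y_0) : (f^*)^\infty(y) + y_0 \leq 0\}$ for $\alpha = 0$. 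This avoids the explicit $t = 0$ bookkeeping above and unifies both cases into a single computation.
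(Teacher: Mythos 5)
Your proposal is correct, and the main route is close in spirit to the paper's (both reduce everything to identifying a conjugate with the indicator of the set $\cC=\{(y,y_0): f^*(y)+y_0\leq 0\}$). The details differ: for $\alpha>0$ you pull $\alpha$ into the arguments via $\alpha f^\pi(z,t)=f^\pi(\alpha z,\alpha t)$ and compute $(f^\pi)^*$ directly, whereas the paper uses $\alpha f^\pi=(\alpha f)^\pi$ and invokes \cite[Corollary~13.5.1]{rockafellar1970convex} to recognize $(\alpha f)^\pi$ as the support function $\delta^*_\cC$. Both are sound.

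The bookkeeping you flag for $\alpha=0$ is a genuine subtlety, and you resolve it correctly. Notice that the paper's own argument there is a bit terse: it justifies restricting to $\lambda>0$ only by noting $\dom(f^\pi)\subseteq\R^d\times\R_+$, which excludes $\lambda<0$ but does not by itself dispose of $\lambda=0$. Your observation that when the $t>0$ branch yields $(f^*)^\infty(y)+y_0\leq 0$ one has $\delta^*_{\dom(f)}(y)<\infty$, and therefore $y^\top z\leq 0$ for every recession direction $z$ of $\dom(f)$ (in particular for every $z\in\dom(f^\infty)\subseteq\rec(\dom(f))$), is exactly the missing link. Your alternative route is genuinely different from the paper's and, in my view, cleaner: applying Lemma~\ref{lem:conjugate-perspective}\,\ref{lem:conjugate-perspective-1} to the proper, closed, convex function $f^\pi$ itself gives $(\alpha f^\pi)^*=((f^\pi)^*)^\pi(\cdot,\alpha)=\delta_\cC^\pi(\cdot,\alpha)$ in one stroke. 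For $\alpha>0$ this is $\delta_{\alpha\cC}$, and for $\alpha=0$ it is $\delta_{\rec(\cC)}$, where $\rec(\cC)=\{(y,y_0):(f^*)^\infty(y)+y_0\leq 0\}$ follows from the recession-cone characterization of level sets of a closed proper convex function (here $h(y,y_0)=f^*(y)+y_0$ with a nonempty $0$-sublevel set). That route sidesteps the $t=0$ bookkeeping entirely and treats both values of $\alpha$ uniformly, which the paper's two-case argument does not.
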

\begin{proof}
Assume first that $\alpha > 0$. If $\lambda>0$, then we have
\[
\alpha f^\pi(z,\lambda) = \alpha\lambda f(z/\lambda) = (\alpha f)^\pi(z,\lambda)\quad\forall z\in\R^d.
\]
Similarly, if $\lambda = 0$, then $\alpha f^\pi(z,\lambda) = \alpha f^\infty(z) = (\alpha f)^\infty(z) = (\alpha f)^\pi(z,\lambda)$ for all $z\in\R^d$.
We thus have shown that $\alpha f^\pi = (\alpha f)^\pi$. Next, define the set
\begin{align*}
	\cC & = \big\{ (y, y_0) \in \R^d\times \R: (\alpha f)^*(y) \leq -y_0 \big\}\\
	& = \big\{ (y, y_0)\in \R^d\times \R: (f^*)^\pi (y, \alpha) \leq -y_0 \big\},
\end{align*}
where the second equality follows from the definition of the perspective function. By~\citep[Corollary~13.5.1]{rockafellar1970convex}, we have $(\alpha f)^\pi =  \delta^*_\cC$. As~$\cC$ is closed, this implies that $(f^\pi)^* = \delta_{\cC}^{**} = \delta_{\cC}$, and thus the claim follows for~$\alpha>0$.

Assume next that~$\alpha=0$. In this case we have $\alpha f^\pi = \delta_{\dom(f^\pi)} $ thanks to our rules of extended arithmetic. This observation implies that 
\begin{align*}
	(\alpha f^\pi)^* (y,y_0)
	&= \delta^*_{\dom(f^\pi)}(y,y_0) = \sup_{\lambda\in\R_{++}} \sup_{ z\in\R^d} \left\{ y^\top z + y_0 \lambda : (z, \lambda) \in \dom(f^\pi) \right\} \\
	& = \sup_{\lambda \in\R_{++}}  \,\lambda \sup_{ z\in\R^d } \left\{y^\top (z/\lambda)  : z/\lambda \in \dom(f) \right\}+ y_0\lambda  \\
	&= \sup_{\lambda \in\R_{++}} \, \lambda \delta^*_{\dom(f)}(y) + \lambda y_0 = \begin{cases}
		0 & \text{if~} \delta_{\dom(f)}^*(y) + y_0 \leq 0, \\
		\infty & \text{otherwise}.
	\end{cases}
\end{align*}
Note that it is sufficient to optimize only over~$\lambda>0$ because $\dom(f^\pi) \subseteq \R^d\times \R_+$. As~$f$ is convex and closed, we have~$f=f^{**}$ thanks to Lemma~\ref{lem:bi:coincidence}, and thus we find
\[
\delta^*_{\dom(f)}(y)= \delta^*_{\dom(f^{**})}(y) = (f^*)^\infty (y) = (f^*)^\pi (y, 0),
\]
where the second and the third equalities follow from \citep[Theorem~13.3]{rockafellar1970convex} and from the definition of the perspective, respectively. Combining the above observations proves the claim for~$\alpha=0$.
\end{proof}

The next lemma derives a formula for the conjugate of a sum of scaled prespectives. It thus generalizes Lemma~\ref{lem:perspective-multiplication}, which addresses only one single scaled perspective, and it is also related to Lemma~\ref{lem:conjugate-sums}, which characterizes the conjugate of a sum of arbitrary convex functions---not necessarily scaled perspectives.

\begin{lemma}[Conjugates of Perspective Functions II]
\label{lem:conjugate-of-perspective:II}
Suppose that $f_i : \R^d \to \overline \R$, $i \in [m]$, are proper, convex and closed and that there is $\bar z \in \cap_{i \in [m]} \rint(\dom(f_i))$. Let $f(z_1, \dots, z_m, \lambda) = \sum_{i \in [m]} \alpha_i f_i^\pi(z_i, \lambda) $ be a weighted sum of the corresponding perspective functions with weight vector $\alpha \in \R_+^m$. Then, the conjugate of~$f$ satisfies
\begin{align*}
	f^*(y_1, \dots, y_m, y_0)
	&= \begin{cases}
		0 & 
		\left\{\begin{array}{l}
			\text{if } \exists \beta \in \R^m \text{ with }\sum_{i \in [m]} \beta_i = y_0 \text{ and}\\ (f_i^*)^\pi(y_i, \alpha_i) \leq - \beta_i ~~ \forall i \in [m],
		\end{array} \right. \\[2ex]
		\infty & \text{otherwise.}
	\end{cases}
\end{align*}
\end{lemma}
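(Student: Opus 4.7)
The plan is to express $f$ as a sum of $m$ functions defined on the product space $(\R^d)^m\times\R$, each depending on only two of the block coordinates, and then invoke Lemma~\ref{lem:conjugate-sums} together with Lemma~\ref{lem:perspective-multiplication}. Concretely, I would define $g_i:(\R^d)^m\times\R\to\overline\R$ via $g_i(z_1,\ldots,z_m,\lambda)=\alpha_i f_i^\pi(z_i,\lambda)$, so that $f=\sum_{i\in[m]} g_i$. Each $g_i$ inherits properness, convexity and closedness from $f_i^\pi$, since $f_i$ is proper, convex and closed and since extending a closed convex function by unconstrained (free) variables preserves these properties.

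To apply Lemma~\ref{lem:conjugate-sums}, I would verify the Slater-type hypothesis $\bigcap_{i\in[m]}\rint(\dom(g_i))\neq\emptyset$. Because $g_i$ depends only on $(z_i,\lambda)$, we have $\rint(\dom(g_i))=(\R^d)^{m-1}\times\rint(\dom(f_i^\pi))$ with the non-trivial factor placed in the $i$-th position. It therefore suffices to show that $(\bar z,1)\in\rint(\dom(f_i^\pi))$ for every $i$. This is a standard fact about perspectives: for $(z',t')$ sufficiently close to $(\bar z,1)$ with $t'>0$, the ratio $z'/t'$ lies in $\dom(f_i)$ because $\bar z\in\rint(\dom(f_i))$, and hence $(z',t')\in\dom(f_i^\pi)$, localizing $(\bar z,1)$ inside the relative interior.

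With this Slater condition in place, Lemma~\ref{lem:conjugate-sums} yields the exact formula
\[
f^*(y_1,\ldots,y_m,y_0) = \inf\Big\{\sum_{i\in[m]} g_i^*(w_i) : \sum_{i\in[m]} w_i = (y_1,\ldots,y_m,y_0)\Big\},
\]
with the infimum attained, where $w_i=(w_{i,1},\ldots,w_{i,m},w_{i,0})\in(\R^d)^m\times\R$. I would then compute $g_i^*$ directly: the supremum defining $g_i^*(w_i)$ is $+\infty$ whenever $w_{i,j}\neq 0$ for some $j\neq i$ (because the corresponding $z_j$ is unconstrained in the sup), and otherwise equals $(\alpha_i f_i^\pi)^*(w_{i,i},w_{i,0})$. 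Enforcing $w_{i,j}=0$ for all $j\neq i$ in the sum constraint forces $w_{i,i}=y_i$ and $\sum_i w_{i,0}=y_0$, so writing $\beta_i=w_{i,0}$ collapses the infimal convolution to an infimum over $\beta\in\R^m$ with $\sum_{i\in[m]}\beta_i=y_0$ of $\sum_{i\in[m]}(\alpha_i f_i^\pi)^*(y_i,\beta_i)$.

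Finally, Lemma~\ref{lem:perspective-multiplication} evaluates each $(\alpha_i f_i^\pi)^*(y_i,\beta_i)$ as $0$ when $(f_i^*)^\pi(y_i,\alpha_i)\leq-\beta_i$ and $+\infty$ otherwise, so the resulting infimum is $0$ precisely when there exists $\beta\in\R^m$ with $\sum_i\beta_i=y_0$ and $(f_i^*)^\pi(y_i,\alpha_i)\leq-\beta_i$ for every $i$, and $+\infty$ otherwise, which matches the claim. The only delicate step is the relative-interior verification $(\bar z,1)\in\rint(\dom(f_i^\pi))$; once that technicality about the perspective's domain is in hand, the rest is mechanical bookkeeping with the two preceding lemmas.
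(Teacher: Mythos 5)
Your proof is correct and follows essentially the same route as the paper's: the paper performs the variable splitting $\lambda_i=\lambda$ and dualizes the coupling constraints with multipliers $\beta_i$ via a Slater point built from $\bar z$, which is exactly the infimal-convolution computation you obtain by applying Lemma~\ref{lem:conjugate-sums} to the block functions $g_i$ on the product space and observing that the off-block components of each $w_i$ must vanish. Your verification that $(\bar z,1)\in\rint(\dom(f_i^\pi))$ plays the same role as the paper's Slater point $(z_i,\lambda_i,\lambda)=(\bar z,1,1)$, and the final appeal to Lemma~\ref{lem:perspective-multiplication} is identical.
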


\begin{proof}
By using a variable splitting trick as in the proof of Lemma~\ref{lem:conjugate-sums}, we find
\begin{align*}
	f^*(y_1, \dots, y_m, y_0)
	&= \sup_{z_1,\ldots, z_m\in\R^d} \sup_{\lambda\in\R_+} \, y_0 \lambda +\sum_{i \in [m]} y_i^\top z_i  - \sum_{i \in [m]} \alpha_i f_i^\pi(z_i, \lambda) \\[1ex]
	&= \left\{ \begin{array}{cl}
		\displaystyle \sup_{\substack{ z_1,\ldots,z_m \in \R^d \\ \lambda \in \R, \, \lambda_1,\ldots,\lambda_m \in \R_+}} & \displaystyle y_0 \lambda + \sum_{i \in [m]} y_i^\top z_i - \alpha_i f_i^\pi(z_i, \lambda_i)  \\
		\st & \lambda_i = \lambda \quad i \in [m]
	\end{array} \right.
\end{align*} 
The resulting convex maximization problem admits a Slater point. To see this, recall that there exists $\bar z \in \cap_{i \in [m]} \rint(\dom(f_i))$. As $\dom(f^\pi)$ is contained in the cone generated by $\dom(f)\times\{1\}$, we may thus conclude that the solution with~$\lambda=1$, $\lambda_i=1$ and~$z_i=\bar z$ for all $i\in[m]$ constitutes a Slater point. Therefore, the above maximization problem admits a strong Lagrangian dual, that is, we have
\begin{align*}
	& f^*(y_1, \dots, y_m, y_0) \\
	&= \min_{\beta_1,\ldots,\beta_m \in \R} \sup_{\substack{ z_1,\ldots,z_m \in \R^d \\ \lambda \in \R, \, \lambda_1,\ldots,\lambda_m \in \R_+}} y_0 \lambda + \sum_{i \in [m]} y_i^\top z_i - \alpha_i f_i^\pi(z_i, \lambda_i) + \beta_i (\lambda_i - \lambda) \\
	&= \min_{\beta_1,\ldots,\beta_m \in \R} \left\{ \sum_{i \in [m]} (\alpha_i f_i^\pi)^* (y_i, \beta_i) : \sum_{i \in [m]} \beta_i = y_0 \right\},
\end{align*}
see also Theorem~\ref{thm:convex:duality}. By Lemma~\ref{lem:perspective-multiplication}, we further have $(\alpha_i f_i^\pi)^* =\delta_{\cC_i}$, where
\[
\cC_i = \big\{ (y, y_0) \in \R^d\times\R : (f_i^*)^\pi (y, \alpha_i) \leq -y_0 \big\}
\]
for all $i\in[m]$. Substituting this alternative expression for~$(\alpha_i f_i^\pi)^*$ into the above dual problem yields the desired formula. Thus, the claim follows.
\end{proof}

We emphasize that Lemmas~\ref{lem:perspective-multiplication} and~\ref{lem:conjugate-of-perspective:II} are complementary to Lemma~\ref{lem:conjugate-of-perspective}. Indeed, while Lemma~\ref{lem:conjugate-of-perspective} evaluates the conjugate only with respect to the first argument of a perspective function, Lemmas~\ref{lem:perspective-multiplication} and~\ref{lem:conjugate-of-perspective:II} do so with respect to {\em both} arguments. We are now ready to derive a finite bi-dual reformulation of the worst-case expectation problem over an optimal transport ambiguity set.

\begin{theorem}[Finite Bi-Dual Reformulation for Optimal Transport Ambiguity Sets]
\label{thm:finite:convex:OT:II}
If~$\cP$ is the optimal transport ambiguity set~\eqref{eq:OT-ambiguity-set} and Assumptions~\ref{assmp:discrete:reference} and~\ref{assmp:convexity} hold, then the worst-case expectation problem~\eqref{eq:worst-case:expectation} satisfies the weak duality relation
\begin{align}
	&\sup_{\P \in \cP} ~ \E_\P[\ell(Z)] \notag \\
	&\hspace{-1ex}\leq \left\{
	\begin{array}{cl@{\hspace{-1em}}l}
		\sup & \displaystyle \sum_{i \in [N]} \sum_{j \in [J]} - (-\ell_j)^\pi ( p_{ij} \hat z_i + z_{ij}, p_{ij}) \\[3ex]
		\st & p_{ij} \in \R_+, ~ z_{ij} \in \R^d & \forall i \in [N], \, j \in [J] \\[1ex]
		& \displaystyle g_k^\pi ( p_{ij} \hat z_i + z_{ij}, p_{ij}) \leq 0 & \forall i \in [N], \, j \in [J], \, k \in [K] \\[1ex]
		& \displaystyle \sum_{j \in [J]} p_{ij} = \hat p_i & \forall i \in [N] \\[3ex]
		& \displaystyle \sum_{i \in [N]} \sum_{j \in [J]} c_i^\pi ( p_{ij} \hat z_i + z_{ij},p_{ij} ) \leq r,
	\end{array} 
	\right.
	\label{eq:finite:OT:II}
\end{align}
where $c_i : \cZ \to \overline \R$ is defined through $c_i(z) = c(z, \hat z_i)$ for every $i \in [N]$. If~$r>0$, then strong duality holds, that is, the above inequality becomes an equality.
\end{theorem}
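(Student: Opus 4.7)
The plan is to obtain~\eqref{eq:finite:OT:II} by dualizing the finite convex program on the right-hand side of the weak duality bound in Theorem~\ref{thm:finite:convex:OT:I}. I would attach a non-negative Lagrange multiplier $p_{ij}$ to each inequality constraint indexed by $(i,j) \in [N] \times [J]$ and a vector multiplier $z_{ij} \in \R^d$ to each equality constraint $\zeta^{\ell}_{ij} + \zeta^{c}_{ij} + \sum_k \zeta^{g}_{ijk} = 0$. Forming the Lagrangian and invoking the max-min inequality, the supremum over the epigraphical variables $s_i$ imposes $\sum_{j \in [J]} p_{ij} = \hat p_i$ by matching coefficients of $s_i$ (any violation of this relation sends the Lagrangian to $-\infty$).

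Next, the remaining joint supremum over $\zeta^{\ell}_{ij}, \zeta^{c}_{ij}, \zeta^{g}_{ijk}, \lambda$ and $\alpha_{ijk}$ decouples into pieces of precisely the shape handled by Lemmas~\ref{lem:perspective-multiplication} and~\ref{lem:conjugate-of-perspective:II}. For each $(i,j)$, the $\zeta^{\ell}_{ij}$-subproblem evaluates the Fenchel conjugate of $p_{ij}(-\ell_j)^*$ at a linear combination of $z_{ij}$ and $\hat z_i$; Lemma~\ref{lem:bi:coincidence} together with Lemma~\ref{lem:conjugate-perspective}\,\ref{lem:conjugate-perspective-1} returns the perspective $(-\ell_j)^\pi(p_{ij}\hat z_i + z_{ij}, p_{ij})$ of the original concave piece, where the shift $p_{ij}\hat z_i$ arises because $c_i(z) = c(z, \hat z_i)$ translates the origin (this shift can be made explicit by the substitution $z_{ij} \leftarrow z_{ij} - p_{ij}\hat z_i$ before dualizing). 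The joint suprema over $(\zeta^c_{ij}, \lambda)$ and $(\zeta^{g}_{ijk}, \alpha_{ijk})$ are exactly the setup of Lemma~\ref{lem:conjugate-of-perspective:II}: pairing the scale variables $\lambda$ and $\alpha_{ijk}$ with $\zeta^c_{ij}$ and $\zeta^{g}_{ijk}$ reproduces the original (not conjugate) perspectives $c_i^\pi$ and $g_k^\pi$ at the shifted arguments, and collecting the coupling multipliers yields the global transportation-cost bound $\sum_{i,j} c_i^\pi(p_{ij}\hat z_i + z_{ij}, p_{ij}) \leq r$ together with the pointwise support-set constraints $g_k^\pi(p_{ij}\hat z_i + z_{ij}, p_{ij}) \leq 0$. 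The surviving objective is precisely $\sum_{i,j} -(-\ell_j)^\pi(p_{ij}\hat z_i + z_{ij}, p_{ij})$ appearing in~\eqref{eq:finite:OT:II}.

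This Lagrangian manipulation establishes the weak duality bound. For strong duality when $r > 0$, Theorem~\ref{thm:finite:convex:OT:I} already identifies~\eqref{eq:worst-case:expectation} with the optimal value of its finite dual, so it remains only to show that the dual and the bi-dual agree. Invoking Theorem~\ref{thm:convex:duality}\,\ref{lem:slater:duality}, I would exhibit a Slater point for the bi-dual: setting $p_{ij} = \hat p_i / J$ and $z_{ij} = 0$ collapses the arguments of every perspective to $(p_{ij}\hat z_i, p_{ij})$, so each $g_k^\pi$ reduces to $p_{ij} g_k(\hat z_i)$ and each $c_i^\pi$ to $p_{ij} c_i(\hat z_i) = 0$; Assumption~\ref{assmp:convexity}\,\ref{assmp:slater} (namely that $\hat z_i$ is a Slater point of $\cZ$) together with $r > 0$ then guarantees strict feasibility of every nonlinear constraint.

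The main obstacle will be bookkeeping the translation by $\hat z_i$. The primal perspectives live in absolute coordinates while the natural Lagrange multipliers $z_{ij}$ parametrize displacements from $\hat z_i$, and reconciling these so that the argument $p_{ij}\hat z_i + z_{ij}$ surfaces uniformly across every perspective is the delicate part of the calculation. Once this substitution is pinned down and the $p_{ij} = 0$ branch is handled through the recession-function part of the perspective (so that mass of weight zero can only be placed along recession directions of $c_i$, $-\ell_j$ and $g_k$), the remainder of the derivation is a routine application of the conjugate dictionary assembled in Lemmas~\ref{lem:conjugate-perspective}, \ref{lem:perspective-multiplication} and~\ref{lem:conjugate-of-perspective:II}.
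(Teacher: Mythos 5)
Your derivation follows the paper's proof almost exactly: assign multipliers $p_{ij}$ and $z_{ij}$ to the two constraint groups of the finite dual~\eqref{eq:finite-dual-ot}, evaluate the resulting partial conjugates with Lemmas~\ref{lem:conjugate-perspective}, \ref{lem:perspective-multiplication} and~\ref{lem:conjugate-of-perspective:II}, and close the duality gap via Theorem~\ref{thm:convex:duality}\,\ref{lem:slater:duality} --- your Slater point $p_{ij}=\hat p_i/J$, $z_{ij}=0$ for~\eqref{eq:finite:OT:II} is correct and the whole argument goes through. The one thing to repair is your account of the shift $p_{ij}\hat z_i$: the Lagrangian dualization produces perspectives at the bare multiplier $z_{ij}$ (the atoms $\hat z_i$ never enter the partial conjugate calculations), and the shift is a purely cosmetic change of variables $z_{ij}\leftarrow z_{ij}-p_{ij}\hat z_i$ applied to the dual problem \emph{after} the dualization is finished; it does not arise from $c_i$ ``translating the origin'' and cannot be performed ``before dualizing'', since the variables $z_{ij}$ only come into existence when they are introduced as multipliers.
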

\begin{proof}
We will show that~\eqref{eq:finite:OT:II} is obtained by dualizing the finite dual reformulation~\eqref{eq:finite-dual-ot} of problem~\eqref{eq:worst-case:expectation}. To see this, we assign Lagrange multipliers $p_{ij}\in \R_+$ and $z_{ij}\in\R^d$, $i\in[N]$, $j\in[J]$, to the first and second constraint groups in~\eqref{eq:finite-dual-ot}, respectively. The Lagrangian dual of~\eqref{eq:finite-dual-ot} can then be represented compactly as
\begin{align*}
	\sup_{p \geq 0,z} \;\inf_{\substack{\lambda\geq 0, \alpha\geq 0 \\ s, \zeta^\ell, \zeta^c, \zeta^g}} L_1 (s; p, z) + L_2 (\zeta^\ell; p, z) + L_3 (\lambda, \zeta^c; p, z, \lambda) + L_4 (\alpha, \zeta^g; p, z),
\end{align*}
where the Lagrangian is additively separable with respect to four disjoint groups of primal decision variables, namely, $s$, $\zeta^\ell$, $(\lambda, \zeta^c)$ and $(\alpha,\zeta^g)$. The corresponding partial Lagrangians are defined as follows.
\begin{align*}
	L_1(s; p, z) &= \sum_{i\in[N]} \hat p_i s_i - \sum_{i\in[N]} \sum_{j\in[J]} p_{ij} s_i \\
	L_2(\zeta^\ell; p, z) &= \sum_{i\in[N]} \sum_{j\in[J]} p_{ij} \cdot (-\ell_j)^*(\zeta_{ij}^\ell) - z_{ij}^\top \zeta_{ij}^\ell \\
	L_3(\lambda, \zeta^c; p, z) & = \lambda r + \sum_{i\in[N]} \sum_{j\in[J]} p_{ij} \cdot (c_i^*)^\pi(\zeta_{ij}^c, \lambda) - z_{ij}^\top \zeta_{ij}^c \\
	L_4(\alpha, \zeta^g; p,z) & = \sum_{i\in[N]} \sum_{j\in[J]} \sum_{k\in[K]} p_{ij} \cdot (g_k^*)^\pi(\zeta_{ijk}^g, \alpha_{ijk}) - z_{ij}^\top \zeta_{ijk}^g
\end{align*}
These partial Lagrangians can be minimized separately with respect to the primal decision variables. For example, an elementary calcucation shows that
\begin{align*}
	\inf_{s} L_1(s; p, z) 
	= \begin{cases} 0 & \displaystyle \text{if} ~ \sum_{j\in[J]} p_{ij} = \hat p_i ~~ \forall i \in [N], \\ -\infty & \text{otherwise.} \end{cases}
\end{align*}
Recall now that $-\ell_j$ is proper, convex and closed, which implies via Lemma~\ref{lem:bi:coincidence} that $(-\ell_j)^{**}=-\ell_j$. Note also that minimizing $L_2(\zeta^\ell; p, z)$ with respect to~$\zeta^\ell$ amounts to evaluating the conjugate of a sum of conjugates with mutually different arguments. By using Lemma~\ref{lem:conjugate-perspective}\,\ref{lem:conjugate-perspective-1} and applying a few elementary manipulations we thus find
\begin{align*}
	\displaystyle \inf_{\zeta^\ell} L_2(\zeta^\ell; p, z) = \sum_{i \in [N]} \sum_{j \in [J]} - (-\ell_j)^\pi(z_{ij}, p_{ij}).
\end{align*}
Similarly, recall that~$c_i$ is proper, convex and closed such that $c_i^{**}=c_i$. Note also that minimizing $L_3(\lambda, \zeta^c; p, z)$ with respect to~$\lambda$ and~$\zeta^c$ amounts to evaluating the conjugate of a sum of perspective functions with one common argument. By using Lemma~\ref{lem:conjugate-of-perspective:II} and applying a few elementary manipulations we thus find
\begin{align*}
	\inf_{\lambda \geq 0,  \zeta^c} L_3(\lambda, \zeta^c; p, z) = \begin{cases}
		0 & 
		\left\{\begin{array}{l}
			\text{if } \exists \beta_{ij} \in \R^m \text{ with }\sum_{i \in [N]}\sum_{j\in[J]} \beta_{ij} = r \text{ and}\\ c_i^\pi(z_{ij}, p_{ij}) \leq \beta_{ij} ~~ \forall i \in [N],\, j\in[J],
		\end{array} \right. \\[2ex]
		-\infty & \text{otherwise.}
	\end{cases}
\end{align*}
Finally, recall that~$g_k$ is proper, convex and closed such that $g_k^{**}=g_k$. Note also that minimizing $L_4(\alpha, \zeta^g; p, z)$ with respect to~$\alpha$ and~$\zeta^g$ amounts to evaluating the conjugate of a sum of perspective functions with mutually different arguments. By using Lemma~\ref{lem:perspective-multiplication} and applying a few elementary manipulations we thus find
\begin{align*}
	\inf_{\alpha\geq 0, \zeta^g} L_4(\alpha, \zeta^g; p,z) 
	= \begin{cases} 0 & \text{if} ~ g_k^\pi \big( z_{ij}, p_{ij}) \leq 0 ~ \forall i \in [N],\, j \in [J],\, k \in [K], \\
		-\infty & \text{otherwise.}\end{cases}
\end{align*}
Substituting the infima of the partial Lagrangians into the dual objective yields the following equivalent reformulation for the problem dual to~\eqref{eq:finite-dual-ot}.
\begin{align}
	\label{eq:finite:OT:II-preliminary}
	\begin{array}{cll}
		\sup & \displaystyle \sum_{i \in [N]} \sum_{j \in [J]} - (-\ell_j)^\pi (z_{ij}, p_{ij}) \\
		\st & \displaystyle p_{ij} \in \R_+, ~ \beta_{ij} \in \R, ~ z_{ij} \in \R^d & \forall i \in [N], \, j \in [J] \\[1mm]
		& \displaystyle g_k^\pi(z_{ij}, p_{ij}) \leq 0 & \forall i \in [N], \, j \in [J], \, k \in [K] \\[1mm]
		& \displaystyle \sum_{j \in [J]} p_{ij} = \hat p_i & \forall i \in [N] \\[1mm]
		& \displaystyle c_i^\pi ( z_{ij}, p_{ij}) \leq \beta_{ij} & \forall i \in [N], \, j \in [J] \\[1mm]
		& \displaystyle \sum_{i \in [N]} \sum_{j \in [J]} \beta_{ij} = r
	\end{array}
\end{align}
Note that if the finite dual reformulation~\eqref{eq:finite-dual-ot} of the worst-case expectation problem is viewed as an instance of the primal convex program~\eqref{eq:primal:convex}, then problem~\eqref{eq:finite:OT:II-preliminary} represents the corresponding instance of the dual convex program~\eqref{eq:dual:convex}. By Assumptions~\ref{assmp:discrete:reference} and~\ref{assmp:convexity}, problem~\eqref{eq:finite:OT:II-preliminary} admits a Slater point with $p_{ij}=\hat p_i/J$ and $z_{ij}=\hat z_i$ for all $i\in[N]$ and~$j\in[J]$. Thus, strong duality holds thanks to Theorem~\ref{thm:convex:duality}\,\ref{lem:slater:duality}. It remains to be shown that~\eqref{eq:finite:OT:II-preliminary} is equivalent to~\eqref{eq:finite:OT:II}. To this end, note first that the last constraint in~\eqref{eq:finite:OT:II-preliminary} can be relaxed to a less-than-or-equal-to inequality without increasing the problem's supremum such that $\beta_{ij}= c_i^\pi ( z_{ij}, p_{ij})$ at optimality. This allows us to eliminate the $\beta_{ij}$ variables from~\eqref{eq:finite:OT:II-preliminary}. Problem~\eqref{eq:finite:OT:II} is then obtained by applying the substitution $z_{ij} \leftarrow z_{ij} - p_{ij} \hat z_i$. 
\end{proof}

The finite bi-dual reformulation~\eqref{eq:finite:OT:II} is guaranteed to be solvable provided that the transportation cost function satisfies the following additional assumption.

\begin{assumption}[Identity of Indiscernibles]
\label{assmp:cost:zero}
The transportation cost function is real-valued and satisfies $c(z, \hat z) = 0$ if and only if~$z = \hat z$.
\end{assumption}

\begin{lemma}[Solvability of the Finite Bi-Dual Reformulation]
\label{lem:bi-dual-solvability-OT}
Suppose that Assumptions~\ref{assmp:discrete:reference}, \ref{assmp:convexity} and~\ref{assmp:cost:zero} hold. Then, problem~\eqref{eq:finite:OT:II} is solvable.
\end{lemma}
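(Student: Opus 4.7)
The plan is to establish solvability of~\eqref{eq:finite:OT:II} via the Weierstrass extreme value theorem by showing that the feasible region is non-empty and compact, and that the objective is upper semicontinuous.

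First, I would confirm non-emptiness of the feasible set. The point $p_{ij}=\hat p_i/J$ and $z_{ij}=0$ for all $i\in[N]$, $j\in[J]$ satisfies $(p_{ij}\hat z_i+z_{ij})/p_{ij}=\hat z_i\in\cZ$ by Assumption~\ref{assmp:convexity}\,\ref{assmp:convex:support}, so $g_k^\pi(p_{ij}\hat z_i, p_{ij})=p_{ij}g_k(\hat z_i)\leq 0$; moreover, $c_i^\pi(p_{ij}\hat z_i, p_{ij})=p_{ij}c(\hat z_i,\hat z_i)=0$ by Assumption~\ref{assmp:cost:zero}, so the aggregate cost constraint is trivially satisfied. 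Upper semicontinuity of the objective follows from Assumption~\ref{assmp:convexity}\,\ref{assmp:loss}: each $-\ell_j$ is proper, convex, and closed, so its perspective $(-\ell_j)^\pi$ is proper, convex, and lower semicontinuous~\citep[page~67]{rockafellar1970convex}, which makes $-(-\ell_j)^\pi$ upper semicontinuous. The same property of perspectives applied to $g_k$ and $c_i$ ensures that all constraints define closed sets, so the feasible region is closed.

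The main obstacle will be to show that the feasible set is bounded. The $p_{ij}$ variables satisfy $0\leq p_{ij}\leq\hat p_i$ by virtue of the equality constraints $\sum_j p_{ij}=\hat p_i$. To bound the $z_{ij}$ variables, I would exploit Assumption~\ref{assmp:cost:zero}: the function $c_i(\cdot)=c(\cdot,\hat z_i)$ is real-valued, convex (Assumption~\ref{assmp:convexity}\,\ref{assmp:cost}), and admits $\hat z_i$ as its unique minimizer, with value zero. Such a function on $\R^d$ must be coercive; otherwise, $c_i^\infty$ would vanish in some direction, forcing $c_i$ to be constant along a ray from $\hat z_i$ by convexity, which would contradict uniqueness of the minimizer. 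Combining convexity with $c_i(\hat z_i)=0$ and compactness of the unit sphere, I would then extract a constant $m>0$ such that $c_i(z)\geq m\|z-\hat z_i\|$ whenever $\|z-\hat z_i\|\geq 1$; this uses that $t\mapsto c_i(\hat z_i+td)/t$ is non-decreasing for every direction $d$ and is bounded below on the unit sphere by the continuous positive function $d\mapsto c_i(\hat z_i+d)$.

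I would then derive a uniform bound on $\|z_{ij}\|$ from the cost constraint $c_i^\pi(p_{ij}\hat z_i+z_{ij},p_{ij})\leq r$. When $p_{ij}>0$, rewriting the constraint as $c_i(\hat z_i + z_{ij}/p_{ij})\leq r/p_{ij}$ and applying the growth estimate gives $\|z_{ij}\|\leq \max(r/m,\hat p_i)$ by distinguishing whether $\|z_{ij}/p_{ij}\|$ exceeds~$1$. When $p_{ij}=0$, the constraint collapses to $c_i^\infty(z_{ij})\leq r$, and positive homogeneity of $c_i^\infty$ together with the inherited lower bound $c_i^\infty\geq m$ on the unit sphere yields $\|z_{ij}\|\leq r/m$. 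Consequently, the feasible set of~\eqref{eq:finite:OT:II} is compact, and the Weierstrass theorem guarantees that the upper semicontinuous objective attains its supremum.
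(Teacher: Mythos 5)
Your proof is correct and follows essentially the same structure as the paper's: Weierstrass' maximum theorem applied to an upper semicontinuous objective over a non-empty compact feasible set, with boundedness of the $z_{ij}$ variables obtained from a linear lower growth bound on each $c_i$. The one difference is that you derive the growth bound from first principles (monotonicity of difference quotients of convex functions plus positivity and continuity of $c_i$ on the unit sphere around $\hat z_i$), whereas the paper cites \citep[Lemma~C.10]{zhen2023unification} for the equivalent estimate $c_i(\hat z_i + z) \geq \delta \|z\|_2 - 1$ and sums it across $i,j$; you also make the non-emptiness check explicit, which the paper leaves implicit.
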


\begin{proof}
Under the stated assumptions, problem~\eqref{eq:finite:OT:II} maximizes an upper semicontinuous function over a compact feasible region, and thus the claim follows from Weierstrass' maximum theorem. To see that the objective function of~\eqref{eq:finite:OT:II} is upper semicontinuous, note that the functions $-\ell_j$ are proper, convex and closed for all~$j\in[J]$ thanks to Assumption~\ref{assmp:convexity}\,\ref{assmp:loss}. By \citep[pages~35 and~67]{rockafellar1970convex}, their perspectives are proper, convex and closed, too; see also \citep[Proposition~C.2]{zhen2023unification}. Thus, the {\em negative} perspective functions appearing in the objective function of problem~\eqref{eq:finite:OT:II} are indeed upper semicontinuous. Similarly, one can show that the feasible region of problem~\eqref{eq:finite:OT:II} is closed. Indeed, $g_k$ and~$c_i$ are proper, convex and closed for all~$k\in[K]$ and~$i\in[N]$ thanks to Assumption~\ref{assmp:convexity} and Definition~\ref{def:cost}. This readily implies that their perspectives are lower semicontinuous, and thus the feasible region of~\eqref{eq:finite:OT:II} is indeed closed. To see that the feasible region is also bounded, note first that $p_{ij} \in [0,1]$ for all~$i \in [N]$ and~$j \in [J]$. Indeed, these variables must be non-negative and compatible with the probabilities $\hat p_i$, $i\in[N]$, of the discrete reference distribution. Next, we show that the variables~$z_{ij}$ for~$i \in [N]$ and~$j \in [J]$ are restricted to a bounded set, as well. Indeed, by \citep[Lemma~C.10]{zhen2023unification}, which applies thanks to Assumption~\ref{assmp:cost:zero} and Definition~\ref{def:cost}, there exists~$\delta > 0$ such that $c_i(\hat z_i + z) \geq \delta \| z \|_2 - 1$ for all~$z \in \R^d$ and~$i\in[N]$. The last constraint of problem~\eqref{eq:finite:OT:II} therefore implies that
\begin{align*}
	\sum_{i \in [N]}  \sum_{j \in [J]} c_i^\pi( p_{ij} \hat z_i + z_{ij}, p_{ij}) \leq r 
	\quad \implies \quad 
	\sum_{i \in [N]}  \sum_{j \in [J]} \| z_{ij} \|_2 \leq \frac{1 + r}{\delta},
\end{align*}
where we used the identity $\sum_{i \in [N]} \sum_{j \in [J]} p_{ij} = \sum_{i\in[N]} \hat p_i= 1$ and the definition of the perspective function. Thus, the feasible region of~\eqref{eq:finite:OT:II} is indeed bounded. 
\end{proof}

We are now ready to construct extremal distributions~$\P^\star\in\cP(\cZ)$ that attain the supremum of the worst-case expectation problem~\eqref{eq:worst-case:expectation} over the optimal transport ambiguity set~\eqref{eq:OT-ambiguity-set}. To this end, fix any maximizer $(p^\star, z^\star)$ of the bi-dual problem~\eqref{eq:finite:OT:II}, which exists thanks to Lemma~\ref{lem:bi-dual-solvability-OT}. Next, define the index sets
\begin{align*}
\mspace{-2mu}
\cJ^\infty_i = \big\{ j \in [J]: p_{ij}^\star = 0, \, z_{ij}^\star \neq 0 \big\}\quad \text{and} \quad \cJ^+_i = \big\{ j \in [J]: p_{ij}^\star > 0 \big\} ,
\end{align*}
and define~$\cJ_i = \cJ_i^+ \cup \cJ_i^\infty$ for any~$i \in [N]$. The following theorem uses the maximizer~$(p^\star,z^\star)$ and the corresponding index sets to construct~$\P^\star$.

\begin{theorem}[Extremal Distributions of Optimal Transport Ambiguity Sets]
\label{thm:finite:convex:OT:III}
Suppose that all conditions of Theorem~\ref{thm:finite:convex:OT:II} for weak and strong duality are satisfied, Assumption~\ref{assmp:cost:zero} holds, and~$(p^\star, z^\star)$ solves~\eqref{eq:finite:OT:II}. Then, the following~hold.
\begin{enumerate}[label=(\roman*)]
	\item \label{thm:OT:extremal} If $\cJ_i^\infty = \emptyset$ for all~$i \in [N]$, then problem~\eqref{eq:worst-case:expectation} is solved by 
	\[
	\P^\star = \sum_{i \in [N]} \sum_{j \in \cJ^+_i} p_{ij}^\star \, \delta_{\hat z_i + z_{ij}^\star / p_{ij}^\star }.
	\]
	\item \label{thm:OT:assymptotic} If $\cJ^\infty_i \neq \emptyset$ for some~$i \in [N]$, then problem~\eqref{eq:worst-case:expectation} is asymptotically solved by $\P^m = \sum_{i \in [N]} \sum_{j \in \cJ_i} p_{ij}^m \, \delta_{z_{ij}^m}$ as~$m \in \N$, $m\geq \max_{i\in[N]} |\cJ^\infty_i|$, grows, where
	\begin{align*}
		\hspace{-4ex}
		p_{ij}^m 
		= \begin{cases}
			\left(1 - \tfrac{|\cJ^\infty_i|}{m} \right) p_{ij}^\star & \text{if }j \in \cJ^+_i, \\[1ex]
			\frac{\hat p_i}{m} & \text{if }j \in \cJ^\infty_i,
		\end{cases} 
		~~ \text{and} ~~~
		z_{ij}^m 
		= \begin{cases}
			\hat z_i + \frac{z_{ij}^\star}{p_{ij}^\star} & \text{if } j \in \cJ^+_i, \\[1ex]
			\hat z_i + \frac{z_{ij}^\star}{p_{ij}^m} & \text{if } j \in \cJ^\infty_i.
		\end{cases}
	\end{align*}
\end{enumerate}
\end{theorem}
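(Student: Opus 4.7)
The plan is to treat parts (i) and (ii) within a common scheme: verify that the candidate distributions are supported on $\cZ$ and lie in $\cP$, and then match their expected losses against the bi-dual optimum, which equals $\sup_{\P \in \cP} \E_\P[\ell(Z)]$ by Theorem~\ref{thm:finite:convex:OT:II}. That the candidates are proper probability measures follows directly from the marginal constraint $\sum_j p_{ij}^\star = \hat p_i$ in~\eqref{eq:finite:OT:II}. For support, the inequality $g_k^\pi(p_{ij}^\star \hat z_i + z_{ij}^\star, p_{ij}^\star) \leq 0$ splits into two cases via the definition of the perspective: when $p_{ij}^\star > 0$ it yields $g_k(\hat z_i + z_{ij}^\star / p_{ij}^\star) \leq 0$; when $p_{ij}^\star = 0$ it reduces to $g_k^\infty(z_{ij}^\star) \leq 0$, so that $z_{ij}^\star$ is a recession direction of $\cZ$ and $\hat z_i + t z_{ij}^\star \in \cZ$ for every $t \geq 0$.

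Part (i) is handled by the natural transport plan $\gamma^\star = \sum_i \sum_{j \in \cJ_i^+} p_{ij}^\star \, \delta_{(\hat z_i + z_{ij}^\star/p_{ij}^\star, \hat z_i)}$. Its marginals are $\P^\star$ and $\hat\P$, and its cost $\sum_i \sum_{j \in \cJ_i^+} c_i^\pi(p_{ij}^\star \hat z_i + z_{ij}^\star, p_{ij}^\star)$ is bounded by $r$ via the last constraint of~\eqref{eq:finite:OT:II} (the suppressed terms with $p_{ij}^\star = z_{ij}^\star = 0$ contribute $c_i^\pi(0,0) = c_i^\infty(0) = 0$). Bounding $\ell \geq \ell_j$ at each atom and using the perspective identity $p_{ij}^\star \ell_j(\hat z_i + z_{ij}^\star/p_{ij}^\star) = -(-\ell_j)^\pi(p_{ij}^\star \hat z_i + z_{ij}^\star, p_{ij}^\star)$ shows that $\E_{\P^\star}[\ell(Z)]$ weakly exceeds the bi-dual value; the opposite inequality follows from $\P^\star \in \cP$ and Theorem~\ref{thm:finite:convex:OT:II}.

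The genuine obstacle lies in part (ii), where one must verify $\P^m \in \cP$ exactly despite the diverging support points $\hat z_i + m z_{ij}^\star/\hat p_i$ used for $j \in \cJ_i^\infty$. The key observation is the convex decomposition $(p_{ij}^m \hat z_i + z_{ij}^\star, p_{ij}^m) = (1/m)(\hat p_i \hat z_i + z_{ij}^\star, \hat p_i) + (1 - 1/m)(z_{ij}^\star, 0)$, which via convexity of $c_i^\pi$ yields $c_i^\pi(p_{ij}^m \hat z_i + z_{ij}^\star, p_{ij}^m) \leq (1/m) c_i^\pi(\hat p_i \hat z_i + z_{ij}^\star, \hat p_i) + (1 - 1/m) c_i^\infty(z_{ij}^\star)$. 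Combining this with $c_i(\hat z_i) = 0$ from Definition~\ref{def:cost} and the monotonicity of the difference quotient $t \mapsto c_i(\hat z_i + t z_{ij}^\star)/t$ on $t > 0$ delivers the crucial bound $c_i^\pi(\hat p_i \hat z_i + z_{ij}^\star, \hat p_i) \leq c_i^\infty(z_{ij}^\star)$. Summing, incorporating the shrinkage factor $(1 - |\cJ_i^\infty|/m)$ on the $\cJ_i^+$-terms, and invoking the last constraint of~\eqref{eq:finite:OT:II} then show that the transport cost of $\P^m$ is at most $r$.

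Finally, the asymptotic identity $\lim_{m \to \infty} \E_{\P^m}[\ell(Z)] = \sup_{\P \in \cP} \E_\P[\ell(Z)]$ will follow by applying the analogous convex decomposition to $(-\ell_j)^\pi$ in place of $c_i^\pi$: this upper bounds $(-\ell_j)^\pi(p_{ij}^m \hat z_i + z_{ij}^\star, p_{ij}^m)$ by a convex combination whose limit as $m \to \infty$ is $(-\ell_j)^\infty(z_{ij}^\star) = (-\ell_j)^\pi(p_{ij}^\star \hat z_i + z_{ij}^\star, p_{ij}^\star)$, so $\liminf_{m \to \infty}$ of the resulting lower bound on $\E_{\P^m}[\ell(Z)]$ recovers the bi-dual value. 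All quantities in this bound are finite: the bi-dual value is finite by hypothesis, and the recession-direction property keeps $\hat z_i + z_{ij}^\star/\hat p_i$ inside $\cZ = \dom(-\ell_j)$. Combining with the upper bound $\E_{\P^m}[\ell(Z)] \leq \sup_{\P \in \cP} \E_\P[\ell(Z)]$ from $\P^m \in \cP$ closes the proof.
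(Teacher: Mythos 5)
Your proposal is correct and follows the same overall architecture as the paper's proof: build the natural transport plan $\sum_{i,j} p_{ij}^\star\,\delta_{(\hat z_i + z_{ij}^\star/p_{ij}^\star,\,\hat z_i)}$ (resp.\ its $\P^m$ analogue) to certify $\P^\star \in \cP$ (resp.\ $\P^m \in \cP$), translate atom costs and atom losses into the perspective functions $c_i^\pi$ and $(-\ell_j)^\pi$, invoke the last constraint of~\eqref{eq:finite:OT:II} and the concave lower bound $\ell \geq \ell_j$ to match the expected loss of the candidate against the bi-dual optimum, and close via the strong duality of Theorem~\ref{thm:finite:convex:OT:II}. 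You also correctly handle the terms outside $\cJ_i^+$ (resp.\ $\cJ_i$) using $c_i^\pi(0,0) = (-\ell_j)^\pi(0,0) = 0$, and use the recession-direction interpretation of $g_k^\pi(z_{ij}^\star,0) \leq 0$ to argue $\P^m$ is supported on $\cZ$.

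The one place where your route differs is in the cost bound for part~(ii). There you first write $(p_{ij}^m\hat z_i + z_{ij}^\star,\,p_{ij}^m)$ as a convex combination of $(\hat p_i\hat z_i + z_{ij}^\star,\,\hat p_i)$ and $(z_{ij}^\star,0)$, apply convexity of $c_i^\pi$, and then bound $c_i^\pi(\hat p_i\hat z_i + z_{ij}^\star,\hat p_i)\leq c_i^\infty(z_{ij}^\star)$ via the monotone difference quotient of $c_i$. The paper applies the monotone difference quotient directly to the $m$-th term, giving $c_i^\pi(p_{ij}^m\hat z_i + z_{ij}^\star,\,p_{ij}^m) \leq c_i^\infty(z_{ij}^\star)$ in one step. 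Your detour is sound but slightly circuitous: since you ultimately invoke the same monotonicity of $t\mapsto c_i(\hat z_i + tz_{ij}^\star)/t$, the convex-combination step is not actually load-bearing---it just relocates where the monotonicity argument is applied (at $t = 1/\hat p_i$ rather than at $t = m/\hat p_i$). Both routes deliver the same uniform-in-$m$ cost bound and are equally rigorous, so this is a stylistic difference rather than a substantive one. Everything else, including the finiteness check via the recession-direction property and the asymptotic lower bound on $\E_{\P^m}[\ell(Z)]$, matches the paper's intended argument.
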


\begin{proof}
In view of assertion~\ref{thm:OT:extremal}, we first show that $\P^\star$ defined in the statement of the theorem is feasible in the worst-case expectation problem~\eqref{eq:worst-case:expectation}. To this end, observe first that feasibility of $(p^\star, z^\star)$ in~\eqref{eq:finite:OT:II} implies that $p_{ij}^\star \geq 0$ for all $i \in [N]$ and $j \in [J]$, and that $\sum_{i \in [N]} \sum_{j \in \cJ^+_i} p_{ij}^\star = 1$. Note also that $\hat z_i + z_{ij}^\star / p_{ij}^\star \in \mathcal{Z}$ for all $i \in [N]$ and $j \in \cJ^+_i$ due to the second constraint in~\eqref{eq:finite:OT:II}. This confirms that $\P^\star\in\cP(\cZ)$. The penultimate constraint group of problem~\eqref{eq:finite:OT:II} also implies that
\[
\sum_{i\in[N]} \sum_{j\in\cJ_i^+} p_{ij}^\star \, \delta_{\big(\hat z_i + z_{ij}^\star / p_{ij}^\star, \hat z_i\big)} \in \Gamma(\P^\star, \hat \P)
\]
constitutes a valid transportation plan for morphing~$\hat\P$ into~$\P^\star$. Thus, we find
\begin{align*}
	\OT_c(\P^\star, \hat \P) 
	& \leq  \sum_{i \in [N]} \sum_{j \in \cJ_i^+} p_{ij}^\star \cdot c(\hat z_i + z_{ij}^\star / p_{ij}^\star, \hat z_i) \\
	& = \sum_{i \in [N]} \sum_{j \in [J]} c_i^\pi(p_{ij}^\star \hat z_i + z_{ij}^\star, p_{ij}^\star) \leq r.
\end{align*}
Here, the equality holds because all terms corresponding to~$i\in[N]$ and~$j\notin\cJ_i^+$ vanish. Indeed, if~$j\notin\cJ_i^+$, then~$p^\star_{ij}=0$. As~$\cJ_i^\infty=\emptyset$, this implies that~$z_{ij}^\star = 0$. Thus, we have $c_i^\pi(p_{ij}^\star \hat z_i + z_{ij}^\star, p_{ij}^\star) = c_i^\pi(0, 0) = c_i^\infty(0) = 0$ by the definitions of the perspective and the recession function. The second inequality in the above expression follows from the last constraint in~\eqref{eq:finite:OT:II}. In summary, we have shown that~$\P^\star$ is feasible in~\eqref{eq:worst-case:expectation}. As for the objective function value of~$\P^\star$, note that
\begin{align*}
	\E_{\P^\star} [\ell(Z)] 
	\leq \sup_{\P \in \cP} \, \E_{\P} [\ell(Z)] 
	\leq \sum_{i \in [N]} \sum_{j \in [J]} - (-\ell_j)^\pi (p_{ij}^\star \hat z_i + z_{ij}^\star, p_{ij}^\star),
\end{align*}
where the second inequality follows from the weak duality relation established in Theorem~\ref{thm:finite:convex:OT:II}. At the same time, however, the expected loss under $\P^\star$ satisfies
\begin{align*}
	\mspace{-4mu}
	\E_{\P^\star} [\ell(Z)] 
	& = \sum_{i \in [N]} \sum_{j \in \cJ^+} \max_{j' \in [J]} p_{ij}^\star \ell_{j'} \big(\hat z_i + \tfrac{z_{ij}^\star}{p_{ij}^\star} \big) \\
	&\geq  \sum_{i \in [N]} \sum_{j \in \cJ^+} - (-\ell_j)^\pi (p_{ij}^\star \hat z_i + z_{ij}^\star, p_{ij}^\star) \\
	&= \sum_{i \in [N]} \sum_{j \in [J]} - (-\ell_j)^\pi (p_{ij}^\star \hat z_i + z_{ij}^\star, p_{ij}^\star),
\end{align*}
where the inequality uses the definition of the perspective function and the trivial observation that~$j \in \mathcal{J}^+$ is a feasible choice for~$j' \in [J]$. The last equality holds once more because $p_{ij}^\star = 0$ implies $z_{ij}^\star = 0$ and $(-\ell_j)^\pi(0,0) = (-\ell_j)^\infty(0) = 0$ by the definition of the perspective and the recession function. In summary, the above inequalities imply that~$\P^\star$ is optimal in~\eqref{eq:worst-case:expectation}. Hence, assertion~\ref{thm:OT:extremal} follows.

As for assertion~\ref{thm:OT:assymptotic}, we first show that $\P^m \in \cP$ for any fixed $m \geq \max_{i \in [N]} |\cJ_i^\infty|$. The constraints of problem~\eqref{eq:finite:OT:II} imply that $p_{ij}^m \ge 0$ for all~$j \in \cJ_i$ and~$i \in [N]$ and that $\sum_{i \in [N]} \sum_{j \in \cJ} p_{ij}^m = 1$. They also imply that~$z_{ij}^m \in \cZ$ for every~$j \in \cJ_i$ and~$i \in [N]$. This is easy to see if~$j\in\cJ_i^+$. If~$j \in \cJ_i^\infty$, on the other hand, then~$p^\star_{ij}=0$, $z^\star_{ij}\neq 0$ and~$g_k^\pi(z^\star_{ij},0)\leq 0$ for all $k\in[K]$, which implies via \citep[Theorem~8.6]{rockafellar1970convex} that~$z^\star_{ij}$ is a recession direction of~$\cZ$. Geometrically, this means that the ray emanating from any point in~$\cZ$ along the direction~$z^\star_{ij}$ never leaves~$\cZ$. Thus, $z_{ij}^m = \hat z_i+m \, z_{ij}^\star/\hat p_i \in\cZ$ for all~$i\in[N]$ and~$j\in\cJ_i^\infty$. In addition, one verifies that
\[
\sum_{i\in[N]} \sum_{j\in\cJ_i} p_{ij}^m \, \delta_{\big(z_{ij}^m, \hat z_i\big)} \in \Gamma(\P^\star, \hat \P)
\]
constitutes a valid transportation plan for morphing~$\hat\P$ into~$\P^m$. Thus, we find
\begin{align*}
	&\OT_c(\P^m, \hat \P)\\
	& \leq  \sum_{i \in [N]} \sum_{j \in \cJ_i} p_{ij}^m \, c(z_{ij}^m, \hat z_i) \\
	& = \sum_{i \in [N]}  \sum_{j \in \cJ^+_i} p_{ij}^\star \left(1 - \tfrac{ |\cJ^\infty_i|}{m} \right) \, c \big( \hat z_i + \tfrac{z^\star_{ij}}{p_{ij}^\star}, \hat z_i \big) + \sum_{i \in [N]}  \sum_{j \in \cJ^\infty_i} \frac{\hat p_i}{m} \, c \big(  \hat z_i + m \tfrac{z^\star_{ij}}{\hat p_i}, \hat z_i \big) \\
	&\leq \sum_{i \in [N]} \sum_{j \in \cJ^+_i} p_{ij}^\star \, c\big(  \hat z_i + \tfrac{z_{ij}^\star}{p_{ij}^\star}, \hat z_i \big) + \sum_{i \in [N]} \sum_{j \in \cJ^\infty_i} \lim_{m \rightarrow \infty} \frac{\hat p_i}{m} \, c \big( \hat z_i + m \tfrac{z_{ij}^\star}{\hat p_i}, \hat z_i \big) \\
	&= \sum_{i \in [N]} \sum_{j \in \cJ^+_i} p_{ij}^\star \, c\big(  \hat z_i + \tfrac{z_{ij}^\star}{p_{ij}^\star}, \hat z_i \big) + \sum_{i \in [N]} \sum_{j \in \cJ^\infty_i} \lim_{m \rightarrow \infty} \frac{\hat p_i}{m} \, c \big(m \tfrac{z_{ij}^\star}{\hat p_i}, \hat z_i \big) \\
	&  = \sum_{i \in [N]}  \sum_{j \in [J]} c_i^\pi ( p_{ij}^\star \hat z_i + z_{ij}^\star, p_{ij}^\star)  \leq r,
\end{align*}
where the first equality follows from the definitions of~$p_{ij}^m$ and~$ z_{ij}^m$. The second inequality holds because the transportation cost function~$c(z, \hat z)$ is non-negative and convex in~$z$, which implies that both terms in the third line are non-decreasing in~$m$. The second equality follows from Assumption~\ref{assmp:cost:zero}, which ensures that~$c(z,\hat z)$ is real-valued such that the reference point in the definition of the recession function of $c(\cdot,\hat z_i)$ can be chosen freely. The third equality exploits the definition of the perspective function~$c_i^\pi$ and the observation that $c_i^\pi(0, 0) = c_i^\infty(0) = 0$. Finally, the last inequality follows from the last constraint of problem~\eqref{eq:finite:OT:II}. We have thus shown that~$\P^m$ is feasible in~\eqref{eq:worst-case:expectation}. In analogy to analysis for~$\P^\star$, one can show that the asymptotic expected loss $\lim_{m \rightarrow \infty} \E_{\P^m}[\ell(Z)]$ is at least as large as the optimal value~$\sum_{i \in [N]} \sum_{j \in [J]} - (-\ell_j)^\pi ( p^\star_{ij} \hat z_i + z_{ij}^\star, p^\star_{ij})$ of the finite bi-dual reformulation~\eqref{eq:finite:OT:II}. However, as the suprema of~\eqref{eq:worst-case:expectation} and~\eqref{eq:finite:OT:II} match, it is clear that the distributions~$\P^m$, $m \in \N$, must be asymptotically optimal in~\eqref{eq:worst-case:expectation}.
\end{proof}

If $\cJ^\infty_i \neq \emptyset$ for some~$i \in [N]$, then the extremal distributions constructed in Theorem~\ref{thm:finite:convex:OT:III} send atoms with decaying probabilities to infinity along specific
recession directions $z_{ij}^\star$, $j\in \cJ^\infty_i$, of the support set~$\cZ$. Moving atoms to infinity is possible even when only a finite transportation budget~$r$ is available provided that the probability mass transported scales inversely with the transportation cost. The following lemma establishes sufficient conditions for~$\cJ_i^\infty$ to be empty for every~$i\in[N]$, which ensures via Theorem~\ref{thm:finite:convex:OT:III}\,\ref{thm:OT:extremal} that problem~\eqref{eq:worst-case:expectation} is solvable. 

\begin{lemma}
\label{lem:sufficient:OT}
If all assumptions of Theorem~\ref{thm:finite:convex:OT:III} are satisfied and either of the following conditions holds, then~$\cJ_i^\infty = \emptyset$ for every~$i\in [N]$, and~\eqref{eq:worst-case:expectation} is solvable.
\begin{enumerate}[label=(\roman*)]
	\item \label{lem:sufficient:cost} The transportation cost function grows superlinearly in its first argument. By this we mean that $c_i^\infty(z)=\infty$ for any~$z\neq 0$ and for any~$i\in[N]$. 
	\item \label{lem:sufficient:support} The support set $\cZ$ is bounded.
\end{enumerate}
\end{lemma}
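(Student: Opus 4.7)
The plan is to show that under either condition, every maximizer $(p^\star,z^\star)$ of the finite bi-dual~\eqref{eq:finite:OT:II} must satisfy $z_{ij}^\star = 0$ whenever $p_{ij}^\star = 0$; this is precisely the statement that $\cJ_i^\infty = \emptyset$ for all~$i\in[N]$. Once this is established, Theorem~\ref{thm:finite:convex:OT:III}\,\ref{thm:OT:extremal} will immediately construct an explicit maximizer $\P^\star$ of~\eqref{eq:worst-case:expectation}, yielding solvability. Note that a maximizer $(p^\star,z^\star)$ of~\eqref{eq:finite:OT:II} exists under condition~\ref{lem:sufficient:cost} via Lemma~\ref{lem:bi-dual-solvability-OT} (which uses Assumption~\ref{assmp:cost:zero}, and this is implied by superlinear growth of~$c_i$ combined with~$c_i(\hat z_i)=0$), and under condition~\ref{lem:sufficient:support} by a direct Weierstrass argument since the feasible region of~\eqref{eq:finite:OT:II} is then compact as $\cZ$ is bounded.

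For condition~\ref{lem:sufficient:cost}, I would fix any $i\in[N]$ and $j\in[J]$ with $p_{ij}^\star=0$, and examine the transportation budget constraint in~\eqref{eq:finite:OT:II}. By the definition of the perspective function, the corresponding summand equals
\[
    c_i^\pi(p_{ij}^\star \hat z_i + z_{ij}^\star, p_{ij}^\star) = c_i^\pi(z_{ij}^\star, 0) = c_i^\infty(z_{ij}^\star).
\]
Since the transportation cost function is non-negative, every summand is non-negative, and therefore each individual summand is at most~$r<\infty$. Hence $c_i^\infty(z_{ij}^\star)<\infty$, and the superlinear growth hypothesis $c_i^\infty(z)=\infty$ for all $z\neq 0$ forces $z_{ij}^\star = 0$. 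This eliminates the index set $\cJ_i^\infty$.

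For condition~\ref{lem:sufficient:support}, I would again fix $i\in[N]$ and $j\in[J]$ with $p_{ij}^\star=0$ and now invoke the constraints $g_k^\pi(p_{ij}^\star \hat z_i + z_{ij}^\star,p_{ij}^\star)\leq 0$ for all $k\in[K]$ from~\eqref{eq:finite:OT:II}. Setting $p_{ij}^\star=0$ reduces these to $g_k^\infty(z_{ij}^\star)\leq 0$ for every~$k\in[K]$. By \citep[Theorem~8.6]{rockafellar1970convex}, this means that $z_{ij}^\star$ belongs to the recession cone of the set $\{z\in\R^d : g_k(z)\leq 0\ \forall k\in[K]\}=\cZ$. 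Since $\cZ$ is bounded by assumption, its recession cone is~$\{0\}$, forcing $z_{ij}^\star=0$. Hence $\cJ_i^\infty=\emptyset$, completing the argument. The main obstacle is simply to keep track of the definition of the perspective at zero and to recall the recession-cone characterization of recession directions for level sets of closed convex functions; no genuinely technical step is required beyond these.
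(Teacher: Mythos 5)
Your proof is correct and follows essentially the same reasoning as the paper's: for condition~(i) you observe that $p_{ij}^\star=0$ forces $c_i^\pi(p_{ij}^\star \hat z_i + z_{ij}^\star, p_{ij}^\star)=c_i^\infty(z_{ij}^\star)$ which must be finite by the transportation-budget constraint, hence $z_{ij}^\star=0$ by superlinear growth; and for condition~(ii) you deduce $g_k^\infty(z_{ij}^\star)\leq 0$ for all $k$, invoke~\citep[Theorem~8.6]{rockafellar1970convex} to identify $z_{ij}^\star$ as a recession direction of~$\cZ$, and use boundedness of~$\cZ$ to conclude $z_{ij}^\star=0$. The only minor redundancy is your preliminary discussion of why a maximizer of~\eqref{eq:finite:OT:II} exists: since Theorem~\ref{thm:finite:convex:OT:III} already assumes Assumption~\ref{assmp:cost:zero}, Lemma~\ref{lem:bi-dual-solvability-OT} applies directly and there is no need to re-derive solvability separately under each condition.
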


\begin{proof}
As usual, let~$(p^\star, z^\star)$ be a maximizer of problem~\eqref{eq:finite:OT:II}, which exists thanks to Lemma~\ref{lem:bi-dual-solvability-OT}. As for assertion~\ref{lem:sufficient:cost}, assume that the transportation cost function grows superlinearly. For the sake of argument, assume also that there exists~$i \in [N]$ with $\cJ^\infty_i \neq \emptyset$. For every $j \in \cJ^\infty_i$ we thus have $p^\star_{ij}=0$ and $z^\star_{ij}\neq 0$. Hence, we find
\begin{align*}
	c_i^\pi ( p_{ij}^\star \hat z_i + z_{ij}^\star, p_{ij}^\star)
	= c_i^\infty(z^\star_{ij}) = \infty,
\end{align*}
where the first equality uses the definition of the perspective function, and the second equality holds because the transportation cost function grows superlinearly. Thus, $(p^\star, z^\star)$ violates the last constraint of problem~\eqref{eq:finite:OT:II}, which contradicts its assumed feasibility. We may thus conclude that~$\cJ^\infty_i = \emptyset$ and that~\eqref{eq:worst-case:expectation} is solvable.

As for assertion~\ref{lem:sufficient:support}, assume now that~$\cZ$ is bounded. Without loss of generality, we may also assume that $p_{ij}^\star = 0$ for some $i \in [N]$ and $j \in [J]$ for otherwise $\cJ^\infty_i$ is trivially empty. The constraints of problem~\eqref{eq:finite:OT:II} then ensure that~$g_k^\pi(z^\star_{ij},0)\leq 0$ for all $k\in[K]$, which implies via \citep[Theorem~8.6]{rockafellar1970convex} that~$z^\star_{ij}$ is a recession direction of~$\cZ$. As $\cZ$ is compact, however, this implies that $z_{ij}^\star = 0$. We may thus again conclude that $\cJ^\infty_i = \emptyset$  and that~\eqref{eq:worst-case:expectation} is solvable.
\end{proof}

Condition~\ref{lem:sufficient:cost} of Lemma~\ref{lem:sufficient:OT} is satisfied whenever~$\cP$ is a $p$-Wasserstein ball and the transportation cost function is of the form $c(z,\hat z)=\|z-\hat z\|^p$ for some~$p>1$.

The structural properties of the distributions that solve the worst-case expectation problem~\eqref{eq:worst-case:expectation} over an optimal transport ambiguity set, as well as necessary and sufficient conditions for their existence, were studied by \citet{wozabal2012framework,owhadi2017extreme,yue2020linear} and \citet{gao2016distributionally}. In particular, significant efforts were spent on characterizing the extremal distributions of a Wasserstein ball centered at a discrete reference distributions with~$N$ atoms. The earliest result in this domain is due to \citet[Theorem~3.3]{wozabal2012framework} who showed that the worst-case expectation of a continuous bounded loss function is attained by a discrete distribution with at most $N+3$~atoms. Later, \citet[Theorem~2.3]{owhadi2017extreme} and \citet[Corollary~1]{gao2016distributionally} managed to sharpen this result by showing that the worst-case expectation is in fact attained by a discrete distribution with at most $N+2$ or even only $N+1$ atoms, respectively; see also \citep[Theorem~4]{yue2020linear}. Theorem~\ref{thm:finite:convex:OT:III}\,\ref{thm:OT:extremal} and Lemma~\ref{lem:sufficient:OT} reveal that if~$\cZ$ is bounded and the loss function~$\ell$ is concave, thus satisfying Assumption~\ref{assmp:convexity}\,\ref{assmp:loss} with~$J=1$, then the worst-case expected loss is attained by an $N$-point distribution. For more general loss functions, however, every $N$-point distributions can be strictly suboptimal even if problem~\eqref{eq:worst-case:expectation} is solvable; see \citep[Example~5]{kuhn2019wasserstein}. The results in this section are based on \citep[\textsection~6]{zhen2023unification}.

\subsection{Nash Equilibria and Adversarial Attacks}

The DRO problem~\eqref{eq:primal:dro} can be viewed as a zero-sum game in which the decision-maker first chooses a decision~$x \in \cX$, and nature subsequently responds with a distribution~$\P \in \cP$ that adapts to~$x$. Throughout this section we will refer to~\eqref{eq:primal:dro}  as the \emph{primal} DRO problem. In addition, one can study the \emph{dual} DRO problem
\begin{align}
\label{eq:dual:dro}
\sup_{\P \in \cP} \inf_{x \in \cX} ~ \E_{\P} \left[ \ell(x, Z) \right],
\end{align}
where nature first selects a distribution $\P \in \cP$, and the decision-maker subsequently responds with a decision~$x \in \mathcal{X}$ that adapts to~$\P$. In contrast to the primal DRO problem~\eqref{eq:primal:dro}, whose objective function is linear in~$\P$, the objective function of the dual DRO problem~\eqref{eq:dual:dro} is concave in~$\P$. This difference makes the dual DRO problem more challenging to solve. It is now natural to seek conditions that imply strong duality and thus ensure that the infimum of the primal DRO problem~\eqref{eq:primal:dro} coincides with the supremum of the dual DRO problem~\eqref{eq:dual:dro}. One readily verifies that strong duality is implied, for example, by the existence of a Nash equilibrium $(x^\star, \mathbb{P}^\star) \in \mathcal{X} \times \mathcal{P}$ satisfying the saddle point condition
\begin{align}
\label{eq:Nash}
\E_\P \left[ \ell(x^\star, Z) \right] 
\leq \E_{\P^\star} \left[ \ell(x^\star, Z) \right] 
\leq \E_{\P^\star} \left[ \ell(x, Z) \right] 
\quad \forall x \in \cX,~\P \in \cP.
\end{align}
We emphasize that the reverse implication is false, that is, strong duality does not necessarily imply the existence of a Nash equilibrium. The primal DRO problem naturally arises in many applications. The practical usefulness of the dual DRO problem, on the other hand, is less evident because this problem assumes somewhat unrealistically that the decision-maker observes the distribution that governs~$Z$. Nevertheless, the dual DRO problem has deep connections to robust statistics, machine learning as well as several other disciplines as we explain below.

From the perspective of robust statistics, a minimizer~$x^\star$ of the primal DRO problem~\eqref{eq:primal:dro} can be interpreted as a \emph{robust estimator} for the minimizer of the stochastic program $\min_{x \in \cX}~\E_{\P_0} \left[ \ell(x, Z) \right]$ corresponding to an unknown distribution~$\P_0$. When $x^\star$ and $\P^\star$ satisfy the saddle point condition~\eqref{eq:Nash}, then the robust estimator~$x^\star$ constitutes a best response to~$\P^\star$. Hence, it solves the stochastic program corresponding to~$\P^\star$; see also \citep[Chapter~5]{lehmann2006theory}. For this reason, $\P^\star$ is often referred to as the \emph{least favorable distribution}. The existence of~$\P^\star$ makes~$x^\star$ a plausible estimator because it ensures that~$x^\star$ is the minimizer of a stochastic program corresponding to {\em some} distribution in the ambiguity set.

Algorithms for computing Nash equilibria of DRO problems are also relevant for applications in machine learning. To see this, recall that adversarial training aims to immunize machine learning models against adversarial perturbations of the input data \citep{szegedy2014intriguing,ian15adversarial,madry2018towards,wang2019convergence,kurakin2022adversarial}. In this context, it is of interest to generate adversarial examples, that is, maliciously designed inputs that mislead prediction models encoded by parameters~$x \in \cX$. As a na\"ive approach to construct adversarial examples, one could simply solve the worst-case expectation problem
\begin{align}
\label{eq:wc}
\sup_{\P \in \cP} \, \E_\P[\ell(\hat x, Z)],
\end{align}
which seeks a test distribution that maximizes the expected prediction loss of one particular model encoded by~$\hat x$. Thus, any solution~$\P^\star$ of~\eqref{eq:wc} can be viewed as an adversarial attack, and samples drawn from~$\P^\star$ are naturally interpreted as adversarial examples. In order to develop efficient strategies for attacking as well as defending prediction models, however, it is desirable to construct adversarial attacks that fool a broad spectrum of different models. Such attacks are called \emph{transferable} in the machine learning literature \citep{tramer2017space,demontis2019adversarial,kurakin2022adversarial}. The dual DRO problem~\eqref{eq:dual:dro} can be used to construct transferable attacks in a systematic manner. Indeed, the solutions of~\eqref{eq:dual:dro} are {\em not} tailored to a particular model~$\hat x\in\cX$. Instead, they aim to attack \emph{all} models~$x \in \cX$ simultaneously. If the primal DRO problem~\eqref{eq:primal:dro} has a unique minimizer~$x^\star$, then this minimizer can be recovered by solving the stochastic program corresponding to the adversary's Nash strategy~$\P^\star$.

To date, dual DRO problems have only been investigated in the context of specific applications. For example, it is known that the least favorable distributions in distributionally robust estimation and Kalman filtering problems with a $2$-Wasserstein ambiguity set centered at a Gaussian reference distribution are themselves Gaussian and can be computed efficiently via semidefinite programming \citep{shafieezadeh2018wasserstein, nguyen2019bridging}. Several recent studies describe similar results for distributionally robust optimal control problems with a $2$-Wasserstein ambiguity set \citep{al2023distributionally, hajar2023wasserstein, kargin2024lqr, kargin2024distributionally, kargin2024infinite, kargin2024wasserstein, taskesen2024distributionally}. When the Wasserstein ambiguity set is replaced with a Kullback-Leibler ambiguity set around a Gaussian reference distribution, then the least favorable distributions remain Gaussian and can be determined in quasi-closed form \citep{levy2004robust,levy2012robust}. In fact, these results even extend to generalized $\tau$-divergence ambiguity sets \citep{zorzi2016robust,zorzi2017robustness}. Gaussian distributions also solve several other minimax games reminiscent of DRO problems, which are relevant for applications in statistics, control and information theory \citep{bacsar1972minimax,bacsar1973minimax,bacsar1973multistage,bacsar1977optimum,bacsar1982optimum,bacsar1983gaussian,bacsar1984bandwidth,bacsar1985complete,bacsar1986solutions}. Furthermore, it is possible to characterize the Nash equilibria of distributionally robust pricing and auction design problems with support-only and Markov ambiguity sets in closed form \citep{bergemann2008pricing,koccyiugit2020distributionally,koccyiugit2021robust,anunrojwong2024robustness,chen2023screening}. Minimax theorems establishing strong duality between primal and dual DRO problems involving more general optimal transport ambiguity sets are reported in \citep{blanchet2019confidence,shafiee2023new,frank2024existence,pydi2024many}.

\section{Regularization by Robustification}
\label{sec:approximations-of-nature}

Classical stochastic optimization seeks decisions that perform well under a probability distribution~$\hat\P$ estimated from training data. By ignoring any information about estimation errors in~$\hat\P$, however, stochastic optimization tends to output overfitted decisions that incur a low expected loss under~$\hat\P$ but may perform poorly under the unknown population distribution~$\P$. This problem becomes more acute if training data is scarce. A key advantage of DRO {\em vis-\`a-vis} stochastic optimization is that it has access to information about estimation errors. DRO uses this information to prevent overfitting. Robustifying a stochastic optimization problem against distributional uncertainty can thus be viewed as a form of implicit regularization.

We now show that there is often a deep connection between {\em implicit} regularization (achieved by robustifying a problem against distributional uncertainty) and {\em explicit} regularization (achieved by adding a penalty term to the problem's objective function). This discussion complements and extends several results from Section~\ref{sec:analytical-wc}. For example, in Section~\ref{sec:KL-risk} we have seen that the worst-case expected value of a linear loss function with respect to a Kullback-Leibler ambiguity set centered at a Gaussian distribution coincides with the nominal expected loss and a variance regularization term. Similarly, in Section~\ref{sec:wc-wasserstein} we have seen that the worst-case expected value of a convex loss function with respect to a $1$-Wasserstein ambiguity set coincides with the nominal expected loss and a Lipschitz regularization term. See also Sections~\ref{sec:1-wasserstein-risk} and~\ref{sec:p-wasserstein-risk} for some variants and generalizations of this result.

In Section~\ref{sec:phi-divergence-regularization} we will show---in broad generality---that the worst-case expected loss over a $\phi$-divergence ambiguity set is closely related to the nominal expected loss with a {\em variance} regularization term. Similarly, in Section~\ref{sec:wasserstein-regularization} we will show that the worst-case expected loss over a Wasserstein ambiguity set is closely related to the nominal expected loss with {\em variation} and {\em Lipschitz} regularization terms. In Section~\ref{sec:Lipschitz-continuous-risk-measures} we will further show that many popular risk measures are Lipschitz continuous in the distribution of the relevant risk factors with respect to a Wasserstein distance. This implies that the worst-case risk over a Wasserstein ambiguity set is closely related to the nominal risk and a {\em Lipschitz} regularization term. We remark that the connections between robustification and regularization are less well understood for moment ambiguity sets. From Section~\ref{sec:gelbrich-risk} we know that the worst-case risk of a linear loss function over a Gelbrich ambiguity set often coincides with the nominal risk and a $2$-norm regularization term. However, it is unclear whether similar results can be obtained for nonlinear loss functions or other moment ambiguity sets. Therefore, we will not touch on moment ambiguity sets in this section. We emphasize that the connections between robustification and regularization often enable statistical analyses of DRO problems; see Section~\ref{sec:statistics}.


\subsection{$\phi$-Divergence Ambiguity Sets}
\label{sec:phi-divergence-regularization}
As a motivating example, we show that robustification with respect to a Pearson $\chi^2$-divergence ambiguity set is closely related to variance regularization. To see this, recall first that the Pearson $\chi^2$-divergence ambiguity set~\eqref{eq:pearson-chi-squared-ambiguity-set} is defined as
\begin{align*}
\cP = \{ \P \in \mc \P(\cZ): \chi^2(\P, \hat \P) \leq r \}.
\end{align*}
If $\ell$ is a bounded Borel function, Proposition~\ref{prop:variance:chi} readily implies that
\begin{align*}
\cP \subseteq \left\{ \P \in \mc \P(\cZ): \E_\P [\ell(Z)] \leq \E_{\hat \P}[\ell(Z)] + r^\half \V_{\hat \P}[\ell(Z)]^\half\right\},
\end{align*}
and thus we may conclude that
\begin{align*}
\sup_{\P \in \cP} \, \E_\P[\ell(Z)] \leq \E_{\hat \P}[\ell(Z)] + r^\half \V_{\hat \P}[\ell(Z)]^\half.
\end{align*}
Hence, the worst-case expected loss with respect to a Pearson $\chi^2$-divergence ambiguity set of radius~$r$ around~$\hat\P$ is bounded above by the mean-standard deviation risk measure with risk-aversion coefficient~$r^\half$ evaluated under~$\hat\P$. By slight abuse of terminology, the scaled standard deviation $r^\half \V_{\hat \P}[\ell(Z)]^\half$ is commonly referred to as a variance regularizer. By leveraging Theorem~\ref{thm:duality:restricted:phi}, the above bound can be extended to arbitrary (possibly unbounded) Borel loss functions. This extension critically relies on the following lemma. 

\begin{lemma}[Variance Formula]
\label{lem:variance}
For any reference distribution~$\hat \P \in \cP(\cZ)$, size parameter~$r > 0$ and Borel function $\ell\in\cL(\R^d)$ with $\E_{\hat \P}[|\ell(Z)|] < \infty$, we have
\begin{align}
	\label{eq:variance-formula}
	\inf_{\lambda_0 \in \R, \lambda \in \R_+} \, \lambda r + \frac{\E_{\hat \P}[(\ell(Z) - \lambda_0)^2]}{4 \lambda} = r^\half \V_{\hat \P}[\ell(Z)]^\half.
\end{align}
\end{lemma}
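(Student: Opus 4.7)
The plan is to solve the bivariate minimization in two stages, optimizing first over $\lambda_0 \in \R$ for fixed $\lambda > 0$, and then over $\lambda > 0$, before finally attending to the corner case $\lambda = 0$.

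First, assume temporarily that $\E_{\hat\P}[\ell(Z)^2] < \infty$, so that the variance $\V_{\hat\P}[\ell(Z)]$ is finite. For any fixed $\lambda > 0$, the inner objective is a strictly convex quadratic in $\lambda_0$ whose unique minimizer solves the first-order condition $\E_{\hat\P}[\ell(Z) - \lambda_0] = 0$; thus $\lambda_0^\star = \E_{\hat\P}[\ell(Z)]$, and the corresponding minimum value of $\E_{\hat\P}[(\ell(Z)-\lambda_0)^2]$ is precisely $\V_{\hat\P}[\ell(Z)]$. Substituting back reduces the problem to $\inf_{\lambda > 0} \, \lambda r + \V_{\hat\P}[\ell(Z)]/(4\lambda)$.

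Next, applying the AM--GM inequality $a + b \geq 2\sqrt{ab}$ to $a = \lambda r$ and $b = \V_{\hat\P}[\ell(Z)]/(4\lambda)$ yields the pointwise bound $\lambda r + \V_{\hat\P}[\ell(Z)]/(4\lambda) \geq r^{1/2} \V_{\hat\P}[\ell(Z)]^{1/2}$, with equality attained (if $\V_{\hat\P}[\ell(Z)] > 0$) at $\lambda^\star = \V_{\hat\P}[\ell(Z)]^{1/2}/(2 r^{1/2})$. If $\V_{\hat\P}[\ell(Z)] = 0$, the infimum is still $0 = r^{1/2} \V_{\hat\P}[\ell(Z)]^{1/2}$, approached asymptotically as $\lambda \downarrow 0$. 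This establishes the desired identity when $\V_{\hat\P}[\ell(Z)] < \infty$.

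For the remaining case $\V_{\hat\P}[\ell(Z)] = \infty$ (which is consistent with $\E_{\hat\P}[|\ell(Z)|] < \infty$), observe that $\E_{\hat\P}[(\ell(Z) - \lambda_0)^2] = \infty$ for every $\lambda_0 \in \R$, so the left-hand side evaluates to $+\infty$ for every $\lambda > 0$, matching the right-hand side $r^{1/2}\cdot \infty = \infty$. Finally, I would verify that including $\lambda = 0$ in the infimum does not change the optimal value: under the paper's extended-arithmetic convention the summand $\E_{\hat\P}[(\ell(Z)-\lambda_0)^2]/(4\lambda)$ at $\lambda = 0$ is interpreted through the perspective of $t \mapsto t^2/4$, and hence equals $0$ only when $\ell(Z) = \lambda_0$ $\hat\P$-almost surely (i.e., $\V_{\hat\P}[\ell(Z)] = 0$) and $+\infty$ otherwise, which is consistent with the limit $\lambda \downarrow 0$ analysed above.

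The proof is essentially a two-step closed-form optimization; the only delicate point is bookkeeping the cases $\V_{\hat\P}[\ell(Z)] \in \{0, \infty\}$ and the boundary value $\lambda = 0$, which must be reconciled with the perspective-function conventions used throughout the paper.
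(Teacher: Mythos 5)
Your proof is correct and takes essentially the same approach as the paper: handle the degenerate case $\E_{\hat\P}[\ell(Z)^2]=\infty$ separately, then minimize over $\lambda_0$ to get $\lambda_0^\star=\E_{\hat\P}[\ell(Z)]$, then minimize over $\lambda$ (the paper states the optimal $\lambda^\star$ directly; your AM--GM derivation is equivalent). Your additional bookkeeping of the cases $\V_{\hat\P}[\ell(Z)]=0$ and $\lambda=0$ is sound and slightly more careful than the paper, but does not change the structure of the argument.
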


\begin{proof}
If $\E_{\hat \P}[\ell(Z)^2] = \infty$, then both sides of~\eqref{eq:variance-formula} evaluate to~$\infty$, and thus the claim follows. In the remainder of the proof, we may thus assume that $\E_{\hat \P}[\ell(Z)^2] < \infty$. In this case, one readily verifies that the partial minimization problem over $\lambda_0$ is solved by $\lambda_0^\star = \E_{\hat \P}[\ell(Z)]$. Substituting $\lambda_0^\star$ back into the objective function reveals that the infimum on the left hand side of~\eqref{eq:variance-formula} equals $\inf_{\lambda \in \R_+} \, \lambda r + {\V_{\hat \P}[\ell(Z)]}/{4 \lambda}$. In order to prove~\eqref{eq:variance-formula}, it suffices to realize that this minimization problem over~$\lambda$ is solved by $\lambda^\star = \sqrt{\V_{\hat \P}[\ell(Z)] / (4r)}$. This observation completes the proof.
\end{proof}

\begin{theorem}[Variance Regularization]
If~$\cP$ is the Pearson $\chi^2$-divergence ambiguity set~\eqref{eq:pearson-chi-squared-ambiguity-set} and $\E_{\hat\P}[|\ell(Z)|]<\infty$, then we have
\begin{align*}
	\sup_{\P \in \cP} \, \E_\P[\ell(Z)] \leq \E_{\hat \P}[\ell(Z)] + r^\half \V_{\hat \P}[\ell(Z)]^\half.
\end{align*}
\end{theorem}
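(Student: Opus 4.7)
The plan is to combine the weak duality result for restricted $\phi$-divergence ambiguity sets (Theorem~\ref{thm:duality:restricted:phi}) with Lemma~\ref{lem:variance}. First, I would note that the entropy function $\phi(s)=(s-1)^2$ inducing the Pearson $\chi^2$-divergence satisfies $\phi^\infty(1)=\infty$, so every $\P\in\cP$ is automatically absolutely continuous with respect to $\hat\P$ and the Pearson $\chi^2$-divergence ambiguity set~\eqref{eq:pearson-chi-squared-ambiguity-set} coincides with its restricted counterpart. Since $\E_{\hat\P}[|\ell(Z)|]<\infty$ implies $\E_{\hat\P}[\ell(Z)]>-\infty$, the weak duality part of Theorem~\ref{thm:duality:restricted:phi} delivers
\[
\sup_{\P\in\cP}\E_\P[\ell(Z)] \;\leq\; \inf_{\lambda_0\in\R,\,\lambda\in\R_+}\;\lambda_0+\lambda r+\E_{\hat\P}\!\left[(\phi^*)^\pi(\ell(Z)-\lambda_0,\lambda)\right].
\]

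Second, I would bound $\phi^*$ by a pure quadratic. A direct computation gives $\phi^*(t)=t^2/4+t$ for $t\geq -2$ and $\phi^*(t)=-1$ for $t<-2$, so in either case $\phi^*(t)\leq t^2/4+t$ for every $t\in\R$. Passing to the perspective yields $(\phi^*)^\pi(s,\lambda)\leq s^2/(4\lambda)+s$ for all $\lambda>0$, with the bound remaining valid at $\lambda=0$ under the standard convention that $s^2/0=+\infty$ for $s\neq 0$. Integrating this pointwise inequality against $\hat\P$ and collecting the affine term $\E_{\hat\P}[\ell(Z)]-\lambda_0$, which cancels the outer $\lambda_0$, produces
\[
\sup_{\P\in\cP}\E_\P[\ell(Z)] \;\leq\; \E_{\hat\P}[\ell(Z)]+\inf_{\lambda_0\in\R,\,\lambda\in\R_+}\;\lambda r+\frac{\E_{\hat\P}[(\ell(Z)-\lambda_0)^2]}{4\lambda}.
\]

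Third, for $r>0$ Lemma~\ref{lem:variance} evaluates the residual minimization in closed form as $r^{1/2}\V_{\hat\P}[\ell(Z)]^{1/2}$, yielding the claimed inequality; the case $r=0$ is trivial because then $\cP=\{\hat\P\}$. If $\V_{\hat\P}[\ell(Z)]=\infty$ the right hand side is vacuous, so no integrability beyond $\E_{\hat\P}[|\ell(Z)|]<\infty$ is required. I do not foresee any genuine obstacle: the only delicate point is the quadratic envelope of $\phi^*$, which is immediate because $\phi$ is already quadratic on its effective domain.
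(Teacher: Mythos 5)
Your proposal is correct and follows essentially the same route as the paper: both invoke the duality bound for the (automatically restricted, since $\phi^\infty(1)=\infty$) $\chi^2$-ambiguity set, dominate $\phi^*(t)$ by the quadratic $t^2/4+t$, and then evaluate the remaining minimization over $(\lambda_0,\lambda)$ via Lemma~\ref{lem:variance}. Your extra care with the perspective at $\lambda=0$ and the $r=0$ case is fine but not a substantive difference.
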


\begin{proof}
The claim trivially holds if~$r = 0$. We may thus assume that~$r > 0$. Recall now that the entropy function~$\phi$ inducing the Pearson $\chi^2$-divergence satisfies $\phi(s)=(s-1)^2$ if~$s\geq 0$ and $\phi(s)=\infty$ if~$s<0$. Hence, the conjugate entropy function~$\phi^*$ satisfies $\phi^*(t)=\frac{1}{4} t^2 +t$ if~$t \geq -2$ and $\phi^*(t)=-1$ if~$t < -2$, and its domain is given by $\dom(\phi^*)=\R$. As $\phi^\infty(1) = \infty$, all distributions $\P\in\cP$ are absolutely continuous with respect to~$\hat\P$. Thus, Theorem~\ref{thm:duality:restricted:phi} applies, and we find
\begin{align*}
	\sup_{\P \in \cP} \; \E_\P[\ell(Z)] 
	&= \inf_{\lambda_0\in\R, \lambda \in \R_+} \lambda_0  + \lambda r + \E_{\hat \P} \left[ (\phi^*)^\pi \left( \ell(Z) - \lambda_0, \lambda \right) \right] \\
	&\leq \inf_{\lambda_0\in\R, \lambda \in \R_+} \E_{\hat \P}[\ell(Z)] + \lambda r + \frac{\E_{\hat \P}[(\ell(Z) - \lambda_0)^2]}{4 \lambda} \\
	&= \E_{\hat \P}[\ell(Z)] + r^\half \V_{\hat \P}[\ell(Z)]^\half,
\end{align*}
where the inequality holds because $\phi^*(t) \leq \frac{1}{4} t^2 + t$, and the second equality follows from Lemma~\ref{lem:variance}. Thus, the claim follows.
\end{proof}

Most $\phi$-divergences are smooth and non-negative and thus resemble the Pearson $\chi^2$-divergence  locally around~$1$ \citep[\ts~7.10]{polyanskiy2024information}. Accordingly, one can use a Taylor expansion to show that robustification over a $\phi$-divergence ambiguity set of sufficiently small size~$r$ is often equivalent to variance regularization. To formalize this result, we assume from now on that~$\phi$ is differentiable.

\begin{assumption}[Differentiability]
\label{assmp:smooth:phi}
The entropy function $\phi$ is twice continuously differentiable on a neighborhood of $1$ with $\phi(1) = \phi'(1) = 0$ and~$\phi''(1) = 2$.
\end{assumption}

The assumption that $\phi'(1)=0$ incurs no loss of generality. Indeed, any entropy function~$\phi$ is equivalent to a transformed entropy function~$\tilde\phi$ defined through $\tilde \phi(t)= \phi(t) - \phi'(1) \cdot t + \phi'(1)$ with $\tilde\phi'(1)=0$. That is, both~$\phi$ and~$\tilde\phi$ induce the same divergence. Note that all entropy functions listed in Table~\ref{tab:phi-divergence}---except for the one associated with the total variation---satisfy~$\phi'(1)=0$. The assumption that $\phi''(1) = 2$ serves as an arbitrary normalization but will simplify calculations. 

Recall now that the restricted $\phi$-divergence ambiguity set~\eqref{eq:restricted-phi-divergence-ambiguity-set} is defined~as
\begin{align*}
\cP = \left\{ \P \in \cP(\cZ) \, : \, \P \ll \hat \P, ~ \D_\phi(\P, \hat \P) \leq r \right\}.
\end{align*} 
Here, $\cZ$ is a closed support set, $r\in\R_+$ is a size parameter, $\phi$ is an entropy function in the sense of Definition~\ref{def:phi}, $\D_\phi$ is the corresponding $\phi$-divergence in the sense of Definition~\ref{def:D_phi}, and~$\hat\P\in\cP(\cZ)$ is a reference distribution. The following theorem provides a leading-order Taylor expansion of the worst-case expectation over~$\cP$.

\begin{theorem}[Taylor Expansion of Worst-Case Expectation]
\label{thm:phi:taylor}
If~$\cP$ is the restricted $\phi$-divergence ambiguity set~\eqref{eq:restricted-phi-divergence-ambiguity-set}, the entropy function~$\phi$ satisfies Assumption~\ref{assmp:smooth:phi} and the loss~$\ell(Z)$ is $\hat\P$-almost surely bounded, then we have
\begin{align}
	\label{eq:phi-Taylor}
	\sup_{\P \in \cP} \, \E_\P[\ell(Z)] = \E_{\hat \P}[\ell(Z)] + r^\half \V_{\hat \P}[\ell(Z)]^\half + o( r^\half ).  
\end{align}
\end{theorem}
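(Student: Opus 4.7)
The plan is to establish matching upper and lower bounds on $\sup_{\P \in \cP} \E_\P[\ell(Z)]$ by combining the strong duality of Theorem~\ref{thm:duality:restricted:phi} with second-order Taylor expansions of $\phi$ at $1$ and of $\phi^*$ at $0$. From $\phi(1) = \phi'(1) = 0$ and $\phi''(1) = 2$, classical properties of the Legendre transform (inverse-function theorem applied to $\phi'$) give $\phi^*(0) = 0$, $(\phi^*)'(0) = 1$ and $(\phi^*)''(0) = 1/\phi''(1) = 1/2$, so that
\begin{align*}
    \phi(1+x) &= x^2 + o(x^2) \quad \text{as } x \to 0, \\
    \phi^*(t) &= t + \tfrac{1}{4} t^2 + o(t^2) \quad \text{as } t \to 0.
\end{align*}
Since $\ell(Z)$ is $\hat\P$-a.s.\ bounded, these pointwise remainders can be promoted to uniform ones on the relevant ranges of the arguments---this is the key technical device behind the whole argument. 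One may assume $\V_{\hat\P}[\ell(Z)] > 0$, since otherwise $\ell(Z)$ is $\hat\P$-a.s.\ constant and both sides of~\eqref{eq:phi-Taylor} trivially coincide.

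For the upper bound, I would evaluate the dual supplied by Theorem~\ref{thm:duality:restricted:phi} at the candidate pair $\lambda_0^\star = \E_{\hat\P}[\ell(Z)]$ and $\lambda^\star = \sqrt{\V_{\hat\P}[\ell(Z)]/(4r)}$ that minimizes the variance formula in Lemma~\ref{lem:variance}. Writing $T(z) = (\ell(z) - \lambda_0^\star)/\lambda^\star$, boundedness of $\ell$ yields $\|T\|_\infty = O(r^{1/2})$, and the uniform Taylor expansion of $\phi^*$ gives
\begin{align*}
    \lambda^\star \phi^*(T(Z)) = (\ell(Z) - \lambda_0^\star) + \frac{(\ell(Z) - \lambda_0^\star)^2}{4 \lambda^\star} + \frac{(\ell(Z) - \lambda_0^\star)^2}{\lambda^\star}\, g(T(Z)),
\end{align*}
where $g(t) \to 0$ as $t \to 0$. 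Taking $\hat\P$-expectation and invoking Lemma~\ref{lem:variance} to collapse $\lambda_0^\star + \lambda^\star r + \V_{\hat\P}[\ell(Z)]/(4\lambda^\star)$ into $\E_{\hat\P}[\ell(Z)] + r^{1/2}\V_{\hat\P}[\ell(Z)]^{1/2}$, the remainder term contributes at most $O(r^{1/2}) \cdot o(1) = o(r^{1/2})$, which delivers the desired upper bound.

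For the lower bound, I would construct an explicit feasible distribution $\P_\alpha$ with Radon--Nikodym derivative $\diff\P_\alpha/\diff\hat\P = 1 + \alpha(\ell(Z) - \E_{\hat\P}[\ell(Z)])$ for a small parameter $\alpha > 0$. Boundedness of $\ell$ ensures that, for $\alpha$ sufficiently small, this density is nonnegative, so $\P_\alpha \ll \hat\P$ is a bona fide probability distribution on $\cZ$. A uniform Taylor expansion of $\phi$ at $1$ then yields
\begin{align*}
    \D_\phi(\P_\alpha, \hat\P) = \E_{\hat\P}\bigl[\phi\bigl(1 + \alpha(\ell(Z) - \E_{\hat\P}[\ell(Z)])\bigr)\bigr] = \alpha^2 \V_{\hat\P}[\ell(Z)] (1 + o(1)),
\end{align*}
while $\E_{\P_\alpha}[\ell(Z)] = \E_{\hat\P}[\ell(Z)] + \alpha\, \V_{\hat\P}[\ell(Z)]$. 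Choosing $\alpha(r) = \sqrt{r/\V_{\hat\P}[\ell(Z)]}\, (1 - o(1))$ so that $\D_\phi(\P_\alpha, \hat\P) \leq r$ places $\P_\alpha$ inside $\cP$ and gives $\E_{\P_\alpha}[\ell(Z)] - \E_{\hat\P}[\ell(Z)] = r^{1/2}\V_{\hat\P}[\ell(Z)]^{1/2} + o(r^{1/2})$, which matches the upper bound and closes the argument.

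The principal obstacle I anticipate is the uniform control of the Taylor remainders for both $\phi$ near $1$ and $\phi^*$ near $0$. The almost-sure boundedness of $\ell(Z)$ is essential here: it keeps the arguments of $\phi$ and $\phi^*$ uniformly inside small neighborhoods of $1$ and $0$ respectively, so that the pointwise $o(\cdot)$ bounds upgrade to uniform bounds that survive integration against $\hat\P$. Without boundedness, one would need tail control on $\ell(Z)$ under $\hat\P$ together with quantitative third-order Taylor remainders, and the clean statement~\eqref{eq:phi-Taylor} may fail.
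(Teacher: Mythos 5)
Your proposal is correct and follows essentially the same strategy as the paper's proof: the lower bound via the linear density perturbation $1+\alpha(\ell(Z)-\E_{\hat\P}[\ell(Z)])$ with $\alpha\approx\sqrt{r/\V_{\hat\P}[\ell(Z)]}$, and the upper bound via the dual of Theorem~\ref{thm:duality:restricted:phi} evaluated at $\lambda_0^\star=\E_{\hat\P}[\ell(Z)]$ and $\lambda^\star=\sqrt{\V_{\hat\P}[\ell(Z)]/(4r)}$, with almost-sure boundedness of $\ell(Z)$ used exactly as you describe to make the second-order Taylor remainders uniform. The only cosmetic difference is that the paper obtains its uniform quadratic upper bound on $\phi^*$ by minorizing $\phi$ with a scaled Huber function and conjugating, whereas you Taylor-expand $\phi^*$ directly at $0$ via the inverse function theorem; the two devices are interchangeable here.
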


\begin{proof}
Note that~\eqref{eq:phi-Taylor} trivially holds if~$r=0$. Similarly, if $\V_{\hat\P}[\ell(Z)]=0$, then~$\ell(Z)$ coincides $\hat\P$-almost surely with $\E_{\hat\P}[\ell(Z)]$. As~$\cP$ is a restricted $\phi$-divergence ambiguity set, this readily implies that $\E_{\P}[\ell(Z)]= \E_{\hat\P}[\ell(Z)]$ for all~$\P\in\cP$. Indeed, any~$\P\in\cP$ satisfies~$\P\ll\hat\P$. Hence, \eqref{eq:phi-Taylor} is again trivially satisfied. In the remainder of the proof we my therefore assume that~$r>0$ and that~$\V_{\hat\P}[\ell(Z)]>0$.

Assumption~\ref{assmp:smooth:phi} implies that~$\phi(s)=(s-1)^2 +o(s^2)$. By Taylor's theorem with Peano remainder, $\phi$ can thus be bounded from below (or above) locally around~$1$ by a quadratic function whose second derivative is slightly smaller (or larger) than $\phi''(1)=2$. Thus, there exists a function $\kappa:\R_+\to\R_+$ with $\lim_{\varepsilon\downarrow 0}\kappa(\varepsilon)=0$ and 
\begin{align}
	\label{eq:bounds-on-phi}
	\frac{1}{1+\kappa(\varepsilon)} \cdot s^2\leq \phi(1+s)\leq (1+\kappa(\varepsilon))\cdot s^2 \quad \forall s\in[-\varepsilon, +\varepsilon]
\end{align}
for all sufficiently small~$\varepsilon$. The rest of the proof proceeds in two steps, both of which exploit~\eqref{eq:bounds-on-phi}. First, we show that the right hands side of~\eqref{eq:phi-Taylor} provides a {\em lower} bound on the worst-case expected loss over~$\cP$ (Step~1). Next, we show that the right hands side of~\eqref{eq:phi-Taylor} also provides an {\em upper} bound on the worst-case expected loss over~$\cP$ (Step~2). Taken together, Steps~1 and~2 will imply the claim.

\paragraph{Step~1.} Every distribution~$\P$ in the restricted $\phi$-divergence ambiguity set~$\cP$ satisfies~$\P\ll\hat\P$ and has thus a density function~$f\in \cL^1(\hat\P)$ with respect to~$\hat\P$. Here, $\cL^1(\hat \P)$ denotes as usual the family of all Borel functions from~$\cZ$ to~$\R$ that are integrable with respect to~$\hat\P$. As~$\P\ll\hat\P$, we have $\D_\phi(\P,\hat\P)=\E_{\hat\P}[\phi(f(Z))]$ (see also Section~\ref{sec:phi-divergence-amgibuity-sets}). Thus, the worst-case expectation problem over~$\cP$ can be recast as
\begin{align*}
	\sup_{\P\in\cP} \E_\P[\ell(Z)] = \left\{ \begin{array}{cl}
		\displaystyle \sup_{f\in \cL^1(\hat\P) } & \displaystyle \E_{\hat\P} \left[ \ell(Z)f(Z)\right]\\
		\st  & \hat\P(f(Z)\geq 0)=1 \\
		& \E_{\hat \P}\left[ f(Z)\right]=1 \\
		& \E_{\hat \P} \left[ \phi(f(Z))\right] \leq r.
	\end{array}\right.
\end{align*}
Renaming~$f(z)+1$ as~$f(z)$ further yields
\begin{align}
	\label{eq:shifted-wc-phi}
	\sup_{\P\in\cP} \E_\P[\ell(Z)] = \E_{\hat \P}[\ell(Z)] + \left\{ \begin{array}{cl}
		\displaystyle \sup_{f\in \cL^1(\hat\P) } & \displaystyle \E_{\hat\P} \left[ \ell(Z)f(Z)\right]\\
		\st  & \hat\P(f(Z)\geq -1)=1 \\
		& \E_{\hat \P}\left[ f(Z)\right]=0 \\
		& \E_{\hat \P} \left[ \phi(1+ f(Z))\right] \leq r.
	\end{array}\right.
\end{align}
Next, introduce an auxiliary function $\varepsilon:\R_+\to\R_+$ satisfying
\[
\varepsilon(r) = 2r^{\half}\cdot \frac{\esssup_{\hat\P}[|\ell(Z)-\E_{\hat \P}[\ell(Z)]|]}{\V_{\hat \P}[\ell(Z)]^\half}.
\]
In addition, for every $r\in\R_+$, define the function $f^\star_r\in\cL^1(\hat\P)$ through
\[
f^\star_r (z)=\frac{r^{\half}}{(1+\kappa(\varepsilon(r)))^\half} \cdot \frac{\ell(z)- \E_{\hat\P}[\ell(Z)]}{\V_{\hat \P}[\ell(Z)]^\half}.
\]
By construction, we may thus conclude that
\begin{align}
	\label{eq:bounded-f_r}
	\left| f^\star_r (Z) \right| & \leq r^{\half} \cdot \frac{ \left| \ell(Z) - \E_{\hat\P}[\ell(Z)] \right|}{\V_{\hat \P}[\ell(Z)]^\half} \leq \varepsilon(r) \quad \hat\P\text{-a.s.}
\end{align}
for every~$r\in\R_+$, where the two inequalities follow from the definitions of~$f^\star_r(z)$ and~$\varepsilon(r)$, respectively. In addition, we have $\E_{\hat\P}[f^\star_r (Z)]=0$ and
\begin{align*}
	\E_{\hat\P} \left[ \phi(1+f^\star_r (Z))\right] & \leq (1+\kappa(\varepsilon(r))) \cdot \E_{\hat\P} \left[ f^\star_r (Z)^2)\right] = r
\end{align*}
for all sufficiently small~$r$. The inequality in the above expression follows from~\eqref{eq:bounded-f_r} and from the upper bound on~$\phi$ in~\eqref{eq:bounds-on-phi}, which holds for all sufficiently small~$\varepsilon$. The equality exploits the definition of~$f^\star_r$. This shows that~$f^\star_r$ constitutes a feasible solution for the maximization problem in~\eqref{eq:shifted-wc-phi} if~$r$ is sufficiently small. Substituting~$f^\star_r$ into~\eqref{eq:shifted-wc-phi} then yields the desired lower bound. Indeed, we have
\begin{align*}
	\sup_{\P\in\cP} \E_\P[\ell(Z)] & \geq \E_{\hat \P}[\ell(Z)] + \E_{\hat\P} \left[ \ell(Z)f^\star_r(Z)\right] \\[-1ex]
	& = \E_{\hat \P}[\ell(Z)] +\frac{r^{\half}}{(1+\kappa(\varepsilon(r)))^\half} \cdot \frac{\E_{\hat \P} \left[ \ell(Z)(\ell(Z)-\E_{\hat\P}[\ell(Z)]) \right] }{\V_{\hat \P}[\ell(Z)]^\half} \\
	& = \E_{\hat \P}[\ell(Z)] + r^\half \V_{\hat \P}[\ell(Z)]^\half+o(r^\half),
\end{align*}
for all sufficiently small~$r$, where the first equality follows from the definition of~$f^\star_r$. The second equality exploits the Taylor expansion of the inverse square root function around~$1$ and the elementary observation that $\lim_{r\downarrow 0}\kappa(\varepsilon(r))=0$. 

\paragraph{Step~2.} The Huber loss $h_\varepsilon:\R\to \R$ with tuning parameter $\varepsilon>0$ is defined through
\[
h_\varepsilon(s)=\left\{ \begin{array}{ll}
	\half s^2 & \text{if } |s|\leq \varepsilon, \\
	\varepsilon |s| -\half\varepsilon^2 & \text{otherwise}.
\end{array} \right.
\]
By construction, $h_\varepsilon$ is continuously differentiable, depends quadratically on~$s$ if $|s|\leq\varepsilon$ and depends linearly on~$s$ if~$|s|>\varepsilon$. Its conjugate $h_\varepsilon^*:\R\to\overline\R$ satisfies 
\[
h_\varepsilon^*(t)=\left\{ \begin{array}{ll}
	\half t^2 & \text{if } |t|\leq \varepsilon, \\
	\infty & \text{otherwise.}
\end{array} \right.
\]
The lower bound on~$\phi$ in~\eqref{eq:bounds-on-phi} and the convexity of~$\phi$ imply that
\[
\phi(s) \geq \frac{2}{1+\kappa(\varepsilon)} h_\varepsilon(s-1)\quad \forall s\in\R
\]
whenever~$\varepsilon$ is sufficiently small. This uniform lower bound on~$\phi$ in terms of~$h_\varepsilon$  gives rise to a uniform upper bound on~$\phi^*$ in terms of~$h^*_\varepsilon$. Indeed, we have
\begin{align}
	\nonumber
	\phi^*(t) & \leq \sup_{s\in\R}~ st-\frac{2}{1+\kappa(\varepsilon)} h_\varepsilon(s-1) \\
	& = t+ \frac{2}{1+\kappa(\varepsilon)} \textstyle h^*_\varepsilon \left(\half t(1+\kappa(\varepsilon))\right) = t+\left\{ \begin{array}{ll}
		\frac{(1+\kappa(\varepsilon))t^2}{4} & \text{if } |t| \leq  \frac{2\varepsilon}{1+\kappa(\varepsilon)}, \\
		\infty & \text{otherwise,}
	\end{array}\right.
	\label{eq:phi-conjugate-ub}
\end{align}
for all sufficiently small~$\varepsilon$. The first equality in~\eqref{eq:phi-conjugate-ub} is obtained by applying the variable transformation $s\leftarrow s-1$ and by extracting the constant $2/(1+\kappa(\varepsilon))$ from the supremum. The second equality follows from the definition of~$h^*_\varepsilon$. By weak duality as established in Theorem~\ref{thm:duality:restricted:phi}, we then find
\begin{align}
	\sup_{\P\in\cP} \E_\P[\ell(Z)] & \leq \inf_{\lambda_0\in\R, \lambda \in \R_+} \lambda_0  + \lambda r + \E_{\hat \P} \left[ (\phi^*)^\pi \left( \ell(Z) - \lambda_0, \lambda \right) \right] \nonumber \\
	& \leq \left\{ \begin{array}{cl}
		\displaystyle \inf_{\lambda_0\in\R, \lambda \in \R_+}& \E_{\hat \P} [\ell(Z)]  + \lambda r + \frac{1+\kappa(\varepsilon(r))}{4 \lambda} \E_{\hat \P} \left[ \left( \ell(Z) - \lambda_0\right)^2 \right]\\
		\st & \hat\P\left( |\ell(Z)-\lambda_0|\leq \frac{2\varepsilon(r)\lambda}{1+\kappa(\varepsilon(r))}\right) =1, \end{array}\right.
	\label{eq:variance-reg-ub1}
\end{align}
where the second inequality follows from the definition of the perspective function and from~\eqref{eq:phi-conjugate-ub}, which holds for all sufficiently small~$\varepsilon$. Here, we have re-used the function~$\varepsilon(r)$ introduced in Step~1. Next, we set $\lambda_0^\star= \E_{\hat \P}[\ell(Z)]$ and define
\[
\lambda^\star_r = \frac{\big(1+\kappa(\varepsilon(r)) \big)^\half}{2r^\half} \cdot \V_{\hat \P}[\ell(Z)]^\half
\]
for any~$r>0$. Note that $(\lambda^\star_0, \lambda_r^\star)$ is feasible in~\eqref{eq:variance-reg-ub1} provided that~$r$ is sufficiently small; in particular, $r$ must be small enough to satisfy $\kappa(\varepsilon(r))\leq 3$. Indeed, we have 
\begin{align*}
	& \hat\P\left( |\ell(Z)-\lambda^\star_0|\leq \frac{2\varepsilon(r) \lambda_r^\star}{1+\kappa(\varepsilon(r))}\right) = \hat\P\left( |\ell(Z)-\E_{\hat\P}[\ell(Z)]|\leq \frac{\varepsilon(r) \V_{\hat \P}[\ell(Z)]^\half}{r^\half \big(1+\kappa(\varepsilon(r))\big)^\half}\right) \\
	& = \hat\P\left( |\ell(Z)-\E_{\hat\P}[\ell(Z)]|\leq \frac{2}{\big(1+\kappa(\varepsilon(r))\big)^\half} \esssup_{\hat\P}\big[|\ell(Z)-\E_{\hat \P}[\ell(Z)]|\big] \right) =1,
\end{align*}
where the first equality follows from the definitions of~$\lambda_0^\star$ and~$\lambda_r^\star$, the second equality follows from the definition of~$\varepsilon(r)$, and the last equality holds because $\kappa(\varepsilon(r))\leq 3$. Substituting $(\lambda^\star_0, \lambda_r^\star)$ into~\eqref{eq:variance-reg-ub1} then yields the desired upper bound.
\begin{align*}
	\sup_{\P\in\cP} \E_\P[\ell(Z)] & \leq \E_{\hat \P} [\ell(Z)]  + \lambda^\star_r r + \frac{1+\kappa(\varepsilon(r))}{4 \lambda^\star_r} \E_{\hat \P} \left[ \left( \ell(Z) - \lambda^\star_0\right)^2 \right] \\
	& = \E_{\hat \P} [\ell(Z)] + \frac{(1+\kappa(\varepsilon(r)))^\half}{2} \cdot r^\half \V_{\hat \P}[\ell(Z)]^\half \\
	&\qquad + \frac{(1+\kappa(\varepsilon(r)))^{\frac{1}{2}}}{2 \V_{\hat \P}[\ell(Z)]^\half} r^\half \E_{\hat \P} \left[ \left( \ell(Z) - \E_{\hat \P}[\ell(Z)]\right)^2 \right] \\
	&= \E_{\hat \P} [\ell(Z)] + r^\half \V_{\hat \P}[\ell(Z)]^\half + o(r^\half)
\end{align*}
Here, the first equality follows from the definitions of~$\lambda_0^\star$ and~$\lambda_r^\star$, and the second equality holds because $\lim_{r\downarrow 0}\kappa(\varepsilon(r))=0$. Hence, the claim follows.
\end{proof}

Theorem~\ref{thm:phi:taylor} reveals that, up to leading order in~$r$, robustification with respect to a restricted divergence ambiguity set is equivalent to variance regularization. The requirement that the loss must be almost surely bounded is restrictive but necessary. However, it can be relaxed if the entropy function~$\phi$ grows superlinearly. As an example, recall from Proposition~\ref{prop:KL-analytical} that the worst-case expectation of a linear loss function with respect to a Kullback-Leibler ambiguity set centered at a Gaussian distribution equals precisely $\E_{\hat \P} [\ell(Z)] + (2r)^\half \V_{\hat \P}[\ell(Z)]^\half$ without any higher-order error terms. This formula is consistent with Theorem~\ref{thm:phi:taylor} because the entropy function of the Kullback-Leibler divergence satisfies~$\phi''(1)=1$. Thus, it must be scaled by~2 to satisfy Assumption~\ref{assmp:smooth:phi}. Note that any (non-constant) linear loss functions fails to be $\hat\P$-almost surely bounded with respect to any (non-degenerate) Gaussian distribution~$\hat\P$. However, the conclusions of Theorem~\ref{thm:phi:taylor} hold nevertheless because the underlying entropy function grows faster than linearly.

A Taylor expansion akin to~\eqref{eq:phi-Taylor} for {\em empirical} reference distributions and for the Kullback-Leibler divergence ambiguity set~\eqref{eq:KL-ambiguity-set} is due to \citet{lam2019recovering}. \citet{duchi2021statistics} generalize this result to other $\phi$-divergences. Similar results for empirical reference distributions are also reported by \citet{lam2016robust, lam2018sensitivity, duchi2019variance} and \citet{blanchet2023statistical} in different contexts. In a parallel line of research, \citet{gotoh2018robust,gotoh2021calibration} derive a Taylor expansion of the penalty-based worst-case expected loss $\sup_{\P\in\cP(\cZ)} \E_\P[\ell(Z)] -\frac{1}{r}\D_\phi(\P,\hat\P)$. They focus again on discrete empirical reference distributions and provide both first- as well as higher-order terms of the corresponding Taylor expansion. 

\citet{maurer2009empirical} show that variance-regularized empirical risk minimization may provide faster rates of convergence to the expected loss under the population distribution compared to standard empirical risk minimization. This improved convergence highlights the potential benefits of incorporating variance regularization in the learning process. Unfortunately, simple stochastic optimization problems with a mean-variance objective are NP-hard even if the underlying loss function is convex in the decision variables \citep{ahmed2006convexity}. In contrast, the worst-case expectation with respect to any ambiguity set preserves the convexity of the underlying loss function. Theorem~\ref{thm:phi:taylor} thus suggests that the worst-case expected loss over a restricted $\phi$-divergence ambiguity set provides a convex surrogate for the nonconvex---but statistically attractive---variance-regularized empirical loss.

\subsection{Wasserstein Ambiguity Sets}
\label{sec:wasserstein-regularization}
As a motivating example, we show that robustification with respect to a $1$-Wasser\-stein ambiguity set is closely related to Lipschitz regularization. To see this, recall first that the $p$-Wasserstein ambiguity set~\eqref{eq:p-wasserstein-ball} for~$p \in [1, \infty)$ is defined as
\begin{align*}
\cP = \left\{ \P \in \cP(\cZ): \W_p(\P, \hat \P) \leq r \right\}.
\end{align*}
Here, $\cZ$ is a closed support set, $r\in\R_+$ is a size parameter, $\W_p$ is the $p$-Wasserstein distance induced by a norm~$\| \cdot \|$ on~$\R^d$ (see Definition~\ref{def:p-Wassertein}), and~$\hat\P\in\cP(\cZ)$ is a reference distribution. If the loss function~$\ell$ is piecewise concave, then the worst-case expectation problem~\eqref{eq:worst-case:expectation} over~$\cP$ can be reformulated as a finite convex program (see Theorem~\ref{thm:finite:convex:OT:I}). For more general loss functions, however, exact reformulations of~\eqref{eq:worst-case:expectation} are unavailable. We now show that if~$p=1$ and~$\ell$ is Lipschitz continuous as well as $\hat \P$-integrable, then the worst-case expectation problem~\eqref{eq:worst-case:expectation} admits a simple upper bound that involves the Lipschitz modulus of~$\ell$.

\begin{proposition}[Lipschitz Regularization]
\label{prop:KR-bound}
Suppose that~$\cP$ is the $1$-Wasserstein ambiguity set of radius~$r\in\R_+$ around $\hat\P\in \cP(\cZ)$, and~$\W_1$ is induced by a norm~$\| \cdot \|$ on~$\R^d$. In addition, suppose that~$\ell$ is Lipschitz continuous on~$\cZ$ with respect to the same norm~$\|\cdot\|$ and that~$\E_{\hat \P}[|\ell(Z)|]<\infty$. Then, we have
\begin{align}
	\label{eq:lipschitz-regularization}
	\sup_{\P \in \cP} \, \E_\P[\ell(Z)] \leq \E_{\hat \P}[\ell(Z)] + r \cdot \lip(\ell).
\end{align}
\end{proposition}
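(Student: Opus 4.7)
The plan is to derive the bound~\eqref{eq:lipschitz-regularization} by a direct application of the Kantorovich-Rubinstein duality (Corollary~\ref{cor:kanorovich-rubinstein-duality}), which states that
\[
    \W_1(\P, \hat \P) = \sup_{f \in \cL^1(\P),\,\lip(f) \leq 1} \int_{\cZ} f(z) \, \diff \P(z) - \int_{\cZ} f(\hat z) \, \diff \hat \P(\hat z).
\]
First, I would dispose of the trivial case $\lip(\ell) = 0$: then $\ell$ is constant on $\cZ$, so $\E_\P[\ell(Z)] = \E_{\hat\P}[\ell(Z)]$ for every $\P \in \cP$, and~\eqref{eq:lipschitz-regularization} holds with equality. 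In the main case $\lip(\ell) > 0$, I would set $f = \ell / \lip(\ell)$, which is $1$-Lipschitz with respect to $\|\cdot\|$ by construction. Before using this function as a test function in the dual representation, I would verify the integrability condition $f \in \cL^1(\P)$: Lipschitz continuity implies the pointwise bound $|\ell(z)| \leq |\ell(z_0)| + \lip(\ell)\,\|z - z_0\|$ for any reference point $z_0 \in \cZ$, and since $\hat \P$ has finite first moment (a consequence of $\E_{\hat \P}[|\ell(Z)|] < \infty$ combined with the same Lipschitz bound in directions where $\ell$ is non-constant) and $\W_1(\P, \hat \P) \leq r < \infty$, the triangle inequality for $\W_1$ shows that every $\P \in \cP$ has finite first moment, from which $\ell \in \cL^1(\P)$.

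With integrability in hand, the Kantorovich-Rubinstein duality yields
\[
    \int_\cZ \frac{\ell(z)}{\lip(\ell)} \, \diff \P(z) - \int_\cZ \frac{\ell(\hat z)}{\lip(\ell)} \, \diff \hat \P(\hat z) \leq \W_1(\P, \hat \P) \leq r
\]
for every $\P \in \cP$. Multiplying through by $\lip(\ell)$, rearranging terms, and taking the supremum over $\P \in \cP$ on the left hand side delivers the claim.

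An alternative route would be to invoke Theorem~\ref{thm:duality:OT} with transportation cost $c(z,\hat z) = \|z-\hat z\|$ and plug the admissible dual variable $\lambda = \lip(\ell)$ into the dual problem; Lipschitz continuity then forces the inner supremum $\sup_{z \in \cZ} \ell(z) - \lip(\ell)\,\|z - \hat z\|$ to be attained at $z = \hat z$ and to equal $\ell(\hat z)$, yielding the dual objective value $\lip(\ell) \cdot r + \E_{\hat \P}[\ell(\hat Z)]$. The main (minor) obstacle in either route is the integrability bookkeeping, since the proposition only postulates $\E_{\hat \P}[|\ell(Z)|] < \infty$ rather than a finite first moment for $\hat \P$ outright; the Lipschitz bound reduces this to an elementary check, and no deeper issue arises because the inequality~\eqref{eq:lipschitz-regularization} is a weak duality statement that does not rely on $r > 0$ or on attainment.
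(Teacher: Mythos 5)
Your proposal is correct and follows essentially the same route as the paper's proof: normalize $\ell$ by $\lip(\ell)$ to obtain a $1$-Lipschitz test function and apply the Kantorovich--Rubinstein duality of Corollary~\ref{cor:kanorovich-rubinstein-duality} to bound $\E_\P[\ell(Z)]-\E_{\hat\P}[\ell(Z)]$ by $r\cdot\lip(\ell)$. Your extra care with the degenerate case $\lip(\ell)=0$ and the integrability bookkeeping is welcome but does not change the argument.
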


We emphasize that evaluating the Lipschitz modulus of a generic loss function is computationally challenging. For example, one can show that computing $\lip(\ell)$ is NP-hard even if $\|\cdot\|$ is the $\infty$-norm and even if~$\ell$ is a (convex) conic quadratic loss function; see, {\em e.g.}, \citep[Remark~3]{kuhn2019wasserstein} for a simple proof.

\begin{proof}[Proof of Proposition~\ref{prop:KR-bound}]
The Kantorovich-Rubinstein duality implies that
\begin{align*}
	\sup_{\P \in \cP} \, \E_\P[\ell(Z)] & = \E_{\hat \P}[\ell(Z)]+ \lip(\ell)\cdot \left( \sup_{\P \in \cP} \, \E_\P\left[\frac{\ell(Z)}{\lip(\ell)}\right] - \E_{\hat \P}\left[ \frac{\ell(Z)}{\lip(\ell)}\right] \right) \\
	& \leq \E_{\hat \P}[\ell(Z)] + r \cdot \lip(\ell).
\end{align*}
Indeed, the normalized function $\ell/\lip(\ell)$ is Lipschitz continuous and has Lipschitz modulus at most~$1$. By Corollary~\ref{cor:kanorovich-rubinstein-duality}, we thus have for every~$\P\in\cP$ that
\[
\E_\P\left[\frac{\ell(Z)}{\lip(\ell)}\right] - \E_{\hat \P}\left[ \frac{\ell(Z)}{\lip(\ell)}\right] \leq \W_1(\P,\hat\P)\leq r.
\]
Therefore, the claim follows.
\end{proof}

Close connections between Wasserstein distributionally robust optimization and Lipschitz regularization have been discovered in different contexts \citep{mohajerin2018data,shafieezadeh2015distributionally,shafieezadeh2019regularization,gao2024wasserstein}. Recall that the upper bound in~\eqref{eq:lipschitz-regularization} is tight. Indeed, Proposition~\ref{prop:1-wasserstein-analytical} implies that~\eqref{eq:lipschitz-regularization} collapses to an equality if~$\ell$ is convex and~$\cZ=\R^d$. The Lipschitz modulus of the loss function encodes its variability. Thus, the Lipschitz regularization term in~\eqref{eq:lipschitz-regularization} penalizes loss functions that display a high degree of variability. In the following we will derive generalized variation regularization bounds akin to~\eqref{eq:lipschitz-regularization} for worst-case expectation problems over $p$-Wasserstein ambiguity sets for~$p\in\N$.

Toward this goal, for any~$k \in \Z_+$ we use~$D^k \ell(\hat z)$, to denote the totally symmetric tensor of all $k$-th order partial derivatives of~$\ell(z)$ at $z=\hat z$. Accordingly, $D^k \ell(\hat z)[z_1, \ldots, z_k]$ stands for the directional derivative of $\ell(z)$ along the directions~$z_i\in\R^d$ for~$i \in [k]$. If~$z_i=z$ for all~$i\in[k]$, then we use~$D^k \ell(\hat z)[z]^k$ as a shorthand for $D^k \ell(\hat z)[z, \ldots, z]$. Any norm~$\|\cdot\|$ on~$\R^d$ induces a norm on the space of totally symmetric $k$-th order tensors through
\begin{align*}
\|D^k \ell(\hat z)\| 
&= \sup_{z_1, \ldots, z_k\in\R^d} \left\{ \left| D^k \ell(\hat  z)[z_1, \ldots, z_k] \right|: \|z_i\| \leq 1 ~ \forall i \in [k] \right\} \\
&= \sup_{z\in\R^d} \left\{ \left| D^k \ell(\hat z)[z]^k \right|: \| z \| \leq 1 \right\} ,
\end{align*}
where the second equality exploits the symmetry of~$D^k \ell(\hat z)$ \citep[Satz~1]{Banach1938}. By slight abuse of notation, we use the same symbol~$\|\cdot\|$ for the tensor norm as for the underlying vector norm~$\|\cdot\|$. The following theorem generalizes Proposition~\ref{prop:KR-bound} to any~$p\in\N$. This result is due to \citet[Theorem~3.2]{shafiee2023new}.

\begin{theorem}[Variation and Lipschitz Regularization]
\label{thm:OT:safe:approximation}
If~$\cP$ is the $p$-Wasserstein ambiguity set~\eqref{eq:p-wasserstein-ball} for some~$p \in \N$, where $\W_p$ is induced by a norm~$\| \cdot \|$ on~$\R^d$, $\cZ$ is convex and~$\ell$ is~$p-1$ times continuously differentiable, then we have 
\begin{align*}
	\sup_{\P \in \cP} \E_\P \left[ \ell(Z) \right] 
	\leq \E_{\hat \P} [\ell(\hat Z)] + \sum_{k=1}^{p-1} \frac{r^k}{k!}\, \E_{\hat \P} \left[ \| D^{k} \ell(\hat Z)\|^{q_k} \right]^{\frac{1}{q_k}} + \frac{r^p}{p!} \, \lip( D^{p-1}\ell), 
\end{align*}
where~$p_k = p/k$ and~$q_k = p / (p-k)$ for all $k\in[p-1]$. 
\end{theorem}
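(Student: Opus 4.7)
The plan is to combine a Taylor expansion of $\ell$ around $\hat z$ with a well-chosen optimal coupling, and then control each term by Hölder's inequality. Since $\ell$ is $(p-1)$-times continuously differentiable on the convex set $\cZ$ with $D^{p-1}\ell$ Lipschitz, Taylor's theorem with integral/Lagrange remainder yields, for every $z,\hat z\in\cZ$,
\begin{equation*}
    \ell(z) \;=\; \ell(\hat z) + \sum_{k=1}^{p-1}\frac{1}{k!}\,D^k\ell(\hat z)[z-\hat z]^k + R_p(z,\hat z),
\end{equation*}
where the remainder satisfies $|R_p(z,\hat z)|\le \tfrac{\lip(D^{p-1}\ell)}{p!}\,\|z-\hat z\|^p$. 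Convexity of $\cZ$ is what guarantees that the line segment joining $z$ and $\hat z$ stays in~$\cZ$, so the Taylor expansion is valid on all of $\cZ\times\cZ$.

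Next, I would fix an arbitrary $\P\in\cP$ and exploit that, by Lemma~\ref{lem:OT-solvability}, the infimum in the definition of $\W_p(\P,\hat\P)$ is attained by some coupling $\gamma\in\Gamma(\P,\hat\P)$ with $\E_\gamma[\|Z-\hat Z\|^p]^{1/p}=\W_p(\P,\hat\P)\le r$. Integrating the Taylor identity against $\gamma$ and using that the marginal of $\gamma$ in $Z$ equals $\P$ and in $\hat Z$ equals $\hat\P$, I would obtain
\begin{equation*}
    \E_\P[\ell(Z)] \;\le\; \E_{\hat\P}[\ell(\hat Z)] + \sum_{k=1}^{p-1}\frac{1}{k!}\,\E_\gamma\!\left[\,|D^k\ell(\hat Z)[Z-\hat Z]^k|\,\right] + \frac{\lip(D^{p-1}\ell)}{p!}\,\E_\gamma[\|Z-\hat Z\|^p].
\end{equation*}
The remainder term is immediately bounded by $r^p\lip(D^{p-1}\ell)/p!$.

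For each intermediate term, the operator-norm bound $|D^k\ell(\hat z)[z-\hat z]^k|\le \|D^k\ell(\hat z)\|\,\|z-\hat z\|^k$ converts the integrand into a product, to which I apply Hölder's inequality with conjugate exponents $q_k=p/(p-k)$ and $p_k=p/k$ (which indeed satisfy $1/q_k+1/p_k=1$):
\begin{equation*}
    \E_\gamma\!\left[\|D^k\ell(\hat Z)\|\,\|Z-\hat Z\|^k\right] \;\le\; \E_\gamma\!\left[\|D^k\ell(\hat Z)\|^{q_k}\right]^{1/q_k}\E_\gamma\!\left[\|Z-\hat Z\|^{k p_k}\right]^{1/p_k}.
\end{equation*}
Since $k p_k=p$, the second factor equals $\E_\gamma[\|Z-\hat Z\|^p]^{k/p}\le r^k$, while the first factor depends only on the $\hat Z$-marginal of $\gamma$ and hence equals $\E_{\hat\P}[\|D^k\ell(\hat Z)\|^{q_k}]^{1/q_k}$. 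Summing yields the stated inequality for the fixed but arbitrary $\P\in\cP$, and taking the supremum over~$\cP$ concludes the proof.

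The only genuinely delicate step is the Taylor remainder bound: standard statements assume $\ell$ is $C^p$, whereas here we only have $C^{p-1}$ with a Lipschitz $(p-1)$st derivative. I would justify $|R_p(z,\hat z)|\le \tfrac{\lip(D^{p-1}\ell)}{p!}\|z-\hat z\|^p$ by writing the remainder as $\int_0^1\tfrac{(1-t)^{p-2}}{(p-2)!}\bigl(D^{p-1}\ell(\hat z+t(z-\hat z))-D^{p-1}\ell(\hat z)\bigr)[z-\hat z]^{p-1}\,\diff t$ (valid a.e.\ by Rademacher-type arguments applied componentwise) and estimating the Lipschitz difference by $\lip(D^{p-1}\ell)\cdot t\|z-\hat z\|$. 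Everything else is a clean application of Hölder and the Wasserstein radius constraint.
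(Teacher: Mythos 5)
Your proposal is correct and follows essentially the same route as the paper: a Taylor expansion of $\ell(z)-\ell(\hat z)$ around $\hat z$, integration against an optimal coupling $\gamma\in\Gamma(\P,\hat\P)$ attaining $\W_p(\P,\hat\P)$, and H\"older's inequality with the conjugate exponents $p_k=p/k$, $q_k=p/(p-k)$ to bound each order-$k$ term by $r^k\,\E_{\hat\P}[\|D^k\ell(\hat Z)\|^{q_k}]^{1/q_k}$. Your handling of the remainder is in fact slightly more careful than the paper's, which invokes the Lagrange form $\tfrac{1}{p!}D^p\ell(f(z,\hat z))[z-\hat z]^p$ (tacitly requiring $p$-fold differentiability) and then passes to $\lip(D^{p-1}\ell)$ via the mean value theorem, whereas your integral-form remainder works directly under the stated $C^{p-1}$-with-Lipschitz-$(p-1)$st-derivative hypothesis.
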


\begin{proof}
Select any~$\P\in \cP$ and any optimal coupling~$\gamma^\star\in\Gamma(\P,\hat\P)$ with $\W_p(\P, \hat{\P})=\E_{\gamma^\star}[\|Z-\hat Z\|^p]^{1/p}$, which exists by Lemma~\ref{lem:OT-solvability}. As~$\gamma^\star\in\Gamma(\P,\hat\P)$, we have
\begin{align*}
	\E_\P [ \ell(Z)] - \E_{\hat \P} [ \ell(\hat Z)] 
	= \E_{\gamma^\star} \left[ \ell(Z) - \ell(\hat Z)\right].
\end{align*}
By \citep[Theorem~2.2.5]{krantz2002primer}, we can expand $\ell(z) - \ell(\hat z)$ as a Taylor series with Lagrange remainder. Thus, there exists a Borel function $f:\cZ\times\cZ \to\cZ$ that maps any pair~$(z, \hat z)$ to a point on the line segment between~$z$ and~$\hat z$ such that
\begin{align}
	\ell(z) - \ell(\hat z) 
	&= \sum_{k=1}^{p-1} \frac{1}{k!} D^{k} \ell(\hat z)\left[ z - \hat z \right]^k + \frac{1}{p!} D^{p} \ell(f(z,\hat z)) \left[ z - \hat z \right]^p \notag \\ 
	&\leq \sum_{k=1}^{p-1} \frac{1}{k!} \| D^{k} \ell(\hat z)\| \| z - \hat z \|^k + \frac{1}{p!} \|D^{p} \ell(f(z, \hat z)) \| \| z - \hat z \|^p.
	\label{eq:upper:bound}
\end{align}
The inequality in~\eqref{eq:upper:bound} follows from the  definition of the tensor norm. By H\"older's inequality, the expected value of the $k$-th term in~\eqref{eq:upper:bound} with respect to~$\gamma^\star$ satisfies
\begin{align*}
	\E_{\gamma^\star} \left[ \| D^{k} \ell(\hat Z)\| \| Z-\hat Z \|^k \right] 
	&\leq \E_{\gamma^\star}\left[ \| Z-\hat Z\|^{k p_k} \right]^\frac{1}{p_k} \E_{\gamma^\star}\left[ \| D^{k} \ell(\hat Z)\|^{q_k} \right]^{\frac{1}{q_k}}  \\ 
	&\leq r^k \E_{\hat \P}\left[ \| D^{k} \ell(\hat Z)\|^{q_k} \right]^{\frac{1}{q_k}},
\end{align*}
where $p_k = p/k$ and $q_k = p / (p-k)$ represent conjugate exponents. The second inequality in the above expression holds because~$\gamma^\star\in\Gamma(\P,\hat\P)$, which implies that
\[
\E_{\gamma^\star} [\|Z-\hat Z\|^{k p_k}]^{\frac{1}{p_k}} = \E_{\gamma^\star} [\|Z-\hat Z\|^p]^{\frac{k}{p}} = \W_p(\P, \hat{\P})^k \leq r^k.
\]
As~$\cZ$ is convex, we may conclude that~$f(z,\hat z)\in\cZ$ for all~$z,\hat z\in\cZ$. Thus, the expected value of the Lagrange remainder in~\eqref{eq:upper:bound} with respect to~$\gamma^\star$ satisfies
\begin{align*}
	& \E_{\gamma^\star} \left[ \|D^{p}\ell(f(Z, \hat Z))\| \| Z-\hat Z \|^p \right] \\
	&\leq \sup_{\hat z \in \cZ} \| D^{p} \ell(\hat z)\| \; \E_{\gamma^\star} \left[ \| Z-\hat Z \|^p \right] \leq r^p \sup_{\hat z \in \cZ} \| D^{p} \ell(\hat z)\|\leq r^p \lip( D^{p-1}\ell),
\end{align*}
where the second inequality exploits again H\"older's inequality and the properties of the optimal coupling~$\gamma^\star$. The third inequality follows from the mean value theorem. The desired inequality is finally obtained by combining the upper bounds on the expected values of all terms in~\eqref{eq:upper:bound} with respect to~$\gamma^\star$.
\end{proof}

Theorem~\ref{thm:OT:safe:approximation} shows that the worst-case expected loss over a $p$-Wasser\-stein ball is bounded above by the sum of the expected loss under the reference distribution, $p-1$ variation regularization terms, and a Lipschitz regularization term. Note that~$p_1=p$ and $q=q_1=p/(p-1)$ are H\"older conjugates and that $D^1\ell=\nabla\ell$. Thus, the term corresponding to~$k=1$ in the upper bound of Theorem~\ref{thm:OT:safe:approximation} can be expressed more explicitly as $\E_{\hat \P}[\|\nabla\ell(\hat Z)\|^q]]^{1/q}$. The next theorem, which is adapted from \citep{bartl2020robust, gao2024wasserstein}, reveals that this variation regularizer matches the leading term of a Taylor expansion of the worst-case expected loss in the radius~$r$ of the $p$-Wasserstein ball for any~$p>1$.

\begin{theorem}[Taylor Expansion of Worst-Case Expectation]
\label{thm:OT:taylor}
Suppose that~$\cP$ is the $p$-Wasserstein ambiguity set~\eqref{eq:p-wasserstein-ball} for some~$p > 1$, where~$\W_p$ is induced by a norm~$\| \cdot \|$ on~$\R^d$, and $\cZ$ is convex. Suppose also that the following hold.
\begin{itemize}
	\item[(i)] {\bf Growth Condition.} There exist $g,\delta_0>0$ such that $\ell(z)-\ell(\hat z)\leq g\|z-\hat z\|^p$ for all $z,\hat z\in\cZ$ with $\|z-\hat z\|>\delta_0$.
	\item[(ii)] {\bf Smoothness Condition.} There exists $L>0$ such that $\|\nabla\ell(z)-\nabla\ell(\hat z)\|_*\leq L \|z-\hat z\|$ for all $z,\hat z\in\cZ$, where $\|\cdot\|_*$ is the norm dual to $\|\cdot\|$.
	\item[(iii)] {\bf Integrability Condition.} Both $\E_{\hat \P} [ \|\nabla \ell(\hat Z)\|_*^q]$ and $\E_{\hat \P} [ \|\nabla \ell(\hat Z)\|_*^{2q-2}]$ are finite, where $q=p/(p-1)$ is the H\"older conjugate of~$p$.
\end{itemize}
Then, we have
\begin{align}
	\label{eq:OT-Taylor}
	\sup_{\P \in \cP} \; \E_\P[\ell(Z)] 
	= \E_{\hat\P} \left[ \ell(Z) \right] + r \cdot \E_{\hat \P} \left[ \| \nabla \ell(Z)\|_*^{q} \right]^{\frac{1}{q}} + o(r).
\end{align}
\end{theorem}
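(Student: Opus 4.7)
The plan is to prove matching upper and lower bounds whose leading-order terms in $r$ both equal $\E_{\hat\P}[\|\nabla\ell(Z)\|_*^q]^{1/q}$. The case $r = 0$ is trivial, and when $\E_{\hat\P}[\|\nabla\ell(Z)\|_*^q]=0$ the function $\ell$ is $\hat\P$-a.s.\ constant along any admissible perturbation direction, reducing the claim to a second-order bound that is manifestly $o(r)$; so we may assume both $r>0$ and the expectation is strictly positive.

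For the upper bound, I would invoke Theorem~\ref{thm:duality:OT} applied to the transportation cost $c(z,\hat z)=\|z-\hat z\|^p$, which yields
\[
    \sup_{\P \in \cP} \E_\P[\ell(Z)] \;=\; \inf_{\lambda\geq 0} \;\lambda r^p + \E_{\hat \P}\Big[\sup_{z\in\cZ} \ell(z) -\ell(\hat Z) -\lambda \|z-\hat Z\|^p \Big]+\E_{\hat\P}[\ell(Z)].
\]
The growth condition ensures that for any $\lambda > g$ the inner supremum is attained over a bounded set and is finite, while the smoothness condition gives the pointwise bound $\ell(z)-\ell(\hat z) \leq \nabla\ell(\hat z)^\top(z-\hat z) + \tfrac{L}{2}\|z-\hat z\|^2$. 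Substituting this estimate, pulling out the linear term using Lemma~\ref{lem:p-norm}(i)—which computes $\sup_{w}\;y^\top w-\lambda\|w\|^p = \varphi(q)\|y\|_*^q/\lambda^{q-1}$—and then choosing $\lambda$ to balance the two terms in the outer problem (the optimal scaling is $\lambda \asymp r^{-p/q}$) produces the bound $r\cdot\E_{\hat\P}[\|\nabla\ell(Z)\|_*^q]^{1/q}$ to leading order, with the quadratic remainder contributing $O(r^2\E_{\hat\P}[\|\nabla\ell(Z)\|_*^{2q-2}])$, which is $o(r)$ by the integrability condition.

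For the lower bound, I would exhibit an explicit admissible distribution. Pick a Borel selection $h:\cZ\to\R^d$ with $\|h(z)\|\leq 1$ and $h(z)^\top\nabla\ell(z)=\|\nabla\ell(z)\|_*$ (existence follows from \cite[Corollary~14.6]{rockafellar2009variational}), and define the transport map
\[
    T_r(z) \;=\; z + r\cdot\frac{\|\nabla\ell(z)\|_*^{q-1}}{\E_{\hat\P}[\|\nabla\ell(Z)\|_*^q]^{1/p}}\,h(z),
\]
producing $\P_r = \hat\P\circ T_r^{-1}$. The coupling $(T_r(Z),Z)\sim(\P_r,\hat\P)$ shows $\W_p(\P_r,\hat\P)\leq r$, so $\P_r\in\cP$. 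A first-order Taylor expansion combined with the smoothness condition gives $\ell(T_r(z))-\ell(z) = \nabla\ell(z)^\top(T_r(z)-z) + O(\|T_r(z)-z\|^2)$; taking expectations and applying the tight case of Hölder's inequality to the linear term yields exactly $r\cdot\E_{\hat\P}[\|\nabla\ell(Z)\|_*^q]^{1/q}$, while the quadratic remainder is bounded by $\tfrac{L}{2}r^2\E_{\hat\P}[\|\nabla\ell(Z)\|_*^{2(q-1)}]/\E_{\hat\P}[\|\nabla\ell(Z)\|_*^q]^{2/p}$, again $o(r)$ by the integrability hypothesis. One may need to handle $\cZ\neq\R^d$ by verifying that $T_r(z)\in\cZ$ for small $r$, which follows from convexity of $\cZ$ and a truncation argument should $h$ point outward at boundary points.

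The principal difficulty will be controlling the remainder in the upper bound \emph{uniformly in} $\hat z$ while simultaneously optimizing over $\lambda$: substituting a pointwise quadratic bound into the inner supremum turns it into a perturbed $p$-norm conjugate whose exact maximizer is not expressible in closed form. I would address this by showing that, under the optimal scaling $\lambda\asymp r^{-p/q}$, the effective support of the inner maximizer satisfies $\|z-\hat z\|=O(r^{1/(p-1)}\|\nabla\ell(\hat z)\|_*^{1/(p-1)})$, so the quadratic correction contributes $O(r^{p/(p-1)}\|\nabla\ell(\hat z)\|_*^{p/(p-1)})=O(r^{q}\|\nabla\ell(\hat z)\|_*^{q})$; taking expectation and using the integrability condition $\E_{\hat\P}[\|\nabla\ell(Z)\|_*^{2q-2}]<\infty$ together with $q\geq 1$ then confirms this error is $o(r)$. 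A Fatou-type argument or a careful passage to the limit $\lambda\to\lambda^*(r)$ would formalize the interchange of the inner supremum and the expectation.
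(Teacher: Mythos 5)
Your overall strategy coincides with the paper's: strong duality (Theorem~\ref{thm:duality:OT}) for the upper bound and an explicit pushforward of $\hat\P$ for the lower bound. Your lower-bound map $T_r$ is, up to notation, exactly the paper's construction (the paper optimizes over radius functions $\delta(\hat z)$ and selects $\delta^\star(\hat z)=r\,\|\nabla\ell(\hat z)\|_*^{q-1}/\E_{\hat\P}[\|\nabla\ell(\hat Z)\|_*^q]^{1/p}$, which is precisely $\|T_r(\hat z)-\hat z\|$); your Wasserstein-feasibility and first-order computations there are correct, and the feasibility issue $T_r(z)\in\cZ$ that you flag is handled in the paper by taking the argmax over the ball intersected with $\cZ$ rather than by a truncation of $h$.

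The gap is in your upper bound for $1<p<2$. Substituting the smoothness estimate $\ell(z)-\ell(\hat z)\leq\nabla\ell(\hat z)^\top(z-\hat z)+\tfrac{L}{2}\|z-\hat z\|^2$ into $\sup_{z}\{\cdots-\lambda\|z-\hat z\|^p\}$ produces $+\infty$ whenever $p<2$, because $\|w\|^2$ dominates $\lambda\|w\|^p$ at infinity; so Lemma~\ref{lem:p-norm}(i) plus an ``$O(r^2)$ quadratic remainder'' cannot be extracted this way. Your proposed repair via the ``effective support of the inner maximizer'' is circular: the perturbed conjugate whose maximizer you are trying to localize has no maximizer for $p<2$, and the maximizer of the \emph{original} inner problem is not controlled by that heuristic. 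The correct fix, which is what the paper does, is to first use the growth condition to show that for $\lambda>g$ the inner supremum is unchanged when restricted to $\|z-\hat z\|\leq\delta_0$, and then to absorb the quadratic term into the $p$-th-power penalty on that bounded set (equivalently, the paper proves the local-variation bound $V_\delta(\hat z)\leq\|\nabla\ell(\hat z)\|_*\delta+G\delta^p$ for $p\leq 2$ and splits the dual multiplier as $\lambda=\lambda_1+\lambda_2$). This yields a remainder of order $O(r^p)$, not $O(r^2)$; it is still $o(r)$ since $p>1$, so the theorem survives, but your stated error term and its derivation are wrong in this regime. Two smaller points: for $p>2$ the joint supremum $\sup_w\{y^\top w+\tfrac{L}{2}\|w\|^2-\lambda\|w\|^p\}$ is finite but has no closed form, so you still need the paper's $\lambda$-splitting device (or an equivalent) to separate the linear and quadratic contributions; and your exponent bookkeeping in the last paragraph is off --- under $\lambda\asymp r^{-p/q}$ the relevant displacement scales as $r\|\nabla\ell(\hat z)\|_*^{q-1}$, so the quadratic correction scales as $r^2\|\nabla\ell(\hat z)\|_*^{2q-2}$, which is why the hypothesis $\E_{\hat\P}[\|\nabla\ell(\hat Z)\|_*^{2q-2}]<\infty$ appears.
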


Recall that all norms on~$\R^d$ are topologically equivalent. Thus, in the smoothness condition we could equivalently use the primal norm instead of the dual norm to measure differences between gradients. However, working with the dual norm is more convenient and will simplify the proof of Theorem~\ref{thm:OT:taylor}.

\begin{proof}[Proof of Theorem~\ref{thm:OT:taylor}]
For any fixed~$\delta\in\R_+$ and~$\hat z\in\cZ$, we define the variation of the loss function~$\ell$ over a norm ball of radius~$\delta$ around~$\hat z$ as
\[
V_\delta(\hat z) = \sup_{z\in\cZ} \big\{ \ell(z)-\ell(\hat z) : \|z-\hat z\|\leq \delta \big\}.
\]
Note that~$V_\delta(\hat z)$ is finite because~$\ell$ is continuous thanks to the smoothness condition. As a preparation to prove the theorem, we first establish simple upper and lower bounds on~$V_\delta(\hat z)$. As~$\cZ$ is convex, the line segment from~$\hat z$ to any~$z\in\cZ$ is contained in~$\cZ$. The mean value theorem then implies that there exists a point $\bar z\in\cZ$ on this line segment that satisfies $\ell(z)-\ell(\hat z)=\nabla\ell(\bar z)^\top(z-\hat z)$. Thus, we have
\begin{equation*}
	\begin{aligned}
		\big|\ell(z)-\ell(\hat z) -\nabla\ell(\hat z)^\top (z-\hat z) \big| & = \big| \nabla\ell(\bar z)^\top (z-\hat z) - \nabla\ell(\hat z)^\top (z-\hat z)\big| \\
		& \leq \| \nabla\ell(\bar z)-\nabla\ell(\hat z)\|_* \|z-\hat z\| \leq L\|z-\hat z\|^2,
	\end{aligned}
\end{equation*}
where the two inequalities follow from the definition of the dual norm and from the smoothness condition, respectively. This implies that
\begin{equation}
	\label{eq:gradient-estimate}
	\nabla\ell(\hat z)^\top (z-\hat z) - L\|z-\hat z\|^2 \leq \ell(z)-\ell(\hat z)\leq \nabla\ell(\hat z)^\top (z-\hat z) + L\|z-\hat z\|^2.
\end{equation}
The first inequality in~\eqref{eq:gradient-estimate} gives rise to a lower bound on $V_\delta(\hat z)$. Indeed, we find
\begin{align}
	\nonumber
	V_\delta(\hat z) &\geq \sup_{z\in\cZ} \left\{ \nabla\ell(\hat z)^\top (z-\hat z) - L\|z-\hat z\|^2 : \|z-\hat z\|\leq \delta \right\} \\
	& \geq \sup_{z\in\cZ} \left\{ \nabla\ell(\hat z)^\top (z-\hat z) : \|z-\hat z\|\leq \delta \right\} -L\delta^2 = \|\nabla \ell(\hat z)\|_*\delta-L\delta^2,
	\label{eq:g-lower-bound}
\end{align}
where the equality follows from the definition of the dual norm. Similarly, the second inequality in~\eqref{eq:gradient-estimate} gives rise to the following upper bound on $V_\delta(\hat z)$.
\begin{align}
	\label{eq:g-upper-bound-plarger2}
	V_\delta(\hat z) &\leq \|\nabla \ell(\hat z)\|_*\delta + L\delta^2
	\quad \forall \delta \in \mathbb{R}_+
\end{align}
This upper bound grows quadratically with~$\delta$ and is therefore too loose for our purposes if~$p < 2$. In this case, we must establish an alternative upper bound that grows only as~$\delta^p$. This is possible thanks to the growth condition on~$\ell$. To see this, define the worst-case variation of~$\ell$ over any ball of radius~$\delta_0$ as
\[
\overline V=\sup \big\{ \ell(z)-\ell(\hat z):z,\hat z\in\cZ, ~\|z-\hat z\|\leq\delta_0 \big\}.
\]
One can show that~$\overline V$ is finite. If~$\cZ$ is compact, then this is a consequence of Weierstrass' maximum theorem, which applies because~$\ell$ is continuous. If~$\cZ$ is unbounded, on the other hand, then this is a consequence of the convexity of~$\cZ$ and the growth condition on~$\ell$. In this case, there exists a recession direction~$d$ of~$\cZ$ with $\|d\|=2\delta_0$. Thus, for all $z,\hat z\in\cZ$ with $\|z-\hat z\|\leq \delta$ we have
\begin{align*}
	\ell(z)-\ell(\hat z) & \leq |\ell(z)-\ell(z+d)| + |\ell(z + d)-\ell(\hat z)| \\ & \leq g\|d\|^p+g\|z+d-\hat z\|^p \leq g \big( (2\delta_0)^p + (3\delta_0)^p \big).
\end{align*}
The second inequality follows from the growth condition on~$\ell$ and the estimates $\|z -(z+d)\|=\delta_0$ and $\|(z+d)-\hat z\|\geq \|d\|-\|z-\hat z\|\geq \delta_0$. Thus, $\ell(z)-\ell(\hat z)$ admits a finite upper bound independent of~$z$ and~$\hat z$, which confirms that~ $\overline V$ is finite.

The growth condition on~$\ell$ ensures that $V_\delta(\hat z) \leq \max\{\overline V, g\delta^p\}$. Combining this estimate with~\eqref{eq:g-upper-bound-plarger2} and defining $u(\delta) = \min \{ \max\{ \overline V , g\delta^p\}, L\delta^2 \}$ yields
\begin{align*}
	V_\delta(\hat z) & \leq \min \left\{ \max\{ \overline V , g\delta^p\}, \|\nabla \ell(\hat z)\|_*\delta+L\delta^2 \right\} \leq \|\nabla \ell(\hat z)\|_*\delta+ u(\delta) .
\end{align*}
Note that $u(\delta)=g\delta^p$ for all sufficiently large~$\delta$ and $u(\delta)=L\delta^2$ for all sufficiently small~$\delta$. In between there is a (possibly empty) interval on which~$u(\delta)=\overline V$ is constant. Since $p \leq 2$, in all three regimes, $u(\delta)$ can be bounded above by~$g'\delta^p$ for some growth parameter~$g'\in\R_+$. Setting~$G$ to the largest of these three growth parameters, we may thus conclude that
\begin{equation}
	\label{eq:g-upper-bound-psmaller2}
	V_\delta(\hat z) \leq \|\nabla \ell(\hat z)\|_*\delta+ G\delta^p \quad \forall\delta\in\R_+.
\end{equation}
Thus, if~$p\leq 2$, then $V_\delta(\hat z)$ admits an upper bound that grows only as~$\delta^p$.

The remainder of the proof proceeds in two steps. First, we show that the right hand side of~\eqref{eq:OT-Taylor} provides a {\em lower} bound on the worst-case expected loss over~$\cP$ (Step~1). Next, we show that the right hand side of~\eqref{eq:OT-Taylor} also provides an {\em upper} bound on the worst-case expected loss over~$\cP$ (Step~2). This will prove the claim.

\paragraph{Step~1.} Define~$\cF$ as the family of all Borel functions $f:\cZ \to\cZ$. Any~$f\in\cF$ induces a pushforward distribution~$\cP=\hat\P\circ f^{-1}$ supported on~$\cZ$. By restricting the Wasserstein ball around~$\hat\P$ to contain only such pushforward distributions, we find 
\begin{subequations}
	\begin{align}
		\label{eq:OT-LB-a}
		\sup_{\P\in\cP} \E_\P[\ell(Z)] & \geq \sup_{f\in\cF} \left\{ \E_{\hat\P} \left[\ell(f(\hat Z)) \right] : \E_{\hat \P} \left[\|f(\hat Z)-\hat Z\|^p \right]\leq r^p \right\} \\
		& \geq \E_{\hat\P} \left[\ell(\hat Z) \right] + \sup_{\delta\in\Delta} \left\{ \E_{\hat\P} \left[ V_{\delta(\hat Z)}(\hat Z) \right] : \E_{\hat \P} \left[\delta(\hat Z)^p \right]\leq r^p \right\},
		\label{eq:OT-LB-b}
	\end{align}
\end{subequations}
where the set~$\Delta$ in~\eqref{eq:OT-LB-b} represents the family of all Borel functions~$\delta:\cZ\to\R_+$. The second inequality in the above expression can be justified as follows. Select any $\delta\in\Delta$ feasible in~\eqref{eq:OT-LB-b}, and define $f\in\cF$ as any Borel function satisfying
\[
f(\hat z)\in\arg\max_{z\in\cZ} \big\{ \ell(z) : \|z-\hat z\|\leq\delta(\hat z) \big\} \quad\forall \hat z\in\cZ.
\]
Such a Borel function exists thanks to \citep[Corollary~14.6 and Theorem~14.37]{rockafellar2009variational}. As~$\delta$ is feasible in~\eqref{eq:OT-LB-b}, this function~$f$ satisfies
\[
\E_{\hat \P} \left[\|f(\hat Z)-\hat Z\|^p \right]\leq \E_{\hat \P} \left[ \delta(\hat Z)^p \right]\leq r^p
\]
and is thus feasible in~\eqref{eq:OT-LB-a}. Its objective function value in~\eqref{eq:OT-LB-a} satisfies
\[
\E_{\hat\P} \left[\ell(f(\hat Z)) \right] = 
\E_{\hat\P} \left[\ell(\hat Z)\right] +\E_{\hat\P} \left[ V_{\delta(\hat Z)}(\hat Z) \right] .
\]
Hence, any feasible solution in~\eqref{eq:OT-LB-b} gives rise to a feasible solution in~\eqref{eq:OT-LB-a} with the same objective function value. This proves the inequality in~\eqref{eq:OT-LB-a}. Substituting the lower bound~\eqref{eq:g-lower-bound} on $V_\delta(\hat z)$ into~\eqref{eq:OT-LB-b} then yields the estimate
\begin{align}
	\label{eq:lower-bound-wcloss}
	& \sup_{\P\in\cP} \E_\P[\ell(Z)]  \geq \E_{\hat\P} \left[\ell(\hat Z) \right] + \left\{ \begin{array}{cl} \displaystyle \sup_{\delta\in\Delta} & \E_{\hat\P} \left[ \|\nabla\ell(\hat Z)\|_* \delta(\hat Z)-L \delta(\hat Z)^2 \right] \\
		\st & \E_{\hat \P} \left[\delta(\hat Z)^p \right]\leq r^p. \end{array}\right. 
\end{align}
If $\|\nabla\ell(\hat Z)\|_*=0$ $\hat\P$-almost surely, then we have established the desired lower bound. From now on we may thus assume that $\E_{\hat\P}[\|\nabla\ell(\hat Z)\|_*]>0$. Next, we construct a function~$\delta^\star\in\Delta$ feasible in the maximization problem in~\eqref{eq:lower-bound-wcloss} and use its objective function value as a lower bound on the problem's supremum. Specifically, we set
\[
\delta^\star(\hat z) = \frac{\|\nabla \ell(\hat z)\|_*^{q-1} r}{\E_{\hat \P} [ \|\nabla \ell(\hat Z)\|_*^q]^{1/p}} \quad\forall \hat z\in\cZ,
\]
which is well-defined by the integrability condition. As $q-1=q/p$, we find
\[
\E_{\hat \P} \left[ \delta^\star(\hat Z)^p \right] = r^p \quad\text{and} \quad  \E_{\hat \P} \left[ \|\nabla \ell(\hat Z)\|_* \delta^\star (\hat Z)^p \right] = r\cdot \E_{\hat \P} \left[ \|\nabla \ell(\hat Z)\|_*^q\right]^{\frac{1}{q}}.
\]
Hence, $\delta^\star$ is feasible in~\eqref{eq:lower-bound-wcloss}, and its objective function value amounts to
\[
\E_{\hat\P} \left[ \|\nabla\ell(\hat Z)\|_* \delta^\star(\hat Z)-L\delta^\star (\hat Z)^2 \right] = r\cdot \E_{\hat \P} \left[ \|\nabla \ell(\hat Z)\|_*^q\right]^{\frac{1}{q}} - L r^2\cdot \frac{ \E_{\hat \P} [ \|\nabla \ell(\hat Z)\|_*^{2q-2}]}{\E_{\hat \P} [ \|\nabla \ell(\hat Z)\|_*^q]^{2/p}}.
\]
Note that the last term is again finite thanks to the integrability condition. Substituting this expression back into~\eqref{eq:lower-bound-wcloss} yields the desired lower bound
\begin{align*}
	\sup_{\P\in\cP} \E_\P[\ell(Z)]  \geq \E_{\hat\P} \left[\ell(\hat Z) \right] + r \cdot \E_{\hat\P} \left[ \|\nabla\ell(\hat Z)\|_*^q\right]^{\frac{1}{q}} +\cO(r^2).
\end{align*}

\paragraph{Step~2.} By strong duality as established in Theorem~\ref{thm:duality:OT}, we have
\begin{align}
	\nonumber
	\sup_{\P\in\cP} \E_\P[\ell(Z)] & = \inf_{\lambda\in\R_+} \lambda r^p + \E_{\hat \P} \left[ \sup_{z\in\cZ} \ell(z)-\lambda \|z-\hat Z\|^p \right]\\
	& = \inf_{\lambda\in\R_+} \lambda r^p + \E_{\hat \P} \left[ \ell(\hat Z) + \sup_{\delta\in\R_+} V_\delta(\hat Z)-\lambda \delta^p \right],
	\label{eq:upper-bound-taylor}
\end{align}
where the second equality follows from the observation that 
\begin{align*}
	\sup_{z\in\cZ} \ell(z)-\lambda \|z-\hat z\|^p & = \sup_{z\in\cZ} \sup_{\delta\in\R_+}\big\{ \ell(z)-\lambda \delta^p: \|z-\hat z\|\leq\delta\big\}\\
	& = \ell (\hat z) + \sup_{\delta\in\R_+} V_\delta(\hat z)-\lambda \delta^p.
\end{align*}
Next, we construct an upper bound on~\eqref{eq:upper-bound-taylor}. In fact, we need separate constructions for~$p>2$ and~$p\leq 2$. Assume first that~$p>2$. In this case, we have
\begin{subequations}
	\begin{align}
		\nonumber
		& \sup_{\P\in\cP} \E_\P[\ell(Z)] - \E_{\hat \P} \left[ \ell (\hat Z) \right] \\
		\nonumber \leq & \inf_{\lambda_1,\lambda_2\in\R_+} (\lambda_1+\lambda_2) r^p + \E_{\hat \P} \left[ \sup_{\delta\in\R_+} \|\nabla \ell(\hat Z)\|_*\delta+L\delta^2 -(\lambda_1+\lambda_2) \delta^p \right] \\
		\leq & \inf_{\lambda_1 \in\R_+} \lambda_1r^p + \E_{\hat \P} \left[ \sup_{\delta\in\R_+} \|\nabla \ell(\hat Z)\|_*\delta -\lambda_1 \delta^p \right]
		\label{eq:OT-UB-a} \\ 
		& \qquad + \inf_{\lambda_2\in\R_+} \lambda_2 r^p + \sup_{\delta\in\R_+} L\delta^2 -\lambda_2\delta^p,
		\label{eq:OT-UB-b}
	\end{align}
\end{subequations}
where the first inequality follows from the estimate~\eqref{eq:g-upper-bound-plarger2}, and the second inequality holds because the supremum over~$\delta$ is duplicated. The resulting upper bound on the worst-case expected loss thus coincides with the sum of two infima. One readily verifies that the maximization problem over~$\delta$ in~\eqref{eq:OT-UB-a} is solved by $\delta^\star= (p\lambda_1)^{-q/p} \|\nabla \ell(\hat Z)\|_*^{q/p}$. Thus, the infimum in~\eqref{eq:OT-UB-a} equals
\begin{subequations}
	\begin{align}
		\label{eq:OT-UB-a-explicit}
		\inf_{\lambda_1 \in\R_+} \lambda_1r^p + \frac{1}{q} (\lambda_1 p)^{-\frac{q}{p}}  \E_{\hat \P} \left[ \|\nabla \ell(\hat Z)\|_*^q \right] = r \cdot \E_{\hat \P} \left[ \|\nabla \ell(\hat Z)\|_*^q \right]^{\frac{1}{q}},
	\end{align}
	where the equality holds because the resulting minimization problem over~$\lambda_1$ is solved by $\lambda_1^\star = pr^{-p/q}\E_{\hat \P} \left[ \|\nabla \ell(\hat Z)\|_*^q \right]^{1/q}$. Similarly, the maximization problem over~$\delta$ in~\eqref{eq:OT-UB-b} is solved by $\delta^\star=C_1 \lambda_2^{-1/(p-2)}$, where~$C_1$ represents a positive constant that only depends on~$p$ and~$L$. Thus, the infimum in~\eqref{eq:OT-UB-b} equals
	\begin{align}
		\label{eq:OT-UB-b-explicit}
		\inf_{\lambda_2\in\R_+} \lambda_2 r^p + C_2 \lambda_2^{-\frac{2}{p-2}} = C_3 r^2,
	\end{align}
\end{subequations}
where $C_2$ and $C_3$ are other positive constants depending on~$p$ and~$L$. The equality in~\eqref{eq:OT-UB-b-explicit} is obtained by solving the minimization problem over~$\lambda_2$ in closed form. Replacing~\eqref{eq:OT-UB-a} with~\eqref{eq:OT-UB-a-explicit} and~\eqref{eq:OT-UB-b} with~\eqref{eq:OT-UB-b-explicit} finally yields
\begin{align*}
	\sup_{\P\in\cP} \E_\P[\ell(Z)]  \leq \E_{\hat\P} \left[\ell(\hat Z) \right] + r \cdot \E_{\hat\P} \left[ \|\nabla\ell(\hat Z)\|_*^q\right]^{\frac{1}{q}} +\cO(r^2).
\end{align*}

Assume next that $p\leq 2$. In this case, we have
\begin{subequations}
	\begin{align}
		\nonumber
		\sup_{\P\in\cP} \E_\P[\ell(Z)]& \leq \inf_{\lambda_1,\lambda_2\in\R_+} (\lambda_1+\lambda_2) r^p + \E_{\hat \P} \left[ \sup_{\delta\in\R_+} \|\nabla \ell(\hat Z)\|_*\delta+ G\delta^p -(\lambda_1+\lambda_2) \delta^p \right] \\
		& \leq \inf_{\lambda_1 \in\R_+} \lambda_1r^p + \E_{\hat \P} \left[ \sup_{\delta\in\R_+} \|\nabla \ell(\hat Z)\|_*\delta -\lambda_1 \delta^p \right]
		\label{eq:OT-UB-a2} \\ 
		& \qquad + \inf_{\lambda_2\in\R_+} \lambda_2 r^p + \sup_{\delta\in\R_+} G\delta^p -\lambda_2\delta^p,
		\label{eq:OT-UB-b2}
	\end{align}
\end{subequations}
where the first inequality follows from the estimate~\eqref{eq:g-upper-bound-psmaller2}. Note that the infimum in~\eqref{eq:OT-UB-a2} is identical to that in~\eqref{eq:OT-UB-a} and thus simplifies to~\eqref{eq:OT-UB-a-explicit}. Next, note that the maximization problem over~$\delta$ in~\eqref{eq:OT-UB-b2} is unbounded unless $\lambda_2\geq G$. This condition thus constitutes an implicit constraint for the minimization problem over~$\lambda_2$. Whenever $\lambda_2$ satisfies this constraint, however, the supremum over~$\delta$ evaluates to~$0$, and therefore the infimum over~$\lambda_2$ evaluates to $Gr^p$. Replacing~\eqref{eq:OT-UB-a2} with~\eqref{eq:OT-UB-a-explicit} and~\eqref{eq:OT-UB-b2} with~$Gr^p$ finally yields
\begin{align*}
	\sup_{\P\in\cP} \E_\P[\ell(Z)]  \leq \E_{\hat\P} \left[\ell(\hat Z) \right] + r \cdot \E_{\hat\P} \left[ \|\nabla\ell(\hat Z)\|_*^q\right]^{\frac{1}{q}} +\cO(r^p).
\end{align*}
As both $\cO(r^2)$ and~$\cO(r^p)$ for $0<p\leq 2$ are of the order $o(r)$, the claim follows.
\end{proof}

The proof of Theorem~\ref{thm:OT:taylor} reveals that the variation~$V_\delta(\hat z)$ equals~$\|\nabla\ell(\hat z)\|_*\delta$ to first order in~$\delta$. Hence, it is natural to refer to the regularization term $\E_{\hat \P}[ \| \nabla \ell(\hat Z)\|_*^{q}]^{1/q}$ appearing in~\eqref{eq:OT-Taylor} as the {\em total variation}. 

Regularizers penalizing the Lipschitz moduli, gradients, Hessians or tensors of higher-order partial derivatives are successfully used in the adversarial training of neural networks \citep{lyu2015unified, jakubovitz2018improving, finlay2021scaleable, bai2024wasserstein} and in the stabilizing training of generative adversarial networks \citep{roth2017stabilizing, nagarajan2017gradient, gulrajani2017improved}. However, these regularizers introduce nonconvexity into an otherwise convex optimization problem. Theorems~\ref{thm:OT:safe:approximation} and~\ref{thm:OT:taylor} thus suggest that the worst-case expected loss with respect to a Wasserstein ambiguity set provides a convex surrogate for the  empirical loss with Lipschitz and/or variation regularizers.

\subsection{Lipschitz Continuity of Law-Invariant Convex Risk Measures}
\label{sec:Lipschitz-continuous-risk-measures}

Let~$\varrho$ be a law-invariant convex risk measure as introduced in Section~\ref{sec:duality-wc-risk}. Recall that all convex risk measures are translation invariant, monotone and convex. Assume also that~$\varrho$ is an $\cL_p$-risk measure for some~$p\geq 1$. By this we mean that~$\varrho_{\P}[\ell(Z)]$ is finite whenever $\ell\in\cL_p(\P)$ and~$\P\in\cP(\R^d)$, that is, whenever~$\E_{\P}[|\ell(Z)|^p]<+\infty$. The aim of this section is to derive interpretable and easily computable upper bounds on the worst case of $\varrho_\P[\ell(Z)]$ with respect to all distributions~$\P$ of~$Z$ in a $p$-Wasserstein ball. To this end, we first recall the definition of a subgradient.

\begin{definition}[Subgradient]
\label{def:subgradient}
If $\varrho$ is a law-invariant convex $\cL_p$-risk measure for some $p\geq 1$, then $h\in\cL_q(\P)$ is a subgradient of~$\varrho_\P$ at~$\ell_0\in\cL_p(\P)$ if $\frac{1}{p}+\frac{1}{q}=1$ and
\[
\varrho_\P[\ell(Z)] \geq \varrho_\P[ \ell_0(Z)] + \E_\P \left[ h(Z)\cdot \left(\ell(Z)-\ell_0(Z) \right) \right] \quad \forall \ell\in\cL_p(\P).
\]
\end{definition}

We say that~$\varrho_\P$ is subdifferentiable at~$\ell_0$ if it has at least one subgradient at~$\ell_0$. 

\begin{definition}[Lipschitz Continuity]
\label{def:Lipschitz-risk-measure}
Let~$\varrho$ be a law-invariant convex $\cL_p$-risk measure for some~$p\geq 1$. Then, $\varrho$ is Lipschitz continuous if there exists~$L\geq 0$ with
\[
\left| \varrho_\P[\ell(Z)] - \varrho_\P[\ell_0(Z)] \right|  \leq L\cdot \E_\P \left[ \left|\ell(Z)-\ell_0(Z) \right|^p \right]^{\frac{1}{p}} ~~ \forall \ell, \ell_0 \in\cL_p(\P),~\forall \P\in\cP(\R^d).
\]
We use  $\lip(\varrho)$ to denote the Lipschitz modulus, i.e., the smallest $L$ with this property.
\end{definition}

\begin{lemma}[Subgradient Bounds]
\label{lem:subgradient-bound}
Let $\varrho$ be a law-invariant convex $\cL_p$-risk measure and~$h\in\cL_q(\P)$ a subgradient of~$\varrho_\P$ at~$\ell_0\in\cL_p(\P)$ for some~$\P\in\cP(\R^d)$, where $\frac{1}{p}+\frac{1}{q}=1$. If~$\varrho$ is Lipschitz continuous, then $\E_\P[|h(Z)|^q]^{1/q}\leq \lip(\varrho)$.
\end{lemma}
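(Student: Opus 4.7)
The plan is to combine the two defining inequalities for a subgradient and for Lipschitz continuity, and then use the sharpness of Hölder's inequality (the $\cL_p$-$\cL_q$ duality) to extract the desired bound on $\E_\P[|h(Z)|^q]^{1/q}$.

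First I would observe that, since $h$ is a subgradient of $\varrho_\P$ at $\ell_0$, Definition~\ref{def:subgradient} yields
\begin{equation*}
    \E_\P\left[ h(Z)\cdot(\ell(Z)-\ell_0(Z))\right] \leq \varrho_\P[\ell(Z)] - \varrho_\P[\ell_0(Z)] \quad \forall \ell \in \cL_p(\P).
\end{equation*}
Combining this with the Lipschitz inequality from Definition~\ref{def:Lipschitz-risk-measure}, and writing $g = \ell - \ell_0$ (which ranges over all of $\cL_p(\P)$ as $\ell$ does), we obtain
\begin{equation*}
    \E_\P\left[ h(Z) g(Z) \right] \leq \lip(\varrho)\cdot \E_\P\left[|g(Z)|^p\right]^{\frac{1}{p}} \quad \forall g \in \cL_p(\P).
\end{equation*}

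Next, I would take the supremum of the left-hand side over $g\in\cL_p(\P)$ subject to $\E_\P[|g(Z)|^p]^{1/p}\leq 1$. By the standard $\cL_p$-$\cL_q$ duality (sharpness of Hölder's inequality), this supremum equals $\E_\P[|h(Z)|^q]^{1/q}$, which is finite because $h\in\cL_q(\P)$ by assumption. Consequently,
\begin{equation*}
    \E_\P\left[|h(Z)|^q\right]^{\frac{1}{q}} \leq \lip(\varrho),
\end{equation*}
which is the claim.

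The only delicate step is the sharpness of Hölder's inequality, but this is classical: one may explicitly take $g(z) = c\cdot \mathrm{sign}(h(z))|h(z)|^{q-1}$ (with $c>0$ chosen so that $\E_\P[|g(Z)|^p]^{1/p}=1$), which gives $\E_\P[h(Z)g(Z)] = \E_\P[|h(Z)|^q]^{1/q}$ whenever this quantity is positive; the trivial case $h=0$ $\P$-a.s.\ is immediate. No further obstacles arise.
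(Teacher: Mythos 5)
Your proposal is correct and takes essentially the same route as the paper: both combine the subgradient inequality with the Lipschitz bound and then invoke the $\cL_p$-$\cL_q$ duality to identify the resulting supremum with $\E_\P[|h(Z)|^q]^{1/q}$. The only difference is that you spell out the extremizing $g$ explicitly (which silently assumes $q<\infty$, i.e.\ $p>1$), whereas the paper simply cites the norm duality.
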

\begin{proof}
By the Lipschitz continuity of $\varrho$ and the definition of subgradients, we have
\begin{align*}
	& \varrho_\P[ \ell_0(Z)] + \lip(\varrho) \cdot \E_\P \left[ \left|\ell(Z)-\ell_0(Z) \right|^p \right]^{\frac{1}{p}} \\ & \qquad \geq  \varrho_\P[ \ell(Z)] 
	\geq \varrho_\P[ \ell_0(Z)] + \E_\P \left[ h(Z)\cdot \left(\ell(Z)-\ell_0(Z) \right) \right]
\end{align*}
for every $\ell\in\cL_p(\P)$. This inequality is equivalent to
\begin{align*}
	\lip(\varrho) \geq \sup_{\substack{\ell\in\cL_p(\P) \\ \ell\neq\ell_0}} \E_\P \left[ h(Z)\cdot \frac{\ell(Z)-\ell_0(Z)}{\E_\P \left[ \left|\ell(Z)-\ell_0(Z) \right|^p \right]^{\frac{1}{p}}} \right] =\E_\P[|h(Z)|^q]^{1/q},
\end{align*}
where the equality holds because the $\cL_q$-norm is dual to the $\cL_p$-norm.
\end{proof}

The results of this section also rely on the fundamentals of comonotonicity theory, which we review next. For any Borel measurable function $f:\R^d\to\R$ the distribution function $F:\R\to[0,1]$ of the random variable~$f(Z)$ under~$\P$ is defined through $F(\tau)=\P(f(Z)\leq \tau)$ for every~$\tau\in\R$, and the corresponding (left) quantile function $F^\leftarrow :[0,1]\to \overline\R$ is defined through $F^\leftarrow (q)=\inf \{\tau\in\R :F_1(\tau)\geq q\}$ for every $q\in[0,1]$. Note that if~$F$ is invertible, then~$F^\leftarrow = F^{-1}$. Note also that~$F$ is generally right-continuous, whereas~$F^\leftarrow$ is generally left-continuous. The definition of the quantile function $F^\leftarrow$ also readily implies the equivalence
\begin{equation}
\label{eq:cdf-versus-quantile-function}
F(\tau)\geq q~~ \iff ~~ \tau \geq F^\leftarrow(q) \quad\forall \tau\in\R,~ \forall q\in[0,1].
\end{equation}

\begin{definition}[Comonotonicity]
\label{def:comonotonicity}
Two random variables $f(Z)$ and $g(Z)$ induced by Borel measurable functions~$f,g:\R^d\to \R$ are comonotonic under~$\P$ if
\[
\P \left(f(Z)\leq\tau_1 \wedge g(Z)\leq \tau_2 \right) = \min\left\{F(\tau_1), G(\tau_2) \right\}\quad \forall \tau_1,\tau_2\in\R,
\]
where~$F$ and~$G$ denote the distribution functions of~$f(Z)$ and~$g(Z)$ under~$\P$.
\end{definition}

The following proposition sheds more light on Definition~\ref{def:comonotonicity}. It shows that comonotonic random variables can essentially always be expressed as functions of each other \cite[Corollary~5.17]{mcneil2015quantitative}.

\begin{proposition}[Comonotonicity]
\label{prop:comonotonicity-alternative}
Let $f(Z)$ and $g(Z)$ be two random variables with respective distribution functions~$F$ and~$G$ under~$\P$ as in Definition~\ref{def:comonotonicity}. If~$F$ is continuous, then $f(Z)$ and $g(Z)$  are comonotonic under~$\P$ if and only if 
\[
g(Z)= G^\leftarrow (F (f(Z)))\quad \P\text{-a.s.}
\] 
\end{proposition}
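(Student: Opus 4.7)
The plan is to prove the two directions separately, relying on two algebraic consequences of equivalence~\eqref{eq:cdf-versus-quantile-function}: namely, $G^\leftarrow(u)\leq\tau\iff u\leq G(\tau)$, and its contrapositive form $F(\tau)<q\iff\tau<F^\leftarrow(q)$. I would open the proof with three preliminary observations that follow from the continuity of $F$. First, the random variable $U:=F(f(Z))$ is uniformly distributed on $[0,1]$ under $\P$ (a standard inverse-transform argument, using that $F$ has no jumps). Second, $F(F^\leftarrow(q))=q$ for every $q\in(0,1)$. Third, the events $\{F(f(Z))\leq q\}$ and $\{f(Z)\leq F^\leftarrow(q)\}$ differ by a $\P$-null set, because their symmetric difference is contained in $\{F(f(Z))=q\}\cup\{f(Z)=F^\leftarrow(q)\}$, both of which have probability zero by continuity of $F$ and of the distribution of $U$.

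For the ``if'' direction, I assume $g(Z)=G^\leftarrow(U)$ $\P$-a.s.\ and rewrite $\{g(Z)\leq\tau_2\}=\{G^\leftarrow(U)\leq\tau_2\}=\{U\leq G(\tau_2)\}$. Two cases then yield the comonotonicity identity. If $F(\tau_1)\leq G(\tau_2)$, the monotonicity of $F$ gives $\{f(Z)\leq\tau_1\}\subseteq\{F(f(Z))\leq F(\tau_1)\leq G(\tau_2)\}=\{U\leq G(\tau_2)\}$, so the joint probability equals $\P(f(Z)\leq\tau_1)=F(\tau_1)=\min\{F(\tau_1),G(\tau_2)\}$. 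If instead $F(\tau_1)>G(\tau_2)$, the third preliminary observation together with $F^\leftarrow(F(\tau_1))\leq\tau_1$ shows that $\{U\leq G(\tau_2)\}\subseteq\{f(Z)\leq F^\leftarrow(F(\tau_1))\}\subseteq\{f(Z)\leq\tau_1\}$ up to a null set, so the joint probability equals $\P(U\leq G(\tau_2))=G(\tau_2)=\min\{F(\tau_1),G(\tau_2)\}$.

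For the ``only if'' direction, I compute the joint distribution function of $(U,g(Z))$ using the same preliminary identification together with the assumed comonotonicity of $f(Z)$ and $g(Z)$:
\[
    \P(U\leq q,\,g(Z)\leq\tau_2) = \P(f(Z)\leq F^\leftarrow(q),\,g(Z)\leq\tau_2) = \min\{q,G(\tau_2)\}
\]
for every $q\in(0,1)$ and $\tau_2\in\R$. This coincides with the joint distribution function of $(U,G^\leftarrow(U))$, and since $U$ has a density on $[0,1]$, differentiating in $q$ identifies the regular conditional law of $g(Z)$ given $U=u$ as $\delta_{G^\leftarrow(u)}$ for almost every $u$; hence $g(Z)=G^\leftarrow(U)$ $\P$-a.s.

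The delicate point is extracting this $\P$-a.s.\ equality from the identification of joint laws. A self-contained route that sidesteps an explicit regular-conditional-probability theorem is to observe that for each rational $q_1\in(0,1)$ and each $\tau_2<G^\leftarrow(q_1)$ the joint formula gives $\P(q_1\leq U,\,g(Z)\leq\tau_2)=G(\tau_2)-G(\tau_2)=0$, while for $\tau_2>G^\leftarrow(q_1)$ it gives $\P(U\leq q_1,\,g(Z)>\tau_2)=q_1-q_1=0$. Letting $\tau_2\uparrow G^\leftarrow(q_1)$ in the first and $\tau_2\downarrow G^\leftarrow(q_1)$ through the rationals in the second yields $\P(q_1\leq U,\,g(Z)<G^\leftarrow(q_1))=\P(U\leq q_1,\,g(Z)>G^\leftarrow(q_1))=0$. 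A countable union over $q_1\in\Q\cap(0,1)$ together with the left-continuity of $G^\leftarrow$ (to approach $G^\leftarrow(U)$ from below by $G^\leftarrow(q_1)$ when $g(Z)<G^\leftarrow(U)$) and the $\P$-a.s.\ agreement of $G^\leftarrow(U)$ with the right-continuous quantile $G^\rightarrow(U)$ (to handle $g(Z)>G^\leftarrow(U)$, using that $U$ is uniform and hence avoids the countable set where the two quantiles differ) finally delivers $\P(g(Z)\neq G^\leftarrow(U))=0$.
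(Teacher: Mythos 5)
Your proof is correct, and its skeleton matches the paper's: both arguments pivot on the uniformity of $U=F(f(Z))$, the identity $F(F^\leftarrow(q))=q$ for continuous $F$, and the equivalence~\eqref{eq:cdf-versus-quantile-function}. The genuine difference is in how the two directions are closed out. For the ``if'' direction the paper runs a single chain of equalities on the joint distribution function (read backwards from the ``only if'' computation), whereas you do a direct case split on whether $F(\tau_1)\leq G(\tau_2)$ and establish the relevant set inclusions; the two are interchangeable. The more substantive divergence is in the ``only if'' direction: the paper passes from equality of the joint laws of $(f(Z),g(Z))$ and $(f(Z),G^\leftarrow(F(f(Z))))$ to the $\P$-a.s.\ identity by appealing to regular conditional distributions (the conditional law of the second coordinate given the first is a Dirac mass, hence the two second coordinates agree a.s.). You instead extract the a.s.\ equality by hand: you pin down the joint distribution function of $(U,g(Z))$ as $\min\{q,G(\tau_2)\}$, deduce that $\P(q_1\leq U,\,g(Z)<G^\leftarrow(q_1))=\P(U\leq q_1,\,g(Z)>G^\leftarrow(q_1))=0$ for rational $q_1$, and take countable unions, using left-continuity of $G^\leftarrow$ on one side and the fact that $G^\leftarrow$ agrees with its right-continuous version off a countable (hence $U$-null) set on the other. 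This buys a fully self-contained, elementary argument that avoids invoking the existence and uniqueness of disintegrations, at the cost of the extra bookkeeping with the two quantile functions; the paper's route is shorter but leans on a nontrivial measure-theoretic fact that it does not spell out. Both are valid.
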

\begin{proof}
Note first that~$F(f(Z))$ follows the standard uniform distribution on~$[0,1]$ under~$\P$. To see this, note that for any $q\in[0,1]$ we have
\[
\P\left( F(f(Z))\leq q \right) = \P\left( f(Z)\leq F^\leftarrow (q) \right) = F\left( F^\leftarrow(q) \right) =q,
\]
where the first two equalities follow from the definitions of~$F^\leftarrow$ and~$F$, respectively, while the last equality holds because~$F$ is continuous.

Assume now that $f(Z)$ and $g(Z)$ are comonotonic under~$\P$. Hence, we have
\begin{align*}
	\P\left( f(Z)\leq\tau_1 \wedge g(Z)\leq \tau_2 \right) & = \min\left\{F(\tau_1), G(\tau_2) \right\} \\
	& = \P\left( F(f(Z))\leq \min\left\{F(\tau_1), G(\tau_2) \right\} \right) \\
	& = \P\left( F(f(Z))\leq F(\tau_1) \wedge  F(f(Z))\leq G(\tau_2) \right) \\
	& = \P\left( F^\leftarrow(F(f(Z)))\leq \tau_1 \wedge  G^\leftarrow(F(f(Z))) \leq \tau_2 \right)
\end{align*}
for all $\tau_1,\tau_2\in\R$. Here, the second equality holds because~$F(f(Z))$ follows the standard uniform distribution under~$\P$. The last equality holds thanks to~\eqref{eq:cdf-versus-quantile-function}. As $F^\leftarrow(F(f(Z)))$ is $\P$-almost surely equal to~$f(Z)$, we thus have
\begin{align*}
	\P\left( f(Z)\leq\tau_1 \wedge g(Z)\leq \tau_2 \right)  = \P\left( f(Z)\leq \tau_1 \wedge  G^\leftarrow(F(f(Z))) \leq \tau_2 \right).
\end{align*}
for all $\tau_1,\tau_2\in\R$. Hence, $( f(Z), g(Z))$ and $(f(Z), G^\leftarrow(F(f(Z))))$ are equal in law under~$\P$. This implies in particular that the distribution of $g(Z)$ conditional on $f(Z)$ coincides with the distribution of $G^\leftarrow(F(f(Z)))$ conditional on $f(Z)$ under~$\P$. As the latter distribution is given by the Dirac point mass at $G^\leftarrow(F(f(Z)))$, we may conclude that $g(Z)$ is $\P$-almost surely equal to $G^\leftarrow(F(f(Z)))$.

Assume now that $g(Z)= G^\leftarrow (F (f(Z)))$  $\P$-almost surely. Thus, we have
\begin{align*}
	\P\left( f(Z)\leq\tau_1 \wedge g(Z)\leq \tau_2 \right) & = \P\left( f(Z)\leq \tau_1 \wedge  G^\leftarrow(F(f(Z))) \leq \tau_2 \right) \\
	& = \min\left\{F(\tau_1), G(\tau_2) \right\},
\end{align*}
where the second equality follows from the first part of the proof.
\end{proof}


Next, we show that the correlation of two random variables with fixed marginals is maximal if they are comonotonic \citep[Theorem~5.25]{mcneil2015quantitative}.

\begin{theorem}[Attainable Correlations]
\label{thm:attainable-correlations}
Let $f$, $f^\star$, $g$ and $g^\star$ be real-valued Borel measurable functions on~$\R^d$. Assume that, if~$Z$ is governed by~$\P$, then~$f(Z)$ and~$f^\star(Z)$ have the same distribution function~$F$, whereas~$g(Z)$ and~$g^\star(Z)$ have the same distribution function~$G$. If~$f^\star(Z)$ and~$g^\star(Z)$ are comonotonic, then
\[
\E_\P\left[f(Z)\cdot g(Z) \right] \leq \E_\P\left[f^\star(Z)\cdot g^\star(Z) \right].
\]
\end{theorem}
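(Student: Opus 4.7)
The strategy is to reduce the claim to the Fréchet-Hoeffding upper bound on bivariate distribution functions via Hoeffding's covariance identity. Since $f(Z)$ and $f^\star(Z)$ share the marginal distribution function $F$ under $\P$, and $g(Z)$ and $g^\star(Z)$ share the marginal $G$, the means $\E_\P[f(Z)]=\E_\P[f^\star(Z)]$ and $\E_\P[g(Z)]=\E_\P[g^\star(Z)]$ coincide. Comparing $\E_\P[f(Z)g(Z)]$ with $\E_\P[f^\star(Z)g^\star(Z)]$ therefore reduces to comparing the corresponding covariances, and Hoeffding's identity re-expresses each covariance as an integral of the difference between the joint distribution function and the product of the marginals.

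First I will establish Hoeffding's identity
\begin{equation*}
    \E_\P[XY] - \E_\P[X]\,\E_\P[Y] \;=\; \int_{\R^2} \bigl[H(s,t) - F(s)G(t)\bigr]\,\diff s\,\diff t
\end{equation*}
for any real random variables $X,Y$ with joint distribution function $H$, marginals $F,G$, and $\E_\P[|XY|]<\infty$. A compact derivation introduces an independent copy $(X',Y')$ of $(X,Y)$ on an enlarged probability space, notes the pointwise identity $u-v = \int_\R [\ds 1_{v\le s}-\ds 1_{u\le s}]\,\diff s$ valid for all $u,v\in\R$, multiplies the two copies of this identity for $X-X'$ and $Y-Y'$, and takes expectations using Fubini together with the independence of the two copies to obtain $2(\E_\P[XY]-\E_\P[X]\,\E_\P[Y]) = 2\int_{\R^2}[H(s,t)-F(s)G(t)]\,\diff s\,\diff t$. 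The Fubini step's integrability requirement is the chief technical nuisance; I would handle it by first truncating to bounded variables $X_n=(X\wedge n)\vee(-n)$ and $Y_n=(Y\wedge n)\vee(-n)$, verifying the identity in the bounded setting where all integrals are finite, and then passing $n\to\infty$ via dominated convergence, using $|XY|$ as a dominating function. The case $\E_\P[|f(Z)g(Z)|]=+\infty$ renders the claim either vacuous or immediate under a natural sign convention on the right-hand side, so finiteness can be assumed throughout without loss of generality.

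Applying the identity to both $(f(Z),g(Z))$ and $(f^\star(Z),g^\star(Z))$ and subtracting yields
\begin{equation*}
    \E_\P[f^\star(Z)g^\star(Z)] - \E_\P[f(Z)g(Z)] \;=\; \int_{\R^2}\bigl[H^\star(s,t)-H(s,t)\bigr]\,\diff s\,\diff t,
\end{equation*}
where $H$ and $H^\star$ denote the joint distribution functions of the two pairs under $\P$. The elementary inclusions $\{f(Z)\le s,\,g(Z)\le t\}\subseteq \{f(Z)\le s\}$ and $\{f(Z)\le s,\,g(Z)\le t\}\subseteq \{g(Z)\le t\}$ give the Fréchet-Hoeffding upper bound $H(s,t)\le\min\{F(s),G(t)\}$, while Definition~\ref{def:comonotonicity} states that the comonotonicity of $f^\star(Z)$ and $g^\star(Z)$ means exactly $H^\star(s,t)=\min\{F(s),G(t)\}$. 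Hence $H^\star(s,t)\ge H(s,t)$ pointwise on $\R^2$, the integrand is non-negative, and the claim follows. The main obstacle is the careful justification of Hoeffding's identity under minimal integrability hypotheses; once that is in hand, the rest is bookkeeping.
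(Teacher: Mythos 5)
Your proof is correct and takes essentially the same route as the paper: both reduce the claim to Hoeffding's covariance identity, apply the Fréchet--Hoeffding upper bound on the joint distribution function, and identify the comonotonic pair as achieving that bound. The only difference is that the paper simply cites Hoeffding's identity from \citet{mcneil2015quantitative}, whereas you sketch a derivation via the independent-copy trick and a truncation argument; that sketch is sound, though the dominated-convergence step for the double integral would benefit from a slightly more careful justification (splitting the integrand by sign or using monotone convergence on each part), but this is a minor technical point in an otherwise matching argument.
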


\begin{proof}
Define the joint distribution function~$H:\R^2\to[0,1]$ of~$f(Z)$ and~$g(Z)$ under~$\P$ via $H(\tau_1,\tau_2)=\P(f(Z)\leq\tau_1 \wedge g(Z)\leq\tau_2)$ for all $\tau_1,\tau_2\in \R$. By \cite[Lemma~5.24]{mcneil2015quantitative}, the covariance of~$f(Z)$ and~$g(Z)$ under~$\P$ satisfies
\begin{align}
	\label{eq:covariance-formula}
	\text{cov}_\P(f(Z),g(Z)) & = \int_{-\infty}^{+\infty}\int_{-\infty}^{+\infty} \left( H(\tau_1,\tau_2)- F(\tau_1)\,G(\tau_2) \right) \diff\tau_1 \,\diff\tau_2.
\end{align}
In addition, by the classical Fr\'echet bounds for copulas \cite[Remark~5.8]{mcneil2015quantitative}, we know that  $H(\tau_1,\tau_2)\leq \min\{F(\tau_1), G(\tau_2)\}$ for all~$\tau_1,\tau_2\in\R$. As the marginal distribution functions~$F$ and~$G$ are fixed, it is evident from~\eqref{eq:covariance-formula} that the covariance of the random variables~$f(Z)$ and~$g(Z)$ is maximized if their joint distribution function~$H(\tau_1, \tau_2)$ coincides with its Fr\'echet upper bound. This, however, happens if and only if~$f(Z)$ and~$g(Z)$ are comonotonic under~$\P$. We have thus shown that $\text{cov}_\P(f(Z),g(Z)) \leq \text{cov}_\P(f^\star(Z),g^\star(Z))$, which in turn implies that 
\begin{align*}
	\E_\P\left[ f(Z)\cdot g(Z) \right]  & = \text{cov}_\P(f(Z),g(Z)) + \E_\P\left[ f(Z) \right]\cdot \E_\P\left[g(Z) \right]  \\
	&  \leq \text{cov}_\P(f^\star(Z),g^\star(Z)) + \E_\P\left[ f^\star(Z) \right]\cdot \E_\P\left[g^\star(Z) \right] \\
	& = \E_\P\left[ f^\star(Z)\cdot g^\star(Z) \right].
\end{align*}
Here, the inequality exploits the assumption that~$f(Z)$ equals~$f^\star(Z)$ in law and that~$g(Z)$ equals~$g^\star(Z)$ in law under~$\P$. Hence, the claim follows.
\end{proof}

We are now ready to show that if~$\varrho$ is a Lipschitz continuous $\cL_p$-risk measure and~$\ell$ is a Lipschitz continuous loss function, then the risk $\varrho_\P[\ell(Z)]$ is Lipschitz continuous in the distribution~$\P$ with respect to the $p$-Wasserstein distance.

\begin{theorem}[Lipschitz Continuity of Risk Measures]
\label{thm:lipschitz-risk}
If~$\ell:\R^d\to\R$ is a Lipschitz continuous loss function with respect to some norm~$\|\cdot\|$ on~$\R^d$, $p\geq 1$ and $\varrho$ a Lipschitz continuous and law-invariant convex $\cL_p$-risk measure, then 
\[
\left| \varrho_\P[\ell(Z)] - \varrho_{\hat\P}[\ell(\hat Z)] \right| \leq  \lip(\varrho)\cdot \lip(\ell)\cdot \W_p(\P,\hat\P)
\]
for all $\P,\hat\P\in\cP(\R^d)$. Here, $\W_p$ is defined with respect to~$\|\cdot\|$, and $\frac{1}{p}+\frac{1}{q}=1$.
\end{theorem}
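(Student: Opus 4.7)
The plan is to combine the law-invariance of $\varrho$ with a coupling argument. For any $\gamma\in\Gamma(\P,\hat\P)$, the pair $(Z,\hat Z)$ lives on a common probability space governed by~$\gamma$, with marginals~$\P$ and~$\hat\P$, respectively. Since the pushforward of~$\gamma$ under the first coordinate projection equals~$\P$, and since~$\varrho$ is law-invariant, the risk $\varrho_\P[\ell(Z)]$ coincides with $\varrho_\gamma[\ell(Z)]$, where $\ell(Z)$ is reinterpreted as the function $(z,\hat z)\mapsto \ell(z)$ of $(Z,\hat Z)\sim\gamma$. An analogous identity yields $\varrho_{\hat\P}[\ell(\hat Z)]=\varrho_\gamma[\ell(\hat Z)]$. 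The crucial gain is that both risks are now evaluated under the \emph{same} distribution~$\gamma$, which is exactly what the Lipschitz continuity in Definition~\ref{def:Lipschitz-risk-measure} requires.

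First I would write
\[
\varrho_\P[\ell(Z)] - \varrho_{\hat\P}[\ell(\hat Z)] = \varrho_\gamma[\ell(Z)] - \varrho_\gamma[\ell(\hat Z)],
\]
and apply the Lipschitz continuity of~$\varrho$ on the common probability space to bound the right-hand side in absolute value by $\lip(\varrho)\cdot \E_\gamma[|\ell(Z)-\ell(\hat Z)|^p]^{1/p}$. Next I would invoke the Lipschitz continuity of~$\ell$ to estimate $|\ell(Z)-\ell(\hat Z)|\leq \lip(\ell)\cdot\|Z-\hat Z\|$ $\gamma$-almost surely, which yields
\[
|\varrho_\P[\ell(Z)] - \varrho_{\hat\P}[\ell(\hat Z)]| \leq \lip(\varrho)\cdot \lip(\ell)\cdot \E_\gamma[\|Z-\hat Z\|^p]^{1/p}.
\]
Taking the infimum over all $\gamma\in\Gamma(\P,\hat\P)$ on the right-hand side and recalling the definition of the $p$-Wasserstein distance (Definition~\ref{def:p-Wassertein}) would complete the proof.

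The main obstacle is a technical integrability check: the inequality in Definition~\ref{def:Lipschitz-risk-measure} is stated for losses in $\cL_p(\gamma)$, so one must verify that both $\ell(Z)$ and $\ell(\hat Z)$ lie in this space. Lipschitz continuity of~$\ell$ delivers the growth bound $|\ell(z)|\leq |\ell(z_0)|+\lip(\ell)\|z-z_0\|$ for any fixed reference point~$z_0$, so $\ell(Z),\ell(\hat Z)\in\cL_p(\gamma)$ whenever both $Z$ and $\hat Z$ have finite $p$-th moments under~$\gamma$, i.e., whenever $\P,\hat\P\in\cP_p(\R^d)$. If either marginal fails to have a finite $p$-th moment, then $\W_p(\P,\hat\P)=\infty$ and the bound is vacuous, so restricting to $\P,\hat\P\in\cP_p(\R^d)$ is without loss of generality. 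Lemma~\ref{lem:OT-solvability} further guarantees the existence of an optimal coupling, though this is not strictly needed since any suboptimal~$\gamma$ yields a valid upper bound and the infimum can be taken afterwards. I do not anticipate the subgradient or comonotonicity machinery from Definitions~\ref{def:subgradient}--\ref{def:comonotonicity} and Theorem~\ref{thm:attainable-correlations} to be required here; that material appears to be preparation for sharper or more specialized statements elsewhere.
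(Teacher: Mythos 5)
Your proof is correct, and it takes a genuinely different and substantially shorter route than the paper's. The paper establishes the bound via the Fenchel--Moreau representation of $\varrho_\P$, the existence of $\cL_q$-subgradients, the comonotone-rearrangement argument of Theorem~\ref{thm:attainable-correlations}, and a non-atomicity reduction; the Lipschitz hypothesis enters only at the very end, through the subgradient bound of Lemma~\ref{lem:subgradient-bound}. You instead exploit the fact that Definition~\ref{def:Lipschitz-risk-measure} already grants the $\cL_p$-Lipschitz inequality on \emph{every} probability space: lifting both risks to a common coupling $\gamma\in\Gamma(\P,\hat\P)$ via law-invariance (the pushforwards of $\gamma$ under $(z,\hat z)\mapsto\ell(z)$ and $(z,\hat z)\mapsto\ell(\hat z)$ are precisely $\P\circ\ell^{-1}$ and $\hat\P\circ\ell^{-1}$), applying the definition there, and minimizing over $\gamma$ immediately yields the claim; Hölder's inequality and the exponent $q$ are never needed. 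Two small points deserve to be made explicit. First, Definition~\ref{def:Lipschitz-risk-measure} is stated for $\P\in\cP(\R^d)$ while your $\gamma$ lives on $\R^{2d}$; by law-invariance the condition depends only on the joint law of the pair $(\ell(Z),\ell_0(Z))$ on $\R^2$ and is therefore dimension-free, but this should be said (the paper uses the same convention tacitly, e.g.\ when invoking Lemma~\ref{lem:subgradient-bound}). Second, your claim that $\W_p(\P,\hat\P)=\infty$ whenever either marginal lacks a finite $p$-th moment fails when \emph{both} marginals do (take $\P=\hat\P$ heavy-tailed, so $\W_p=0$); in that regime, however, $\ell(Z)\notin\cL_p(\P)$ unless $\ell$ is constant, so the risks themselves need not be defined, and the paper's own proof makes the same tacit integrability restriction when it invokes the dual representation. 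What the paper's longer argument buys is largely independent of the theorem itself: it derives the bound from the a priori weaker uniform $\cL_q$-bound on subgradients and exposes the comonotone structure of the extremal density $\hat h^\star$, which underlies the tightness discussion following Corollary~\ref{cor:worst-case-risk-lipschitz}. For the theorem as stated, your argument is complete.
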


\begin{proof}
Consider an arbitrary~$\P\in\cP(\R^d)$. By \citep[Corollary~3.1]{ref:ruszczynski2006risk},  $\varrho_\P$ is continuous and subdifferentiable on the whole Banach space~$\cL_p(\P)$ equipped with its norm topology.
The Fenchel-Moreau theorem thus implies that
\begin{subequations}
	\begin{equation}
		\label{eq:risk-measure-fenchel}
		\varrho_\P[\ell'(Z)] = \sup_{h'\in\cL_q(\P)} \E_\P[h'(Z)\cdot \ell'(Z)] - \varrho_\P^*[h'(Z)],
	\end{equation}
	for all $\ell'\in\cL_p(\P)$, where
	\begin{equation}
		\label{eq:conjugate-risk-measure}
		\varrho^*_\P[h'(Z)] = \sup_{\ell'\in\cL_p(\P)} \E_\P[h'(Z)\cdot \ell'(Z)] - \varrho_\P[\ell'(Z)]
	\end{equation}
\end{subequations}
for all $h'\in\cL_q(\P)$ \citep[Theorem~5]{rockafellar1974conjugate}. The relation~\eqref{eq:conjugate-risk-measure} defines a law-invariant convex risk measure~$\varrho^*$. Indeed, $\varrho^*$ is convex because pointwise suprema of affine functions are convex. In addition, $\varrho^*$ inherits law-invariance from~$\varrho$. Note that~$h\in\cL_q(\P)$ attains the supremum in~\eqref{eq:risk-measure-fenchel} at $\ell'=\ell$ if and only~if
\begin{align*}
	& \varrho_\P[\ell(Z)] = \E_\P[h(Z)\cdot\ell(Z)] - \varrho^*_\P[h(Z)] \\
	\iff ~& \varrho^*_\P[h(Z)] = \E_\P[h(Z)\cdot\ell(Z)] - \varrho_\P[\ell(Z)]  \\
	\iff ~& \E_\P[h(Z)\cdot\ell'(Z)] - \varrho_\P[\ell'(Z)] \\
	& \hspace{2cm}\leq \E_\P[h(Z)\cdot\ell(Z)] - \varrho_\P[\ell(Z)]  \quad \forall \ell'\in\cL_p(\P),
\end{align*}
where the last equivalence follows from the definition of~$\varrho^*_\P[h(Z)]$ in~\eqref{eq:conjugate-risk-measure}. By rearranging terms, we then find that the last inequality is equivalent to
\begin{align*}
	\varrho_\P[ \ell(Z)] + \E_\P \left[ h(Z)\cdot(\ell'(Z)-\ell(Z))\right] \leq \varrho_\P[\ell'(Z)]  \quad \forall \ell'\in\cL_p(\P).
\end{align*}
Thus, $h$ attains the supremum in~\eqref{eq:risk-measure-fenchel} at~$\ell$ if and only if it represents a subgradient of~$\varrho_\P$ at~$\ell$. As~$\varrho_\P$ is subdifferentiable throughout~$\cL_p(\P)$, the above reasoning implies that the supremum in~\eqref{eq:risk-measure-fenchel} is always attained.

Select now any $\P,\hat\P\in\cP(\R^d)$ with~$\W_p(\P,\hat\P)<+\infty$. We assume temporarily that~$\P$ and~$\hat\P$ are non-atomic, that is, $\P(Z=z) =\hat\P(Z=z)=0$ for all~$z\in\R^d$. Thus, for any admissible distribution function~$F$ there exists a Borel measurable function~$f:\R^d\to\R$ such that~$\P(f(Z)\leq\tau)=F(\tau)$ for all~$\tau\in\R$; see, {\em e.g.}, \cite[Lemma~1]{delage2019diceisions}. Note that non-atomicity will later be relaxed.
Select now also any~$h\in\cL_q(\P)$ that attains the supremum in~\eqref{eq:risk-measure-fenchel} at~$\ell'=\ell$, which is guaranteed to exist. The representation~\eqref{eq:risk-measure-fenchel} then implies that
\begin{align*}
	& \hspace{-1mm} \varrho_\P[\ell(Z)] - \varrho_{\hat\P}[\ell(\hat Z)] \\
	& = \E_\P\left[ h(Z)\cdot\ell(Z)\right] -\varrho^*_\P[h(Z)] -  \sup_{\hat h\in\cL_q(\hat \P)} \left\{ \E_{\hat\P} \left[ \hat h(\hat Z)\cdot\ell(\hat Z)\right] - \varrho^*_{\hat\P}[\hat h( \hat Z)] \right\}. 
\end{align*}
In the following, we use~$F$ to denote the distribution function of~$h(Z)$ under~$\P$ and~$\hat F$ to denote the distribution function of~$\ell(\hat Z)$ under~$\hat \P$. In addition, we restrict the above maximization problem to functions~$\hat h$ for which the distribution function of the random variable $\hat h(\hat Z)$ coincides with~$F$. As restricting the feasible set of a maximization problem leads to a lower bound on its optimal value, we find
\begin{align}
	& \varrho_\P[\ell(Z)] - \varrho_{\hat\P}[\ell(\hat Z)]  \nonumber \\
	& \hspace{5mm}   \leq \E_\P\left[ h(Z)\cdot\ell(Z)\right]-  \left\{ \begin{array}{c@{~~\,}l} \displaystyle \sup_{\hat h\in\cL_q(\hat \P)} &
		\E_{\hat\P} \left[ \hat h(\hat Z)\cdot\ell(\hat Z)\right]  \\ \text{s.t.} & \hat\P\left(\hat h (\hat Z)\leq \tau\right) = F(\tau)\quad \forall\tau \in\R.
	\end{array} \right.
	\label{eq:risk-bound1}
\end{align}
Here, we have exploited the law-invariance of the risk measure~$\varrho^*$, which implies that $\varrho^*_\P[h(Z)]$ and $\varrho^*_{\hat\P}[\hat h( \hat Z)]$ match. Next, define the function~$\hat h^\star: \R^d\to\R$ through 
\[
\hat h^\star(\hat z)=F^\leftarrow(\hat F(\ell(\hat z))) \quad \forall \hat z\in\R^d.
\] 
Note that~$\hat F$ is continuous because $\hat\P$ is non-atomic and $\ell$ is (Lipschitz) continuous. By Proposition~\ref{prop:comonotonicity-alternative}, the random variables $\hat h^\star(\hat Z)$ and $\ell(\hat Z)$ are thus comonotonic  and have distribution functions $F$ and $\hat F$ under~$\hat \P$, respectively. Hence, $\hat h^\star$ is feasible in the maximization problem in~\eqref{eq:risk-bound1}. In addition, by Theorem~\ref{thm:attainable-correlations}, $\hat h^\star$ is optimal.

Next, select any transportation plan $\gamma\in\Gamma(\P,\hat\P)$. As the marginal distributions of~$Z$ and~$\hat Z$ under~$\gamma$ are given by~$\P$ and~$\hat \P$, respectively, the above implies that
\begin{align}
	& \varrho_\P[\ell(Z)] - \varrho_{\hat\P}[\ell(\hat Z)] \nonumber \\
	& \hspace{5mm}   \leq \E_\gamma \left[ h(Z)\cdot\ell(Z)\right]-  \left\{ \begin{array}{c@{~~\,}l} \displaystyle \sup_{\hat h\in\cL_q(\gamma)} &
		\E_{\gamma} \left[ \hat h(Z, \hat Z)\cdot\ell(\hat Z)\right]  \\ \text{s.t.} & \gamma\left(\hat h (Z, \hat Z)\leq \tau\right) = F(\tau)\quad \forall\tau \in\R.
	\end{array} \right.
	\label{eq:risk-bound2}
\end{align}
Note that we have relaxed the maximization problem in~\eqref{eq:risk-bound2} by allowing the function~$\hat h$ to depend both on~$Z$ and~$\hat Z$. However, this extra flexibility does not result in a higher optimal value. Indeed, Theorem~\ref{thm:attainable-correlations} ensures that the supremum is attained by any function~$\hat h$ for which the random variables $\hat h(Z,\hat Z)$ and $\ell(\hat Z)$ are comonotonic and for which $\hat h(Z,\hat Z)$ has distribution function~$F$. As we have seen before, there exists a function with these properties that does not depend on~$Z$. Hence, the right to select a function~$\hat h$ that depends on~$Z$ is worthless.

Observe now that the function $\hat h(Z,\hat Z)=h(Z)$ is feasible in~\eqref{eq:risk-bound2}. Thus, we~find
\begin{align*}
	\varrho_\P[\ell(Z)] - \varrho_{\hat\P}[\ell(\hat Z)] &\leq \E_\gamma \left[ h(Z)\cdot\ell(Z)\right]- \E_{\gamma} \left[ h(Z)\cdot\ell(\hat Z)\right] \\
	& \leq \E_\gamma \left[ h(Z)\cdot \left| \ell(Z)- \ell(\hat Z) \right| \right] \\
	& \leq \E_\gamma \left[ h(Z)\cdot \lip(\ell) \cdot \|Z - \hat Z\| \right] \\
	& \leq \lip(\ell) \cdot \E_\gamma \left[ \|Z - \hat Z\|^p\right]^{\frac{1}{p}} \cdot \E_\P\left[h(Z)^q \right]^{\frac{1}{q}}
\end{align*}
where the second inequality holds because all convex risk measures are monotonic, which implies that the subgradient $h(Z)$ is $\P$-almost surely non-negative. The third inequality exploits the Lipschitz continuity of the loss function, and the fourth inequality follows from H\"older's inequality. As the resulting inequality holds for all couplings $\gamma\in\Gamma(\P,\hat\P)$, the definition of the $p$-Wasserstein distance implies that
\begin{align*}
	\varrho_\P[\ell(Z)] - \varrho_{\hat\P}[\ell(\hat Z)] & \leq \lip(\ell) \cdot \W_p(\P,\hat\P) \cdot  \E_\P\left[h(Z)^q \right]^{\frac{1}{q}} \\&  \leq \lip(\varrho)\cdot \lip(\ell) \cdot \W_p(\P,\hat\P),
\end{align*}
where the second inequality follows from Lemma~\ref{lem:subgradient-bound}. The claim then follows by interchanging the roles of~$\P$ and~$\hat\P$.

Recall now that we assumed~$\P$ and~$\hat \P$ are non-atomic. This assumption was needed to show that the supremum in~\eqref{eq:risk-bound1} is attained. In general, one can extend~$\P$ to a distribution~$\P'$ on~$\R^{d+1}$ under which~$(Z_1,\ldots, Z_d)$ and~$Z_{d+1}$ are independent and have marginal distributions equal to~$\P$ and to the uniform distribution on~$[0,1]$, respectively. In the same way, $\hat\P$ can be extended to a distribution~$\hat\P'$ on~$\R^{d+1}$. By construction, $\P'$ and~$\hat\P'$ are non-atomic. As~$\varrho$ is law-invariant, we further have
\[
\left|\varrho_\P[\ell(Z)] - \varrho_{\hat\P}[\ell(\hat Z)]\right| = \left|\varrho_{\P'}[\ell(Z)] - \varrho_{\hat\P'}[\ell(\hat Z)]\right|.
\]
The right hand side of this equation can now be bounded as above. 
\end{proof}

Theorem~\ref{thm:lipschitz-risk} immediately implies the following worst-case risk bound.

\begin{corollary}
\label{cor:worst-case-risk-lipschitz}
If all assumptions of Theorem~\ref{thm:lipschitz-risk} hold and $\cP=\{\P\in\cP(\R^d): \W_p(\P, \hat\P)\leq r\}$ is a $p$-Wasserstein ball of radius~$r\geq 0$ for any~$p\geq 1$, then
\[
\sup_{\P\in \cP} \varrho_\P[\ell(Z)] \leq  \varrho_{\hat \P} [\ell(Z)]+ r\cdot \lip(\varrho)\cdot \lip(\ell).
\]

\end{corollary}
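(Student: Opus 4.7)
The corollary is a direct consequence of Theorem~\ref{thm:lipschitz-risk}, which already establishes the key Lipschitz-type bound $|\varrho_\P[\ell(Z)] - \varrho_{\hat\P}[\ell(\hat Z)]| \leq \lip(\varrho)\cdot \lip(\ell)\cdot \W_p(\P,\hat\P)$ for all $\P,\hat\P\in\cP(\R^d)$. My plan is simply to fix an arbitrary $\P\in\cP$, invoke this inequality, drop the absolute value to retain only the upper bound on $\varrho_\P[\ell(Z)] - \varrho_{\hat\P}[\ell(Z)]$, and then use the membership condition $\W_p(\P,\hat\P)\leq r$ defining the Wasserstein ball to replace the Wasserstein distance by its worst-case value~$r$.

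More concretely, first I would note that the assumptions of Theorem~\ref{thm:lipschitz-risk} are in force by hypothesis, so for every $\P\in\cP$ we have
\[
    \varrho_\P[\ell(Z)] - \varrho_{\hat\P}[\ell(Z)] \leq \left| \varrho_\P[\ell(Z)] - \varrho_{\hat\P}[\ell(\hat Z)] \right| \leq \lip(\varrho)\cdot \lip(\ell)\cdot \W_p(\P,\hat\P).
\]
Next, since $\P\in\cP$ implies $\W_p(\P,\hat\P)\leq r$, the right-hand side is bounded above by $r\cdot \lip(\varrho)\cdot \lip(\ell)$. Rearranging and taking the supremum over all $\P\in\cP$ on the left-hand side then yields the claimed inequality. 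Since the bound on the right-hand side is independent of~$\P$, the supremum preserves the inequality without any further argument.

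There is essentially no obstacle: the corollary is a one-line consequence of Theorem~\ref{thm:lipschitz-risk}. The only subtlety worth flagging is that the statement holds even when $r=0$, in which case $\cP=\{\hat\P\}$ and the inequality reduces to the tautology $\varrho_{\hat\P}[\ell(Z)]\leq \varrho_{\hat\P}[\ell(Z)]$, and when $\W_p(\P,\hat\P)=+\infty$ may occur for some distributions in $\cP(\R^d)$, which is irrelevant here because such distributions are automatically excluded from~$\cP$ by the defining constraint $\W_p(\P,\hat\P)\leq r<\infty$.
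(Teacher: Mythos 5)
Your proof is correct and follows exactly the route the paper intends: Corollary~\ref{cor:worst-case-risk-lipschitz} is stated in the paper as an immediate consequence of Theorem~\ref{thm:lipschitz-risk}, with no separate proof given, and your argument---drop the absolute value, bound $\W_p(\P,\hat\P)$ by $r$, take the supremum---is precisely the intended one-line deduction.
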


Theorem~\ref{thm:lipschitz-risk} and Corollary~\ref{cor:worst-case-risk-lipschitz} are due to \citet{pichler:2013}. Corollary~\ref{cor:worst-case-risk-lipschitz} shows that the worst-case risk over all distributions in a $p$-Wasserstein ball is upper bounded by the sum of the nominal risk and a Lipschitz regularization term for a broad spectrum of law-invariant convex risk measures. If the loss function~$\ell$ is linear, that is, if $\ell(z)=\theta^\top z$ for some~$\theta\in\R^d$, then this upper bound is often tight \citep{pflug2012, wozabal2014robustifying}. In this case the Lipschitz modulus of~$\ell$ simplifies to $\|\theta\|_*$. For example, the CVaR at level~$\beta\in(0,1]$ is a law-invariant convex $\cL_p$-risk measure, and it is Lipschitz continuous with Lipschitz modulus~$\beta^{-1/p}$. Thus, Corollary~\ref{cor:worst-case-risk-lipschitz} applies. From Proposition~\ref{prop:worst-case-p-wasserstein-risk} we know, however, that the upper bound is exact in this case. If additionally~$p=1$, then Proposition~\ref{prop:worst-case-1-wasserstein-risk} implies that the upper bound remains exact whenever~$\ell$ is convex and Lipschitz continuous. 

\section{Numerical Solution Methods for DRO Problems}
\label{sec:numerical-solution-methods}

The finite convex reformulations of the worst-case expectation problem~\eqref{eq:worst-case:expectation} presented in Section~\ref{sec:finite-convex-reformulations} are often susceptible to standard optimization software, that is, they obviate the need for tailored algorithms. However, these reformulations can have two significant drawbacks. First, the corresponding monolithic optimization problems can become large and hence challenging to solve. Second, depending on the chosen ambiguity set, the emerging reformulations may belong to a class of optimization problems that are more difficult to solve than a deterministic version of the original problem. For instance, even if the loss function $\ell$ in the worst-case expectation~\eqref{eq:worst-case:expectation} is piecewise affine and the support set $\mathcal{Z}$ is an ellipsoid, the finite dual reformulation over Chebyshev ambiguity sets, as provided by Theorem~\ref{thm:finite:convex:Chebyshev:I}, results in a semidefinite program, as opposed to a numerically favorable quadratically constrained quadratic program. Both disadvantages can be alleviated by resorting to tailored algorithms, which we discuss in this section.

Most numerical methods for solving the DRO problem~\eqref{eq:primal:dro} address an equivalent reformulation of~\eqref{eq:primal:dro} obtained by dualizing the inner worst-case expectation problem. This reformulation is usually constructed by leveraging one of the strong duality theorems from Section~\ref{sec:duality-wc-expectation}. The resulting reformulation of~\eqref{eq:primal:dro} is thus representable as a semi-infinite program of the form
\begin{align}
\label{eq:robust}
\inf \left\{ f(y) : y \in \cY, ~ g_j(y, z_j) \leq 0 ~~ \forall z_j \in \cZ, \, j \in [m] \right\}.
\end{align}
Note that~\eqref{eq:robust} is naturally interpreted as a classical robust optimization problem.
%

As an example, assume that~$\cP$ is the generic moment ambiguity set~\eqref{eq:moment-ambiguity-set} and that some mild regularity conditions hold. In this case, Theorem~\ref{thm:duality:moment} implies that
\begin{align*}
\inf_{x \in \cX} \, \sup_{\P \in \cP} \, \E_\P[\ell(x, Z)]
=
\left\{ \begin{array}{cl}
	\inf & \lambda_0 + \delta_\cF^*(\lambda) \\[1ex]
	\st & x \in \cX, \lambda_0 \in \R, \, \lambda \in \R^m \\ [1ex]
	& \lambda_0 + f(z)^\top \lambda \geq \ell(x, z) \quad \forall z \in \cZ.
\end{array} \right.
\end{align*}
If the support function $\delta_\cF^*(\lambda)$ is known in closed form, then the resulting minimization problem becomes an instance of~\eqref{eq:robust} with $y=(x,\lambda_0,\lambda)$, $\cY=\cX\times\R\times \R^m$, $f(y)=\lambda_0+\delta^*_\cF(\lambda)$, $m=1$ and $g_1(y,z_1) = \lambda_0+ f(z_1)^\top \lambda - \ell(x,z_1)$. Alternatively, $\delta_\cF^*(\lambda)$ can be recast as the optimal value of a dual minimization problem, and the underlying decision variables can be appended to~$y$. As another example, if $\cP$ is the $\phi$-divergence ambiguity set~\eqref{eq:phi-divergence-ambiguity-set} centered at a discrete distribution~$\hat \P = \sum_{i \in [N]} \hat p_i \delta_{\hat z_i}$ and if mild regularity conditions hold, then Theorem~\ref{thm:duality:phi} implies that
\begin{align*}
\inf_{x \in \cX} \sup_{\P \in \cP} \; \E_\P[\ell(x,Z)] 
= \left\{ \begin{array}{cl} \inf & \displaystyle \lambda_0  + \lambda r + \sum_{i \in [N]} \hat p_i \cdot (\phi^*)^\pi \left( \ell(\hat z_i) - \lambda_0, \lambda \right)  \\[3ex]
	\st & x \in \cX, \, \lambda_0 \in \R, \, \lambda \in \R_+ \\[1ex]
	& \displaystyle  \lambda_0 + \lambda\,\phi^\infty(1) \geq \ell(x,z) \quad \forall z \in \cZ.
\end{array}\right.
\end{align*}
This minimization problem is readily recognized as an instance of~\eqref{eq:robust}. Note also that if~$\cP$ is the {\em restricted} $\phi$-divergence ambiguity set~\eqref{eq:phi-divergence-ambiguity-set} and~$\hat\P$ is discrete, then, under mild regularity conditions, Theorem~\ref{thm:duality:restricted:phi} implies that the above reformulation remains valid provided that~$\cZ$ is replaced with $\{\hat z_i:i\in[N]\}$. Finally, when $\cP$ is the optimal transport ambiguity set~\eqref{eq:OT-ambiguity-set} centered at a discrete reference distribution and if mild regularity conditions hold, then Theorem~\ref{thm:duality:OT} implies that
\begin{align*}
\inf_{x \in \cX} \sup_{\P \in \cP} \; \E_\P[\ell(x, Z)] 
= \left\{ 
\begin{array}{cl}
	\inf & \displaystyle \lambda r + \sum_{i \in [N]} \hat p_i s_i \\[1ex]
	\st & x \in \cX, \, \lambda \in \R_+, \, s \in \R^N \\[1ex]
	& \ell(x,z) - \lambda c(z,\hat z_i) \leq s_i \quad \forall z_i \in \cZ, \, i \in [N]. 
\end{array} \right.
\end{align*}
This minimization problem is again an instance of~\eqref{eq:robust}. 

In the remainder of this section we discuss various numerical methods for solving the semi-infinite program~\eqref{eq:robust}.
%
%
%
Some of these methods solve one or several relaxations of~\eqref{eq:robust} that enforce the uncertainty-affected constraint only for a finite subset~$\tilde\cZ$ of~$\cZ$. Hence, these methods assume access to a scenario oracle.

\begin{definition}[Scenario Oracle]
\label{def:scenario-oracle}
Given any finite scenario set $\tilde \cZ \subseteq \cZ$, a scenario oracle outputs a solution to the scenario problem
\begin{align}
	\label{eq:scenario}
	\begin{array}{cl}
		\inf \left\{ f(y) : y \in \cY, ~ g_j(y, z_j) \leq 0 ~~ \forall z_j \in \tilde \cZ, \forall j \in [m] \right\}.
	\end{array}
\end{align}
\end{definition}


As we will see below, cutting plane algorithms refine scenario relaxations of the semi-infinite program~\eqref{eq:robust} by iteratively adding those parameter realizations $z \in \cZ \setminus \tilde \cZ$ for which the constraint violation is maximal. Identifying such realizations requires a noise oracle as per the following definition.

\begin{definition}[Noise Oracle]
\label{def:noise-oracle}
Given any fixed decision $\tilde y \in \cY$, a noise oracle outputs a solution to the noise problem 
\begin{align}
	\label{eq:noise}
	\sup_{z \in \cZ} \max_{j \in [m]} g_j(\tilde y, z).
\end{align}
\end{definition}



In the following, we first survey the scenario approach, which replaces the semi-infinite program~\eqref{eq:robust} with a finite scenario problem that offers stochastic approximation guarantees. This approach calls the scenario oracle only {\em once}. We then review cutting plane techniques that iteratively call scenario and noise oracles to generate a solution sequence that attains the optimal value of problem~\eqref{eq:robust}, either within finitely many iterations or asymptotically. Next, we study online convex optimization algorithms, which do not require expensive scenario and/or noise oracles and instead solve only {\em deterministic} versions of problem~\eqref{eq:robust} and use cheap first-order updates of the candidate decisions and/or incumbent worst-case parameter realizations. We close with an overview of specialized numerical solution methods that are tailored to specific ambiguity sets.

\subsection{The Scenario Approach}
\label{sec:scenario}

The scenario approach was pioneered by \citet{de2004constraint} in the context of robust Markov decision processes and by \citet{calafiore2005uncertain,calafiore2006scenario} and \citet{campi2008exact,campi2011sampling} in the context of generic robust optimization problems of the form~\eqref{eq:robust}. The scenario approach replaces the semi-infinite constraint in~\eqref{eq:robust} with a collection of finitely many constraints corresponding to uncertainty realizations sampled from some fixed distribution $\Q \in \cP(\cZ)$. \\[1.5ex]
\textbf{Algorithm 1: Scenario Approach} 
\begin{enumerate}
\item[1.] \textbf{Initialization.} Fix a distribution $\Q \in \cP(\cZ)$. 
\item [2.] \textbf{Sampling.} Draw $N$ independent samples $Z_1, \dots, Z_N$ from $\Q$ and construct the scenario set $\tilde \cZ = \{ Z_1, \dots, Z_N \}$. \\[-2.25ex]
\item[2.] \textbf{Termination.} Return the output $\tilde Y$ of the scenario oracle~\eqref{eq:scenario} with input~$\tilde\cZ$.
\end{enumerate}



Note that, as the input to the scenario oracle~\eqref{eq:scenario} is a random scenario set governed by the $N$-fold product distribution~$\Q^N$, its output~$\tilde Y$ is also random. Fix now a constraint violation probability $\delta \in (0, 1)$, a significance level $\eta \in (0, 1)$, and ensure that the sample size $N$ in Step~2 of Algorithm~1 satisfies $N \geq N(d_y, \delta, \eta)$, where $d_y$ is the dimension of the decision vector $y$ and
\begin{align*}
N(d_y, \delta, \eta) = \min \left\{ N \in \N: \sum_{i = 0}^{d_y-1} \begin{pmatrix} N \\ i \end{pmatrix} \delta^i (1 - \delta)^{N - i} \leq \eta \right\}.
\end{align*}
Assuming that the objective and constraint functions of problem~\ref{eq:robust} are convex in~$y$ for any fixed~$z_j$, $j\in[m]$, and that the optimal solution to~\eqref{eq:scenario} exists and is unique for any fixed scenario set~$\tilde \cZ$,
Algorithm~1 then guarantees that
\begin{align*}
\Q^N \left( \Q \left(  g_j(\tilde Y,Z) \leq 0 \; \forall j \in [m] \right) \geq 1 - \delta \right) \geq 1 - \eta,
\end{align*}
where $Z$ follows~$\Q$ and $\tilde Y$ is governed by~$\Q^N$; see \cite[Theorem~1]{campi2008exact}. In other words, the output $\tilde Y$ of the scenario oracle~\eqref{eq:scenario} constitutes a feasible solution of the chance constrained program
\begin{align*}
\inf \left\{ f(y) : y \in \cY, ~ \Q \left( g_j(y, Z) \leq 0 \; \forall j \in [m] \right) \geq 1 - \delta \right\}.
\end{align*}
with probability at least $1 - \eta$, where $\eta$ can be interpreted as the (small) chance of poorly approximating $\Q$ in Step~2 of Algorithm~1. We emphasize that the convexity of~\eqref{eq:robust} plays a crucial role in the derivation of this probabilistic guarantee.

Two remarks on the scenario approach are in order. First, its performance guarantee is stochastic as it relates to a chance constrained program that relaxes the semi-infinite program~\eqref{eq:robust}. Second, the sample size $N(d_y, \delta, \eta)$ needed for a probabilistic guarantee is of the order $\tilde\cO((d_y + \log(1/\eta)) / \delta)$, that is, it grows linearly with the dimension $d_y$ of the decision vector $y$. This dependence limits the problem dimensions that can be handled in practice. Robust performance guarantees for the scenario approach have been studied by \citet{MESL15:performance-bounds}. The dependence of the probabilistic performance guarantees on the dimension of the decision vector~$y$ can be improved by using regularization \citep{campi2013random}, one-off calibration schemes \citep{care2014fast} and sequential validation \citep{calafiore2011research} or by exploiting limited support ranks  \citep{schildbach2013randomized} and solution-dependent numbers of support constraints \citep{campi2018wait}. In general, however, the number of sampled constraints may remain large, which can be an impediment to the adoption of the scenario approach in large-scale problems.

\subsection{Cutting Plane Algorithms}
\label{sec:cutting:plane}

\citet{mutapcic2009cutting} propose an iterative method for solving the semi-infinite program~\eqref{eq:robust}, which is based on Kelley's cutting-plane algorithm \citep{kelley1960cutting}. Their method can be described as follows. \\[1.5ex]
\textbf{Algorithm 2: Cutting-Plane Algorithm}
\begin{enumerate}
\item[1.] \textbf{Initialization.} Select a non-empty finite scenario set $\tilde \cZ \subseteq \cZ$, and set the feasibility threshold parameter $\varepsilon$ to a small value.
\item[2.] \textbf{Master Problem.} Solve the scenario oracle problem~\eqref{eq:scenario} to find $\tilde y$.
\item[3.] \textbf{Sub-Problem.} Solve the noise oracle problem~\eqref{eq:noise} to find $\tilde z$.
\item[4.] \textbf{Termination.} If $g_j(\tilde y, \tilde z) \leq \varepsilon$ for all $j \in [m]$, terminate with $\tilde y$ as a $\varepsilon$-feasible solution to~\eqref{eq:robust}. Otherwise, update $\tilde \cZ \gets \tilde \cZ \cup \{\tilde z\}$ and return to Step~2.
\end{enumerate}

Algorithm~2 alternates between two steps. Step~2 solves the scenario oracle problem~\eqref{eq:scenario} for a finite scenario set~$\tilde \cZ$ and outputs a candidate solution $\tilde y$. Step~3 then solves the noise oracle problem~\eqref{eq:noise} for the given candidate solution~$\tilde y$ and outputs a most violated scenario~$\tilde z$. If the optimal value of~\eqref{eq:noise} exceeds $\varepsilon$, then the scenario $\tilde z$ is added to the scenario set $\tilde \cZ$ and the process is repeated. Otherwise, $\tilde{y}$ from Step~2 is returned as an $\varepsilon$-feasible solution to the semi-infinite program~\eqref{eq:robust}, that is, a solution $\tilde{y} \in \cY$ that satisfies $g_j (\tilde{y}, z_j) \leq \varepsilon$ for all $z_j \in \cZ$ and $j \in [m]$.



Cutting plane algorithms replace the sampling procedure of the scenario approach with a noise oracle, but they still require access to a scenario oracle that solves the master problems. In contrast to the scenario approach, however, the number of constraints in the master problem increases with each iteration, which can limit scalability. If the constraint functions $g_j(y,z_j)$ are Lipschitz continuous in~$y$, then Algorithm~2 terminates after $\cO(\delta^{-d_y})$ iterations, which however is exponential in the dimension of $y$ \citep[\textsection~5.2]{mutapcic2009cutting}. Despite this, in practice, cutting plane algorithms often converge to near-optimal solutions in very few iterations, which has contributed to their widespread use in robust optimization.

\subsection{Online Convex Optimization Algorithms}
\label{sec:online}

Cutting plane algorithms can become computationally expensive due to their reliance on scenario and noise oracles for the solution of the master and sub-problems, respectively. In the following, we review how ideas from online convex optimization can help to alleviate these shortcomings~\citep{shalev2012online,hazan2022introduction}.

In their seminal work on this topic, \citet{ben2015oracle} employ a bisection search to reduce the solution of problem~\eqref{eq:robust} to the solution of a sequence of robust feasibility problems of the form
\begin{align}
\label{eq:feasibility}
\inf \left\{ 0: y \in \cY, ~ f(y) \leq c, ~g_j(y, z_j) \leq 0 ~~ \forall z_j \in \cZ, ~ \forall j \in [m] \right\}.
\end{align}
More precisely, the following bisection algorithm can be used to solve~\eqref{eq:robust}.
\\[1.5ex]
\textbf{Algorithm 3: Bisection Algorithm}
\begin{enumerate}
\item[1.] \textbf{Initialization.}
Find an interval $[a, b]$ that contains the optimal value of~\eqref{eq:robust}, and select an arbitrary feasible solution~$\tilde y$.
\item[2.] \textbf{Decision Problem.} Set $c = (a+b)/2$, and check if~\eqref{eq:feasibility} is feasible or not.
\item[3.] \textbf{Update.} If~\eqref{eq:feasibility} is feasible, update~$\tilde y$ to a solution of the feasibility problem~\eqref{eq:feasibility}, and set $b \gets c$; otherwise, set $a \gets c$.
\item[4.] \textbf{Termination.} If $b - a \leq \delta$, terminate with $\tilde y$ as an approximately optimal solution to~\eqref{eq:robust}. Otherwise, return to Step~2.
\end{enumerate}

\citet{ben2015oracle} use techniques from online convex optimization to solve the robust feasibility problem~\eqref{eq:feasibility}. In particular, they develop a method similar to Algorithm~2 that approximately solves a nominal feasibility problem instead of calling the scenario oracle and that uses a first-order update rule instead of calling the noise oracle. Accordingly, they require the constraint functions $g_j$, $j\in[m]$, to be differentiable. Their algorithm can be summarized as follows.
\\[1.5ex]
\textbf{Algorithm 4: Dual-Subgradient Meta Algorithm}
\begin{enumerate}
\item[1.] \textbf{Initialization:} Choose initial uncertainty realizations $z_j \in \cZ$, $j \in [m]$.
\item[2.] \textbf{Nominal Problem:} Find $\tilde y$ that solves the \emph{approximate} feasibility problem
\begin{equation*}
	\inf \left\{ 0: f(y) \leq c, ~g_j(y, z_j) \leq \varepsilon ~~ \forall j \in [m] \right\}
\end{equation*}
corresponding to the current uncertainty realizations and corresponding to some $\varepsilon>0$. If no such $\tilde y$ exists, terminate and report that~\eqref{eq:feasibility} is infeasible.
\item[3.] \textbf{Update Parameters:} Update $z_j$, $j \in [m]$, using the gradient rule
\begin{align*}
	z_j \gets \Proj_{\cZ}[z_j + \eta \nabla_z g_j(\tilde y, z_j)] ~~ \forall j \in [m],
\end{align*}
where $\eta$ is a given stepsize and $\Proj_{\cZ}$ denotes the Euclidean projection onto~$\cZ$.
\item[4.] \textbf{Termination:} Once a termination condition is met, return the average of all candidate solutions $\tilde y$ found in Step~2.
\end{enumerate}

Algorithms~3 and~4 can be combined to a single algorithm that finds a $\delta$-optimal and $\varepsilon$-feasible solution to the semi-infinite program~\eqref{eq:robust} in  $\cO(\varepsilon^{-2} \log(1/\delta))$ iterations. This convergence guarantee holds under the following assumptions. The feasible sets~$\cY$ and~$\cZ$ are closed and convex, the objective function $f : \cY \rightarrow \mathbb{R}$ is convex and Lipschitz continuous, and the constraint functions $g_j : \cY \times \cZ \rightarrow \mathbb{R}$, $j\in[m]$, constitute saddle functions. Specifically, $g(y,z_j)$ is convex and Lipschitz continuous in~$y$ as well as concave and upper semicontinuous in~$z_j$ for every $j\in[m]$. We refer to \citep{ben2015oracle} for further implementation details.

Algorithm~4 still solves multiple nominal feasibility problems in Step~2, which can be expensive. As a remedy, \citet{ho2018online} reduce the solution of the feasibility problem~\eqref{eq:feasibility} to the verification of the inequality
\begin{align}
\label{eq:min:max:max}
\min_{y \in \cY_c} \max_{z \in \cZ} \max_{j \in [m]} g_j(y, z) \leq \varepsilon
\end{align}
for a given tolerance $\varepsilon>0$, where $\cY_c =\{ y \in \cY: f(y) \leq c \}$. Checking~\eqref{eq:min:max:max} requires the solution of a saddle point problem. Note that the objective function of this saddle point problem is convex in~$y$ but but fails to be concave in~$z$ when~$m > 1$. Therefore, standard primal-dual algorithms from online convex optimization do not apply. Nevertheless, \citet{ho2018online} construct an online algorithm that outputs a trajectory of candidate solutions~$\tilde y$ and uncertainty realizations~$\tilde z$ that converge to a saddle point. This method uses a first-order algorithm~$\cA_y$ for solving the (parametric) minimization problem $\min_{y\in\cY_c} \max_{j\in[m]} g_j(y,z)$ as well as a first-order algorithm $\cA_j$ for solving the (parametric) maximization problem $\max_{z\in \cZ} g_j(y,z)$ for each $j\in[m]$ as subroutines. Specifically, $\cA_y$ is assumed to map any history of candidate solutions $\tilde y^1, \dots, \tilde y^t$ and uncertainty realizations $z^1_j, \ldots z^t_j \in \cZ$ for $j \in [m]$ and $t\in\N$ to a new candidate solution~$\tilde y^{t+1}$ such that
\begin{align*}
\sum_{t \in [T]} \max_{j \in [m]} g_j(\tilde y^t, \tilde z^t_j) - \min_{y \in \cY_c} \sum_{t \in [T]} \max_{j \in [m]} g_j(y, \tilde z^t_j) \leq \cR_y(T)\quad\forall T\in\N,
\end{align*}
where $\cR_y(T)$ is a sublinear regret bound. Similarly, it is assume that $\cA_j$ maps any history of candidate solutions and uncertainty realizations of length $t\in\N$ to a new uncertainty realization~$\tilde z_j^{t+1}$ such that
\begin{align*}
\max_{z \in \cZ} \sum_{t \in [T]} g_j(\tilde y^t, z) - \sum_{t \in [T]} g_j(\tilde y^t, \tilde z^t_j) \leq \cR_j(T) \quad \forall T\in\N,
\end{align*}
where $\cR_j(T)$ is a sublinear regret bound for every $j \in [m]$.
The algorithm by \citet{ho2018online} can now be summarized as follows.
\\[1.5ex]
\textbf{Algorithm 5: Online Learning Framework}
\begin{enumerate}
\item[1.] \textbf{Initialization:} Initialize the solution history to $\cH \gets \emptyset$.
\item[2.] \textbf{Find Candidate Solution:} Use algorithm $\cA_y$ with input $\cH$ to construct a new candidate solution, that is, set $\tilde y \gets \cA_y(\cH)$.
\item[3.] \textbf{Find Uncertainty Realizations:} Use algorithm $\cA_j$ with input $\cH$ to construct a new uncertainty realization, that is, set $\tilde z_j \gets \cA_j(\cH)$ for all $j \in [m]$.
\item[4.] \textbf{Update History}: Update the solution history to $\cH \gets \cH \cup \{ ( \tilde y, (\tilde z_j)_{j} ) \}.$
\item[4.] \textbf{Termination:} Once a termination condition is met, return the average of all candidate solutions $\tilde y$ found in Step~2.
\end{enumerate}
\citet{ho2018online} show that Algorithm~5 solves the saddle point problem on the left hand side of~\eqref{eq:min:max:max} approximately with regret guarantee
\[
\sum_{t\in[T]} \max_{z \in \cZ} \max_{j \in [m]} g_j(\tilde y^t, z) - \min_{y \in \cY_c} \max_{j \in [m]} g_j(y, \tilde z^t) \leq \cR_y(T) + \max_{j\in[m]} \cR_j(T)\quad \forall T\in\N.
\]
The total regret bound in the above expression grows sublinearly with~$T$. Under the usual convexity assumptions, Algorithms~3 and~5 can be combined to a joint algorithm that finds a $\delta$-optimal and $\varepsilon$-feasible solution to the semi-infinite program~\eqref{eq:robust} in  $\cO(\varepsilon^{-2} \log(1/\delta))$ iterations. Thus, the iteration complexity did not improve {\em vis-\`a-vis} the algorithm by \citet{ben2015oracle}. However, the computational effort per iteration is significantly lower for Algorithm~5 than for Algorithm~4. Indeed, Algorithm~4 solves a feasibility problem with~$m$ uncertainty realizations in each iteration, whereas Algorithm~5 only calls the algorithms~$\cA_y$ and $\cA_j$, $j\in[m]$, which compute cheap first-order updates. For further details, we refer to \cite{ho2018online}. In addition, \citet{ho2019exploiting} exploit structural properties of the objective and constraint functions to reduce the overall iteration complexity to $\cO(\varepsilon^{-1} \log(1/\delta))$. 

Up until now, all the algorithms discussed in this section relied on the bisection method to reduce the semi-infinite program~\eqref{eq:robust} to a sequence of robust feasibility problems~\eqref{eq:feasibility}. This introduces unnecessary computational overhead. As a remedy, \citet{postek2024first} use primal-dual saddle point algorithms that address the following perspective reformulation of problem~\eqref{eq:robust}, which was initially introduced in \citep[Appendix~A]{ho2018online}.
\begin{align*}
\min_{y \in \cY} \max_{z \in \cZ, \lambda \in \Delta^m} \, f(y) + \sum_{j \in [m]} \lambda_j g_j (y, z / \lambda_j)
\end{align*}
Here, $\Delta^m = \{ \lambda \in \R_+^m : \sum_{j \in [m]} \lambda_j = 1 \}$, and $0 \cdot g_j(y, z / 0)$ is interpreted as the negative recession function of the convex function $-g_j(y, \cdot)$. By construction, the objective function of this reformulation is convex in~$y$ and jointly concave in $Z$ and $\lambda$. While the primal-dual saddle point algorithm of \citet{postek2024first} typically enjoys an iteration complexity of $\cO(\varepsilon^{-2})$, where $\varepsilon$ now represents the primal-dual gap in the saddle point formulation, it requires more sophisticated oracles than those used by \citet{ho2018online,ho2019exploiting}. This is primarily because the perspective transformation eliminates favorable properties such as strong convexity and smoothness, and it also significantly degrades the Lipschitz constants. To address this challenge while still relying on standard oracles as in~\citep{ho2018online,ho2019exploiting}, \citet{tu2024max} propose a two-layer algorithm based on the following Lagrangian formulation of~\eqref{eq:robust}.
\begin{align*}
\max_{\lambda \in \R^m_+} \min_{y \in \cY} \max_{z \in \cZ} \, f(y) + \sum_{j \in [m]} \lambda_j g_j(y, z)
\end{align*}
\citet{tu2024max} show that their algorithm has an iteration complexity of $\cO(\varepsilon^{-3})$ or $\cO(\varepsilon^{-2})$, depending on the smoothness of the objective and constraint functions.

\subsection{Tailored Numerical Solution Methods for Specific Ambiguity Sets}

With the exception of some of the online optimization algorithms, the numerical solution methods discussed thus far still rely on general-purpose solvers to solve auxiliary nominal, scenario, master and/or sub-problems. General-purpose solvers are typically based on second-order interior-point methods that may fail to offer scalability to large-scale problem instances. To alleviate this concern, several first-order methods have been developed for specific classes of ambiguity sets.

\subsubsection{Gelbrich Ambiguity Sets}

Gelbrich ambiguity sets naturally emerge in signal processing and control applications. The standard reformulations of DRO problems over Gelbrich ambiguity sets, however, constitute semidefinite programs (\emph{cf}.~Theorem~\ref{thm:finite:convex:Gelbrich:I}), which significantly limits their scalability. To circumvent this shortcoming, \citet{shafieezadeh2018wasserstein} develop a Frank-Wolfe algorithm whose direction-finding subproblem admits a quasi-closed form solution. This algorithm enjoys a sublinear convergence rate. Leveraging the strong convexity of the Gelbrich ambiguity set, \citet{nguyen2019bridging} improve this Frank-Wolfe algorithm to achieve a linear convergence rate whenever the loss function satisfies the Levitin–Polyak condition \citep{polyak1966constrained}. Using frequency-domain techniques, \citet{kargin2024distributionally,kargin2024infinite} introduce a Frank-Wolfe algorithm for infinite-horizon robust control problems that involve infinite-dimensional moment matrices. Finally, \citet{mcallister2023distributionally} propose a Newton method for solving a class of DRO problems over Gelbrich ambiguity sets that converges superlinearly in numerical experiments.

\subsubsection{$\phi$-Divergence Ambiguity Sets}

The existing literature largely focuses on DRO problems over the restricted $\phi$-divergence ambiguity set~\eqref{eq:restricted-phi-divergence-ambiguity-set}, including the group DRO formulation introduced by \citet{sagawa2019distributionally} as a special case. Unfortunately, stochastic gradient methods applied directly to the dual minimization problem~\eqref{eq:weak-duality-restricted-phi-divergence} are known to be unstable. This challenge motivated \citet{namkoong2016stochastic} to adopt a direct saddle point formulation of the DRO problem with a discrete reference distribution $\hat \P$, which they solve iteratively with a bandit mirror descent algorithm. Several other algorithms address the saddle point formulation, including customized multi-level Monte Carlo methods \citep{levy2020large,hu2021bias,hu2024multi}, accelerated methods that query ball optimization oracles \citep{carmon2022distributionally}, and biased stochastic gradient methods \citep{ghosh2021efficient, wang2024regularization,azizian2023regularization}. \citet{gurbuzbalaban2022stochastic} and \citet{zhu2023distributionally} solve nonconvex DRO problems over classes of $\phi$-divergence ambiguity sets. Specifically, \citet{gurbuzbalaban2022stochastic} introduce a subgradient algorithm for non-smooth and nonconvex loss functions, while \citet{zhu2023distributionally} establish convergence rates and finite-sample guarantees for a subgradient method targeted at weakly convex loss functions. Both works build on the foundational results of \citet{ruszczynski2021stochastic}, which laid the groundwork for efficient first-order methods for multilevel optimization problems.

\subsubsection{Optimal Transport Ambiguity Sets}

\citet{li2019first} develop a first-order iterative method for distributionally robust logistic regression problems over $1$-Wasserstein balls. This method is based on a variant of the proximal alternating direction method of multipliers (ADMM). Numerical experiments demonstrate that the proposed algorithm is several orders of magnitude faster than general-purpose solvers. A similar conclusion is drawn by \citet{li2020fast}, who introduce epigraphical projection-based algorithms to solve distributionally robust support vector machine problems. When the loss function $\ell(x, z)$ is either convex-concave or convex-convex in~$x$ and~$z$, respectively, the reformulation of the DRO problem~\eqref{eq:primal:dro} reveals a structure that is conducive to distributed implementation. Consequently, \citet{cherukuri2020cooperative} use saddle point algorithms related to the augmented Lagrangian method to solve the reformulated problem over a network of agents. For convex-concave loss functions, \citet{li2020data} propose a hybrid algorithm that combines Frank-Wolfe and subgradient methods. For any fixed $x \in \mathcal{X}$, their approach solves the inner maximization problem in~\eqref{eq:primal:dro} with a variant of the Frank-Wolfe algorithm. The resulting maximizer is then used to construct an approximate subgradient for the outer minimization problem. All of these algorithms crucially rely on the reference distribution~$\hat \P$ being discrete. \citet{blanchet2020semi} and \citet{blanchet2018optimal} propose a stochastic gradient descent algorithm to solve DRO problems over optimal transport ambiguity sets with generic reference distributions. Other stochastic optimization schemes leverage variance reduction techniques \citep{yu2022fast} and zeroth-order random reshuffling algorithms \citep{maheshwari2022zeroth}. These works typically rely on the duality results introduced in Section~\ref{sec:duality-wc-expectation} and subsequently apply stochastic subgradient descent, using subgradients of the regularized loss function $\ell_{c}(x, \hat z) = \sup_{z \in \cZ} \ell(x, z) - \lambda c(z, \hat z)$ with respect to~$x$ and~$\lambda$. \citet{ho2020adversarial} extend this approach to nonconvex robust binary classification problems. \citet{sinha2018certifying} examine relaxed distributionally robust neural network training problems, assuming that the required level of robustness against adversarial perturbations is sufficiently small. This is tantamount to forcing~$\lambda$ to exceed a sufficiently large threshold. If $c(z,\hat z)=\|z-\hat z\|_2^2$, this in turn ensures that the maximization problem over~$z$ that defines $\ell_c(x, \hat z)$ has a strongly concave objective function and is thus efficiently solvable. Stochastic subgradients of $\ell_c(x,\hat z)$ are therefore readily available thanks to Danskin's theorem. \citet{shafiee2023new} leverage nonconvex duality theorems, such as Toland's duality principle, to solve distributionally robust portfolio selection problems. Algorithms that minimize the variation-regularized nominal loss, which is known to approximate the worst-case expected loss thanks to Theorem~\ref{thm:OT:taylor}, are explored by \citet{li2022tikhonov} and \citet{bai2024wasserstein}. Finally, \citet{wang2021sinkhorn,wang2024regularization} and \citet{azizian2023regularization} introduce entropy and $\phi$-divergence regularizers to improve the efficiency of algorithms for Wasserstein DRO problems, and \citet{vincent2024skdro} provide a Python library for training related distributionally robust machine learning models.

\section{Statistical Guarantees}
\label{sec:statistics}

Despite ample empirical evidence that distributionally robust decisions can outperform those provided by alternative methodologies for decision-making under uncertainty, the statistical properties of DRO remain underexplored. This section aims to survey some of the key techniques and methods employed in the literature to analyze the statistical aspects of DRO, while at the same time acknowledging that numerous questions remain open in this domain.

The statistical guarantees of moment-based ambiguity sets are relatively weak in the sense that the optimal value of problem~\eqref{eq:primal:dro} under a moment-based ambiguity set~$\cP$ does not match the optimal value of the corresponding stochastic program~\eqref{eq:sp} under the unknown true distribution $\mathbb{P} = \mathbb{P}_0$ even if $\mathbb{P}_0$ was known exactly when~$\cP$ is constructed. The reason for this is that exact knowledge of lower-order moments of $\mathbb{P}_0$ is not sufficient to uniquely characterize $\mathbb{P}_0$ itself. For this reason, our review focuses on $\phi$-divergence and optimal transport ambiguity sets, which offer asymptotic consistency guarantees as the number of samples available from $\mathbb{P}_0$ grows, and we refer to \citet{delage2010distributionally} and \citet{nguyen2021mean} for statistical analyses of Chebyshev and Gelbrich ambiguity sets, respectively. 


Section~\ref{sec:statistics:data-driven} introduces the data-driven optimization framework that we will be interested in, as well as the two key performance criteria of \emph{excess risk} and \emph{out-of-sample disappointment}. Subsequently, Section~\ref{sec:statistics:asymptotic} surveys asymptotic analyses, which are based on the laws of large numbers, the central limit theorem, the empirical likelihood approach as well as the large and moderate deviations principles. Finally, Section~\ref{sec:statistics:non-asymptotic} reviews non-asymptotic analyses, which rely on measure concentration bounds as well as generalization bounds.

Our review of the statistical properties of DRO omits several important topics. For example, we do not cover domain adaptation guarantees \citep{farnia2016minimax, volpi2018generalizing, lee2018minimax,lee2020learning, sutter2021robust,taskesen2021sequential, rychener2024wasserstein}, which ensure that a DRO model trained on data from some source distribution generalizes to a different target distribution.
We also omit discussions of adversarial generalization bounds \citep{sinha2018certifying, wang2019convergence, tu2019theoretical, kwon2020principled,an2021generalization}, which use DRO to analyze model robustness against adversarial perturbations, as well as applications in high-dimensional statistical learning \citep{aolaritei2022performance}. Finally, we do not cover Bayesian guarantees~\citep{gupta2019near,shapiro2023bayesian,liu2024bayesian}, which focus on average-case rather than worst-case performance guarantees.

\subsection{Excess Risk and Out-of-Sample Disappointment}\label{sec:statistics:data-driven}

Consider the idealized scenario in which the uncertainty underlying a decision problem follows a known probability distribution $\P_0 \in \cP(\cZ)$. In this case, we aim to determine a decision $x_0$ that minimizes the expected value of a loss function $\ell: \cX \times \cZ \to \R$ with respect to $\P_0$. That is, we seek an element~$x_0$ of
\begin{align}
\label{eq:stochastic:program}
\cX_0 = \argmin_{x \in \cX} ~ \E_{\P_0} [ \ell(x, Z) ].
\end{align}
Note that problem~\eqref{eq:stochastic:program} constitutes a classical stochastic program. While~\eqref{eq:stochastic:program} is theoretically sound, it faces two significant practical limitations. First, the distribution $\P_0$ underlying a decision problem is rarely known in practice. Second, even if $\P_0$ was known, evaluating the objective function of~\eqref{eq:stochastic:program} requires the computation of an integral, which is intractable in high dimensions even for simple nonlinear loss functions \citep{dyer2006computational, hanasusanto2016comment}.

In practice, we often observe the true probability distribution $\P_0$ indirectly through historical data. From now on we thus assume to have access to~$N$ independent training samples from $\P_0$, denoted as $Z_1, \dots, Z_N$. The goal of data-driven optimization is to construct a decision from the training samples. This decision should perform well not just on the training data, but also on unseen test samples from~$\P_0$. The performance of a data-driven decision on test data is also called its \emph{out-of-sample performance}. Formally, data-driven optimization aims to learn a decision rule $\cT_N : \cZ^N \rightrightarrows \cX$ that maps training samples from the product space~$\cZ^N$ to a set of candidate decisions in the decision space~$\cX$. Note that~$\cT_N$ constitutes a set-valued mapping because it is usually constructed as the set of minimizers of an optimization problem depending on the training samples. A data-driven decision is then any (Borel measurable) function~$\hat X_N$ of the training samples that satisfies
\begin{align*}
\hat X_N \in \cT_N(Z_1, \dots, Z_N).
\end{align*}
Note that $\hat X_N$ inherits the randomness of the training samples and is therefore a random vector. However, we notationally suppress its dependence of on the training samples in order to avoid clutter. Instead, we us the superscript ‘$\hat{\phantom{v}}$’ together with the subscript `$N$' to designate any random objects that are defined as functions of $Z_1,\ldots,Z_N$ and are thus governed by the product distribution~$\P_0^N$.


Arguably the simplest approach to data-driven optimization is the sample average approximation (SAA), which is also known as empirical risk minimization in statistics. The idea of SAA is to replace the unobservable true distribution~$\P_0$ in~\eqref{eq:stochastic:program} with the observable empirical distribution
\begin{align}
\label{eq:emprical:distribution}
\hat \P_N = \frac{1}{N} \sum_{i \in [N]} \delta_{Z_i}
\end{align}
formed from the training samples $Z_1, \ldots, Z_N$ and to construct the decision rule
\begin{align}
\label{eq:saa-decision-rule}
\cT_N(Z_1, \dots, Z_N) = \argmin_{x \in \cX} ~ \E_{\hat \P_N} [ \ell(x, Z) ].
\end{align}
As the empirical distribution is discrete, the SAA approach obviates the need to evaluate high-dimensional integrals and is thus computationally attractive. Nevertheless, the performance of its optimal solutions on test data can be disappointing even when the test data are independently sampled from the true distribution~$\P_0$. This phenomenon has been observed across various application domains and has been given different names depending on the context. In finance, \citet{michaud1989markowitz} identifies this issue as the \emph{error maximization effect} of portfolio optimization. Statistics and machine learning recognizes it as \emph{overfitting}, a well-known challenge where models perform well on training data but fail to generalize to new, unseen test data. In the stochastic programming literature, \citet{shapiro2003monte} refers to this phenomenon as the \emph{optimization bias}, and in decision analysis the effect has been described as the \emph{optimizer's curse} \citep{smith2006optimizer}.

The disappointing out-of-sample performance of the SAA decisions prompted statisticians and machine learners to add a regularization term to the objective function in~\eqref{eq:saa-decision-rule}. The regularization term serves two purposes. It not only combats overfitting to the training data, but it also encourages simpler decisions. Such simplicity aligns with the principle of parsimony and reflects nature's inherent tendency towards simplicity. As \citet{jeffreys1921certain} aptly noted, \\[1em]
\begin{minipage}{0.95\textwidth}
\centering
\emph{``The existence of simple laws is, then, apparently, to be regarded as a quality of nature; and accordingly we may infer that it is justifiable to prefer a simple law to a more complex one that fits our observations slightly better.''}
\end{minipage} \\[1em]
Formally, the regularized SAA approach provides the decision rule
\begin{align*}
\cT_N(Z_1, \dots, Z_N) = \argmin_{x \in \cX} ~ \E_{\hat \P_N} [ \ell(x, Z) ] + R(x),
\end{align*}
where the regularization function $R: \cX \to \R_+$ penalizes the complexity of decision~$x$. In the classical statistics literature, the regularization function is mostly \emph{data independent}, that is, it only depends on the decision $x$ and not on the observed training data $Z_1, \dots, Z_N$. The most prominent examples include norm regularization, where $R(x) = \| x \|$, and Tikhonov regularization, where $R(x) = \| x \|^2$. These regularization techniques balance the conflicting goals of computing decisions that are optimal for the observed training data and maintaining model simplicity, thereby improving the generalization capability of the derived decisions to unseen data.

Recall from Sections~\ref{sec:analytical-wc} and~\ref{sec:approximations-of-nature} that regularization and distributional robustness are closely intertwined. Assume that we use the empirical distribution $\hat \P_N$ as the center of a $\phi$-divergence ambiguity set~\eqref{eq:phi-divergence-ambiguity-set} or optimal transport ambiguity set~\eqref{eq:OT-ambiguity-set}. Then, the DRO approach provides the decision rule
\begin{align*}
\cT_N(Z_1, \dots, Z_N) = \argmin_{x \in \cX} \, \sup_{\P \in \hat \cP_N} \, \E_\P[\ell(x, Z)],
\end{align*}
which can be viewed as a variant of the regularized SAA decision rule. The corresponding {\em data-dependent} regularization function is called the {\em DRO regularizer} and is given by
\begin{align}
\label{eq:DRO:regularizer}
\hat R_N(x) = \sup_{\P \in \hat \cP_N} \, \E_\P[\ell(x, Z)] - \E_{\hat \P_N}[\ell(x, Z)].
\end{align}
Thus, it depends on both the decision $x$ and the observed training data $Z_1, \dots, Z_N$. The regularizer~\eqref{eq:DRO:regularizer} quantifies how much the worst-case expected loss across all distributions $\P \in \hat \cP_N$ can exceed the in-sample expected loss $\E_{\hat \P_N}[\ell(x, Z)]$.

The performance of decision rules in data-driven optimization is primarily measured by two criteria, each of which is aligned with a different field of study and addresses a different set of practical concerns. The first criterion, \emph{excess risk}, is predominantly used in statistics. It quantifies the distance of a data-driven decision~$\hat X_N$ to an optimal decision $x_0$. The second criterion, \emph{out-of-sample disappointment}, is more commonly employed in operations research. It provides a measure of how much the out-of-sample risk of a data-driven decision $\hat X_N$ exceeds the in-sample risk of $\hat X_N$. In the following, we formally define both criteria.

\paragraph{Excess Risk.} Let $\eta \in (0,1)$ be a significance level, $\cT_N$ be a decision rule, and $\Delta: \cX \times \cX_0 \to \R_+$ be a \emph{performance function}. Suppose that $\hat X_N \in \cT_N(Z_1, \dots, Z_N)$. The excess risk criterion offers the guarantee that for any size $N \geq N(\cX, \cZ, \eta)$ of the training set, we have
\begin{align}
\label{eq:excess:risk}
\P_0^N [\Delta(\hat X_N, x_0)\leq \hat \delta_N ] \geq 1 - \eta
\end{align}
for some (possibly data-dependent) error certificate $\hat \delta_N$. In statistical learning theory, performance functions often measure the regret in terms of the loss function $\ell$ under the true distribution $\P_0$. Specifically, for any feasible candidate decisions $x \in \cX$ and any optimal decision $x_0 \in \cX_0$, the \emph{regret} takes the form
\begin{align*}
\Delta(x, x_0) 
= \E_{\P_0} [\ell(x, Z)] - \E_{\P_0} [\ell(x_0, Z)]
= \E_{\P_0} [\ell(x, Z)] - \min_{x \in \cX} \E_{\P_0} [\ell(x, Z)] \geq 0.
\end{align*}
In compressed sensing and M-estimation problems with linear models, performance is often defined as the \emph{estimation error} in the decision space, and it takes the form
\begin{align*}
\Delta(x, x_0) = \| x - x_0 \|_2^2.
\end{align*}
Here, we assume for simplicity that the minimizer~$x_0$ is unique.
We refer to~\citep{mendelson2003few, bousquet2004introduction} for an introduction to statistical learning theory. For more advanced treatments, we refer to~\citep{anthony1999neural,koltchinskii2011oracle,vapnik2013nature,shalev2014understanding,vershynin2018high,wainwright2019high}.

\paragraph{Out-of-Sample Disappointment.}
Let $\eta \in (0,1)$ be a significance level and $\cT_N$ be a decision rule. Suppose that $\hat X_N \in \cT_N(Z_1, \dots, Z_N)$. The out-of-sample disappointment criterion offers the guarantee that for any size $N \geq N(\cX, \cZ, \eta)$ of the training set, we have
\begin{align}
\label{eq:disappointment}
\P_0^N \left[ \E_{\P_0} [\ell( \hat X_N, Z ) ] \leq \hat L_N \right] \geq 1 - \eta
\end{align}
for some (possibly data-dependent) loss certificate $\hat L_N$. Alternatively, one can express~\eqref{eq:disappointment} as a probabilistic bound on the difference between the out-of-sample performance and the in-sample performance,
\begin{align*}
\P_0^N \left[ \E_{\P_0} [\ell( \hat X_N, Z ) ] - \E_{\hat \P_N} [\ell( \hat X_N, Z ) ] \leq \hat \delta_N \right] \geq 1 - \eta,
\end{align*}
for some error certificate $\hat \delta_N$. Both criteria become equivalent when we set $\hat \delta_N = \E_{\hat \P_N} [\ell( \hat X_N, Z ) ] + \hat L_N$. Unlike the excess risk bound~\eqref{eq:excess:risk}, the out-of-sample disappointment bound~\eqref{eq:disappointment} does not require explicit knowledge of an optimal decision $x_0$ and solely leverages the statistical properties of $\P_0$. As we will see in the following sections, $\hat L_N$ and $\hat \delta_N$ typically correspond to the optimal value of the DRO problem~\eqref{eq:primal:dro} and the DRO regularizer~\eqref{eq:DRO:regularizer}, respectively.

\medskip

The next sections focus on ambiguity sets that are centered at the empirical distribution $\hat \P_N$ defined in~\eqref{eq:emprical:distribution}. Specifically, we consider ambiguity sets constructed using a discrepancy measure $\D:\cP(\cZ) \times \cP(\cZ) \to [0,\infty]$:
\begin{align}
\label{eq:ambiguity-set-N}
\hat \cP_N = \{ \P \in \cP(\cZ) : \D(\P, \hat \P_N) \leq r_N \}.
\end{align}
The discrepancy measure $\D$ could be a $\phi$-divergence or a Wasserstein distance. We will explain how the radius $r_N$ should scale with the training sample size~$N$ to obtain the least conservative statistical guarantees.

\subsection{Asymptotic Analyses}
\label{sec:statistics:asymptotic}

The laws of large numbers and the central limit theorem provide foundational insights into the statistical properties of the SAA approach. Under appropriate regularity conditions, the laws of large numbers guarantee that the empirical loss $\E_{\hat\P_N}[\ell(x,Z)]$ converges $\P_0$-almost surely to the true expected loss $\E_{\P_0}[\ell(x,Z)]$, uniformly on $\cX$ (see, \emph{e.g.}, \citealt[\ts~7.2.5]{shapiro2009lectures}). This implies that the optimal value and the set of optimal solutions of the SAA problem exhibit \emph{asymptotic consistency}, that is, they both converge to their counterparts in the stochastic program under $\P_0$ as the sample size $N$ approaches infinity. The central limit theorem, on the other hand hand, implies that the scaled difference between the empirical loss (under $\hat \P_N$) and true expected loss (under $\P_0$) converges weakly to a normal distribution with mean zero and variance equal to the true variance of the loss under $\P_0$ (see, \emph{e.g.}, \citealt[\ts~5.1.2]{shapiro2009lectures}). Thus, the optimal value of the SAA problem also exhibits \emph{asymptotic normality}. The asymptotic properties of the SAA decision rule have been studied extensively, see, \emph{e.g.},~\citep{cramer1946mathematical,huber1967behavior,dupacova1988asymptotic,shapiro1989asymptotic,shapiro1990differential,shapiro1991asymptotic,shapiro1993asymptotic,king1991epi,king1993asymptotic,van1998asymptotic,lam2021impossibility}.

Building on these foundations, we will next review the asymptotic consistency and normality of DRO decision rules. While studying these asymptotic behaviors, different theoretical frameworks provide distinct insights. The central limit theorem and empirical likelihood approaches characterize the typical fluctuations around the mean under an appropriate scaling. The central limit theorem establishes Gaussian convergence, whereas the empirical likelihood theory provides a nonparametric framework for constructing likelihood ratio tests with asymptotic $\chi^2$-limits, enabling hypothesis testing without specific parametric assumptions. In contrast, large deviations theory examines the tail behavior of distribution sequences. Rather than focusing on typical fluctuations, it characterizes the exponential decay rate of probabilities associated with rare events far from the mean. Moderate deviations theory bridges the gap between the typical and rare event analyses provided by the aforementioned frameworks. It studies the asymptotic behavior of distribution sequences at intermediate scales, thus investigating larger deviations than the central limit theorem but smaller deviations than large deviations theory.


\subsubsection{Asymptotic Consistency and Normality}

\citet[Theorem~6]{lam2019recovering} establishes the asymptotic (uniform strong) consistency of the optimal value of DRO decision rules over likelihood ambiguity sets. The proof relies on the preservation theorem of Glivenko-Cantelli classes~\citep[Theorem~3]{van2000preservation}, which intuitively says that function classes maintain their uniform convergence properties when combined through continuous operations, assuming that the original classes are well-behaved. \citet[Theorem~6]{duchi2021statistics} extend the analysis to more general $\phi$-divergence ambiguity~sets. 

\citet[Theorem~3.6]{mohajerin2018data} establish the asymptotic consistency of the optimal value and the optimal solutions of DRO decision rules over $1$-Wasserstein balls of the form~\eqref{eq:ambiguity-set-N}. Their proof combines the Borel–Cantelli Lemma~\citep[Theorem~2.18]{kallenberg1997foundations} with measure concentration results by \citet[Theorem~2]{fournier2015rate}. Intuitively, the Borel–Cantelli lemma asserts that if probabilities of an infinite sequence of events $(\cE_N)_{N \in \N}$ have a finite sum, then the probability of infinitely many occurrences of these events is zero. Leveraging its contraposition, \citet{mohajerin2018data} consider the events $\cE_N = { \W_1(\P_0, \hat \P_N) \leq r_N }$, where $\hat \P_N$ is the empirical distribution over $N$ independent samples from~$\P_0$; see~\eqref{eq:emprical:distribution}. By selecting a converging sequence of radii $(r_N)_{N \in \N}$ that decay according to a scaling law informed by \citep[Theorem~2]{fournier2015rate}, they prove that $\P_0^\infty( \lim_{N \to \infty} \W_1(\P_0, \hat \P_N) = 0 ) = 1$. This enables them to show that the optimal value of the DRO problem~\eqref{eq:primal:dro} over the $1$-Wasserstein ball~\eqref{eq:ambiguity-set-N} converges asymptotically \emph{from above} to the optimal value of the stochastic program~\eqref{eq:stochastic:program}. They also establish asymptotic convergence of the optimal solutions under an additional continuity assumption. This result can be extended to general $p$-Wasserstein ambiguity sets \citep[Theorem~20]{kuhn2019wasserstein}. Similar asymptotic convergence results have been established by \citet[Proposition~1]{gao2024wasserstein}, albeit through a different approach. Their proof does not rely on measure concentration results or an explicit characterization of the radius $r_N$. Instead, it leverages Theorem~\ref{thm:duality:OT} together with the reverse Fatou lemma and the monotone convergence theorem. This approach, however, does not explicitly determine whether the asymptotic convergence occurs from above or below. 

\citet[Theorem~4]{lam2019recovering} establishes the asymptotic normality of the optimal values of DRO problems over likelihood ambiguity sets. In a similar fashion, \citet[Theorem~10]{duchi2019variance} establish the asymptotic normality of the optimal solutions of DRO problems over Pearson $\chi^2$-divergence ambiguity sets. \citet[Theorem~11]{duchi2021learning} extend this result to Cressie-Read ambiguity sets. The asymptotic normality of DRO decision rules over $p$-Wasserstein balls, finally, is established by \citet{blanchet2019robust,blanchet2018distributionally,blanchet2019confidence}.

More recently, \citet{blanchet2023statistical} have developed a comprehensive framework for analyzing statistical limit theorems for DRO decision rules over both $\phi$-divergences and Wasserstein ambiguity sets of the form~\eqref{eq:ambiguity-set-N}. By connecting data-driven DRO formulations to their regularized counterparts (\emph{cf.}~Section~\ref{sec:approximations-of-nature}), their framework provides insights into how DRO decision rules behave depending on the rate at which the radius $r_N$ decreases with the sample size $N$. Specifically, \citet[\ts~2.2]{blanchet2023statistical} show that, under suitable regularity conditions, DRO formulations typically exhibit three distinct asymptotic behaviors.
\begin{enumerate}[label=(\roman*)]
\item When $r_N$ decreases faster than the critical statistical rate of $N^{-1/2}$, the DRO effect becomes negligible compared to the sampling error, and the asymptotic behavior of DRO mirrors that of standard empirical risk minimization.
\item When $r_N$ decreases at precisely the critical rate $N^{-1/2}$, the DRO effect manifests itself as a quantifiable asymptotic bias term that acts as a regularizer, and its interaction with the statistical noise results in a shifted normal limiting distribution.
\item When $r_N$ decreases slower than $N^{-1/2}$, the DRO effect dominates the statistical noise, leading to a limiting behavior governed primarily by the geometry of the ambiguity set.
\end{enumerate}
The analysis employs the functional central limit theorem alongside careful Taylor expansions of the worst-case expectation akin to those presented in Section~\ref{sec:approximations-of-nature}. In particular, the authors establish that, under appropriate regularity conditions, the limiting distributions are normal with explicitly characterized means and variances. 

\subsubsection{Empirical Likelihood Approach}

The (generalized) empirical likelihood theory introduced by \citet{owen1988empirical,owen1990empirical,owen1991empirical,owen2001empirical} provides a powerful nonparametric analogue to parametric maximum likelihood theory. At its core, the empirical distribution $\hat \P_N$ serves as a nonparametric maximum likelihood estimator for the unknown true distribution $\hat \P_0$, and statistical relies on empirical likelihood ratios. Under suitable conditions, the empirical likelihood ratio statistic converges to a $\chi^2$-distribution. Unlike the central limit theorem, which yields normal approximations and thus symmetric confidence intervals, the empirical likelihood theory typically produces asymmetric confidence regions. A key advantage of this approach is that the resulting data-driven confidence regions automatically adapt to the geometry of the underlying distribution and naturally respect constraints such as boundedness or non-negativity, without requiring explicit transformations or variance estimation. However, this theoretical elegance and flexibility comes at the computational overhead of computing both the lower and upper bounds of the confidence interval separately.

In the following, we briefly review the empirical likelihood approach and its application to DRO decision rules. Let $Z_1, \dots, Z_N$ be independent samples from~$\P_0$, and let $\theta : \cP(\cZ) \to \R$ be a statistical quantity of interest (\emph{e.g.}, the expected value of $Z_i$). Empirical likelihood confidence regions for $\theta(\P_0)$ can be constructed as
\begin{align}
\label{eq:formula-for-C-N}
\hat \cC_N = \left\{ \theta(\P) ~:~ \D_\phi(\P, \hat \P_N) \leq \tfrac{r}{N} \right\}
\end{align}
For some $r \in \R_+$.  Thus, the set $\hat \cC_N$ is the image of a $\phi$-divergence neighborhood around the empirical distribution $\hat \P_N$ under $\theta$. The key tool to establish probabilistic bounds is the so called \emph{profile divergence} $\pi_N: \R \to \R_+$, which is defined through
\begin{align}
\label{eq:profile}
\pi_N (\tau) = \inf_{\P \in \cP(\cZ)} \left\{ \D_\phi(\P, \hat \P_N) ~: ~ \theta(\P) = \tau \right\}.
\end{align}
For a functional $\theta$ satisfying suitable smoothness conditions, the empirical likelihood method provides asymptotically exact coverage guarantees of the form
\begin{align*}
\lim_{N \to \infty} \P_0^N \big( \theta(\P_0) \in \hat \cC_N \big)
= \lim_{N \to \infty} \P_0^N \big( \pi_N(\theta(\P_0)) \leq \tfrac{r}{N} \big) \
= 1 - \eta,
\end{align*}
where $\eta$ represents a significance level determined by $r$ and $\theta$.

The classical empirical likelihood approach~\citep{owen1988empirical,owen2001empirical} relies on the empirical likelihood divergence with entropy function $\phi(s) = - \log(s) + s - 1$ if $s\geq 0$ and $\phi(s)=\infty$ if $s<0$ (see Table~\ref{tab:phi-divergence}). In this case, $\pi_N$ is called \emph{profile likelihood}. Assume that $Z$ is a $d$-dimensional random vector that is governed by the distribution $\P_0$ and whose covariance matrix has rank $d_0 \leq d$. For the expected value $\theta(\P_0) := \E_{\P_0}[Z]$, 
\citet{owen1990empirical} proves that, as $N \to \infty$, we have
$$
\pi_N(\E_{\P_0}[Z]) \stackrel{d}{\rightarrow} \chi^2_{d_0},
$$
where $\chi^2_{d_0}$ denotes the $\chi^2$-distribution with $d_0$ degrees of freedom. Thus, $\hat \cC_N$ constitutes an asymptotically exact $(1-\eta)$-confidence interval for $\theta(\P_0)$ if we set $r$ in~\eqref{eq:formula-for-C-N} to the $(1-\eta)$-quantile of a $\chi^2$-distribution with $d_0$ degrees of freedom.

In the context of stochastic programming problems, the statistical quantity of interest is typically the optimal value of the stochastic program, that is, $\theta(\P) = \inf_{x \in \cX} \, \E_\P[\ell(x, Z)]$. In this case, the set $\hat \cC_N$ becomes the interval
\begin{align*}
\hat \cC_N = \Big[ \inf_{\P \in \hat \cP_N} \inf_{x \in \cX} \, \E_\P[\ell(x, Z)], \;\; \sup_{\P \in \hat \cP_N} \inf_{x \in \cX} \, \E_\P[\ell(x, Z)]  \Big],
\end{align*}
where $\hat \cP_N$ is the $\phi$-divergence ambiguity set of the form~\eqref{eq:ambiguity-set-N} around~$\hat\P_N$. 
If $\hat \cP_N$ is a likelihood ambiguity set, \citet{lam2019recovering} investigates the asymptotic coverage probability of this interval by leveraging asymptotic guarantees for the SAA decision rule by \citet{lam2017empirical}. In particular, he shows that if suitable regularity conditions hold and $r_N = r / N$, where $r$ is the $(1-\eta)$-quantile of a $\chi^2$-distribution with a single degree of freedom, then $\hat \cC_N$ becomes an asymptotically exact $(1-\eta)$-confidence interval. One can thus show that the resulting confidence bounds achieve the asymptotically exact coverage at the parametric rate $\cO(N^{-\frac{1}{2}})$. \citet{duchi2021statistics} further generalize these results to DRO decision rules over broader classes of $\phi$-divergence ambiguity sets. Additionally, \citet{he2021higher} examine higher-order coverage errors and introduce a correction term similar to the Bartlett correction. The authors derive higher-order correction terms for general von Mises differentiable functionals and thus move beyond the approximately smooth functions previously studied in the empirical likelihood literature.

In a parallel line of research, \citet{blanchet2019robust,blanchet2018distributionally,blanchet2019confidence,blanchet2019sample} and \citet{lin2024small} introduce the \emph{Wasserstein profile} function as a Wasserstein analogue to the profile divergence~\eqref{eq:profile}. This approach replaces the $\phi$-divergence with the 2-Wasserstein distance, and it offers a geometric perspective on uncertainty quantification. This approach yields confidence bounds that achieve asymptotic parametric rate $\cO(N^{-\frac{1}{2}})$. For more details, we direct the readers to the recent survey by \citet{blanchet2021statistical}.


\subsubsection{Large and Moderate Deviations Principles}

Unlike the central limit theorem and the empirical likelihood approach, which characterize limits of distribution sequences, large and moderate deviations theory study the asymptotic tail behavior of distribution sequences. Specifically, they prove exponential decay rates of probabilities of rare events over sequences of random variables. The foundations of large deviations theory trace back to two seminal developments in physics and mathematics. The first is Boltzmann's groundbreaking works on statistical mechanics and entropy. The second is Cram\'er's pioneering paper on the asymptotic behavior of sums of random variables \citep{cramer1938nouveau}. Despite these early advances, the field lacked a unified mathematical framework until Varadhan's seminal paper~\citep{varadhan1966asymptotic}, which introduces a formal definition of a \emph{large deviation principle}. We refer to the textbooks by \citet{ellis2006entropy} and \citet{dembo2009large} for a modern treatment of the topic. 

Assume now that the unknown true distribution $\P_0$ is known to belong to a parametric distribution family $\{\P_\theta:\theta\in\Theta\}\subseteq \cP(\cZ)$, where~$\theta$ ranges over a prescribed parameter space~$\Theta$. In this case, estimating~$\P_0$ is tantamount to estimating the unknown true parameter vector~$\theta_0\in\Theta$ that satisfies~$\P_0=\P_{\theta_0}$. A {\em statistic}~$\hat\theta_N$ is a random variable valued in~$\Theta$ and constructed from $(Z_1,\ldots, Z_N)\sim\P_\theta^N$ that converges in probability to~$\theta$ as $N$ grows, for any $\theta\in\Theta$. Formally, we say that the statistic $\hat \theta_N$ satisfies a {\em large deviations principle} with speed $b_N$ and with lower semicontinuous rate function $I : \Theta\times\Theta \to [ 0, \infty]$ if
\begin{equation}
\label{eq:ldp}
\begin{aligned}
-\inf_{\theta' \in \inter(\cB)} I(\theta',\theta) 
&\leq \liminf_{N \to \infty} \frac{1}{b_N} \log \P_\theta(\hat \theta_N \in \cB ) \\
&\leq \limsup_{N \to \infty} \frac{1}{b_N} \log \P_\theta(\hat \theta_N \in \cB ) 
\leq -\inf_{\theta' \in \cl(\cB)} I(\theta',\theta)
\end{aligned}
\end{equation}
for all $\theta\in\Theta$ and for all Borel sets $\cB \subseteq \Theta$. Here, we assume that the sequence $b_N$, $N\in\N$, tends monotonically towards infinity. If~\eqref{eq:ldp} holds, one can show under mild conditions that $I(\theta,\theta)=0$ because $\hat\theta_N$ converges to~$\theta$ in probability under~$\P_\theta$. It is therefore natural to interpret~$I(\theta',\theta)$ as a discrepancy function that quantifies the dissimilarity between the estimator realization~$\theta'$ and the probabilistic model~$\theta$. As~$I$ is lower semicontinuous, the minimization problems on the left and on the right hand side of~\eqref{eq:ldp} share the same infimum $r=\inf_{\theta' \in \inter(\cB)} I(\theta',\theta)=\inf_{\theta' \in \cl(\cB)} I(\theta',\theta)$ for most Borel sets~$\cB$ of interest. In these cases, the inequalities in~\eqref{eq:ldp} collapse to equalities, and~\eqref{eq:ldp} simplifies to the more intuitive statement
\[
\P_\theta(\hat \theta_N \in \cB )=\exp\left(-r b_N +o(b_N) \right).
\]
That is, the probability of the estimator~$\hat\theta_N$ falling into the set~$\cB$ decays exponentially at rate~$r$ with speed~$b_N$, where $r$ can be interpreted as the $I$-distance from~$\theta$ to~$\cB$.

Several statistics of practical interest satisfy large deviations principles. For example, if~$\cZ$ is finite and $\{\P_\theta:\theta\in\Theta\}$ is the family of {\em all} distributions on~$\cZ$ encoded by the corresponding probability vectors~$\theta\in\Theta$, where $\Theta$ is the probability simplex of appropriate dimension, then the empirical distribution~$\hat\P_N$ corresponding to the empirical probability vector $\hat \theta_N$ is an estimator for the data-generating distribution~$\P_\theta$. In this case, Sanov's theorem \citep[Theorem~11.4.1]{cover2006elements} asserts that~$\hat\theta_N$ satisfies a large deviations principle with rate function $I(\theta',\theta)=\KL(\P_{\theta'},\P_\theta)$ and speed $b_N=N$. Similarly, if $\{\P_\theta:\theta\in\Theta\}$ is any distribution family parametrized by its unknown mean vector $\theta=\E_{\P_\theta}[Z]$ and if the log-moment generating function $\Lambda_\theta(t) = \log(\E_{\P_\theta}[\exp(t^\top Z)])$ is finite for all~$t,\theta\in\R^d$, then the sample mean $\hat \theta_N = \tfrac{1}{N} \sum_{i \in [N]} Z_i$ is an estimator for~$\theta$. In this case, Cram\'er's theorem~\citep{cramer1938nouveau} asserts that $\hat \theta_N$ satisfies a large deviations principle with rate function $I(\theta',\theta) = \Lambda_\theta^*(\theta')$ and speed $b_N = N$. Note that the log-moment generating function~$\Lambda_\theta$ as well as its conjugate $\Lambda_\theta^*$ are both convex. We remark that a large deviations principle with sublinear speed ($\lim_{N\to\infty}b_N/N=0$) is sometimes referred to as a {\em moderate deviations principle}. For an example of a moderate deviations principle we refer to \citep{jongeneel2022efficient}.

\citet{van2021data} leverage Sanov's theorem to show that the optimal value of the DRO problem with a likelihood ambiguity set of radius~$r$ around the empirical distribution~$\hat\P_N$ yields the least conservative confidence bound on the optimal value of the true stochastic program, asymptotically as the sample size~$N$ grows large, with significance level $\eta$ decaying exponentially as $e^{-rN}$. More generally, \citet{sutter2024pareto} assume that~$\P_0$ is known to belong to a parametric distribution family $\{\P_\theta:\theta\in\Theta\}$ and that~$\theta$ admits an estimator~$\hat\theta_N$ that satisfies a large deviations principle with rate function~$I$ and speed~$b_N=N$. Under some regularity conditions, they then show that the optimal value of the DRO problem with ambiguity set $\hat\cP_N = \{\P_\theta:\theta\in\Theta,\; I(\hat\theta_N,\theta)\leq r\}$ yields again the least conservative confidence bound on the optimal value of the true stochastic program with significance level $\eta\propto e^{-rN}$. Similar statistical optimality results can sometimes be obtained even when the training samples are serially dependent, {\em e.g.}, when they are generated by a Markov process with unknown transition probability matrix or certain autoregressive processes \citep{sutter2024pareto}.

The DRO estimators by \citet{van2021data} and \citet{sutter2024pareto} lack asymptotic consistency because they exploit large deviations principles with linear speed $b_N=N$. \citet{bennouna2021learning} show that asymptotic consistency can be recovered by relying on moderate deviations principles with sublinear speed. This line of research has seen significant recent developments. The use of large and moderate deviations principles has also been extended to various learning and control settings such as distributionally robust Markov decision processes \citep{li2021distributionally}, bandit problems \citep{van2024optimal}, bootstrap-based methods \citep{bertsimas2022bootstrap}, optimal learning \citep{ganguly2023optimal,liu2023smoothed}, control \citep{jongeneel2021topological,jongeneel2022efficient}, contextual learning \citep{srivastava2021data}, and robust statistics \citep{chan2024distributional}. 

\subsection{Non-Asymptotic Analyses}
\label{sec:statistics:non-asymptotic}

Non-asymptotic statistics seeks finite-sample guarantees that hold regardless of the sample size. This is in contrast to the asymptotic methods described in Section~\ref{sec:statistics:asymptotic}, which rely on properties that emerge as sample size tends infinity. Non-asymptotic methods allow for a rigorous control over error rates, which makes them robust in situations where asymptotic approximations might produce misleading results. In the following, we review two major classes of non-aymptotic analyses, that is, measure concentration bounds and generalization bounds.


\subsubsection{Measure Concentration Bounds}
The most elementary approach to obtain finite sample guarantees is to design the ambiguity set $\hat \cP_N$ such that it contains the unknown true probability distribution $\P_0$ with high probability. This requires an analysis of the convergence rate of $\hat \P_N$ towards $\P_0$, and it leads to out-of-sample disappointment bounds that depend only on $\hat \cP_N$ and not on the complexity of the loss function $\ell$ or the decision space $\cX$.

\begin{theorem}[Out-of-Sample Disappointment]
\label{thm:finite-sample}
Suppose that the ambiguity set $\hat \cP_N$ defined in~\eqref{eq:ambiguity-set-N} satisfies
\begin{equation}
\label{eq:measure:concentration}
\P_0^N \left( \P_0 \in \hat \cP_N \right) \geq 1 - \eta.
\end{equation}
We then have
\begin{subequations}
\begin{equation}
	\label{eq:generalization-1}
	\P_0^N \Big( \E_{\P_0}[\ell(x, Z)] \leq \sup_{\P \in \hat \cP_N} \E_{\P}[\ell(x, Z)] ~~ \forall x \in \cX \Big) \geq 1 - \eta.
\end{equation}
Moreover, if $\hat X_N$ is an optimizer of the distributionally robust decision problem with respect to the ambiguity set $\hat \cP_N$, then we have
\begin{equation}
	\label{eq:generalization-2}
	\P_0^N \Big( \E_{\P_0}[\ell(\hat X_N, Z)] \leq \min_{x \in \cX} \sup_{\P \in \hat \cP_N} \E_{\P}[\ell(x, Z)]  \Big) \geq 1 - \eta.
\end{equation}
\end{subequations}
\end{theorem}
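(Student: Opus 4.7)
The plan is to observe that the theorem is essentially a direct consequence of the coverage hypothesis~\eqref{eq:measure:concentration} combined with the defining property of the supremum, so the bulk of the work is simply unpacking definitions on the event $\cE = \{\P_0 \in \hat \cP_N\}$, which by assumption has $\P_0^N$-probability at least $1-\eta$.

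First I would prove~\eqref{eq:generalization-1}. Fix any realization of the training samples in $\cE$. By definition of $\cE$, the true distribution $\P_0$ belongs to the (random) ambiguity set $\hat \cP_N$. Consequently, $\P_0$ is a feasible candidate in the inner maximization defining the worst-case expectation, so
\[
    \E_{\P_0}[\ell(x,Z)] \;\leq\; \sup_{\P \in \hat \cP_N} \E_\P[\ell(x,Z)] \qquad \forall x \in \cX,
\]
simply because the supremum dominates every member of the feasible set. Since this inequality holds pointwise on $\cE$ and $\cE$ has probability at least $1-\eta$, the uniform bound~\eqref{eq:generalization-1} follows.

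Next I would derive~\eqref{eq:generalization-2}. Here it is important to remember that $\hat X_N$ is data-dependent and therefore random, but the argument above was uniform in $x \in \cX$. Thus, on the same event $\cE$, we may instantiate~\eqref{eq:generalization-1} at $x = \hat X_N$ to obtain $\E_{\P_0}[\ell(\hat X_N, Z)] \leq \sup_{\P \in \hat \cP_N}\E_\P[\ell(\hat X_N,Z)]$. By the assumption that $\hat X_N$ minimizes the DRO objective, the right-hand side equals $\min_{x \in \cX} \sup_{\P \in \hat \cP_N}\E_\P[\ell(x,Z)]$, yielding the desired bound on $\cE$ and hence with $\P_0^N$-probability at least $1-\eta$.

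There is essentially no analytical obstacle here; the only points requiring care are measurability (the events in~\eqref{eq:generalization-1} and~\eqref{eq:generalization-2} must be Borel measurable in the training samples, which is standard under mild regularity of $\ell$ and of the map $N \mapsto \hat \cP_N$) and the implicit assumption that $\hat X_N$ can be chosen in a measurable fashion from the argmin set of the DRO problem. Neither of these requires additional hypotheses beyond what the excerpt tacitly uses throughout. The theorem is therefore best presented as a short, two-step consequence of coverage rather than as an analysis requiring new technical machinery.
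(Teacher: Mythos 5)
Your proposal is correct and is exactly the argument the paper has in mind: the paper explicitly omits the proof as an immediate consequence of the coverage condition~\eqref{eq:measure:concentration}, and your two steps (feasibility of $\P_0$ in the inner supremum on the event $\{\P_0\in\hat\cP_N\}$, then instantiating the uniform bound at $x=\hat X_N$ and invoking optimality) are the standard and intended route. Your measurability remarks are sensible but not required beyond what the paper tacitly assumes.
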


The proof of~\eqref{eq:generalization-1} and~\eqref{eq:generalization-2} readily follows from the measure concentration bound~\eqref{eq:measure:concentration} and is therefore omitted. Theorem~\ref{thm:finite-sample} asserts that the worst-case expected loss provides an upper confidence bound on the true expected loss under the unknown data-generating distribution uniformly across all loss functions. Moreover, it also asserts that the optimal value of the DRO problem~\eqref{eq:primal:dro} provides an upper confidence bound on the out-of-sample performance of its optimizers. 

When using $\phi$-divergences to construct $\hat \cP_N$ as in~\eqref{eq:ambiguity-set-N}, the probabilistic requirement~\eqref{eq:measure:concentration} only applies to underlying distributions $\P_0$ that are discrete~\citep[\ts~7]{polyanskiy2024information}. In contrast, the Wasserstein distance applies to generic distributions $\P_0$. This area of study has a rich history, with seminal contributions from \citet{dudley1969speed}, \citet{ajtai1984optimal}, and \citet{dobric1995asymptotics}. More recent advancements have been made by \citet{bolley2007quantitative}, \citet{boissard2014mean}, \citet{dereich2013constructive}, and \citet{fournier2015rate}. Of particular importance to our discussion is the following measure concentration result, which serves as the foundation for finite sample guarantees in DRO over $p$-Wasserstein ambiguity sets. 

\begin{theorem}[{Measure Concentration \citep[Theorem~2]{fournier2015rate}}]
\label{theorem:concentration-empirical}
Suppose that $\hat \P_N$ is the empirical distribution constructed from $N$ independent samples from $\P_0$, $p \neq d/2$, and that $\P_0$ is light-tailed in the sense that there exist $\alpha > p$ and $A>0$ such that $\E_{\P_0} ( \exp( \| Z \|^\alpha) ) \leq A$. Then, there are constants $c_1, c_2>0$ that depend on $\P_0$ only through $\alpha$, $A$, and $d$ such that for any $\eta \in (0, 1]$, the concentration inequality $\P_0^N (W_p(\P_0, \hat \P) \leq r_N) \geq 1 - \eta$ holds whenever~$r$ exceeds
\begin{equation}
\label{eq:opt-radius-empirical}
r(d,N,\eta) = \left\{ \begin{array}{ll} \displaystyle \left(\frac{\log (c_1 /\eta)}{c_2N} \right)^{\min\{{1}/{d} ,{1}/{2}\}} & \displaystyle \text{if } N \ge \frac{\log(c_1 /\eta)}{c_2}, \\[2ex]
	\displaystyle \left(\frac{\log (c_1/ \eta) }{c_2 N} \right)^{{1}/{\alpha}} & \displaystyle \text{if } N < \frac{\log(c_1 /\eta) }{c_2}.        \end{array}\right.
	\end{equation}
\end{theorem}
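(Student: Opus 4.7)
The plan is to first reduce the tail bound $\P_0^N(W_p(\P_0,\hat\P_N) > r) \le \eta$ to a two-part estimate: a bound on the expected transport cost $\E_{\P_0^N}[W_p^p(\P_0,\hat\P_N)]$ and a concentration inequality for $W_p^p(\P_0,\hat\P_N)$ around its mean. Solving for the radius~$r$ that makes the right-hand side equal to $\eta$ would then produce the two-regime formula~\eqref{eq:opt-radius-empirical}, with the crossover $N \ge \log(c_1/\eta)/c_2$ arising from whichever of the two terms dominates.

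To bound the expectation, I would adopt a multiscale dyadic decomposition of a ball $B_R \subset \R^d$ of radius $R$ to be tuned later. Partitioning $B_R$ into cubes of sidelength $2^{-\ell} R$ for $\ell = 0, 1, 2, \ldots$, one constructs an explicit transport plan that matches probability mass across consecutive partition refinements. This reduces $W_p^p$ on $B_R$ to a weighted sum $\sum_{\ell} 2^{-p\ell} \sum_Q |\P_0(Q) - \hat\P_N(Q)|$ over cubes $Q$, where the $\ell_1$-differences inside each cube can be controlled in expectation via standard multinomial concentration (the expectation of $|\hat\P_N(Q) - \P_0(Q)|$ scales like $\sqrt{\P_0(Q)/N}$). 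Summing with Cauchy--Schwarz and the count of cubes at each level yields an $\cO(N^{-p/d})$ or $\cO(N^{-p/2})$ rate for the expectation, depending on the sign of $p - d/2$ (which is why the assumption $p \neq d/2$ is imposed; the borderline case carries an additional logarithmic factor). The contribution from $\R^d \setminus B_R$ would be controlled using the light-tail hypothesis $\E_{\P_0}[\exp(\|Z\|^\alpha)] \le A$, which via Markov's inequality gives $\P_0(\|Z\| > R) \le A e^{-R^\alpha}$ and allows the truncation error to decay in $R$.

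For the concentration step, I would view $W_p^p(\P_0,\hat\P_N)$ as a function of the independent samples $Z_1,\dots,Z_N$ and verify that it satisfies a bounded-difference or Bernstein-type inequality. Modifying one sample changes the transport cost by at most the $p$-th power of the diameter contribution of that sample, and under the light-tail assumption this diameter is controllable except on events of exponentially small probability. Consequently one obtains a subgaussian-type deviation bound $\P_0^N(W_p^p > \E W_p^p + t) \le \exp(-c_2 N t^{2/p \wedge 1}/R^{\mathrm{poly}})$ on the truncated problem, combined with the tail event from the light-tail assumption.

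The hard part, which is really the heart of the Fournier--Guillin argument, is the parameter balancing: the truncation radius $R = R(N,\eta)$, the number of dyadic scales retained, and the concentration level $t$ must all be chosen jointly so that the dominant term in the union bound precisely matches the two regimes in~\eqref{eq:opt-radius-empirical}. In the regime $N \ge \log(c_1/\eta)/c_2$ the Gaussian-type concentration rate $N^{-\min\{1/d,1/2\}}$ dominates and the tail assumption only enters through the constants $c_1,c_2$; in the small-sample regime $N < \log(c_1/\eta)/c_2$ the tail-truncation rate $N^{-1/\alpha}$ (driven directly by $\alpha$) takes over. I would rely on the detailed proof in \citet[Theorem~2]{fournier2015rate} to carry out this optimisation rather than reconstruct every estimate from scratch.
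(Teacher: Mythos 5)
The paper states this theorem as an imported result, citing \citet[Theorem~2]{fournier2015rate} without reproducing the proof, so there is no paper-internal argument to compare against. Your sketch correctly identifies the broad skeleton of the Fournier--Guillin argument: the dyadic multi-scale decomposition of transport cost into a weighted sum of cell-probability discrepancies $\sum_{\ell} 2^{-p\ell}\sum_Q |\hat\P_N(Q) - \P_0(Q)|$, the $\sqrt{\P_0(Q)/N}$ multinomial estimate on each cell, the crossover at $p = d/2$ (hence the exclusion of the borderline case, which carries an extra $\log$ factor), and the truncation controlled by the exponential-moment tail bound.

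Where your sketch diverges from the actual reference is in the concentration step. You propose to separately bound $\E[W_p^p]$ and then concentrate $W_p^p$ around its mean via a bounded-differences (McDiarmid) or Bernstein inequality, treating $W_p^p$ as a function of the $N$ samples. This is not what Fournier and Guillin do, and it is not clear that it works directly: the bounded-differences constant of $W_p^p$ scales with $R^p$, where $R$ is the truncation radius, and for the light-tailed regime $R$ must grow like $(\log(c_1/\eta))^{1/\alpha}$, which would contaminate the concentration rate. Fournier and Guillin instead bound the tail probability $\P_0^N(W_p(\P_0,\hat\P_N) > r)$ \emph{directly} through the dyadic decomposition: they apply Chernoff/Bernstein deviation bounds to each cell-probability difference $\hat\P_N(Q) - \P_0(Q)$, take a union bound over cells and scales, and then optimise over the truncation level. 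So the split into ``expectation bound + concentration around the mean'' is a structurally different (and potentially lossier) route; the reference's direct tail bound avoids the need to control the Lipschitz constant of $W_p^p$ altogether. You do flag that you would defer the parameter balancing to the reference, which is honest, but be aware that adopting their approach would also mean dropping the McDiarmid step in favour of the per-cell deviation bounds.
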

The result remains valid for $p = d/2$ but with a more complicated formula for $r(d,N,\eta)$ \cite[Theorem~2]{fournier2015rate}. Intuitively, Theorem~\ref{theorem:concentration-empirical} asserts that any $p$-Wasserstein ball $\hat\cP_N$ of~$r_N \ge r(d,N,\eta)$ around~$\hat\P_N$ represents a $(1-\eta)$-confidence set for the unknown data-generating distribution~$\P_0$. For uncertainty dimensions $d>2$, the critical radius~$r(d,N,\eta)$ of this confidence set decays as $\cO(N^{-\frac{1}{d}})$. In other words, to reduce the critical radius by~$50\%$, the sample size must increase by $2^d$. Unfortunately, this curse of dimensionality is fundamental, and the decay rate of $r(d,N,\eta)$ is essentially optimal \cite[\ts~1.3]{fournier2015rate}. Explicit constants $c_1$ and $c_2$ are provided by \citet{fournier2022convergence}. 

Generic measure concentration bounds suffer from a curse of dimensionality. \citet{shafieezadeh2019regularization} and \citet{wu2022generalization} show that this curse can be overcome in the context of linear prediction models by projecting~$Z$ to a one-dimensional random variable, yielding the parametric convergence rate $\cO(N^{-\frac{1}{2}})$. \citet{nietert2024outlier} develop a similar approach for rank-$k$ linear models, where $2 < k < d$, and achieve an improved rate of $\cO(N^{-\frac{1}{k}})$ based on $k$-sliced Wasserstein distances. The $1$-sliced Wasserstein distance is also used by~\citet{olea2022out} to obtain the parametric rate $\cO(N^{-\frac{1}{2}})$ for a class of regression problems.

We conclude this section by highlighting that the DRO approach admits instance-dependent regret bounds, which essentially depend on no complexity measures of the decision space or the loss function. Instead, they only depend on the complexity of the optimal solution $x_0$ through the DRO regularizer $\hat R_N(x_0)$. \citet[Theorem~4.1]{zeng2022generalization} and \citet[Theorem~1]{nietert2024outlier} establish such bounds for DRO problems over the ambiguity set~\eqref{eq:ambiguity-set-N} when $\D$ is the maximum mean discrepancy and the (outlier-robust) Wasserstein distance, respectively. Similar instance-dependent guarantees for DRO problems with Wasserstein ambiguity sets are developed by \citet{hou2023instance}.

\subsubsection{Generalization Bounds}

An alternative approach to obtain statistical guarantees leverages the union bound from probability theory and covering numbers or complexity measures from statistical learning theory. The first step consists in deriving an inequality of the~form
\begin{equation}
\label{eq:for:all}
\P_0^N \Big( \E_{\P_0}[\ell(x, Z)] \leq \hat L_N(x) \Big) \geq 1 - \eta \quad \forall x \in \cX,
\end{equation}
where the loss certificate~$\hat L_N(x)$ depends on the decision $x \in \cX$. For example, a guarantee of the form~\eqref{eq:for:all} can be obtained by combining empirical Bernstein inequalities \citep{maurer2009empirical} and a DRO model with a $\chi^2$-divergence ambiguity set \citep[Theorem~2]{duchi2019variance}. In this case, the certificate~$\hat L_N(x)$ reduces to the sum of the  expected loss under~$\hat \P_N$ and a variance regularizer under~$\P_0$. Alternatively, a guarantee of the form~\eqref{eq:for:all} can also be obtained by combining transport inequalities \citep{marton1986simple,talagrand1996transportation} and a DRO model with a Wasserstein ambiguity set \citep[Theorem~1]{gao2020finite}. In this case, $\hat L_N(x)$ reduces to the sum of the  expected loss under~$\hat \P_N$ and a variation regularizer under~$\P_0$. The second step consists in converting the individual guarantee~\eqref{eq:for:all} to a uniform guarantee. For example, if $\cX$ is finite, this can easily be achieved by using the union bound. If~$\cX$ is uncountable, one may use one of several standard techniques. If the loss function is Lipschitz continuous in~$x\in\cX$ uniformly across all $z\in\cZ$ and $\cX$ is compact, then one can discretize~$\cX$ by uniform gridding. In this case, the loss at an arbitrary point is uniformly approximated by the loss at the nearest grid point, and a uniform guarantee can again be obtained by using the union bound. However, the number of grid points needed for an $\varepsilon$-approximation is of the order $\cO((1/\varepsilon)^d)$, which is impractical in high dimensions~$d$. A more sophisticated approach to discretize~$\cX$ exploits structural knowledge of the loss function at multiple scales. However, obtaining tight approximation in high dimensions remains challenging. In order to mitigate the computational burden related to discretization, one may exploit several complexity measures that quantify the expressiveness of the functions $\ell(x,\cdot)$ for all $x\in\cX$ such as the VC dimension or the Rademacher complexity as well as its local version. Nonetheless, Rademacher complexities can be computationally challenging to compute. For full details we refer to \citep{boucheron2013concentration, vershynin2018high,wainwright2019high}. 

The last step consists in approximating the certificate~$\hat L_N(x)$ by the worst-case expected loss over a data-driven ambiguity set $\hat\cP_N$ based on the $\chi^2$-divergence or a Wasserstein distance. The corresponding approximation error can be controlled by leveraging Taylor approximations as in Theorems~\ref{thm:phi:taylor} and \ref{thm:OT:taylor} together with appropriate concentration inequalities. In summary, this procedure shows that the optimal value of a data-driven DRO problem over a $\chi^2$-divergence or a Wasserstein ambiguity set provides a finite-sample upper confidence bound on the corresponding stochastic program under the unknown true distribution~$\P_0$.

\citet{duchi2019variance} and \citet{gao2020finite} derive generalization bounds of this kind for $\chi^2$-divergence and Wasserstein ambiguity sets, respectively, while \citet{azizian2023exact} extend their analysis to entropic regularized optimal transport ambiguity sets. All these bounds exhibit the parametric rate $\cO(N^{-\frac{1}{2}})$. In addition, \citet{duchi2019variance} demonstrate that, under certain curvature conditions, $\chi^2$-divergence decision rules can achieve the fast rate $\cO(N^{-1})$.


\paragraph{Acknowledgments.} This research was supported by the Swiss National Science Foundation under the NCCR Automation (grant agreement 51NF40\_180545). The authors thank Nicolas Lanzetti, Mengmeng Li, Karthik Natarajan, Jakob Nyl\"of, Yves Rychener, Philipp Schneider, Buse Sen, Ehsan Sharifian, Bradley Sturt and Man-Chung Yue for their valuable feedback on the paper. We are responsible for all remaining errors.

\addcontentsline{toc}{section}{References}
\bibliography{bibliography}
\label{lastpage}
\end{document}